\journal{}
\newcommand{\eps}{\varepsilon}
\newcommand{\epsb}{\varepsilon_\mathrm{b}}
\newcommand{\mub}{\mu_\mathrm{b}}
\newcommand{\sigmab}{\sigma_\mathrm{b}}
\newcommand{\set}[1]{\left\{#1\right\}}
\newcommand{\E}{\mathrm{E}}
\newcommand{\ma}{\mathbf{a}}
\newcommand{\mb}{\mathbf{b}}
\newcommand{\md}{\mathbf{d}}
\newcommand{\mf}{\mathbf{f}}
\newcommand{\mg}{\mathbf{g}}
\newcommand{\mr}{\mathbf{r}}
\newcommand{\mA}{\mathbf{A}}
\newcommand{\mB}{\mathbf{B}}
\newcommand{\mE}{\mathbf{E}}
\newcommand{\mH}{\mathbf{H}}
\newcommand{\mU}{\mathbf{U}}
\newcommand{\mV}{\mathbf{V}}
\newcommand{\vt}{\boldsymbol{\theta}}
\newcommand{\vv}{\boldsymbol{\vartheta}}
\DeclareMathOperator*{\inc}{inc}
\DeclareMathOperator*{\scat}{scat}
\DeclareMathOperator*{\tot}{tot}
\DeclareMathOperator*{\noise}{noise}
\DeclareMathOperator*{\tx}{TX}
\DeclareMathOperator*{\rx}{RX}
\DeclareMathOperator*{\NF}{NF}
\theoremstyle{plain}
\newtheorem{theorem}{Theorem}[section]
\newtheorem{corollary}[theorem]{Corollary}
\theoremstyle{remark}
\newtheorem{remark}{Remark}[section]
\newtheorem{example}{Example}[section]
\begin{document}

\begin{frontmatter}



\title{Application and analysis of MUSIC algorithm for anomaly detection in microwave imaging without a switching device}

\author{Won-Kwang Park}
\ead{parkwk@kookmin.ac.kr}
\address{Department of Information Security, Cryptology, and Mathematics, Kookmin University, Seoul, 02707, Korea.}

\begin{abstract}
Although the MUltiple SIgnal Classification (MUSIC) algorithm has demonstrated suitability as a microwave imaging technique for detecting anomalies, there is a fundamental limit that it requires a switching device to be used which permits an antenna to transmit and receive signals simultaneously. In this paper, we design a MUSIC-type imaging function using scattering parameter data to find small anomaly and explore its mathematical structure. Considering the investigated structure, we confirm that the imaging performance is highly dependent on the antenna configurations and suggest an arrangement of antennas to enhance imaging performance. Simulation results with synthetic data are displayed to support theoretical result.
\end{abstract}

\begin{keyword}
Microwave imaging \sep MUltiple SIgnal Classification (MUSIC) \sep Scattering parameter data \sep Simulation results \sep Switching device


\end{keyword}

\end{frontmatter}




\section{Introduction}\label{sec:1}
Retrieving of unknown targets from scattered field or scattering parameter data in multi-static measurement environment has been of great interest for many years in both inverse scattering problem and microwave imaging due to its various potential applications such as non-destructive evaluation \cite{CMPD,FMGD,TLRD}, biomedical imaging \cite{A2,CJGT,ML2}, radar imaging \cite{JBRBTFC,LSL,SSA}. To this ends, various reconstruction algorithms have been proposed and they are classified as iterative (or quantitative) and non-iterative (or qualitative) methods. We refer to remarkable researches \cite{AGJKLSW,BCS,C7,CK,DL,SH,Z2} for the description of various iterative and non-iterative techniques in both inverse scattering problems and microwave imaging.

MUltiple SIgnal Classification (MUSIC) algorithm is a well-known,  promising qualitative method in both inverse scattering problem and microwave imaging. From the pioneering research by Devaney \cite{D}, MUSIC was successfully applied for identifying point-like scatterers in multi-static measurement data. After the successful application, it has been applied various inverse scattering problems such as identification of two- and three-dimensional small targets \cite{AILP,ZC}, fast imaging of crack-like defects \cite{P-MUSIC1,P-MUSIC8}, detection of internal corrosion \cite{AKKLV}, imaging of buried targets \cite{AIL1,G2}, limited-aperture and limited-view problems \cite{AJP,P-MUSIC7} as well as microwave imaging such as damage assessment for intricate structures \cite{BYG}, through-the-wall radar imaging \cite{CPPPDT}, time-reversal imaging \cite{MGS}, radar imaging \cite{OBP}, anomaly detection from scattering matrix \cite{P-MUSIC6}, breast cancer detection \cite{RSCGBA}, and synthetic aperture radar (SAR) imaging \cite{ZZK}.

Let us emphasize that for a successful application of MUSIC in microwave imaging, the generated scattering matrix whose elements are scattering parameters must be symmetric. If the matrix is non-symmetric, it is impossible to apply the MUSIC directly. Due to this reason, most of researches has been performed with the strong assumption that the antenna can transmit and receive signals, i.e., switching devices can be used. However, the issue of microwave imaging without a switching device may also exist as a result of limited system configurations.

In this paper, we consider the application of MUSIC algorithm for identifying small anomaly from measured scattering parameter data at a fixed frequency when the switching device is not available. To this end, we assume that the scattering matrix is non-symmetric, design a new imaging function of MUSIC based on the structure of left- and right-singular vectors associated with nonzero singular values of the scattering matrix, and show that it can be represented by an infinite series of Bessel functions and antenna arrangements. This is based on the application of the Born approximation and integral equation formula for the scattering parameter in the existence of a small anomaly. Based on the theoretical result, we confirm that the imaging performance of proposed MUSIC is significantly dependent on the total number and location of transmitting and receiving antennas. To support the theoretical result, various simulation results with synthetic and experimental data are exhibited. It is worth to mention that if the total number of transmitting and receiving antennas is sufficiently large, it is possible to obtain a good result via MUSIC. However, in real-world microwave imaging applications, increasing total number of antennas somehow difficult. Instead of increasing total number of antennas, we investigate a method of arranging the antennas for obtaining a better result based on the elimination of unnecessary extra terms  that disturb the detection in the imaging function.

The rest of this paper is organized as follows. In Section \ref{sec:2}, we briefly survey the basic concept of scattering parameters and MUSIC algorithm without switching devices. In Section \ref{sec:3}, we prove that the imaging function can be expressed as an infinite series of Bessel functions and the antenna arrangement. Sections \ref{sec:4} and \ref{sec:5} present a set of simulation results with various antenna arrangements from synthetic and experimental data, respectively. Section \ref{sec:6} provides short conclusion and perspective of this study.

\section{Scattering Parameter and MUSIC Algorithm}\label{sec:2}
This section provides an overview of the scattering parameters and introduces MUSIC-type imaging function. Throughout this paper, we denote $\Sigma$ be a circular shaped anomaly to be imaged with radius $\alpha$ and center $\mr_\star$, and it is surrounded by a set of transmit and receive antennas $\mathcal{D}_{s}$ with location $\md_s$, $s=1,2,\cdots,S$. We let $\mathcal{B}_m$ denotes a transmitter antenna located at $\mb_m$, $m=1,2,\cdots,M$, that illuminates the $\Sigma$ with the microwave signal at a time and $\mathcal{A}_n$ denotes a receiver antenna located at $\ma_n$, $n=1,2,\cdots,N$, that collects the scattered signal by $\Sigma$. Let us denote $\mA=\set{\mathcal{A}_n:n=1,2,\cdots,N}$ and $\mB=\set{\mathcal{B}_m:m=1,2,\cdots,M}$ be the set of antennas for receiving and transmitting signals, respectively. Since there is no switching derivation through which the antenna transmits and receives signals, we assume that $\mA\cap\mB=\varnothing$ and $\mA\cup\mB\subseteq\set{\mathcal{D}_s:s=1,2,\cdots,S}$.

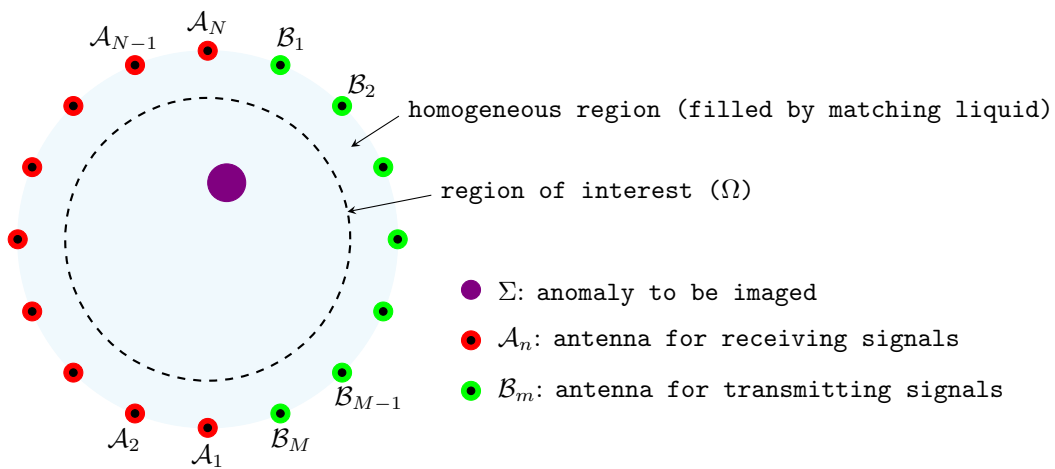
\begin{figure}[h]
\begin{center}
\begin{tikzpicture}[scale=2.5]
\draw[cyan!5,fill=cyan!5] (0,0) circle (1cm);
\draw[stealth-] ({0.9*cos(33)},{0.9*sin(33)}) -- ({1.2*cos(33)},{1.2*sin(33)});
\node[right] at ({1.2*cos(33)},{1.25*sin(33)}) {\texttt{homogeneous region (filled by matching liquid)}};

\draw[dashed,thick] (0,0) circle (0.75cm);
\draw[stealth-] ({0.75*cos(11.25)},{0.75*sin(11.25)}) -- ({1.2*cos(11.25)},{1.2*sin(11.25)});
\node[right] at ({1.2*cos(11.25)},{1.3*sin(11.25)}) {\texttt{region of interest ($\Omega$)}};

\foreach \alpha in {90,112.5,...,270}
{\draw[red,fill=red] ({cos(\alpha)},{sin(\alpha)}) circle (0.05cm);
\draw[black,fill=black] ({cos(\alpha)},{sin(\alpha)}) circle (0.02cm);}
\node at ({1.15*cos(270)},{1.15*sin(270)}) {$\mathcal{A}_{1}$};
\node at ({1.15*cos(247.5)},{1.15*sin(247.5)}) {$\mathcal{A}_{2}$};
\node at ({1.15*cos(112.5)},{1.15*sin(112.5)}) {$\mathcal{A}_{N-1}$};
\node at ({1.15*cos(90)},{1.15*sin(90)}) {$\mathcal{A}_{N}$};
\draw[red,fill=red] ({1.5*cos(337.5)},{1.4*sin(337.5)}) circle (0.05cm);
\draw[black,fill=black] ({1.5*cos(337.5)},{1.4*sin(337.5)}) circle (0.02cm);
\node[right] at ({1.6*cos(337.5)},{1.4*sin(337.5)}) {$\mathcal{A}_n$: \texttt{antenna for receiving signals}};

\foreach \beta in {-67.5,-45,...,67.5}
{\draw[green,fill=green] ({cos(\beta)},{sin(\beta)}) circle (0.05cm);
\draw[black,fill=black] ({cos(\beta)},{sin(\beta)}) circle (0.02cm);}
\node at ({1.15*cos(67.5)},{1.15*sin(67.5)}) {$\mathcal{B}_{1}$};
\node at ({1.15*cos(45)},{1.15*sin(45)}) {$\mathcal{B}_{2}$};
\node at ({1.2*cos(315)},{1.2*sin(315)}) {$\mathcal{B}_{M-1}$};
\node at ({1.15*cos(292.5)},{1.15*sin(292.5)}) {$\mathcal{B}_{M}$};
\draw[green,fill=green] ({1.5*cos(337.5)},{2.1*sin(337.5)}) circle (0.05cm);
\draw[black,fill=black] ({1.5*cos(337.5)},{2.1*sin(337.5)}) circle (0.02cm);
\node[right] at ({1.6*cos(337.5)},{2.1*sin(337.5)}) {$\mathcal{B}_m$: \texttt{antenna for transmitting signals}};

\draw[violet,fill=violet] (0.1,0.3) circle (0.1cm);

\draw[violet,fill=violet] ({1.5*cos(337.5)},{0.7*sin(337.5)}) circle (0.05cm);
\node[right] at ({1.6*cos(337.5)},{0.75*sin(337.5)}) {$\Sigma$: \texttt{anomaly to be imaged}};

\end{tikzpicture}
\caption{\label{Description}Illustration of configuration setup.}
\end{center}
\end{figure}

Throughout this paper, we assume that the microwave machine is filled by matching liquid, whose values of permittivity and conductivity at given angular frequency $\omega=2\pi f$ are $\epsb$ and $\sigmab$, respectively. Analogously, we set the values of permittivity and conductivity of $\Sigma$ at $\omega$ are $\eps_\star$ and $\sigma_\star$, respectively. Using this format, we introduce the piecewise constant permittivity and conductivity as
\[\eps(\mr)=\left\{\begin{array}{rcl}
\eps_\star & \text{for} & \mr\in\Sigma\\
\epsb & \text{for} & \mr\notin\Sigma
\end{array}
\right.
\quad\text{and}\quad
\sigma(\mr)=\left\{\begin{array}{rcl}
\sigma_\star & \text{for} & \mr\in\Sigma\\
\sigmab & \text{for} & \mr\notin\Sigma,
\end{array}
\right.\]
respectively. With this, we define a background wavenumber $k$ that satisfies $k=\omega\sqrt{\mub(\epsb+i\sigmab/\omega)}$, where $\mub$ represents magnetic permeability $\mub=4\pi\times\SI{e-7}{\henry/\meter}$. Let $S(n,m)$ be the $S-$parameter (or, scattering parameter), which is the measurement data through the microwave machine that is defined as the following ratio
\[S(n,m)=\frac{\mathrm{V}_n^-}{\mathrm{V}_m^+},\]
where
$\mathrm{V}_m^+$ and $\mathrm{V}_n^-$ denote the input voltage (or incident wave) at the $\mathcal{B}_m$ and the output voltage (or reflected wave) at the $\mathcal{A}_n$, respectively. We let $S_{\tot}(n,m)$ and $S_{\inc}(n,m)$ be measured scattering parameters in the presence and absence of $\Sigma$, respectively. Then, the scattered field $S-$parameter $\mathrm{S}_{\scat}(n,m)$  can be represented as follows (see \cite{HSM2} for instance):
\begin{equation}\label{Sparameter}
\mathrm{S}_{\scat}(n,m)=S_{\tot}(n,m)-S_{\inc}(n,m)=\frac{ik^2}{4\omega\mu_{\mathrm{b}}}\int_{\Sigma}\left(\frac{\eps(\mr)-\epsb}{\epsb}+i\frac{\sigma(\mr)-\sigmab}{\omega\epsb}\right)\mE_{\inc}(\mr,\mb_m)\cdot\mE_{\tot}(\ma_n,\mr)d\mr,
\end{equation}
where $\mE_{\inc}(\mr,\mb_m)$ denotes the incident electric field due to the point current density $\mathbf{J}$ at $\mb_m$ that satisfies
\[\nabla\times\mE_{\inc}(\mr,\mb_m)=-i\omega\mub\mH_{\inc}(\mr,\mb_m)\quad\text{and}\quad\nabla\times\mH_{\inc}(\mr,\mb_m)=(\sigmab+i\omega\epsb)\mE_{\inc}(\mr,\mb_m)\]
and $\mE_{\tot}(\ma_n,\mr)$ is corresponding total electric field in the presence of $\Sigma$ that satisfies
\[\nabla\times\mE_{\tot}(\ma_n,\mr)=-i\omega\mub\mH_{\tot}(\ma_n,\mr)\quad\text{and}\quad\nabla\times\mH_{\tot}(\ma_n,\mr)=(\sigma(\mr)+i\omega\eps(\mr))\mE_{\tot}(\ma_n,\mr)\]
with the transmission condition on the boundary of $\Sigma$. Here, we assume a time-harmonic dependence $e^{-i\omega t}$ and $\mH$ denotes the magnetic field.

In order to design MUSIC-type imaging algorithm, we consider the following scattering matrix $\mathbb{K}$, whose elements are $\mathrm{S}_{\scat}(n,m)$;
\begin{equation}\label{ScatteringMatrix}
\mathbb{K}=\begin{bmatrix}
S_{\scat}(1,1) & S_{\scat}(1,2) & \cdots & S_{\scat}(1,M) \\
S_{\scat}(2,1) & S_{\scat}(2,2) & \cdots & S_{\scat}(2,M) \\
\vdots & \vdots & \ddots & \vdots \\
S_{\scat}(N,1) & S_{\scat}(N,2) & \cdots & S_{\scat}(N,M)
\end{bmatrix}.
\end{equation}
It is worth emphasizing that the matrix $\mathbb{K}$ of \eqref{ScatteringMatrix} is neither symmetric nor Hermitian (because $N\ne M$). Due to this reason, the typical MUSIC-type imaging algorithm introduced in \cite{AILP,C,D,P-MUSIC6} cannot be applied for identifying $\mr_\star\in\Sigma$. For designing an alternative imaging function of MUSIC, we consider the structure of $\mathbb{K}$. Notice that similar to the recent works \cite{P-MUSIC6,HSM2,P-SUB11,P-SUB16,SSKLJ}, the height of microwave machine can be said to be long enough. Hence, based on the mathematical treatment of the scattering of time-harmonic electromagnetic waves from thin infinitely long cylindrical obstacles, only the $z-$component of the incident and total fields can be used. This means that the considered problem becomes a two-dimensional one and correspondingly, we can obtain a cross-section image of $\Sigma$. Then, by letting $\E_{\inc}$ and $\E_{\tot}$ respectively be the $z-$component of the  $\mE_{\inc}$ and $\mE_{\tot}$, $\mathrm{S}_{\scat}(n,m)$ of \eqref{Sparameter} becomes
\begin{equation}\label{Sparameter1}
\mathrm{S}_{\scat}(n,m)=\frac{ik^2}{4\omega\mub}\int_{\Sigma}\mathcal{O}(\mr)\E_{\inc}(\mr,\mb_m)\E_{\tot}(\ma_n,\mr)d\mr,\quad\text{where}\quad \mathcal{O}(\mr)=\frac{\eps(\mr)-\epsb}{\epsb}+i\frac{\sigma(\mr)-\sigmab}{\omega\epsb}.
\end{equation}
Now, assume that $\alpha$ satisfies
\begin{equation}\label{SmallAnomaly}
4\alpha\left(\sqrt{\frac{\eps_\star}{\epsb}}-1\right)<\lambda,
\end{equation}
where $\lambda$ denotes the wavelength. Then, based on \cite{SKL}, it is possible to apply the Born approximation to \eqref{Sparameter1}. With this, based on the reciprocity property of incident field, (\ref{Sparameter1}) can be approximated as
\begin{equation}\label{ScatteringParameter}
S_{\scat}(n,m)\approx\frac{ik^2}{4\omega\mub}\int_{\Sigma}\mathcal{O}(\mr_\star)\E_{\inc}(\mr,\mb_m)\E_{\inc}(\ma_n,\mr)d\mr\approx\frac{ik^2\alpha^2\pi}{4\omega\mub}\mathcal{O}(\mr_\star)\E_{\inc}(\ma_n,\mr_\star)\E_{\inc}(\mb_m,\mr_\star)
\end{equation}
and correspondingly, $\mathbb{K}$ can be approximated as
\begin{align}
\begin{aligned}\label{StructureMatrix}
\mathbb{K}&\approx\frac{ik^2\alpha^2\pi}{4\omega\mub}\mathcal{O}(\mr_\star)\begin{bmatrix}
\E_{\inc}(\ma_1,\mr_\star)\E_{\inc}(\mb_1,\mr_\star) & \E_{\inc}(\ma_1,\mr_\star)\E_{\inc}(\mb_2,\mr_\star) & \cdots & \E_{\inc}(\ma_1,\mr_\star)\E_{\inc}(\mb_M,\mr_\star) \\
\E_{\inc}(\ma_2,\mr_\star)\E_{\inc}(\mb_1,\mr_\star) & \E_{\inc}(\ma_2,\mr_\star)\E_{\inc}(\mb_2,\mr_\star) & \cdots & \E_{\inc}(\ma_2,\mr_\star)\E_{\inc}(\mb_M,\mr_\star) \\
\vdots & \vdots & \ddots & \vdots \\
\E_{\inc}(\ma_N,\mr_\star)\E_{\inc}(\mb_1,\mr_\star) & \E_{\inc}(\ma_N,\mr_\star)E_{\inc}(\mb_2,\mr_\star) & \cdots & \E_{\inc}(\ma_N,\mr_\star)\E_{\inc}(\mb_M,\mr_\star)
\end{bmatrix}\\
&=\frac{ik^2\alpha^2\pi}{4\omega\mub}\mathcal{O}(\mr_\star)
\begin{bmatrix}
\E_{\inc}(\ma_1,\mr_\star)\\
\E_{\inc}(\ma_2,\mr_\star)\\
\vdots\\
\E_{\inc}(\ma_N,\mr_\star)
\end{bmatrix}
\Big[\E_{\inc}(\mb_1,\mr_\star)\quad\E_{\inc}(\mb_2,\mr_\star)\quad\cdots\quad\E_{\inc}(\mb_M,\mr_\star)
\Big].
\end{aligned}
\end{align}
Since there exists a small anomaly, only the first singular value of $\mathbb{K}$ is nonzero so that the singular value decomposition (SVD) of $\mathbb{K}$ can be written as
\begin{equation}\label{SVD}
\mathbb{K}=\mathbb{USV}^*\approx\tau_1\mU_1\mV_1^*,
\end{equation}
where $\tau_1$ is the first singular value of $\mathbb{K}$, $\mU_1$ and $\mV_1$ denote the left- and right-singular vector of $\mathbb{K}$, respectively. Since $\set{\mU_1}$ and $\set{\mV_1}$ respectively span the signal subspaces of $\mathbb{KK}^*$ and $\mathbb{K}^*\mathbb{K}$, we can define orthonormal projection operators onto the different noise subspaces such that
\[\mathbb{F}_{\noise}=\mathbb{I}_N-\mU_1\mU_1^*\quad\text{and}\quad\mathbb{G}_{\noise}=\mathbb{I}_M-\mV_1\mV_1^*,\]
where $\mathbb{I}_M$ denotes the identity matrix of size $M$.

Now, based on the structures \eqref{StructureMatrix} and \eqref{SVD}, we define two unit vectors for each $\mr\in\Omega$ such that
\begin{equation}\label{TestVector}
\mf(\mr)=\left(\sum_{m=1}^M|\E(\mb_m,\mr)|^2\right)^{-1/2}
\begin{bmatrix}
\E(\ma_1,\mr)\\
\E(\ma_2,\mr)\\
\vdots\\
\E(\ma_N,\mr)
\end{bmatrix}
\quad\text{and}\quad
\mg(\mr)=\left(\sum_{n=1}^N|\overline{\E(\mb_m,\mr)}|^2\right)^{-1/2}
\begin{bmatrix}
\overline{\E(\mb_1,\mr)}\\
\overline{\E(\mb_2,\mr)}\\
\vdots\\\overline{\E(\mb_M,\mr)}
\end{bmatrix}.
\end{equation}
Then, based on \cite{P-MUSIC6,C,AK2}, it can be concluded that
\[\mf(\mr)\in\text{Range}(\mathbb{KK}^*)\quad\text{and}\quad\mg(\mr)\in\text{Range}(\mathbb{K^*}\mathbb{K})\quad\text{if and only if}\quad \mr=\mr_\star\in\Sigma.\]
This implies $|\mathbb{F}_{\noise}(\mf(\mr_\star))|=0$ and $|\mathbb{G}_{\noise}(\mg(\mr_\star))|=0$, and correspondingly, the following imaging function of MUSIC can be introduced: for $\mr\in\Omega$,
\[\mathfrak{F}(\mr)=\frac12\left(\frac{1}{|\mathbb{F}_{\noise}(\mf(\mr))|}+\frac{1}{|\mathbb{G}_{\noise}(\mg(\mr))|}\right).\]
Then, the map of $\mathfrak{F}(\mr)$ is expected to contain a peak of large magnitudes (theoretically, $+\infty$) when $\mr=\mr_\star\in\Sigma$. Thus, it will be possible to identify $\mr_\star$ via designed MUSIC. However, the imaging result is highly dependent on the arrangement of $\mathcal{A}_n$ and $\mathcal{B}_m$ but theoretical reason of this phenomena cannot be explained satisfactorily yet.

\section{Theoretical Result and Some Properties of Imaging Function}\label{sec:3}
To explain the feasibility of $\mathfrak{F}(\mr)$ and dependence of the antenna setting, we prove that the imaging function is expressed by an infinite series of Bessel functions and antenna arrangement.

\begin{theorem}[Structure of the Imaging Function for Single Anomaly]\label{StructureImaging}\label{Theorem}
Let $\vt_n=\ma_n/|\ma_n|=\ma_n/R=(\cos\theta_n,\sin\theta_n)$, $\vv_m=\mb_m/|\mb_m|=\mb_m/R=(\cos\vartheta_m,\sin\vartheta_m)$, and $\mr-\mr_\star=|\mr-\mr_\star|(\cos\phi,\sin\phi)$. If $M,N>1$ and $|\ma_n-\mr|,|\mb_m-\mr|\gg1/4|k|$ for $n=1,2,\cdots,N$ and $m=1,2,\cdots,M$, then $\mathfrak{F}(\mr)$ can be represented as follows: for $\mr\in\Omega$,
\begin{equation}\label{StructureImagingFunction}
\mathfrak{F}(\mr)\approx\frac{1}{2}\Big(\mathfrak{F}_{\rx}(\mr)+\mathfrak{F}_{\tx}(\mr)\Big),
\end{equation}
where
\[\mathfrak{F}_{\rx}(\mr)=\left(1-\bigg|J_0(k|\mr-\mr_\star|)+\frac{\mathcal{E}_{\rx}(\mr,\ma_n)}{N}\bigg|^2\right)^{-1/2}\quad\text{and}\quad\mathfrak{F}_{\tx}(\mr)=\left(1-\bigg|J_0(k|\mr-\mr_\star|)+\frac{\mathcal{E}_{\tx}(\mr,\mb_m)}{M}\bigg|^2\right)^{-1/2}.\]
Here, $J_p$ denotes the Bessel function of integer order $p$ of the first kind and
\begin{align*}
\mathcal{E}_{\rx}(\mr,\ma_n)&=\sum_{m=1}^{M}\sum_{p=-\infty,p\ne0}^{\infty}i^pJ_p(k|\mr-\mr_\star|)e^{ip(\theta_n-\phi)}\\
\mathcal{E}_{\tx}(\mr,\mb_m)&=\sum_{n=1}^{N}\sum_{q=-\infty,q\ne0}^{\infty}i^qJ_q(k|\mr-\mr_\star|)e^{iq(\vartheta_m-\phi)}.
\end{align*}
\end{theorem}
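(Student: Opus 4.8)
The plan is to exploit the rank-one structure \eqref{StructureMatrix}--\eqref{SVD} to write the left and right singular vectors of $\mathbb{K}$ explicitly, to reduce each of $|\mathbb{F}_{\noise}\mf(\mr)|$ and $|\mathbb{G}_{\noise}\mg(\mr)|$ to the modulus of a single scalar inner product, and then to evaluate those inner products by replacing the two-dimensional background field by its far-field (Hankel) asymptotics and invoking the Jacobi--Anger expansion.

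First I would record that, by \eqref{StructureMatrix}, $\mathbb{K}\approx\gamma\,\mathbf{p}\mathbf{q}^{T}$ with $\gamma=\frac{ik^{2}\alpha^{2}\pi}{4\omega\mub}\mathcal{O}(\mr_\star)$, $\mathbf{p}=[\E(\ma_1,\mr_\star),\dots,\E(\ma_N,\mr_\star)]^{T}$ and $\mathbf{q}=[\E(\mb_1,\mr_\star),\dots,\E(\mb_M,\mr_\star)]^{T}$; hence $\mathbb{K}$ has rank one and, up to a unimodular constant, its unit left and right singular vectors are $\mU_1=\mathbf{p}/|\mathbf{p}|$ and $\mV_1=\overline{\mathbf{q}}/|\mathbf{q}|$. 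Since $\mU_1\mU_1^{*}$ and $\mV_1\mV_1^{*}$ are rank-one orthogonal projections, $\mf(\mr),\mg(\mr)$ are unit vectors (this is the point of the normalizations in \eqref{TestVector}), and $\mU_1(\mU_1^{*}\mathbf{x})$ has the same length as the scalar $\mU_1^{*}\mathbf{x}$, the identity $|(\mathbb{I}-P)\mathbf{x}|^{2}=|\mathbf{x}|^{2}-|P\mathbf{x}|^{2}$ for an orthogonal projection $P$ gives
\[
|\mathbb{F}_{\noise}\mf(\mr)|^{2}=1-|\mU_1^{*}\mf(\mr)|^{2},\qquad
|\mathbb{G}_{\noise}\mg(\mr)|^{2}=1-|\mV_1^{*}\mg(\mr)|^{2},
\]
so the whole statement reduces to evaluating the scalars
\[
\mU_1^{*}\mf(\mr)=\frac{\sum_{n=1}^{N}\overline{\E(\ma_n,\mr_\star)}\,\E(\ma_n,\mr)}{|\mathbf{p}|\,\big(\sum_{n=1}^{N}|\E(\ma_n,\mr)|^{2}\big)^{1/2}},\qquad
\mV_1^{*}\mg(\mr)=\frac{\sum_{m=1}^{M}\E(\mb_m,\mr_\star)\,\overline{\E(\mb_m,\mr)}}{|\mathbf{q}|\,\big(\sum_{m=1}^{M}|\E(\mb_m,\mr)|^{2}\big)^{1/2}}.
\]

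The next step is the far-field reduction. Writing the two-dimensional background field as $\E(\ma_n,\mr)=c_{0}H_{0}^{(1)}(k|\ma_n-\mr|)$, the hypothesis $|\ma_n-\mr|\gg1/(4|k|)$ (so that $|k|\,|\ma_n-\mr|\gg1$) licenses the replacement of $H_{0}^{(1)}(k\rho)$ by $\sqrt{2/(\pi k\rho)}\,e^{i(k\rho-\pi/4)}$, while $|\ma_n|=R$ gives $|\ma_n-\mr|=R-\vt_n\cdot\mr+O(R^{-1})$; hence $\E(\ma_n,\mr)\approx C_{R}\,e^{-ik\vt_n\cdot\mr}$ with $C_{R}=c_{0}\sqrt{2/(\pi kR)}\,e^{i(kR-\pi/4)}$ not depending on $n$ to leading order, and likewise $\E(\mb_m,\mr)\approx C_{R}\,e^{-ik\vv_m\cdot\mr}$. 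Then $|\mathbf{p}|^{2}\approx N|C_{R}|^{2}$, $\sum_{n}|\E(\ma_n,\mr)|^{2}\approx N|C_{R}|^{2}$, and similarly with $M$ for the transmitters, so the amplitude factors cancel and
\[
\mU_1^{*}\mf(\mr)\approx\frac{1}{N}\sum_{n=1}^{N}e^{-ik\vt_n\cdot(\mr-\mr_\star)},\qquad
\mV_1^{*}\mg(\mr)\approx\frac{1}{M}\sum_{m=1}^{M}e^{ik\vv_m\cdot(\mr-\mr_\star)}.
\]
Using $\vt_n\cdot(\mr-\mr_\star)=|\mr-\mr_\star|\cos(\theta_n-\phi)$, $\vv_m\cdot(\mr-\mr_\star)=|\mr-\mr_\star|\cos(\vartheta_m-\phi)$ and the Jacobi--Anger identity $e^{ix\cos\psi}=\sum_{p\in\mathbb{Z}}i^{p}J_{p}(x)e^{ip\psi}$, the $p=0$ (resp. $q=0$) mode of each directional average contributes $J_{0}(k|\mr-\mr_\star|)$, while the $p\ne0$ (resp. $q\ne0$) modes, averaged over the receiver directions $\theta_1,\dots,\theta_N$ (resp. transmitter directions $\vartheta_1,\dots,\vartheta_M$), assemble into the correction terms $\mathcal{E}_{\rx}(\mr,\ma_n)/N$ and $\mathcal{E}_{\tx}(\mr,\mb_m)/M$ of the statement (up to an overall conjugation, which is harmless since only $|\cdot|$ enters below). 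Substituting into the two displayed formulas for $|\mathbb{F}_{\noise}\mf|^{2}$ and $|\mathbb{G}_{\noise}\mg|^{2}$, taking reciprocals of the square roots, and averaging gives \eqref{StructureImagingFunction}.

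The step I expect to be the main obstacle is this far-field reduction: one must verify that $H_{0}^{(1)}(k|\ma_n-\mr|)\approx C_{R}\,e^{-ik\vt_n\cdot\mr}$ holds \emph{uniformly} in $n$ and in $\mr\in\Omega$, with an amplitude $C_{R}$ that is genuinely $n$-independent to leading order, so that it truly cancels in both the numerators and the normalizing sums; the hypotheses $M,N>1$ (needed for $\mathbb{F}_{\noise}$ and $\mathbb{G}_{\noise}$ to be nontrivial) and the smallness condition \eqref{SmallAnomaly} (which underpins the rank-one Born structure) enter precisely here. A secondary nuisance is that $k$ is complex in general, so that $J_{p}(k|\mr-\mr_\star|)$ is complex-valued and the conjugations above must be tracked with some care (or one specializes to a lossless background); since $\mathfrak{F}$ depends only on the moduli $|\mathbb{F}_{\noise}\mf|$ and $|\mathbb{G}_{\noise}\mg|$, an overall conjugation of $\mathcal{E}_{\rx}$ and $\mathcal{E}_{\tx}$ does not affect the final formula.
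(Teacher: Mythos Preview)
Your argument is correct and uses the same three ingredients as the paper (rank-one structure of $\mathbb{K}$, large-argument Hankel asymptotics, Jacobi--Anger), but the packaging is different and somewhat more streamlined. You read off $\mU_1=\mathbf{p}/|\mathbf{p}|$, $\mV_1=\overline{\mathbf{q}}/|\mathbf{q}|$ directly from the rank-one factorization and then use the projection identity $|(\mathbb{I}-P)\mathbf{x}|^{2}=|\mathbf{x}|^{2}-|P\mathbf{x}|^{2}$ for unit $\mathbf{x}$, which reduces everything at once to the single scalar $\mU_1^{*}\mf(\mr)$ (resp.\ $\mV_1^{*}\mg(\mr)$). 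The paper instead writes $\mU_1\mU_1^{*}=|\tau_1|^{-2}\mathbb{KK}^{*}$ as an explicit $N\times N$ matrix with an undetermined constant $C=\bigl|\alpha^{2}\mathcal{O}(\mr_\star)k/(32R\omega\mub\tau_1)\bigr|^{2}M$, multiplies out $(\mathbb{I}_N-\mU_1\mU_1^{*})\mf(\mr)$ entrywise, expands $|\mathbb{F}_{\noise}\mf(\mr)|^{2}$ as $1-\Phi-\overline{\Phi}+|\Psi|^{2}$, applies Jacobi--Anger to each piece, and only at the very end fixes $CN=1$ by imposing $|\mathbb{F}_{\noise}\mf(\mr_\star)|=0$. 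Your approach buys you this normalization for free, while the paper's more explicit computation makes the cancellation visible term by term. The sign discrepancy ($H_0^{(1)}$ vs.\ the paper's $H_0^{(2)}$, hence $e^{-ik\vt_n\cdot\mr}$ vs.\ $e^{ik\vt_n\cdot\mr}$) is, as you note, absorbed by an overall conjugation inside $|\cdot|^{2}$ and does not affect the result.
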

\begin{proof}
Since $|\ma_n-\mr|,|\mb_m-\mr|\gg1/4|k|$, the following asymptotic forms hold
\begin{align}
\begin{aligned}\label{AsymptoticField}
\E(\ma_n,\mr)&=\frac{i}{4}H_0^{(2)}(k|\ma_n-\mr|)\approx\frac{(-1+i)e^{-ik|\ma_n|}}{4\sqrt{k\pi|\ma_n|}}e^{ik\vt_n\cdot\mr}=\frac{(-1+i)e^{-ikR}}{4\sqrt{k\pi R}}e^{ik\vt_n\cdot\mr},\\
\E(\mb_m,\mr)&=\frac{i}{4}H_0^{(2)}(k|\mb_m-\mr|)\approx\frac{(-1+i)e^{-ik|\mb_m|}}{4\sqrt{k\pi|\mb_m|}}e^{ik\vv_m\cdot\mr}=\frac{(-1+i)e^{-ikR}}{4\sqrt{k\pi R}}e^{ik\vv_m\cdot\mr}.
\end{aligned}
\end{align}
Thus, $\mf(\mr)$ of \eqref{TestVector} and $\mathbb{K}$ of \eqref{ScatteringMatrix} can be written as
\[\mf(\mr)=\frac{1}{\sqrt{M}}
\begin{bmatrix}
e^{ik\vt_1\cdot\mr}\\
e^{ik\vt_2\cdot\mr}\\
\vdots\\
e^{ik\vt_M\cdot\mr}
\end{bmatrix}\quad\text{and}\quad\mathbb{K}\approx\frac{\alpha^2ke^{-2ikR}}{32R\omega\mub}\mathcal{O}(\mr_\star)\begin{bmatrix}
e^{ik(\vt_1+\vv_1)\cdot\mr_\star} & e^{ik(\vt_1+\vv_2)\cdot\mr_\star} & \cdots & e^{ik(\vt_1+\vv_M)\cdot\mr_\star} \\
e^{ik(\vt_2+\vv_1)\cdot\mr_\star} & e^{ik(\vt_2+\vv_2)\cdot\mr_\star} & \cdots & e^{ik(\vt_2+\vv_M)\cdot\mr_\star} \\
\vdots & \vdots & \ddots & \vdots \\
e^{ik(\vt_N+\vv_1)\cdot\mr_\star} & e^{ik(\vt_N+\vv_2)\cdot\mr_\star} & \cdots & e^{ik(\vt_N+\vv_M)\cdot\mr_\star}
\end{bmatrix},\]
respectively. Since, $|\tau_1|^2\mU_1\mU_1^*=\mathbb{KK}^*$, we can evaluate
\[\mU_1\mU_1^*=\frac{1}{|\tau_1|^2}\mathbb{KK}^*=\left|\frac{\alpha^2\mathcal{O}(\mr_\star)k}{32R\omega\mub\tau_1}\right|^2M\begin{bmatrix}
e^{ik(\vt_1-\vt_1)\cdot\mr_\star} & e^{ik(\vt_1-\vt_2)\cdot\mr_\star} & \cdots & e^{ik(\vt_1-\vt_N)\cdot\mr_\star} \\
e^{ik(\vt_2-\vt_1)\cdot\mr_\star} & e^{ik(\vt_2-\vt_2)\cdot\mr_\star} & \cdots & e^{ik(\vt_2-\vt_N)\cdot\mr_\star} \\
\vdots & \vdots & \ddots & \vdots \\
e^{ik(\vt_N-\vt_1)\cdot\mr_\star} & e^{ik(\vt_N-\vt_2)\cdot\mr_\star} & \cdots & e^{ik(\vt_N-\vt_N)\cdot\mr_\star}
\end{bmatrix}\]
and correspondingly, by letting $C=\left|\frac{\alpha^2\mathcal{O}(\mr_\star)k}{32R\omega\mub\tau_1}\right|^2M\in\mathbb{R}$,
\begin{align*}
\mathbb{F}_{\noise}(\mf(\mr))&=(\mathbb{I}_N-\mU_1\mU_1^*)\mf(\mr)\\
&=\frac{1}{\sqrt{N}}
\begin{bmatrix}
e^{ik\vt_1\cdot\mr}\\
e^{ik\vt_2\cdot\mr}\\
\vdots\\
e^{ik\vt_N\cdot\mr}
\end{bmatrix}-\frac{C}{\sqrt{N}}\begin{bmatrix}
e^{ik(\vt_1-\vt_1)\cdot\mr_\star} & e^{ik(\vt_1-\vt_2)\cdot\mr_\star} & \cdots & e^{ik(\vt_1-\vt_N)\cdot\mr_\star} \\
e^{ik(\vt_2-\vt_1)\cdot\mr_\star} & e^{ik(\vt_2-\vt_2)\cdot\mr_\star} & \cdots & e^{ik(\vt_2-\vt_N)\cdot\mr_\star} \\
\vdots & \vdots & \ddots & \vdots \\
e^{ik(\vt_N-\vt_1)\cdot\mr_\star} & e^{ik(\vt_N-\vt_2)\cdot\mr_\star} & \cdots & e^{ik(\vt_N-\vt_N)\cdot\mr_\star}
\end{bmatrix}
\begin{bmatrix}
e^{ik\vt_1\cdot\mr}\\
e^{ik\vt_2\cdot\mr}\\
\vdots\\
e^{ik\vt_N\cdot\mr}
\end{bmatrix}\\
&=\frac{1}{\sqrt{N}}
\begin{bmatrix}
\displaystyle e^{ik\vt_1\cdot\mr}-Ce^{ik\vt_1\cdot\mr_\star}\sum_{n=1}^{N}e^{ik\vt_n(\mr-\mr_\star)}\\
\displaystyle e^{ik\vt_2\cdot\mr}-Ce^{ik\vt_2\cdot\mr_\star}\sum_{n=1}^{N}e^{ik\vt_n(\mr-\mr_\star)}\\
\vdots\\
\displaystyle e^{ik\vt_N\cdot\mr}-Ce^{ik\vt_N\cdot\mr_\star}\sum_{n=1}^{N}e^{ik\vt_n(\mr-\mr_\star)}
\end{bmatrix}.
\end{align*}
With this, we can write
\begin{align}
\begin{aligned}\label{FactorF}
|\mathbb{F}_{\noise}(\mf(\mr))|^2&=\mathbb{F}_{\noise}(\mf(\mr))\cdot\overline{\mathbb{F}_{\noise}(\mf(\mr))}\\
&=\frac{1}{N}
\begin{bmatrix}
\displaystyle e^{ik\vt_1\cdot\mr}-Ce^{ik\vt_1\cdot\mr_\star}\sum_{n=1}^{N}e^{ik\vt_n(\mr-\mr_\star)}\\
\displaystyle e^{ik\vt_2\cdot\mr}-Ce^{ik\vt_2\cdot\mr_\star}\sum_{n=1}^{N}e^{ik\vt_n(\mr-\mr_\star)}\\
\vdots\\
\displaystyle e^{ik\vt_N\cdot\mr}-Ce^{ik\vt_N\cdot\mr_\star}\sum_{n=1}^{N}e^{ik\vt_n(\mr-\mr_\star)}
\end{bmatrix}\cdot
\begin{bmatrix}
\displaystyle e^{-ik\vt_1\cdot\mr}-Ce^{-ik\vt_1\cdot\mr_\star}\sum_{n=1}^{N}e^{-ik\vt_n(\mr-\mr_\star)}\\
\displaystyle e^{-ik\vt_2\cdot\mr}-Ce^{-ik\vt_2\cdot\mr_\star}\sum_{n=1}^{N}e^{-ik\vt_n(\mr-\mr_\star)}\\
\vdots\\
\displaystyle e^{-ik\vt_N\cdot\mr}-Ce^{-ik\vt_N\cdot\mr_\star}\sum_{n=1}^{N}e^{-ik\vt_n(\mr-\mr_\star)}
\end{bmatrix}\\
&=\frac{1}{N}\sum_{n=1}^{N}\Big(1-\Phi(\mr)-\overline{\Phi(\mr)}+\Psi(\mr)\overline{\Psi(\mr)}\Big),
\end{aligned}
\end{align}
where
\[\Phi(\mr)=Ce^{ik\vt_n\cdot(\mr-\mr_\star)}\sum_{n'=1}^{N}e^{-ik\vt_{n'}\cdot(\mr-\mr_\star)}\quad\text{and}\quad\Psi(\mr)=-Ce^{ik\vt_n\cdot\mr_\star}\sum_{n'=1}^{N}e^{ik\vt_{n'}\cdot(\mr-\mr_\star)}.\]

Since $\vt_n\cdot(\mr-\mr_\star)=|\mr-\mr_\star|\cos(\theta_m-\phi)$ and the following Jacobi-Anger expansion formula holds uniformly
\[e^{ix\cos\theta}=J_0(x)+\sum_{p=-\infty,p\ne0}^{\infty}i^pJ_p(x)e^{ip\theta},\]
we have
\begin{align*}
\frac{1}{N}\sum_{n=1}^{N}\Phi(\mr)&=\frac{C}{N}\left(\sum_{n=1}^{N}e^{ik\vt_n\cdot(\mr-\mr_\star)}\right)\left(\sum_{n=1}^{N}e^{-ik\vt_n\cdot(\mr-\mr_\star)}\right)=\frac{C}{N}\left|\sum_{n=1}^{N}e^{ik|\mr-\mr_\star|\cos(\theta_n-\phi)}\right|^2\\
&=\frac{C}{N}\left|\sum_{n=1}^{N}\bigg(J_0(k|\mr-\mr_\star|)+\sum_{p=-\infty,p\ne0}^{\infty}i^pJ_p(k|\mr-\mr_\star|)e^{ip(\theta_n-\phi)}\bigg)\right|^2\\
&=CN\left|J_0(k|\mr-\mr_\star|)+\frac{1}{N}\sum_{n=1}^{N}\sum_{p=-\infty,p\ne0}^{\infty}i^pJ_p(k|\mr-\mr_\star|)e^{ip(\theta_n-\phi)}\right|^2
\end{align*}
and consequently
\begin{equation}\label{TermPhi}
\frac{1}{N}\sum_{n=1}^{N}\Big(\Phi(\mr)+\overline{\Phi(\mr)}\Big)=2CN\left|J_0(k|\mr-\mr_\star|)+\frac{1}{N}\sum_{n=1}^{N}\sum_{p=-\infty,p\ne0}^{\infty}i^pJ_p(k|\mr-\mr_\star|)e^{ip(\theta_n-\phi)}\right|^2.
\end{equation}
Moreover, since
\begin{align*}
\Psi(\mr)\overline{\Psi(\mr)}&=\left(-Ce^{ik\vt_n\cdot\mr_\star}\sum_{n'=1}^{N}e^{ik\vt_{n'}\cdot(\mr-\mr_\star)}\right)\left(-Ce^{-ik\vt_n\cdot\mr_\star}\sum_{n'=1}^{N}e^{-ik\vt_{n'}\cdot(\mr-\mr_\star)}\right)\\
&=C^2\left|\sum_{n'=1}^{N}e^{ik|\mr-\mr_\star|\cos(\theta_{n'}-\phi)}\right|^2=C^2N^2\left|J_0(k|\mr-\mr_\star|)+\frac{1}{N}\sum_{n'=1}^{N}\sum_{p=-\infty,p\ne0}^{\infty}i^pJ_p(k|\mr-\mr_\star|)e^{ip(\theta_{n'}-\phi)}\right|^2,
\end{align*}
we can obtain
\begin{align}
\begin{aligned}\label{TermPsi}
\frac{1}{N}\sum_{n=1}^{N}\Psi(\mr)\overline{\Psi(\mr)}&=\frac{C^2N^2}{N}\sum_{n=1}^{N}\left|J_0(k|\mr-\mr_\star|)+\frac{1}{N}\sum_{n'=1}^{N}\sum_{p=-\infty,p\ne0}^{\infty}i^pJ_p(k|\mr-\mr_\star|)e^{ip(\theta_{n'}-\phi)}\right|^2\\
&=C^2N^2\left|J_0(k|\mr-\mr_\star|)+\frac{1}{N}\sum_{n=1}^{N}\sum_{p=-\infty,p\ne0}^{\infty}i^pJ_p(k|\mr-\mr_\star|)e^{ip(\theta_n-\phi)}\right|^2.
\end{aligned}
\end{align}
Hence, by combining \eqref{FactorF}, \eqref{TermPhi}, and \eqref{TermPsi}, we have
\[|\mathbb{F}_{\noise}(\mf(\mr))|^2=1-(2CN-C^2N^2)\left|J_0(k|\mr-\mr_\star|)+\frac{1}{N}\sum_{n=1}^{N}\sum_{p=-\infty,p\ne0}^{\infty}i^pJ_p(k|\mr-\mr_\star|)e^{ip(\theta_n-\phi)}\right|^2.\]
Since $|\mathbb{F}_{\noise}(\mf(\mr_\star))|=0$, $J_0(0)=1$, and $J_p(0)=0$ for any nonzero integer $p$,
\[|\mathbb{F}_{\noise}(\mf(\mr_\star))|=1-2CN-C^2N^2=(1-CN)^2=0\quad\text{implies}\quad CN=1.\]
Therefore,
\begin{equation}\label{ProjectionF}
|\mathbb{F}_{\noise}(\mf(\mr))|^2=1-\left|J_0(k|\mr-\mr_\star|)+\frac{1}{N}\sum_{n=1}^{N}\sum_{p=-\infty,p\ne0}^{\infty}i^pJ_p(k|\mr-\mr_\star|)e^{ip(\theta_n-\phi)}\right|^2.
\end{equation}

Next, since $|\tau_1|^2\mV_1\mV_1^*=\mathbb{K^*K}$, we can evaluate
\[\mV_1\mV_1^*=\frac{1}{|\tau_1|^2}\mathbb{K^*K}=\left|\frac{\alpha^2\mathcal{O}(\mr_\star)k}{32R\omega\mub\tau_1}\right|^2N\begin{bmatrix}
e^{-ik(\vv_1-\vv_1)\cdot\mr_\star} & e^{-ik(\vv_1-\vv_2)\cdot\mr_\star} & \cdots & e^{-ik(\vv_1-\vv_M)\cdot\mr_\star} \\
e^{-ik(\vv_2-\vv_1)\cdot\mr_\star} & e^{-ik(\vv_2-\vv_2)\cdot\mr_\star} & \cdots & e^{-ik(\vv_2-\vv_M)\cdot\mr_\star} \\
\vdots & \vdots & \ddots & \vdots \\
e^{-ik(\vv_M-\vv_1)\cdot\mr_\star} & e^{-ik(\vv_M-\vv_2)\cdot\mr_\star} & \cdots & e^{-ik(\vv_M-\vv_M)\cdot\mr_\star}
\end{bmatrix}.\]
Then, throughout a similar process, we can also obtain
\begin{equation}\label{ProjectionG}
|\mathbb{G}_{\noise}(\mf(\mr))|^2=1-\left|J_0(k|\mr-\mr_\star|)+\frac{1}{M}\sum_{m=1}^{M}\sum_{q=-\infty,q\ne0}^{\infty}(-i)^qJ_q(k|\mr-\mr_\star|)e^{-iq(\vartheta_m-\phi)}\right|^2.
\end{equation}
Since $|z|^2=|\overline{z}|^2$ for any $z\in\mathbb{C}$, based on the \eqref{ProjectionF} and \eqref{ProjectionG}, we can obtain \eqref{StructureImagingFunction}.
\end{proof}

Based on the explored structure of the designed imaging function \eqref{StructureImagingFunction}, we can examine some properties.

\begin{remark}[Decomposition of the Imaging Function]\label{Remark1}
The imaging function is decomposed by two factors; the factor $J_0(k|\mr-\mr_\star|)$, which is independent on the antenna arrangement, contributes to the identification because $J_0(0)=1$, and the factors $\mathcal{E}_{\rx}(\mr,\ma_n)$ and $\mathcal{E}_{\tx}(\mr,\mb_m)$ that are significantly dependent on the antenna arrangement disturb the identification by generating several artifacts because $J_p(0)=0$ for any nonzero integer $p$.
\end{remark}

\begin{remark}[Applicability and Limitation]\label{Remark2}
Based on the Remark \ref{Remark1}, the location $\mr_\star$ can be identified through the map of $\mathfrak{F}(\mr)$ if $M$ (total number of transmitting antennas) and $N$ (total number of receiving antennas) are sufficiently large. However, if $M$ or $N$ is not large enough, it will be very difficult to recognize the location $\mr_\star$ because $\mathfrak{F}(\mr)$ will be dominated by the factors $\mathcal{E}_{\rx}(\mr,\ma_n)$ or $\mathcal{E}_{\tx}(\mr,\mb_m)$. Let us emphasize that if one of $M$ or $N$ is large enough then the location $\mr_\star$ can be identified through the map of $\mathfrak{F}_{\rx}(\mr)$ or $\mathfrak{F}_{\tx}(\mr)$ because the terms $\mathcal{E}_{\rx}(\mr,\ma_n)/N$ or $\mathcal{E}_{\tx}(\mr,\mb_m)/M$ will be dominated by the factor $J_0(k|\mr-\mr_\star|)$.
\end{remark}

\begin{remark}[Optimal Antenna Arrangement]\label{Remark3}
Based on the \eqref{StructureImagingFunction}, it will be possible to obtain a good result if $\mathcal{E}_{\rx}(\mr,\ma_n)=0$ and $\mathcal{E}_{\tx}(\mr,\mb_m)=0$ simultaneously. On e possible method is to increase $M$ and $N$ but this is difficult due to the simulation configuration. Note that based on the asymptotic form of the Bessel function
\[J_p(k|\mr-\mr_\star|)\approx\sqrt{\frac{2}{k\pi|\mr-\mr_\star|}}\cos\left(k|\mr-\mr_\star|-\frac{p\pi}{2}-\frac{\pi}{4}\right)\quad\text{implies}\quad\lim_{\omega\to+\infty}J_p(k|\mr-\mr_\star|)=0.\]
Hence, by applying extreme frequency $\omega\longrightarrow+\infty$, we can eliminate $\mathcal{E}_{\rx}(\mr,\ma_n)$ and $\mathcal{E}_{\tx}(\mr,\mb_m)$ and correspondingly it is expected that good imaging results can be retrieved. However, this is an ideal condition. Notice that since
\[\mathcal{E}_{\rx}(\mr,\ma_n)=\sum_{n=1}^{N}\sum_{p=-\infty,p\ne0}^{\infty}i^pJ_p(k|\mr-\mr_\star|)e^{ip(\theta_n-\phi)}=\sum_{p=-\infty,p\ne0}^{\infty}i^pJ_p(k|\mr-\mr_\star|)\sum_{n=1}^{N}e^{ip(\theta_n-\phi)},\]
the factor $\mathcal{E}_{\rx}(\mr,\ma_n)$ can be eliminated if $\sum_{n=1}^{N}e^{ip(\theta_n-\phi)}=0$. Based on recent contribution \cite[Observation 4.4]{P-SUB16}, if $M$ and $N$ are even numbers, antennas are located uniformly on a circular array, and positions of two antennas are symmetric about the origin, then it can be expected that we can obtain a good result whose imaging quality is almost same the one obtained with the switching device. We refer to Example \ref{ex3} for a detailed discussion.
\end{remark}

Based on the above discussions, we can conclude the following important result of unique determination.

\begin{corollary}[Unique Identification]
Assume that total number of antennas for receiving and transmitting signals is sufficiently large or the antenna arrangement satisfies the condition in Remark \ref{Remark3}. Then, the location of anomaly can be identified
uniquely through the map of $\mathfrak{F}(\mr)$.
\end{corollary}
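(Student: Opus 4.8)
The strategy is to substitute the hypotheses into the structure formula \eqref{StructureImagingFunction} of Theorem \ref{StructureImaging} and then reduce the claim to the elementary fact that $J_0$ attains maximum modulus only at the origin; throughout, everything is understood in the far-field, Born-approximation regime in which Theorem \ref{StructureImaging} was established, and the conclusion concerns the leading-order imaging function.

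\emph{Step 1 (the perturbation terms are negligible).} I would first rewrite the error terms as genuine averages. Using $\vt_n\cdot(\mr-\mr_\star)=|\mr-\mr_\star|\cos(\theta_n-\phi)$ and the Jacobi--Anger expansion exactly as in the proof of Theorem \ref{StructureImaging},
\[\frac{\mathcal{E}_{\rx}(\mr,\ma_n)}{N}=\frac1N\sum_{n=1}^{N}e^{ik|\mr-\mr_\star|\cos(\theta_n-\phi)}-J_0(k|\mr-\mr_\star|),\]
and similarly $\mathcal{E}_{\tx}(\mr,\mb_m)/M=\frac1M\sum_{m=1}^{M}e^{ik|\mr-\mr_\star|\cos(\vartheta_m-\phi)}-J_0(k|\mr-\mr_\star|)$. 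If $N$ and $M$ are large and the antennas are deployed so that the angles $\{\theta_n\}$ and $\{\vartheta_m\}$ equidistribute on $[0,2\pi)$ (as a uniform circular array does), the right-hand sides are Riemann sums converging to $\frac1{2\pi}\int_0^{2\pi}e^{ik|\mr-\mr_\star|\cos\psi}\,d\psi-J_0(k|\mr-\mr_\star|)=0$, and since the integrand is smooth the convergence is uniform for $\mr\in\Omega$ (a bounded set). Alternatively, under the arrangement of Remark \ref{Remark3} one writes $\mathcal{E}_{\rx}(\mr,\ma_n)/N=\sum_{p\ne0}i^pJ_p(k|\mr-\mr_\star|)e^{-ip\phi}\big(\tfrac1N\sum_{n=1}^{N}e^{ip\theta_n}\big)$; the symmetric, uniformly spaced configuration annihilates the array factor $\tfrac1N\sum_n e^{ip\theta_n}$ for every $p$ not divisible by $N$ (cf.\ \cite[Observation 4.4]{P-SUB16}), and the bound $|J_p(z)|\le e^{|z|^2/4}(|z|/2)^{|p|}/|p|!$ makes the surviving tail $|p|\ge N$ uniformly negligible on $\Omega$ (and likewise for $\mathcal{E}_{\tx}/M$). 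In either case, inserting this into \eqref{StructureImagingFunction} gives, uniformly for $\mr\in\Omega$,
\[\mathfrak{F}(\mr)\approx h(\mr):=\Big(1-\big|J_0(k|\mr-\mr_\star|)\big|^2\Big)^{-1/2}.\]

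\emph{Step 2 (the peak of $h$ is unique).} The function $z\mapsto J_0(z)$ is entire with $J_0(0)=1$, and $|J_0(z)|<1$ for all $z$ with $0<|z|\le|k|\,\mathrm{diam}\,\Omega$: for real argument this is the classical strict inequality $|J_0(x)|<1$, $x>0$, and it persists for the mildly complex wavenumber induced by $\sigmab$ by continuity (this is where the standing ``$\Omega$ small enough / admissible frequency'' assumption is used). Hence $1-|J_0(k|\mr-\mr_\star|)|^2>0$ for every $\mr\in\Omega\setminus\{\mr_\star\}$ and vanishes only at $\mr=\mr_\star$, so $h$ is finite on $\Omega\setminus\{\mr_\star\}$ while $h(\mr)\to+\infty$ as $\mr\to\mr_\star$. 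Therefore $\mr_\star$ is the unique point of $\Omega$ at which the map of $\mathfrak{F}$ exhibits its (theoretically infinite) peak, and since $\mr_\star\in\Sigma$ this determines the anomaly location uniquely.

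\emph{Main obstacle.} The delicate step is the uniformity in Step 1: one must control the discarded part of $\mathfrak{F}$ simultaneously over all $\mr\in\Omega$ (and over the antenna index), which for the ``large $M,N$'' case rests on quantitative equidistribution/Riemann-sum error estimates, and for the Remark \ref{Remark3} case on verifying that the symmetric uniform array genuinely annihilates the array factor for every index that is not a multiple of $N$. A secondary subtlety is that $k$ is complex, so the textbook bound $|J_0(x)|<1$ must be read through analytic continuation and retained as a standing (physically reasonable) hypothesis; without it one cannot a priori rule out spurious near-peaks of $h$ away from $\mr_\star$.
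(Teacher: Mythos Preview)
The paper offers no formal proof of this corollary at all: it is stated immediately after Remarks \ref{Remark1}--\ref{Remark3} with the single sentence ``Based on the above discussions, we can conclude the following important result.'' Your argument is therefore not a competing proof but a rigorous fleshing-out of exactly what the paper leaves implicit---substitute the hypotheses into \eqref{StructureImagingFunction}, kill $\mathcal{E}_{\rx}/N$ and $\mathcal{E}_{\tx}/M$, and read off uniqueness from $J_0$. In that sense you follow the paper's approach and your reasoning is sound.

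Two places where you are in fact \emph{more} careful than the paper deserve mention. First, Remark \ref{Remark3} tacitly suggests that the symmetric uniform array makes $\sum_{n}e^{ip\theta_n}$ vanish for every $p\ne0$; you correctly observe that it vanishes only for $p\not\equiv0\pmod N$ and then dispose of the surviving modes $|p|\ge N$ via the factorial decay of $J_p$. Second, you flag that $k$ is genuinely complex here (because $\sigmab>0$), so the textbook inequality $|J_0(x)|<1$ for real $x>0$ does not apply verbatim; the paper never addresses this and simply treats the peak as self-evident. Your caveat that this must be taken as a standing hypothesis (or justified by continuity for small $\sigmab/\omega\epsb$) is the honest way to handle it.
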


%

\section{Simulation Results With Synthetic Data}\label{sec:4}
Results of numerical simulation are provided to support the Theorem \ref{StructureImaging}. In preparation for the simulation, a fixed frequency $f=\SI{1}{\GHz}$ was applied and $S=16$ antennas $\mathcal{D}_s$ were arranged uniformly such that
\[\md_s=\SI{0.09}{\meter}(\cos\theta_s,\sin\theta_s),\quad\theta_s=\frac{3\pi}{2}-\frac{2(s-1)\pi}{S}.\]
We set $\Omega$ as a circle of radius $\SI{0.08}{\meter}$ centered at the origin with $(\epsb,\sigmab)=(20\eps_0,\SI{0.2}{\siemens/\meter})$ and $\Sigma$ as a disk with center $\mr_\star=(\SI{0.01}{\meter},\SI{0.03}{\meter})$, radius $\alpha=\SI{0.01}{\meter}$, and $(\eps_\star,\sigma_\star)=(55\eps_0,\SI{1.2}{\siemens/\meter})$. Here, $\eps_0=\SI{8.854e-12}{\farad/\meter}$ is the vacuum permittivity. We refer to Figure \ref{IllustrationAnomalies} for illustration. With this setting, every measurement data $S_{\scat}(n,m)$ and incident field data $\E(\md_s,\mr)$ were generated utilizing CST STUDIO SUITE.

To compare the accuracy of the imaging results through the maps of $\mathfrak{F}_{\tx}(\mr)$, $\mathfrak{F}_{\rx}(\mr)$, and $\mathfrak{F}(\mr)$, the Jaccard index which measures the similarity between two finite sets $\mathcal{N}_1$ and $\mathcal{N}_2$ is considered such that (see \cite{Jaccard} for instance)
\[\mathcal{J}[\mathcal{N}_1,\mathcal{N}_2](\SI{}{\percent})=\frac{|\mathcal{N}_1\cap\mathcal{N}_2|}{|\mathcal{N}_1\cup\mathcal{N}_2|}\times\SI{100}{\percent}.\]
In this paper, the Jaccard index is calculated by comparing two sets: for given threshold $\zeta\in[0,1]$,
\[\mathcal{N}_1=\left\{\begin{array}{ccl}
\mathcal{N}_{\NF}(\mr)&\text{if}&\mathcal{N}_{\NF}(\mr)\geq\zeta\\
0&\text{if}&\mathcal{N}_{\NF}(\mr)<\zeta
\end{array}\right.\quad\text{and}\quad
\mathcal{N}_2=\frac{|k(\mr)-k|}{\displaystyle\max_{\mr\in\Omega}|k(\mr)-k|},\]
where $\mathcal{N}_{\NF}(\mr)$ is one of the following normalized functions
\[\mathcal{N}_{\tx}(\mr)=\frac{\mathfrak{F}_{\tx}(\mr)}{\displaystyle\max_{\mr\in\Omega}|\mathfrak{F}_{\tx}(\mr)|},\quad\mathcal{N}_{\rx}(\mr)=\frac{\mathfrak{F}_{\rx}(\mr)}{\displaystyle\max_{\mr\in\Omega}|\mathfrak{F}_{\rx}(\mr)|},\quad\text{and}\quad\mathcal{N}(\mr)=\frac{\mathfrak{F}(\mr)}{\displaystyle\max_{\mr\in\Omega}|\mathfrak{F}(\mr)|}.\]

\begin{figure}[h]
\begin{center}
\begin{tikzpicture}[scale=2.4]
\draw[yellow,dashed] (1.1,1.1) -- (-1.1,1.1) -- (-1.1,-1.1) -- (1.1,-1.1) -- cycle;
\foreach \alpha in {0,22.5,...,337.5}
{\draw[green,fill=green] ({cos(\alpha)},{sin(\alpha)}) circle (0.05cm);
\draw[black,fill=black] ({cos(\alpha)},{sin(\alpha)}) circle (0.02cm);}
\node at (0,0) {\footnotesize$(\epsb,\sigmab)=(20\eps_0,\SI{0.2}{\siemens/\meter})$};
\end{tikzpicture}\hfill
\begin{tikzpicture}[scale=2.4]
\draw[yellow,dashed] (1.1,1.1) -- (-1.1,1.1) -- (-1.1,-1.1) -- (1.1,-1.1) -- cycle;
\foreach \alpha in {0,22.5,...,337.5}
{\draw[green,fill=green] ({cos(\alpha)},{sin(\alpha)}) circle (0.05cm);
\draw[black,fill=black] ({cos(\alpha)},{sin(\alpha)}) circle (0.02cm);}
\draw[red,fill=red] (0.1,0.3) circle (0.1cm);
\node at (0,0.1) {\footnotesize$(\eps_\star,\sigma_\star)=(55\eps_0,\SI{1.2}{\siemens/\meter})$};
\end{tikzpicture}\hfill
\begin{tikzpicture}[scale=2.4]
\draw[yellow,dashed] (1.1,1.1) -- (-1.1,1.1) -- (-1.1,-1.1) -- (1.1,-1.1) -- cycle;
\foreach \alpha in {0,22.5,...,337.5}
{\draw[green,fill=green] ({cos(\alpha)},{sin(\alpha)}) circle (0.05cm);
\draw[black,fill=black] ({cos(\alpha)},{sin(\alpha)}) circle (0.02cm);}
\draw[red,fill=red] (0.1,0.3) circle (0.1cm);
\node at (0,0.1) {\footnotesize$(\eps_1,\sigma_1)=(55\eps_0,\SI{1.2}{\siemens/\meter})$};
\draw[orange,fill=orange] (-0.4,-0.2) circle (0.1cm);
\node at (0,-0.4) {\footnotesize$(\eps_2,\sigma_2)=(45\eps_0,\SI{1.0}{\siemens/\meter})$};
\end{tikzpicture}
\caption{\label{IllustrationAnomalies}Illustration of the background (left), single (center) and multiple small anomalies (right).}
\end{center}
\end{figure}
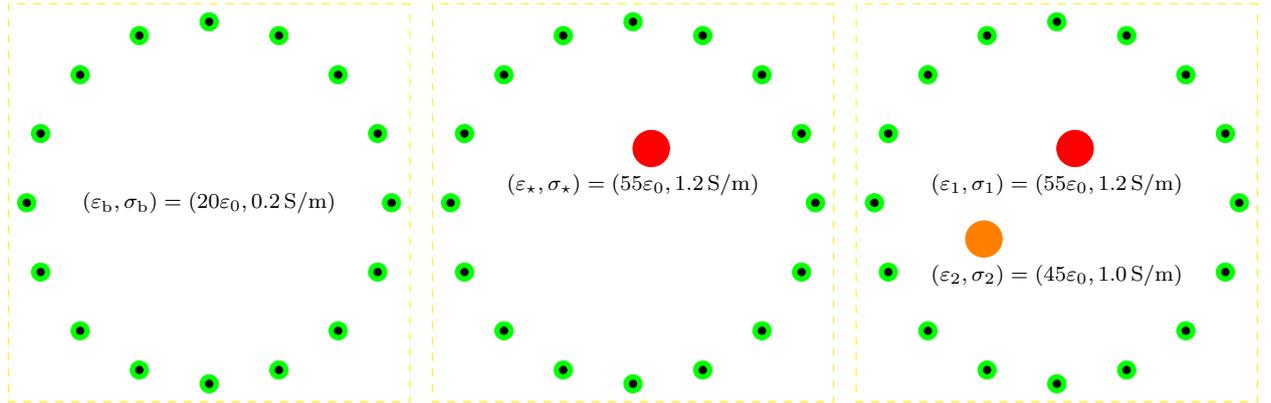



\begin{example}[Small Number of Transmitters]\label{ex1}
Figure \ref{Result1} shows the maps of $\mathfrak{F}_{\tx}(\mr)$, $\mathfrak{F}_{\rx}(\mr)$, $\mathfrak{F}(\mr)$, and Jaccard index with transmitting antenna setting $\mathbf{B}_1=\set{\mathcal{D}_m,m=12,13,14}$ and receiving antenna settings $\mathbf{A}_1=\set{\mathcal{D}_n:n=4,5,6}$, $\mathbf{A}_2=\set{\mathcal{D}_n:n=3,4,5,6,7}$, $\mathbf{A}_3=\set{\mathcal{D}_n:n=2,3,4,5,6,7,8}$, and $\mathbf{A}_4=\set{\mathcal{D}_n:n=1,2,3,4,5,6,7,8,9}$. Based on the result, we can examine that it is very difficult to identify $\mr_\star$ with settings $\mathbf{A}_j\cup\mathbf{B}_1$, $j=1,2,3,4$, due to the blurring effect in the neighborhood of $\mr_\star$. This means that even with a large number of receive antennas, it will be very difficult to obtain good results using a small number of transmit antennas.
\end{example}

\begin{example}[Increasing Total Number of Transmitters and Receivers]\label{ex2}
Figure \ref{Result2} shows the maps of $\mathfrak{F}(\mr)$ with transmitting antenna setting $\mathbf{B}_2=\set{\mathcal{D}_m,m=10,11,12,13,14,15,16}$ and same receiving antenna settings $\mathbf{A}_j$, $j=1,2,3,4$. Opposite to the results with $\mathbf{B}_1$, it is possible to identify $\mr_\star$ with with settings $\mathbf{A}_j\cup\mathbf{B}_2$, $j=2,3,4$ while is still difficult to identify due to the blurring effect in the neighborhood of $\mr_\star$ with the setting $\mathbf{A}_1\cup\mathbf{B}_2$. Therefore, we conclude that increasing the total number of transmitting and receiving antennas will guarantee good results.
\end{example}


\begin{figure}[h]
  \centering
  \includegraphics[width=0.25\textwidth]{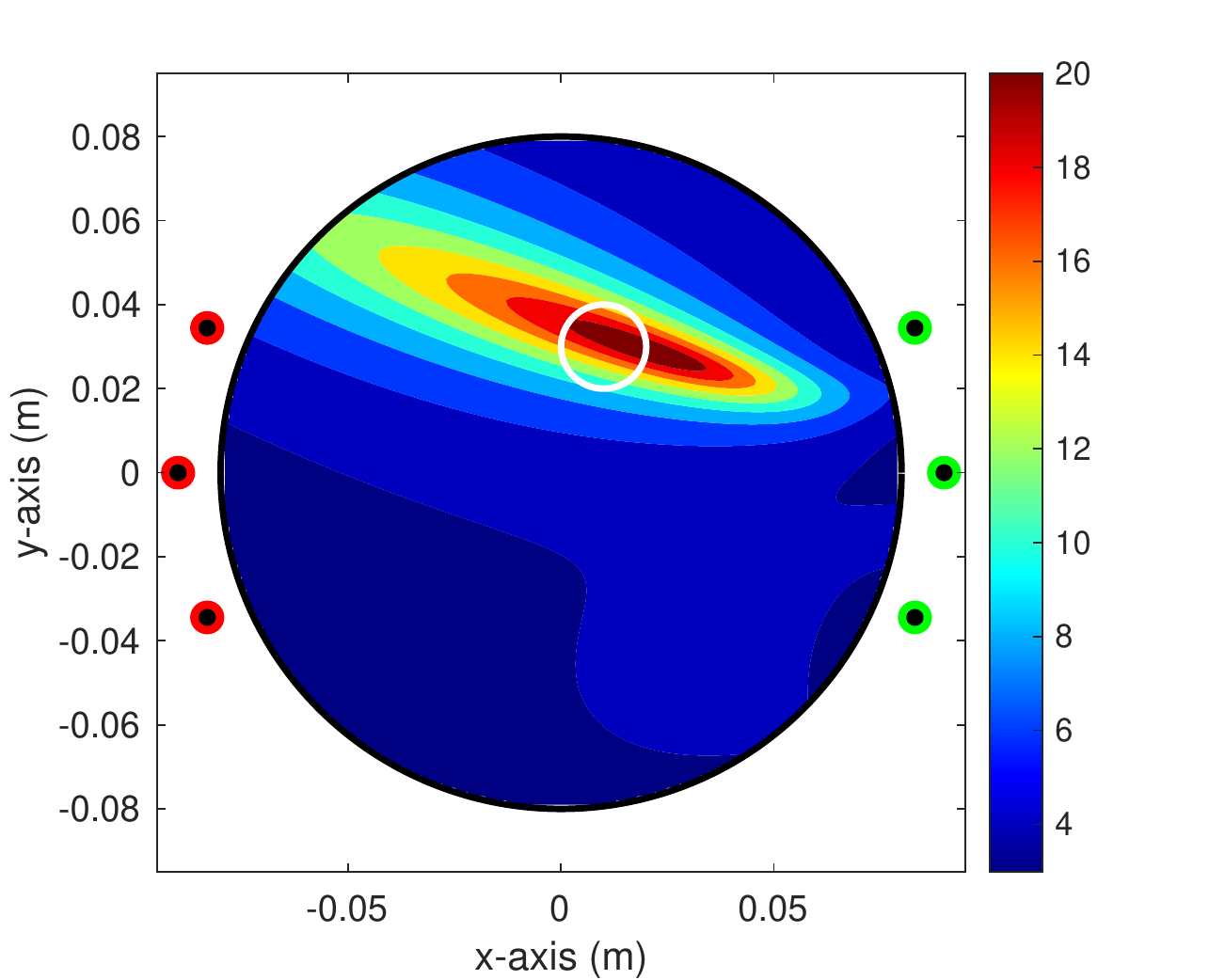}\hfill
  \includegraphics[width=0.25\textwidth]{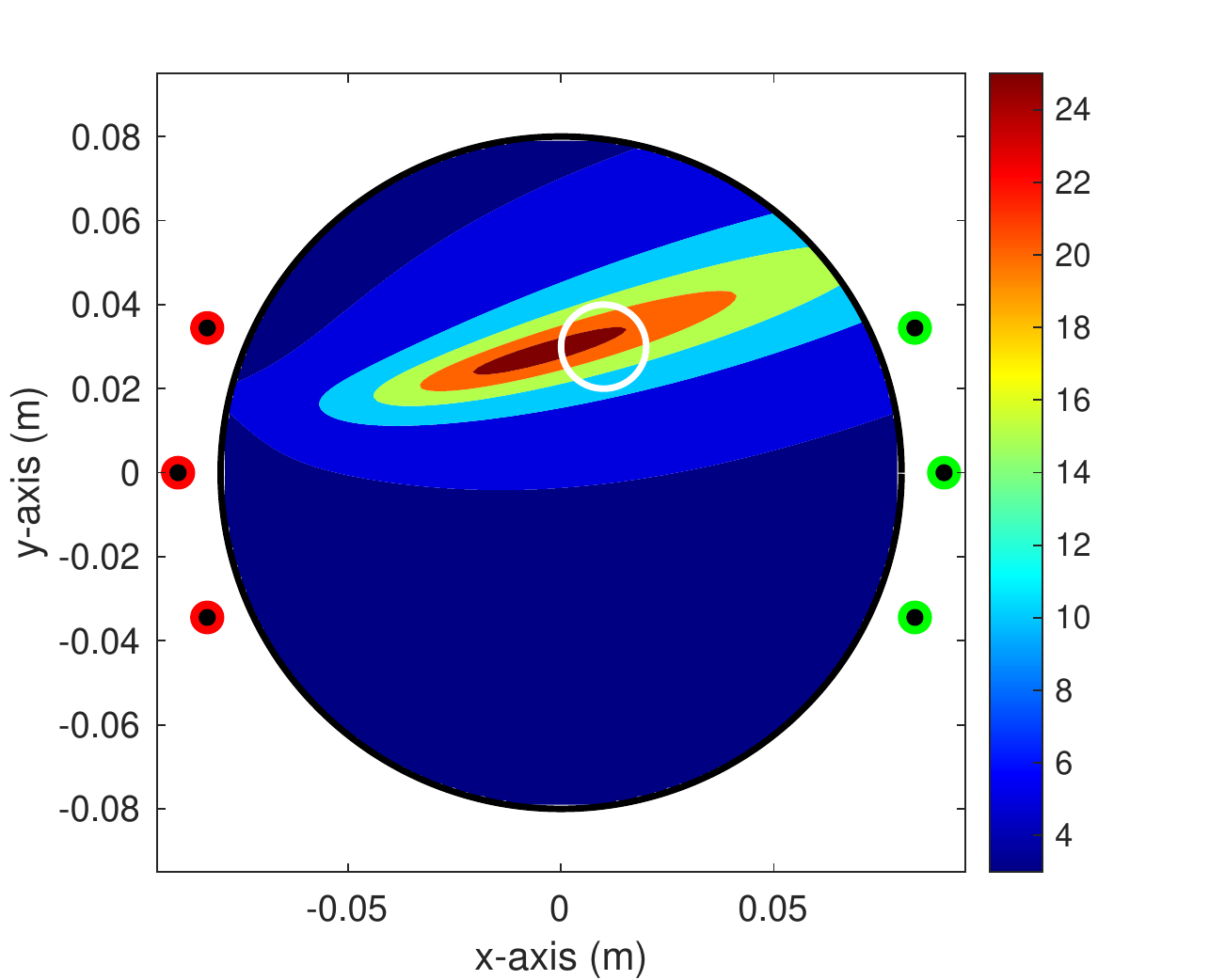}\hfill
  \includegraphics[width=0.25\textwidth]{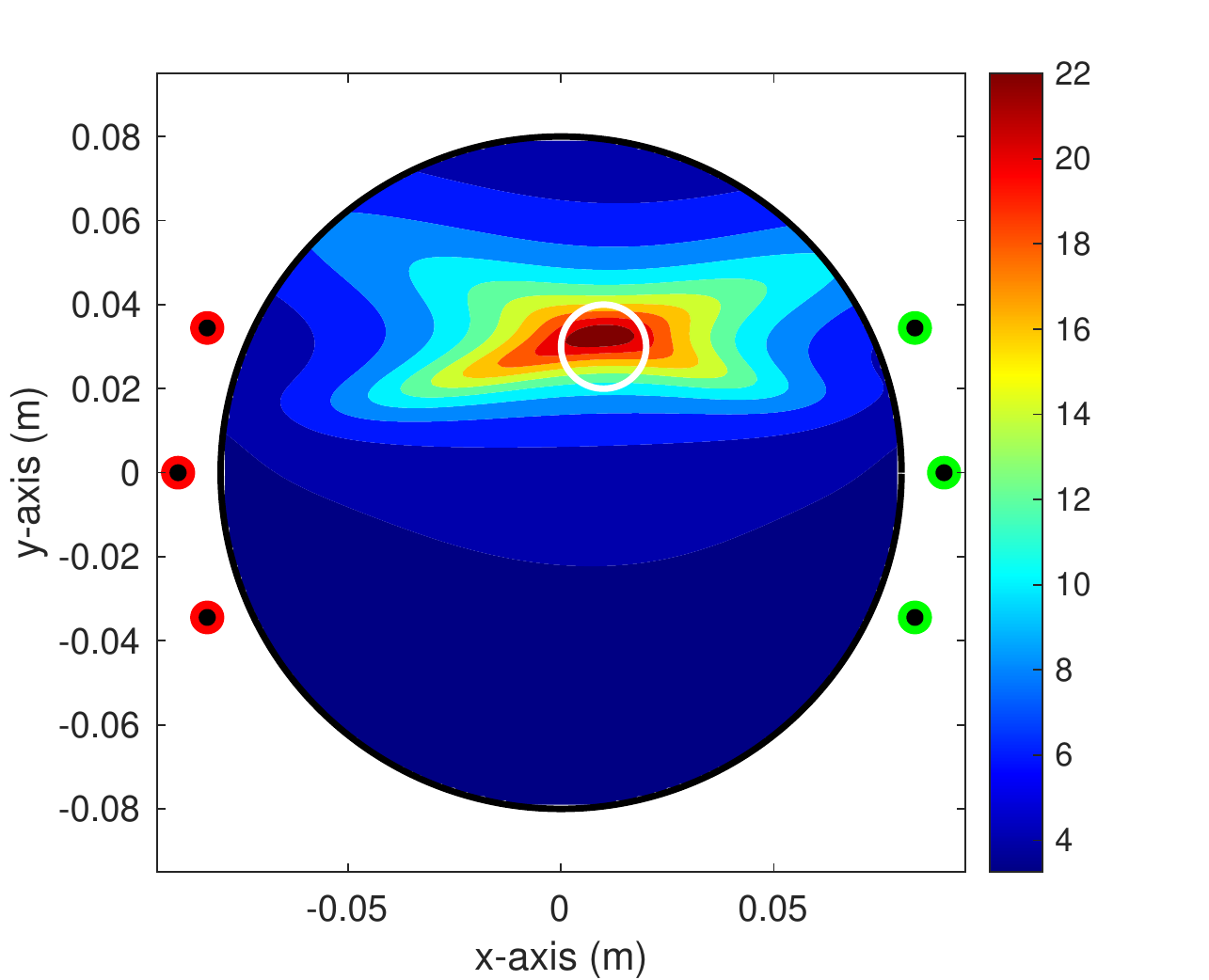}\hfill
  \includegraphics[width=0.25\textwidth]{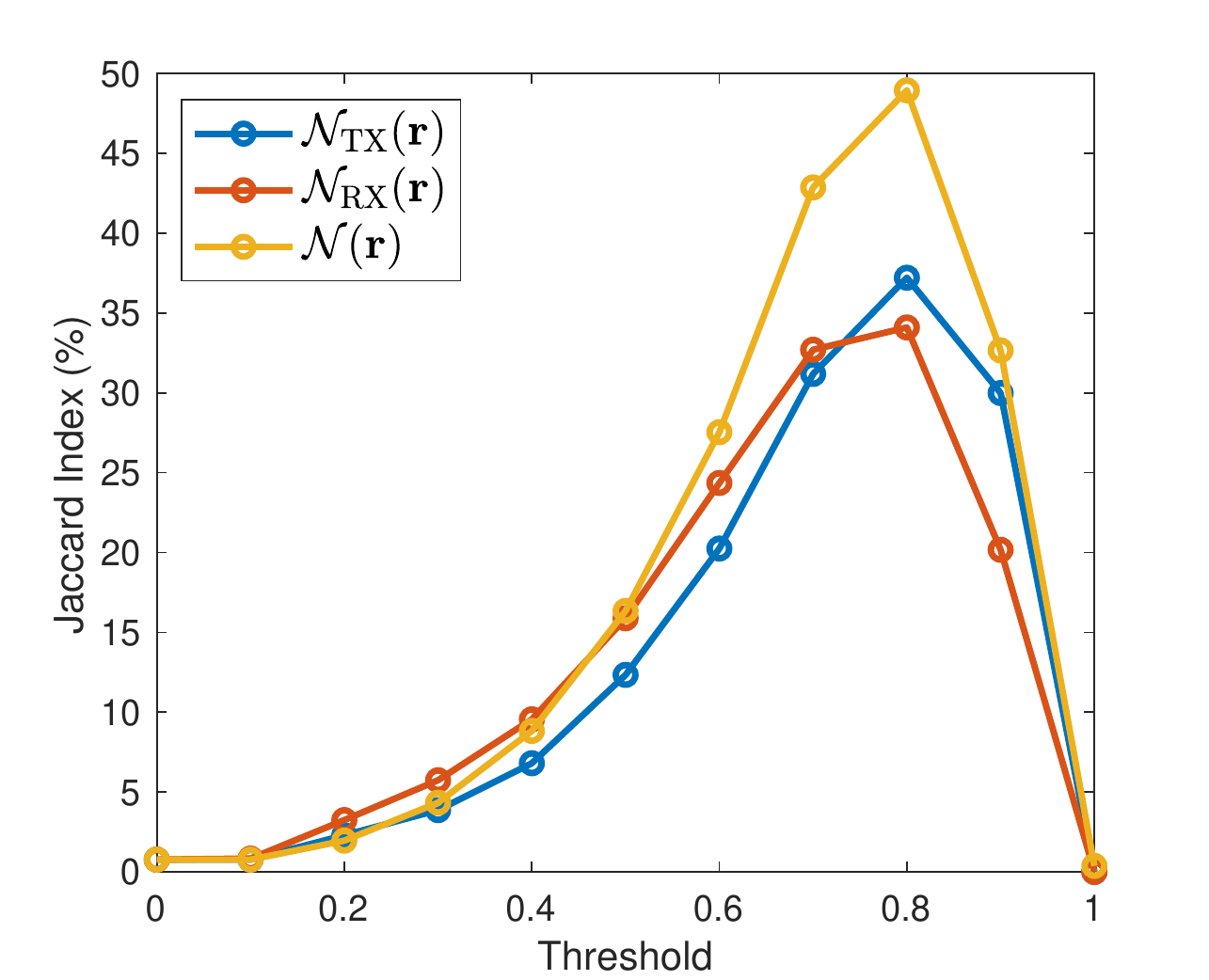}\\
  \includegraphics[width=0.25\textwidth]{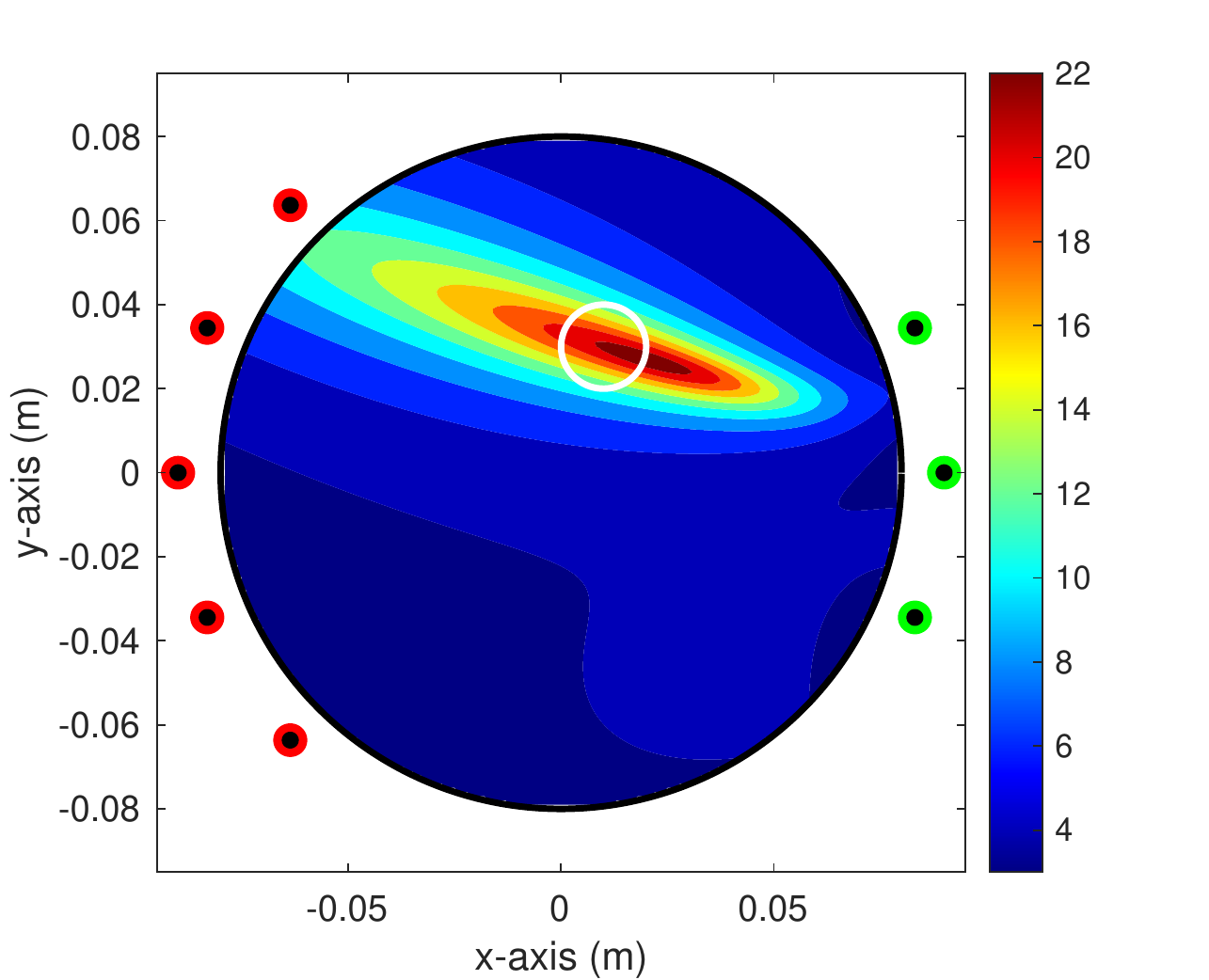}\hfill
  \includegraphics[width=0.25\textwidth]{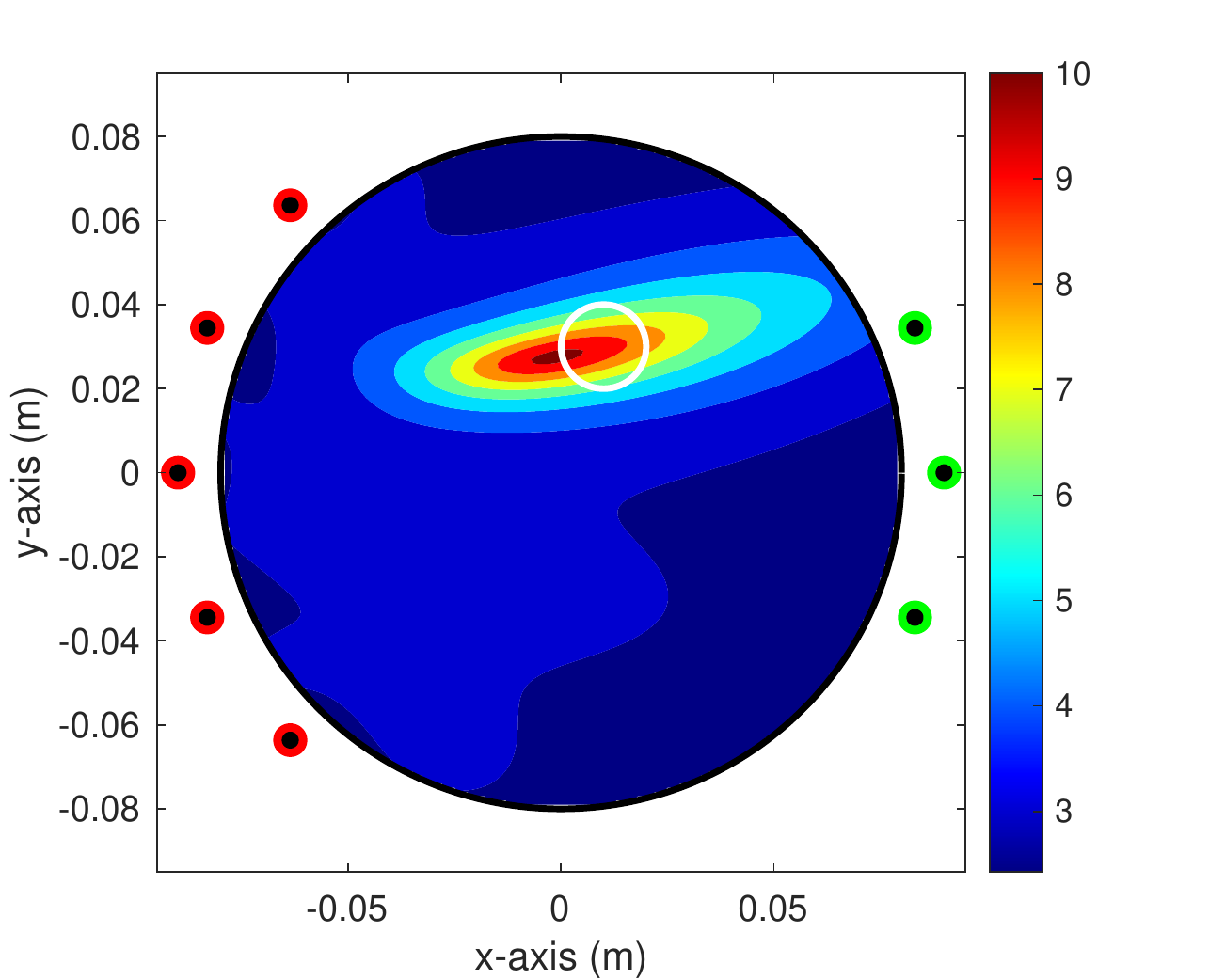}\hfill
  \includegraphics[width=0.25\textwidth]{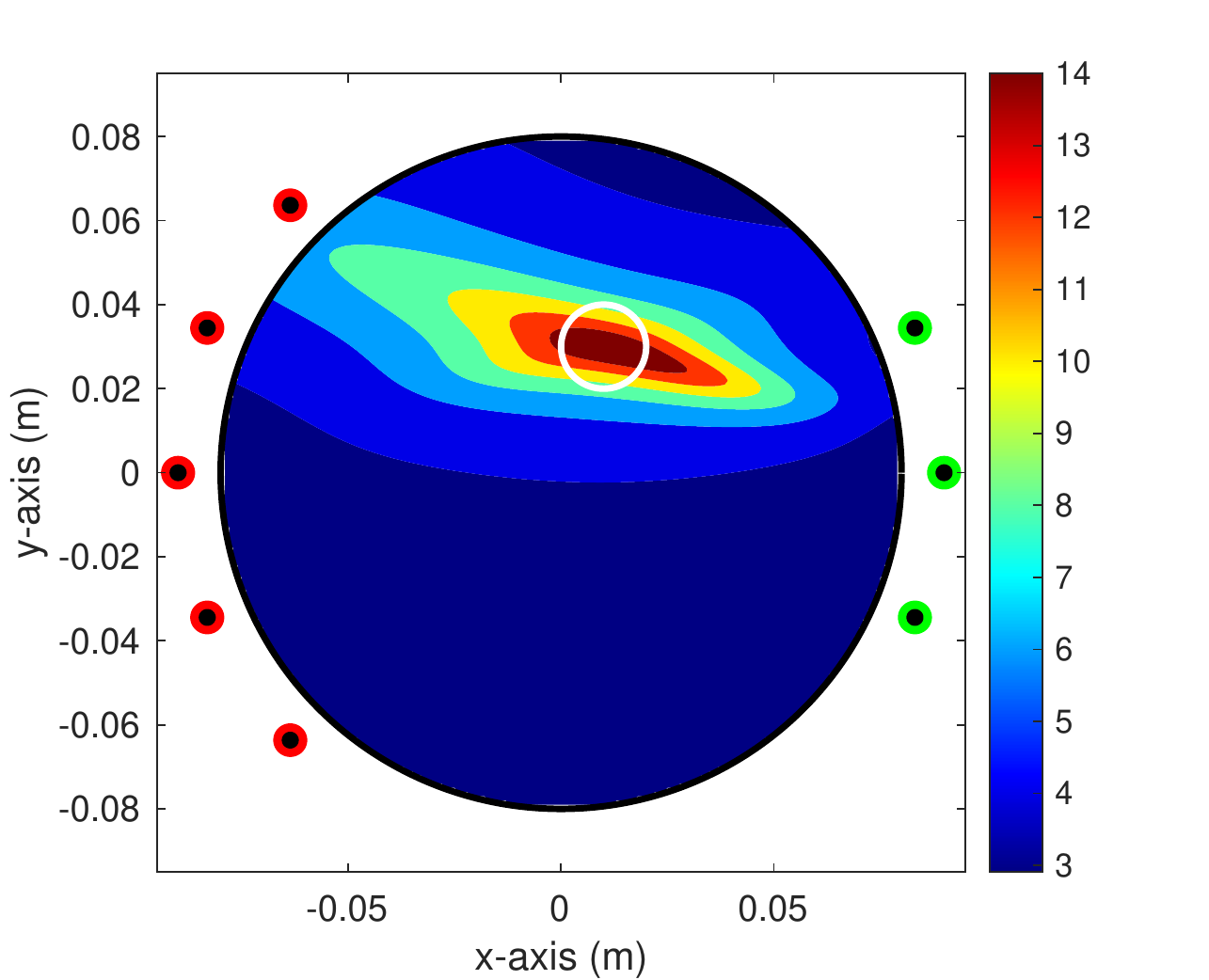}\hfill
  \includegraphics[width=0.25\textwidth]{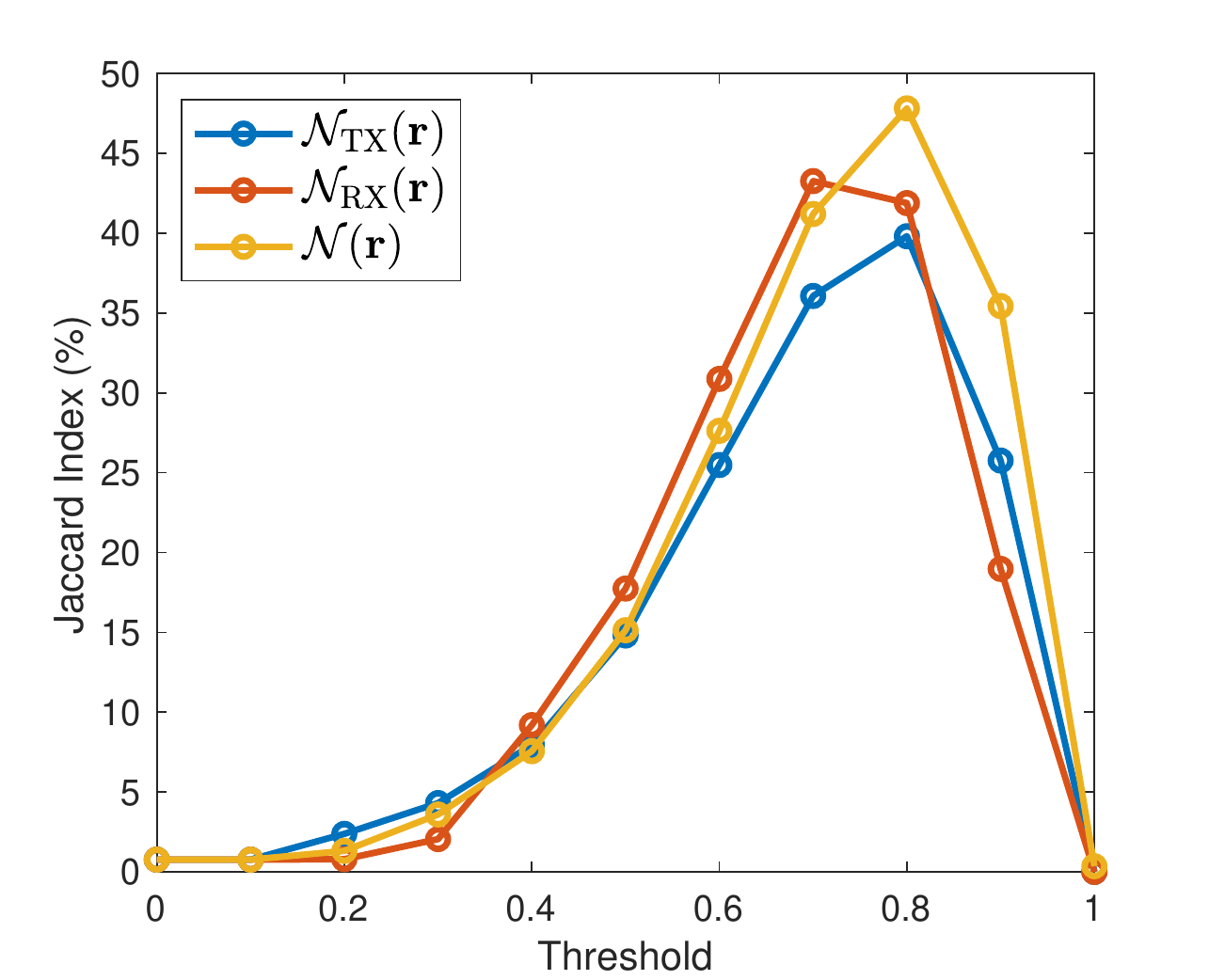}\\
  \includegraphics[width=0.25\textwidth]{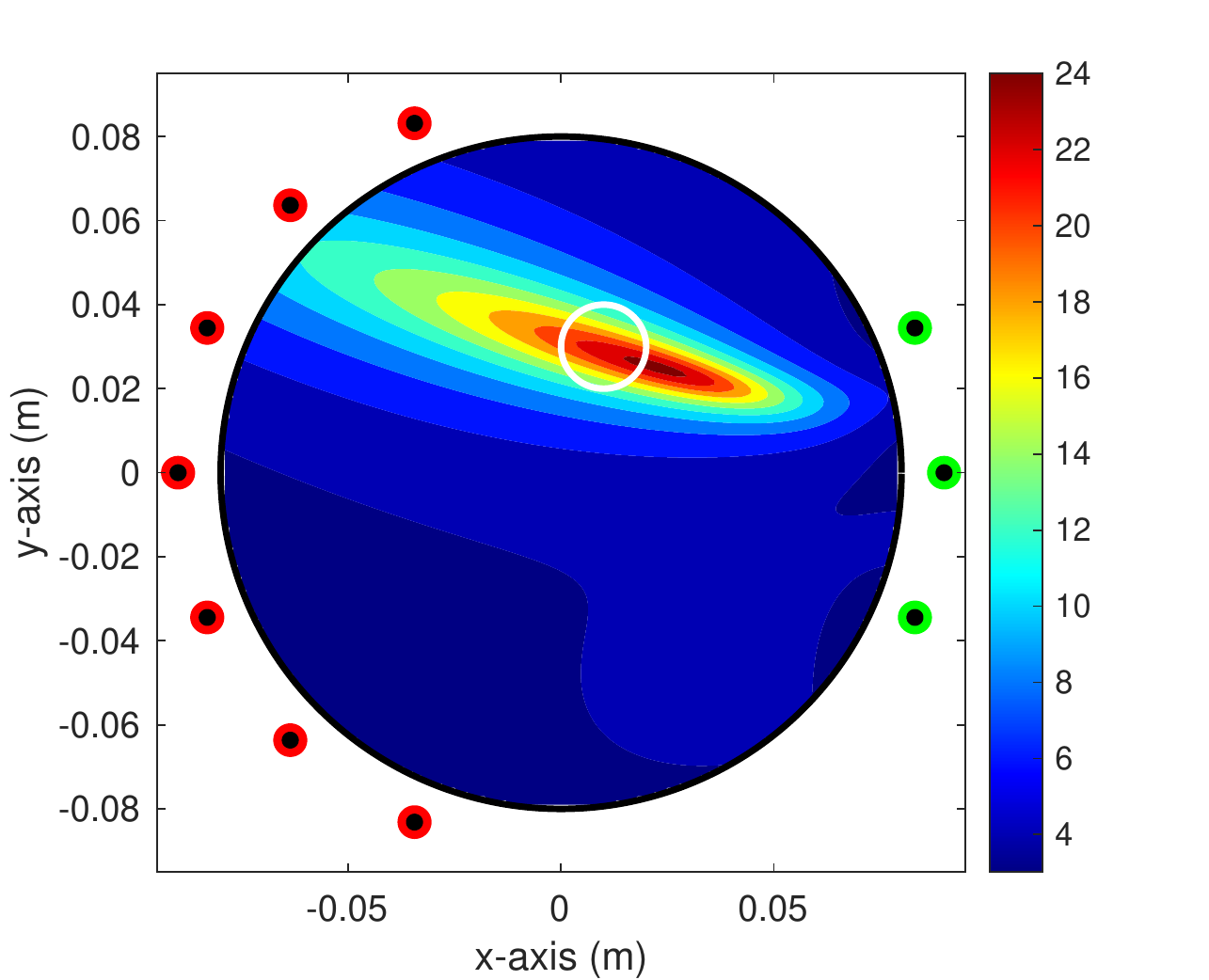}\hfill
  \includegraphics[width=0.25\textwidth]{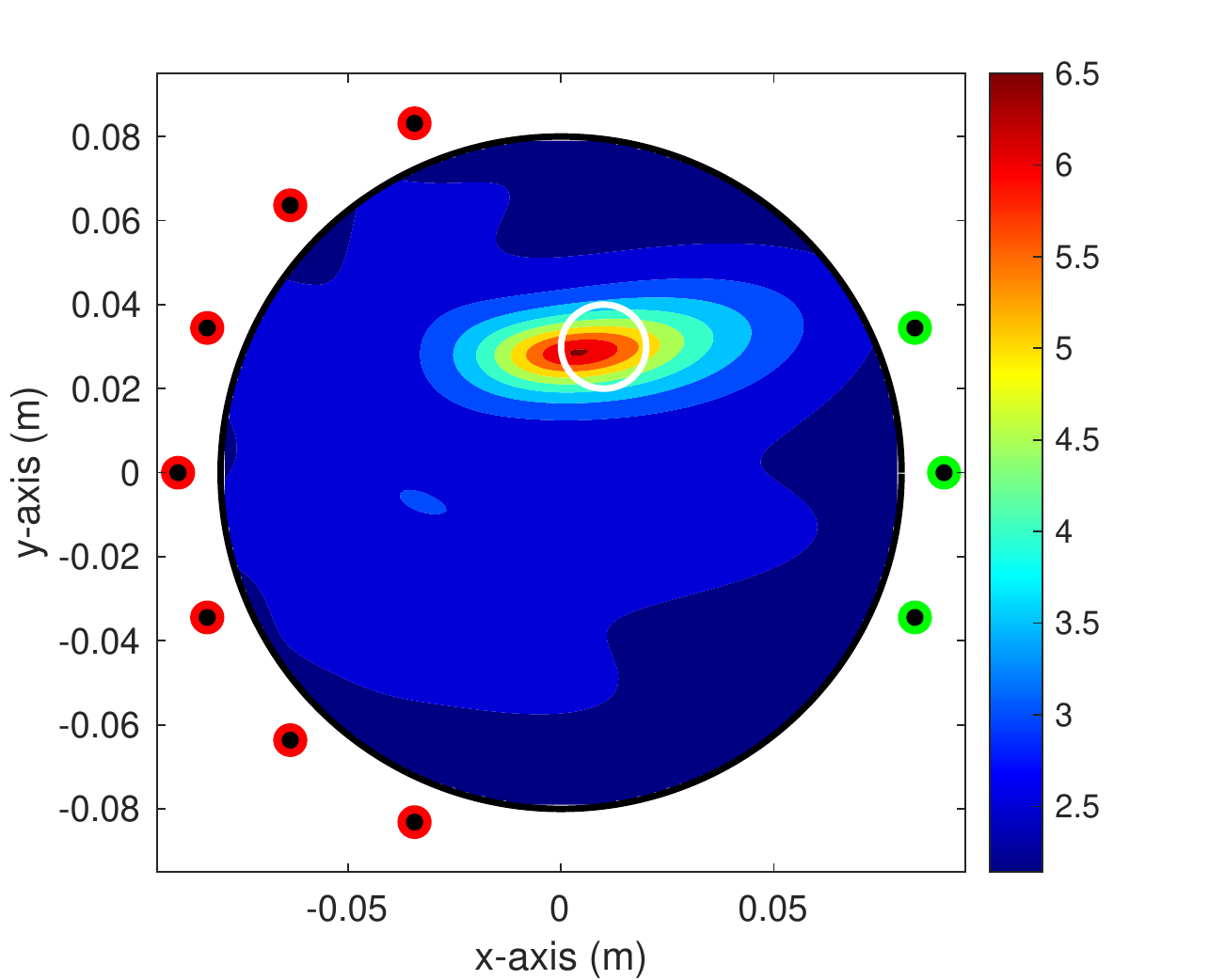}\hfill
  \includegraphics[width=0.25\textwidth]{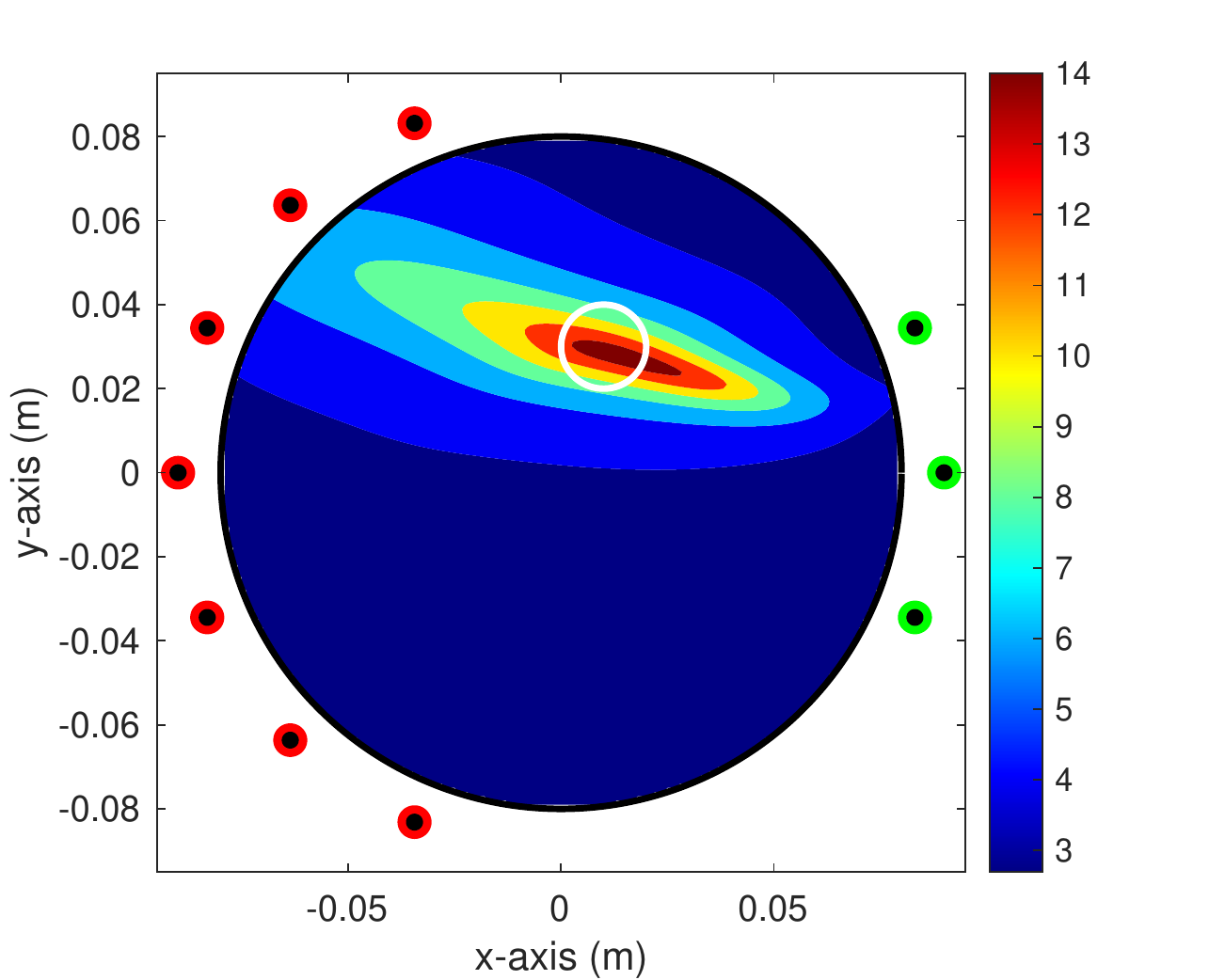}\hfill
  \includegraphics[width=0.25\textwidth]{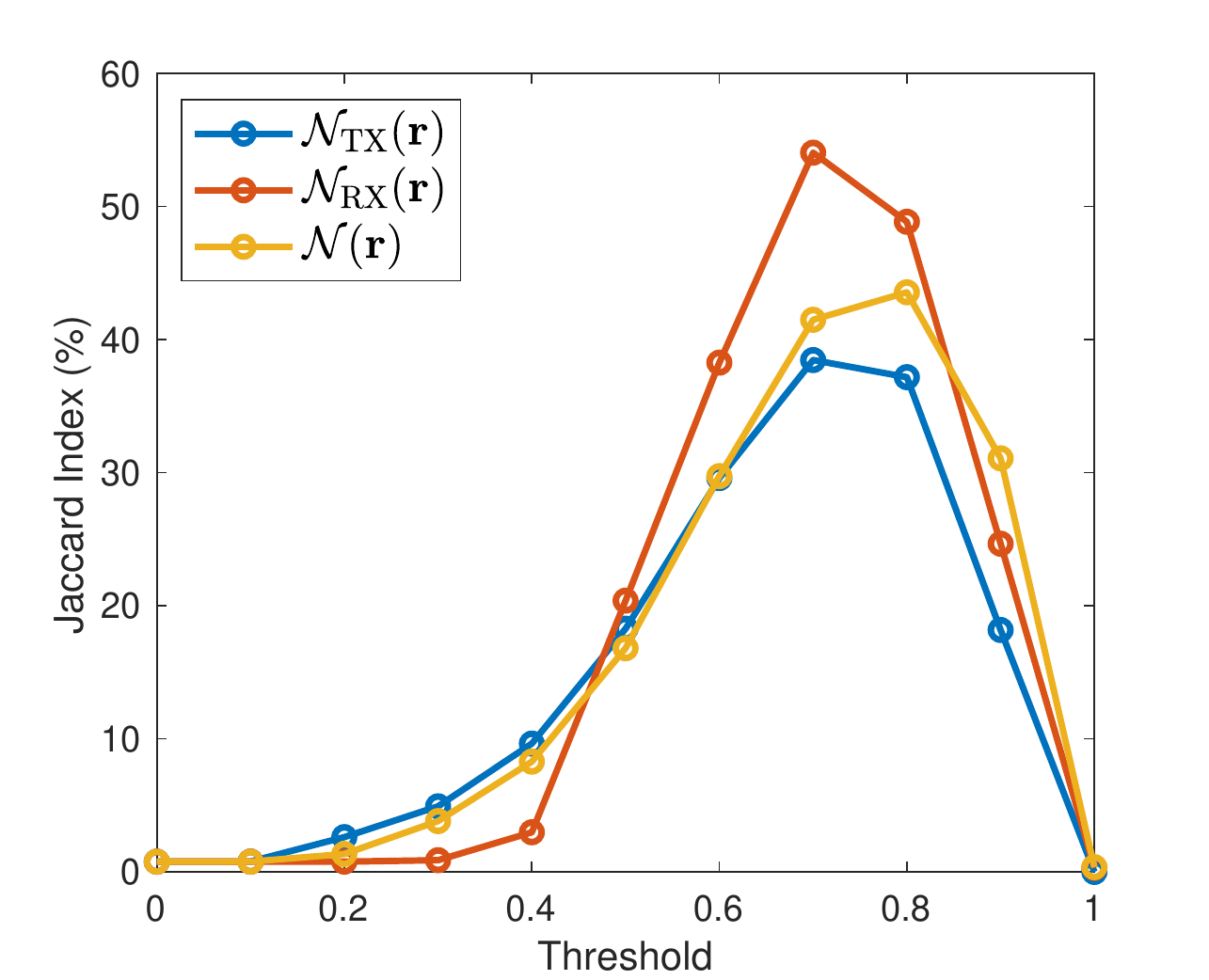}\\
  \includegraphics[width=0.25\textwidth]{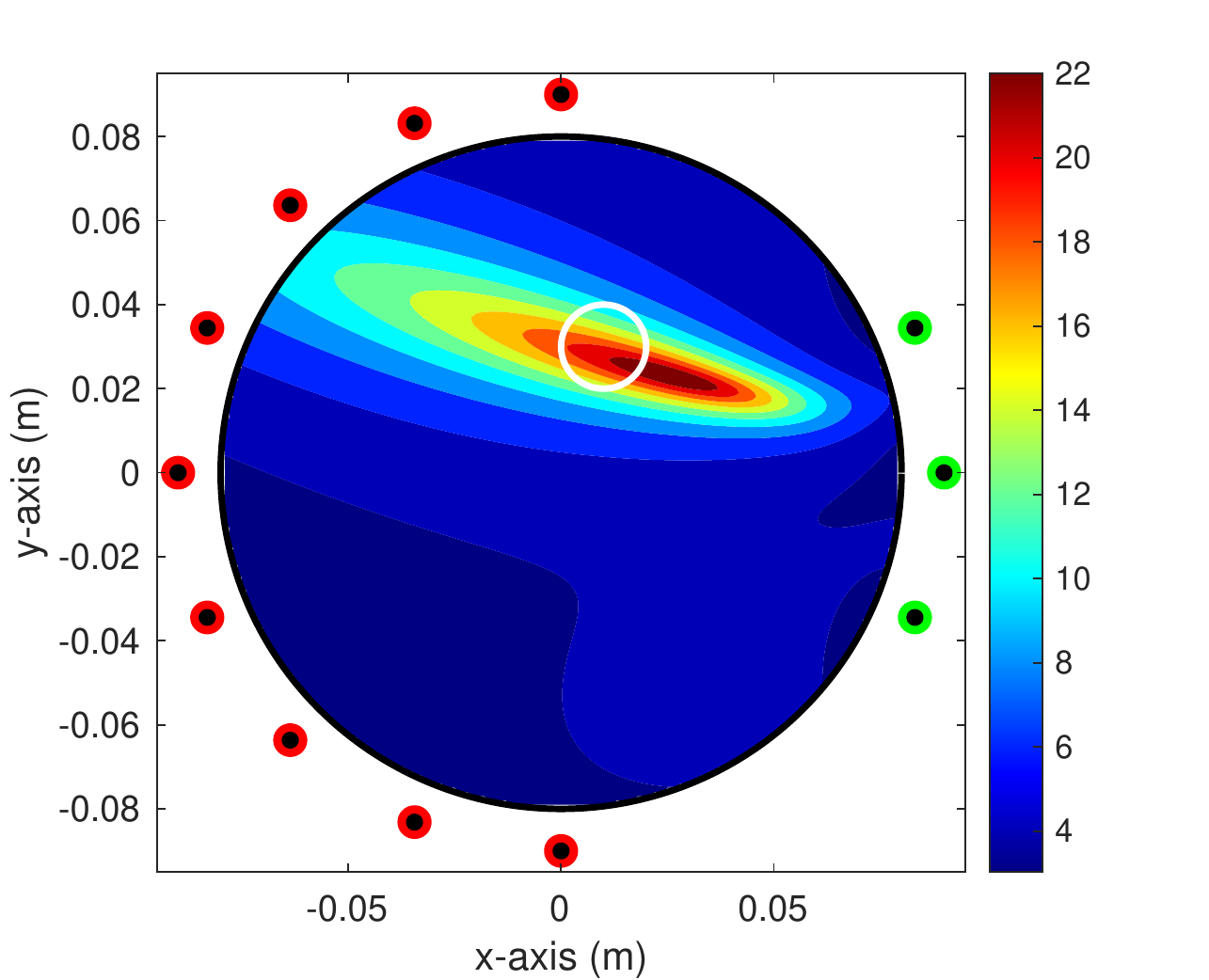}\hfill
  \includegraphics[width=0.25\textwidth]{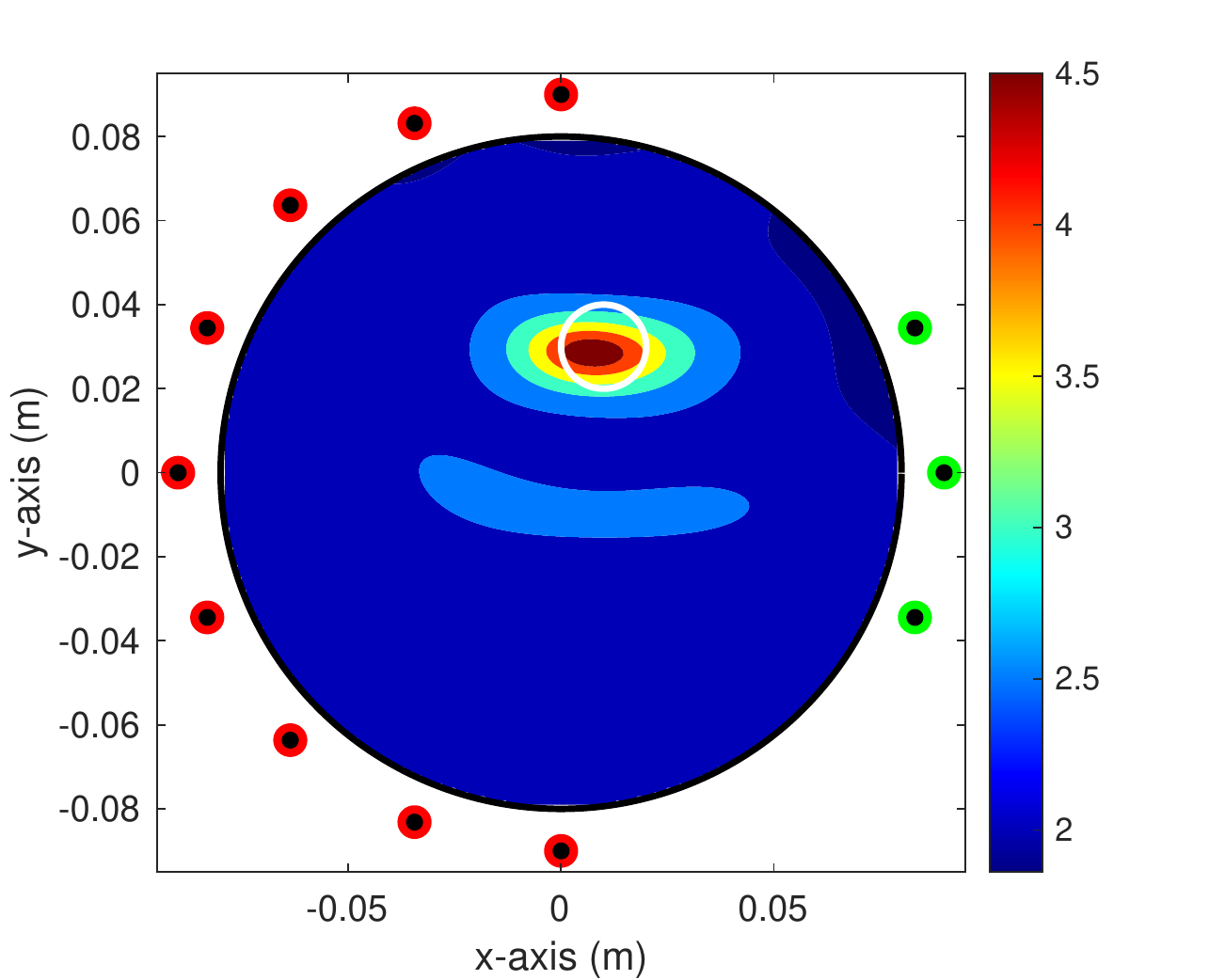}\hfill
  \includegraphics[width=0.25\textwidth]{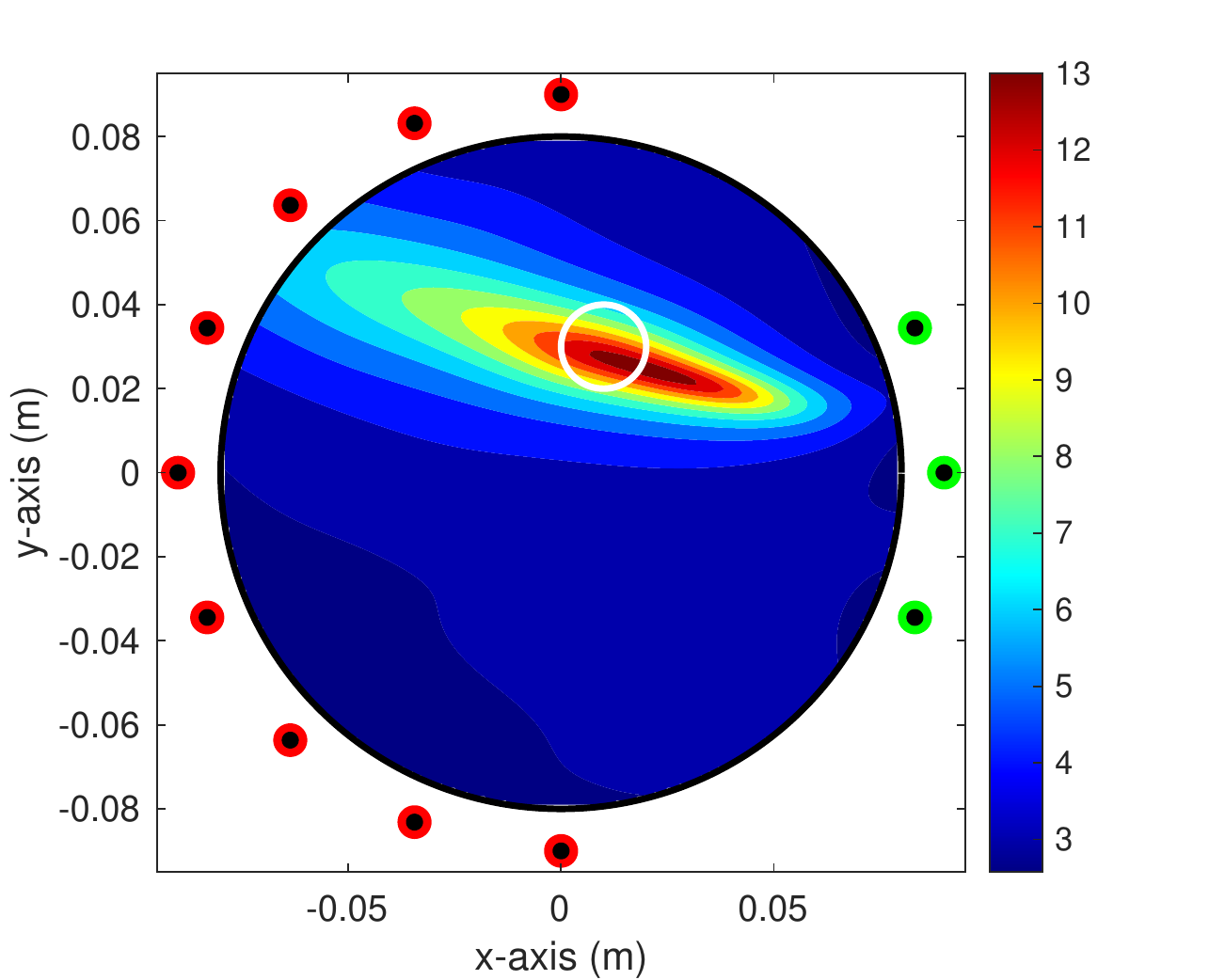}\hfill
  \includegraphics[width=0.25\textwidth]{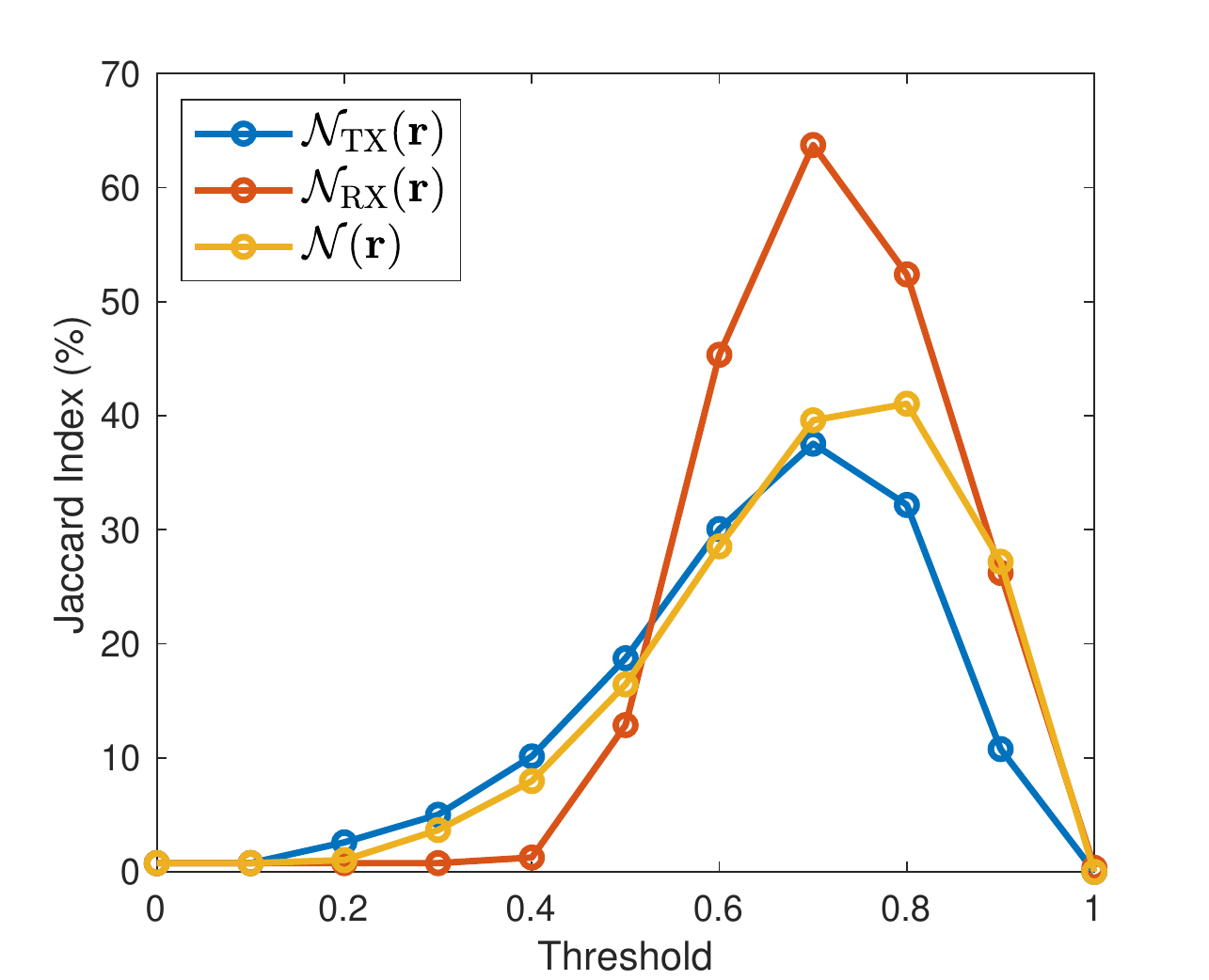}
  \caption{\label{Result1}(Example \ref{ex1}) Maps of $\mathfrak{F}_{\tx}(\mr)$ (first column), $\mathfrak{F}_{\rx}(\mr)$ (second column), $\mathfrak{F}(\mr)$ (third column), and Jaccard index (fourth column). Green and red colored circles describe the location of transmitters and receivers, respectively.}
\end{figure}

\begin{figure}[h]
  \centering
  \includegraphics[width=0.25\textwidth]{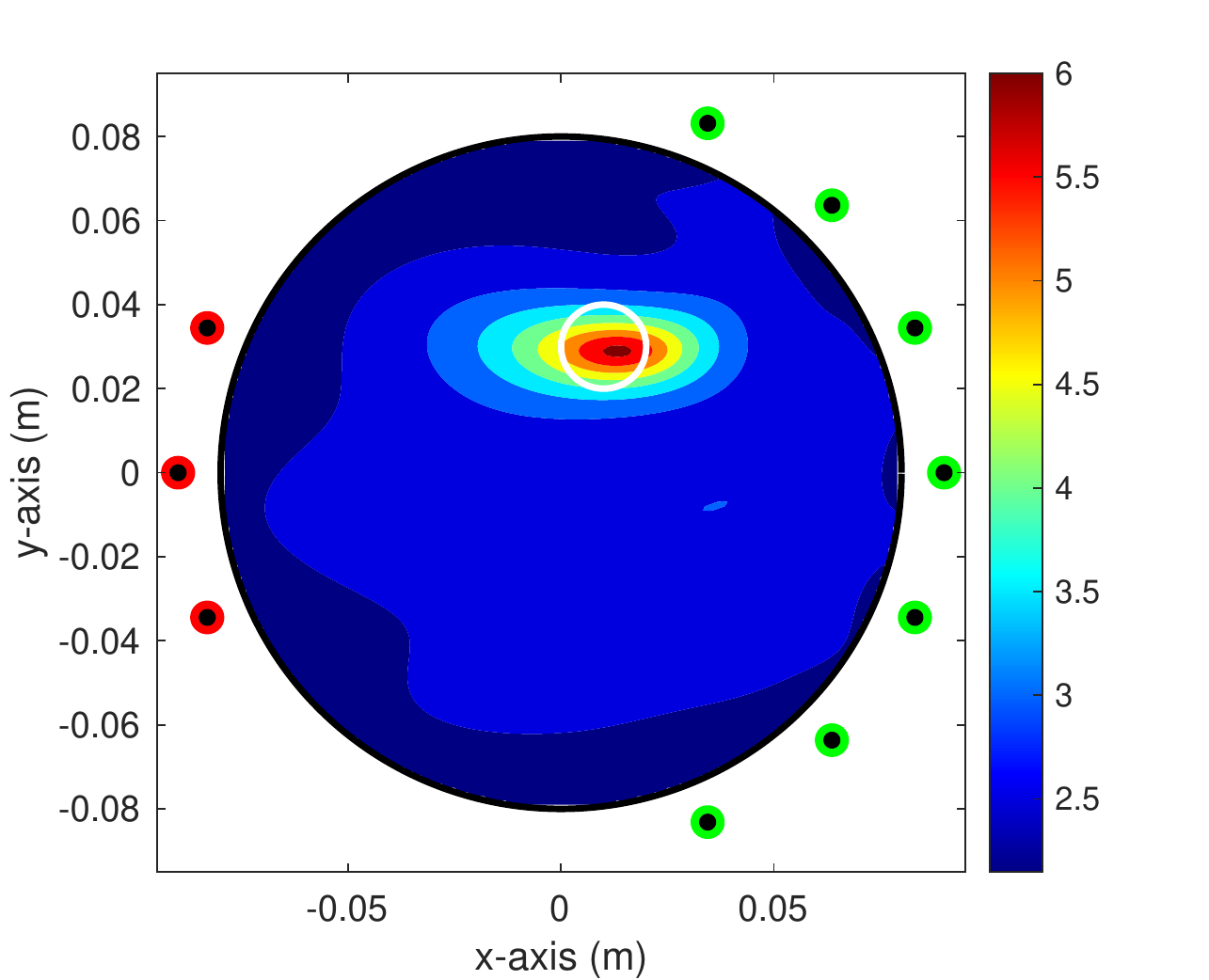}\hfill
  \includegraphics[width=0.25\textwidth]{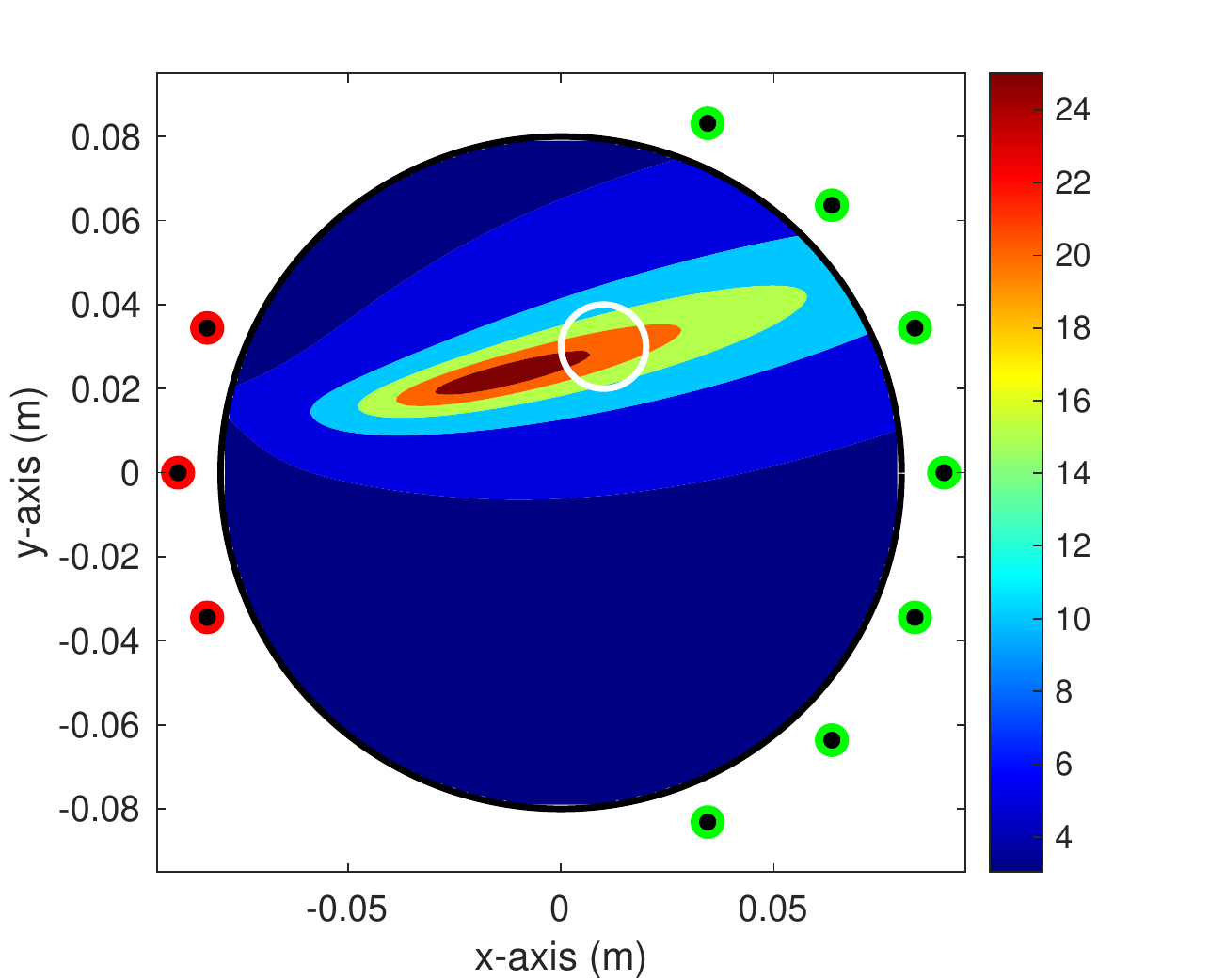}\hfill
  \includegraphics[width=0.25\textwidth]{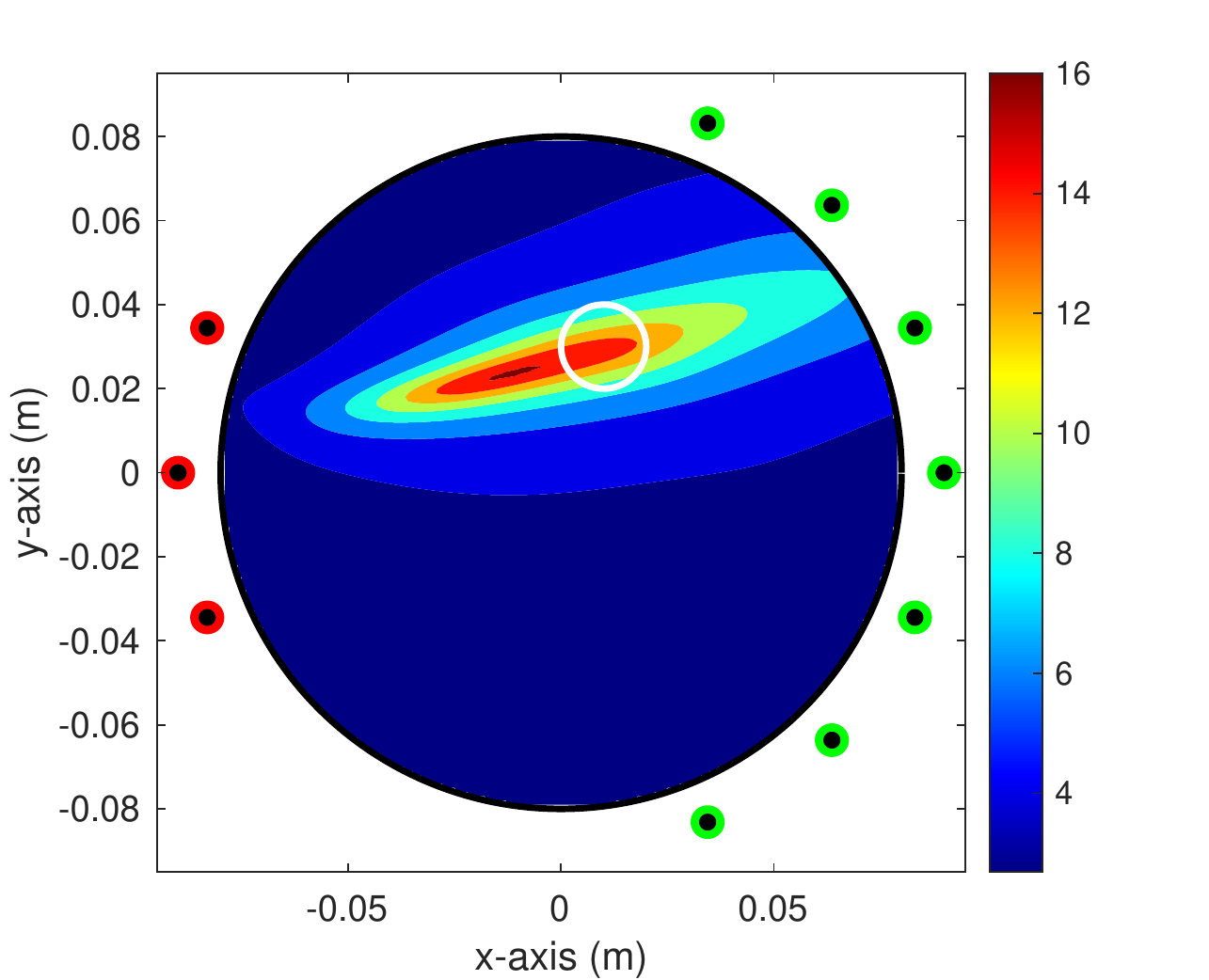}\hfill
  \includegraphics[width=0.25\textwidth]{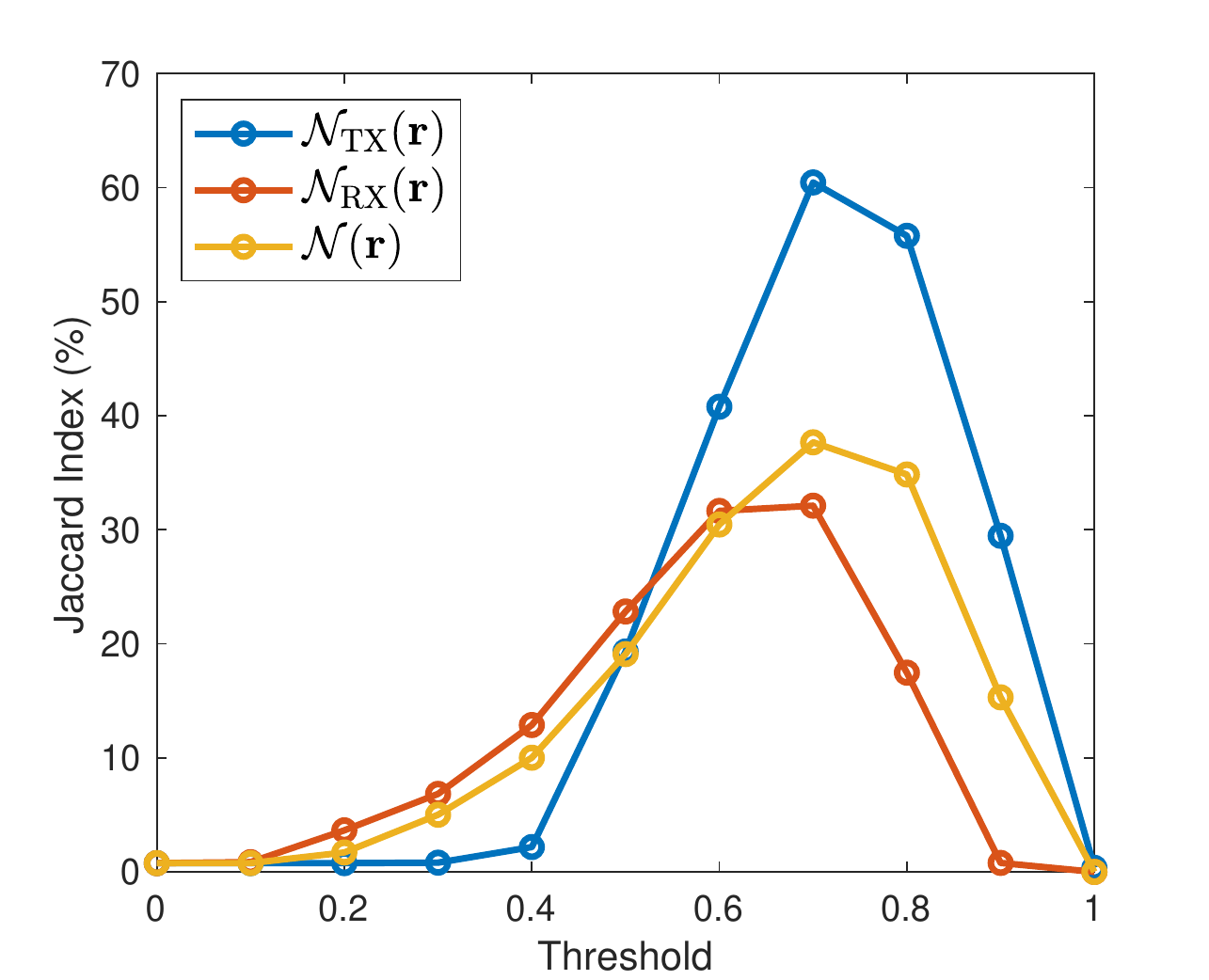}\\
  \includegraphics[width=0.25\textwidth]{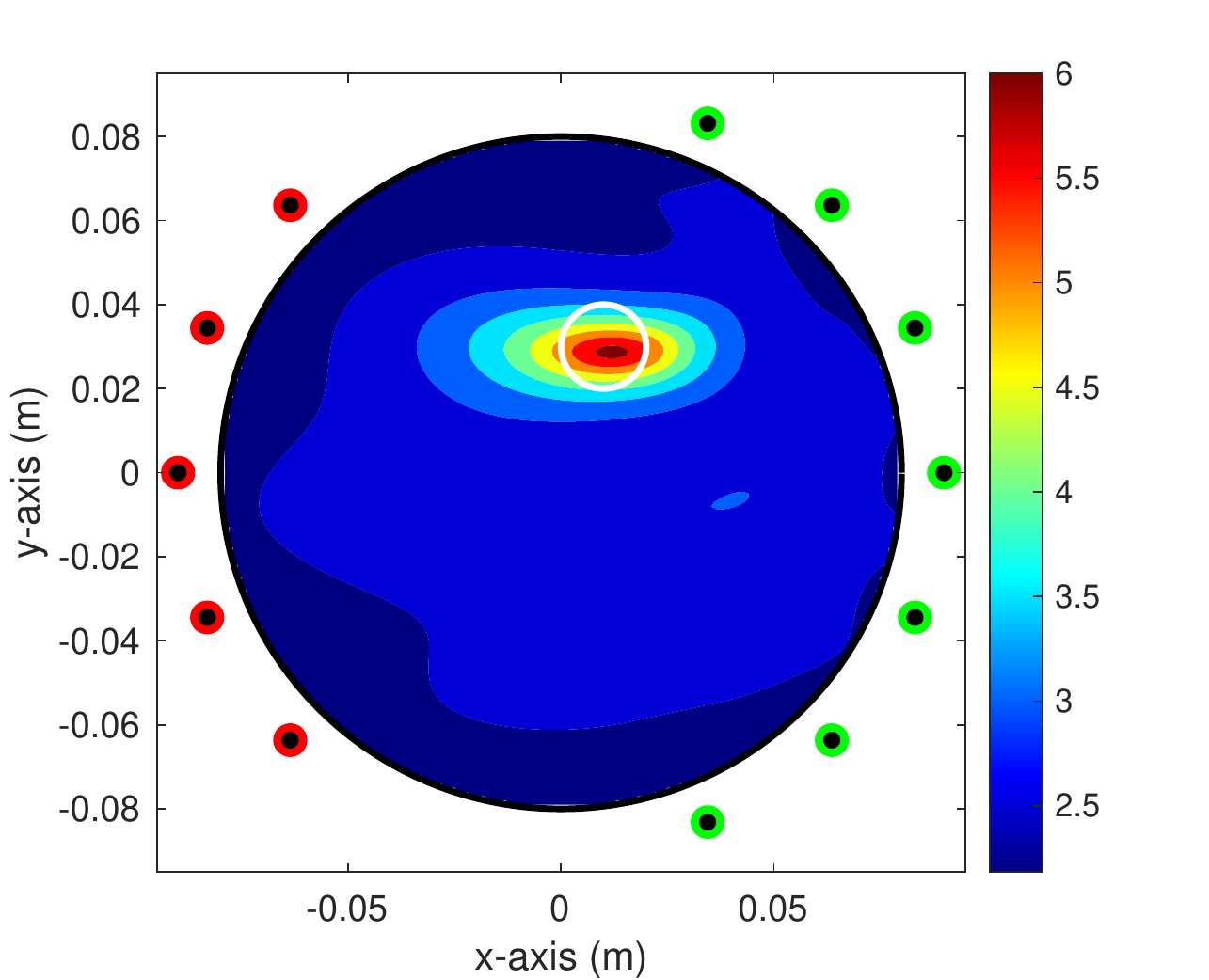}\hfill
  \includegraphics[width=0.25\textwidth]{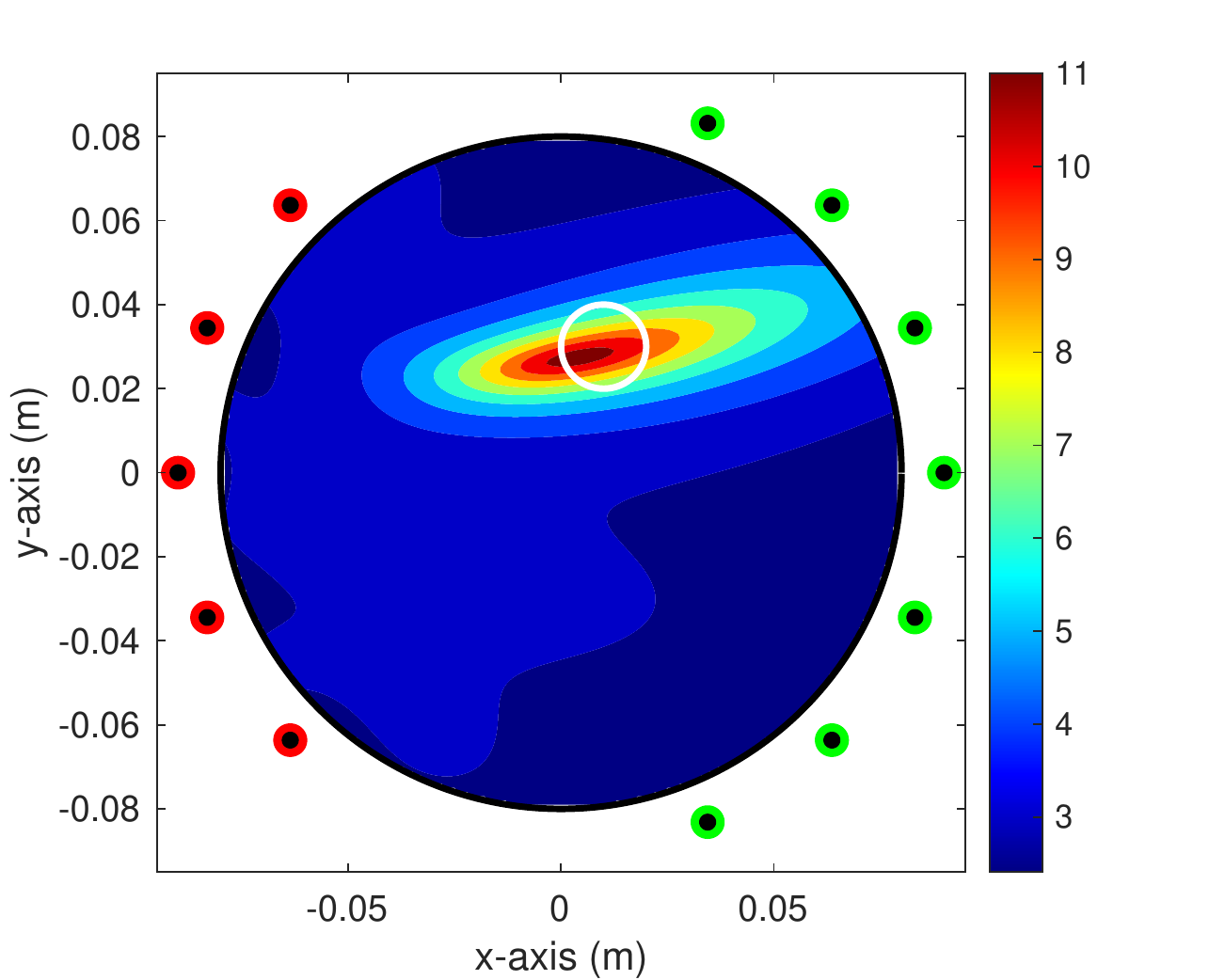}\hfill
  \includegraphics[width=0.25\textwidth]{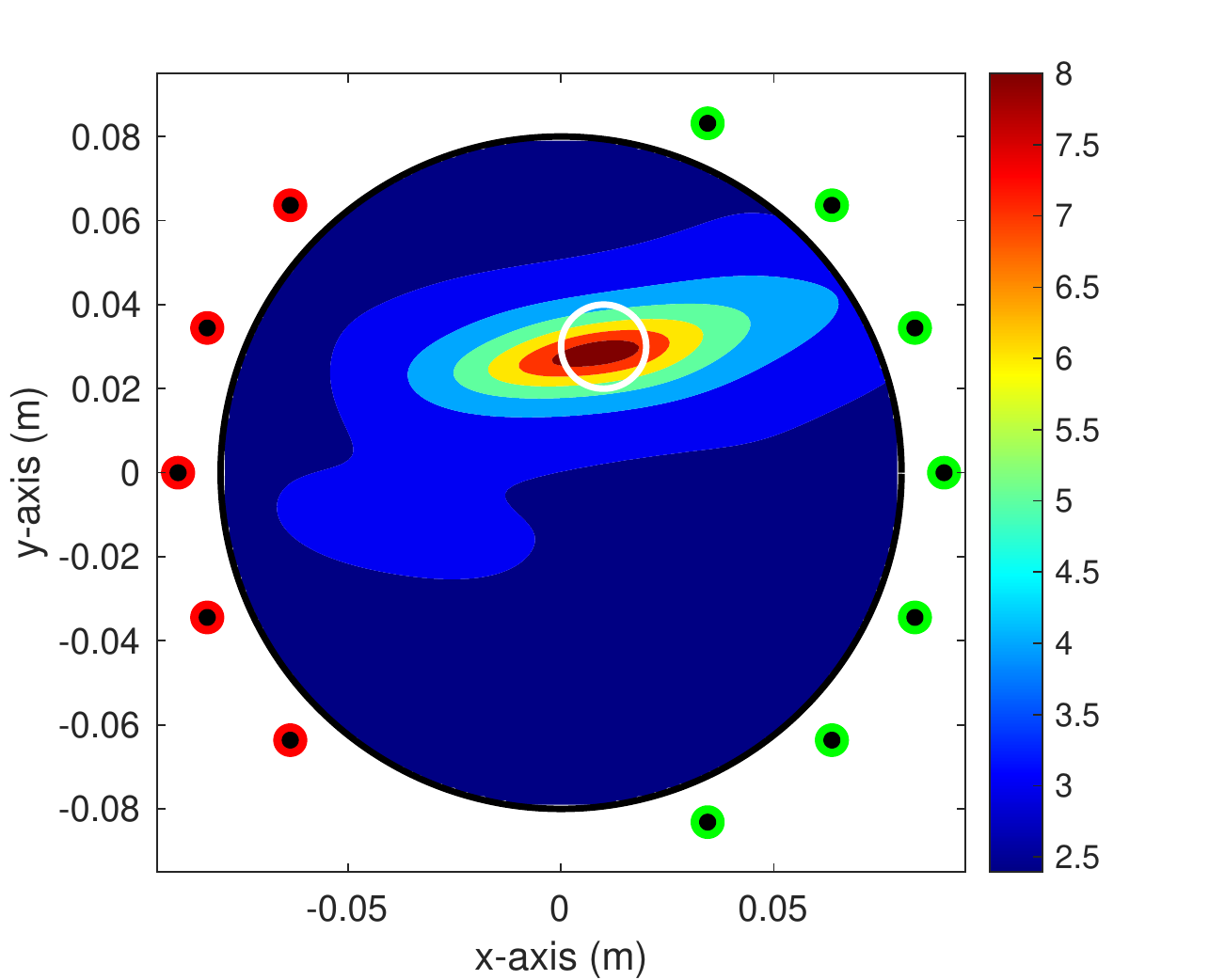}\hfill
  \includegraphics[width=0.25\textwidth]{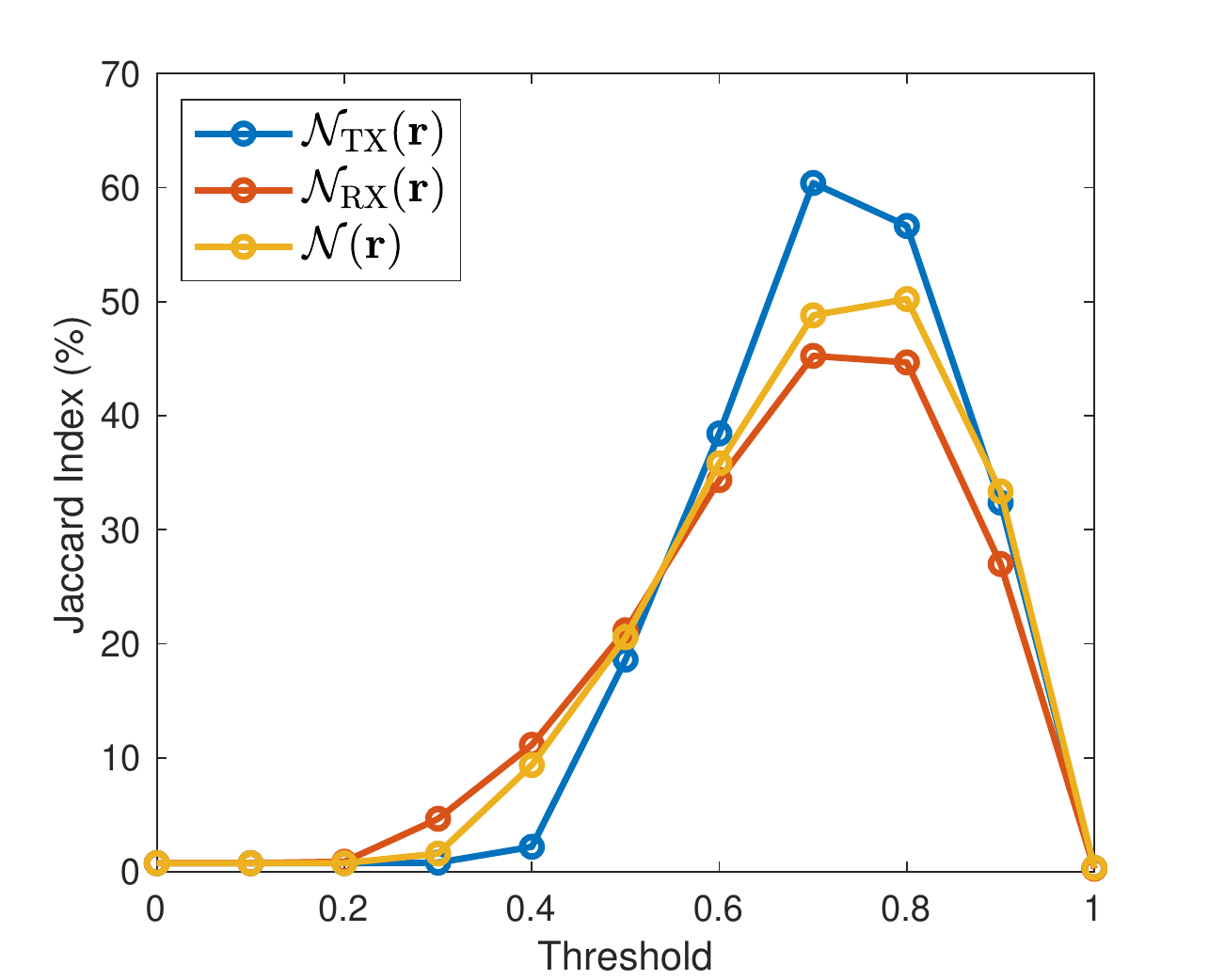}\\
  \includegraphics[width=0.25\textwidth]{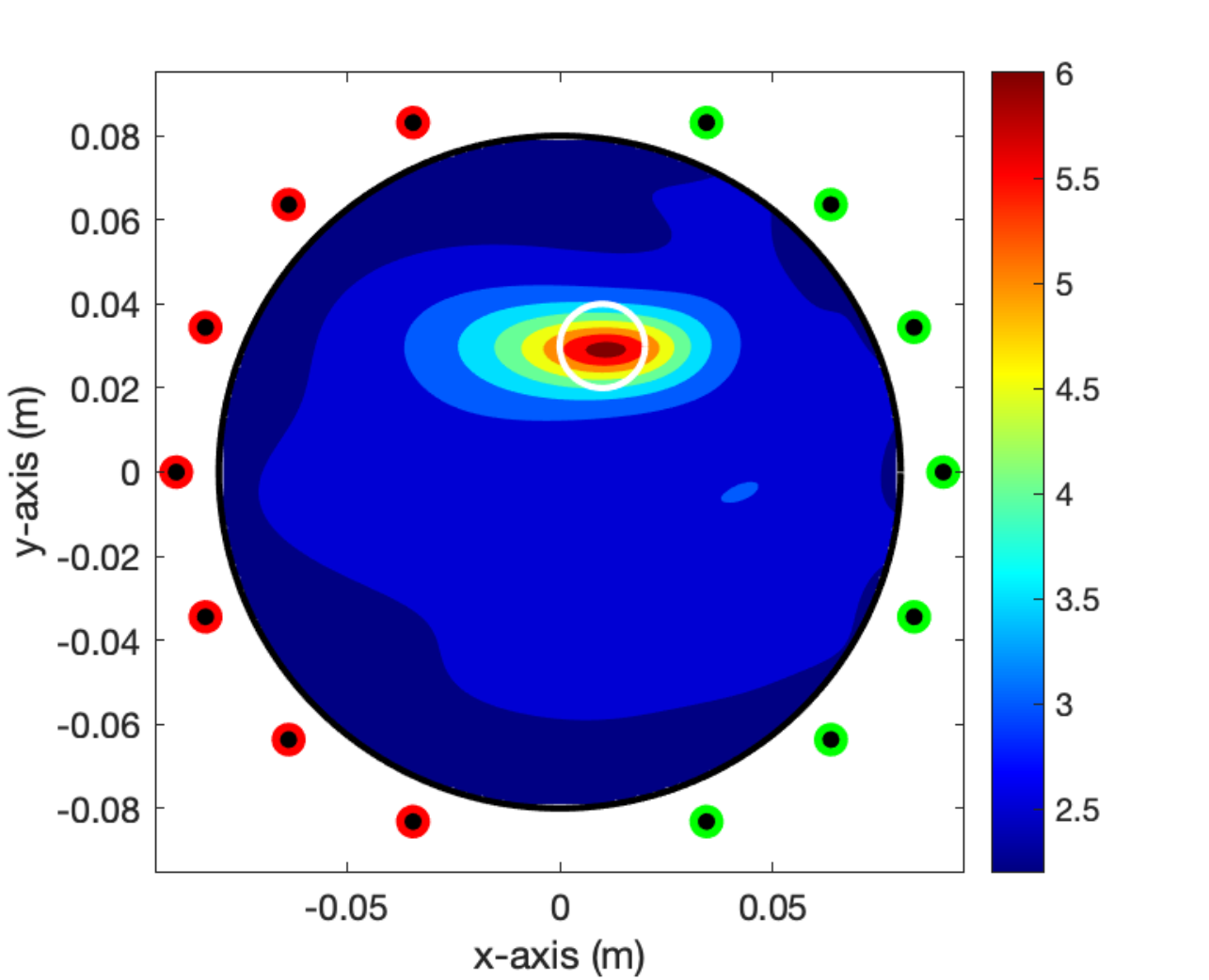}\hfill
  \includegraphics[width=0.25\textwidth]{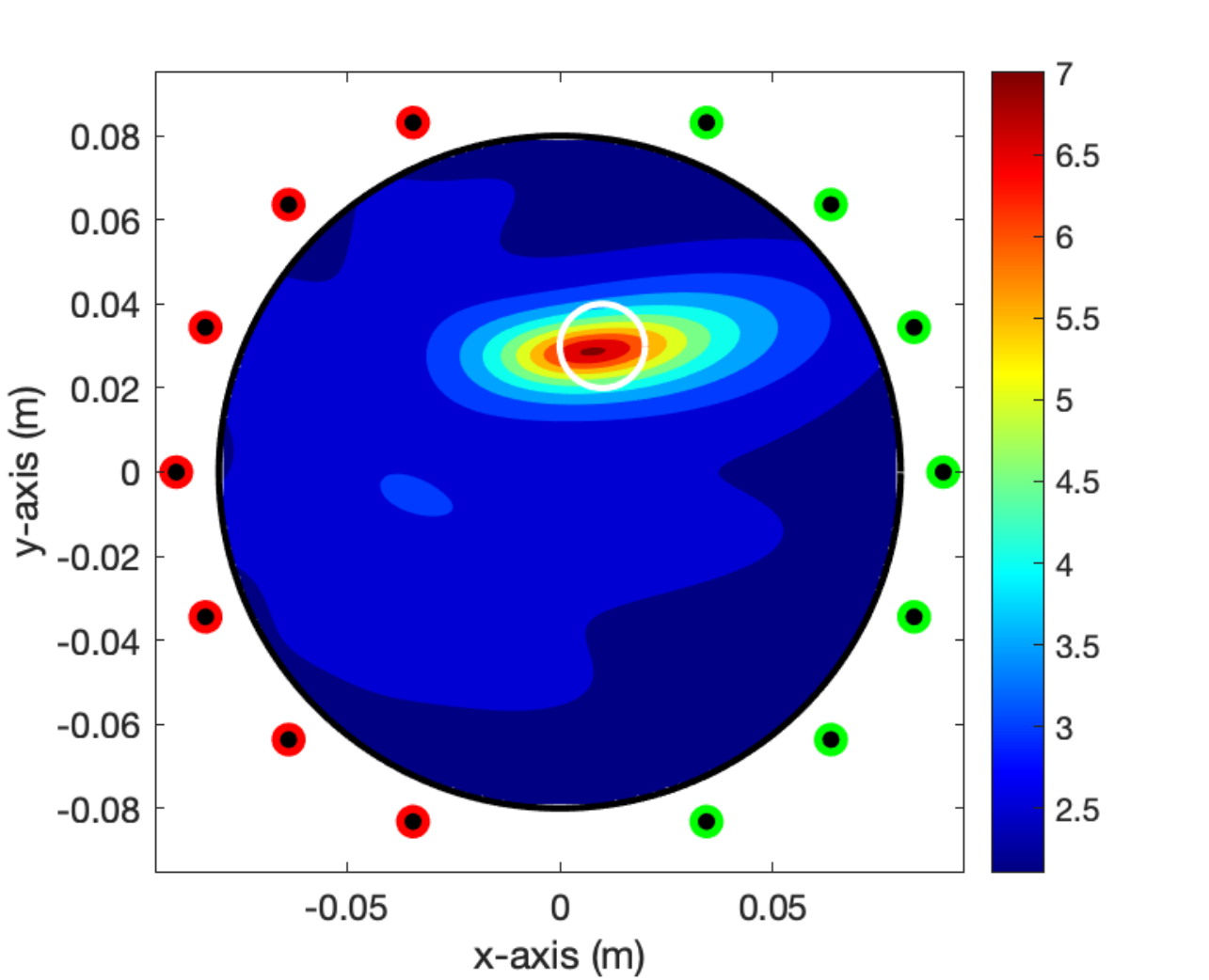}\hfill
  \includegraphics[width=0.25\textwidth]{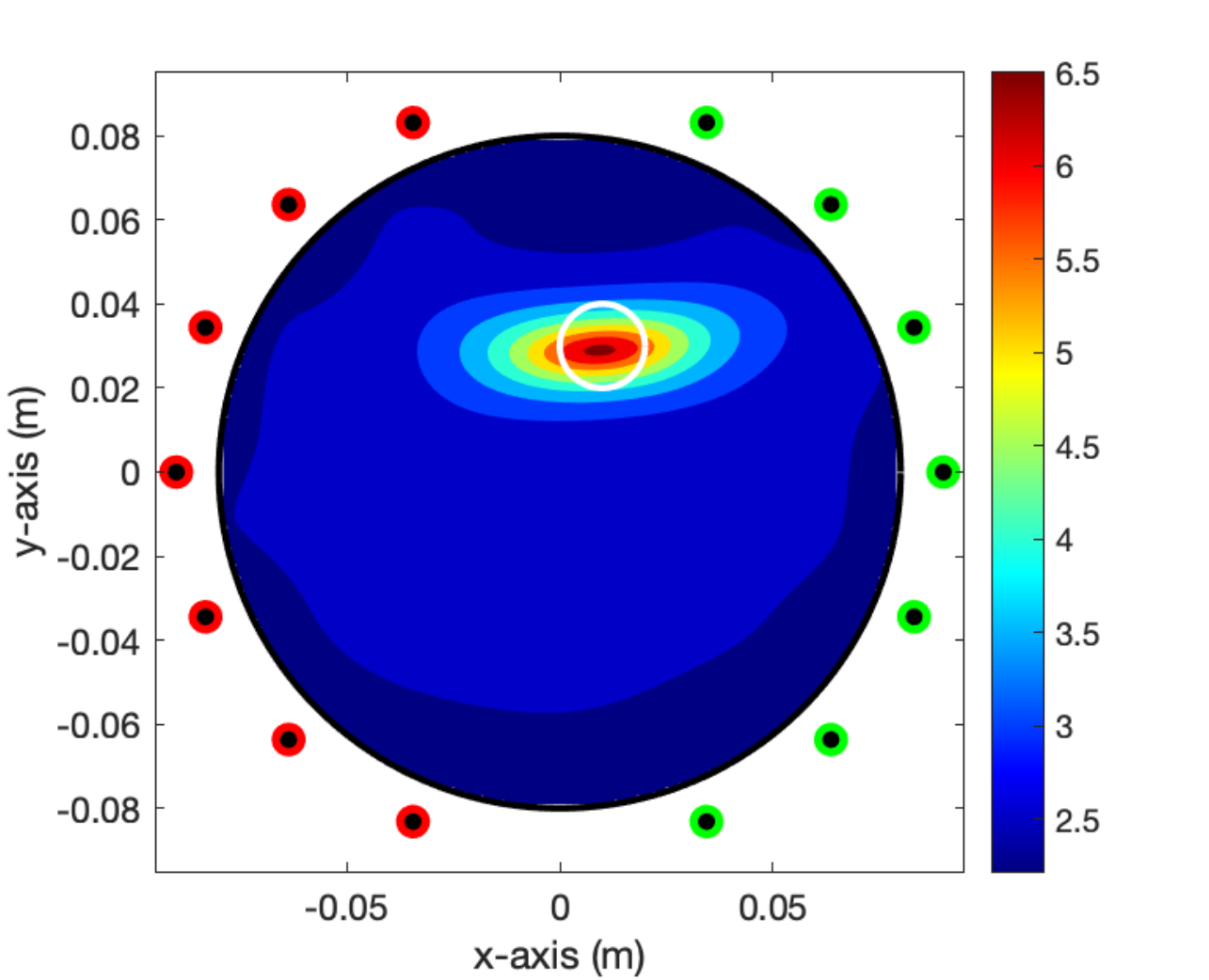}\hfill
  \includegraphics[width=0.25\textwidth]{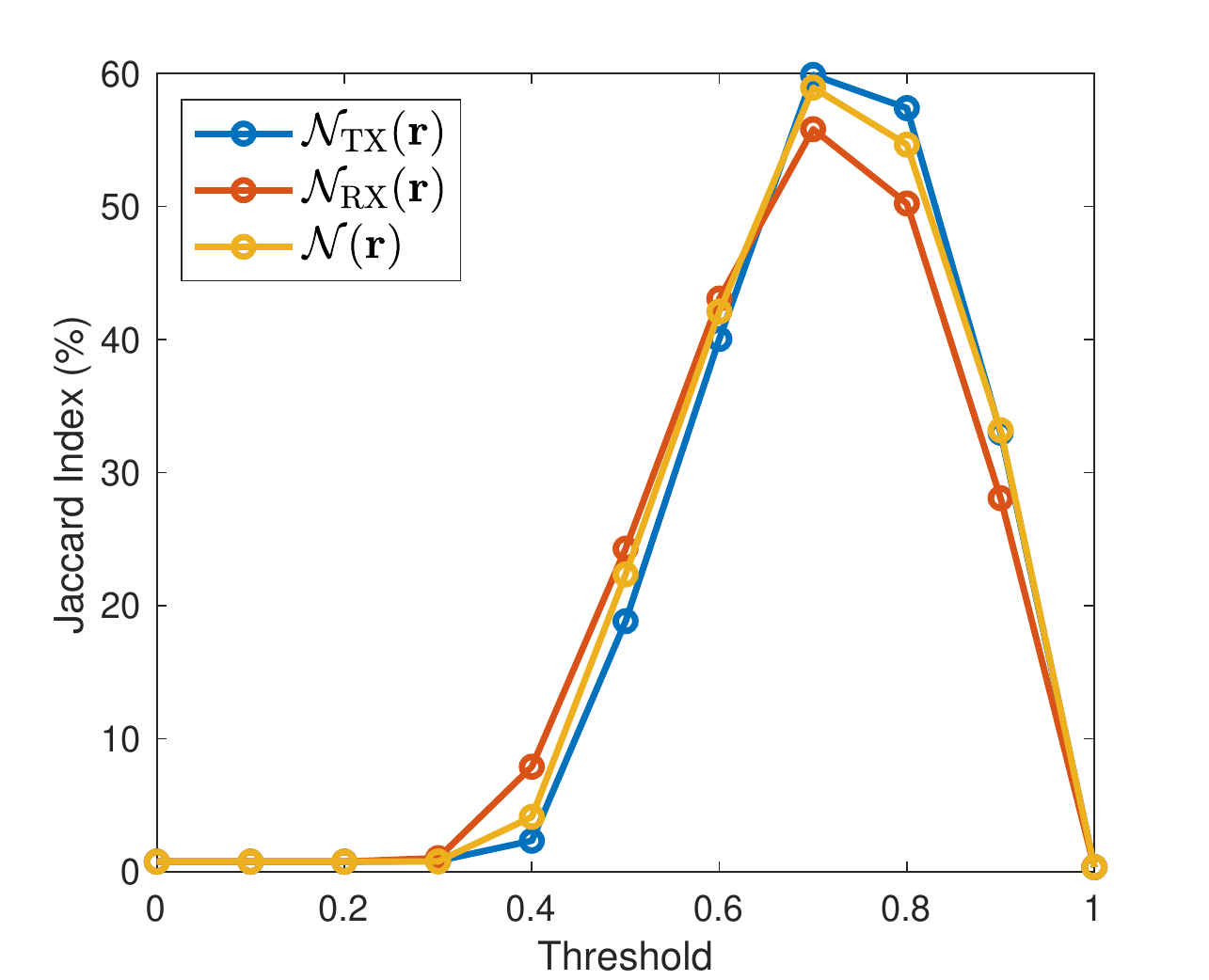}\\
  \includegraphics[width=0.25\textwidth]{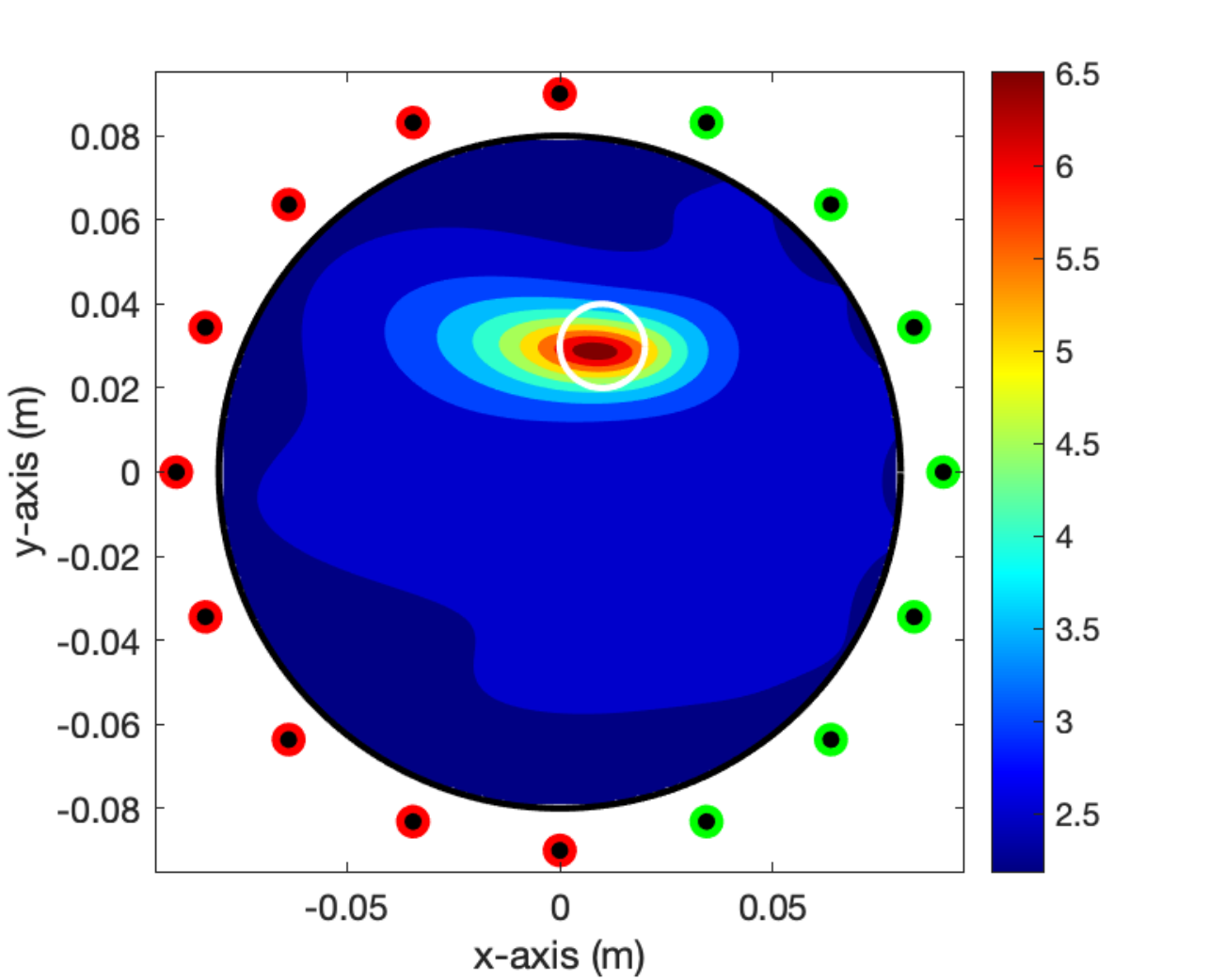}\hfill
  \includegraphics[width=0.25\textwidth]{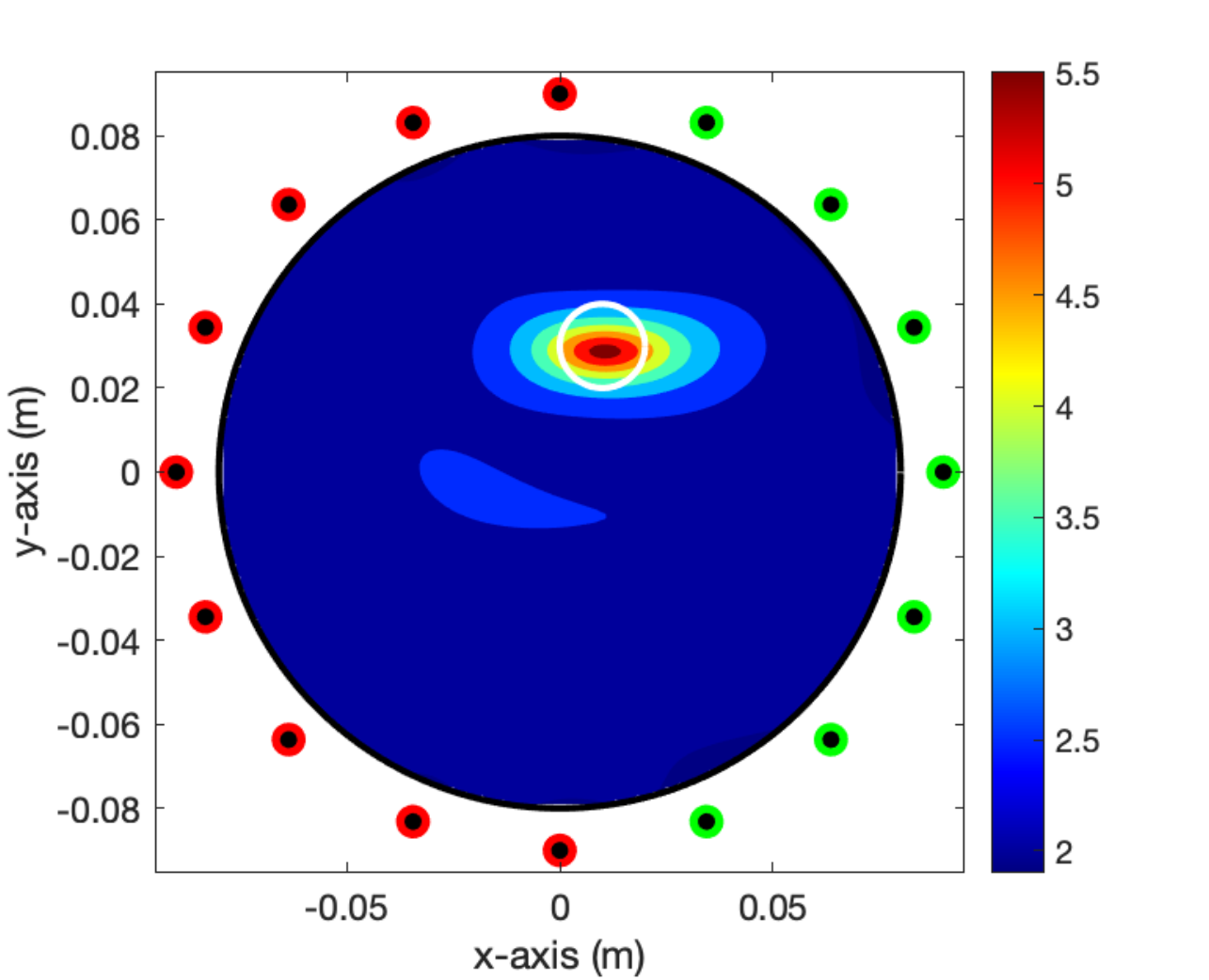}\hfill
  \includegraphics[width=0.25\textwidth]{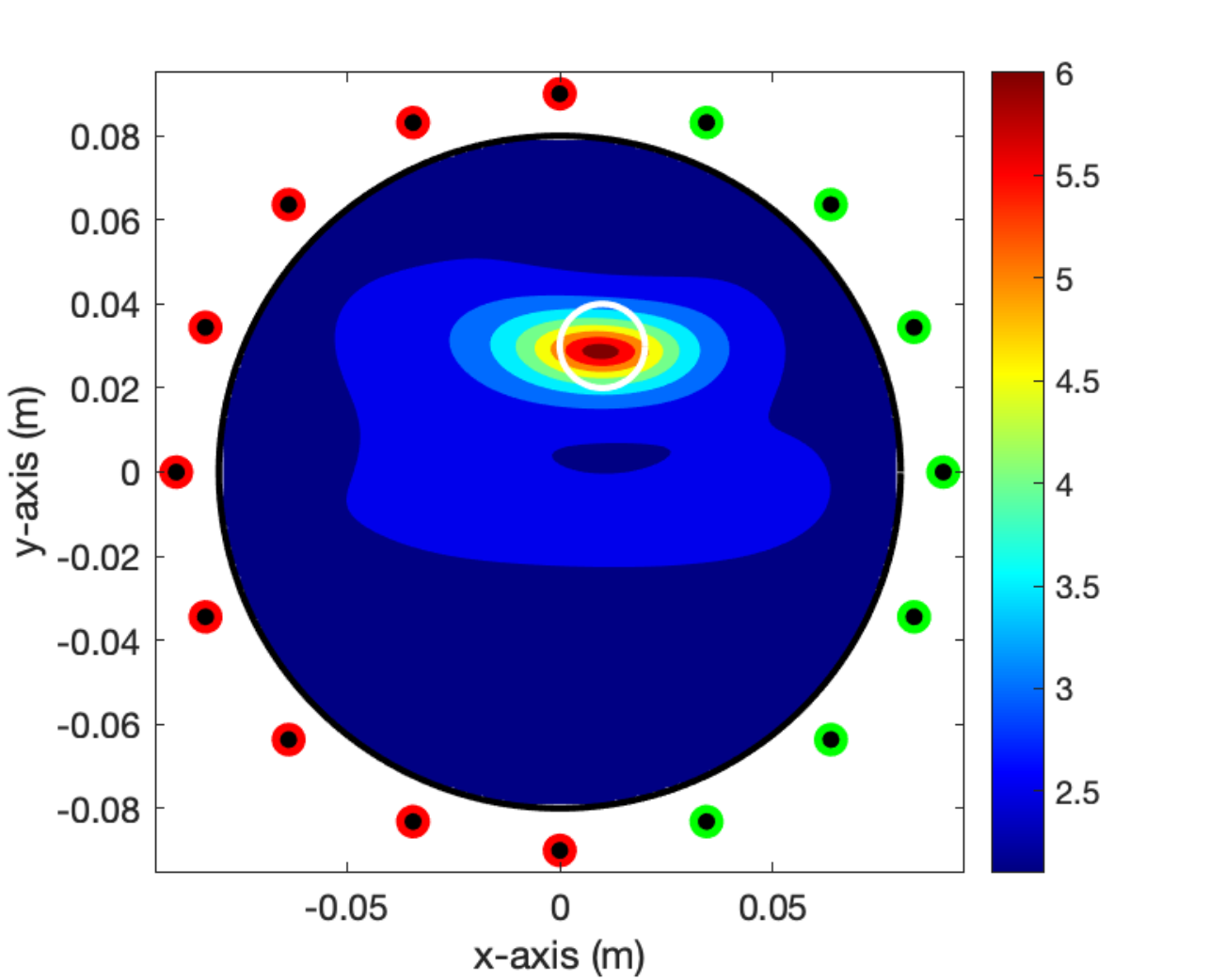}\hfill
  \includegraphics[width=0.25\textwidth]{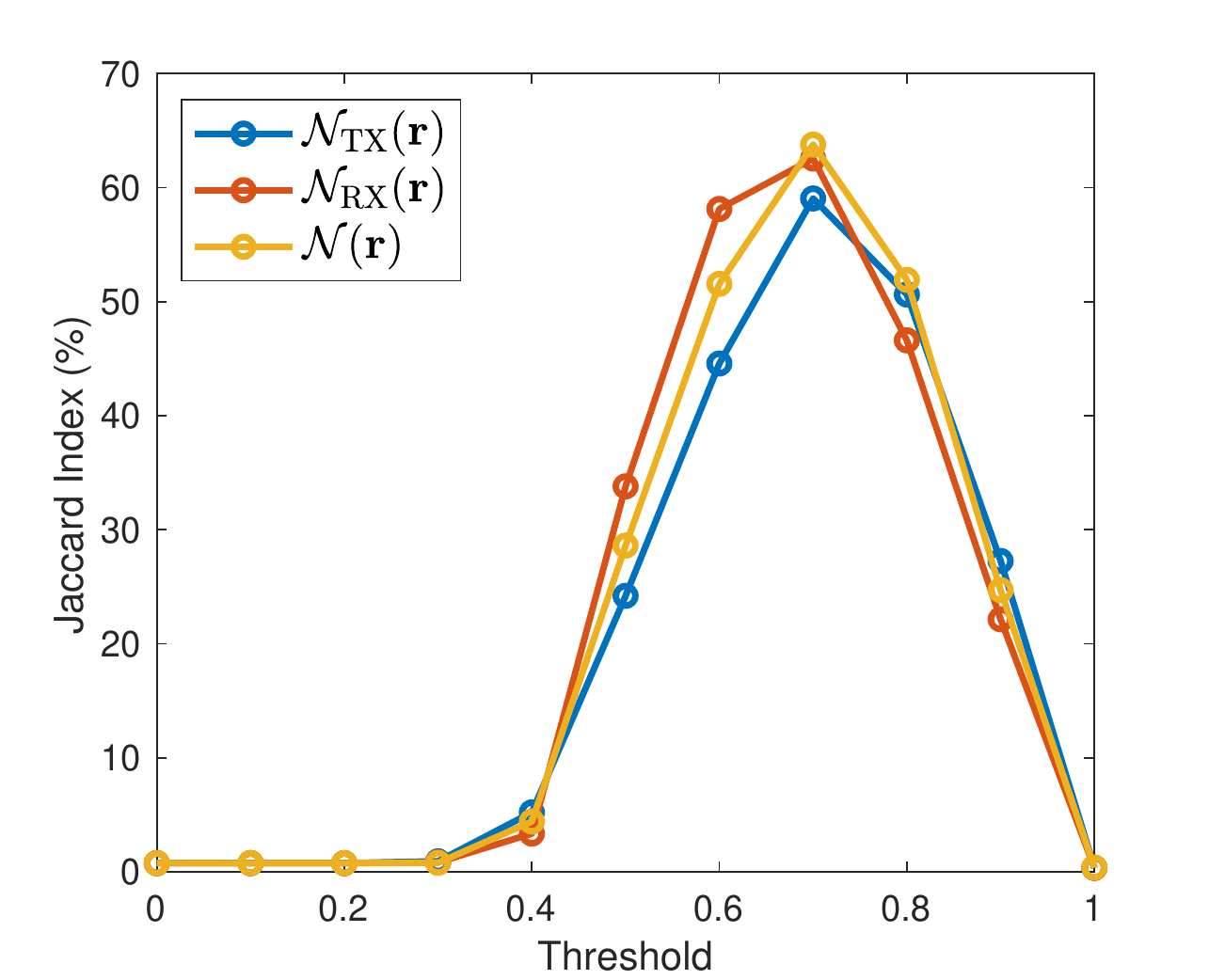}
  \caption{\label{Result2}(Example \ref{ex2}) Maps of $\mathfrak{F}_{\tx}(\mr)$ (first column), $\mathfrak{F}_{\rx}(\mr)$ (second column), $\mathfrak{F}(\mr)$ (third column), and Jaccard index (fourth column). Green and red colored circles describe the location of transmitters and receivers, respectively.}
\end{figure}

\begin{example}[Effects on the Arrangement of Antennas: Single Anomaly]\label{ex3}
As we discussed in the Remark \ref{Remark3}, it will be possible to obtain a good result if $\mathcal{E}_{\rx}(\mr,\ma_n)=0$ and $\mathcal{E}_{\tx}(\mr,\mb_m)=0$ simultaneously. Based on the result in \cite{P-SUB16}, if one adopts the transmitting antenna setting $\mathbf{B}_\star=\set{\mathcal{D}_m,m=1,3,5,7,9,11,13,15}$ and receiving antenna setting $\mathbf{A}_\star=\set{\mathcal{D}_m,m=2,4,6,8,10,12,14,16}$, the factors $\mathcal{E}_{\rx}(\mr,\ma_n)$ and $\mathcal{E}_{\tx}(\mr,\mb_m)$ of \eqref{StructureImagingFunction} can be removed, i.e., this is the best option for achieving good results. To demonstrate this, we consider the maps of $\mathfrak{F}(\mr)$ in Figure \ref{Result3} with transmitting antenna settings $\mathbf{B}_5=\set{\mathcal{D}_m,m=1,5,9,13}$ and $\mathbf{B}_\star$, and receiving antenna settings $\mathbf{A}_5=\set{\mathcal{D}_n:n=2,7,11,15}$, $\mathbf{A}_6=\set{\mathcal{D}_n:n=2,4,6,8,10,12,14}$, $\mathbf{A}_7=\set{\mathcal{D}_n:n=2,3,4,6,7,8,10,11,12,14,15,16}$, and $\mathbf{A}_\star$. Based on the imaging results, the peak of large magnitude appears at $\mr_\star$ however, due to the appearance of some artifacts with large magnitude (setting $\mathbf{A}_5\cup\mathbf{B}_5$) or another peak of large magnitude (settings $\mathbf{A}_6\cup\mathbf{B}_5$ and $\mathbf{A}_7\cup\mathbf{B}_5$), it is very hard to recognize the true location of $\mr_\star$. Fortunately, very good imaging result appeared with setting $\mathbf{A}_\star\cup\mathbf{B}_\star$ and by comparing with the results in Example \ref{ex1}, not only the location but also the shape of the outline $\Sigma$ can be identified clearly and accurately.
\end{example}

\begin{figure}[h]
  \centering
  \includegraphics[width=0.25\textwidth]{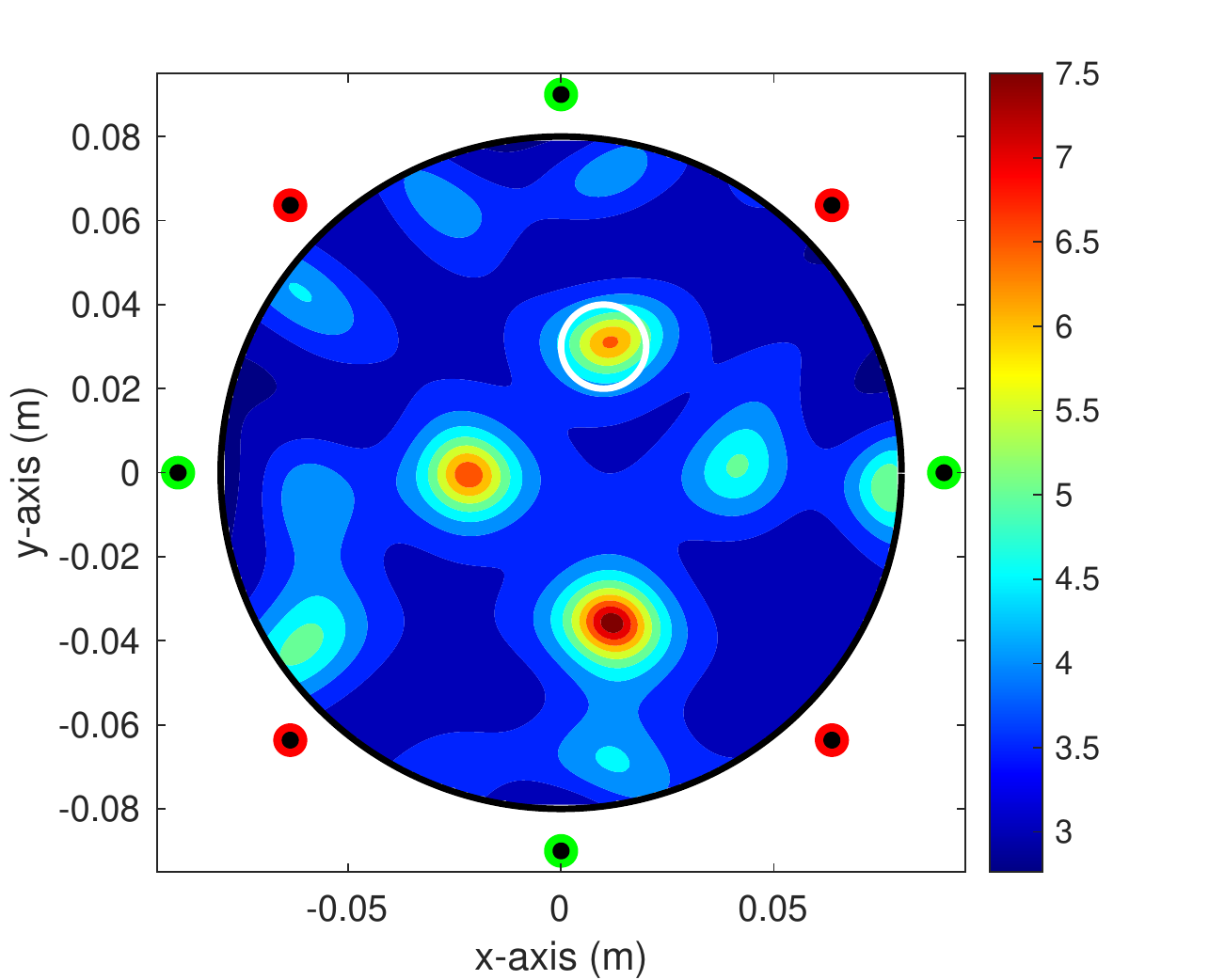}\hfill
  \includegraphics[width=0.25\textwidth]{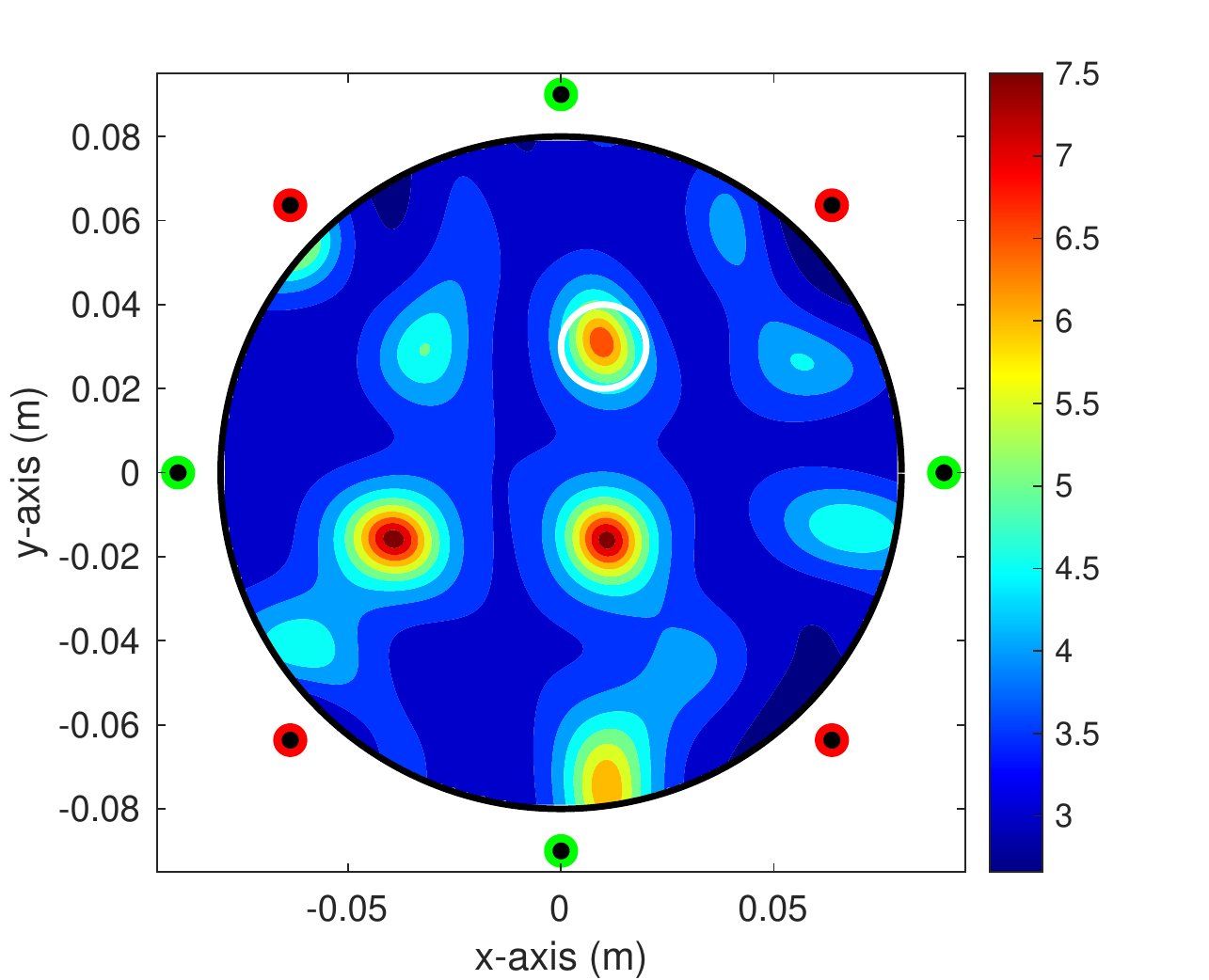}\hfill
  \includegraphics[width=0.25\textwidth]{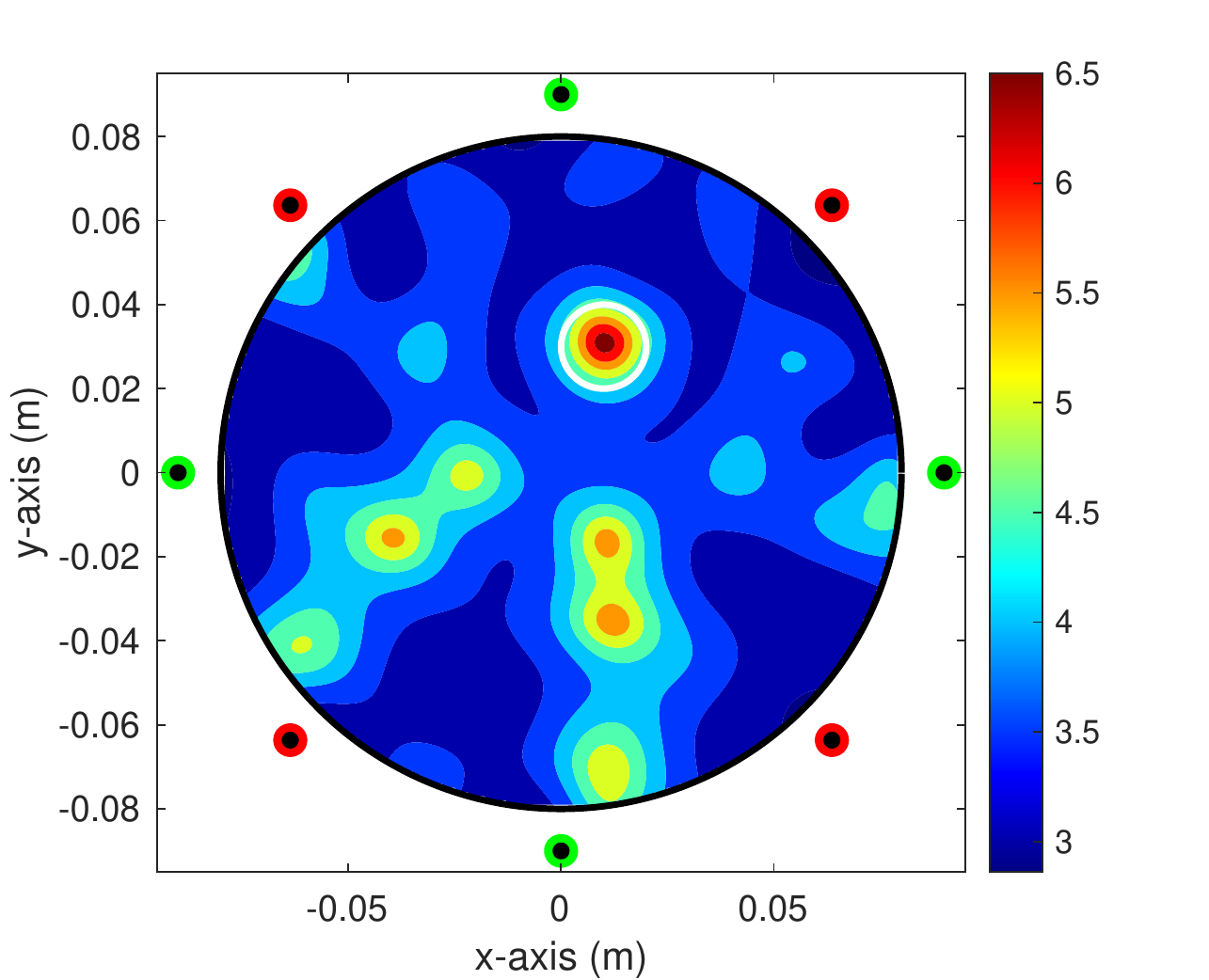}\hfill
  \includegraphics[width=0.25\textwidth]{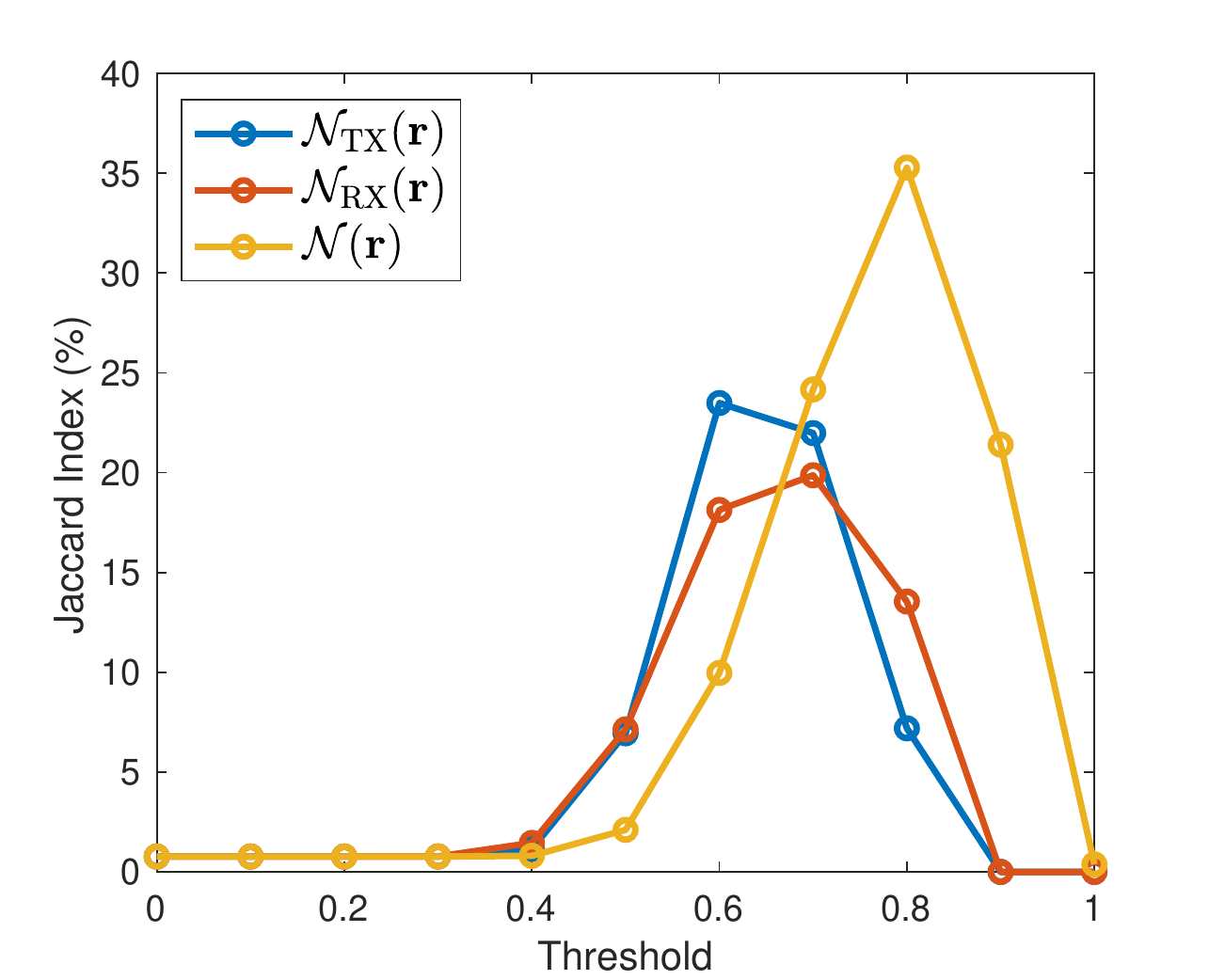}\\
  \includegraphics[width=0.25\textwidth]{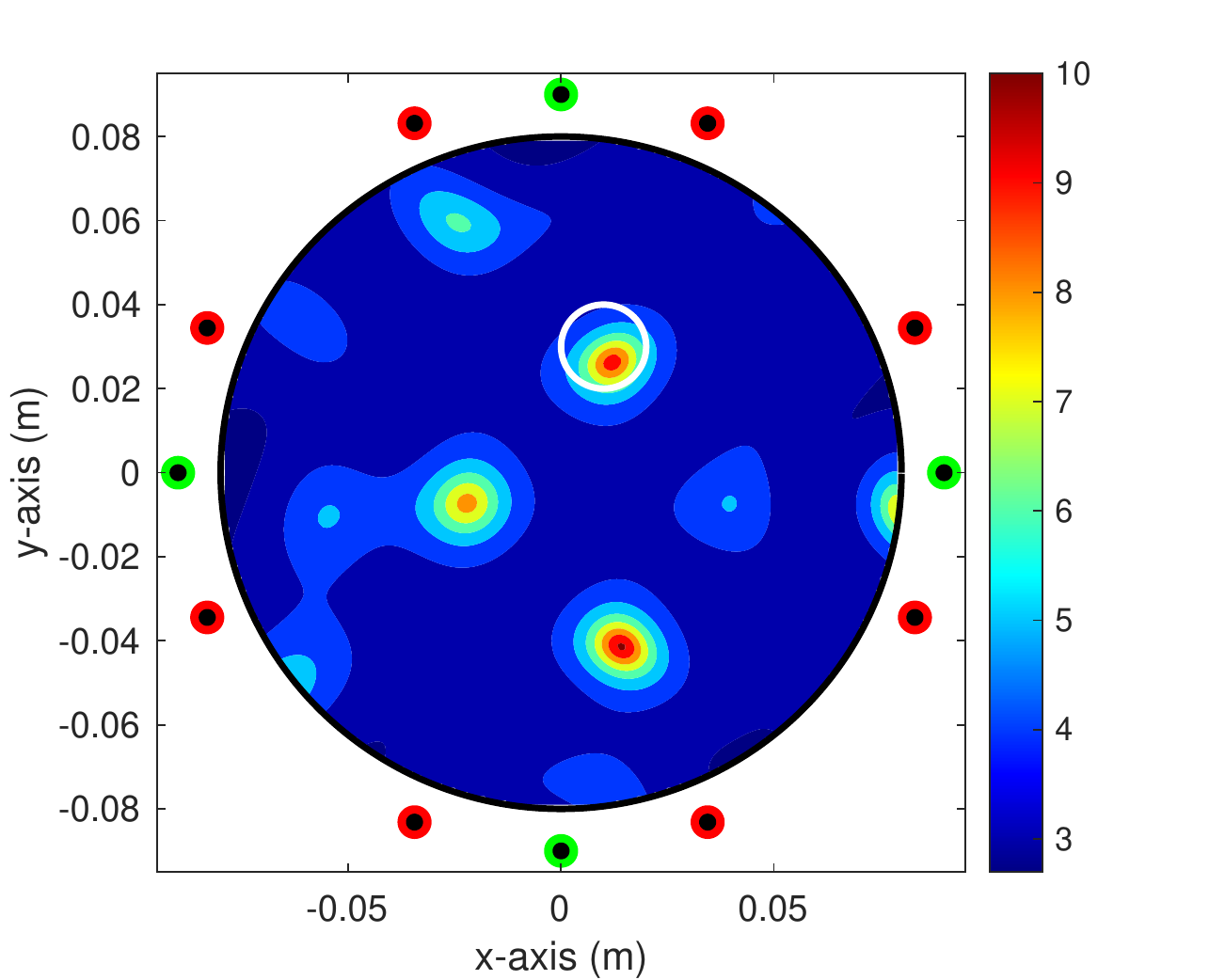}\hfill
  \includegraphics[width=0.25\textwidth]{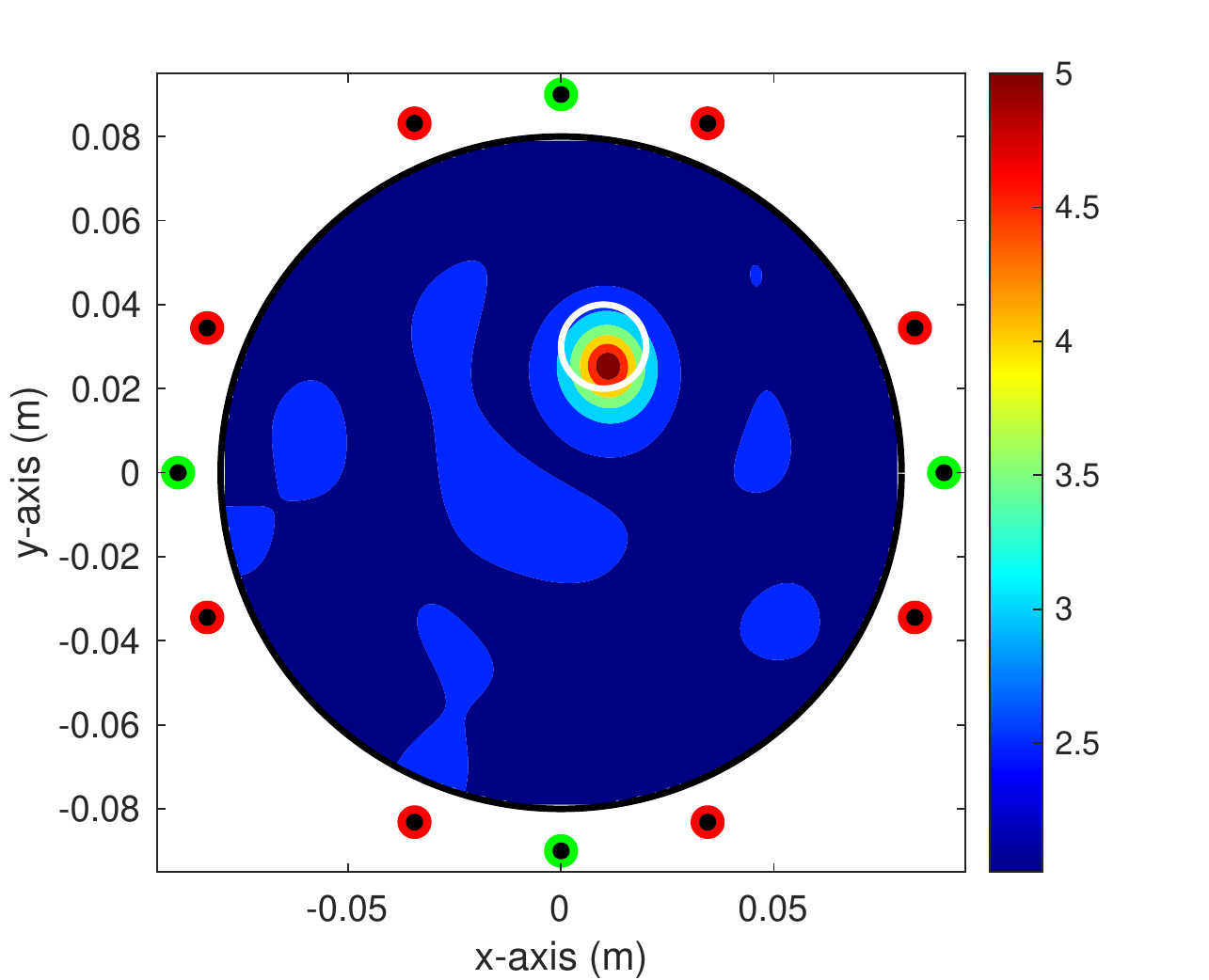}\hfill
  \includegraphics[width=0.25\textwidth]{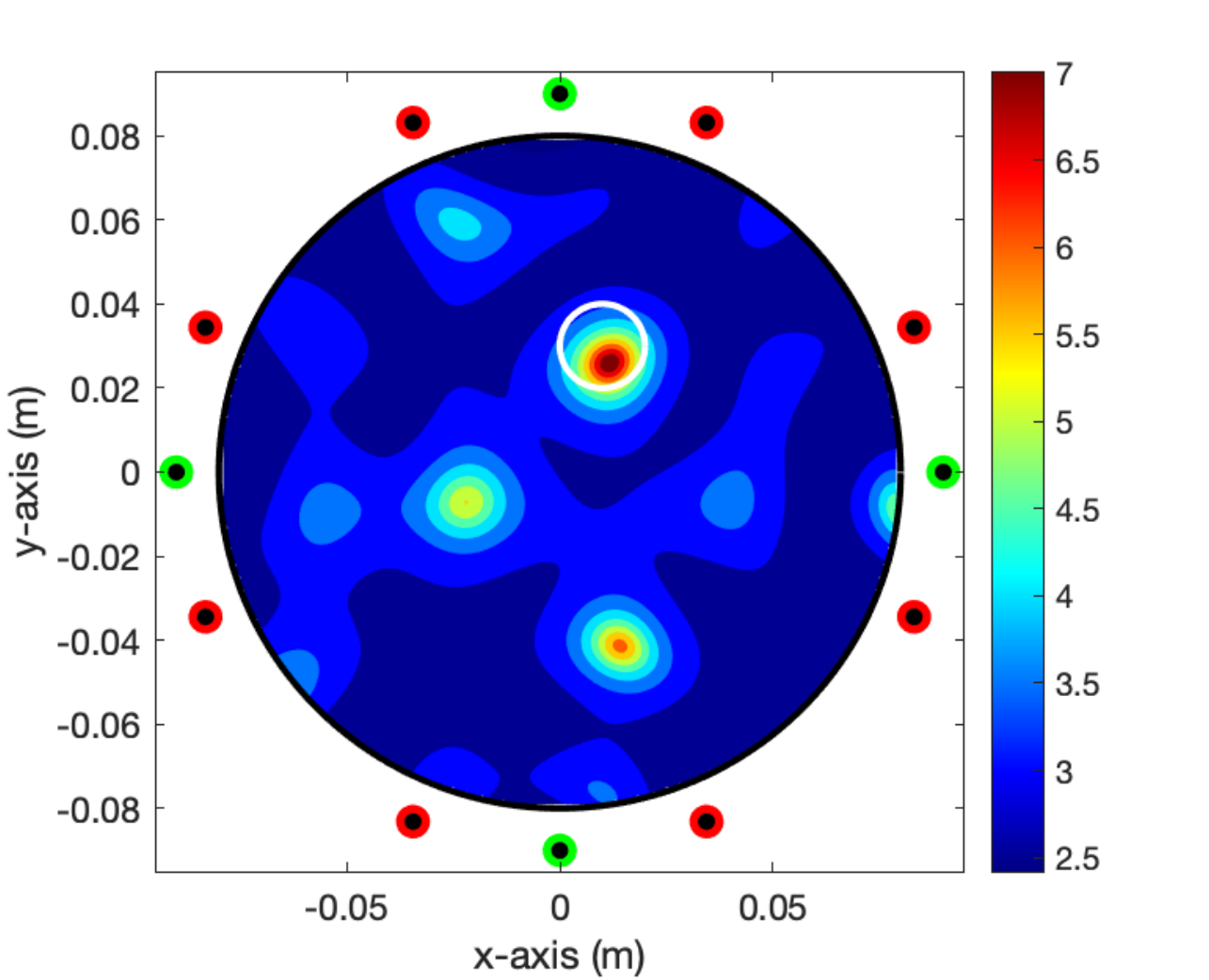}\hfill
  \includegraphics[width=0.25\textwidth]{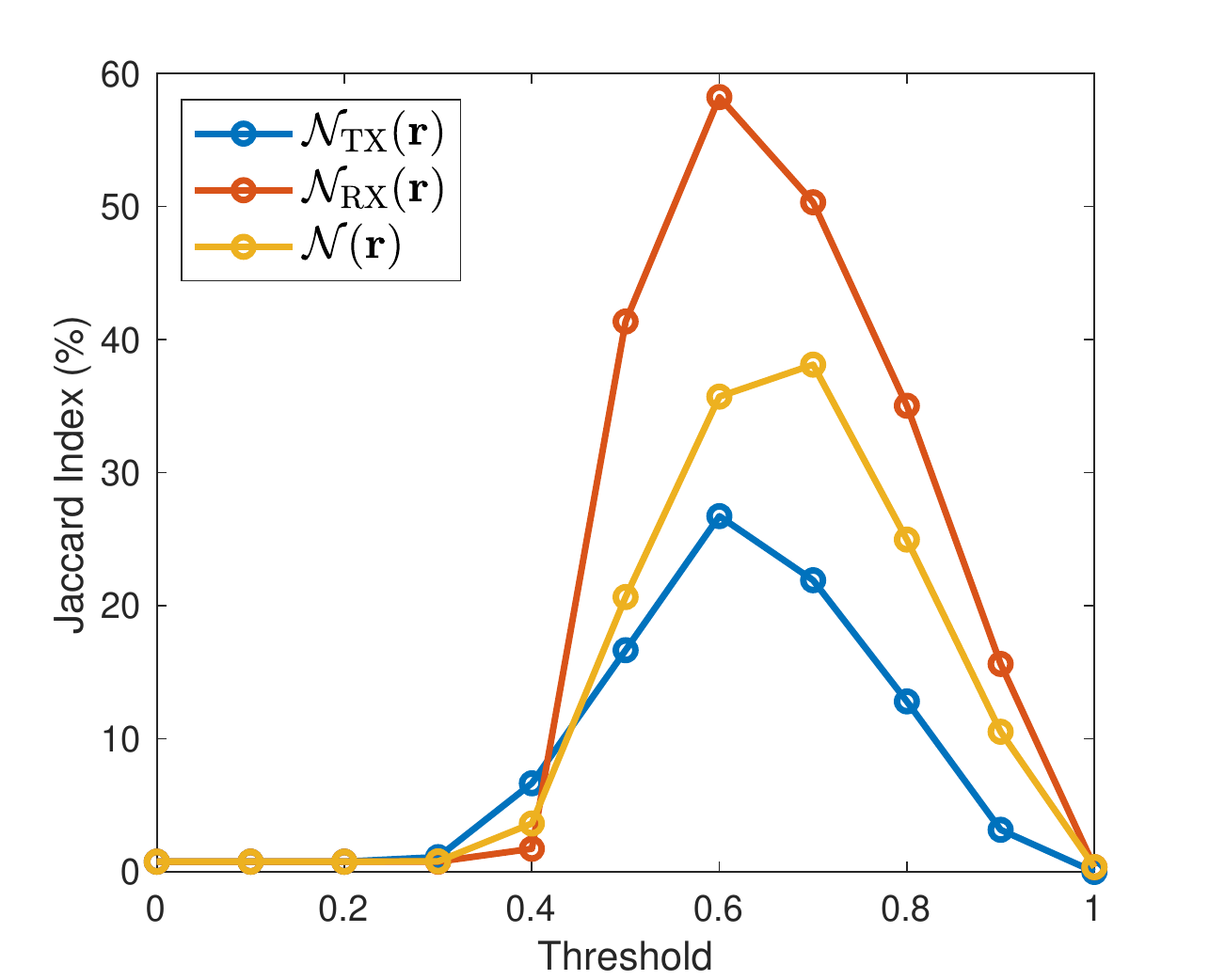}\\
  \includegraphics[width=0.25\textwidth]{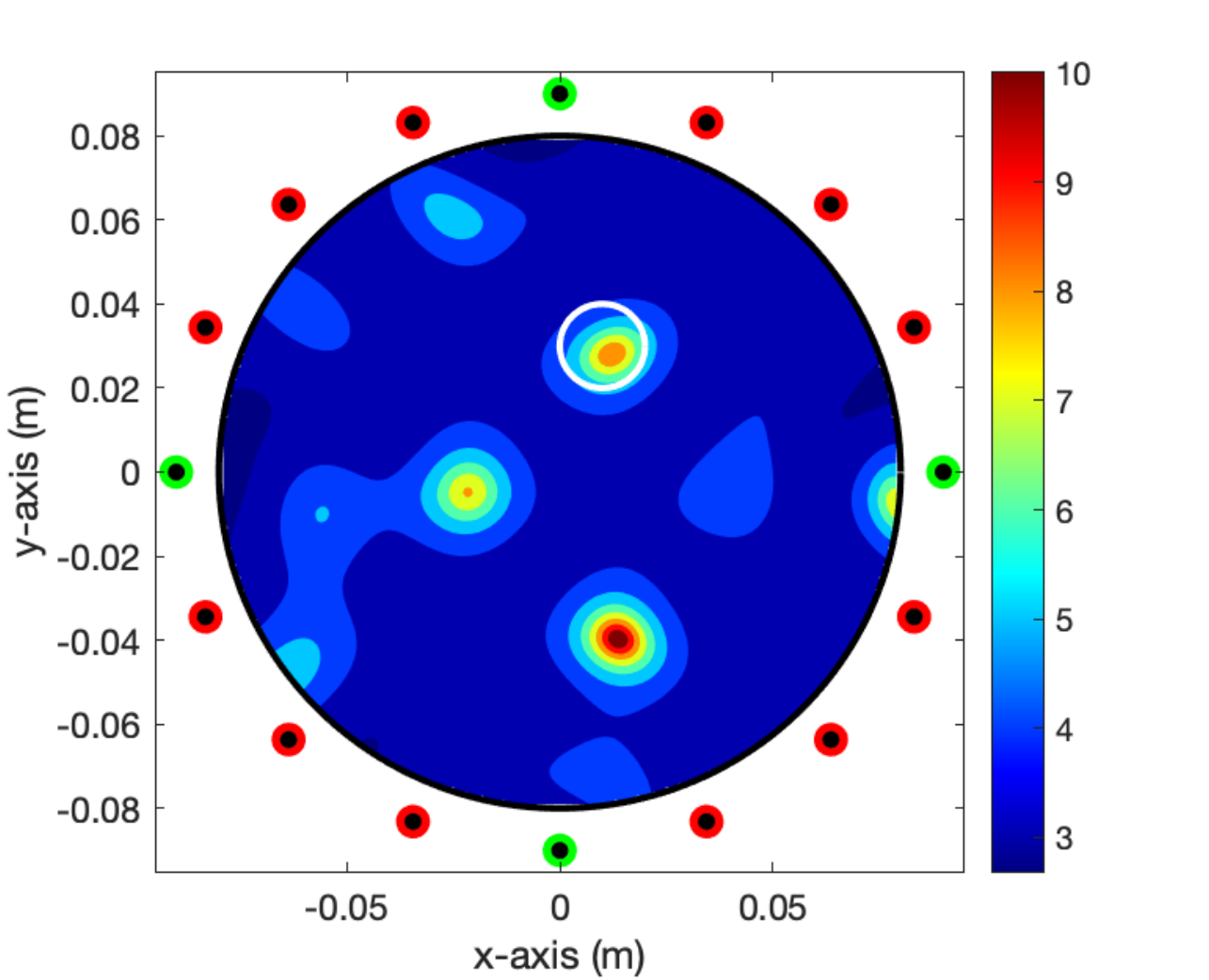}\hfill
  \includegraphics[width=0.25\textwidth]{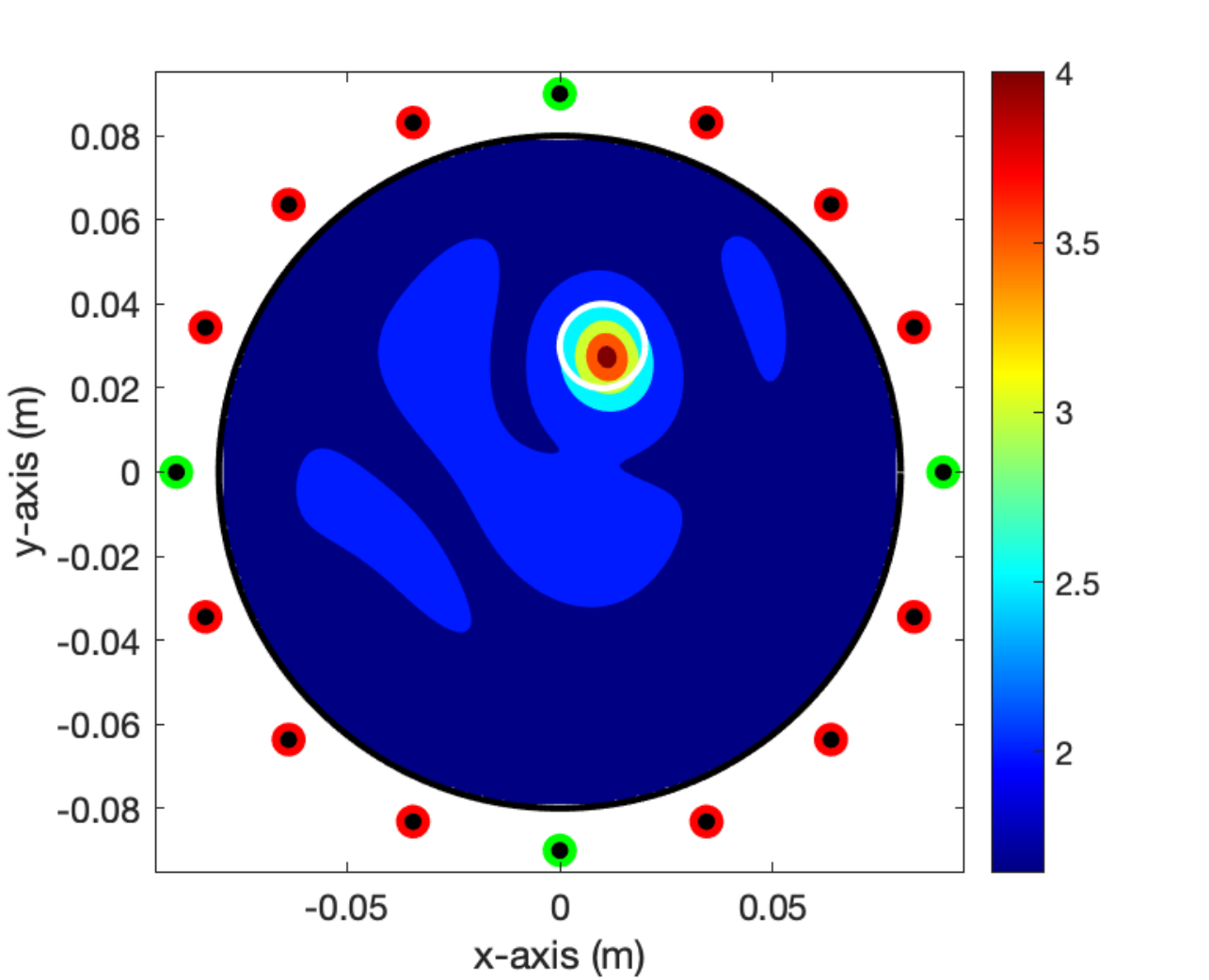}\hfill
  \includegraphics[width=0.25\textwidth]{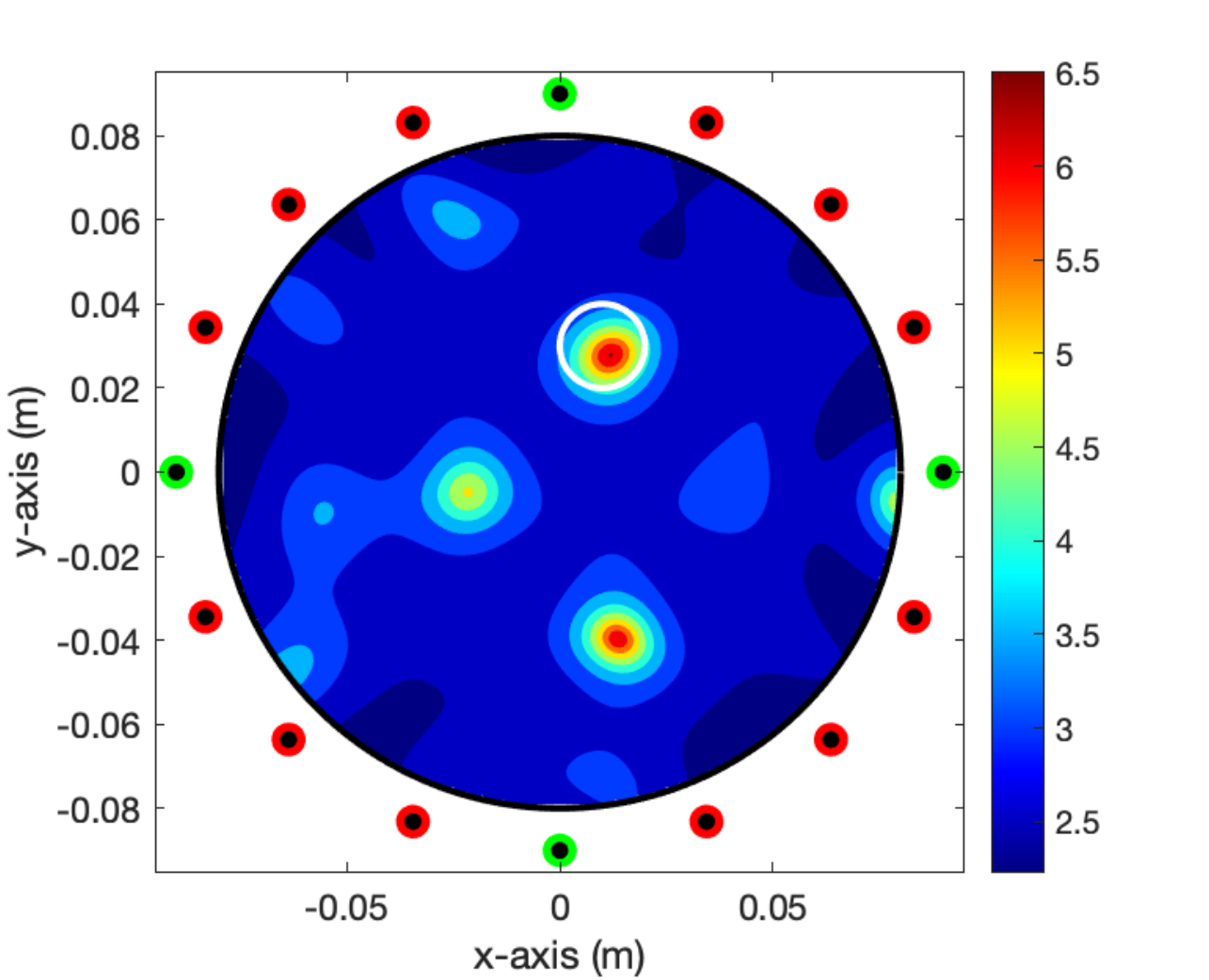}\hfill
  \includegraphics[width=0.25\textwidth]{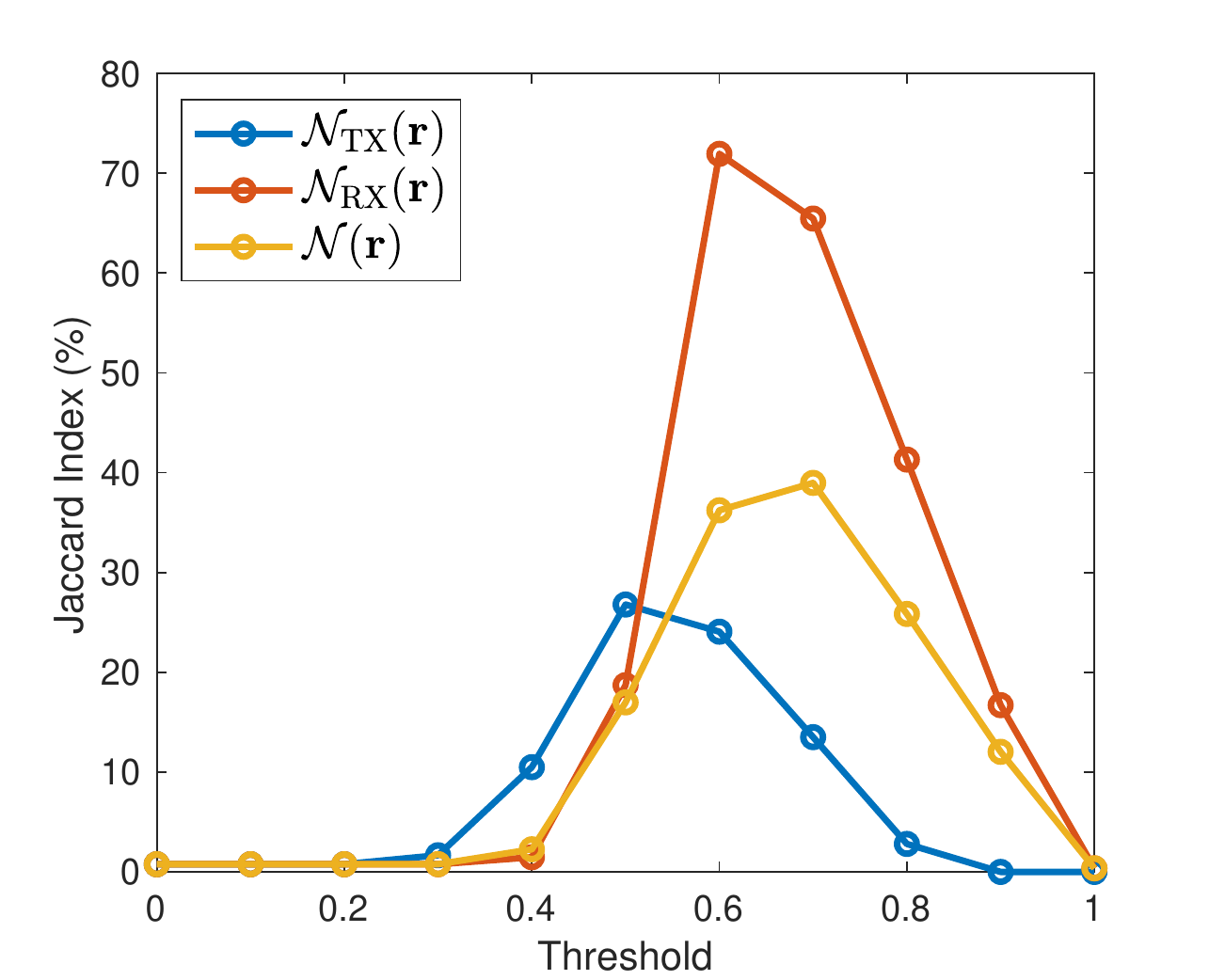}\\
  \includegraphics[width=0.25\textwidth]{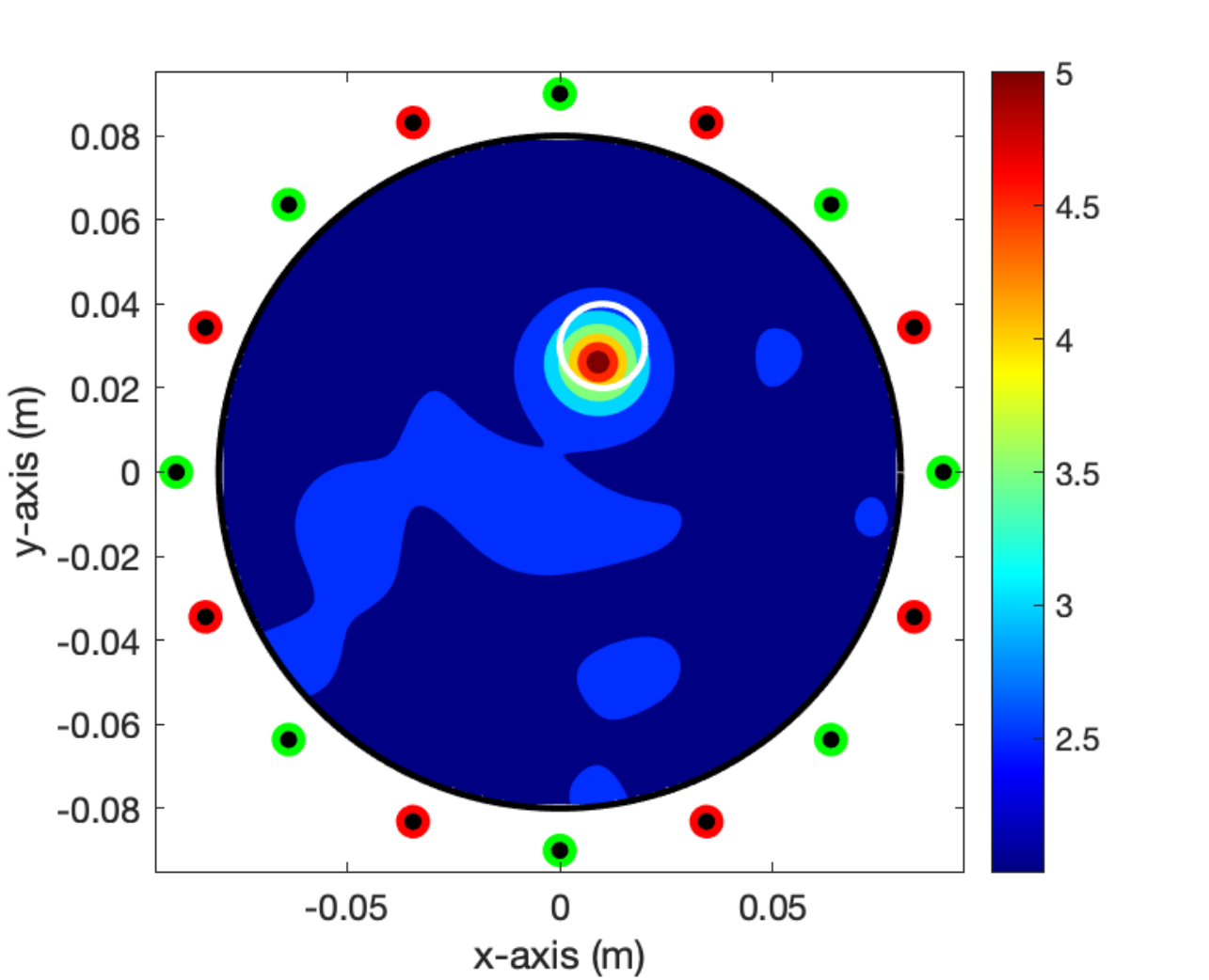}\hfill
  \includegraphics[width=0.25\textwidth]{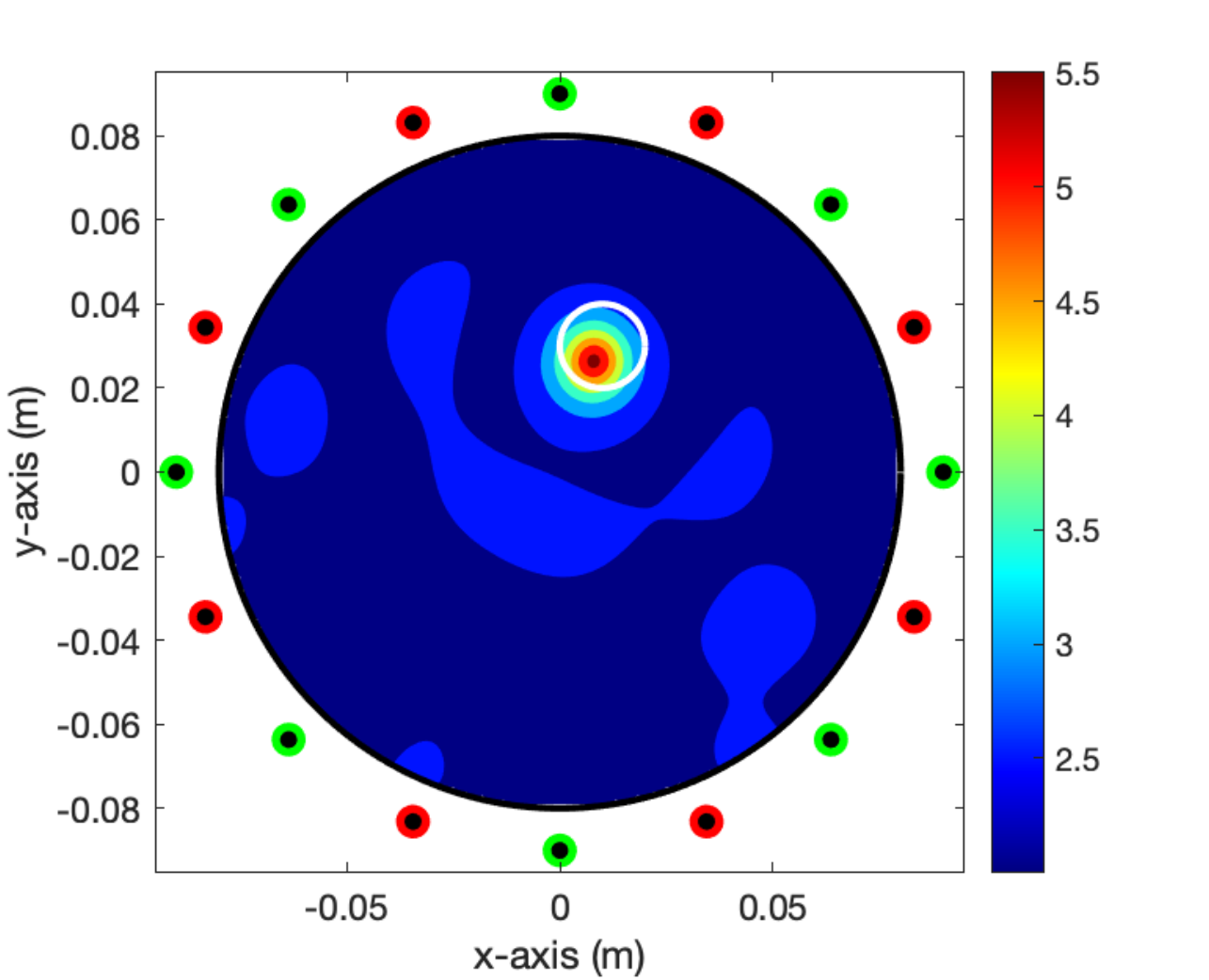}\hfill
  \includegraphics[width=0.25\textwidth]{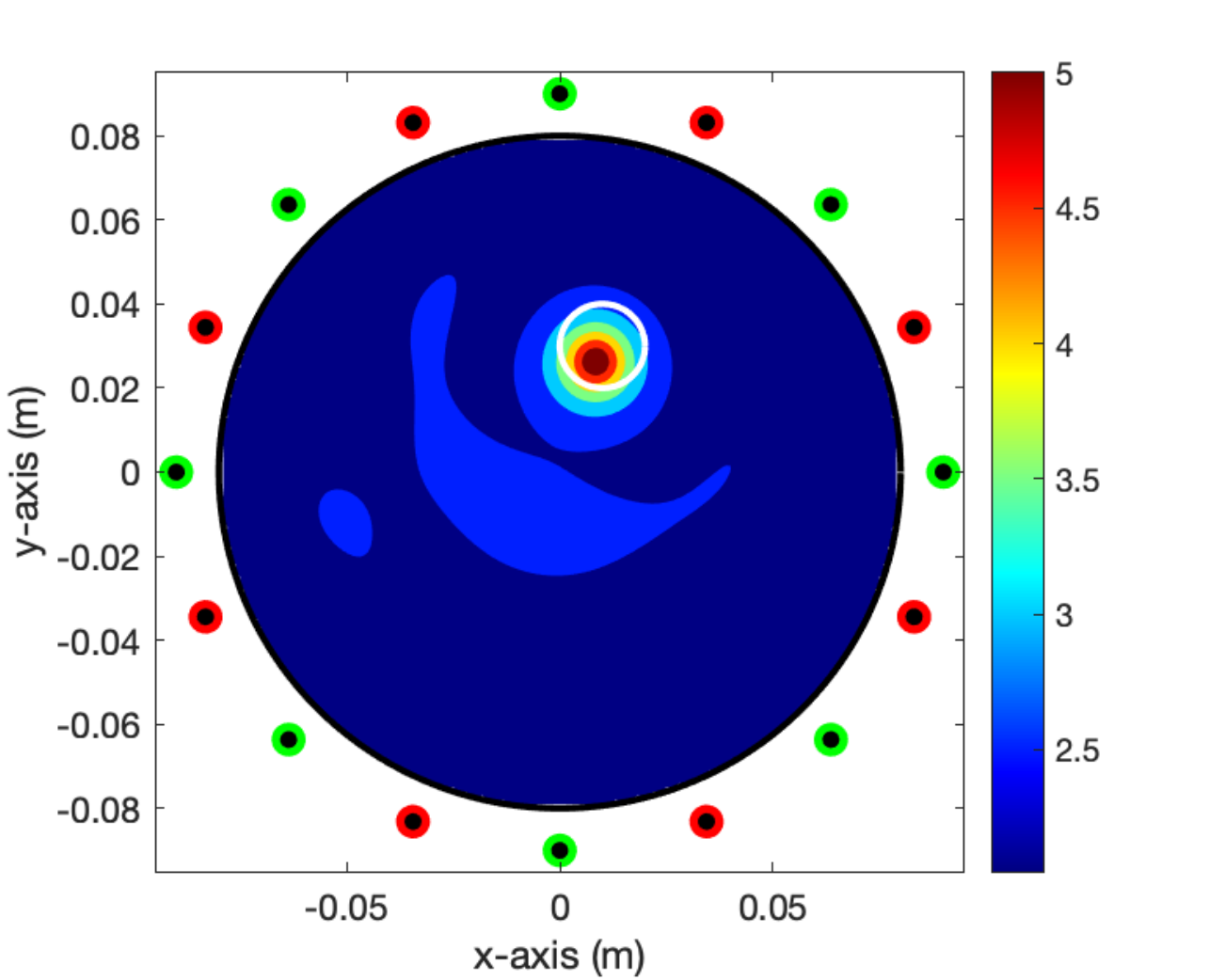}\hfill
  \includegraphics[width=0.25\textwidth]{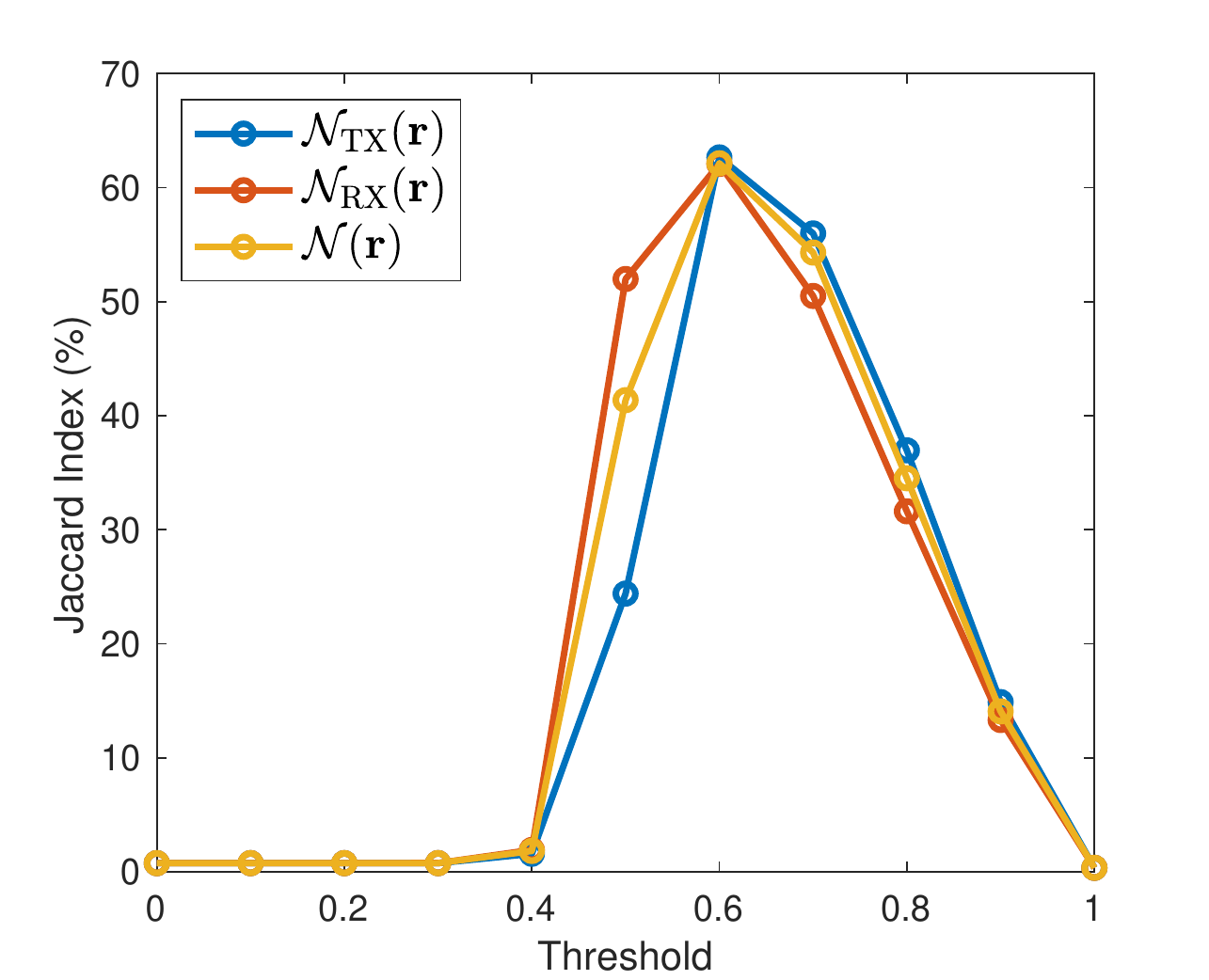}
  \caption{\label{Result3}(Example \ref{ex3}) Maps of $\mathfrak{F}_{\tx}(\mr)$ (first column), $\mathfrak{F}_{\rx}(\mr)$ (second column), $\mathfrak{F}(\mr)$ (third column), and Jaccard index (fourth column). Green and red colored circles describe the location of transmitters and receivers, respectively.}
\end{figure}

\begin{example}[Imaging of Multiple Anomalies]\label{ex4}
Now, let us apply the MUSIC for identifying two different anomalies $\Sigma_1$ and $\Sigma_2$ with locations $\mr_1=(\SI{0.01}{\meter},\SI{0.03}{\meter})$, $\mr_2=(-\SI{0.04}{\meter},-\SI{0.02}{\meter})$, same radii $\alpha_1=\alpha_2=\SI{0.01}{\meter}$, and material properties $(\eps_1,\sigma_1)=(55\eps_0,\SI{1.2}{\siemens/\meter})$, $(\eps_2,\sigma_2)=(45\eps_0,\SI{1.0}{\siemens/\meter})$ (see Figure \ref{IllustrationAnomalies} for an illustration). Figure \ref{Result4} shows the maps of $\mathfrak{F}_{\tx}(\mr)$, $\mathfrak{F}_{\rx}(\mr)$, $\mathfrak{F}(\mr)$, and Jaccard index with transmitting antenna setting $\mathbf{B}_1$ and receiving antenna settings $\mathbf{A}_1$, $\mathbf{A}_2$, $\mathbf{A}_3$, and $\mathbf{A}_4$ introduced in the Example \ref{ex1}. Opposite to the imaging results in Figure \ref{Result1}, it is impossible to recognize the existence of two anomalies with antenna settings $\mathbf{A}_1\cup\mathbf{B}_1$. Moreover, if one increases total number of receiving antennas such as $\mathbf{A}_2$, $\mathbf{A}_3$, and $\mathbf{A}_4$, it is still impossible to recognize the presence of the anomalies through the maps of $\mathfrak{F}_{\tx}(\mr)$ and $\mathfrak{F}(\mr)$. Same as the imaging of single anomaly, despite the appearance of some artifacts, it is possible to identify the existence of two anomalies via the map of $\mathfrak{F}_{\rx}(\mr)$ with $\mathbf{A}_2$, $\mathbf{A}_3$, and $\mathbf{A}_4$. It is interesting to observe that $\mathfrak{F}_{\rx}(\mr_1)>\mathfrak{F}_{\rx}(\mr_2)$ when $\eps_1>\eps_2$ and $\sigma_1>\sigma_2$. Based on this result, we can examine that if an anomaly has a smaller value of permittivity or conductivity (here, $\Sigma_2$) than the other one (here, $\Sigma_1$), this anomaly will have less influence on the scattering matrix and as a consequence, the value of $\mathfrak{F}_{\rx}(\mr_2)$ will be smaller than the one of $\mathfrak{F}_{\rx}(\mr_1)$.
\end{example}

\begin{figure}[h]
  \centering
  \includegraphics[width=0.25\textwidth]{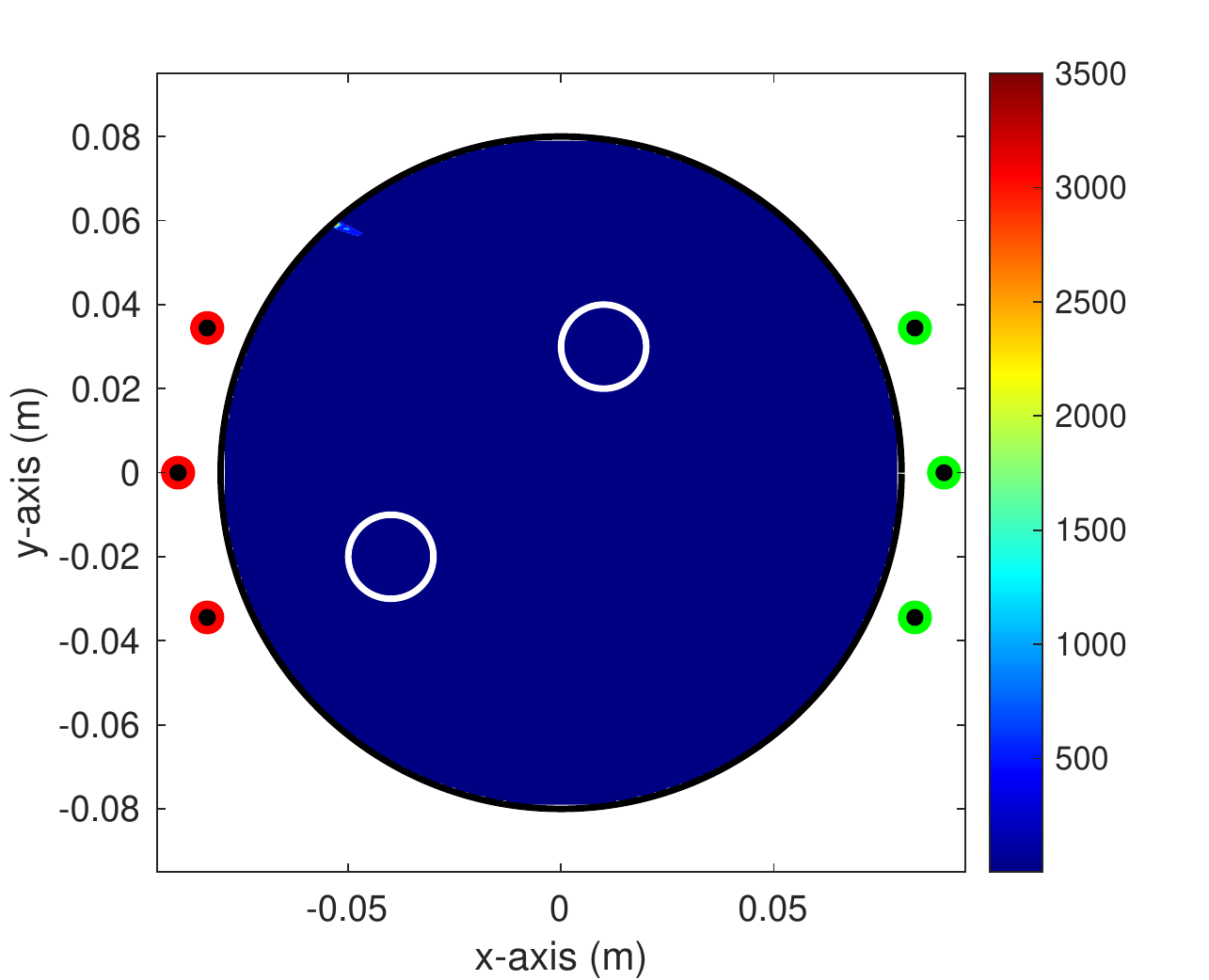}\hfill
  \includegraphics[width=0.25\textwidth]{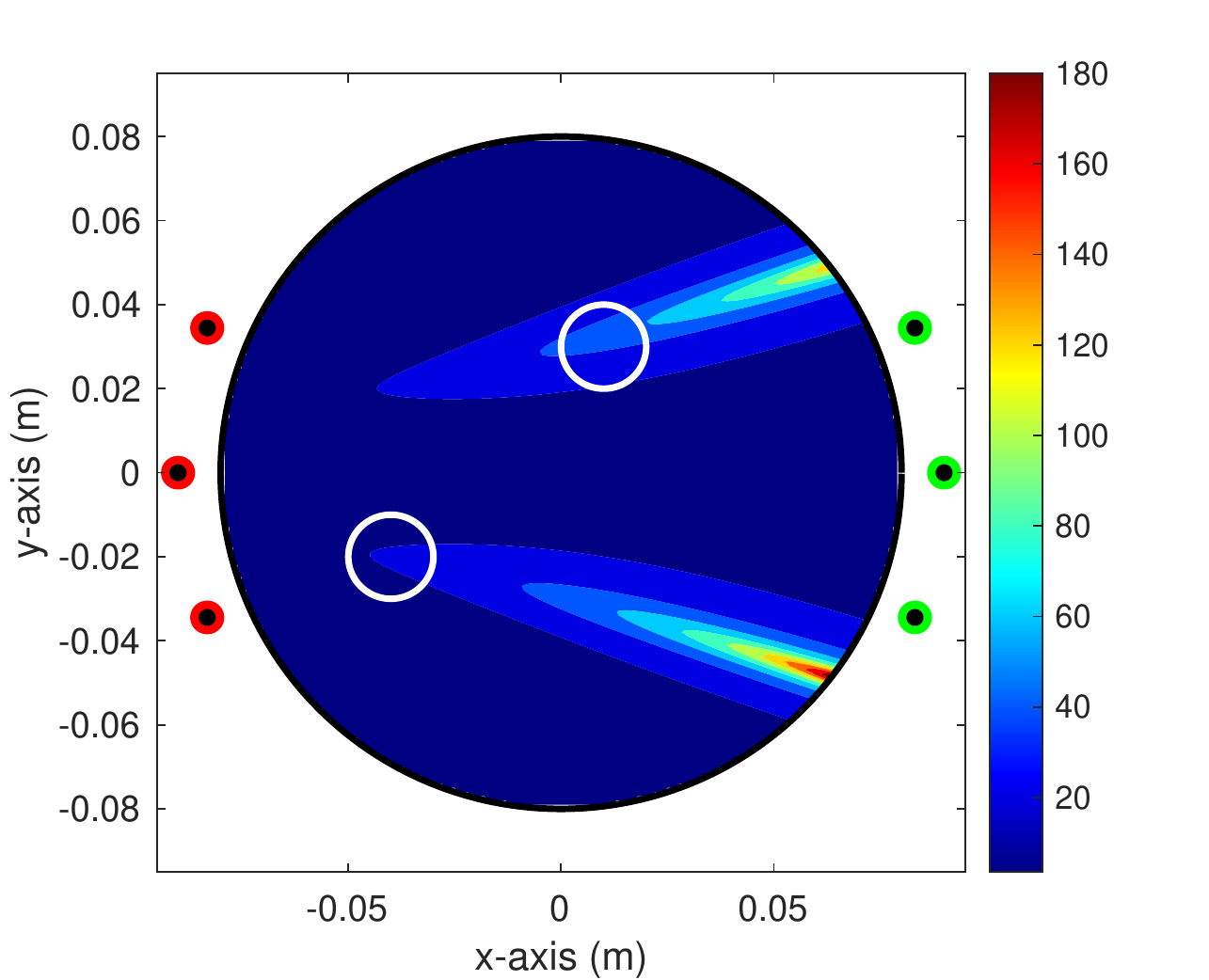}\hfill
  \includegraphics[width=0.25\textwidth]{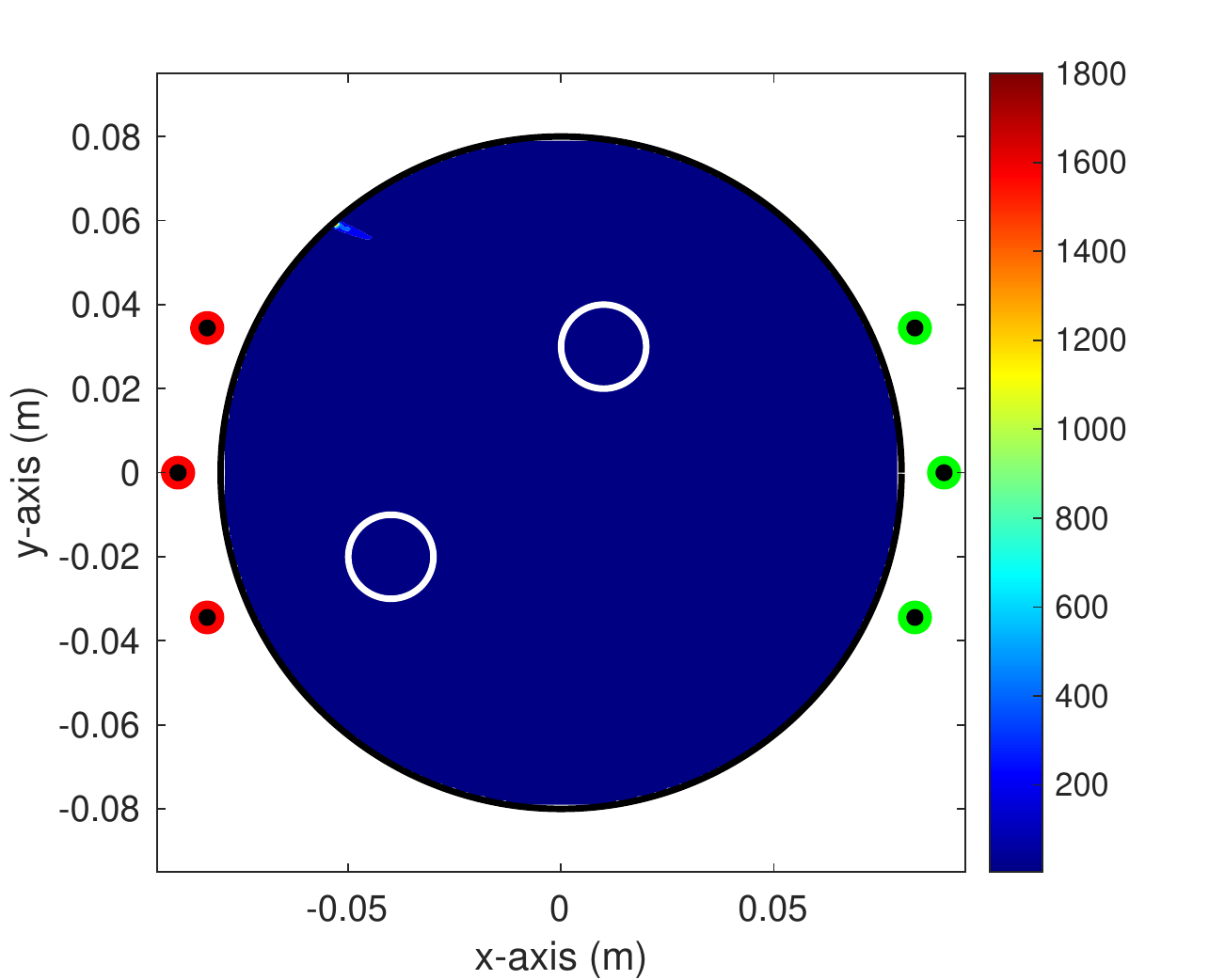}\hfill
  \includegraphics[width=0.25\textwidth]{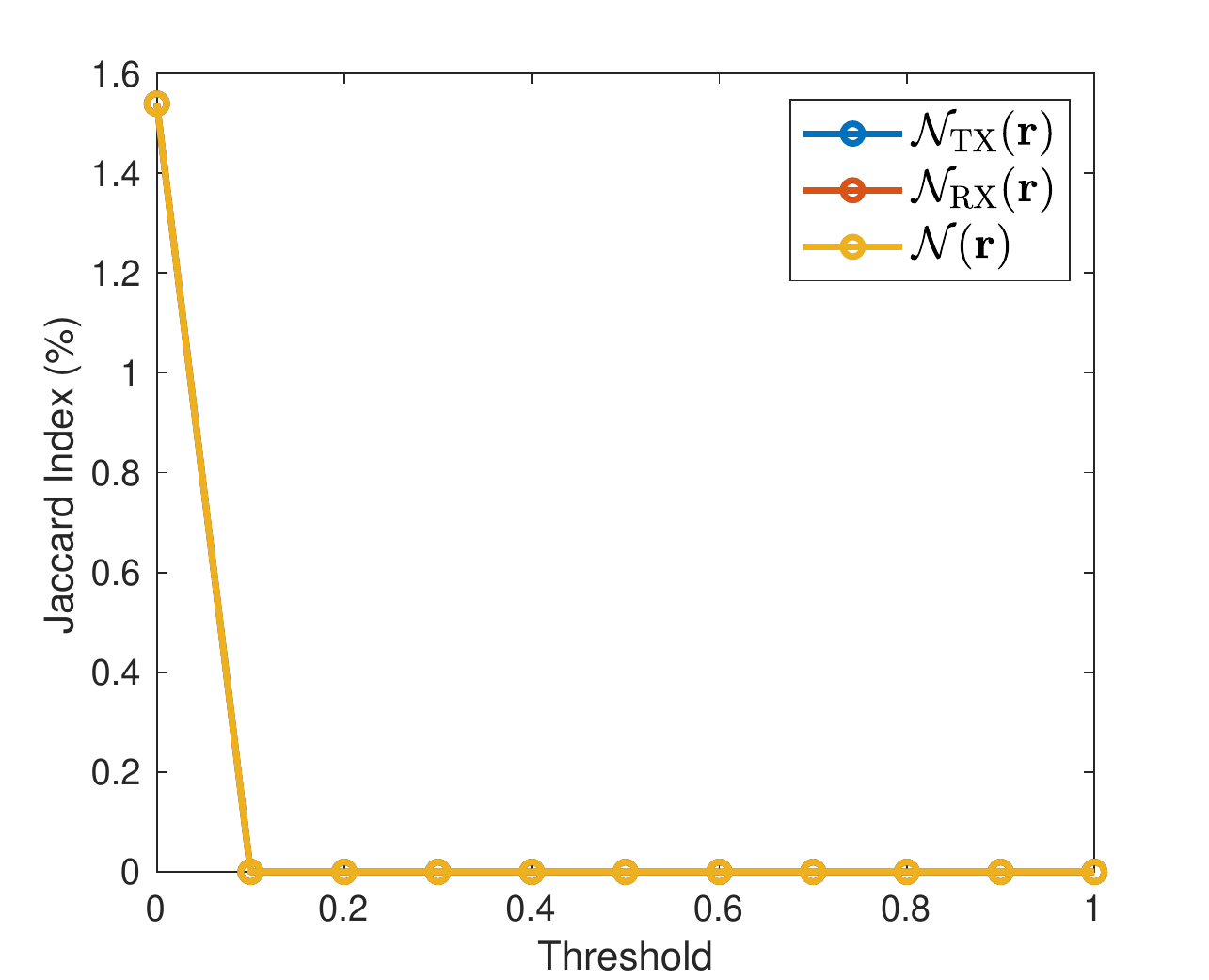}\\
  \includegraphics[width=0.25\textwidth]{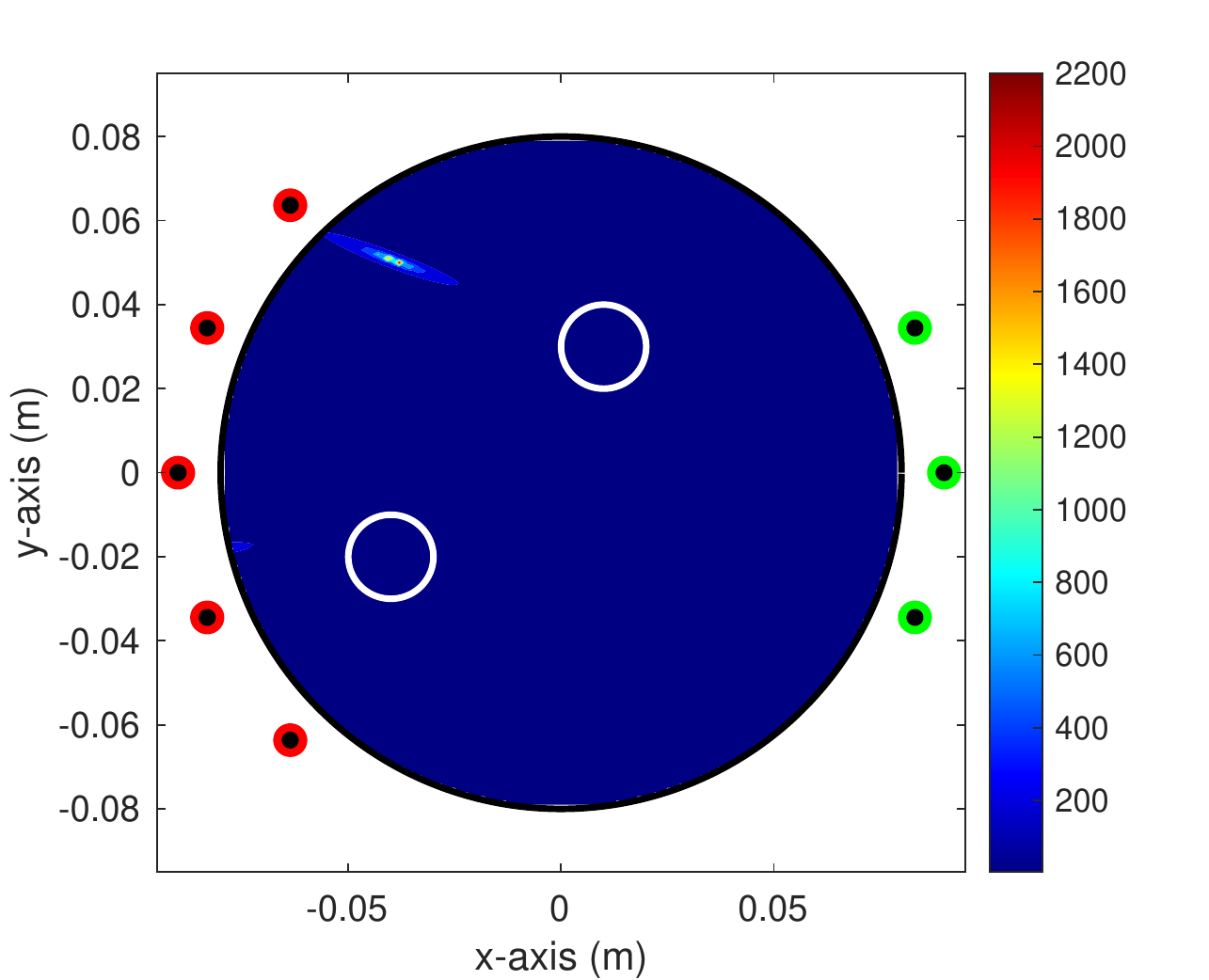}\hfill
  \includegraphics[width=0.25\textwidth]{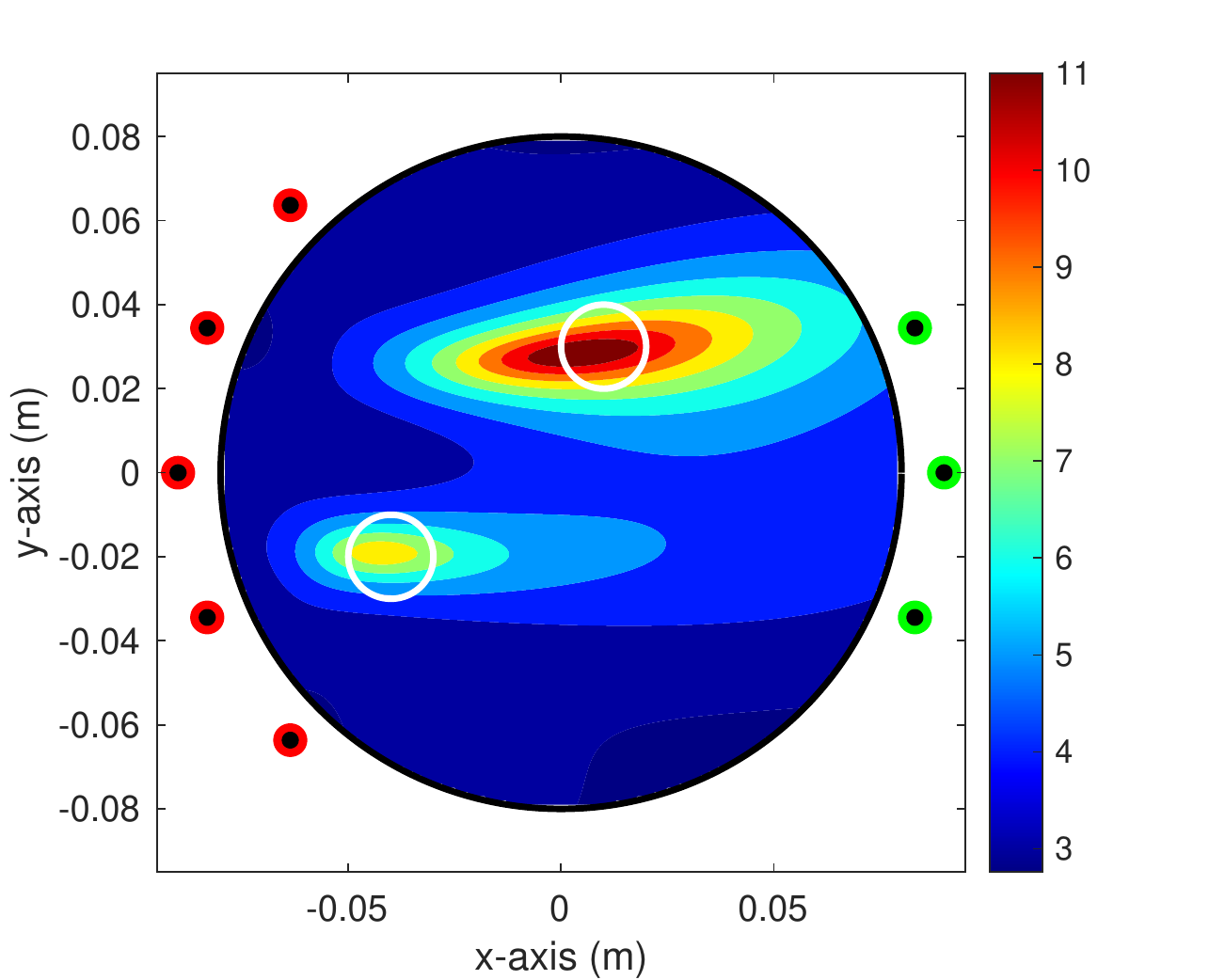}\hfill
  \includegraphics[width=0.25\textwidth]{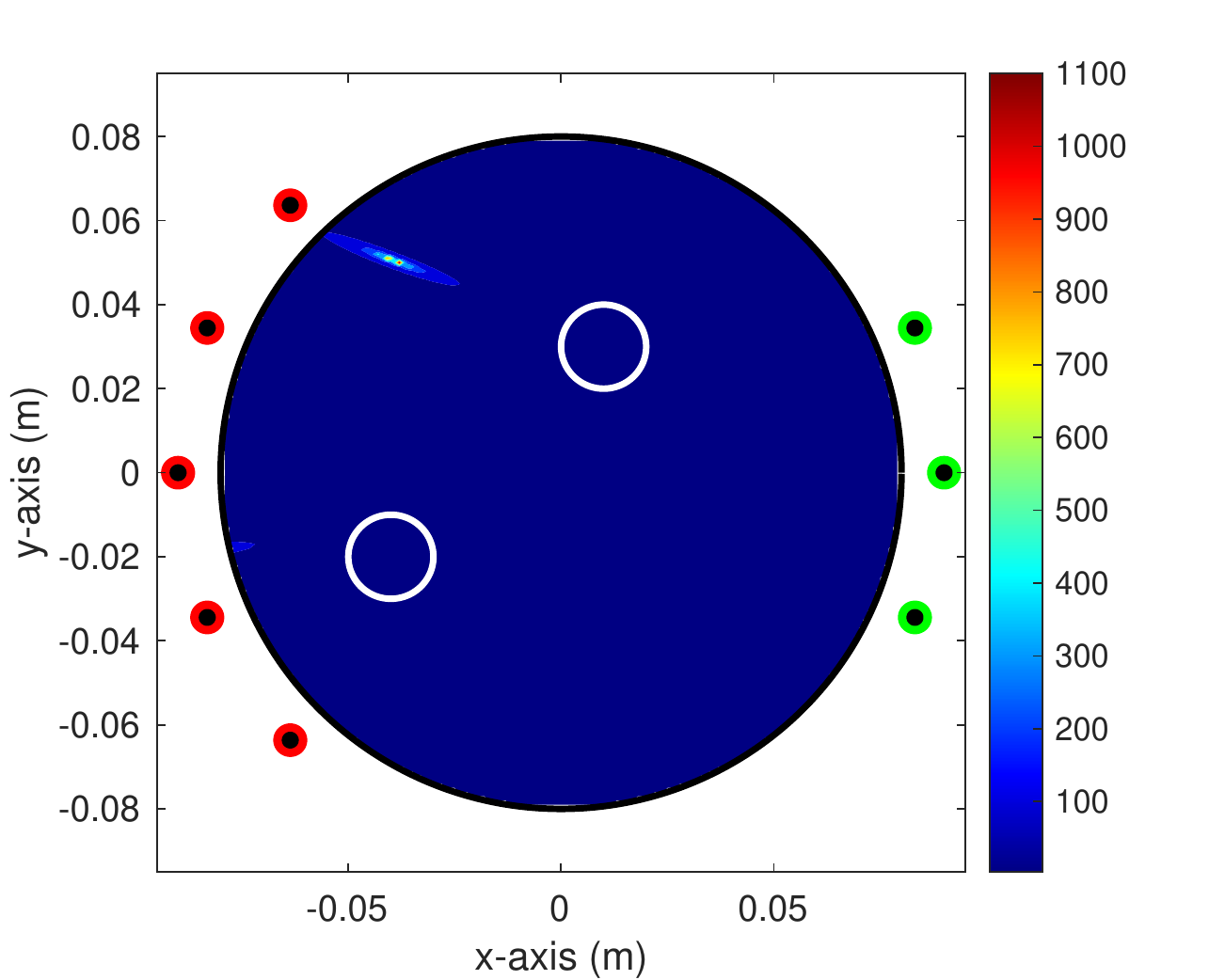}\hfill
  \includegraphics[width=0.25\textwidth]{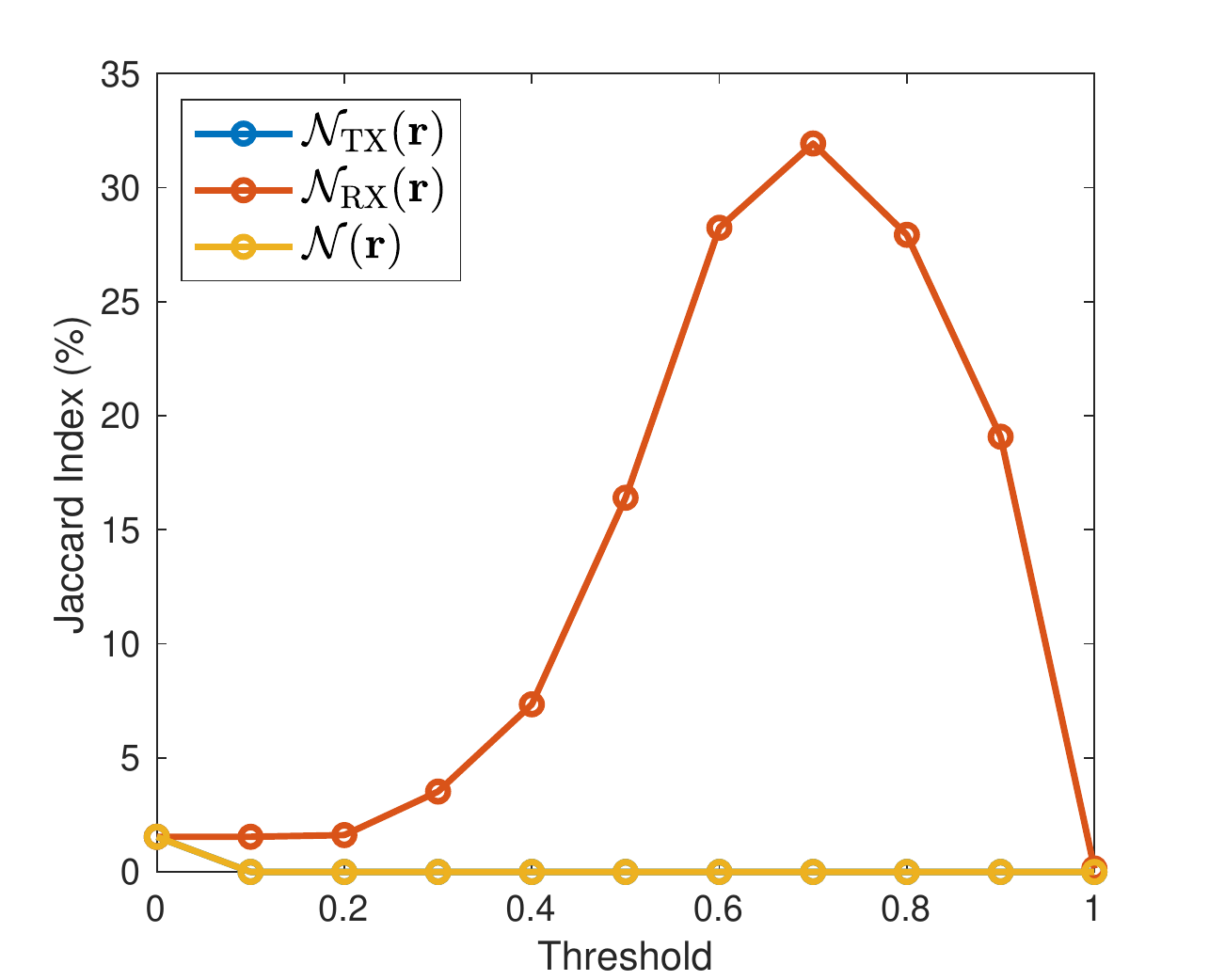}\\
  \includegraphics[width=0.25\textwidth]{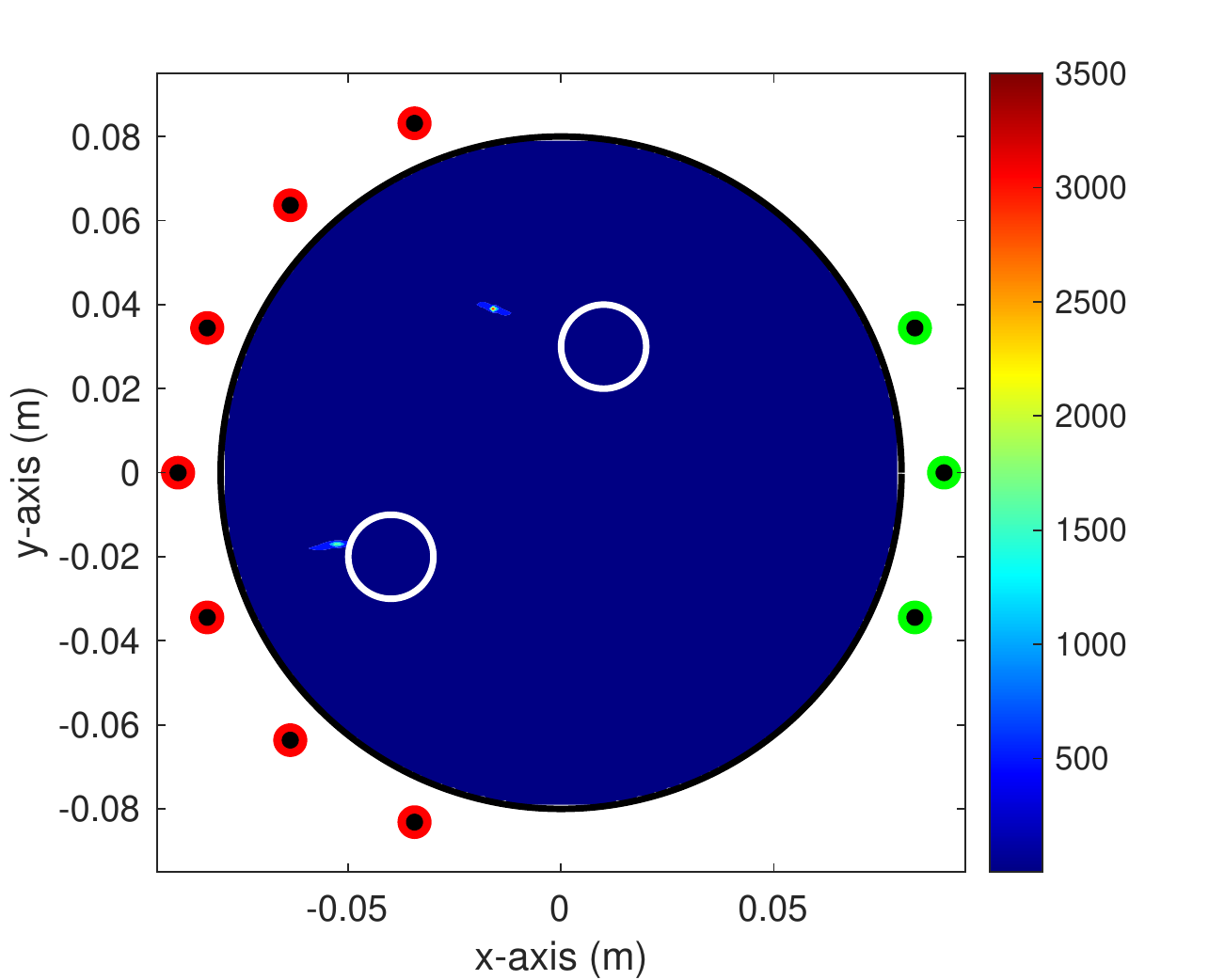}\hfill
  \includegraphics[width=0.25\textwidth]{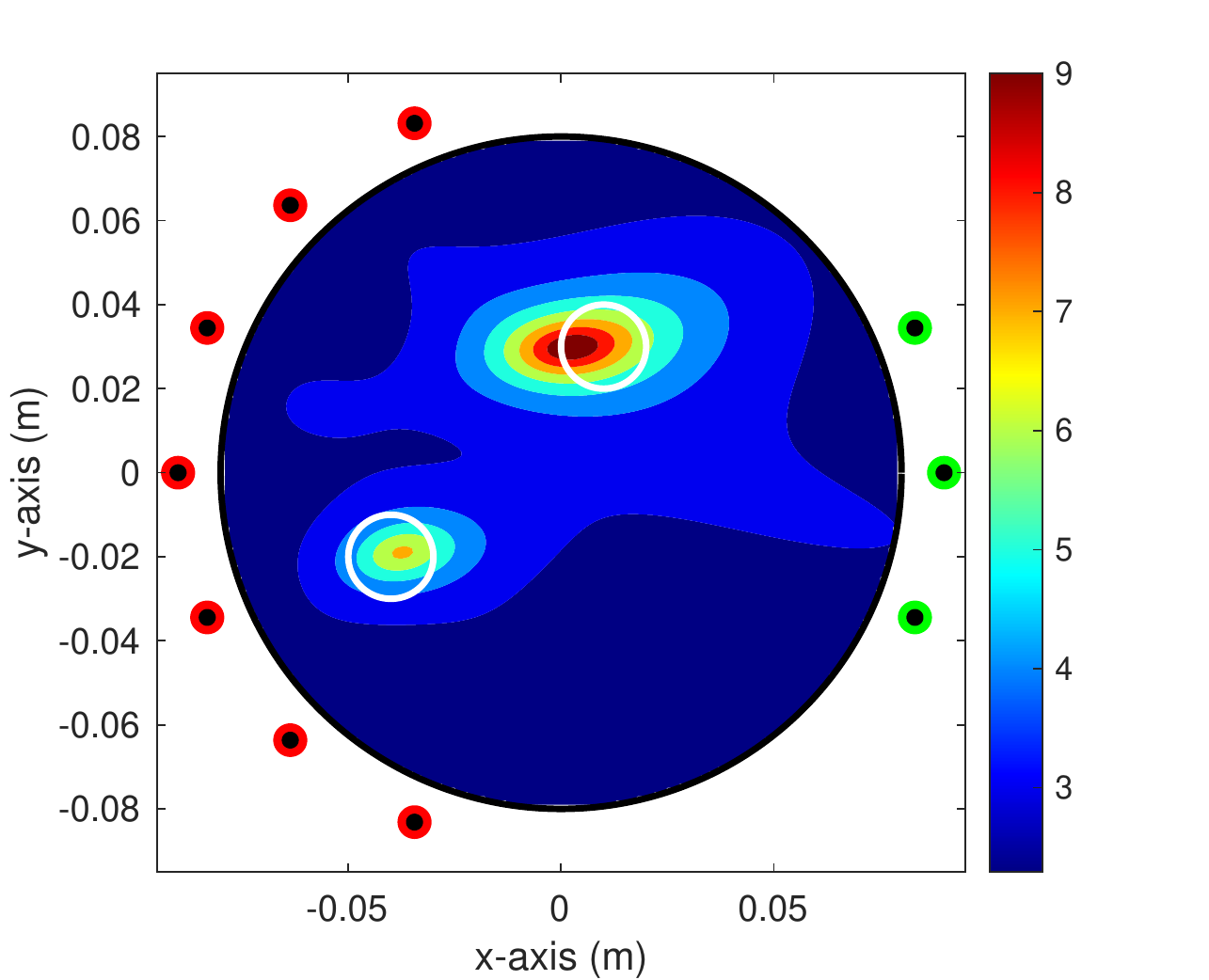}\hfill
  \includegraphics[width=0.25\textwidth]{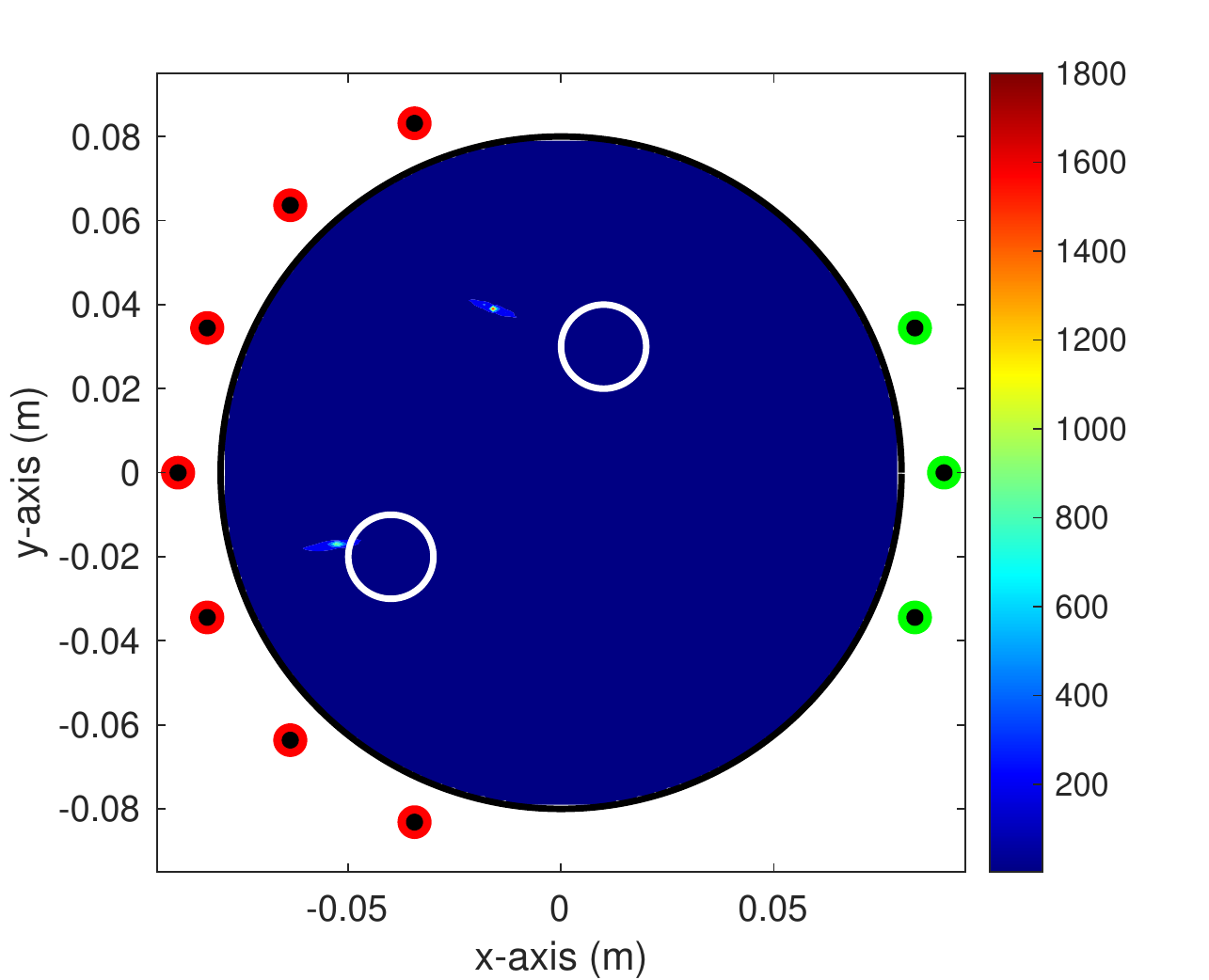}\hfill
  \includegraphics[width=0.25\textwidth]{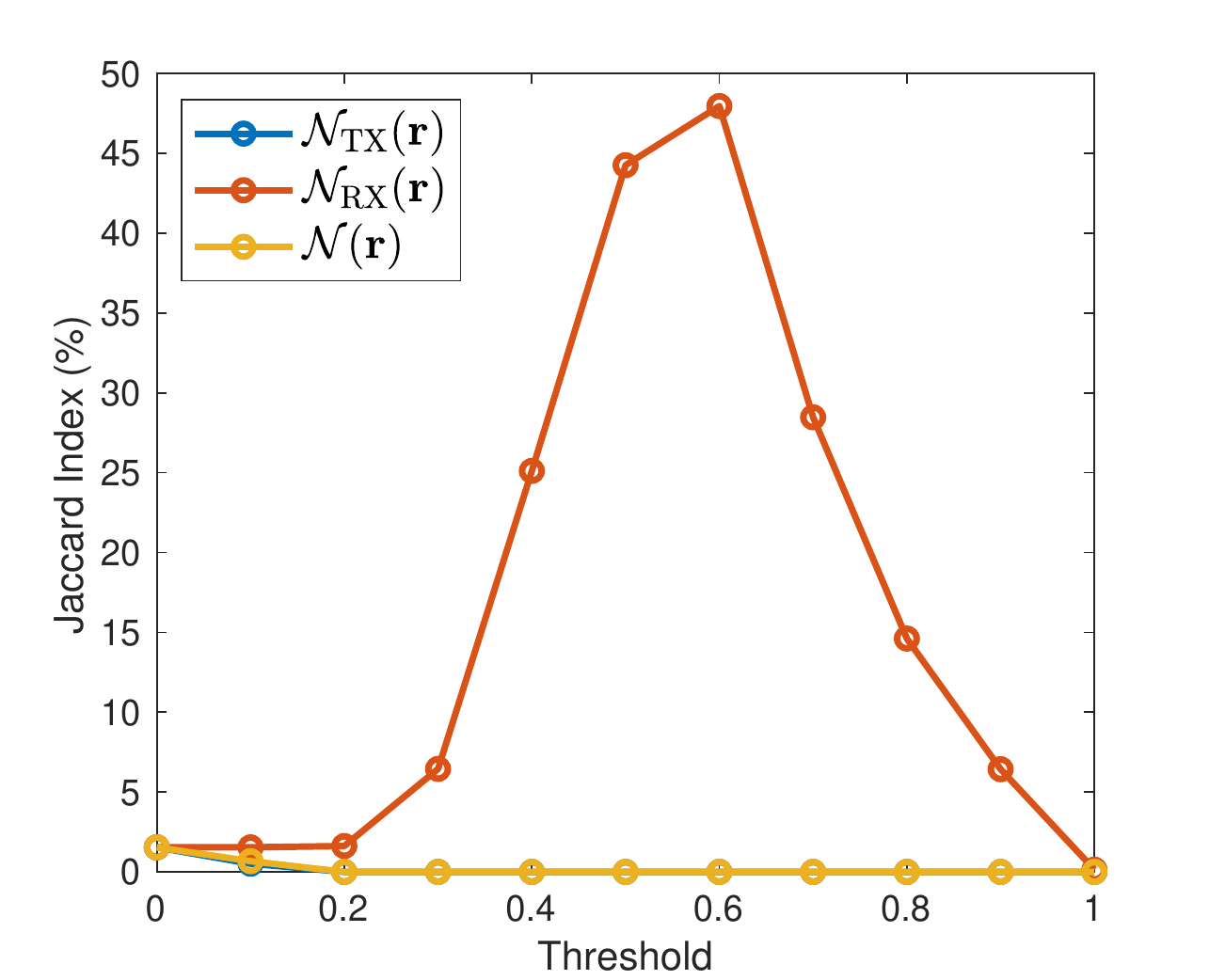}\\
  \includegraphics[width=0.25\textwidth]{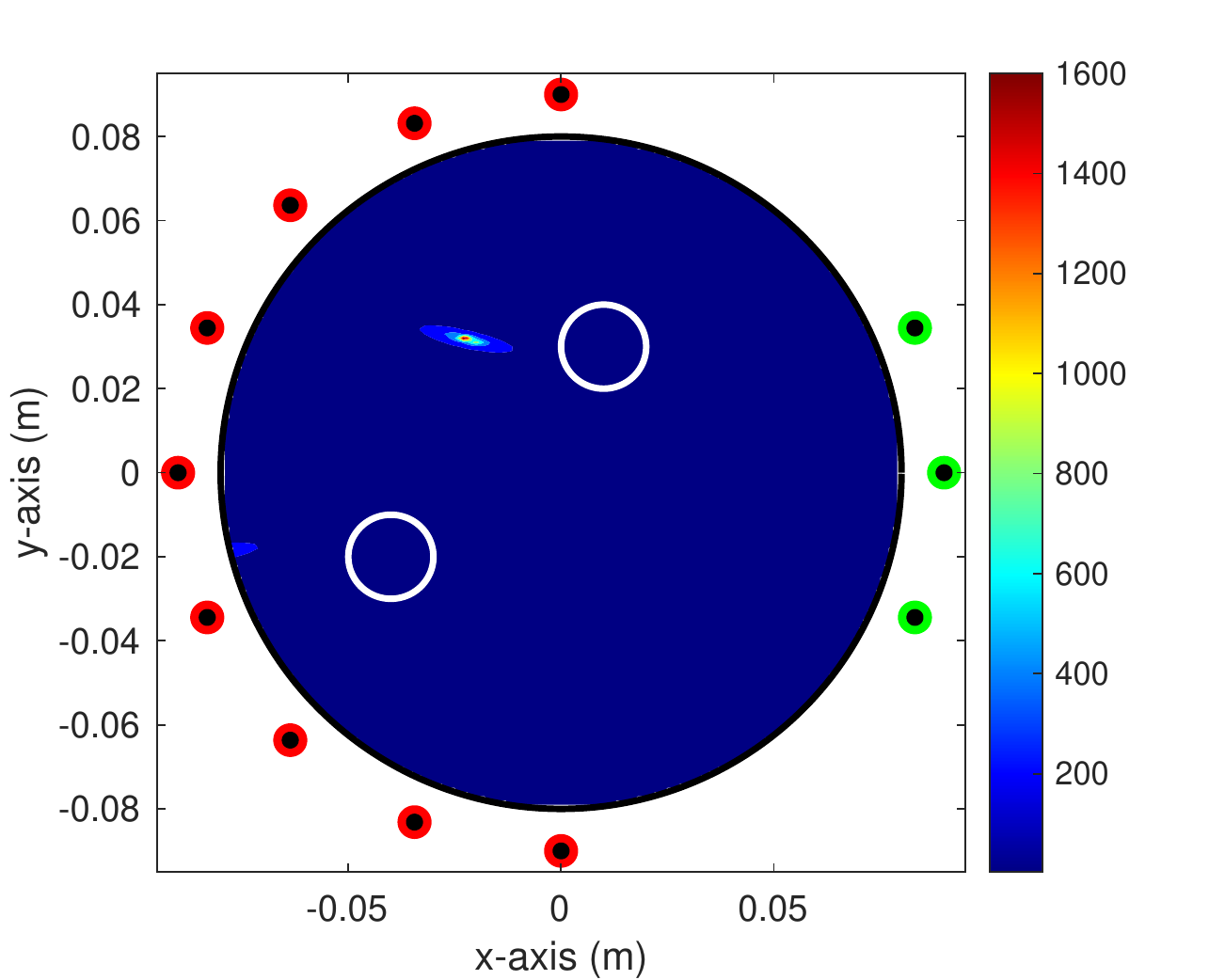}\hfill
  \includegraphics[width=0.25\textwidth]{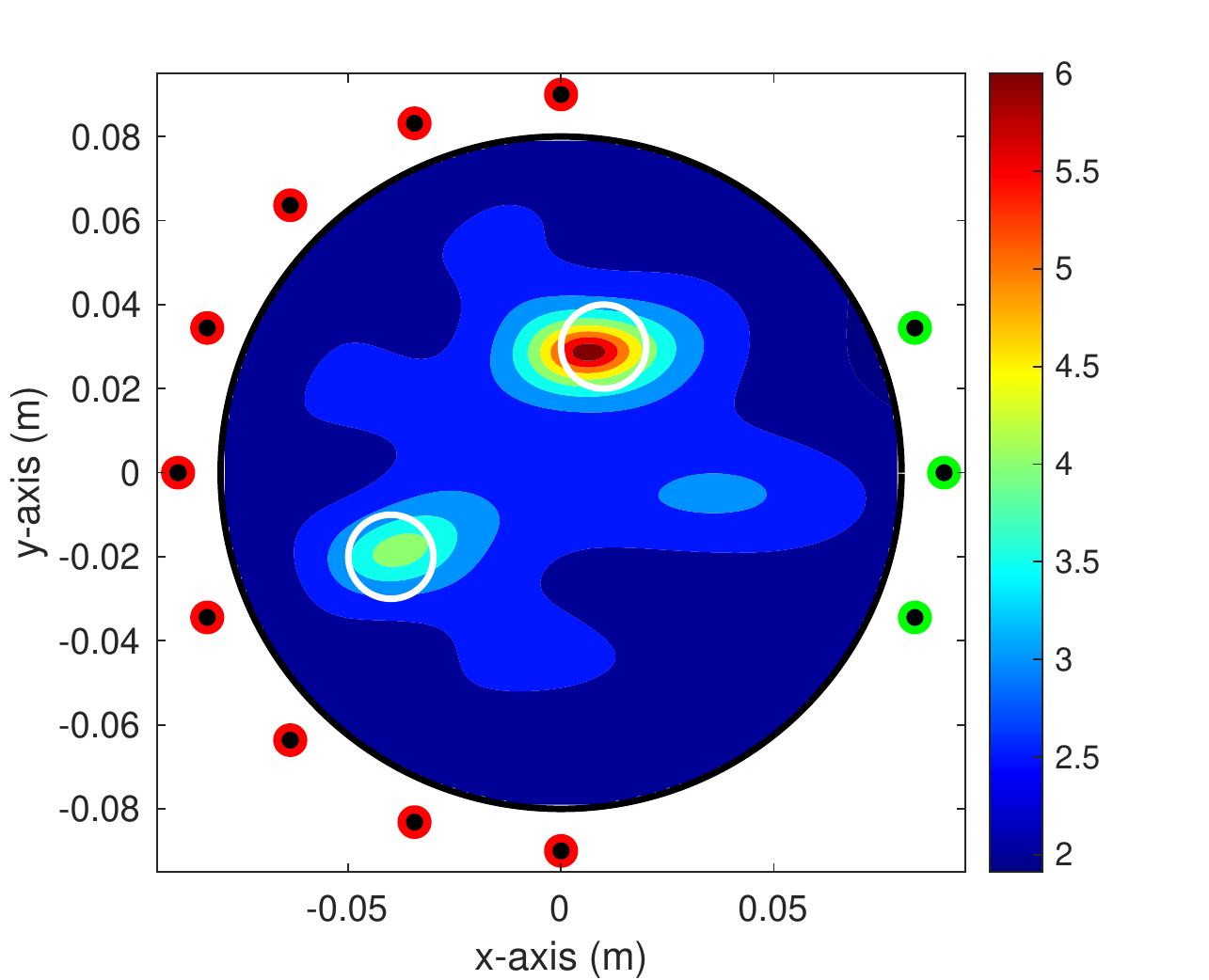}\hfill
  \includegraphics[width=0.25\textwidth]{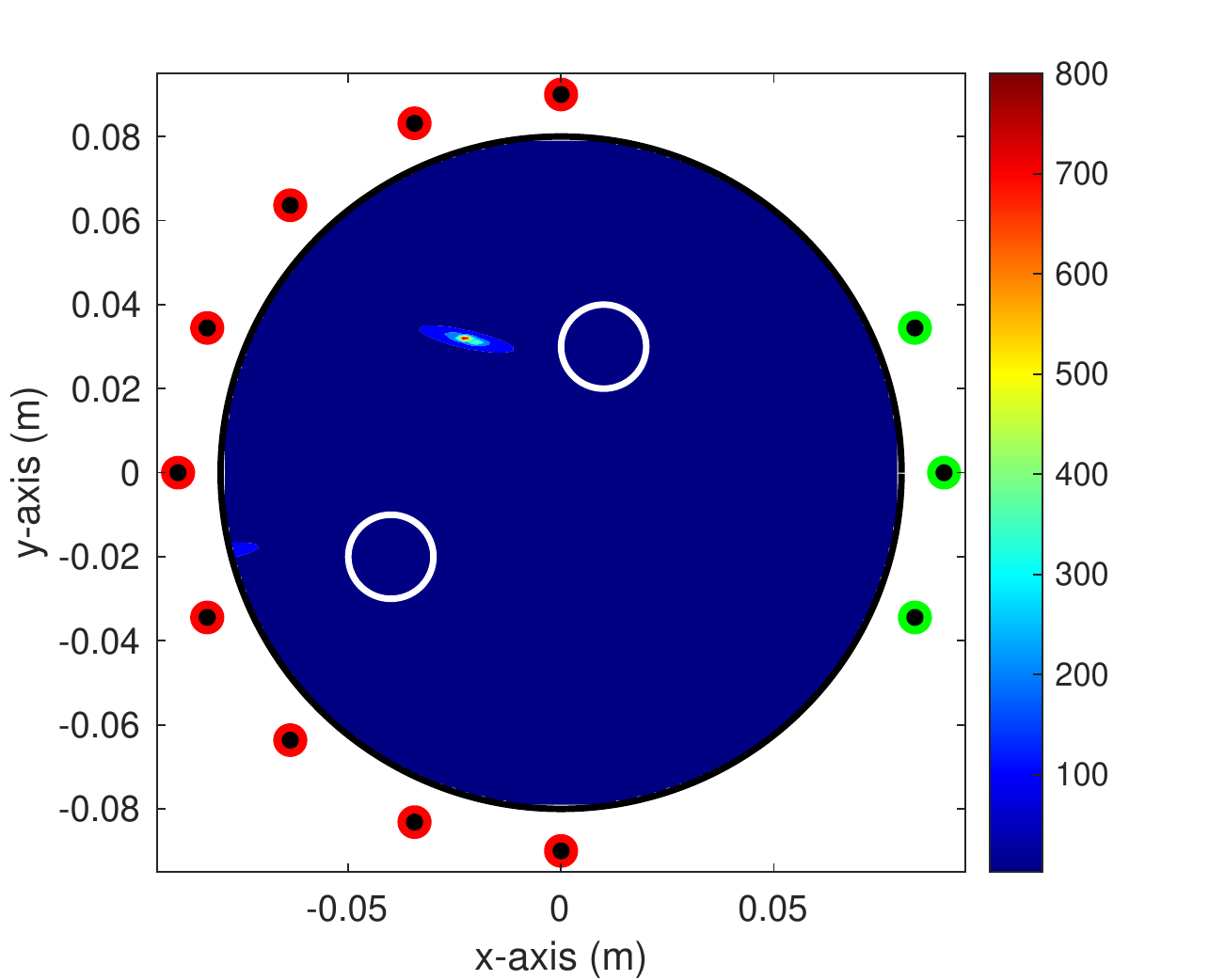}\hfill
  \includegraphics[width=0.25\textwidth]{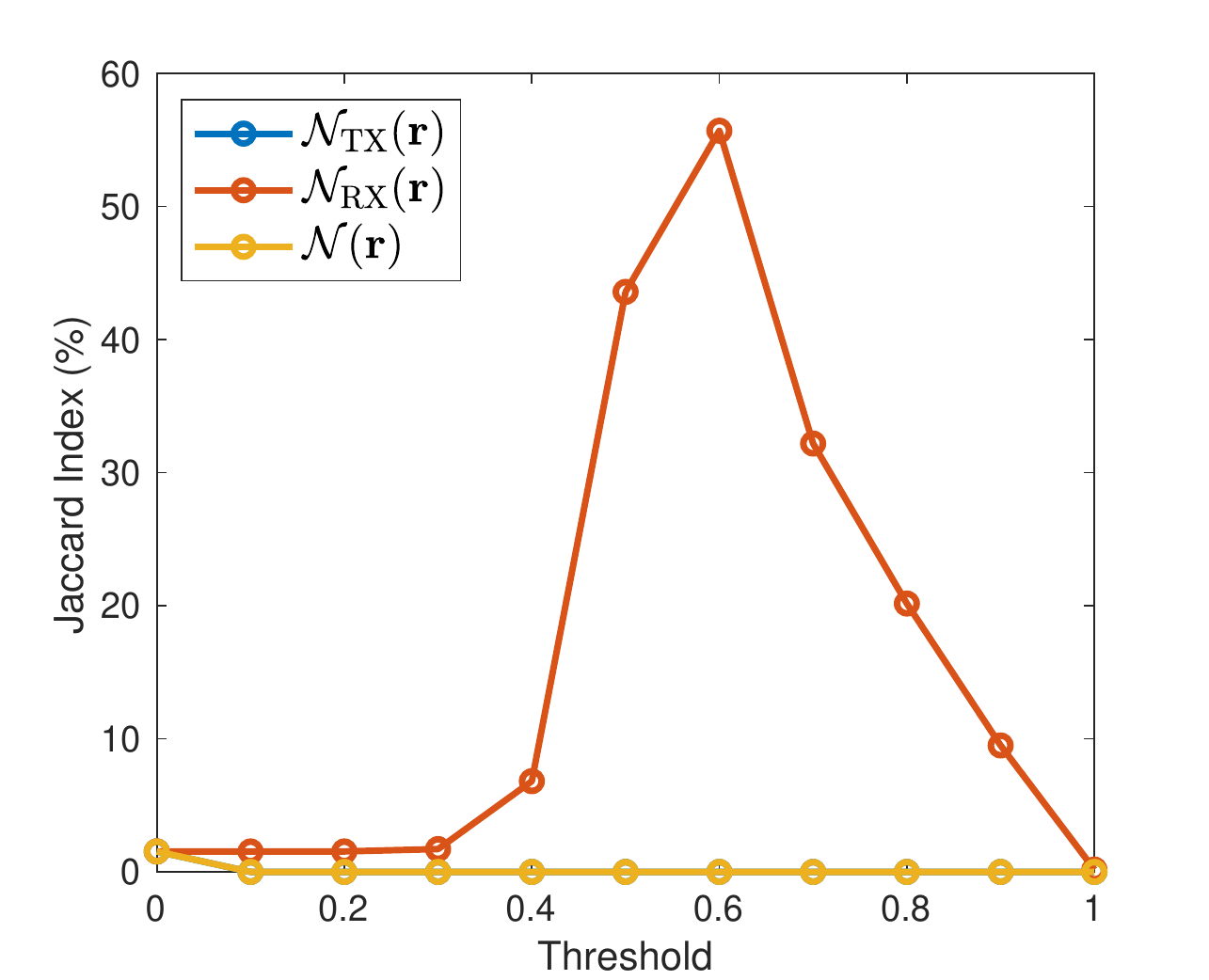}
  \caption{\label{Result4}(Example \ref{ex4}) Maps of $\mathfrak{F}_{\tx}(\mr)$ (first column), $\mathfrak{F}_{\rx}(\mr)$ (second column), $\mathfrak{F}(\mr)$ (third column), and Jaccard index (fourth column). Green and red colored circles describe the location of transmitters and receivers, respectively.}
\end{figure}

\begin{figure}[h]
  \centering
  \includegraphics[width=0.25\textwidth]{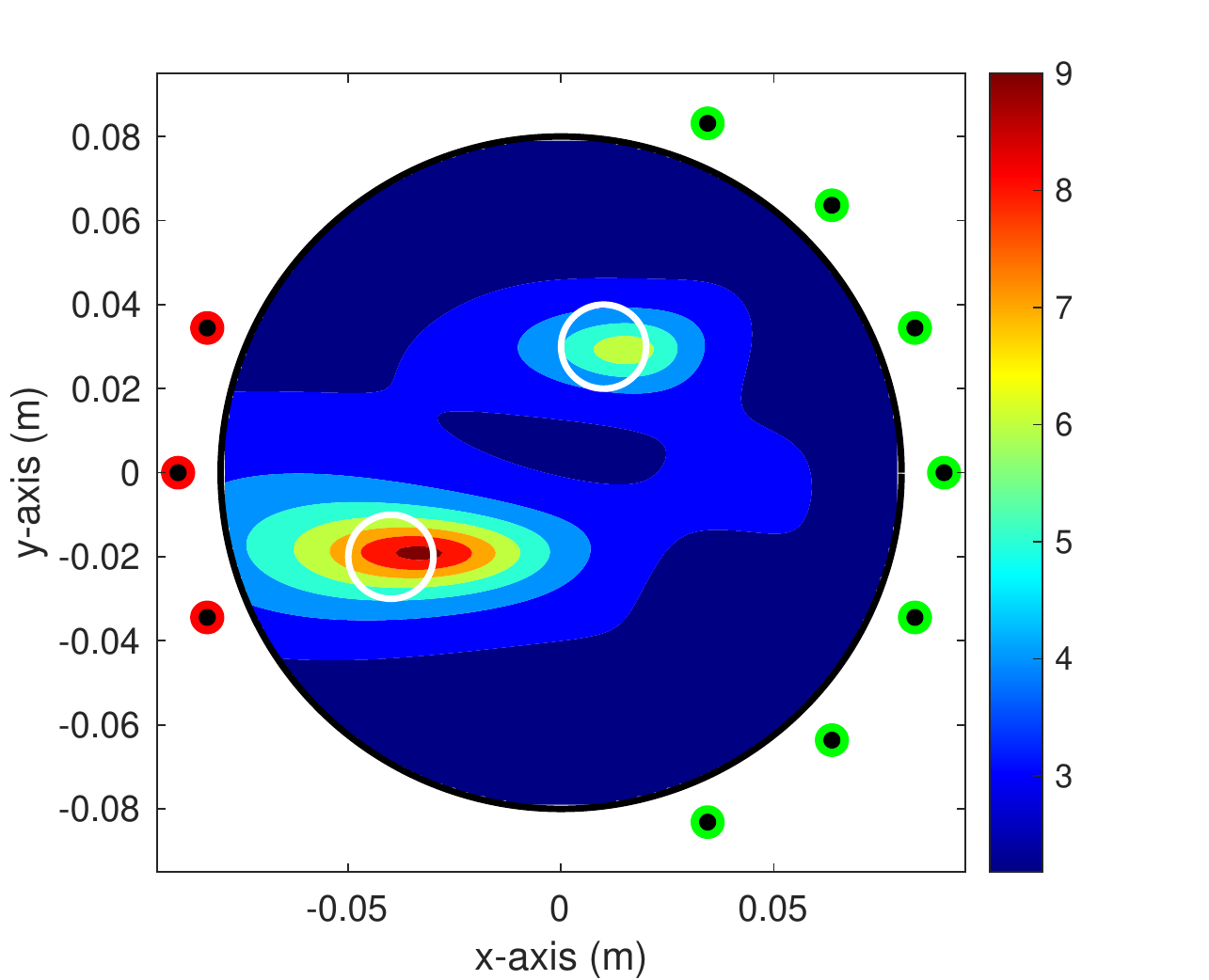}\hfill
  \includegraphics[width=0.25\textwidth]{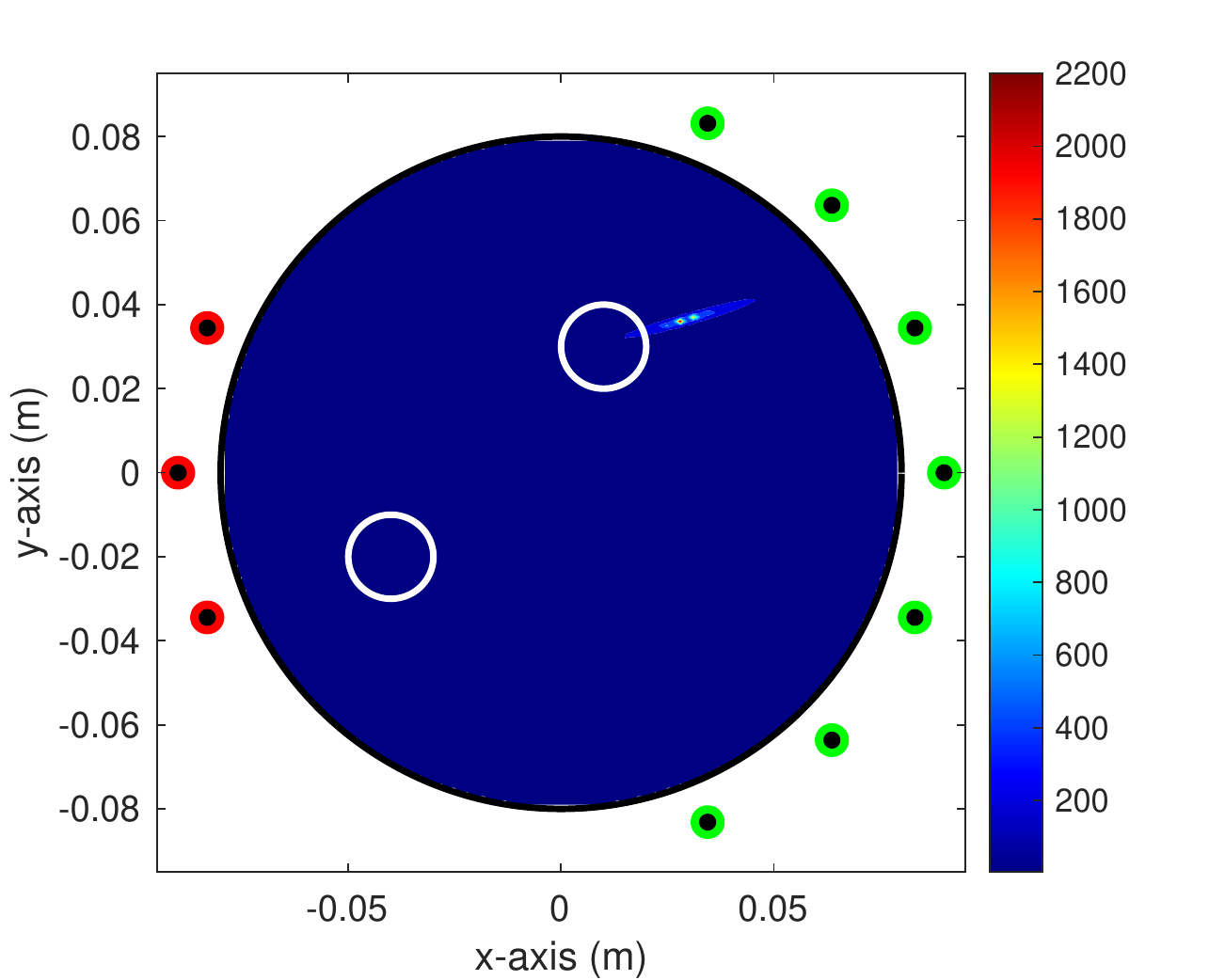}\hfill
  \includegraphics[width=0.25\textwidth]{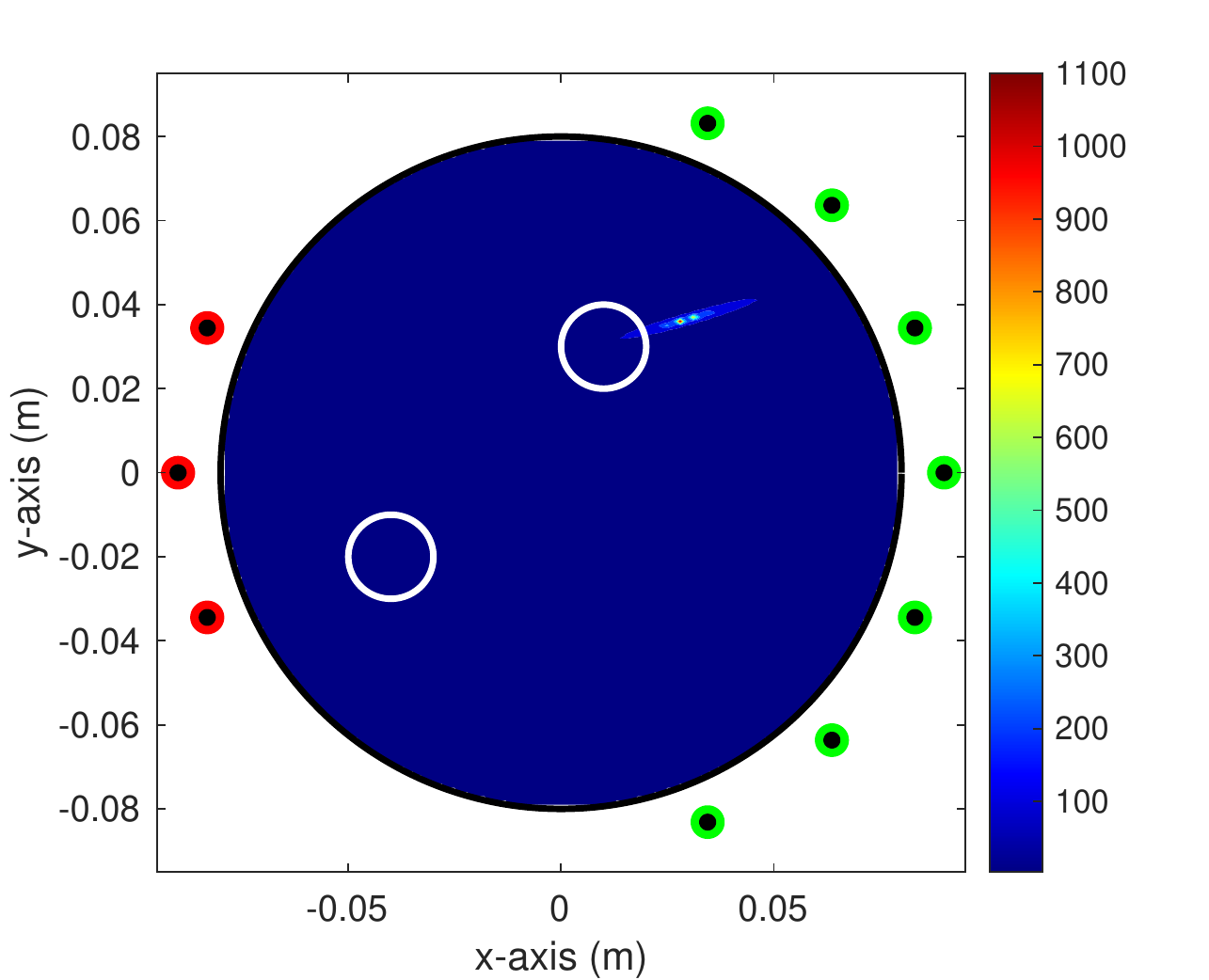}\hfill
  \includegraphics[width=0.25\textwidth]{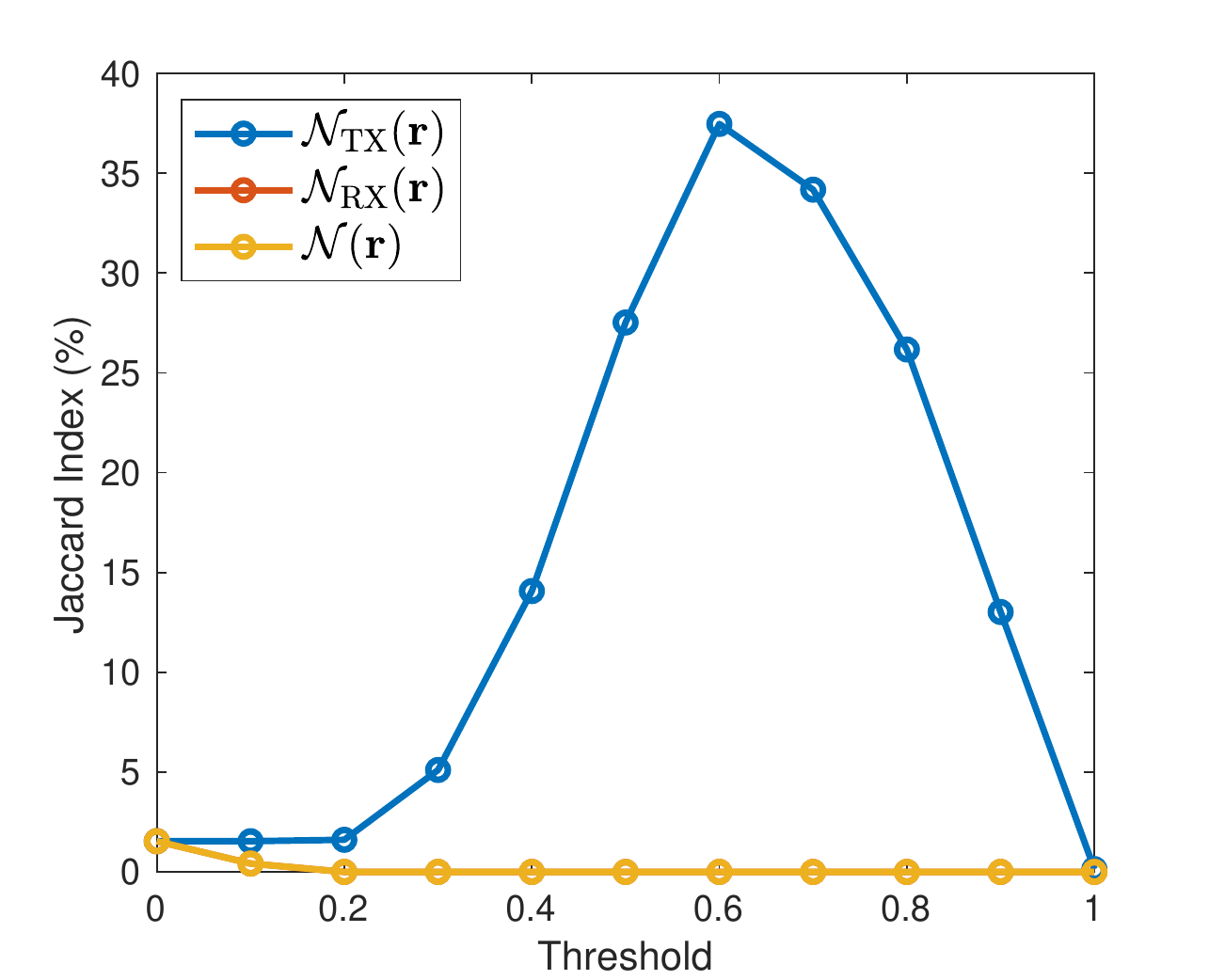}\\
  \includegraphics[width=0.25\textwidth]{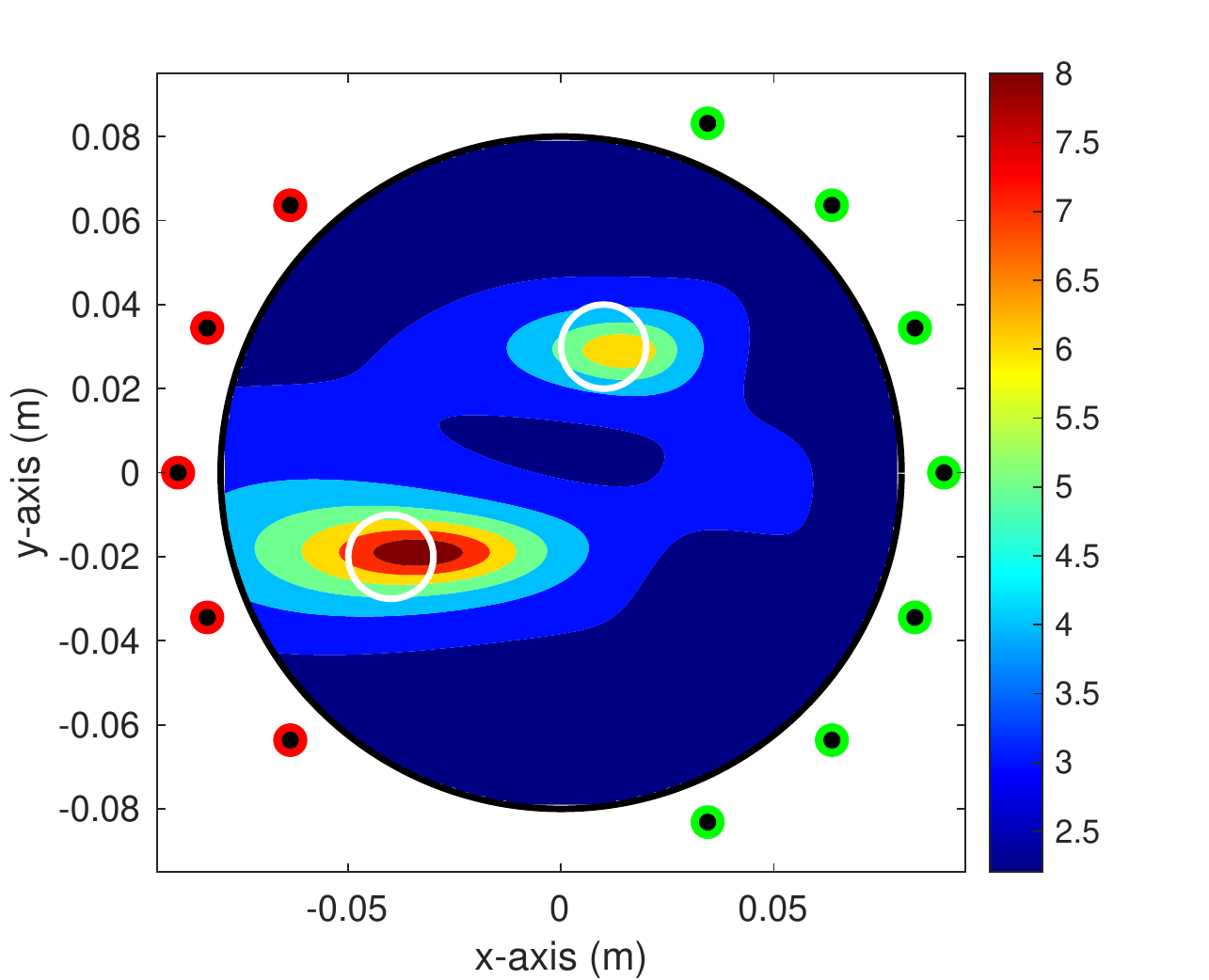}\hfill
  \includegraphics[width=0.25\textwidth]{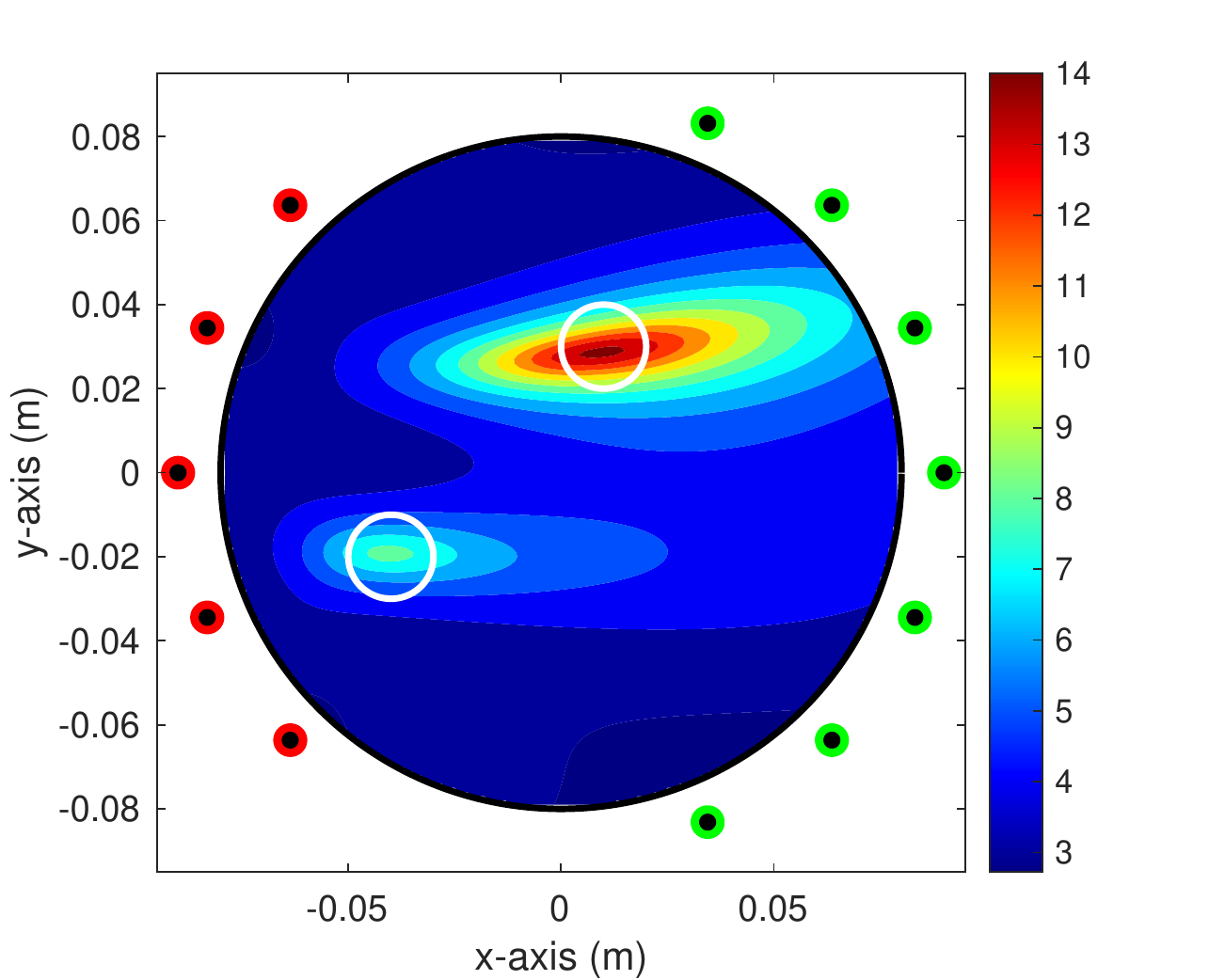}\hfill
  \includegraphics[width=0.25\textwidth]{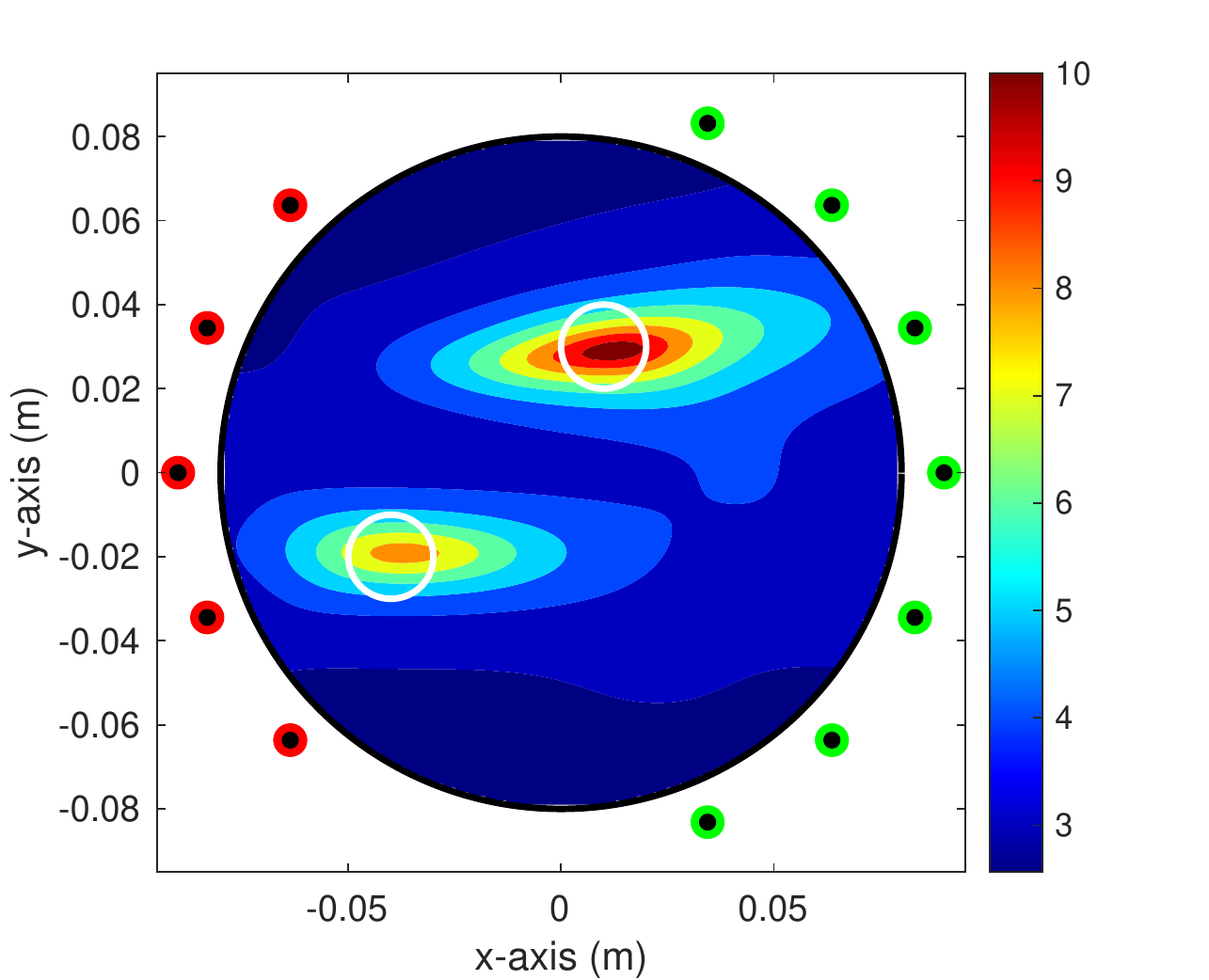}\hfill
  \includegraphics[width=0.25\textwidth]{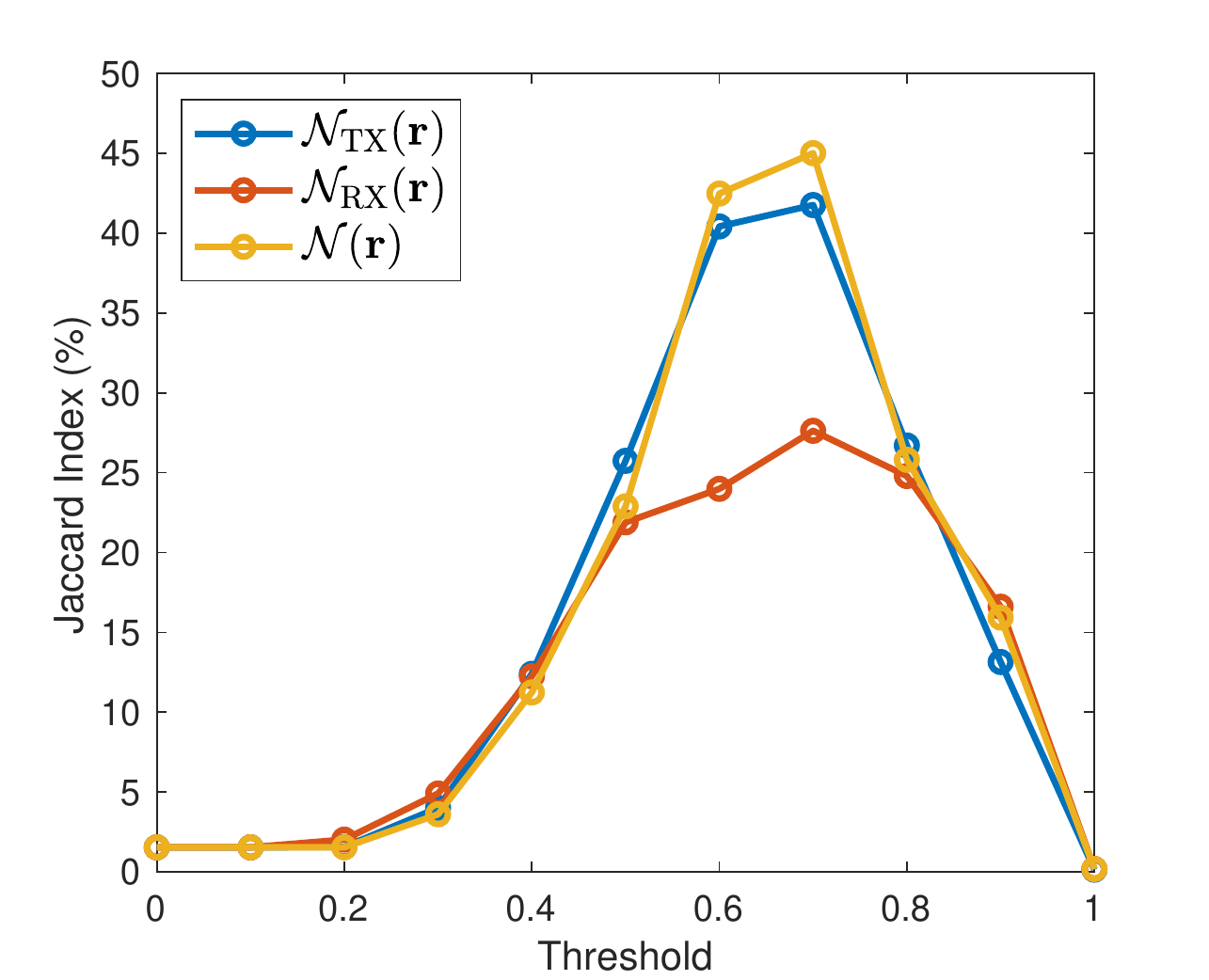}\\
  \includegraphics[width=0.25\textwidth]{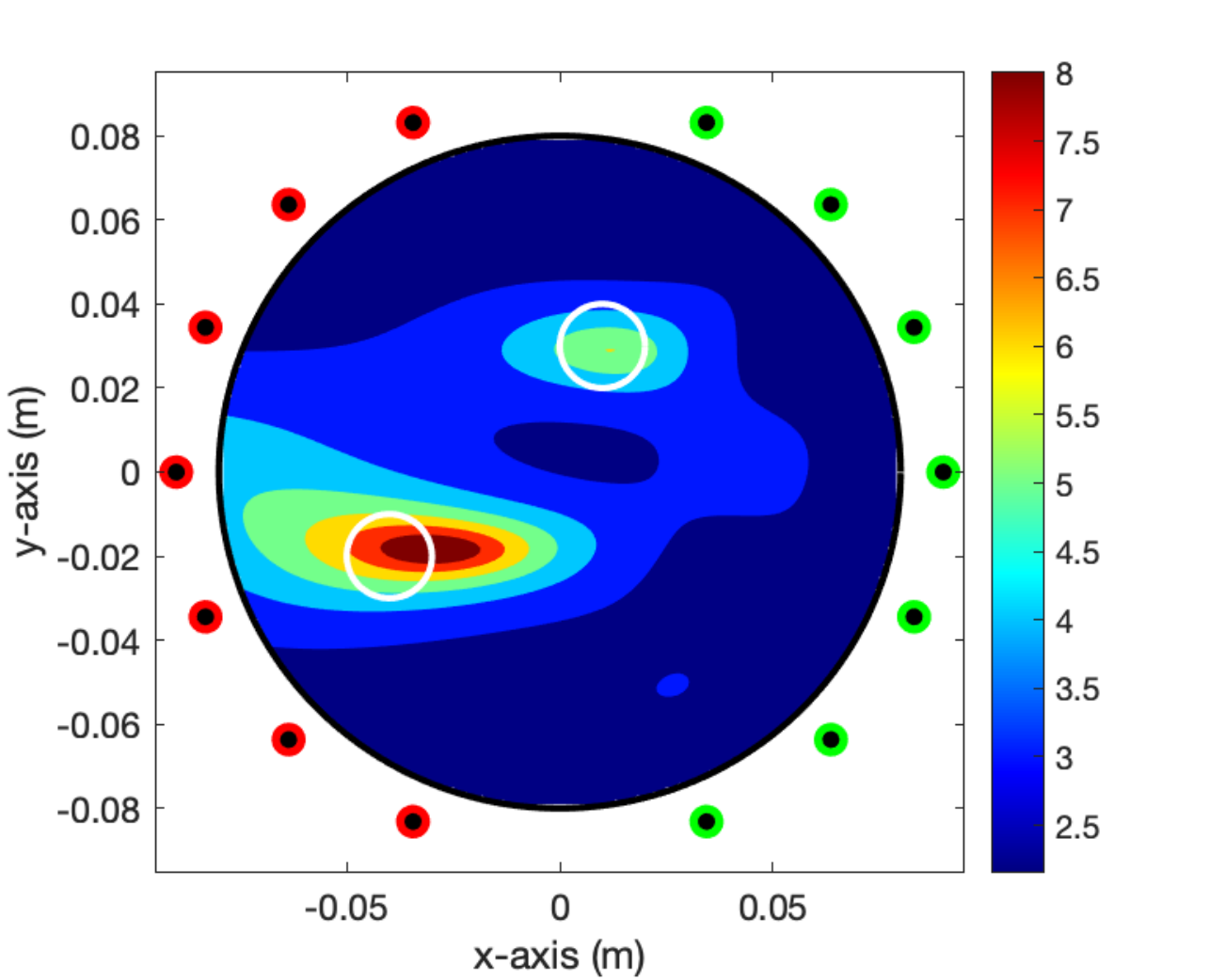}\hfill
  \includegraphics[width=0.25\textwidth]{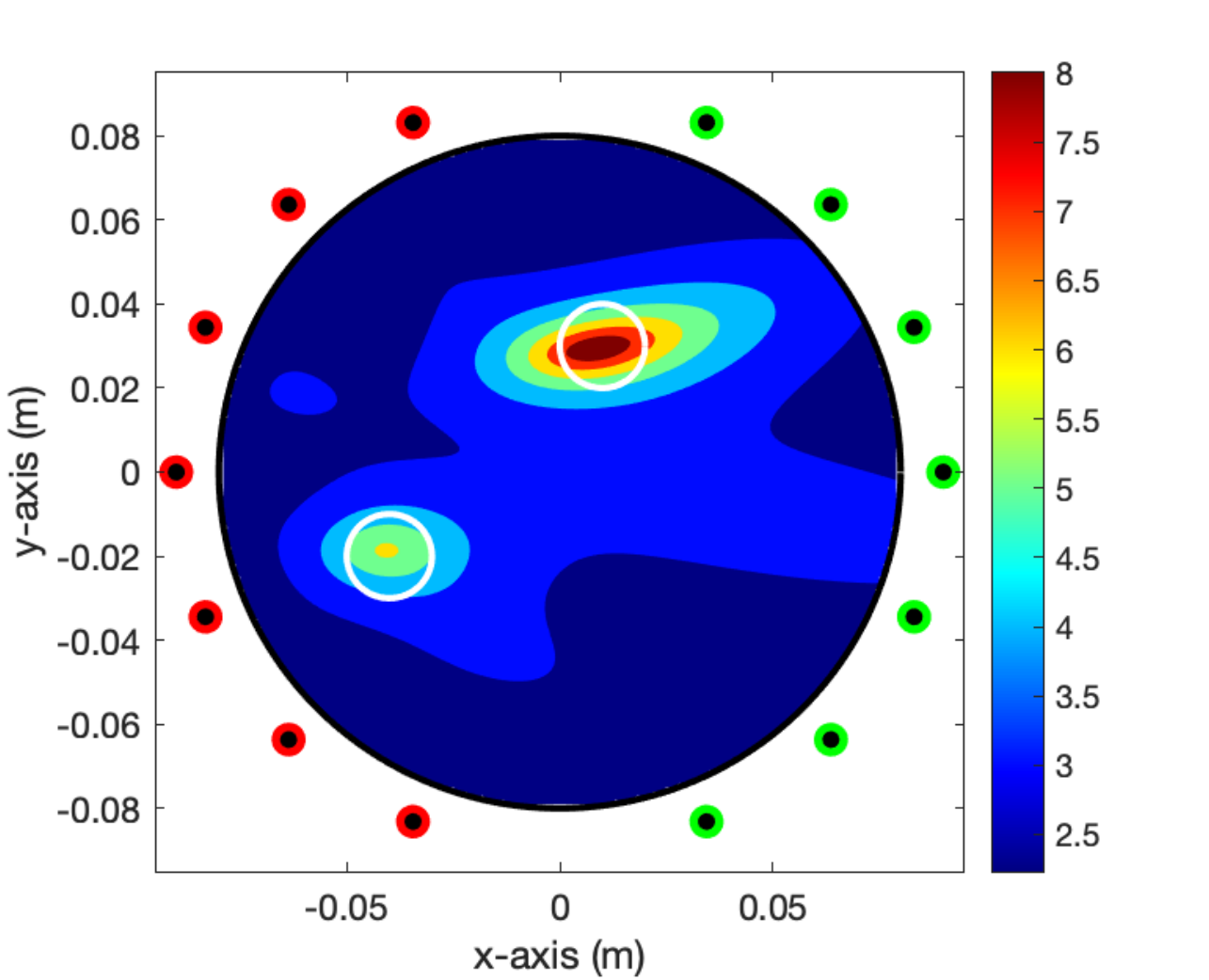}\hfill
  \includegraphics[width=0.25\textwidth]{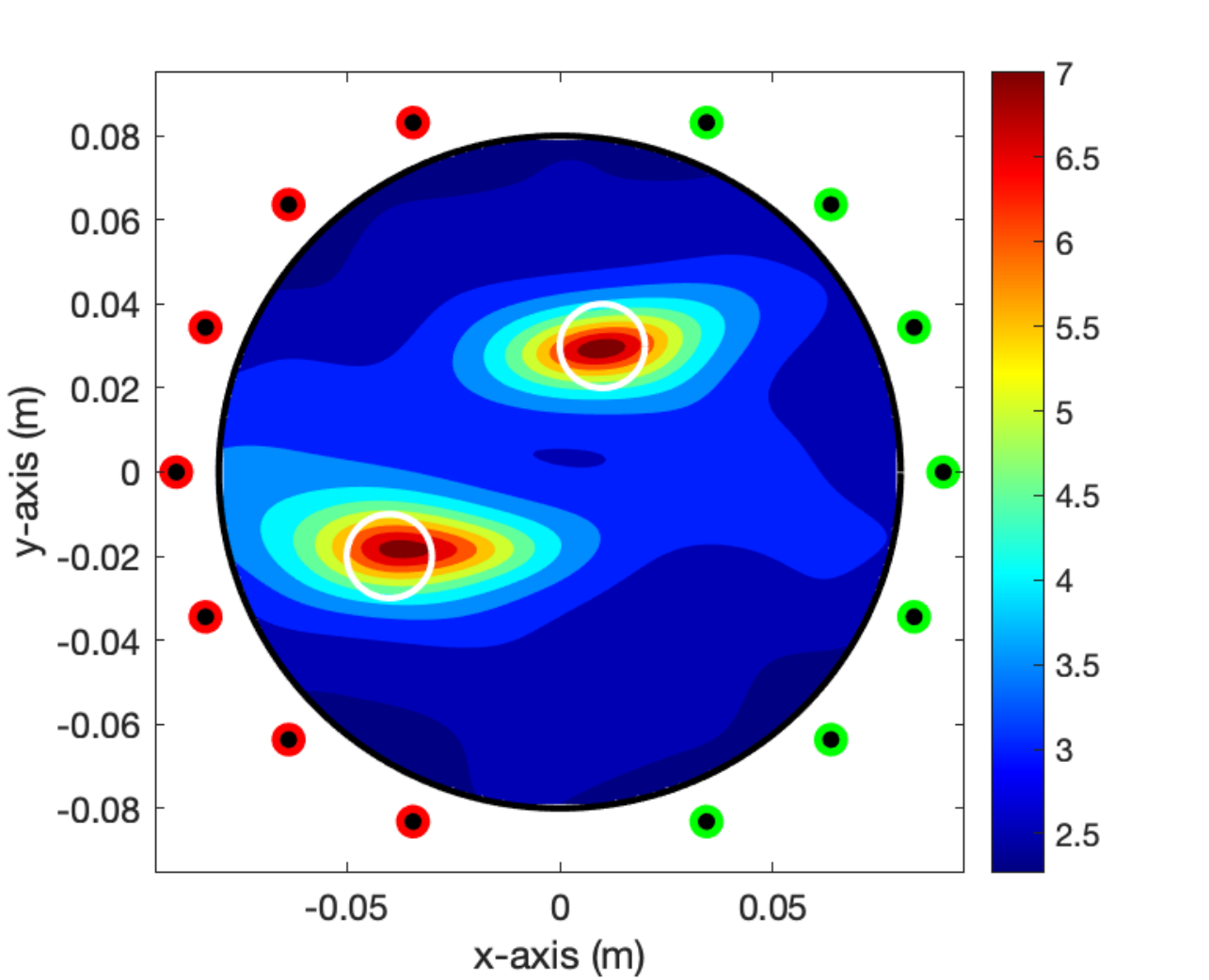}\hfill
  \includegraphics[width=0.25\textwidth]{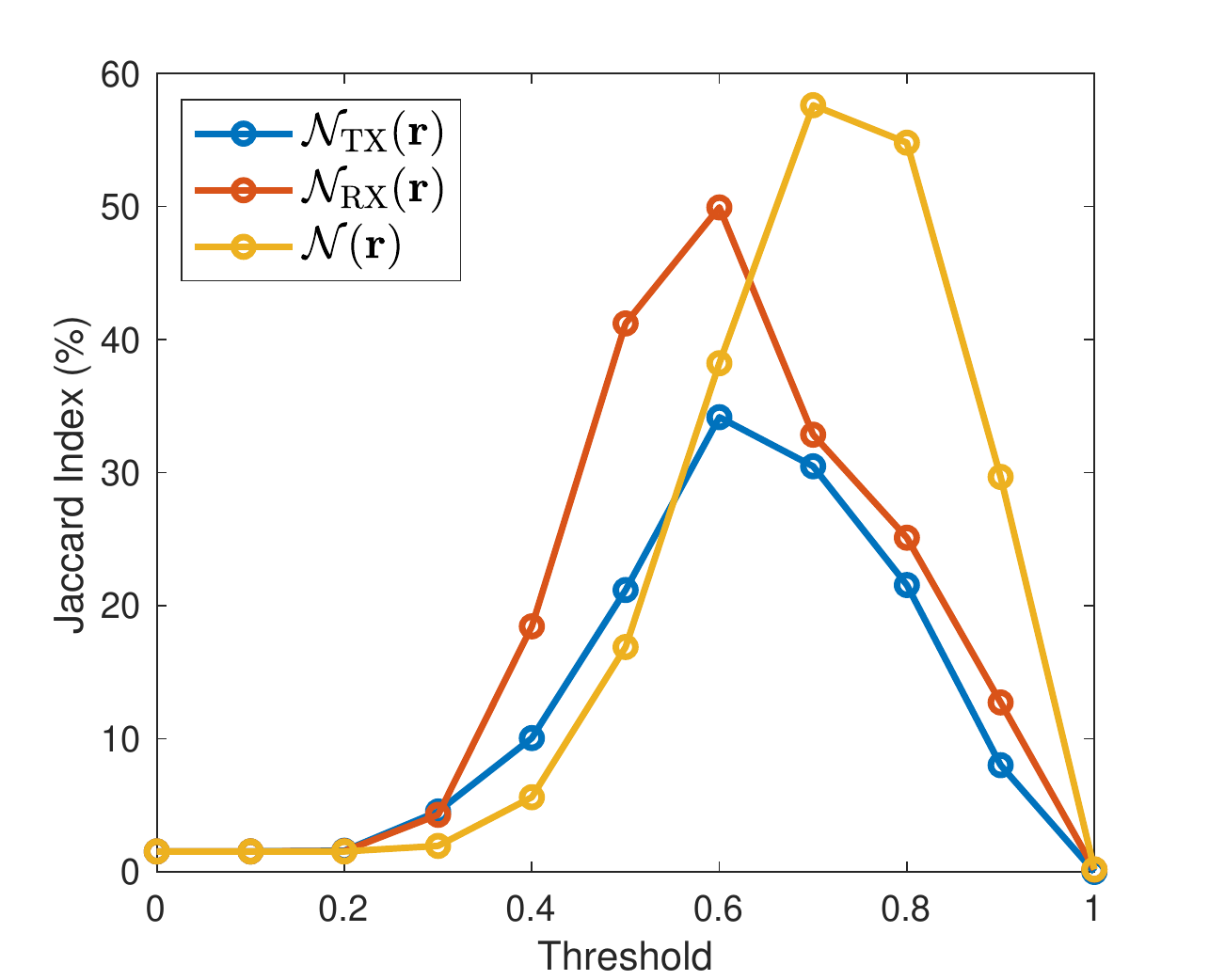}\\
  \includegraphics[width=0.25\textwidth]{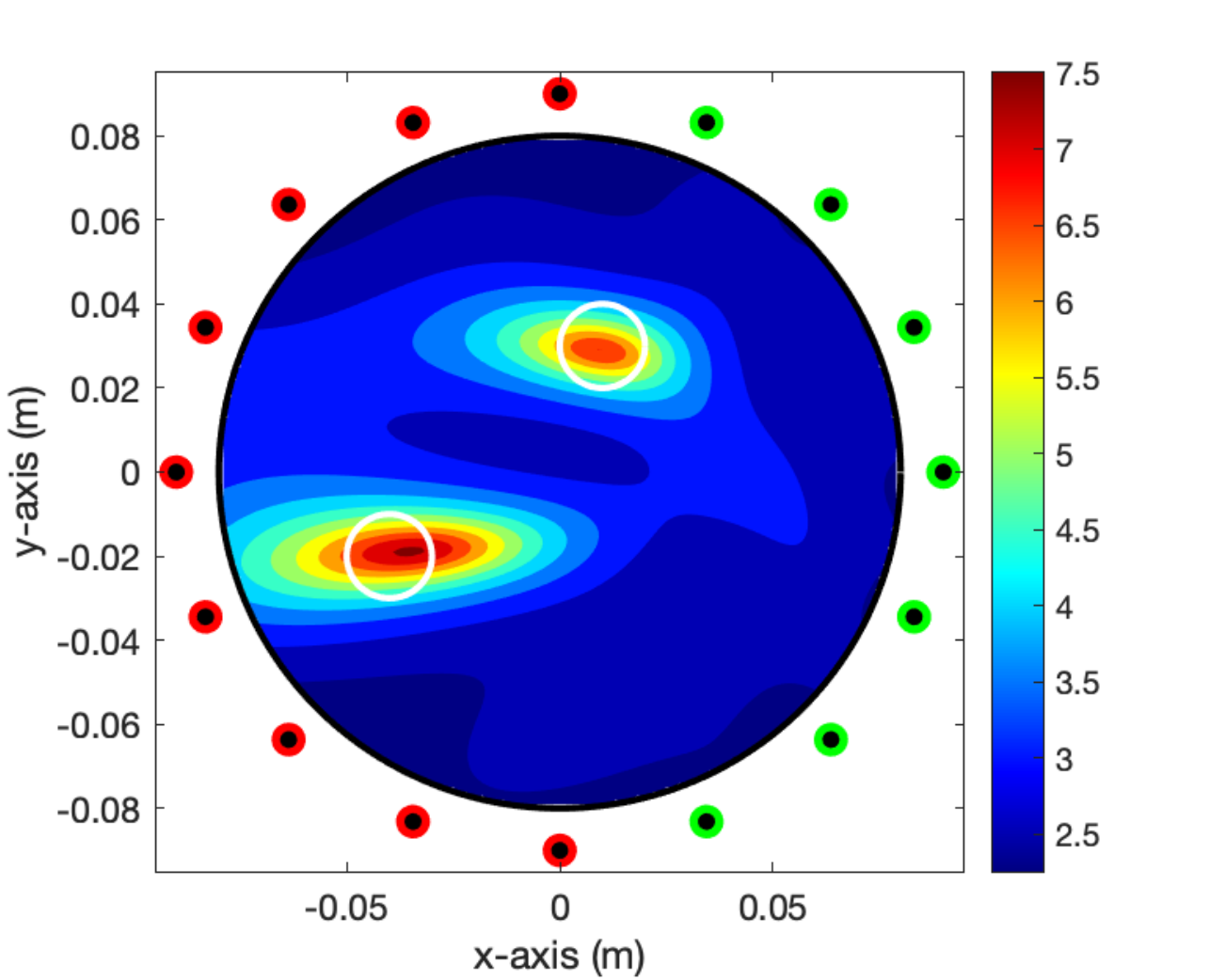}\hfill
  \includegraphics[width=0.25\textwidth]{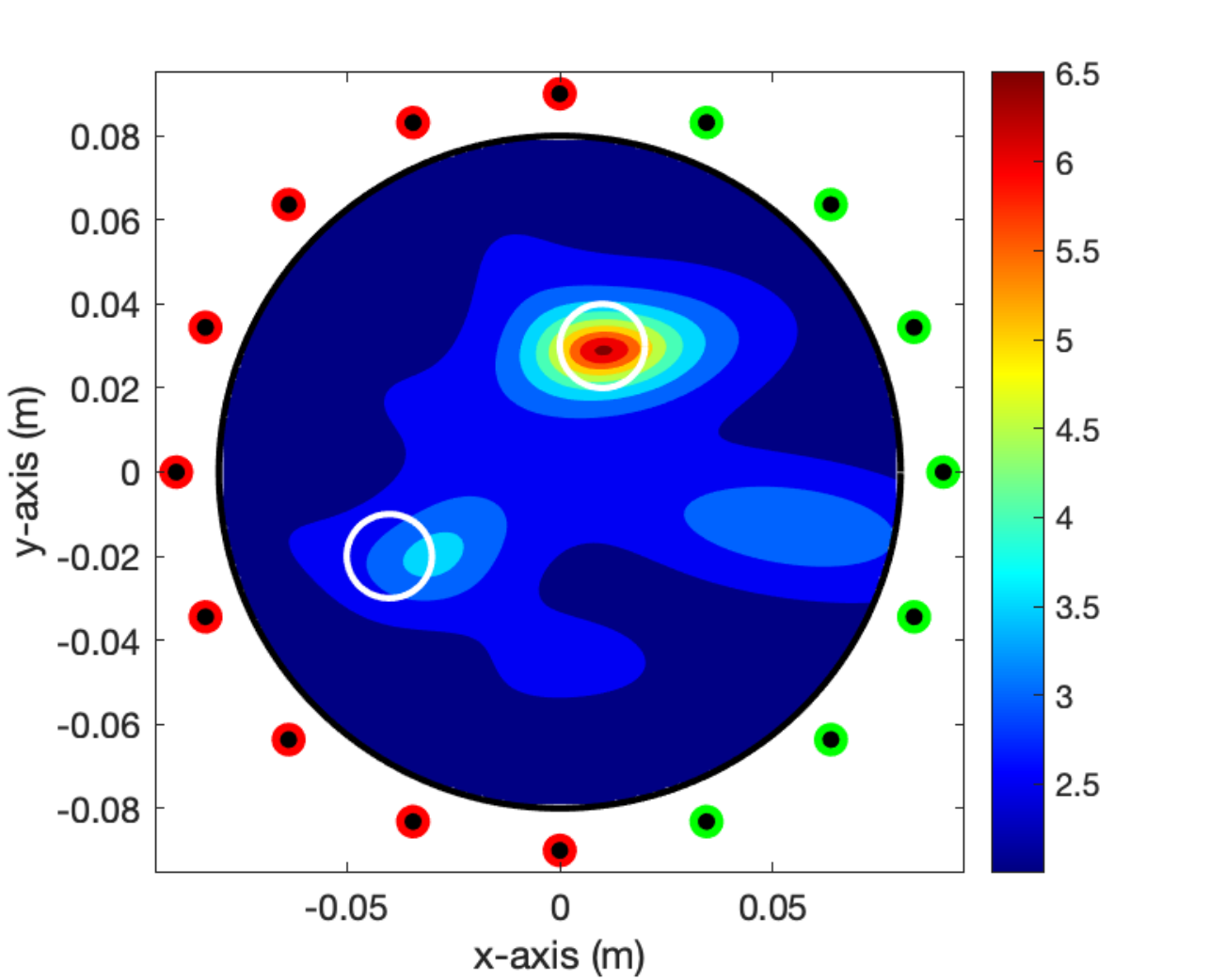}\hfill
  \includegraphics[width=0.25\textwidth]{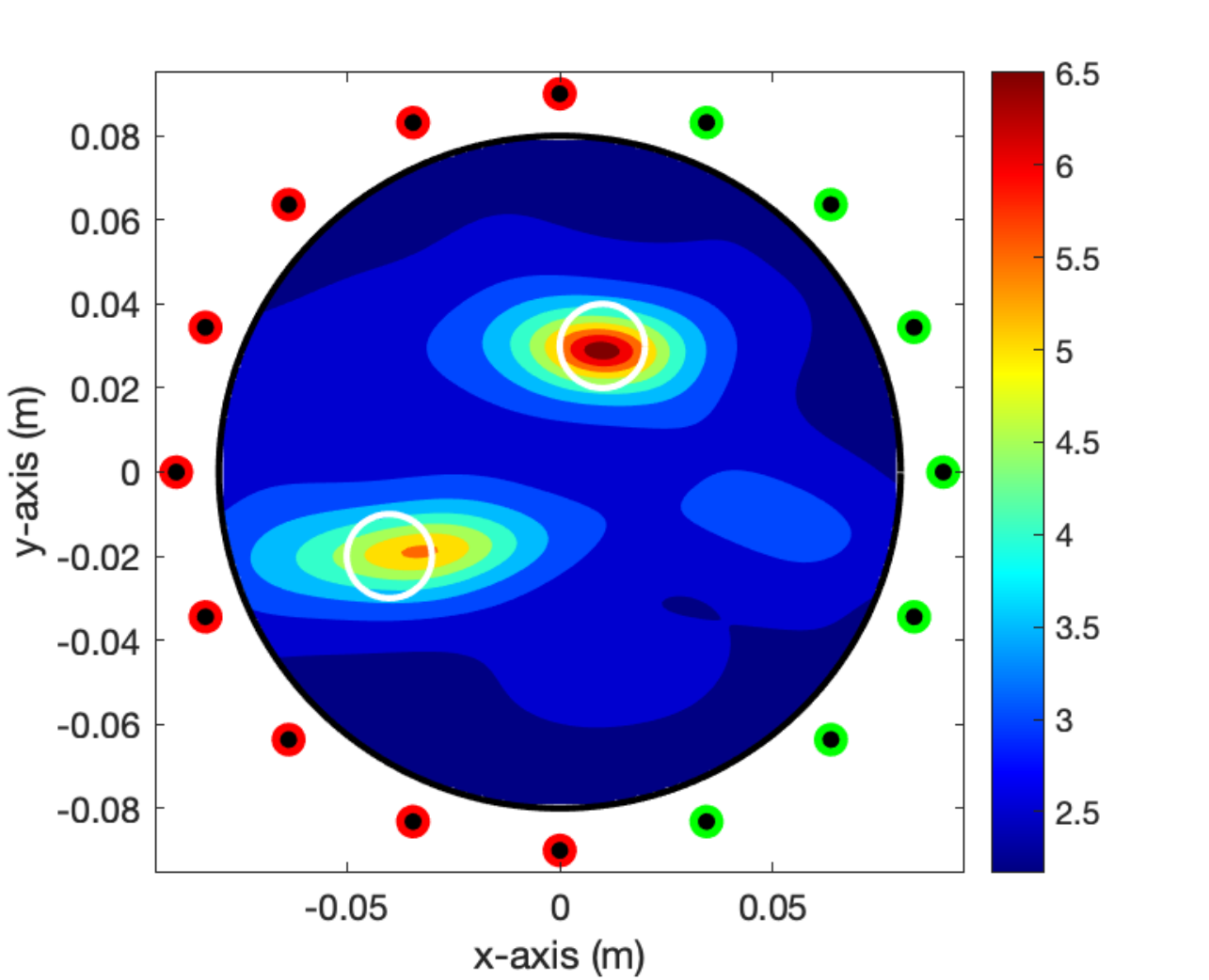}\hfill
  \includegraphics[width=0.25\textwidth]{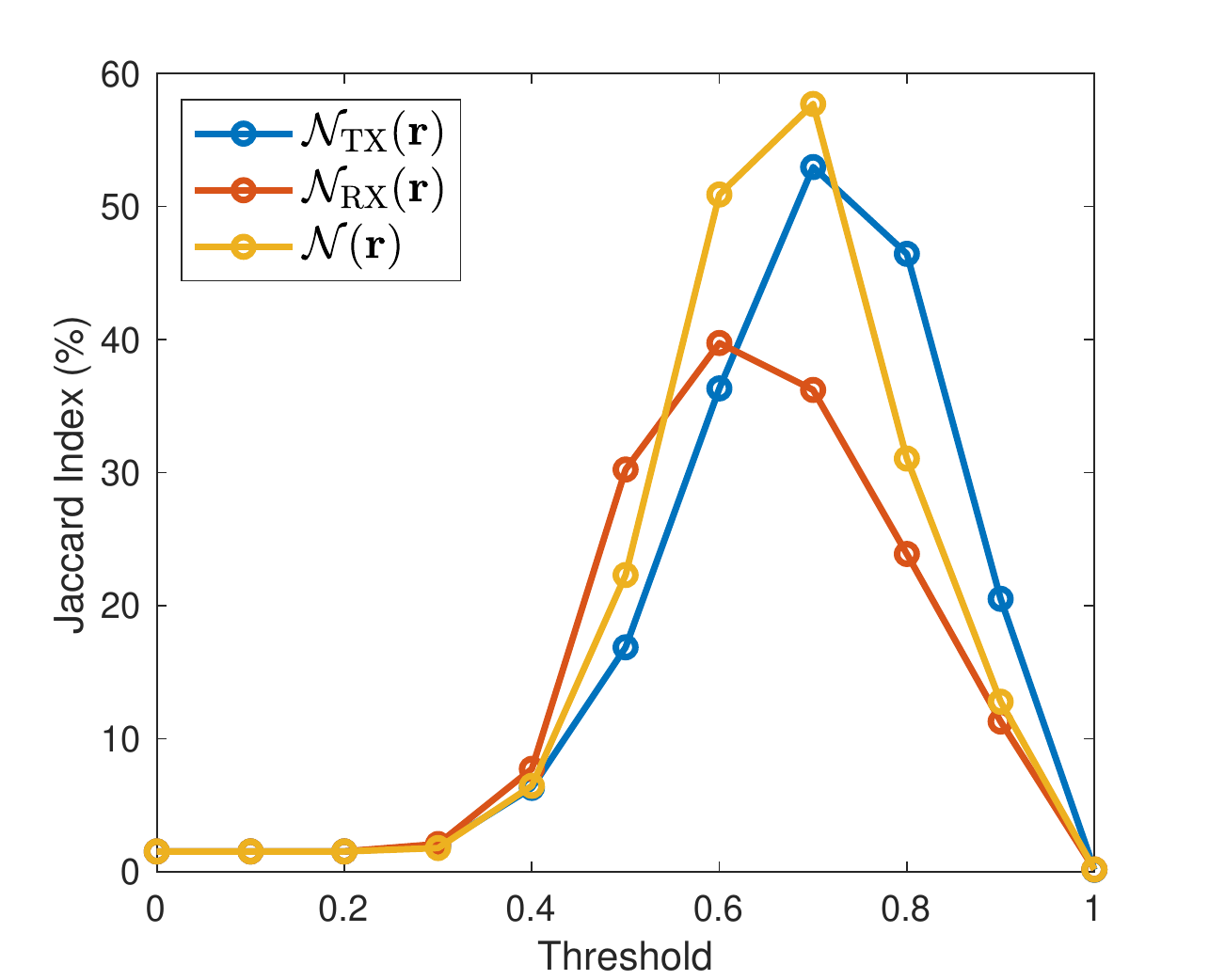}
  \caption{\label{Result5}(Example \ref{ex4}) Maps of $\mathfrak{F}_{\tx}(\mr)$ (first column), $\mathfrak{F}_{\rx}(\mr)$ (second column), $\mathfrak{F}(\mr)$ (third column), and Jaccard index (fourth column). Green and red colored circles describe the location of transmitters and receivers, respectively.}
\end{figure}

\begin{example}[Effects on the Arrangement of Antennas: Multiple Anomalies]\label{ex5}
Here, we consider the imaging results with the antenna settings introduced in the Example \ref{ex3}. Similar to the results in Figure \ref{Result3}, it is very difficult to distinguish the location of anomalies and artifacts with the antenna settings $\mathbf{A}_5\cup\mathbf{B}_5$, $\mathbf{A}_6\cup\mathbf{B}_5$, and $\mathbf{A}_7\cup\mathbf{B}_5$. Moreover, opposite to the imaging of single anomaly, it is very difficult to identify two anomalies due to the presence of unexpected artifacts via the map of $\mathfrak{F}_{\tx}(\mr)$ and extremely small value of $\mathfrak{F}_{\rx}(\mr_2)$. On the other hand, the location of anomalies can be identified through the map of $\mathfrak{F}(\mr)$.
\end{example}

\begin{figure}[h]
  \centering
  \includegraphics[width=0.25\textwidth]{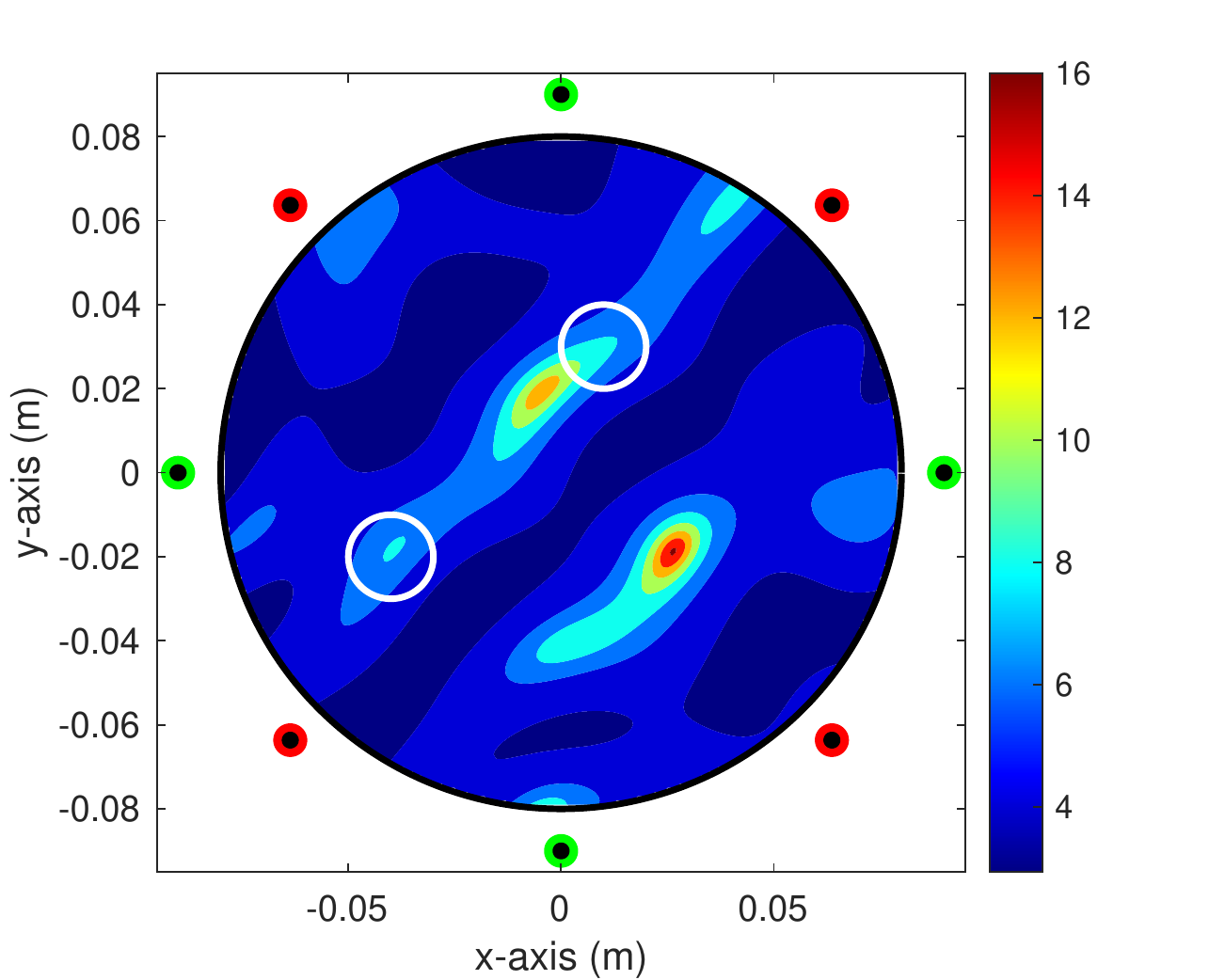}\hfill
  \includegraphics[width=0.25\textwidth]{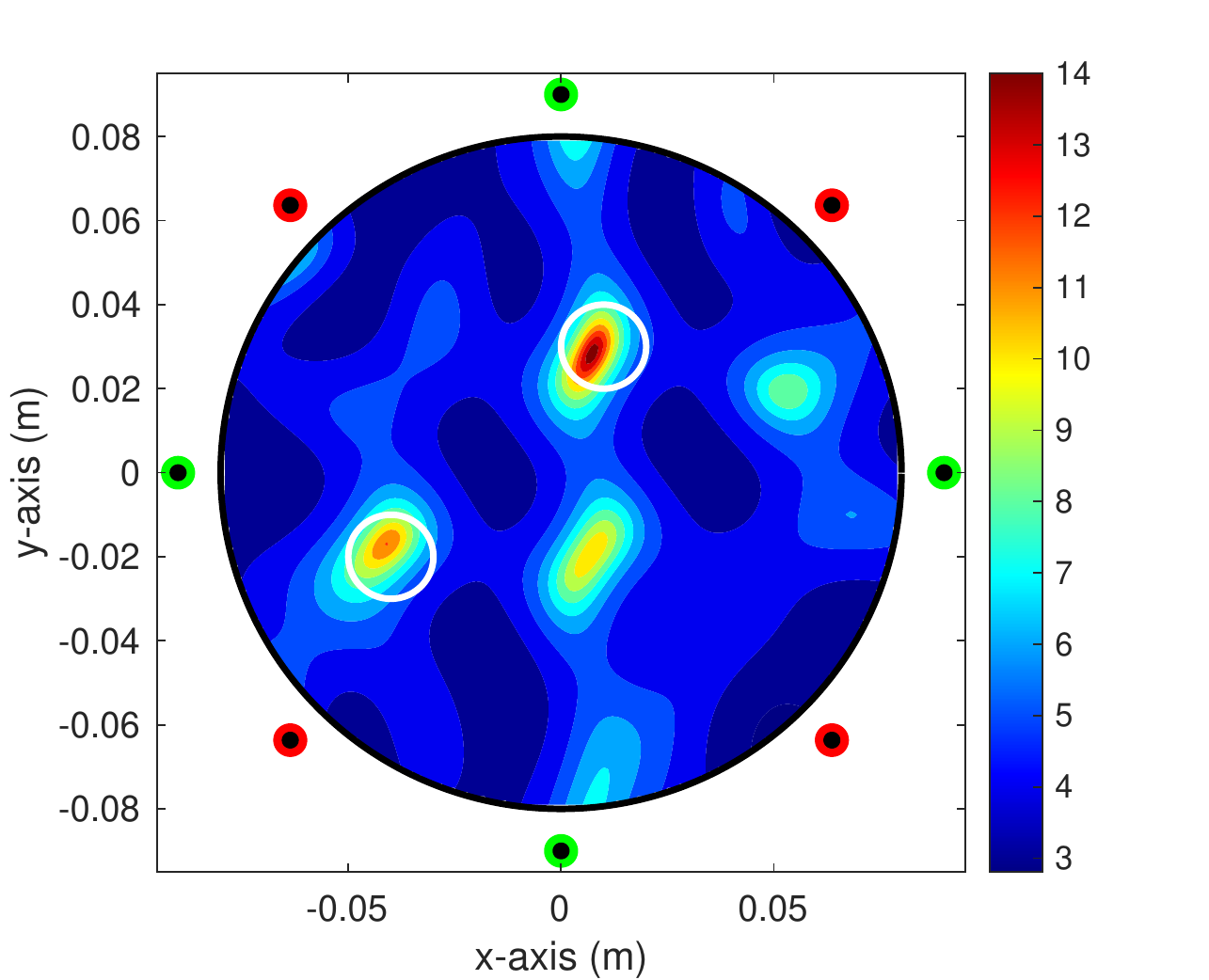}\hfill
  \includegraphics[width=0.25\textwidth]{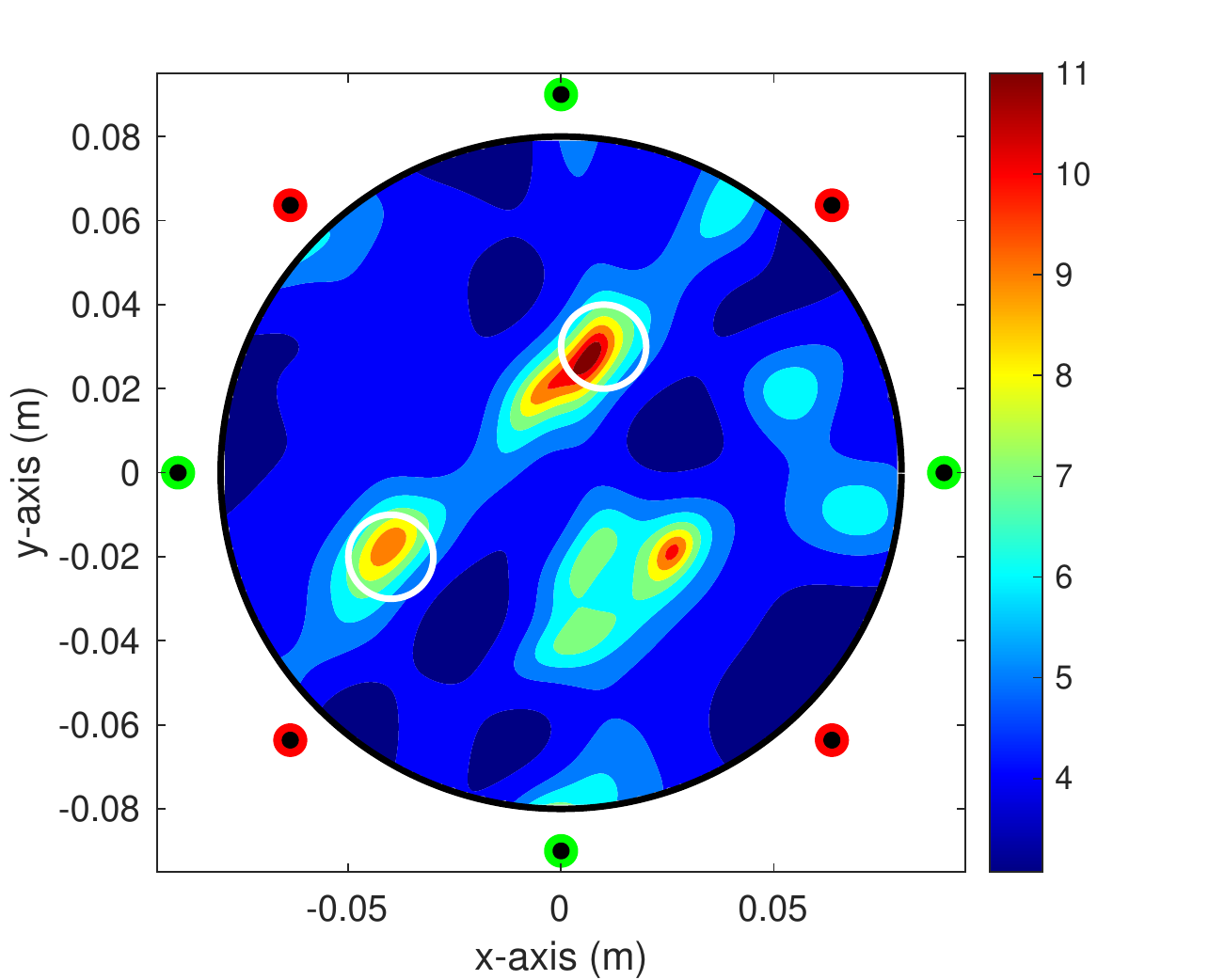}\hfill
  \includegraphics[width=0.25\textwidth]{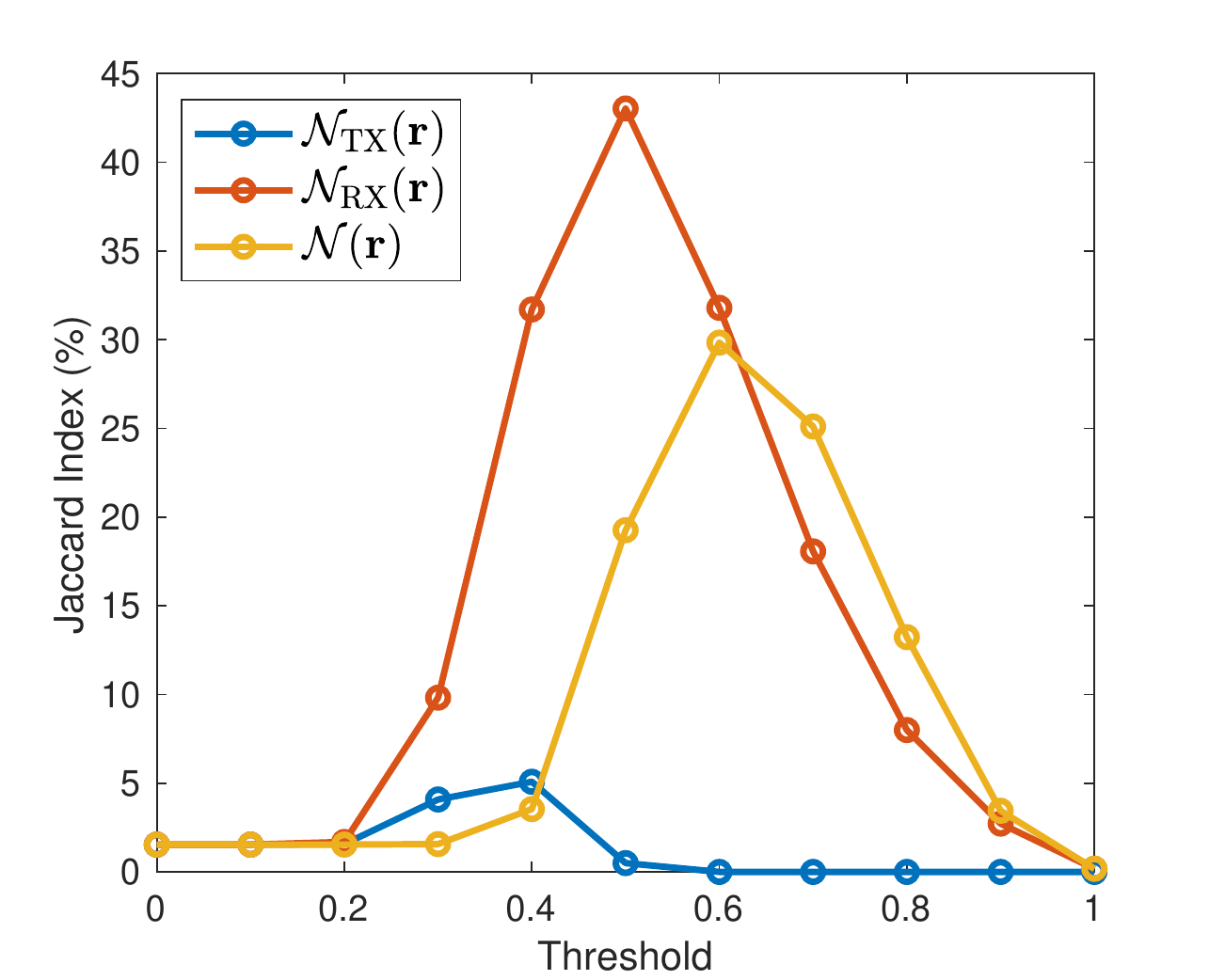}\\
  \includegraphics[width=0.25\textwidth]{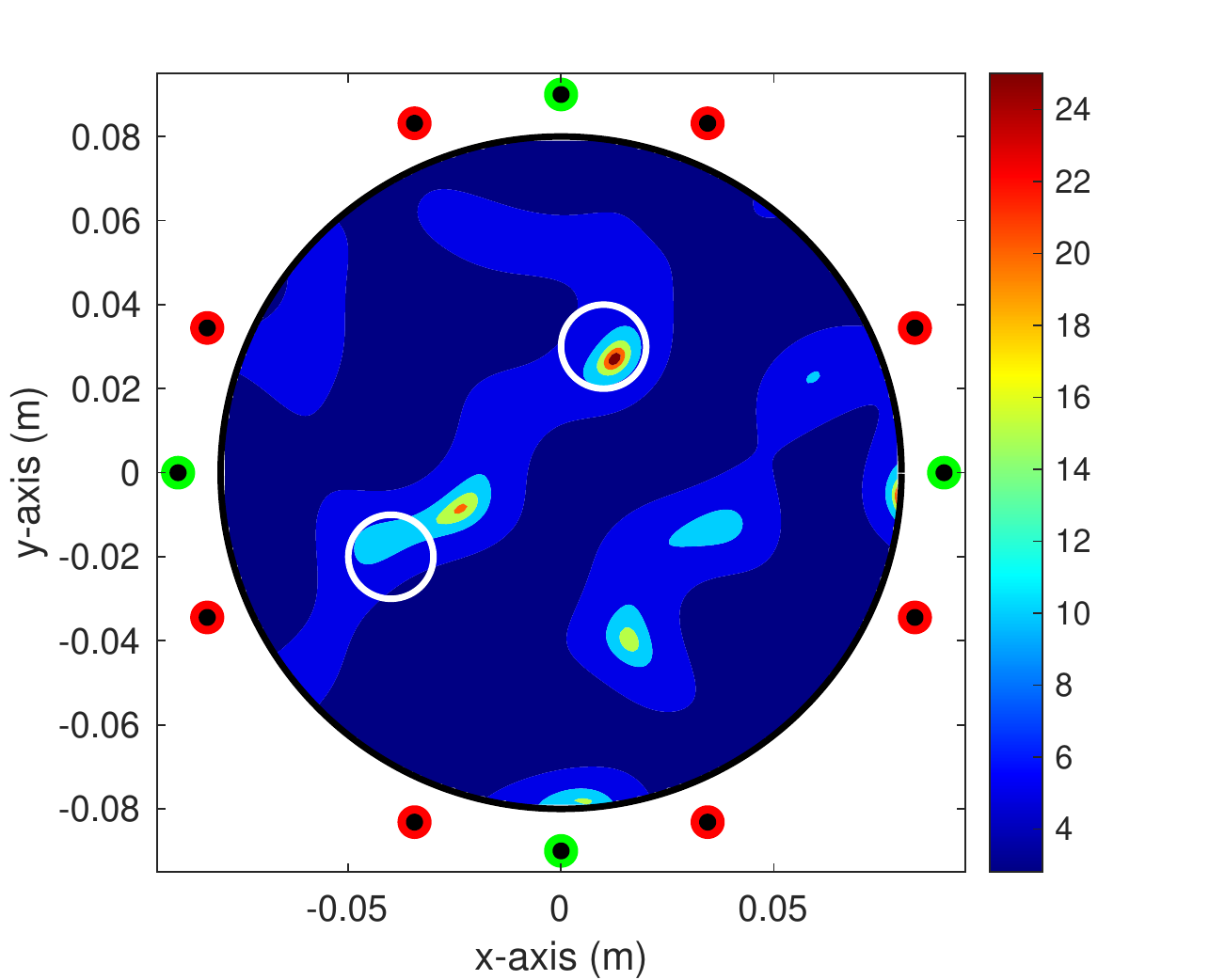}\hfill
  \includegraphics[width=0.25\textwidth]{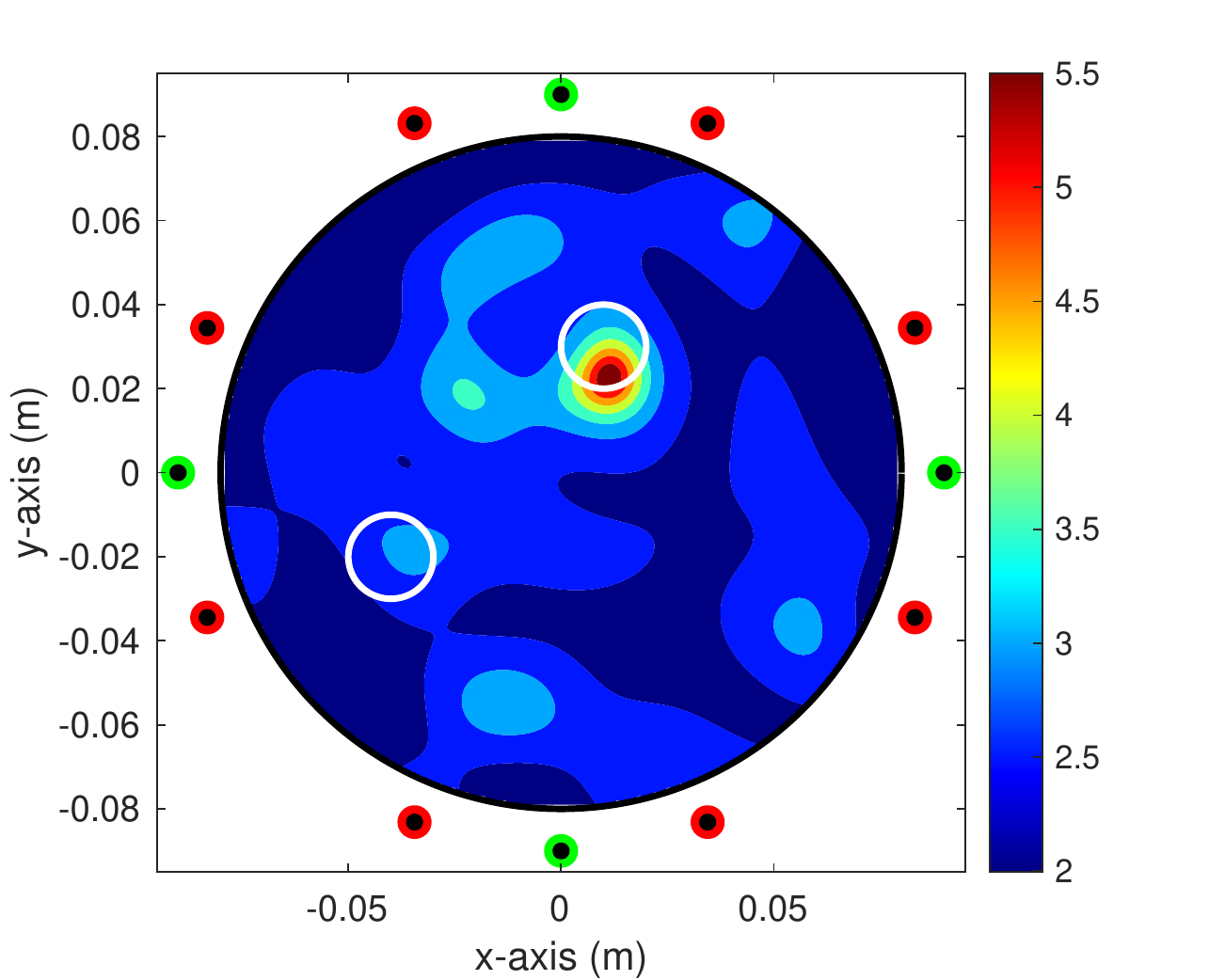}\hfill
  \includegraphics[width=0.25\textwidth]{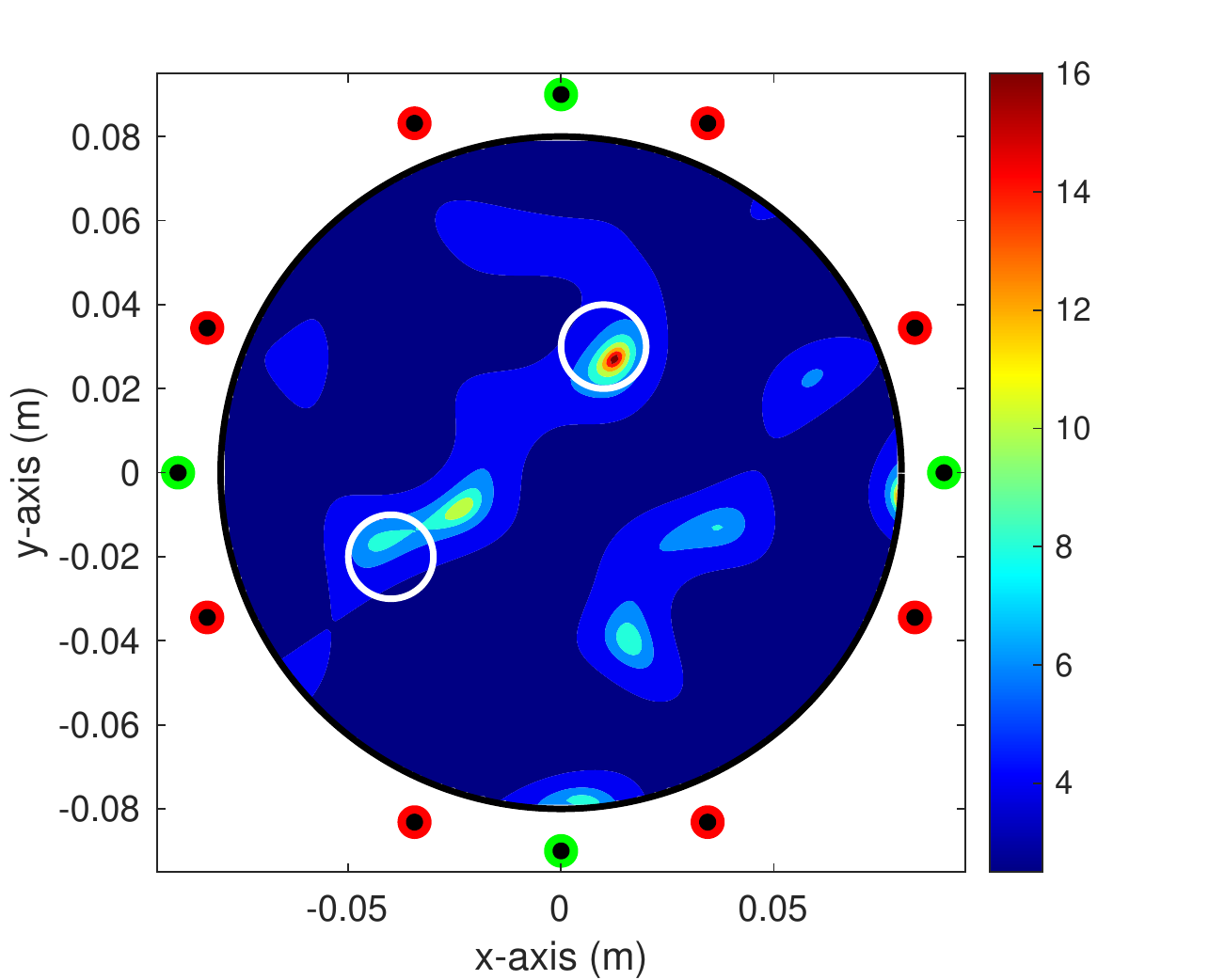}\hfill
  \includegraphics[width=0.25\textwidth]{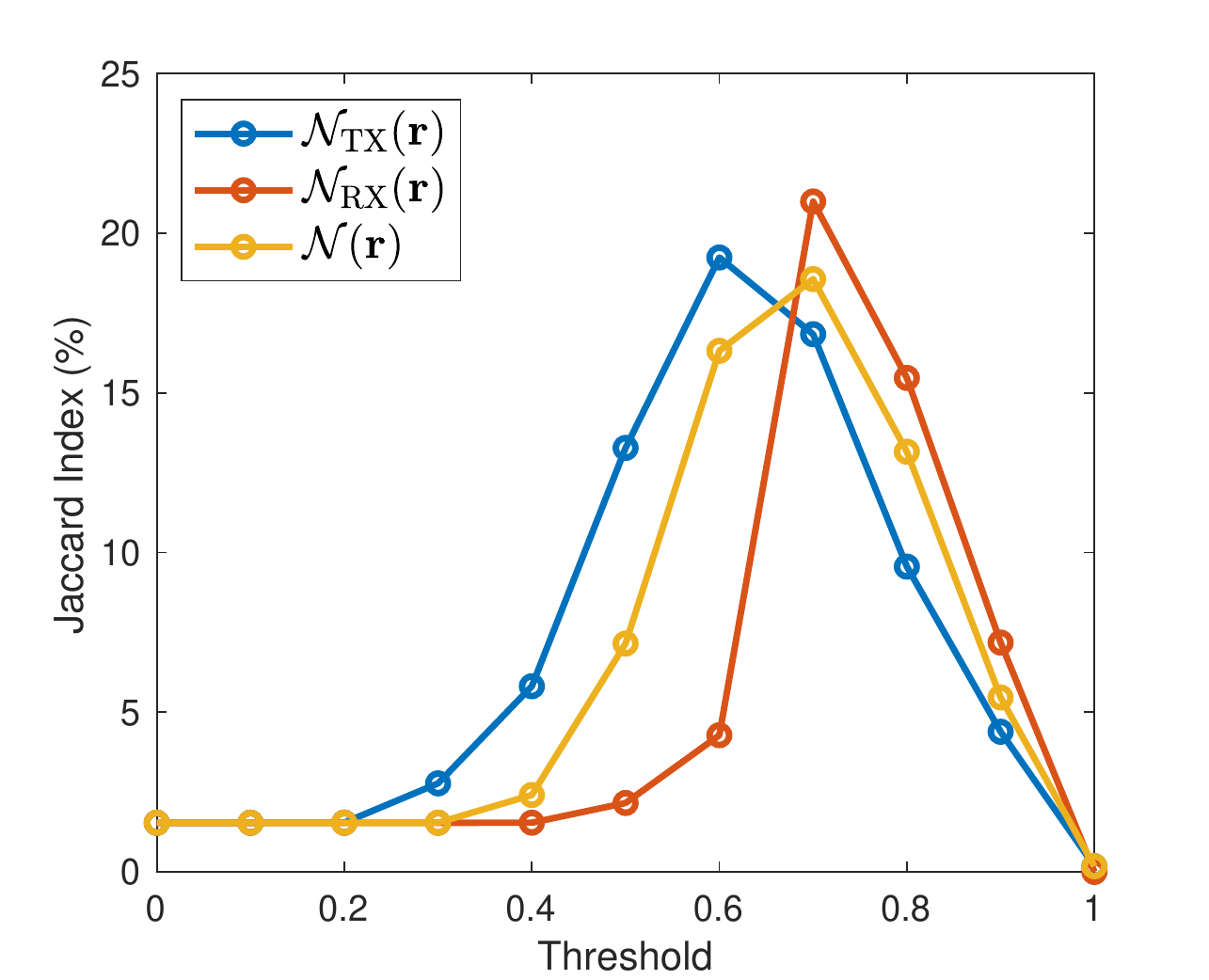}\\
  \includegraphics[width=0.25\textwidth]{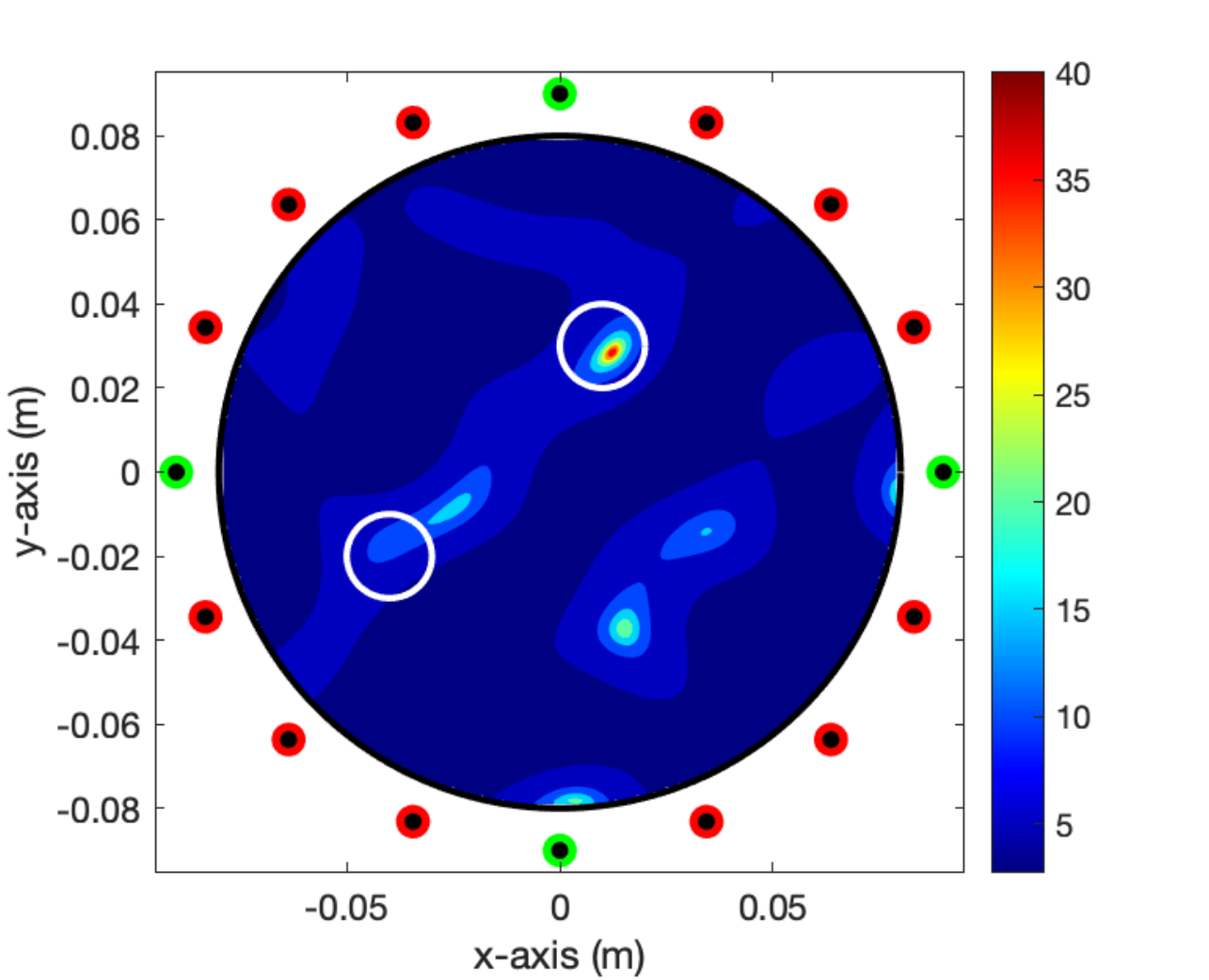}\hfill
  \includegraphics[width=0.25\textwidth]{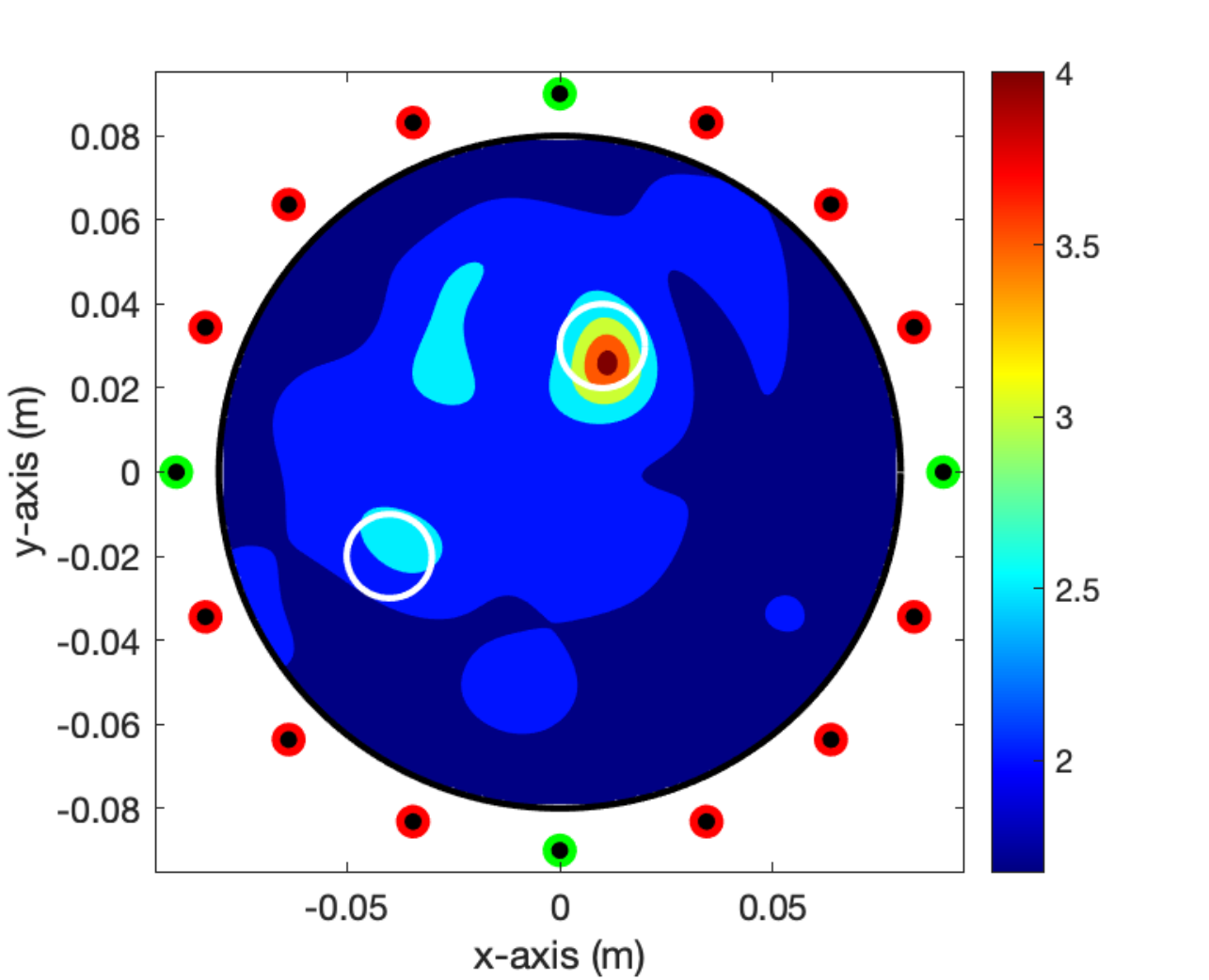}\hfill
  \includegraphics[width=0.25\textwidth]{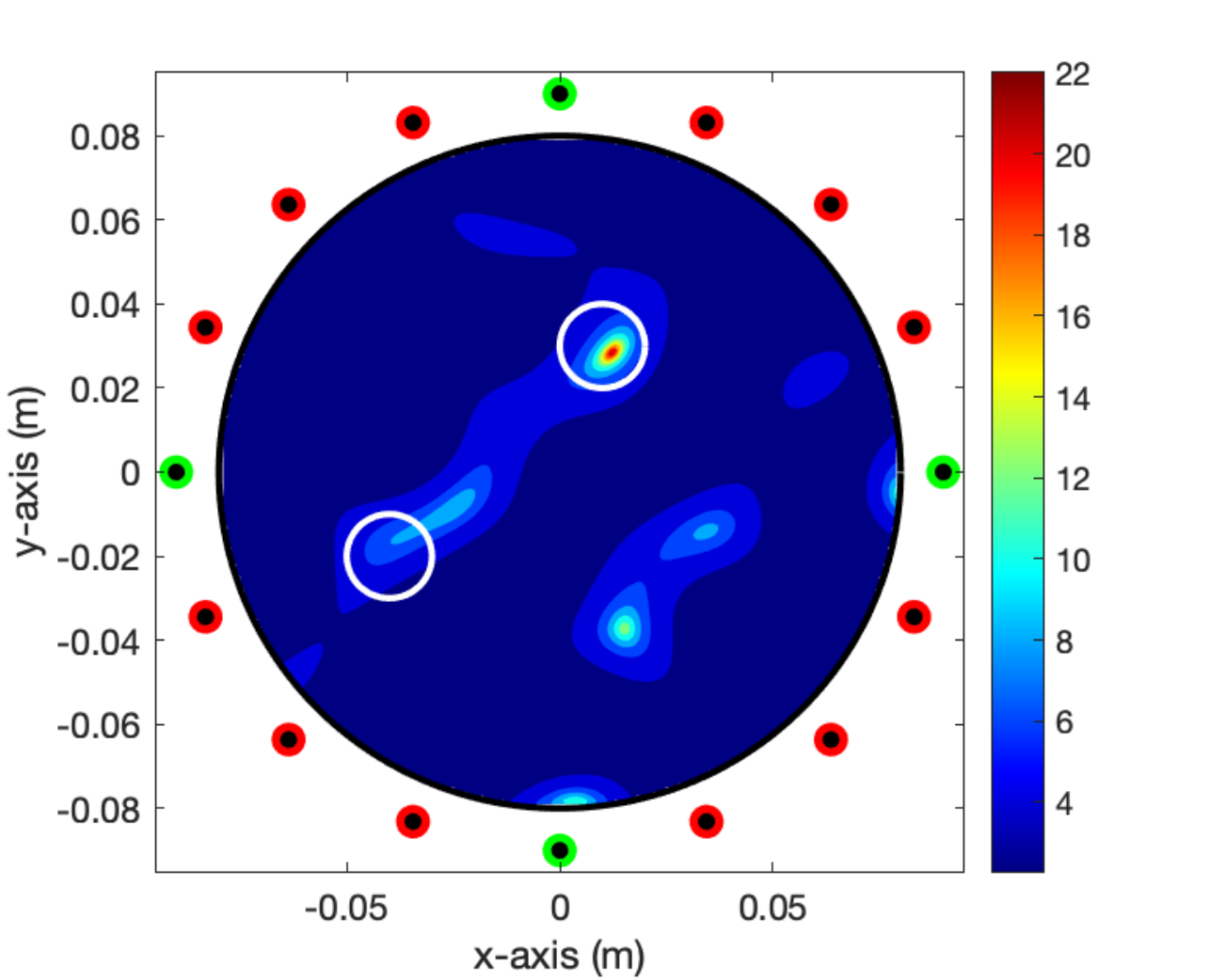}\hfill
  \includegraphics[width=0.25\textwidth]{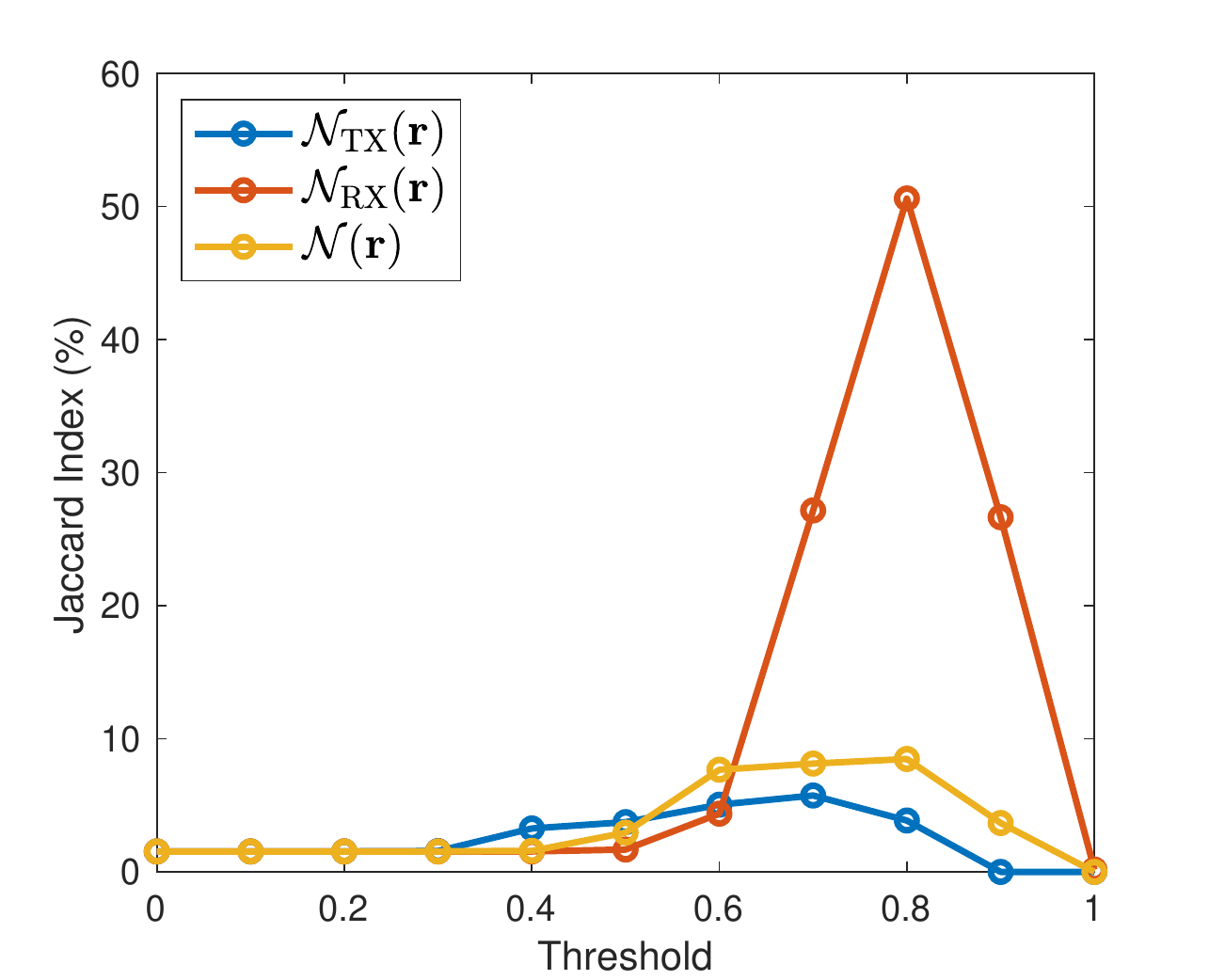}\\
  \includegraphics[width=0.25\textwidth]{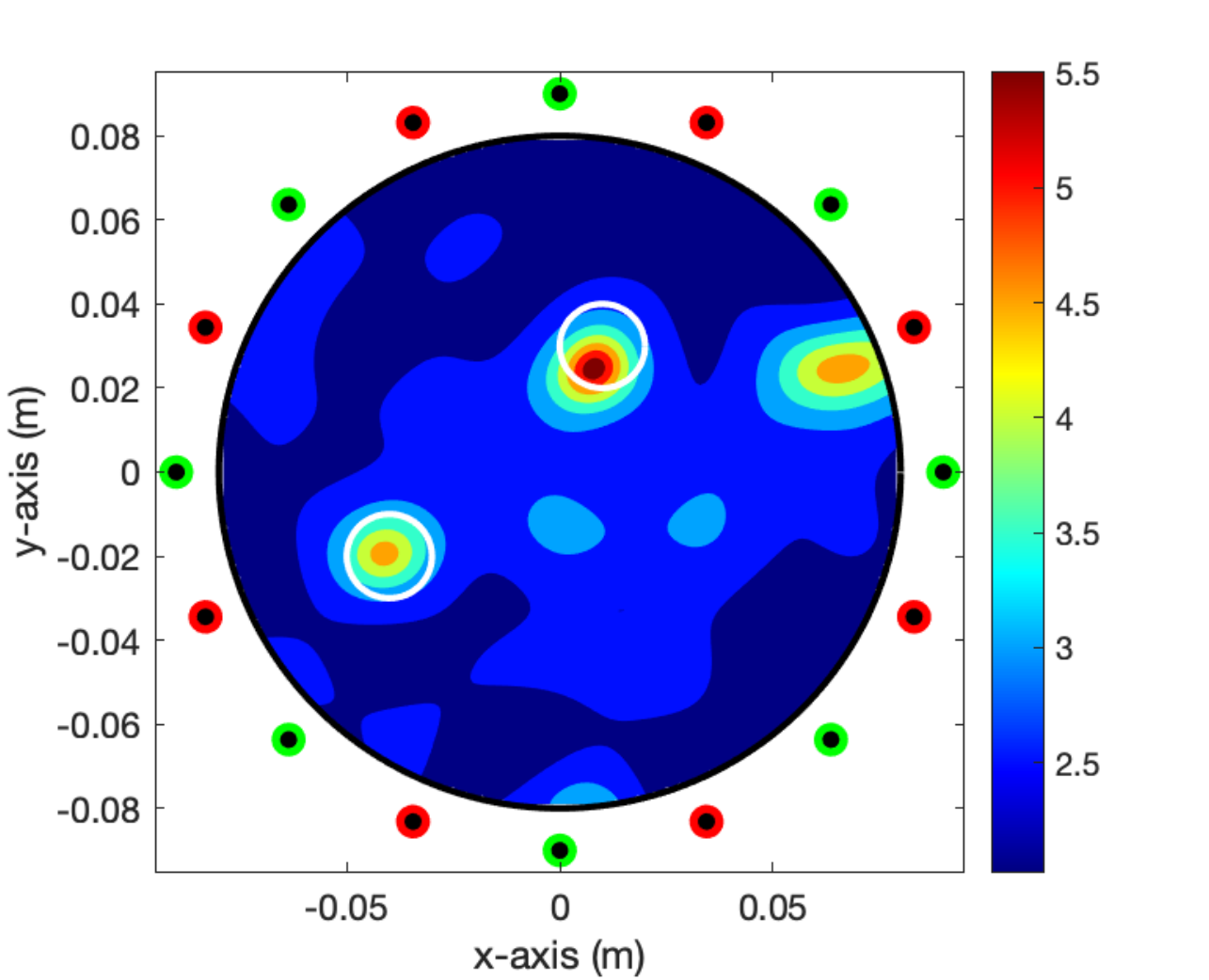}\hfill
  \includegraphics[width=0.25\textwidth]{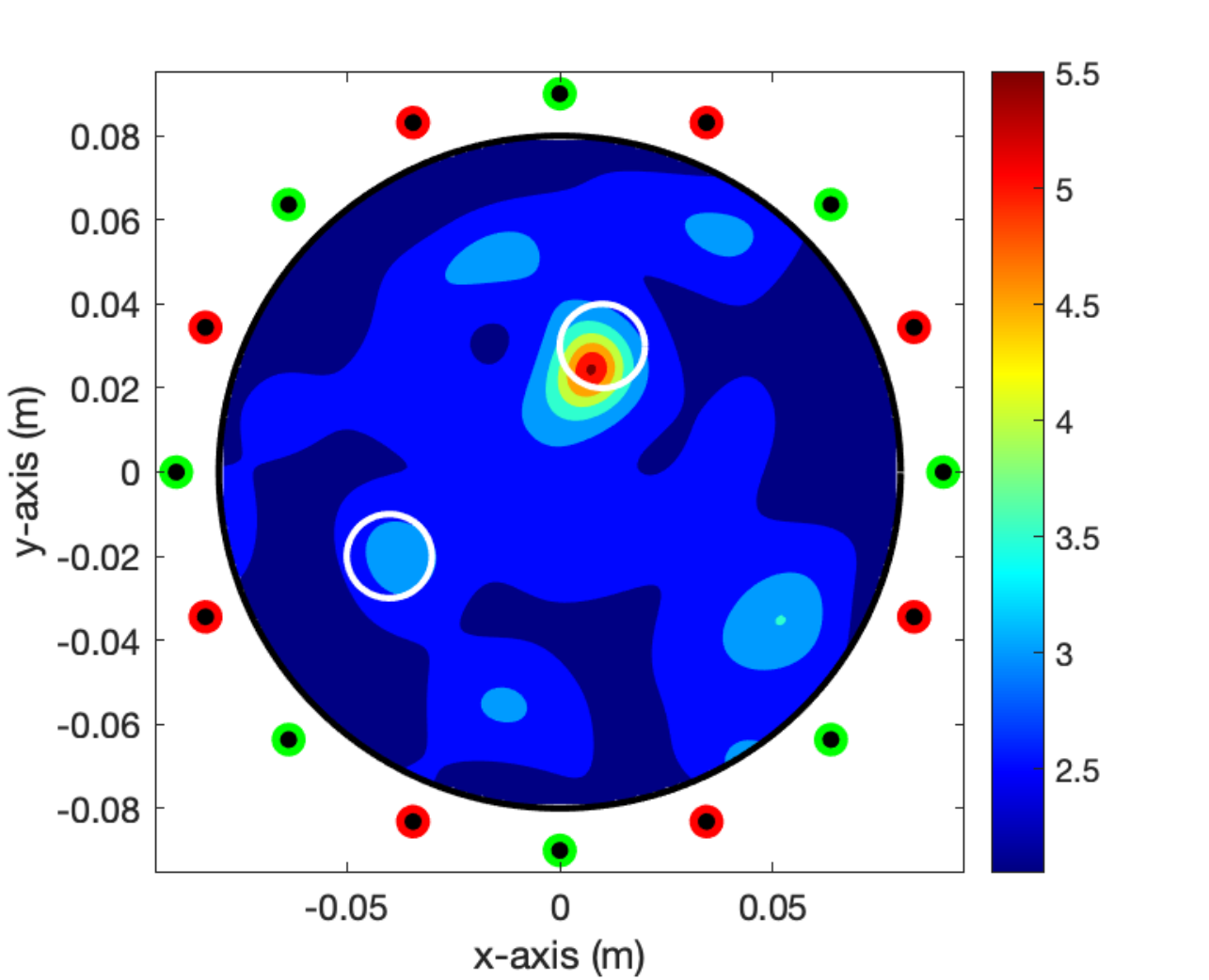}\hfill
  \includegraphics[width=0.25\textwidth]{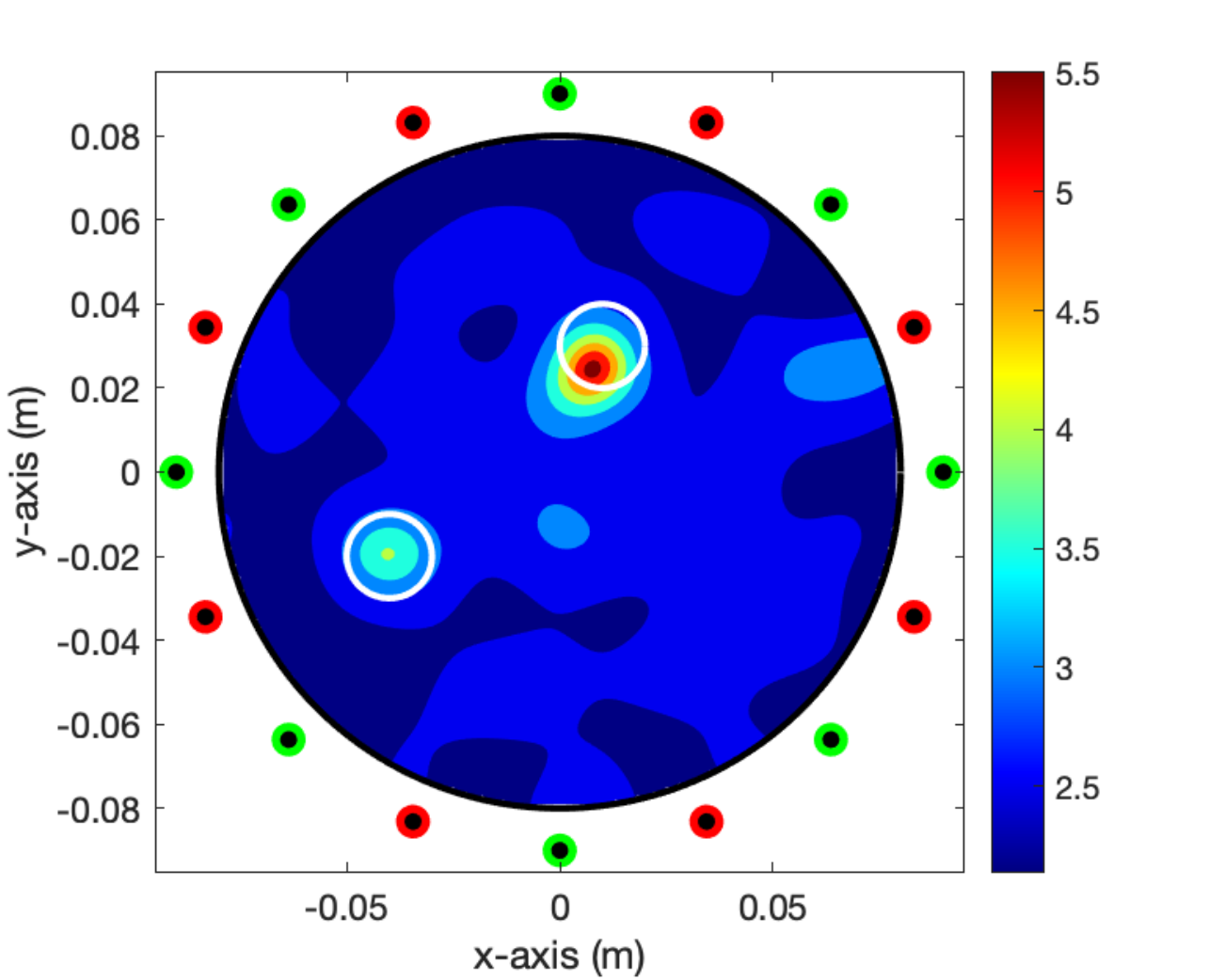}\hfill
  \includegraphics[width=0.25\textwidth]{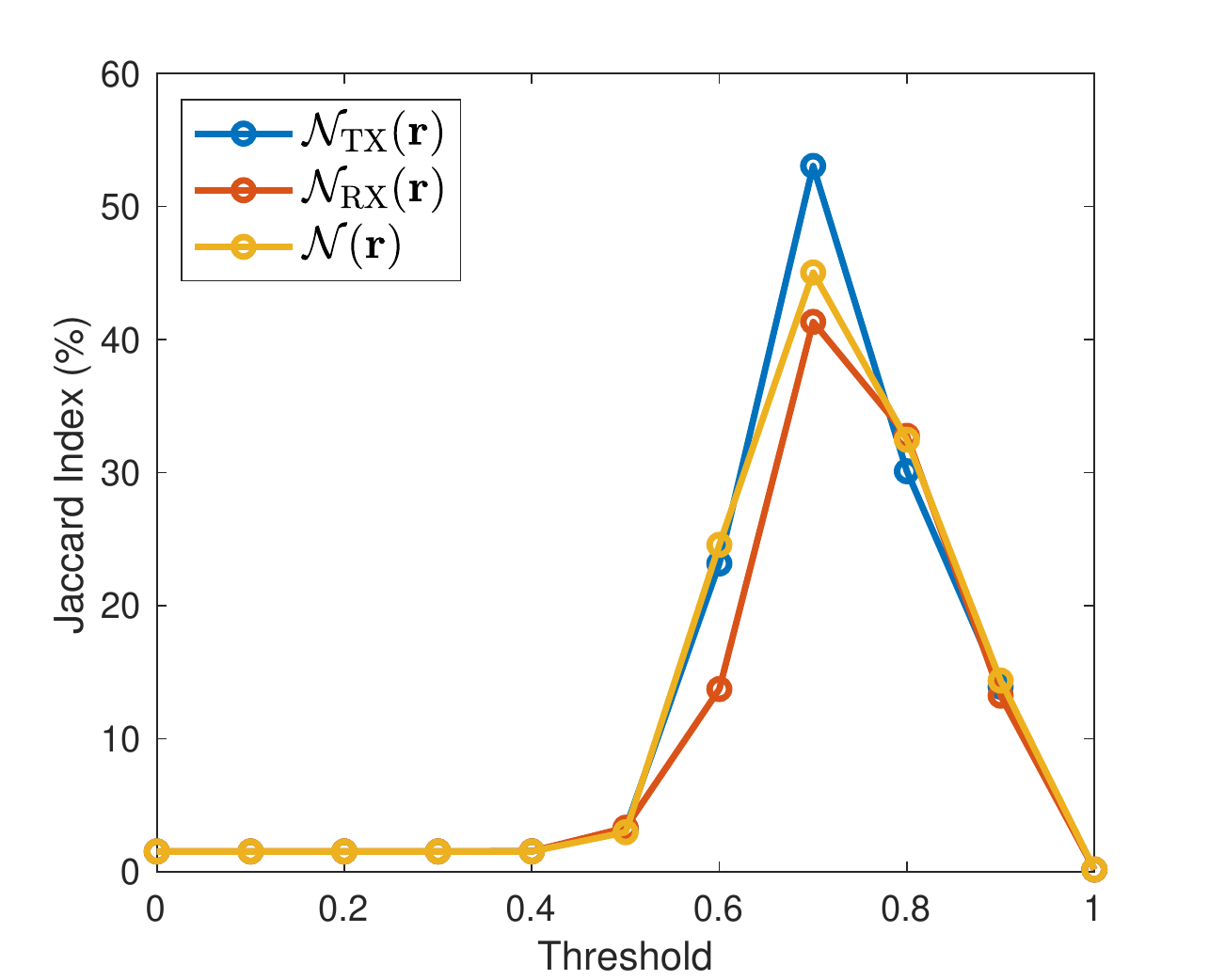}
  \caption{\label{Result6}(Example \ref{ex5}) Maps of $\mathfrak{F}_{\tx}(\mr)$ (first column), $\mathfrak{F}_{\rx}(\mr)$ (second column), $\mathfrak{F}(\mr)$ (third column), and Jaccard index (fourth column). Green and red colored circles describe the location of transmitters and receivers, respectively.}
\end{figure}

\begin{example}[Extremely Small Number of Transmitters]\label{ex6}
Based on the simulation results in Examples \ref{ex1}, \ref{ex2}, \ref{ex4}, and \ref{ex5}, it is expect that the anomalies can be identified successfully by increasing total number of receivers. To this end, let us consider the imaging results with single (from Example \ref{ex1}) and multiple (from Example \ref{ex4}) anomalies. Based on the results in Figure \ref{Result7} with single (say, $\mathcal{D}_1$) and two transmitters (say, $\mathcal{D}_1$ and $\mathcal{D}_9$), as we expected, the location of single anomaly was successfully identified through the map of $\mathfrak{F}_{\rx}(\mr)$ while it is impossible to recognize the anomaly through the maps of $\mathfrak{F}_{\tx}(\mr)$ (first column), $\mathfrak{F}_{\rx}(\mr)$ (second column), $\mathfrak{F}(\mr)$ and $\mathfrak{F}(\mr)$ due to the small number of transmitting antennas.

Unfortunately, based on the Figure \ref{Result8} with two ($\mathcal{D}_1$ and $\mathcal{D}_9$) and three ($\mathcal{D}_1$, $\mathcal{D}_5$, and $\mathcal{D}_9$) transmitters, it is very hard to identify two anomalies due to the appearance of artifact with large magnitude in the neighborhood of $\Sigma_1$ through the map of $\mathfrak{F}_{\rx}(\mr)$. Hence, we can conclude that if the total number of transmitting or receiving antennas is small, it will be very hard to identify the location of anomalies through the MUSIC algorithm.
\end{example}

\begin{figure}[h]
  \centering
  \includegraphics[width=0.25\textwidth]{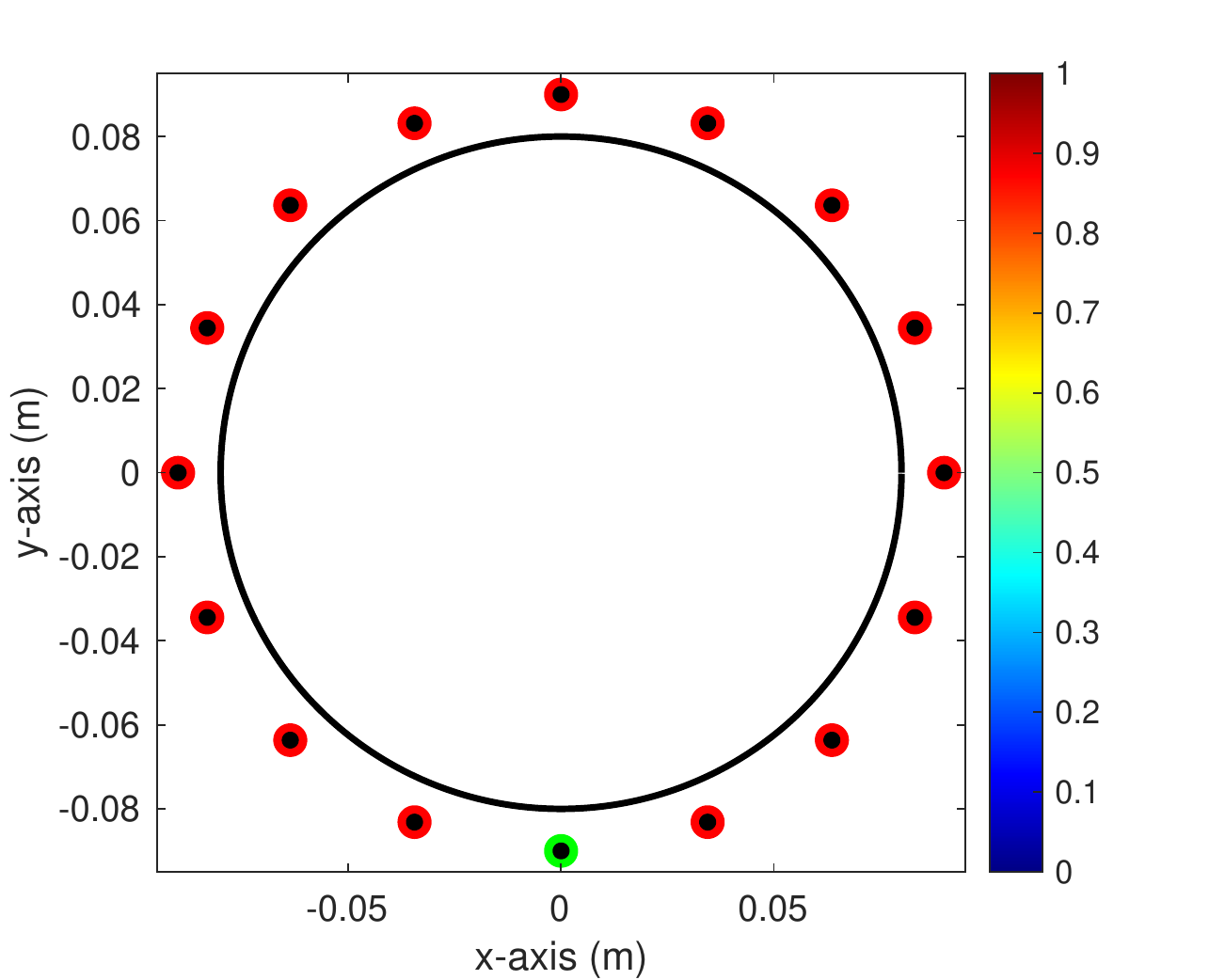}\hfill
  \includegraphics[width=0.25\textwidth]{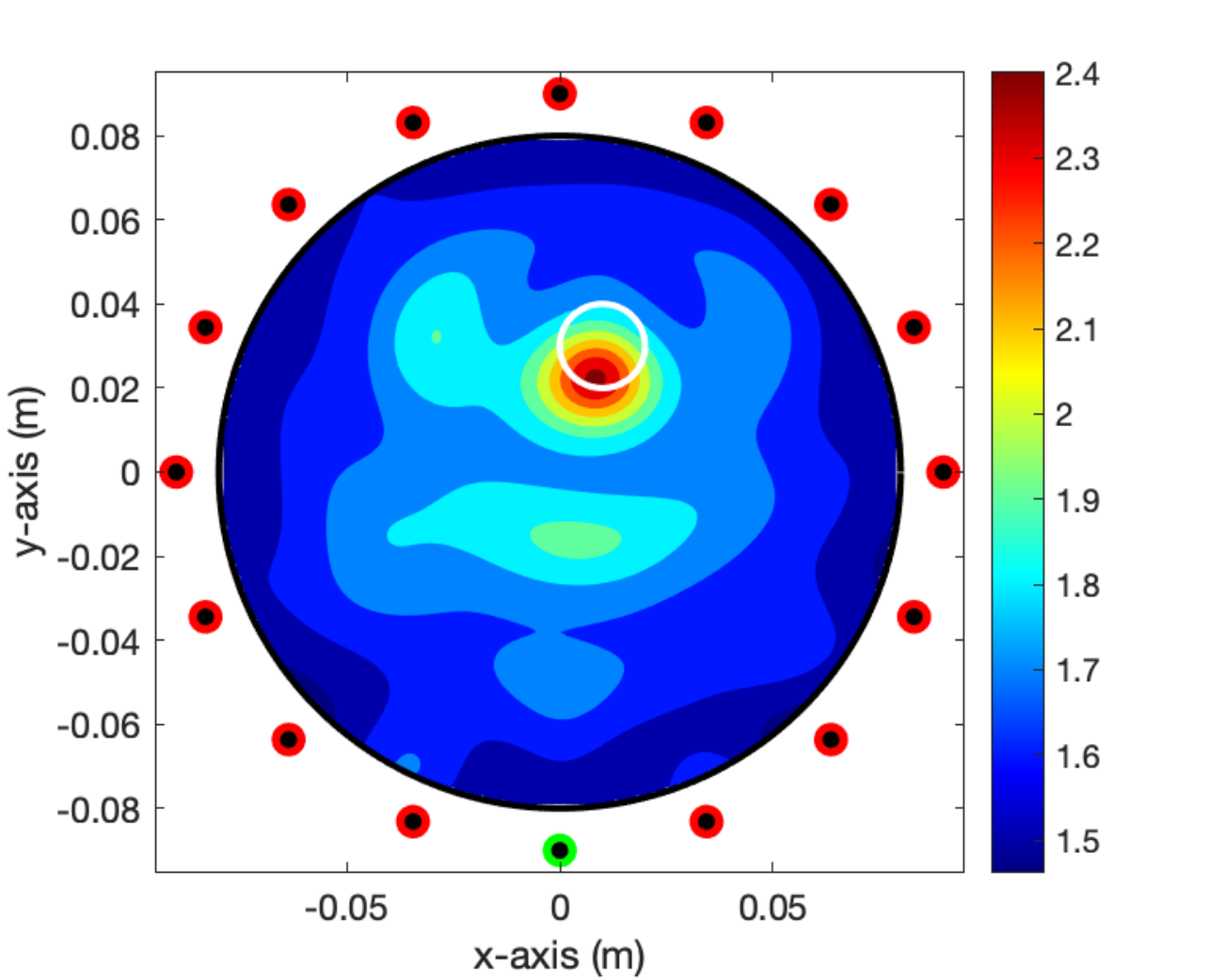}\hfill
  \includegraphics[width=0.25\textwidth]{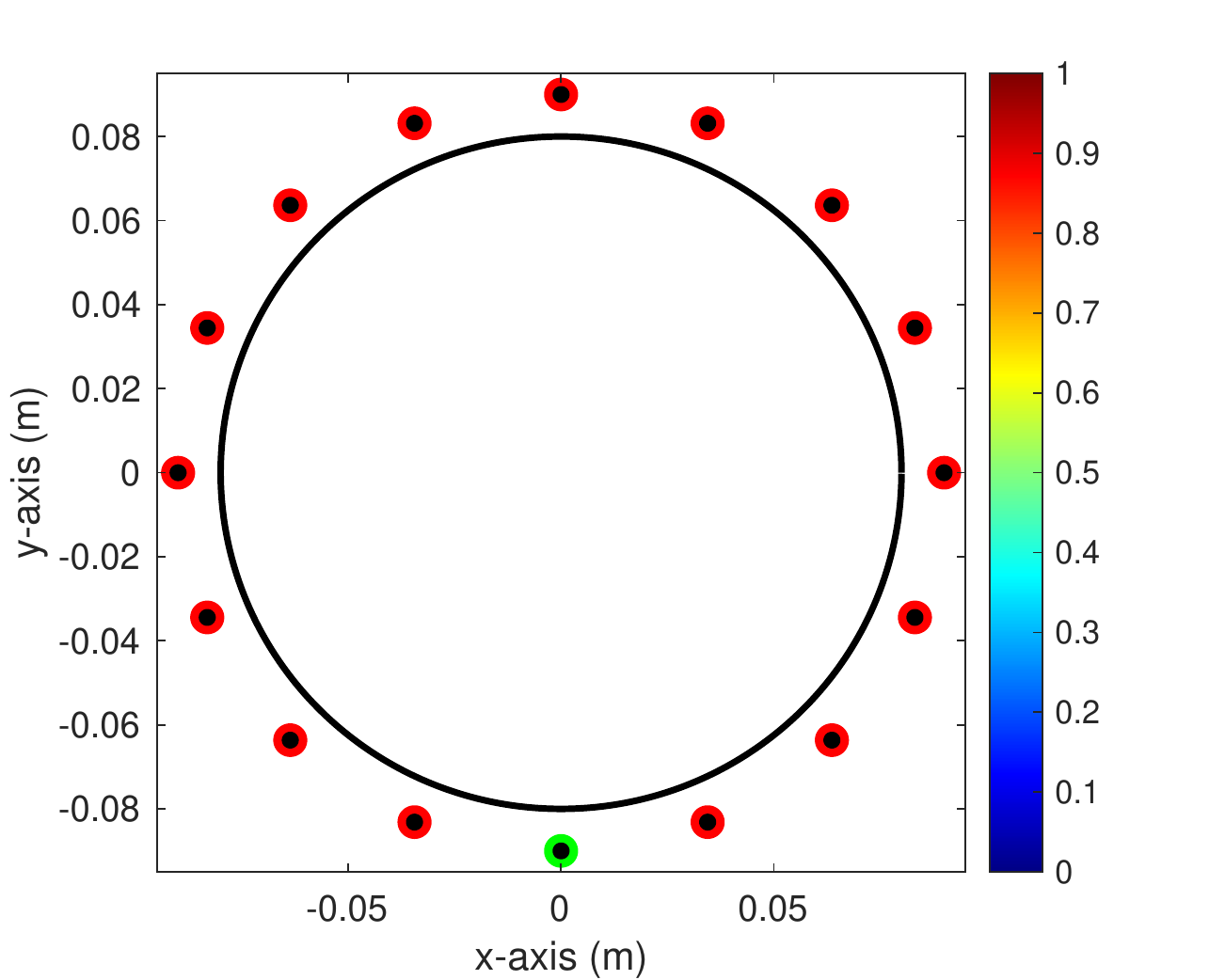}\hfill
  \includegraphics[width=0.25\textwidth]{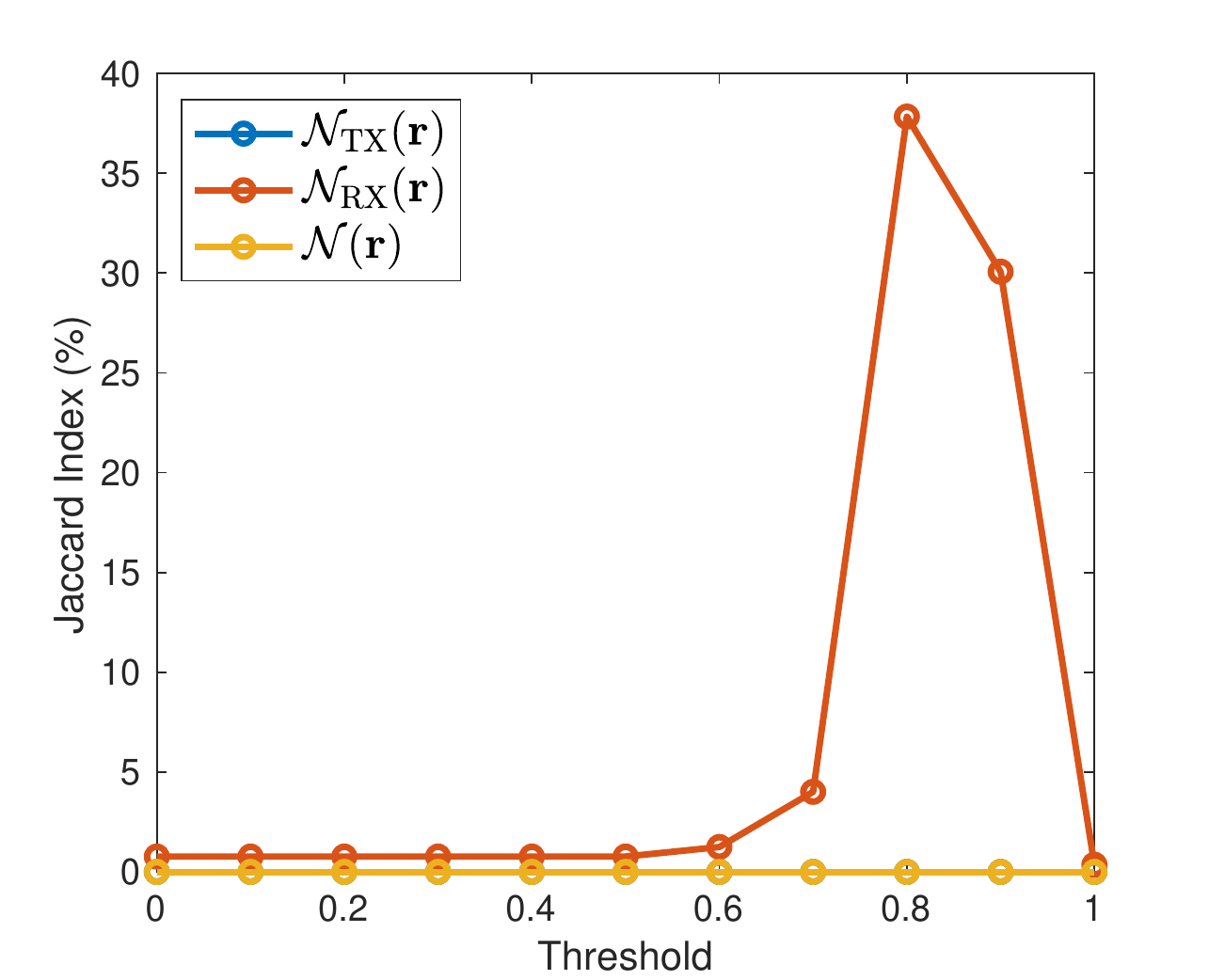}\\
  \includegraphics[width=0.25\textwidth]{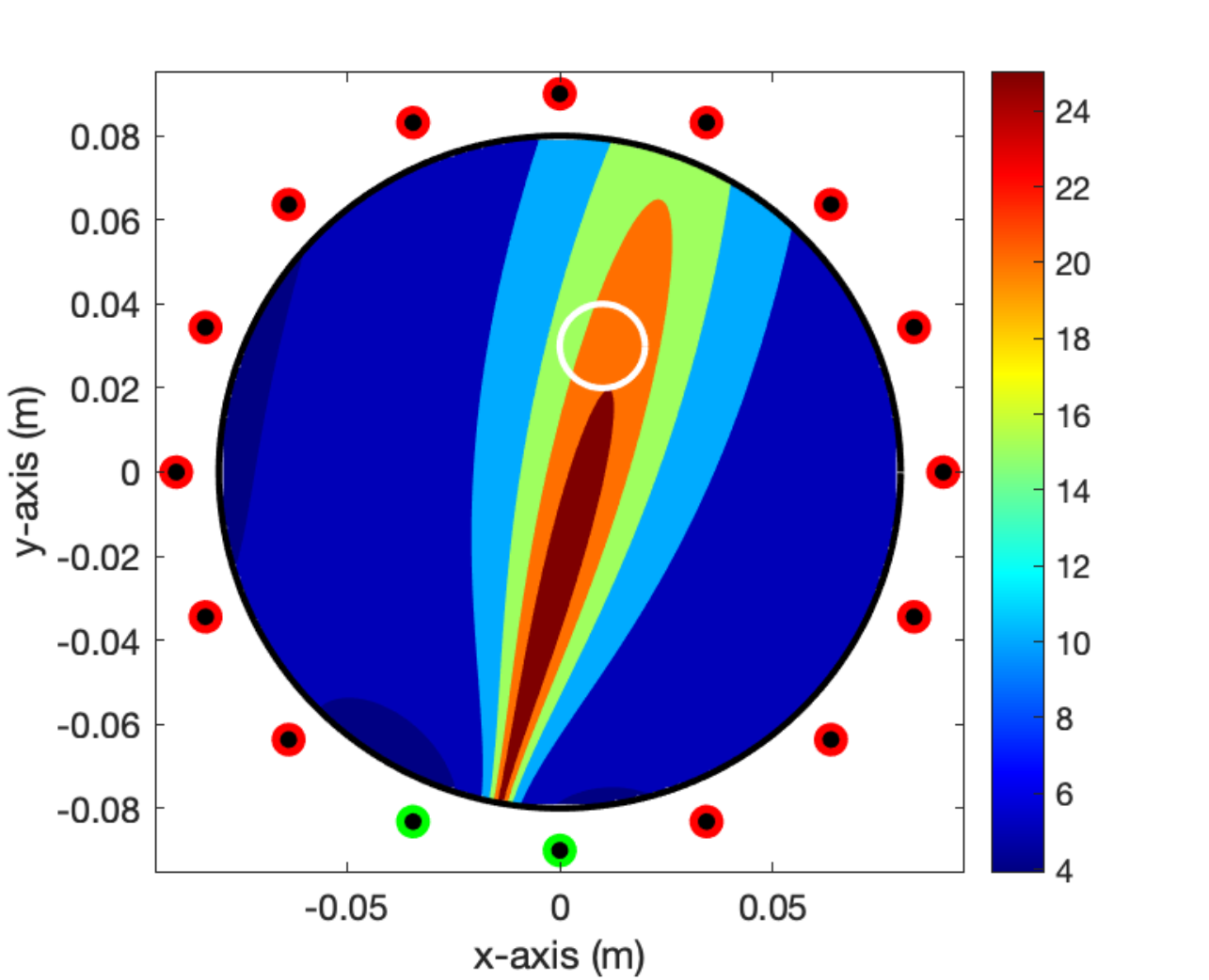}\hfill
  \includegraphics[width=0.25\textwidth]{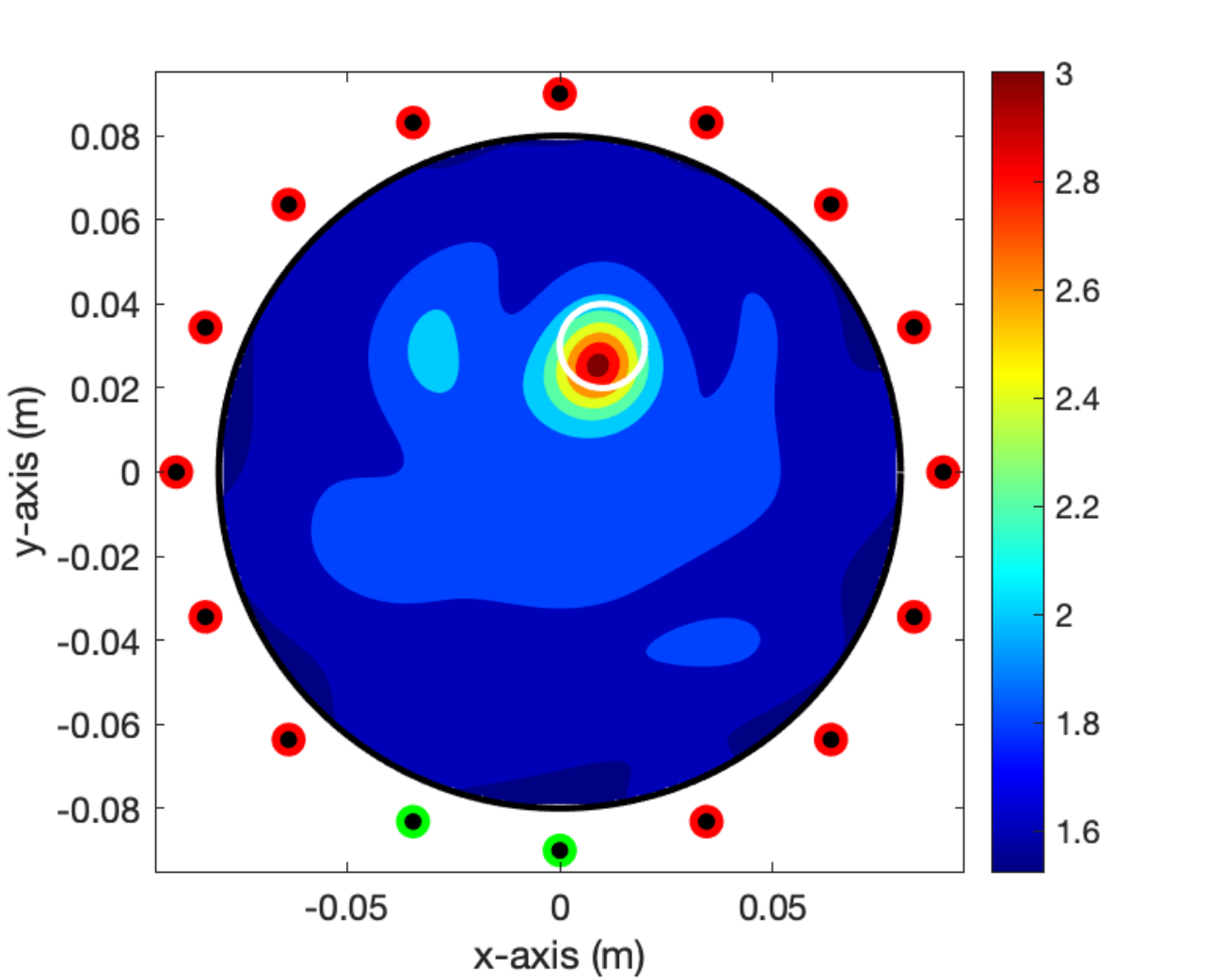}\hfill
  \includegraphics[width=0.25\textwidth]{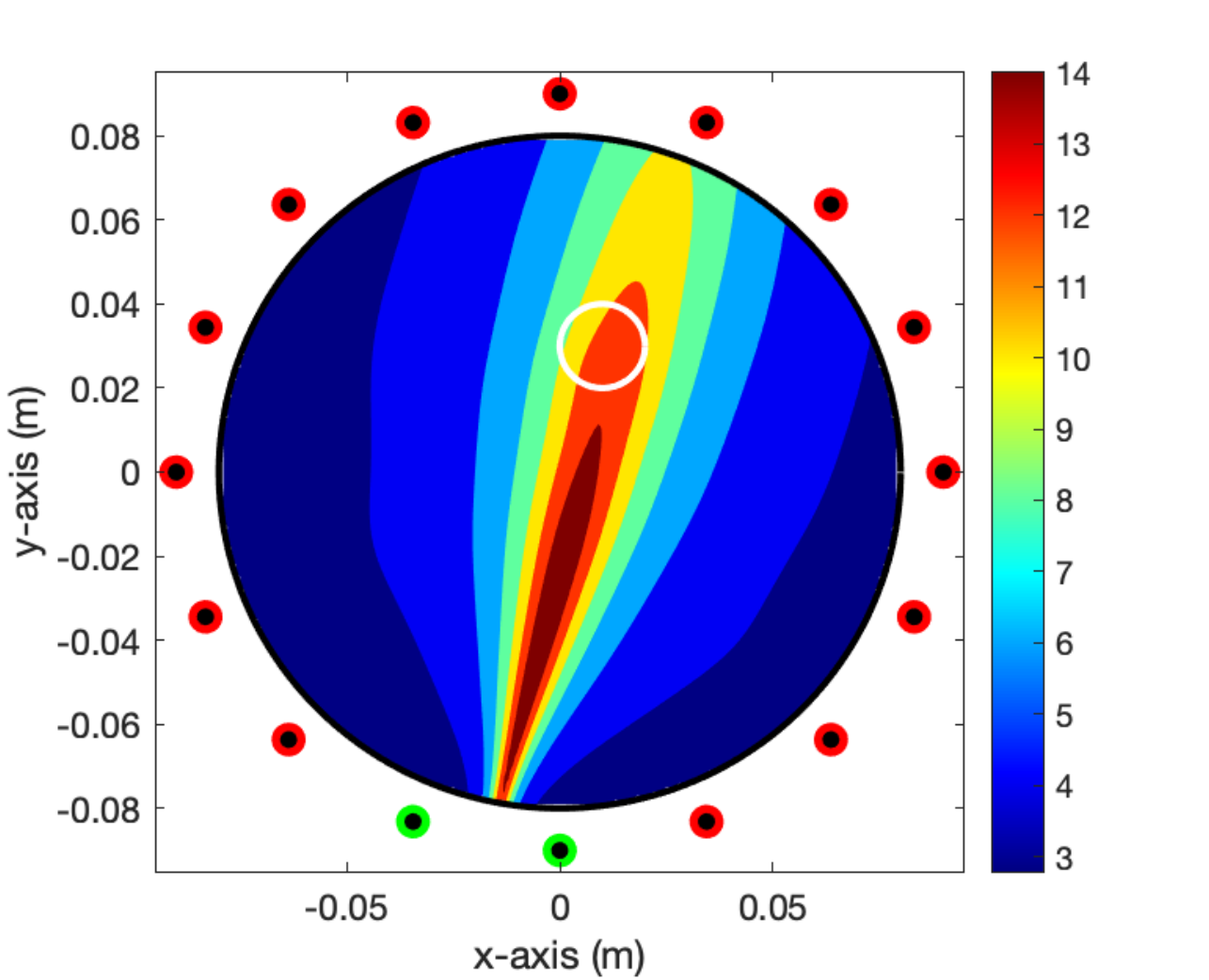}\hfill
  \includegraphics[width=0.25\textwidth]{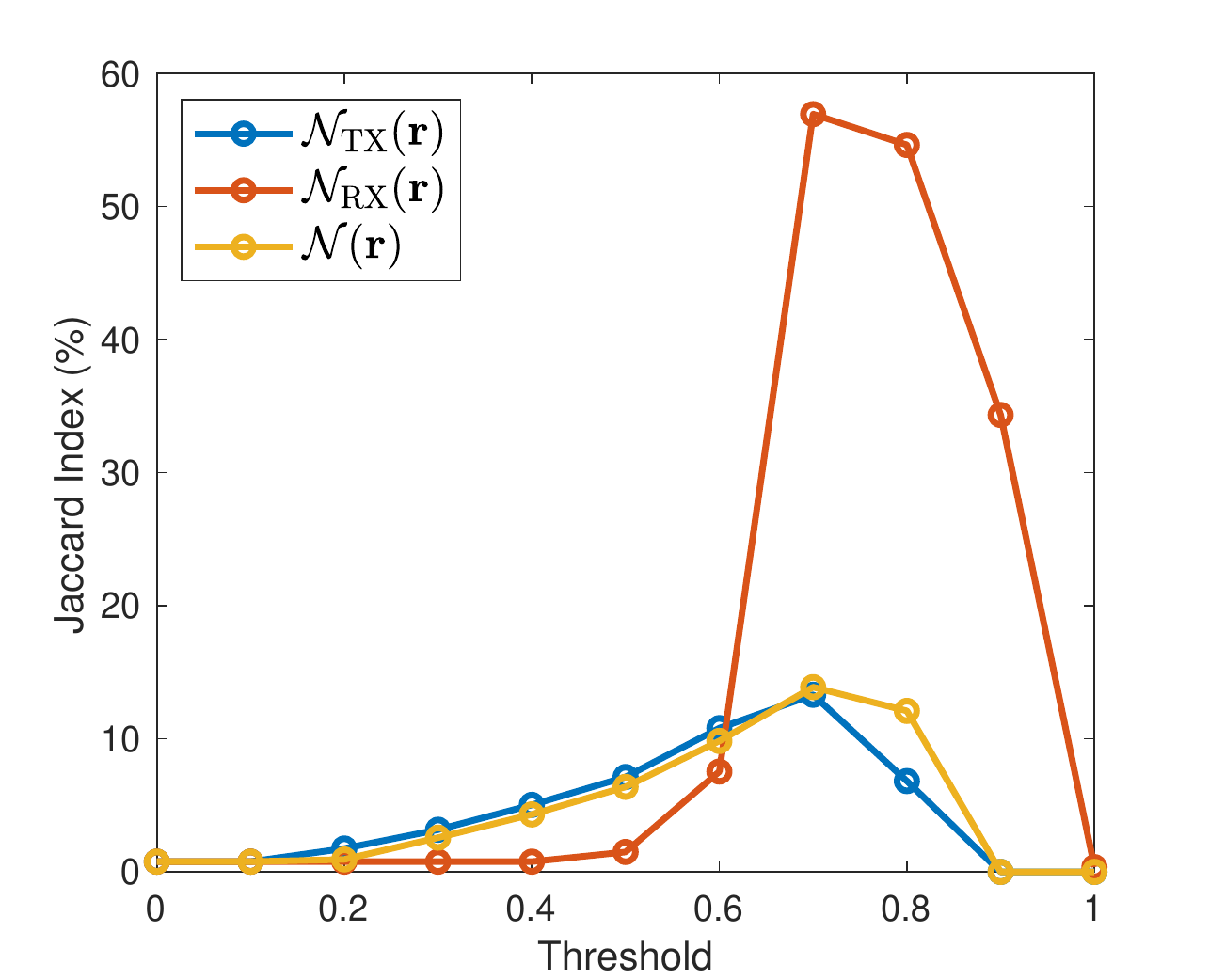}\\
  \includegraphics[width=0.25\textwidth]{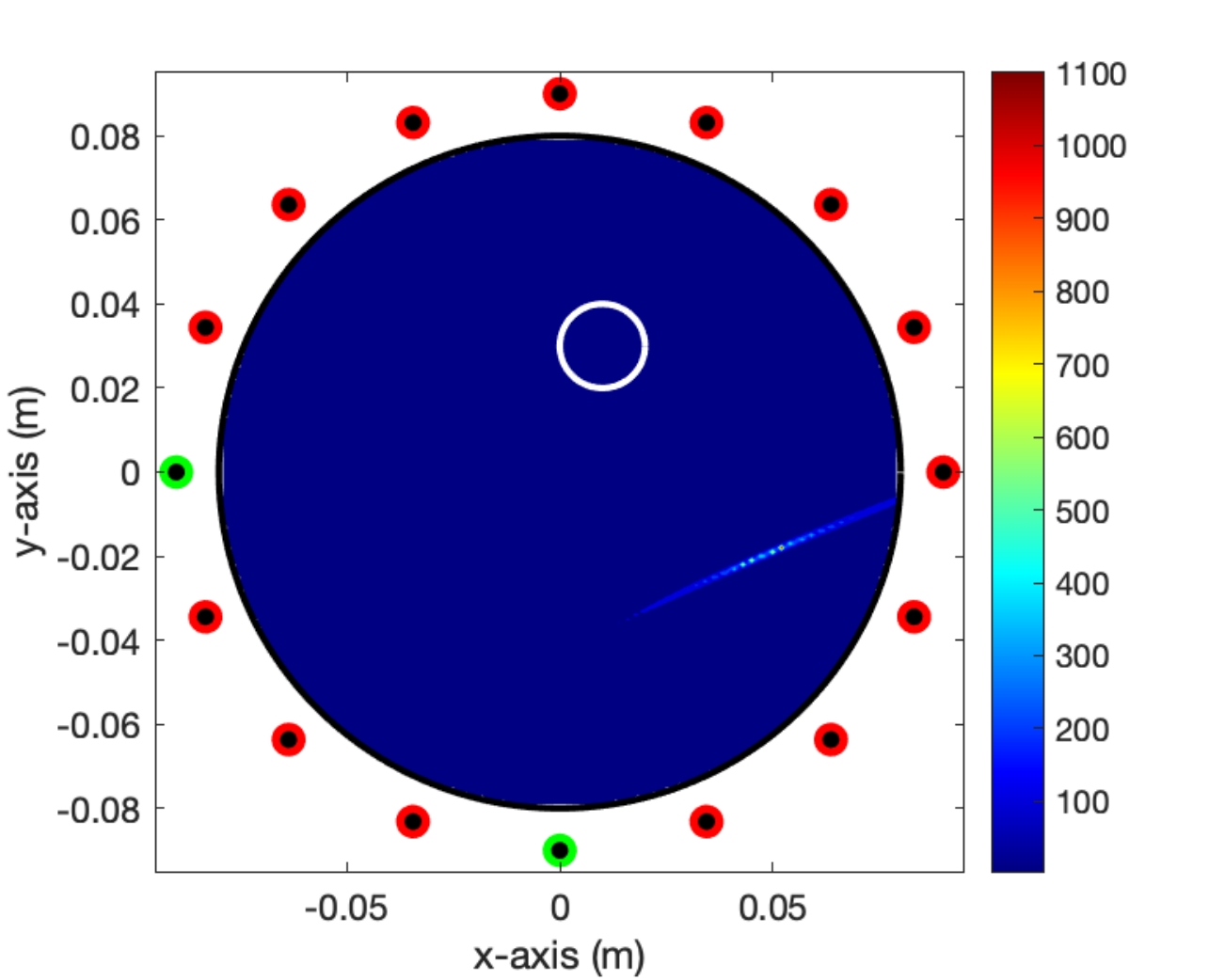}\hfill
  \includegraphics[width=0.25\textwidth]{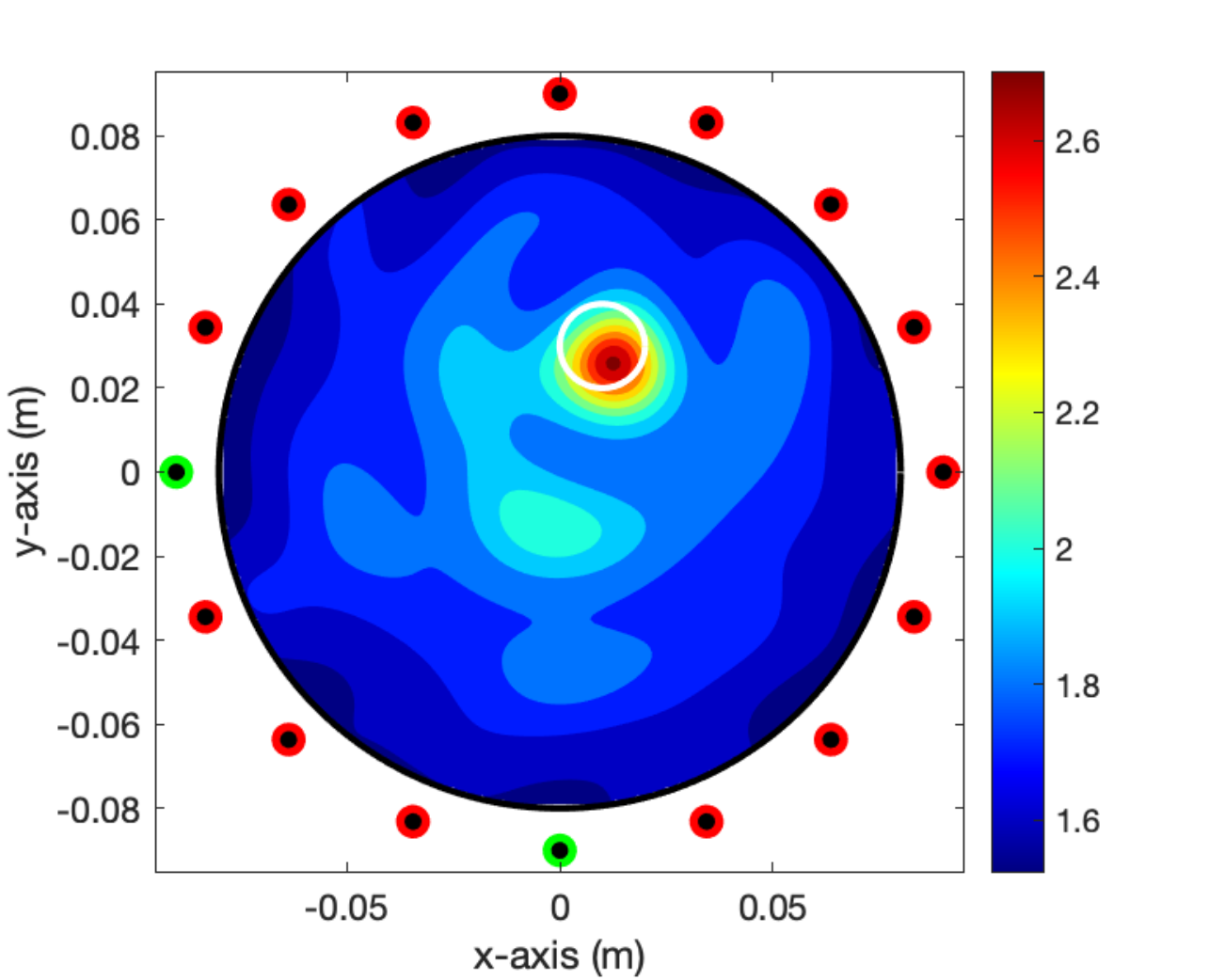}\hfill
  \includegraphics[width=0.25\textwidth]{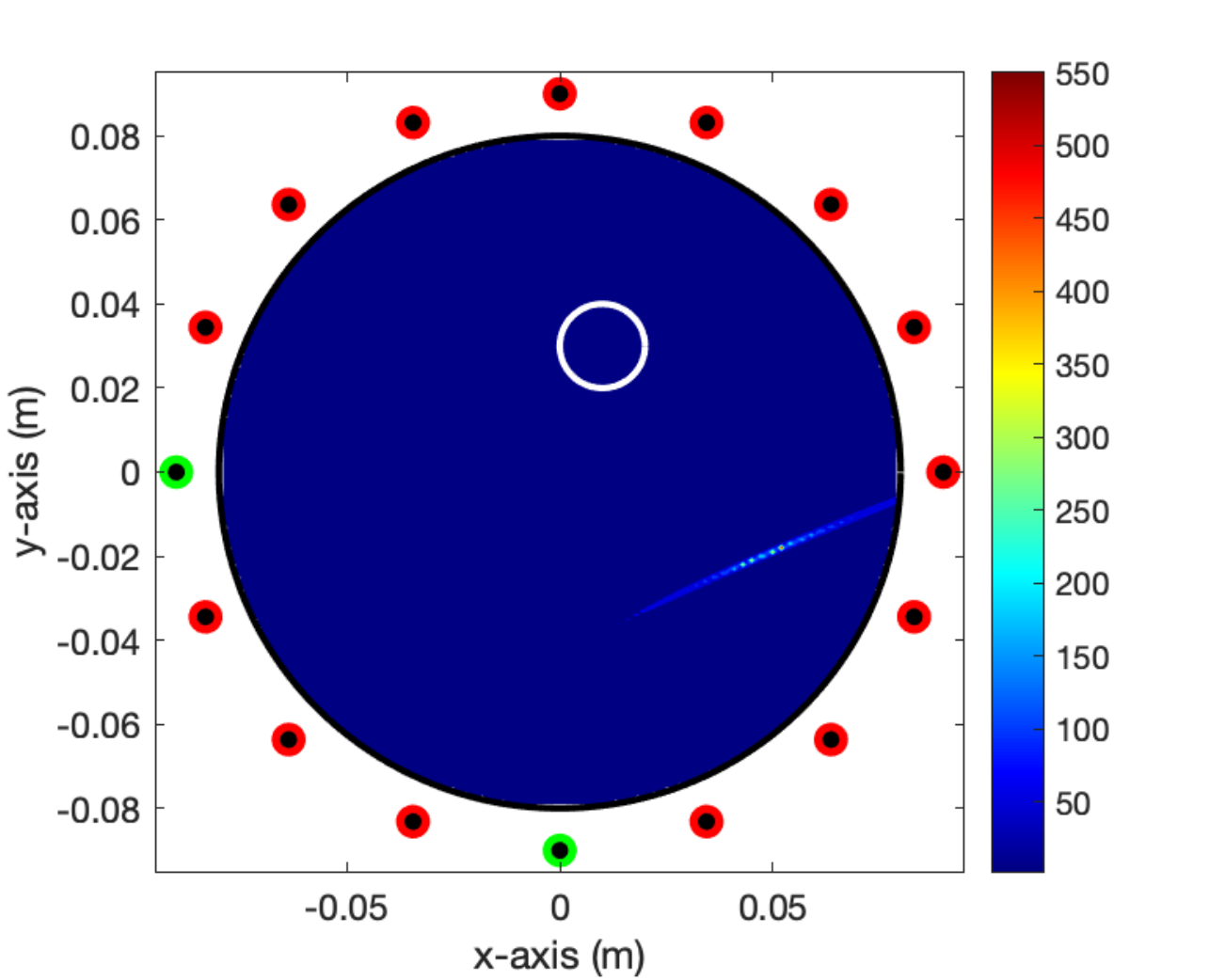}\hfill
  \includegraphics[width=0.25\textwidth]{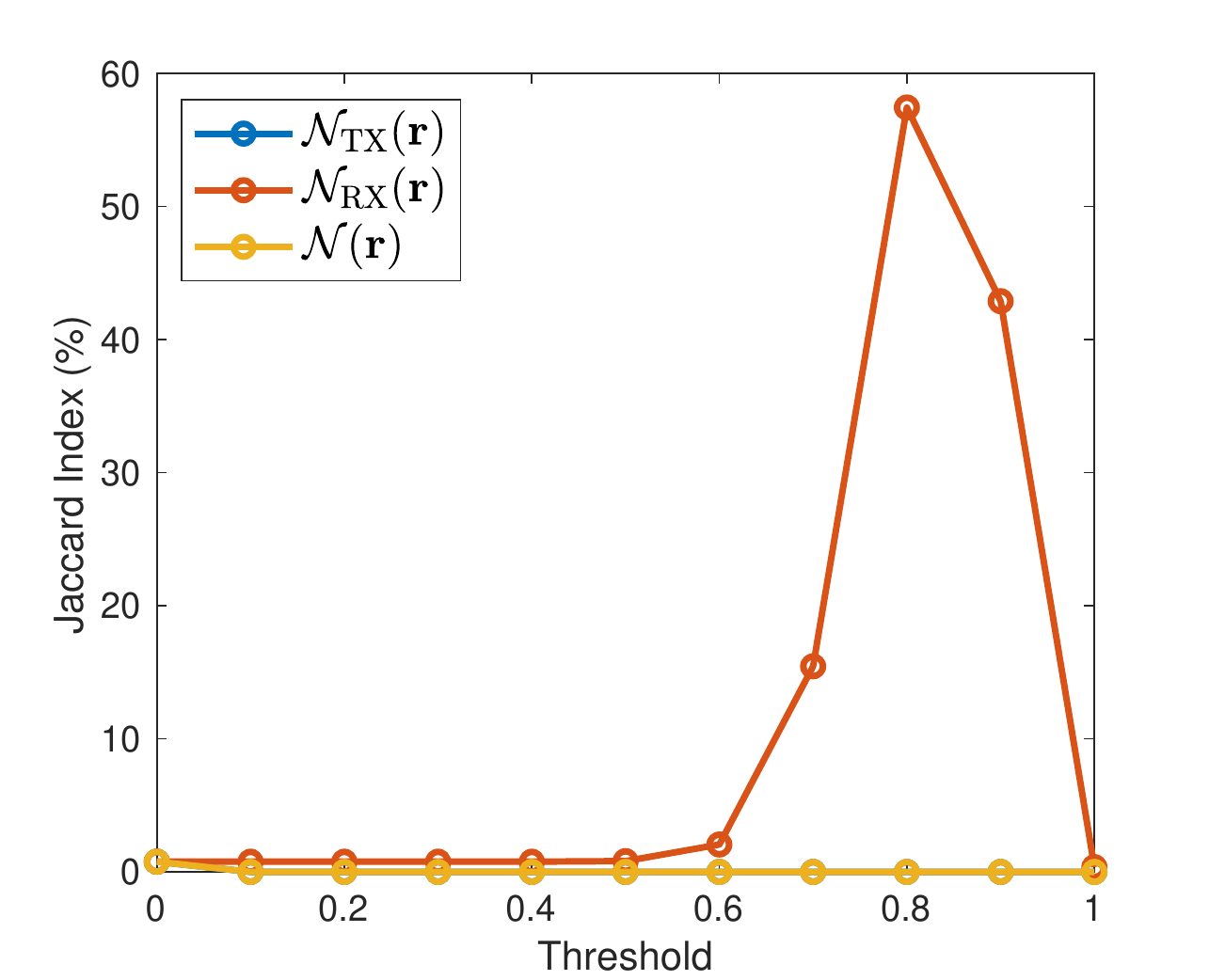}\\
  \includegraphics[width=0.25\textwidth]{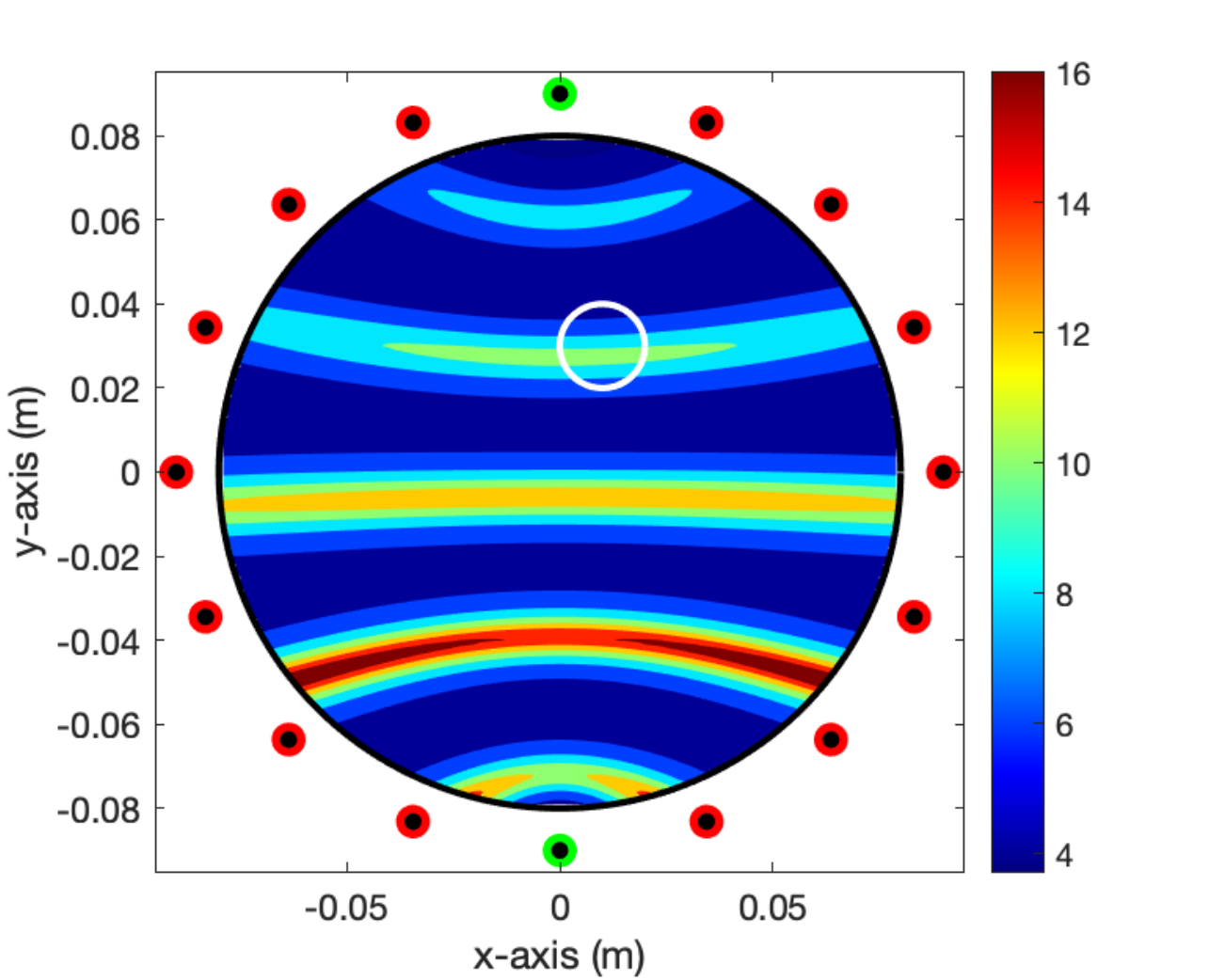}\hfill
  \includegraphics[width=0.25\textwidth]{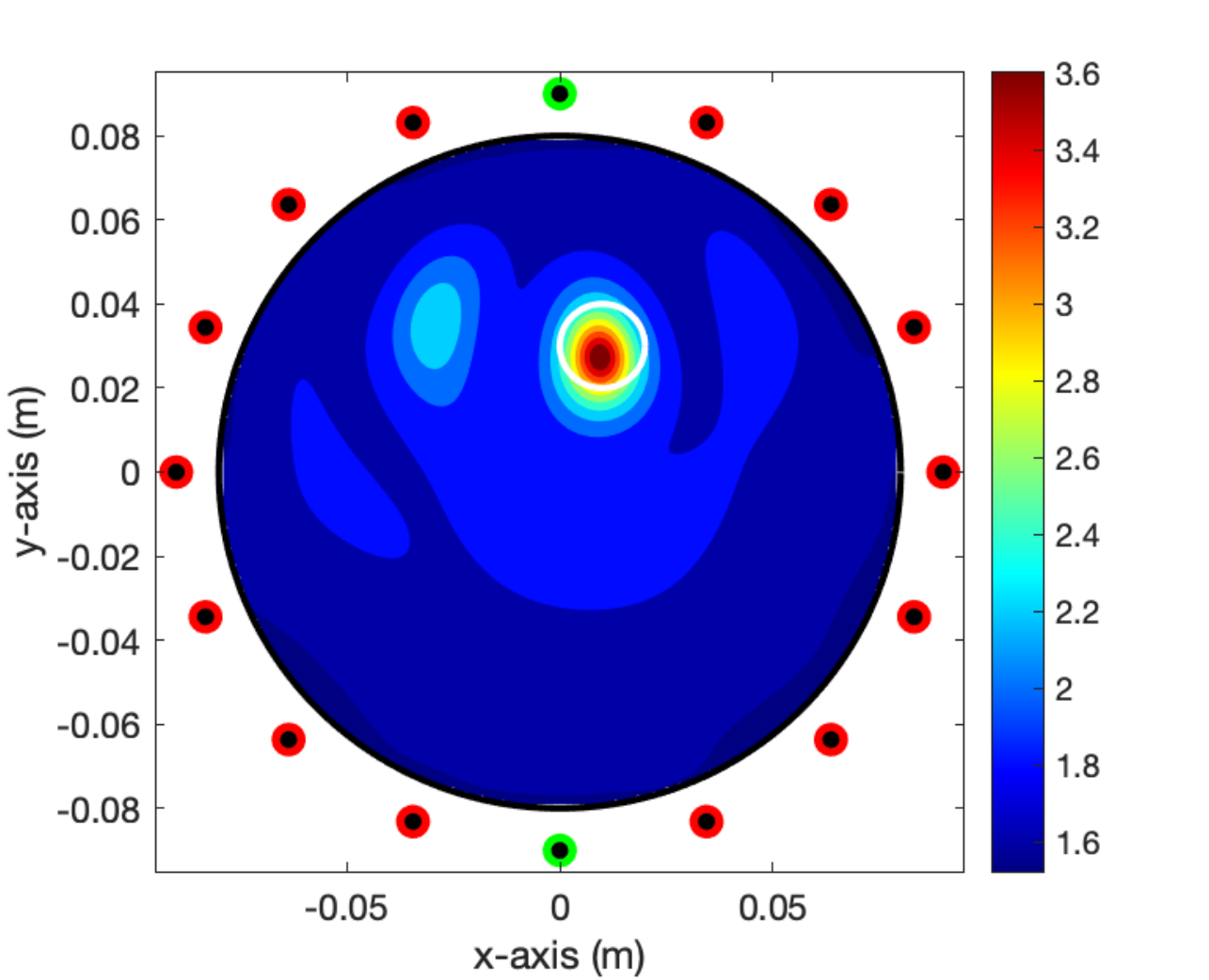}\hfill
  \includegraphics[width=0.25\textwidth]{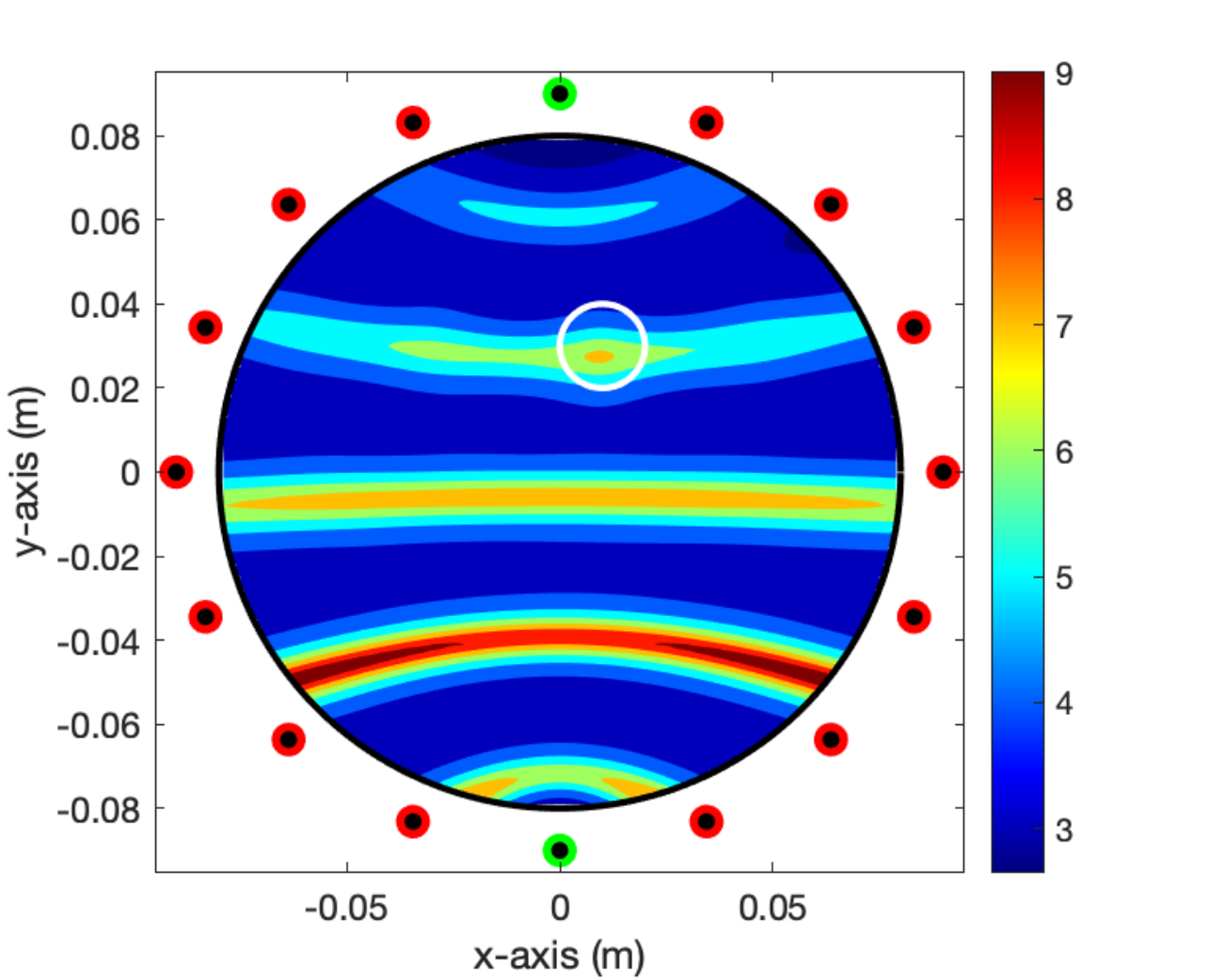}\hfill
  \includegraphics[width=0.25\textwidth]{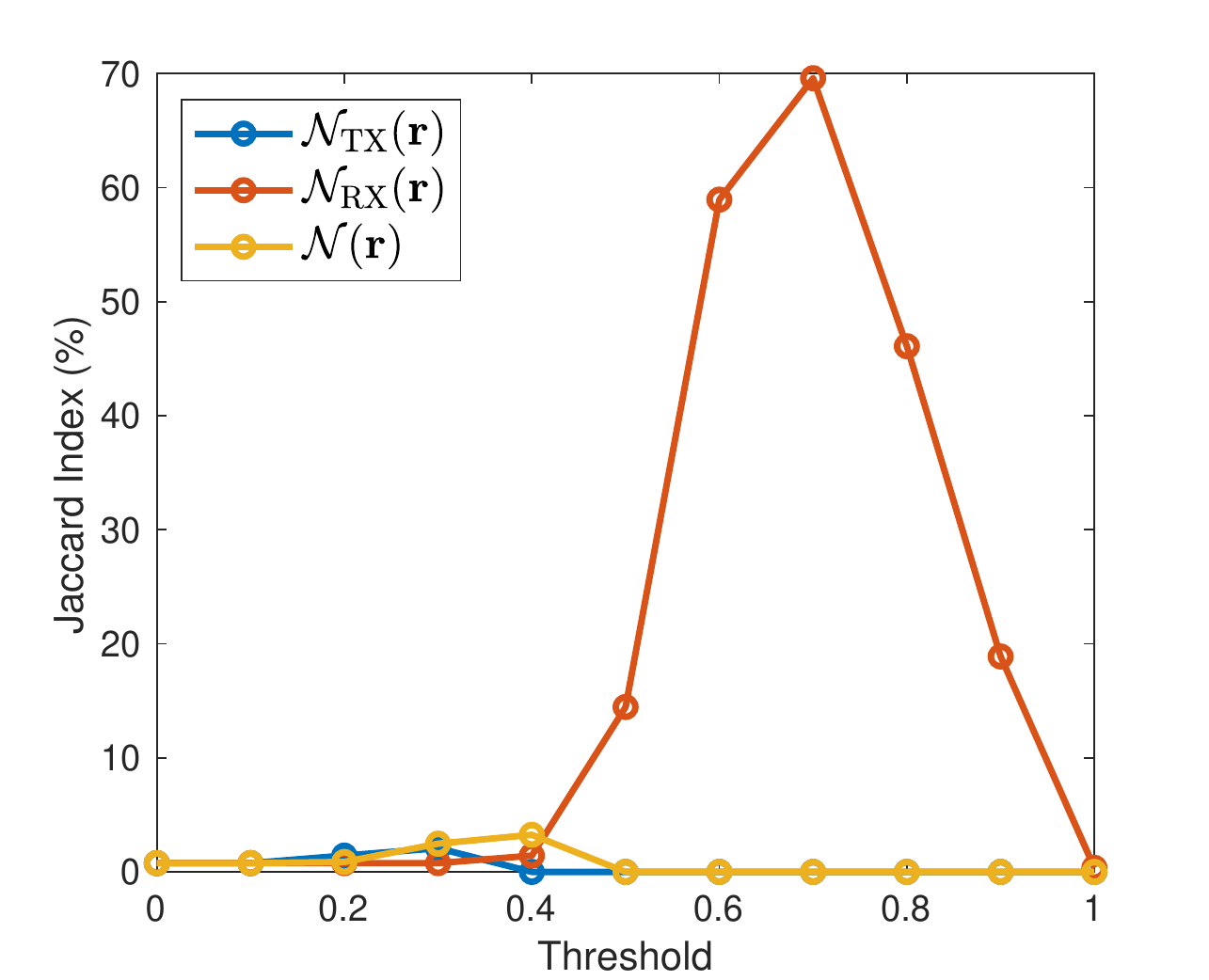}
  \caption{\label{Result7}(Example \ref{ex6}) Maps of $\mathfrak{F}_{\tx}(\mr)$ (first column), $\mathfrak{F}_{\rx}(\mr)$ (second column), $\mathfrak{F}(\mr)$ (third column), and Jaccard index (fourth column). Green and red colored circles describe the location of transmitters and receivers, respectively.}
\end{figure}

\begin{figure}[h]
  \centering
  \includegraphics[width=0.25\textwidth]{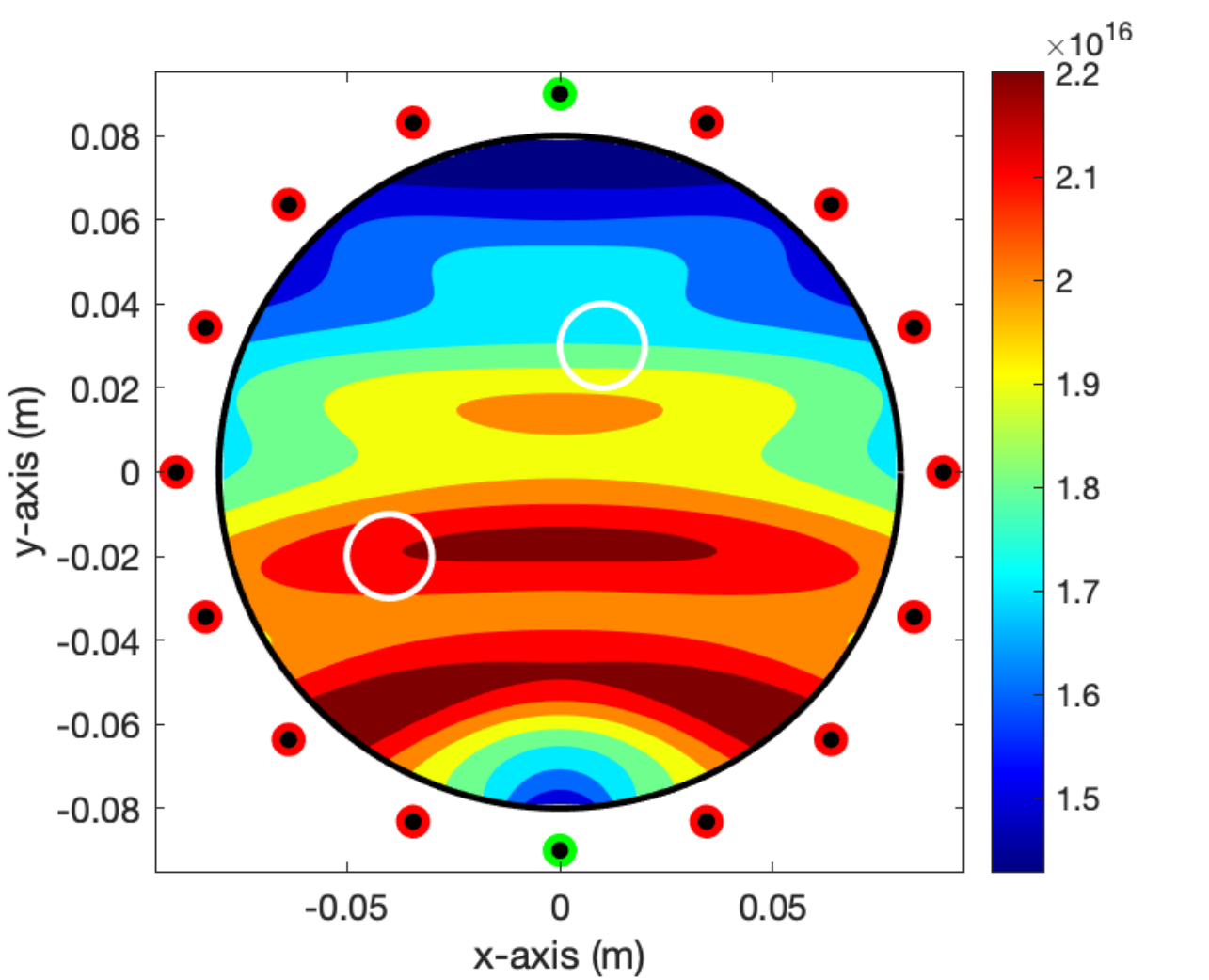}\hfill
  \includegraphics[width=0.25\textwidth]{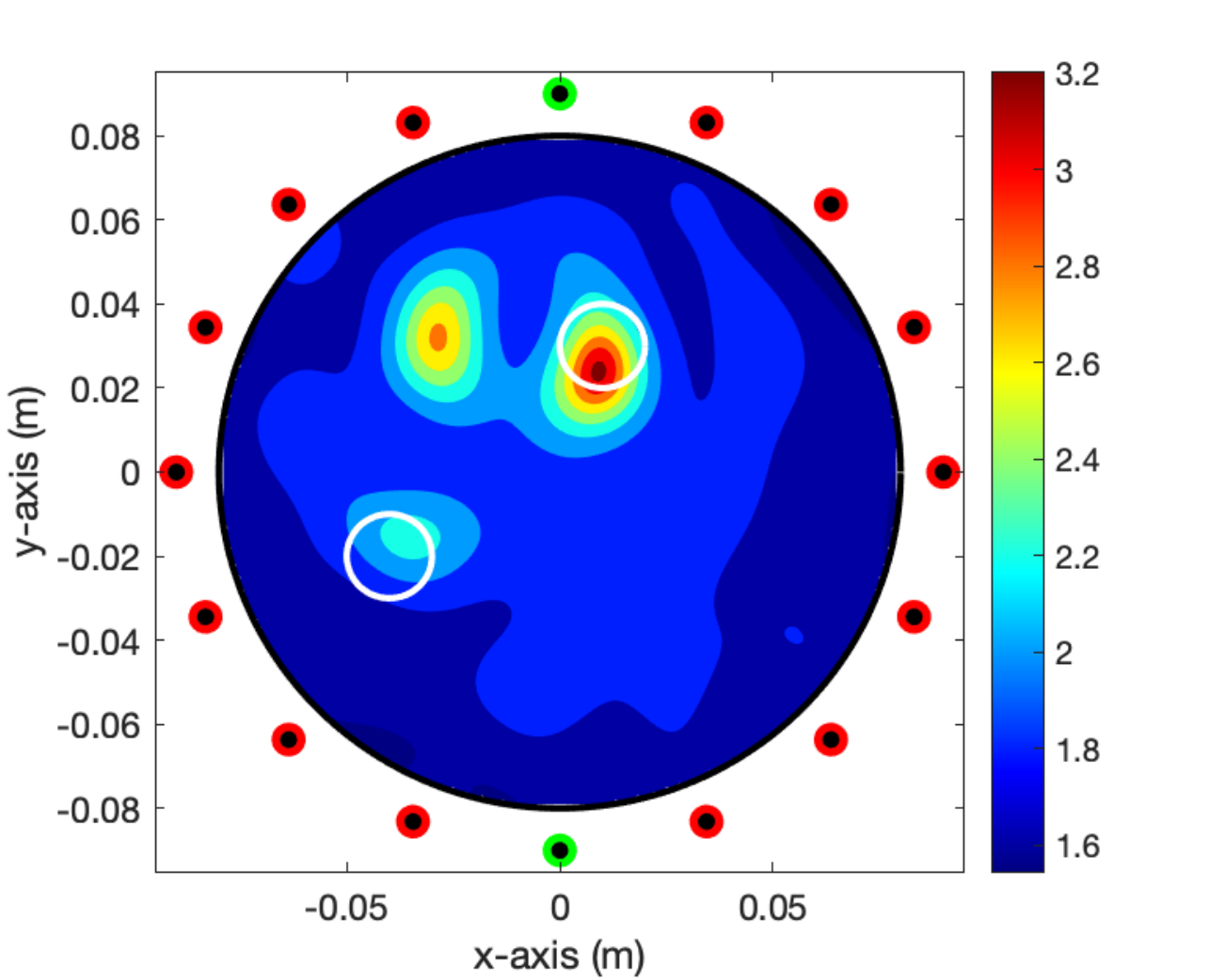}\hfill
  \includegraphics[width=0.25\textwidth]{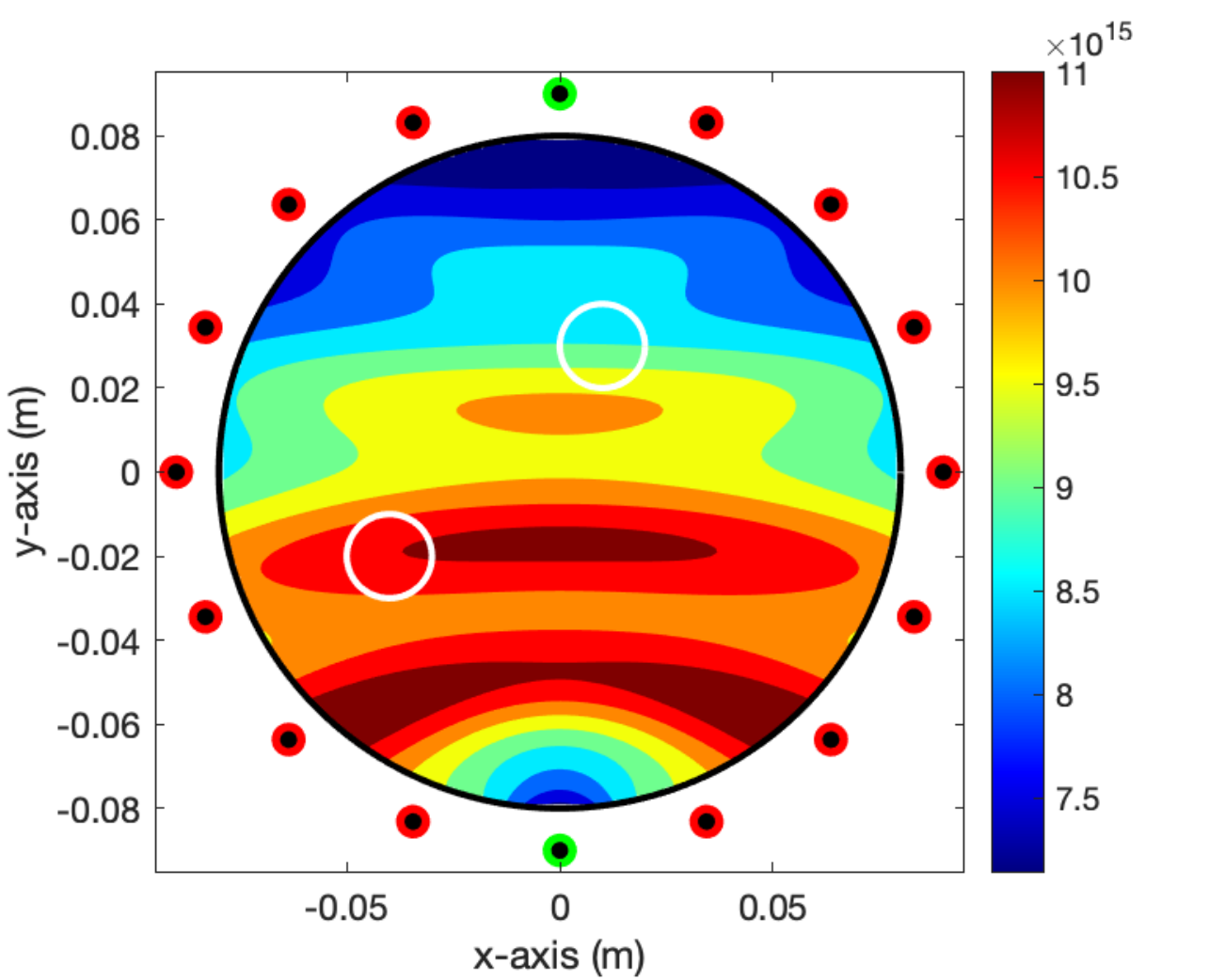}\hfill
  \includegraphics[width=0.25\textwidth]{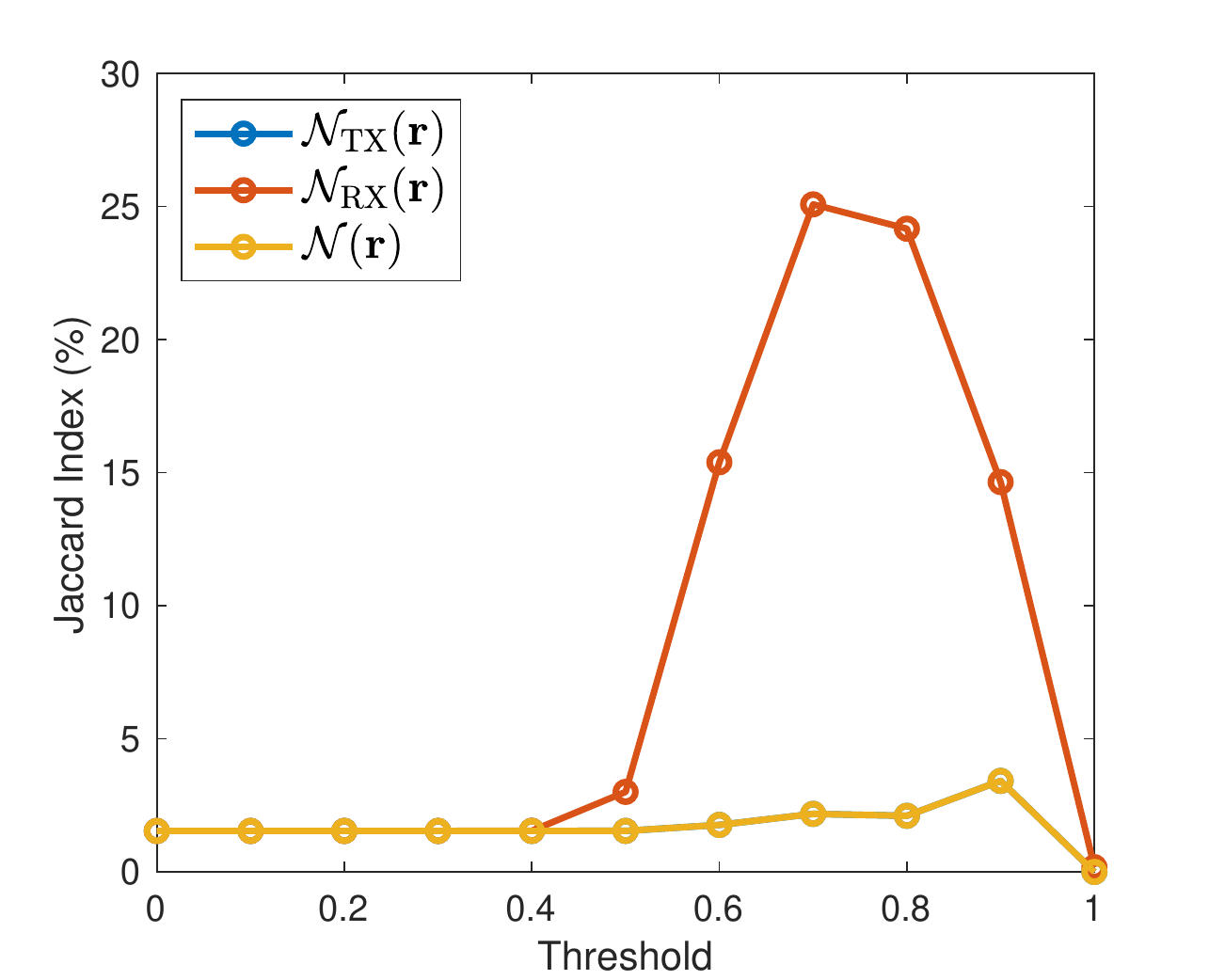}\\
  \includegraphics[width=0.25\textwidth]{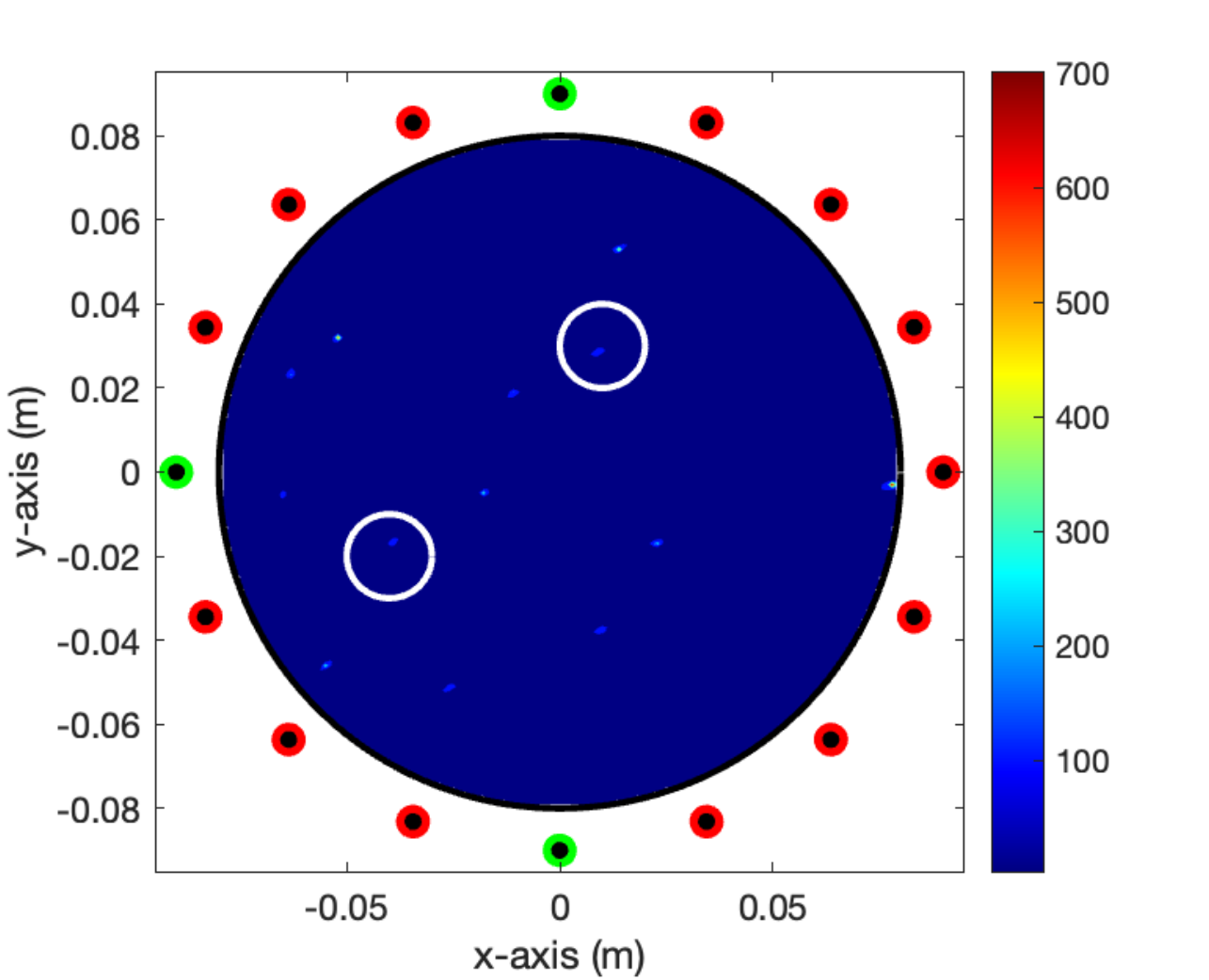}\hfill
  \includegraphics[width=0.25\textwidth]{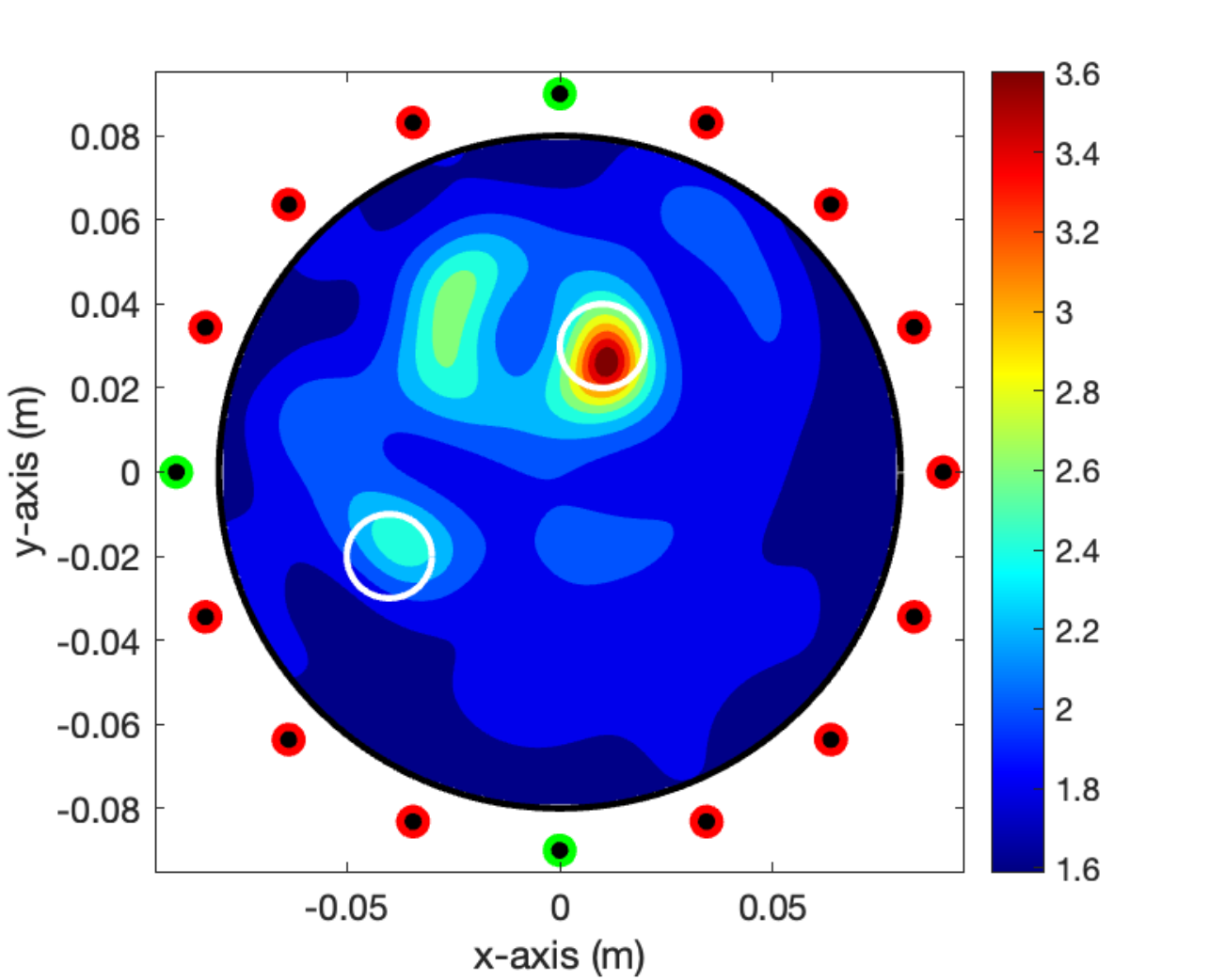}\hfill
  \includegraphics[width=0.25\textwidth]{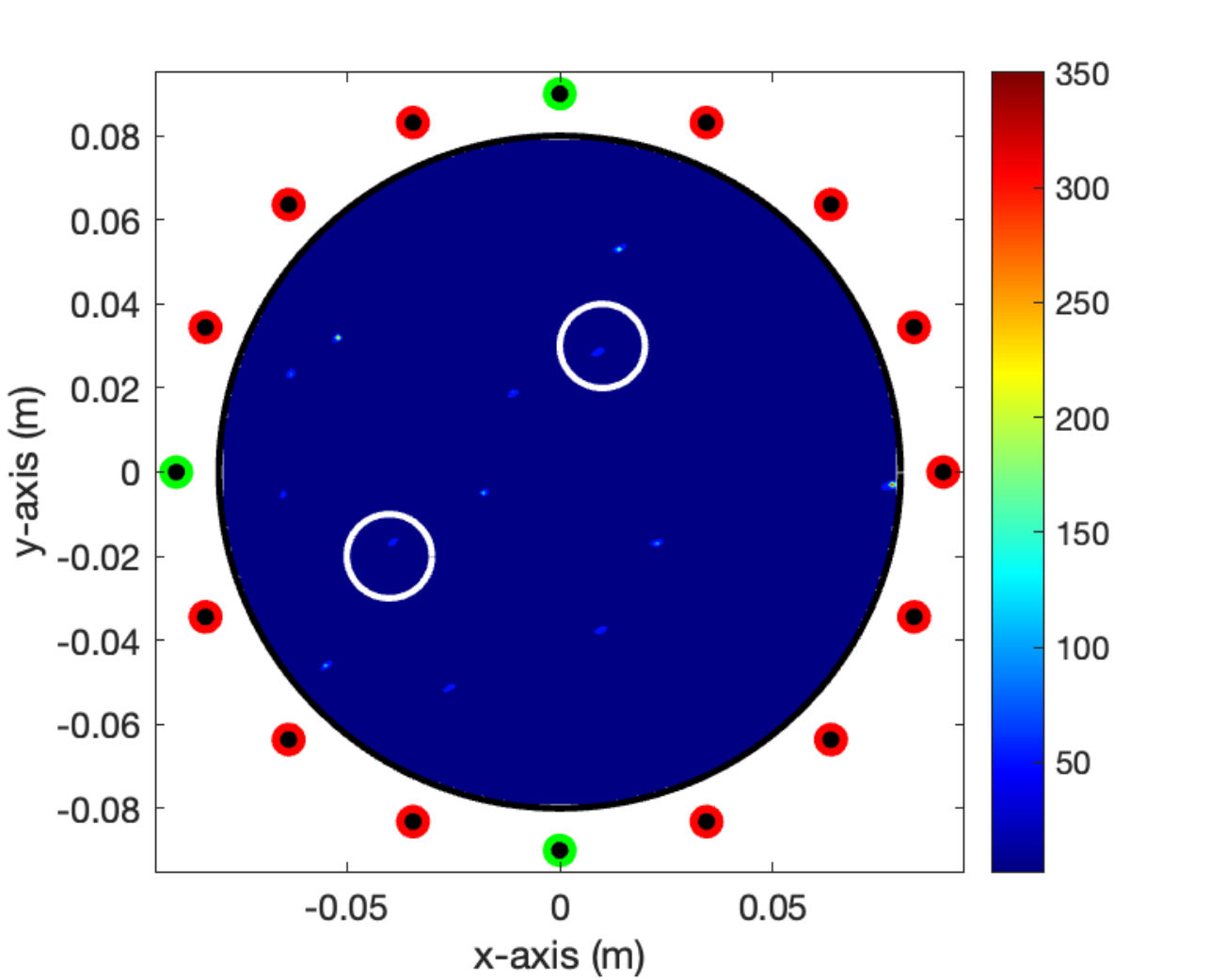}\hfill
  \includegraphics[width=0.25\textwidth]{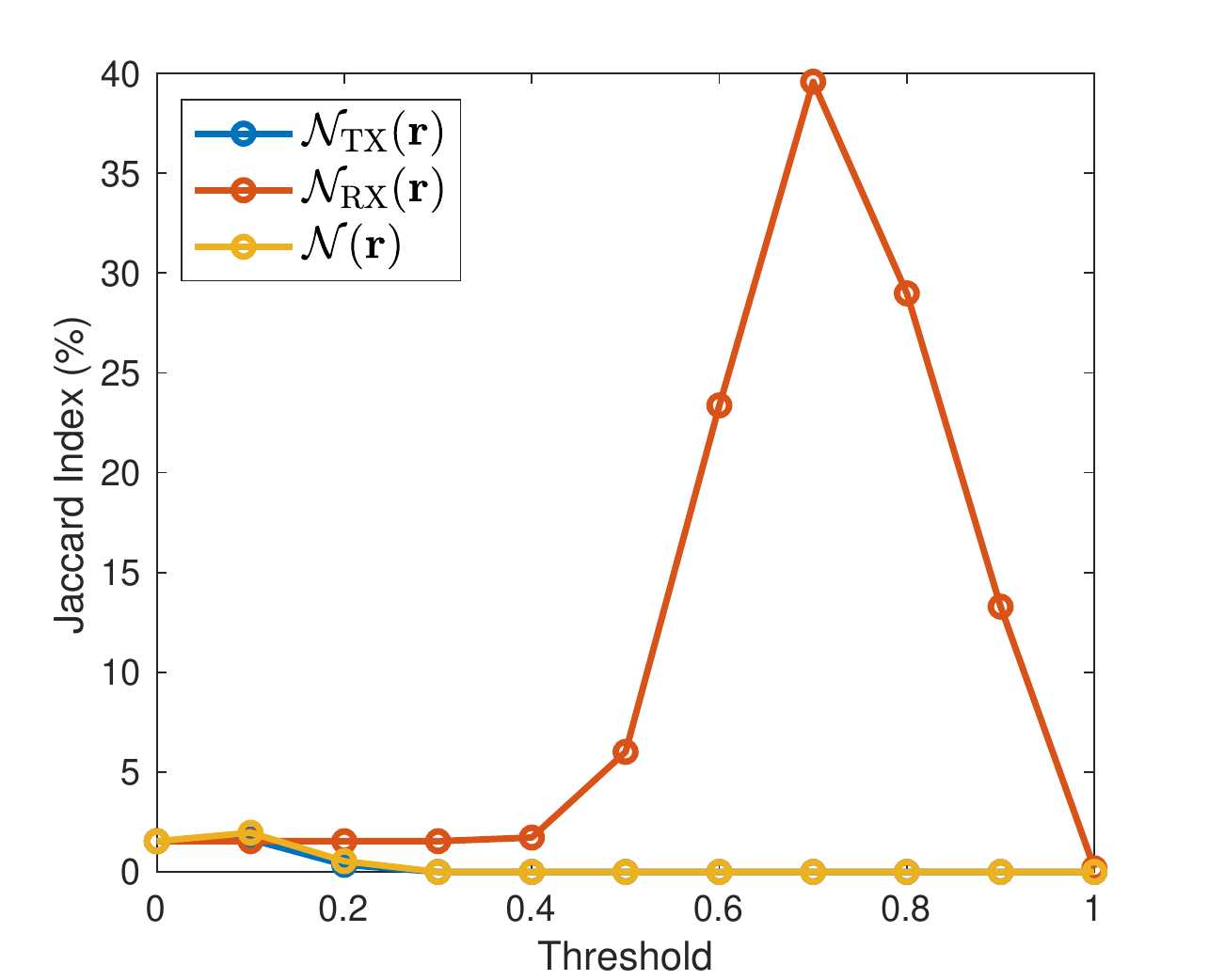}
  \caption{\label{Result8}(Example \ref{ex6}) Maps of $\mathfrak{F}_{\tx}(\mr)$ (first column), $\mathfrak{F}_{\rx}(\mr)$ (second column), $\mathfrak{F}(\mr)$ (third column), and Jaccard index (fourth column). Green and red colored circles describe the location of transmitters and receivers, respectively.}
\end{figure}

\section{Simulation Results With Experimental Data}\label{sec:5}
Following to the recent contribution \cite{P-MUSIC6}, it has been confirmed that the MUSIC algorithm can be applied to the real-world microwave imaging for identifying anomalies. In this section, we perform the imaging procedure with the experimental data to demonstrate the applicability of designed MUSIC. To this end, we used the microwave machine manufactured by the research team of the Radio Environment \& Monitoring research group of the Electronics and Telecommunications Research Institute (ETRI). We refer to \cite{KLKJS} for a detailed description of the machine. For the simulation, the machine was filled with water as a matching liquid with permittivity $\epsb=78\eps_0$ and conductivity $\sigmab=\SI{0.2}{\siemens/\meter}$ at operating frequency $f=\SI{925}{\mega\hertz}$, and we set the ROI as a circle of radius $\SI{0.08}{\meter}$ centered at the origin. We refer to Figure \ref{IllustrationReal} for the illustration.

\begin{figure}[h]
\begin{center}
\includegraphics[width=0.33\textwidth]{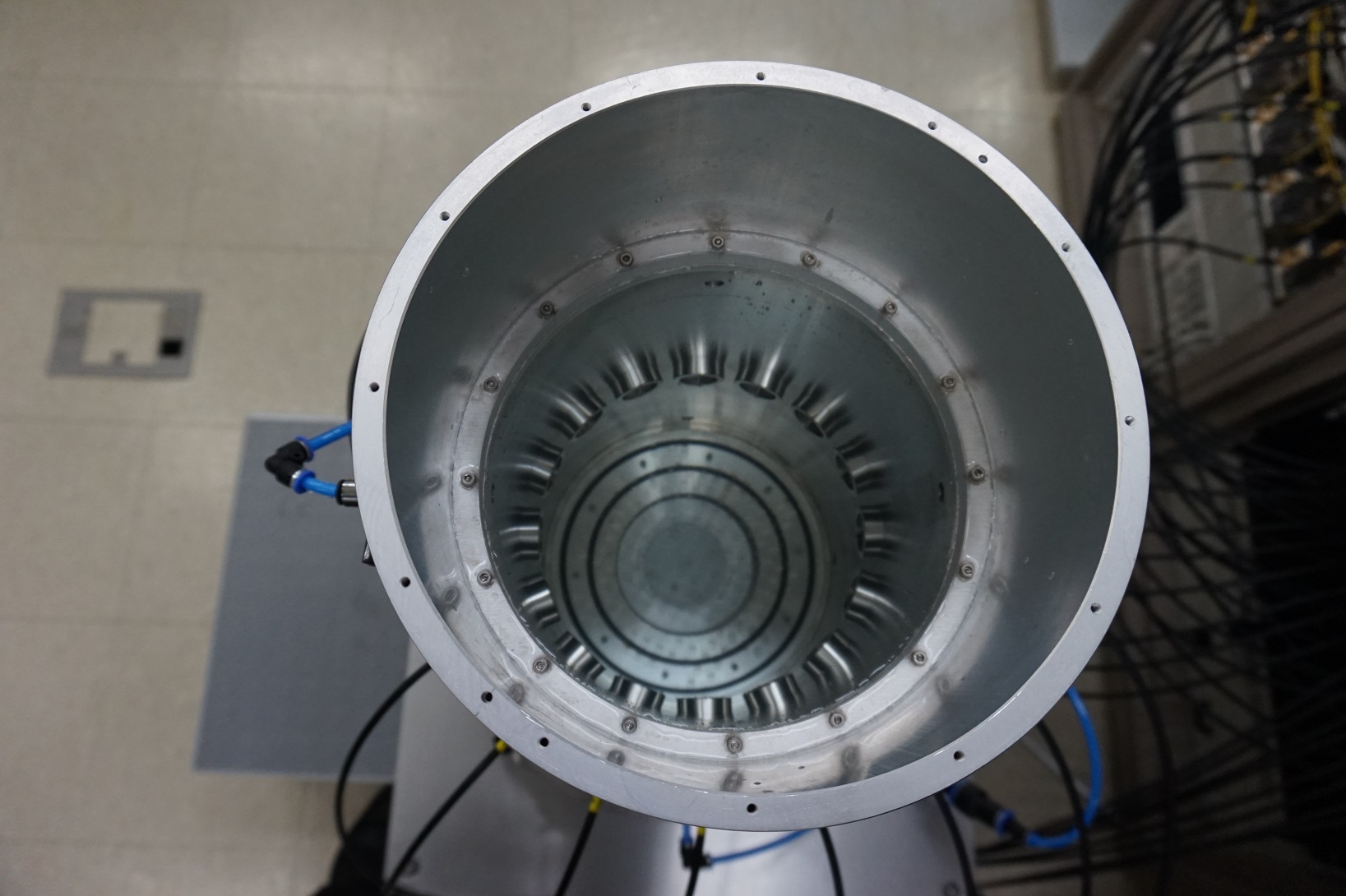}\hfill
\includegraphics[width=0.33\textwidth]{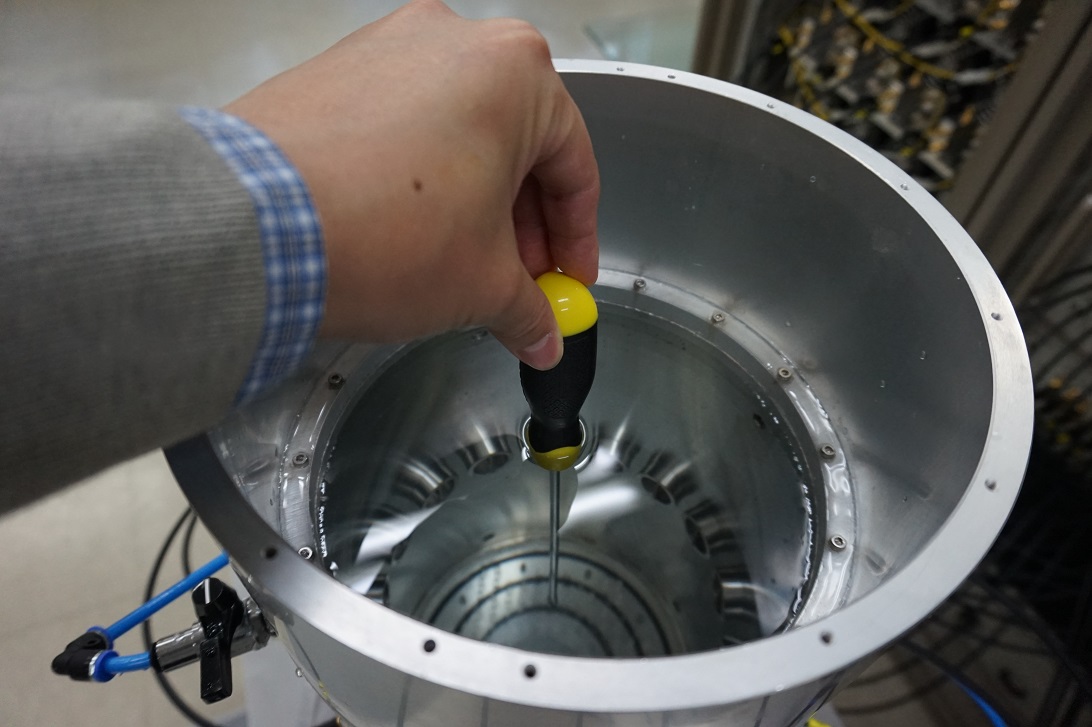}\hfill
\includegraphics[width=0.33\textwidth]{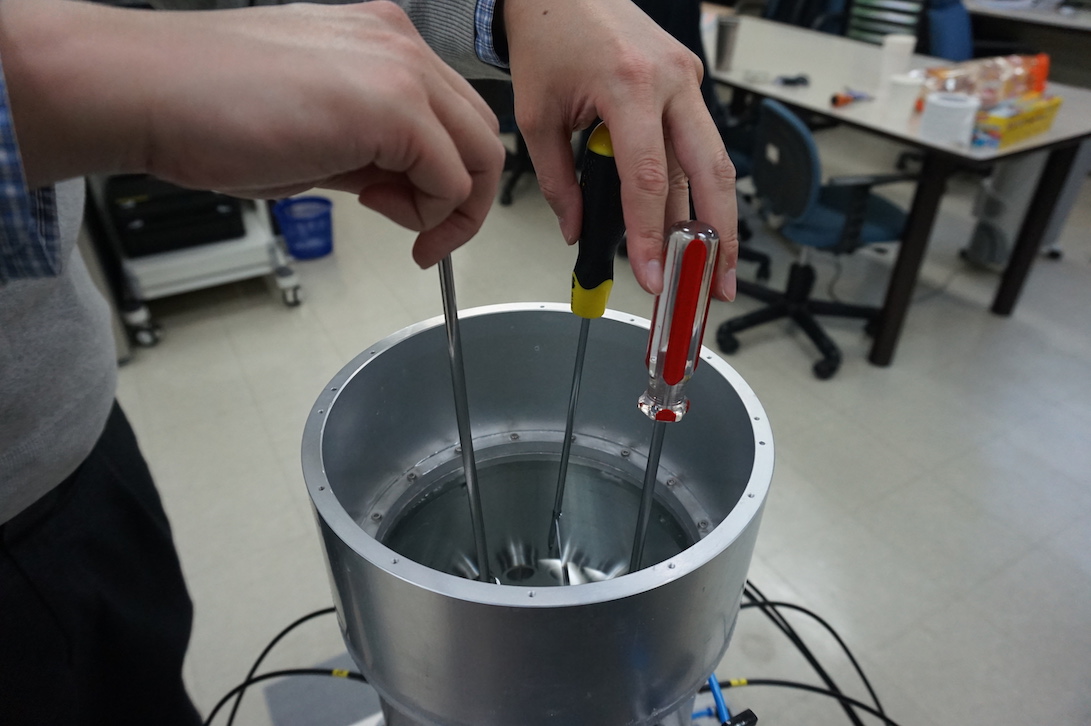}
\caption{\label{IllustrationReal}Microwave machine (left), single (center) and three (right) screw drivers.}
\end{center}
\end{figure}

\begin{example}[Imaging of Screw Driver]\label{exR1}
Here, we consider the imaging of the cross-section of single screw driver. Figure \ref{ResultR1} shows the maps of $\mathfrak{F}_{\tx}(\mr)$, $\mathfrak{F}_{\rx}(\mr)$, $\mathfrak{F}(\mr)$, and Jaccard index with the same antenna settings in Example \ref{ex1}. Based on the result, we can examine that it is very difficult to identify $\mr_\star$ due to the appearance of several artifacts. This result indicates that, same as the Example \ref{ex1}, it will be very difficult to recognize the anomaly using a small number of transmit antennas even with a large number of receive antennas.

Figure \ref{ResultR2} shows the maps of $\mathfrak{F}_{\tx}(\mr)$, $\mathfrak{F}_{\rx}(\mr)$, $\mathfrak{F}(\mr)$, and Jaccard index with the same antenna settings in Example \ref{ex2}. In contrast to the synthetic data experience, it is difficult to distinguish the location of anomaly and artifacts through the maps of $\mathfrak{F}_{\tx}(\mr)$ and $\mathfrak{F}_{\rx}(\mr)$ with settings $\mathbf{A}_j\cup\mathbf{B}_2$, $j=1,2,3,4$. Notice that it is difficult to identify $\mr_\star$ through the map of $\mathfrak{F}(\mr)$ with settings $\mathbf{A}_j\cup\mathbf{B}_2$, $j=1,2$ but the recognization of anomaly becomes possible with settings $\mathbf{A}_j\cup\mathbf{B}_2$, $j=3,4$. Hence, similar to the synthetic data experience, it will be possible to identify small anomaly by increasing the total number of transmitting and receiving antennas.

Figure \ref{ResultR3} shows the maps of $\mathfrak{F}_{\tx}(\mr)$, $\mathfrak{F}_{\rx}(\mr)$, $\mathfrak{F}(\mr)$, and Jaccard index with the same antenna settings in Example \ref{ex3}. Based on the imaging results, we can examine that
\begin{enumerate}
\item[\textcircled{1}] it is difficult to recognize the existence of the anomaly through the map of $\mathfrak{F}_{\tx}(\mr)$ with $\mathbf{A}_j\cup\mathbf{B}_5$, $j=5,6,7$ and $\mathbf{A}_\star\cup\mathbf{B}_\star$,
\item[\textcircled{2}] it is difficult to identify $\mr_\star$ through the map of $\mathfrak{F}_{\tx}(\mr)$ with $\mathbf{A}_j\cup\mathbf{B}_5$, $j=5,6$ due to the appearance of several artifacts but, it is still difficult to distinguish anomaly from artifacts, the peak of large magnitude becomes appear in the map of $\mathfrak{F}_{\rx}(\mr)$ with $\mathbf{A}_7\cup\mathbf{B}_5$ and $\mathbf{A}_\star\cup\mathbf{B}_\star$,
\item[\textcircled{3}] the location of anomaly cannot be retrieved through the map of $\mathfrak{F}(\mr)$ with $\mathbf{A}_j\cup\mathbf{A}_5$, $j=5,6,7$ but one can obtain the best imaging result by selecting the imaging function $\mathfrak{F}(\mr)$ and antenna configuration $\mathbf{A}_\star\cup\mathbf{B}_\star$.
\end{enumerate}
\end{example}

\begin{figure}[h]
  \centering
  \includegraphics[width=0.25\textwidth]{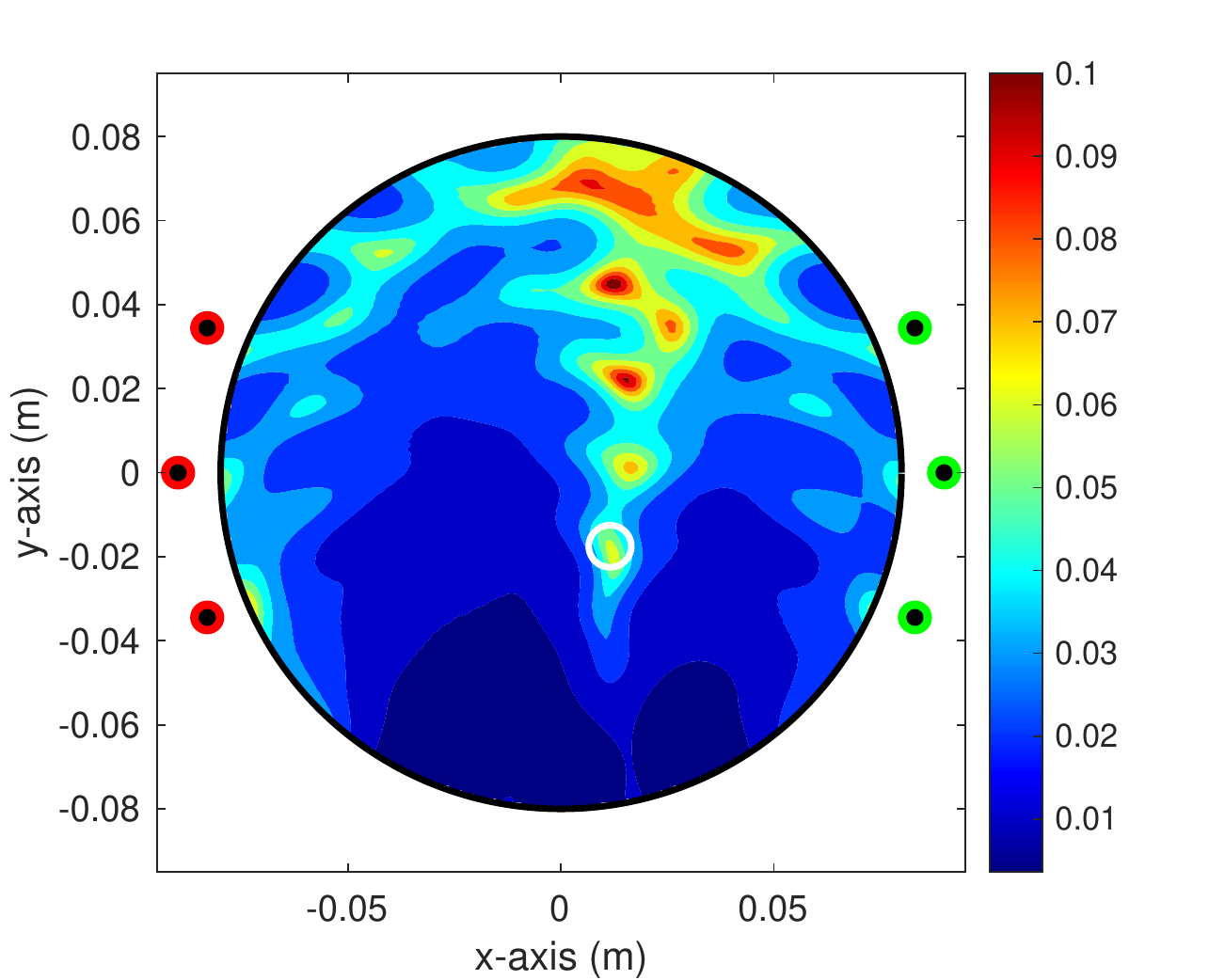}\hfill
  \includegraphics[width=0.25\textwidth]{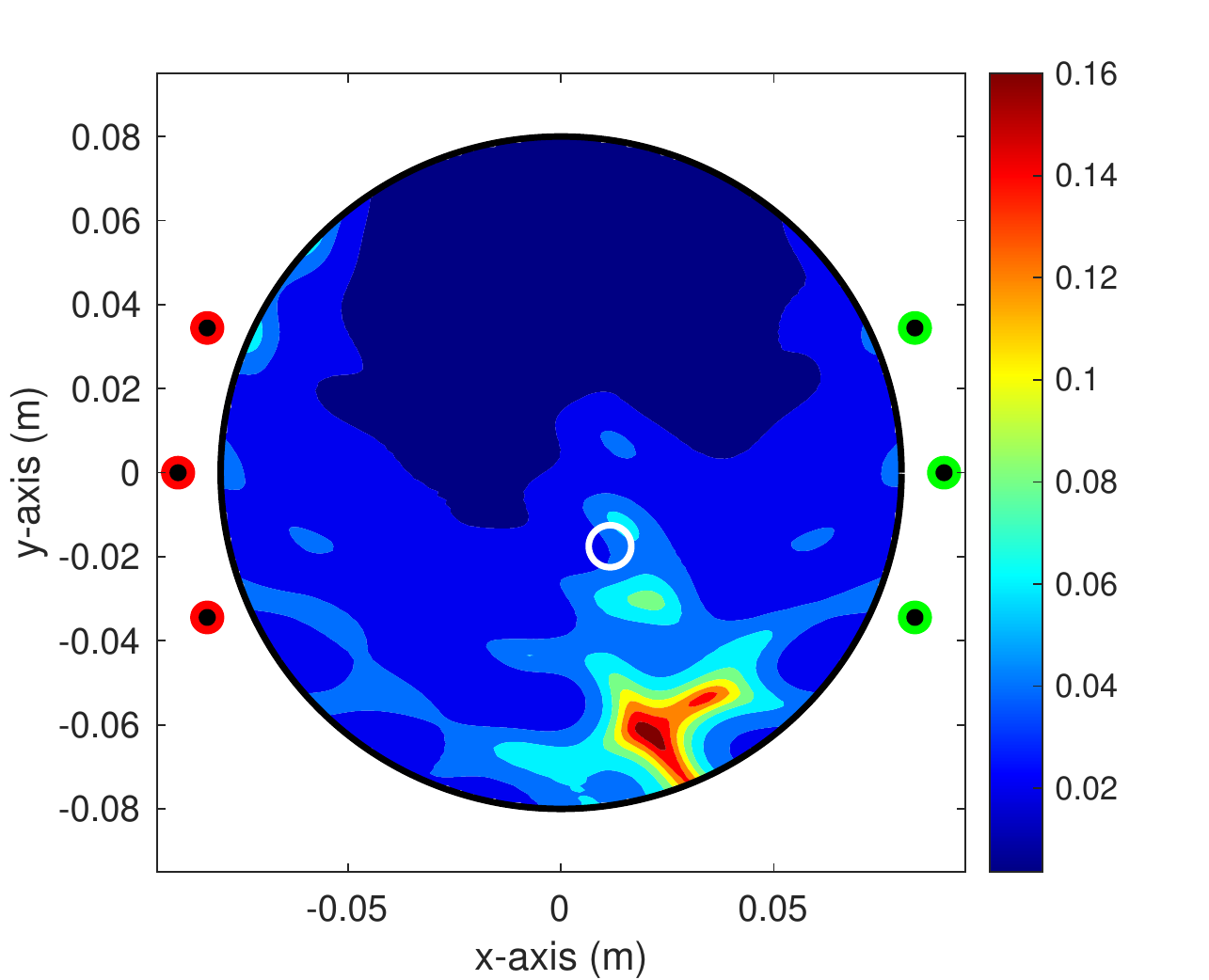}\hfill
  \includegraphics[width=0.25\textwidth]{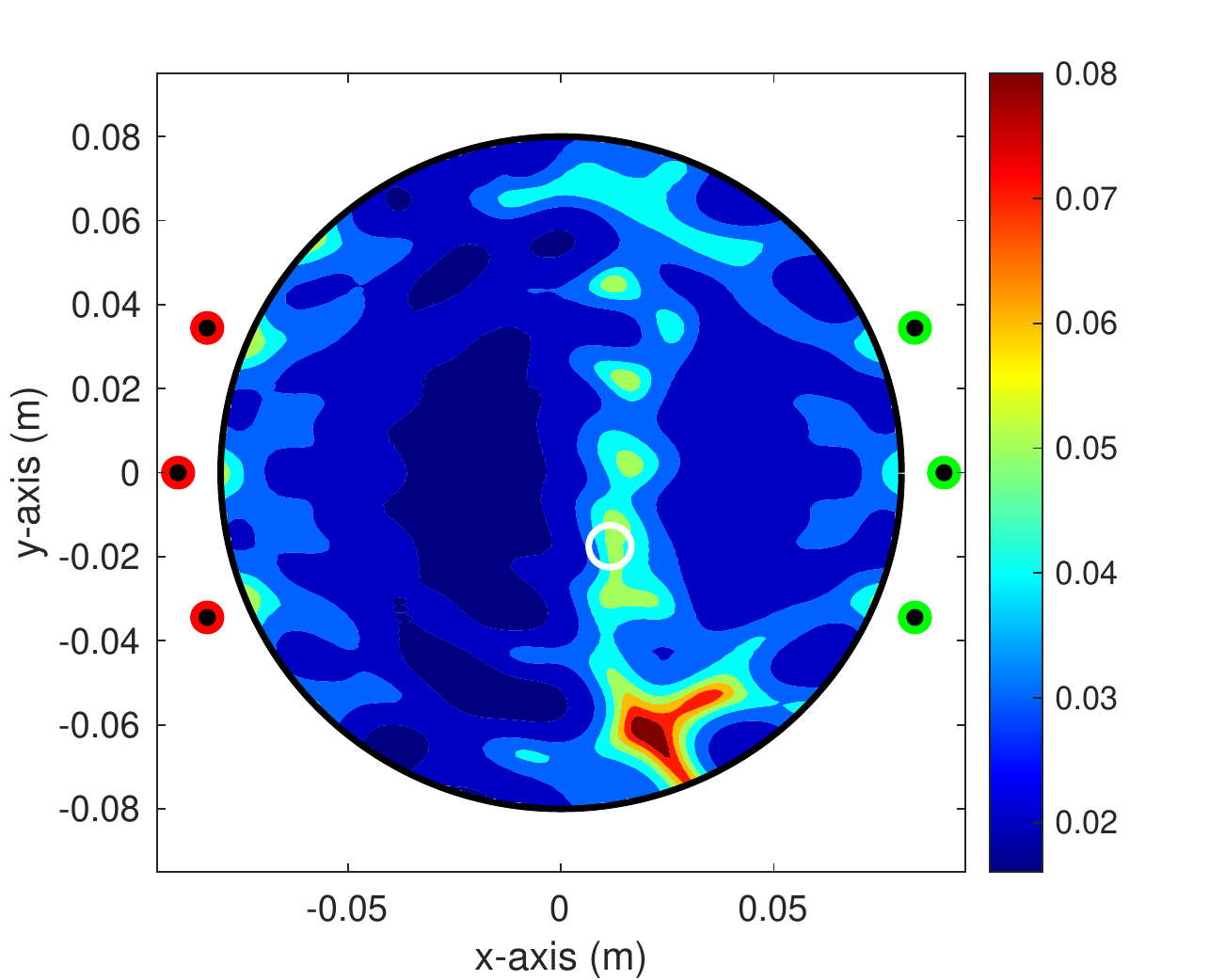}\hfill
  \includegraphics[width=0.25\textwidth]{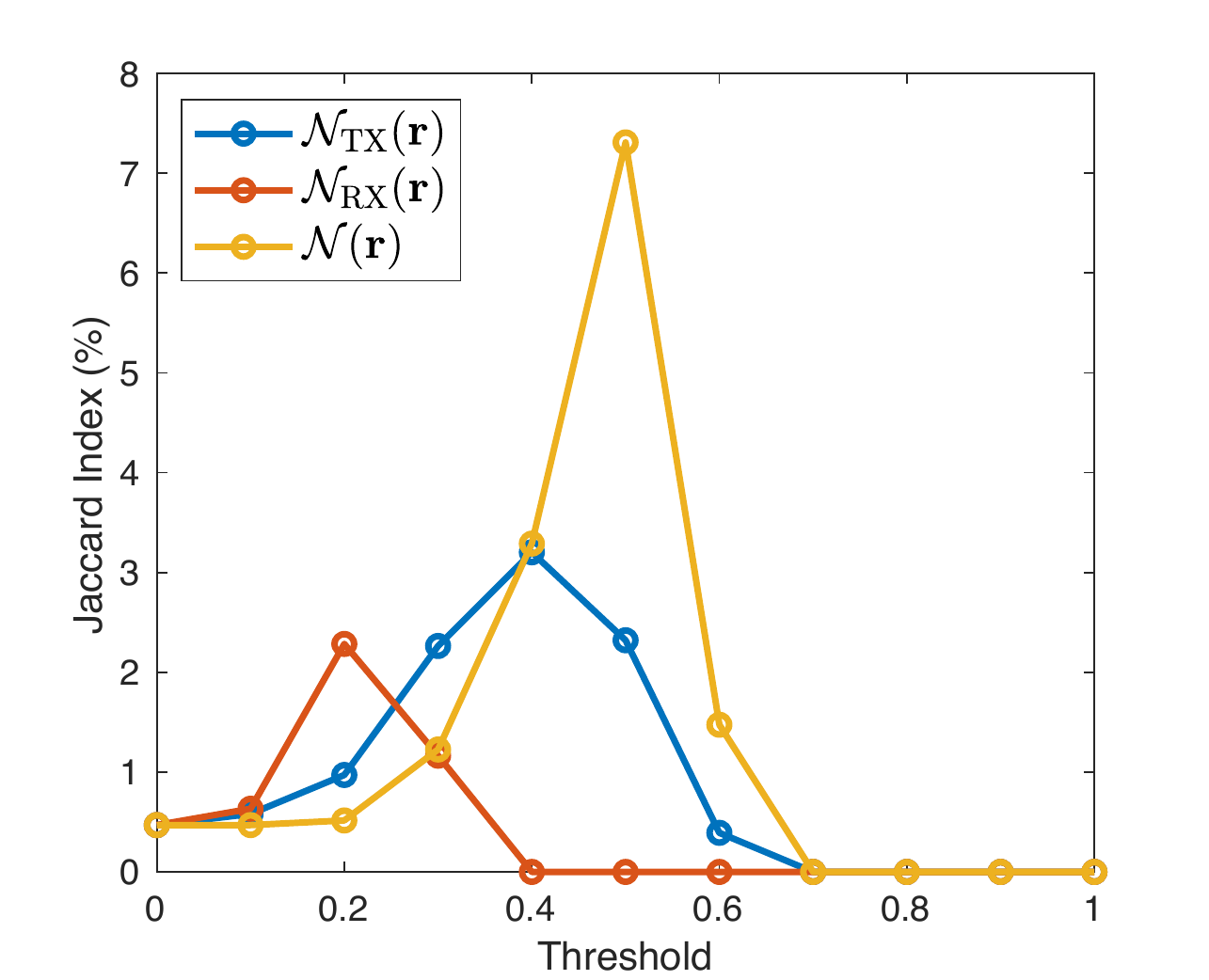}\\
  \includegraphics[width=0.25\textwidth]{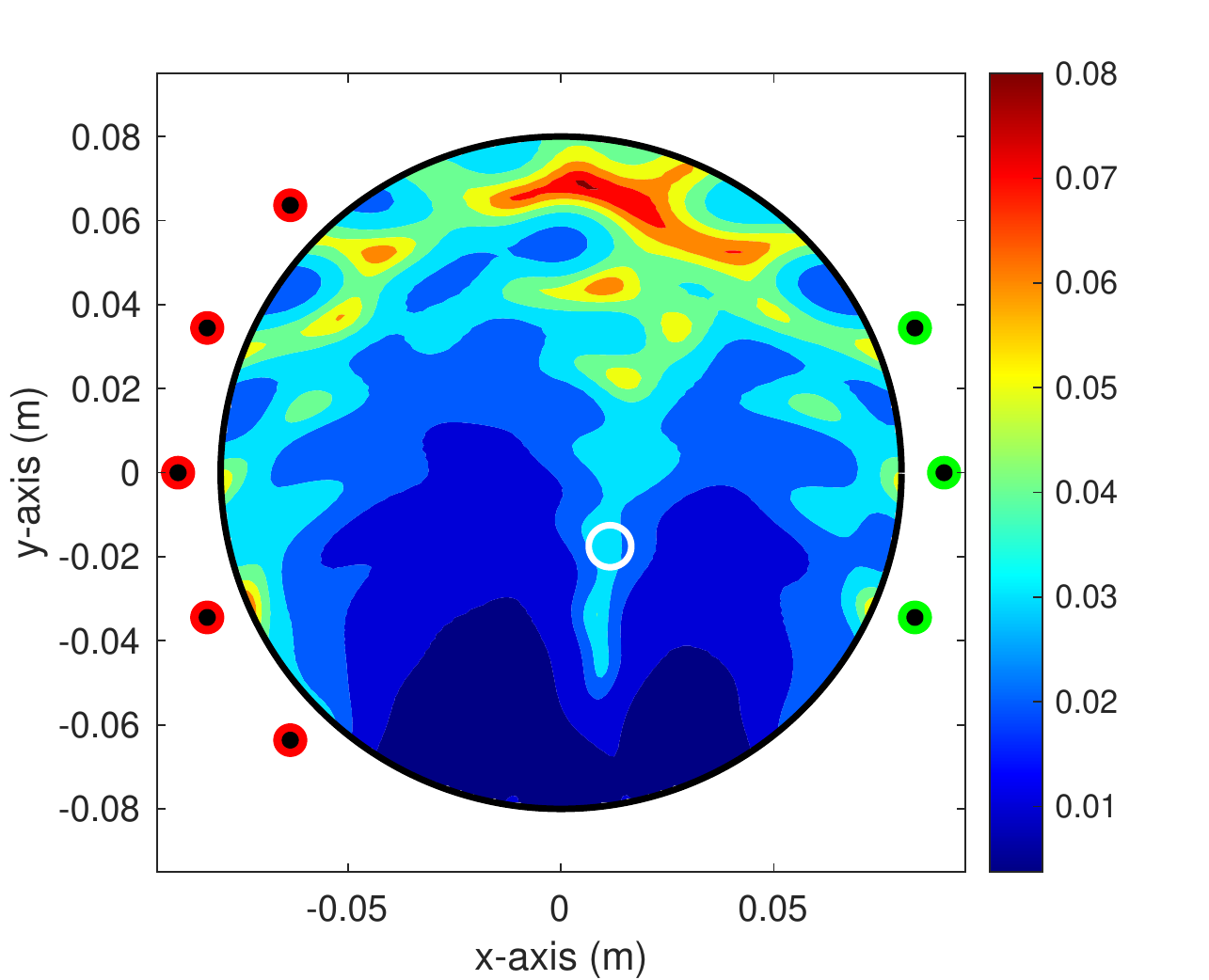}\hfill
  \includegraphics[width=0.25\textwidth]{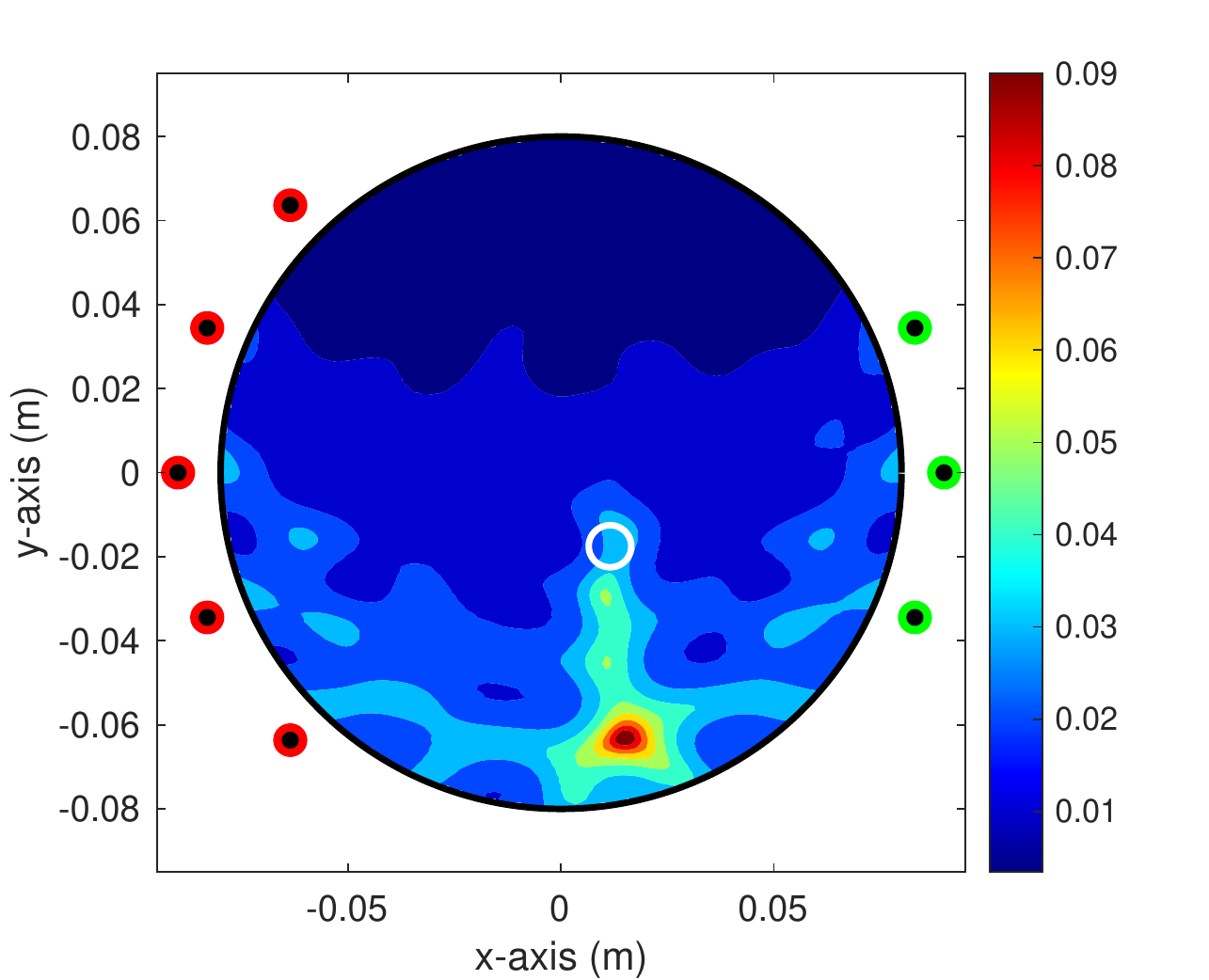}\hfill
  \includegraphics[width=0.25\textwidth]{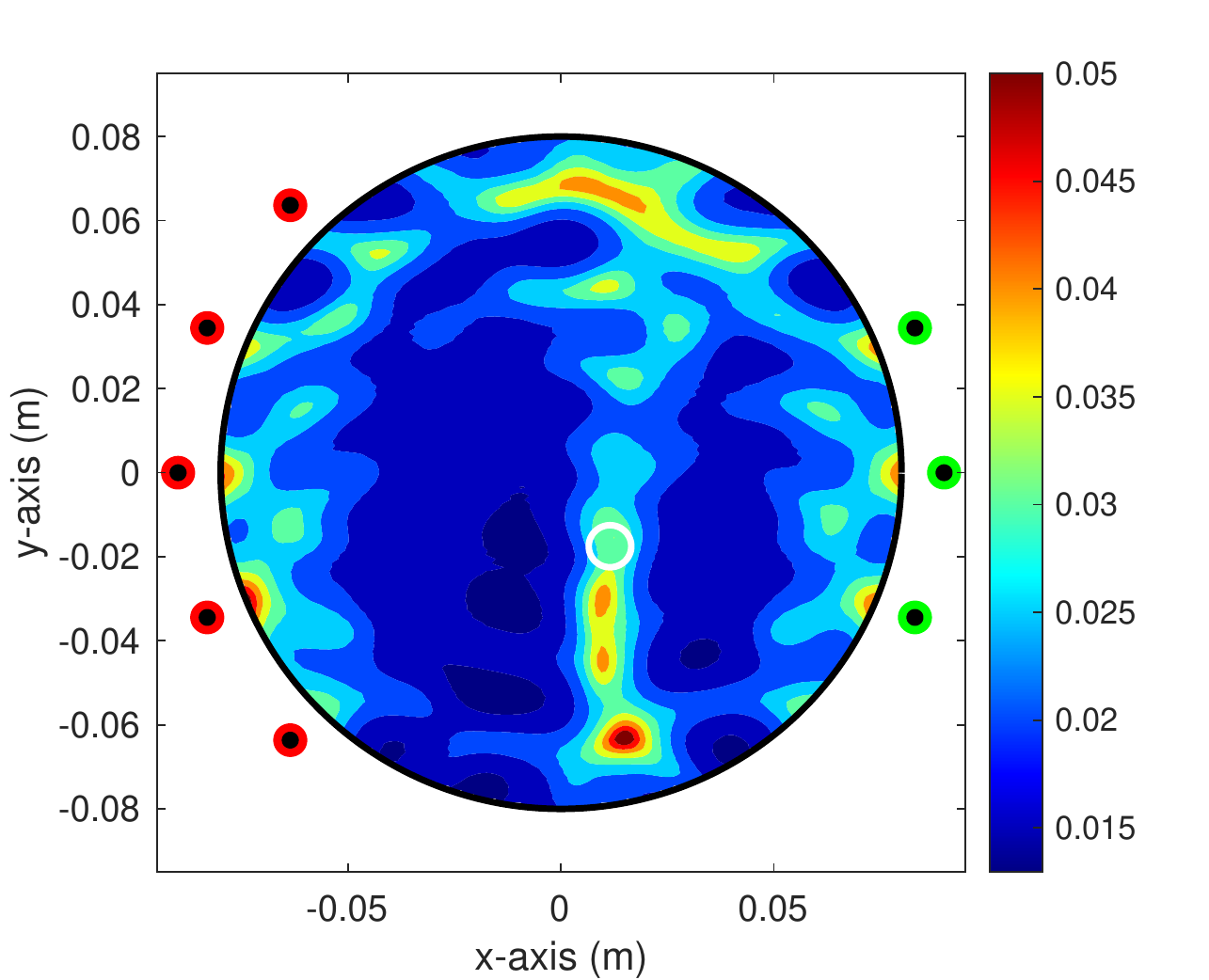}\hfill
  \includegraphics[width=0.25\textwidth]{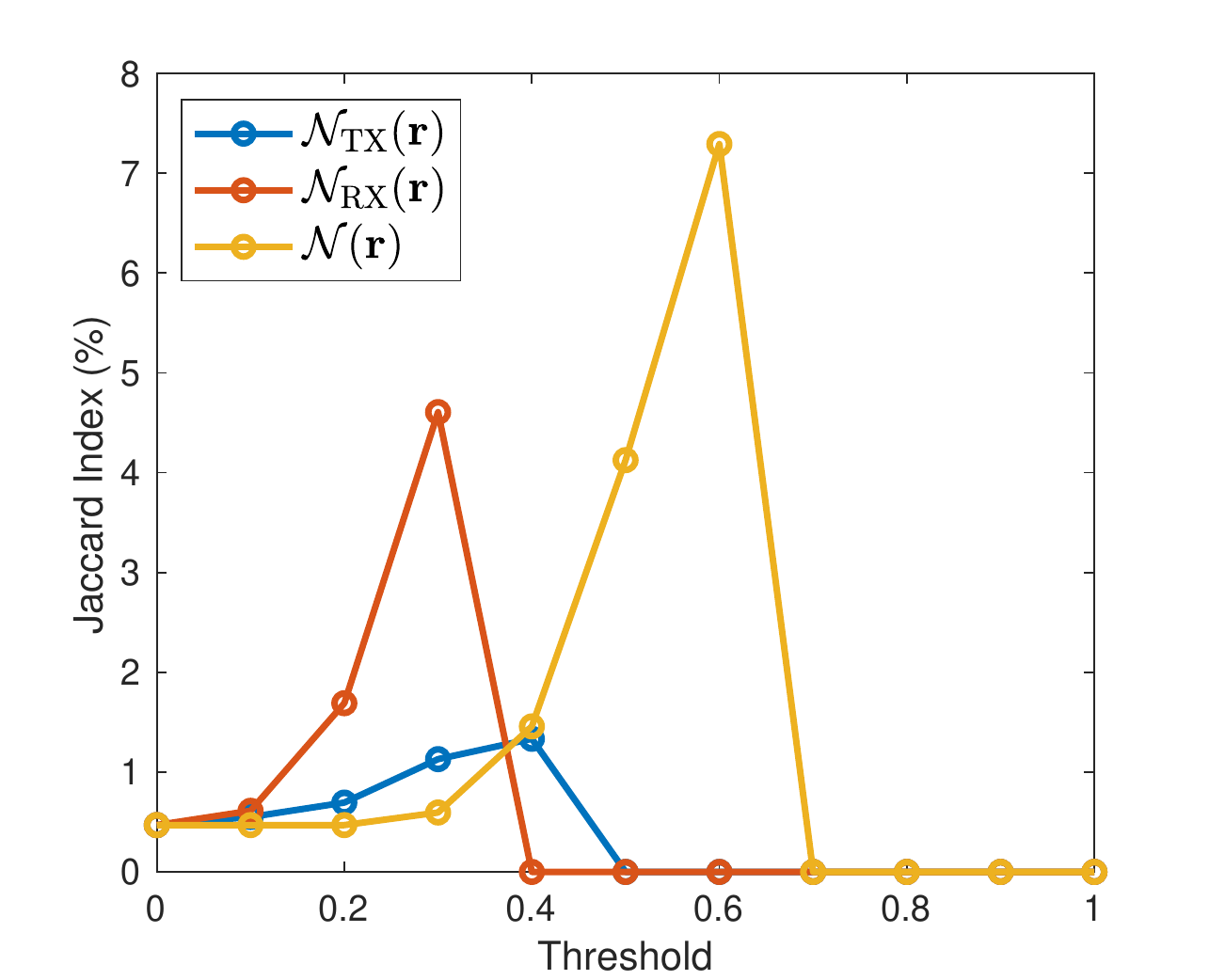}\\
  \includegraphics[width=0.25\textwidth]{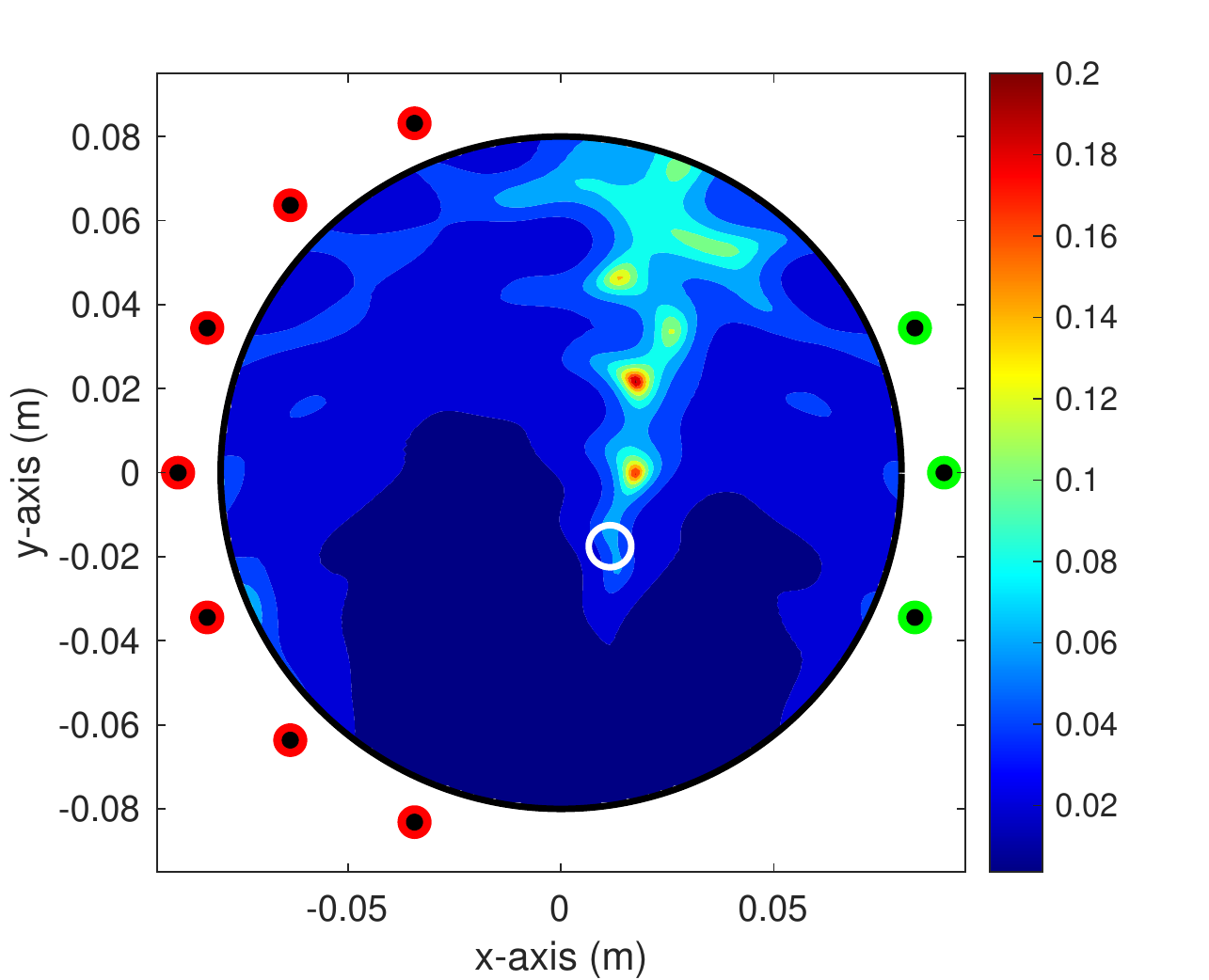}\hfill
  \includegraphics[width=0.25\textwidth]{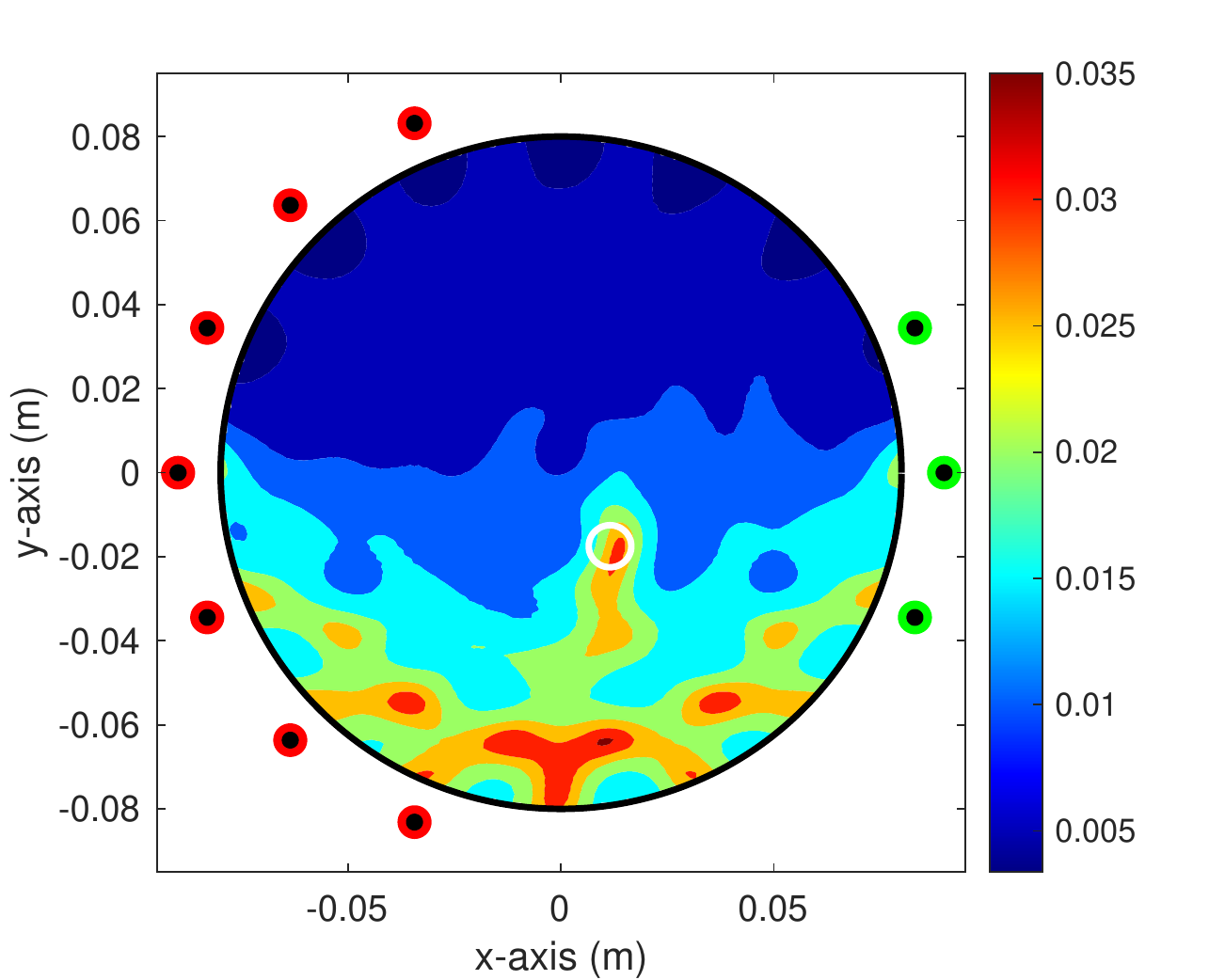}\hfill
  \includegraphics[width=0.25\textwidth]{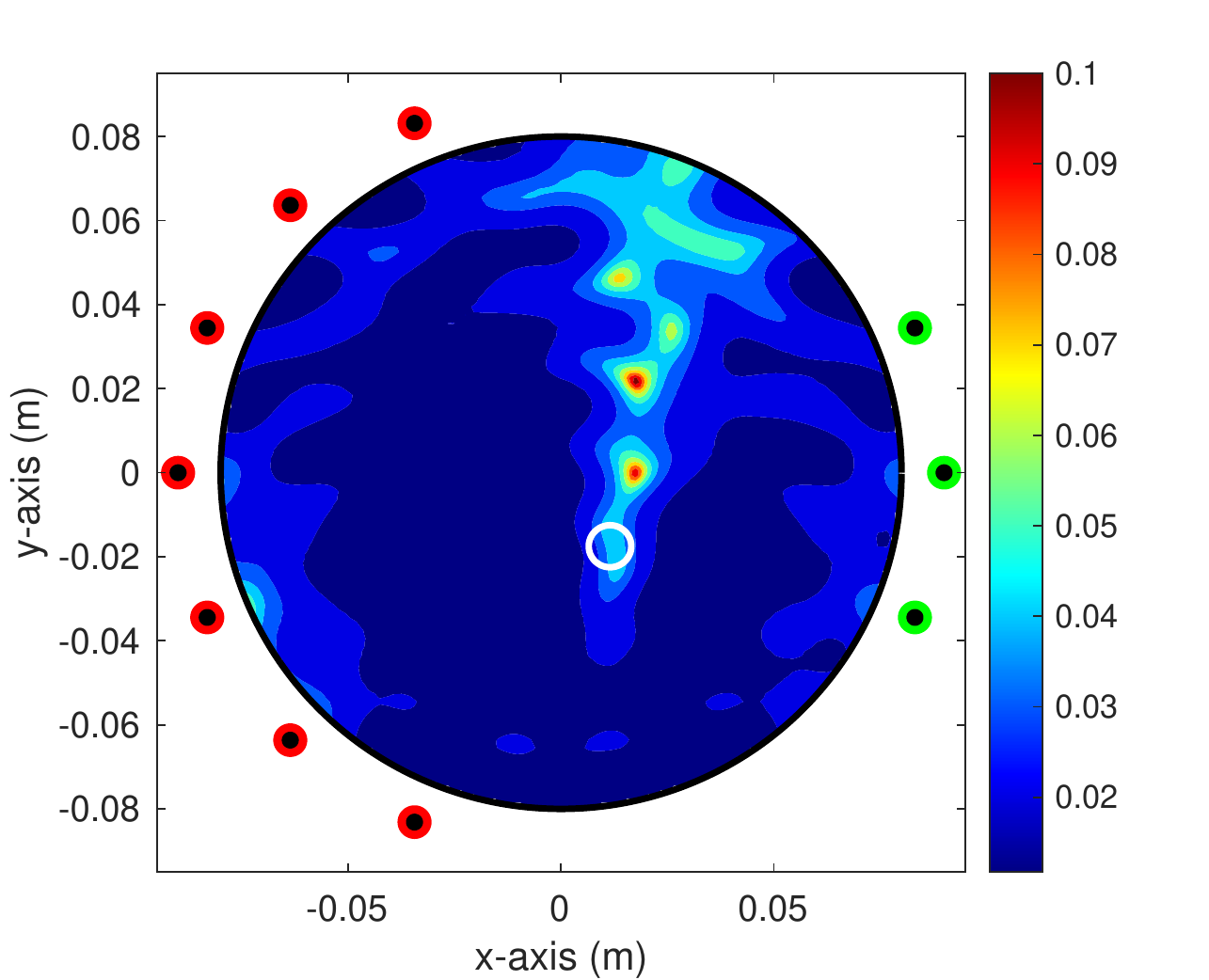}\hfill
  \includegraphics[width=0.25\textwidth]{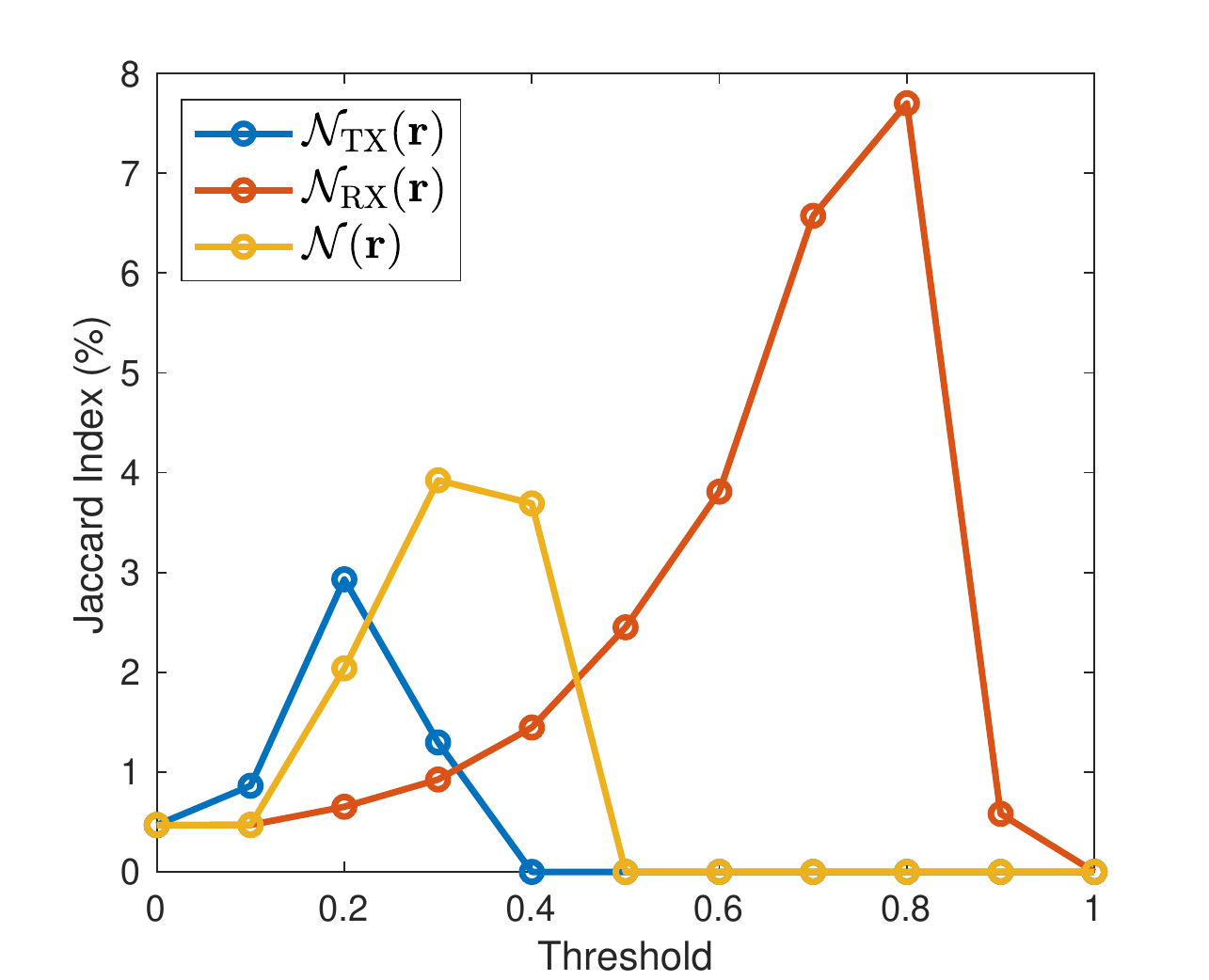}\\
  \includegraphics[width=0.25\textwidth]{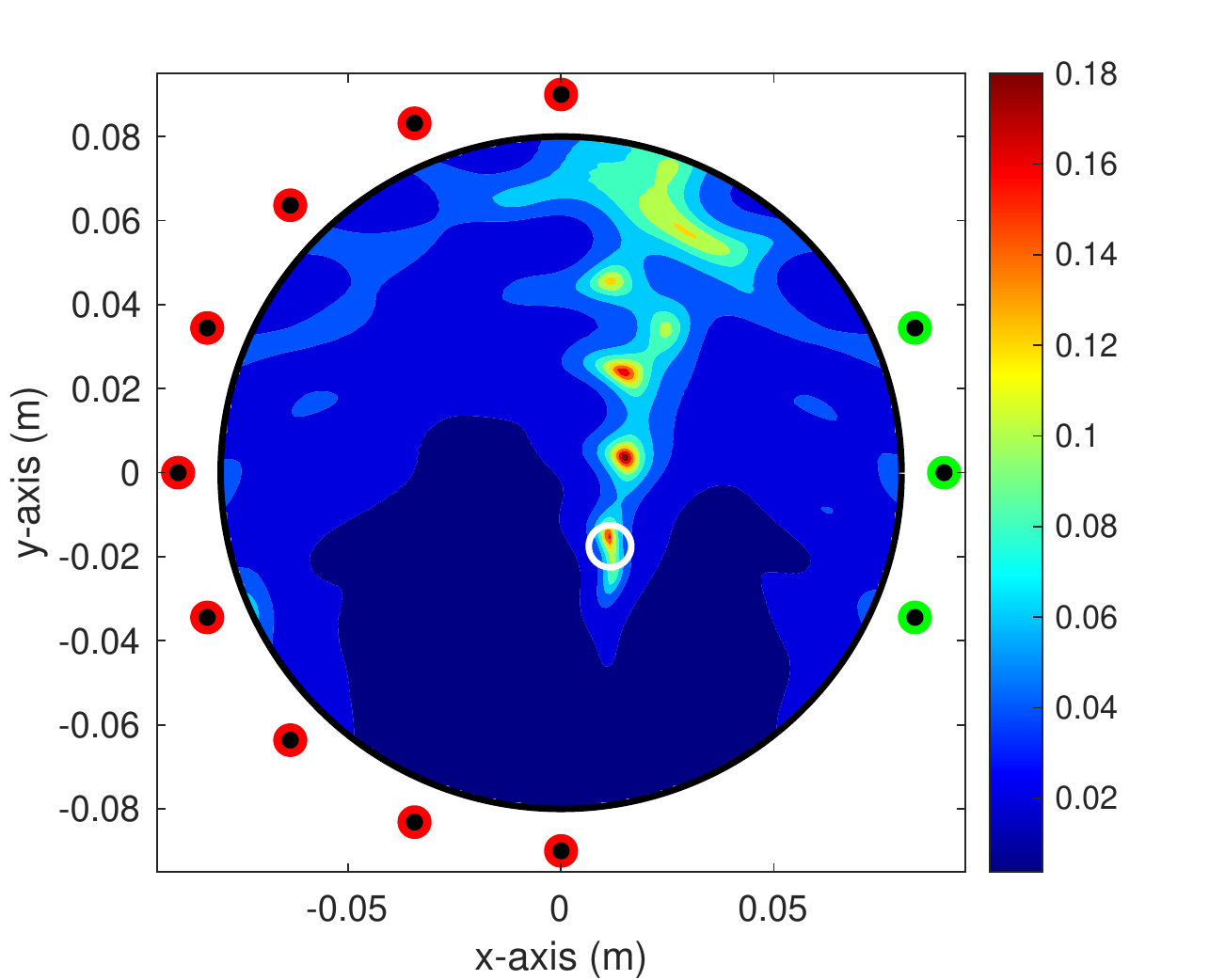}\hfill
  \includegraphics[width=0.25\textwidth]{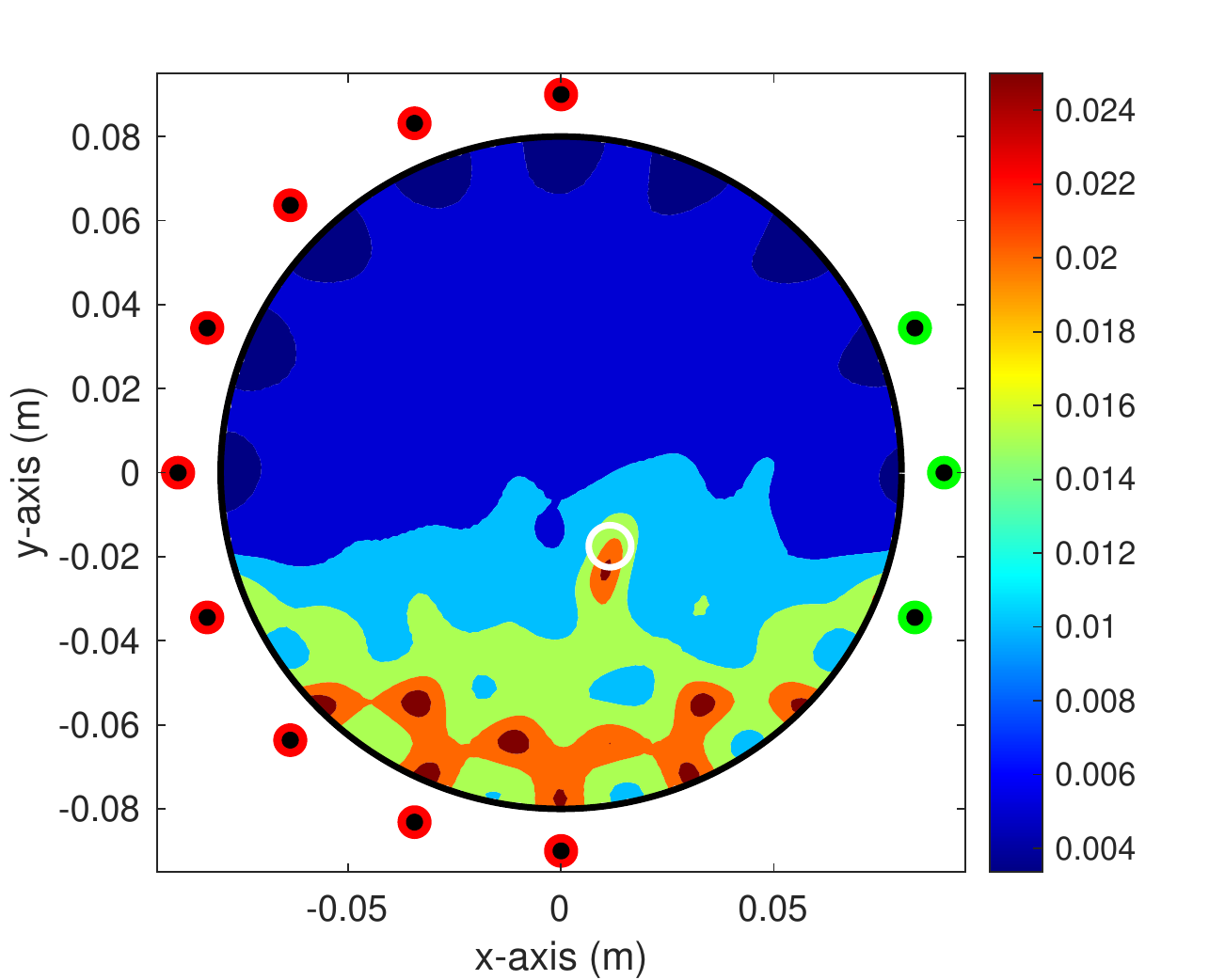}\hfill
  \includegraphics[width=0.25\textwidth]{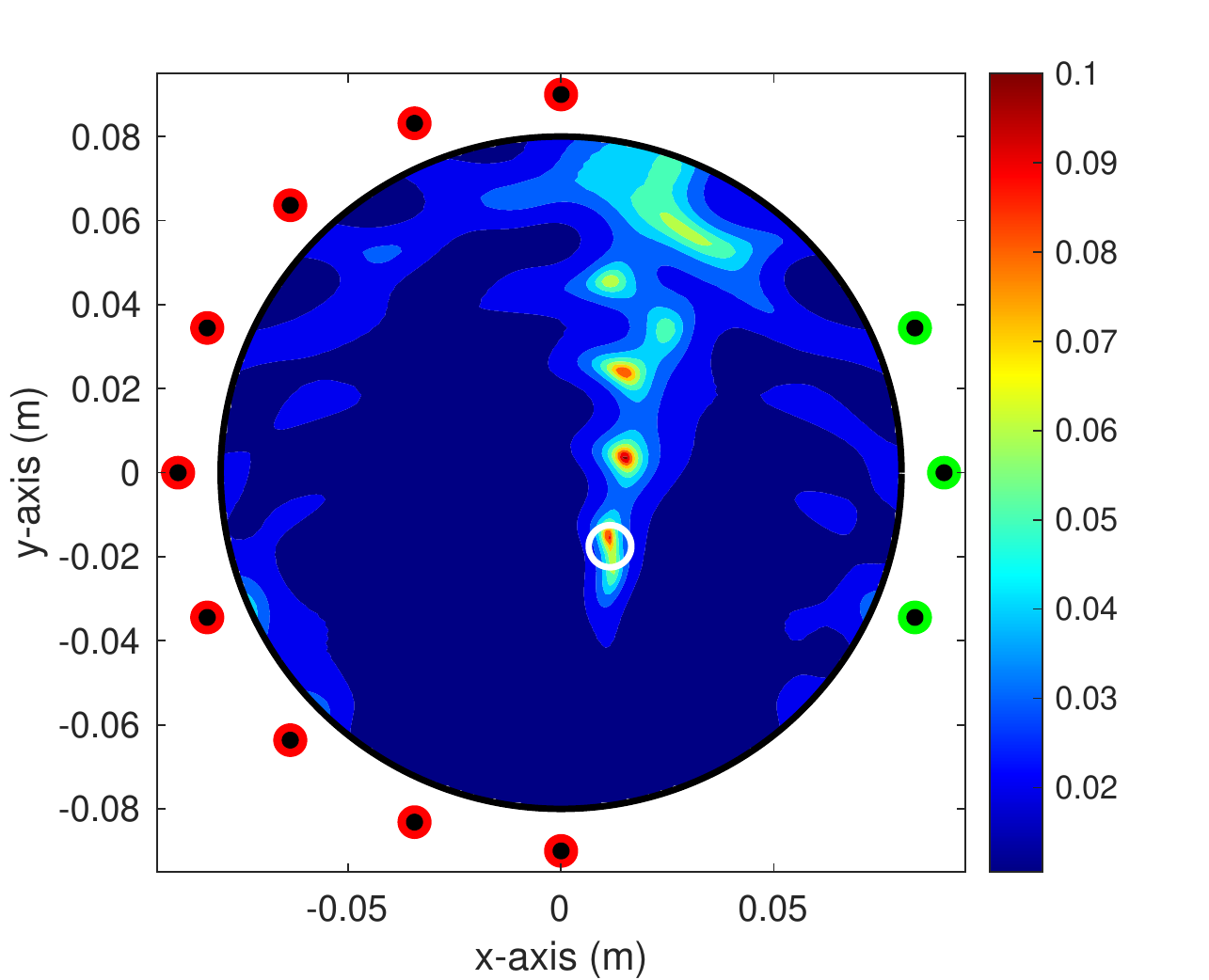}\hfill
  \includegraphics[width=0.25\textwidth]{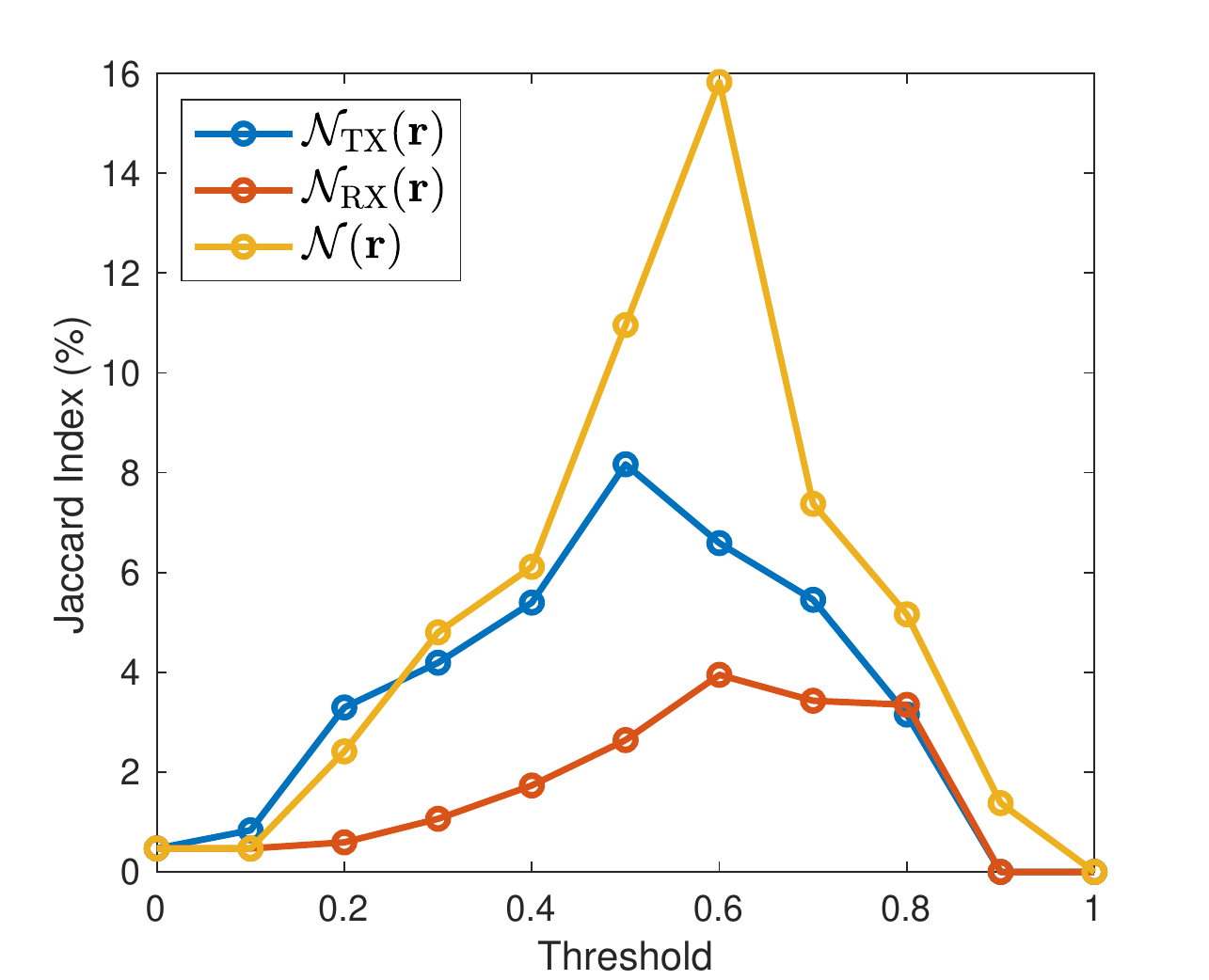}
  \caption{\label{ResultR1}(Example \ref{exR1}) Maps of $\mathfrak{F}_{\tx}(\mr)$ (first column), $\mathfrak{F}_{\rx}(\mr)$ (second column), $\mathfrak{F}(\mr)$ (third column), and Jaccard index (fourth column). Green and red colored circles describe the location of transmitters and receivers, respectively.}
\end{figure}

\begin{figure}[h]
  \centering
  \includegraphics[width=0.25\textwidth]{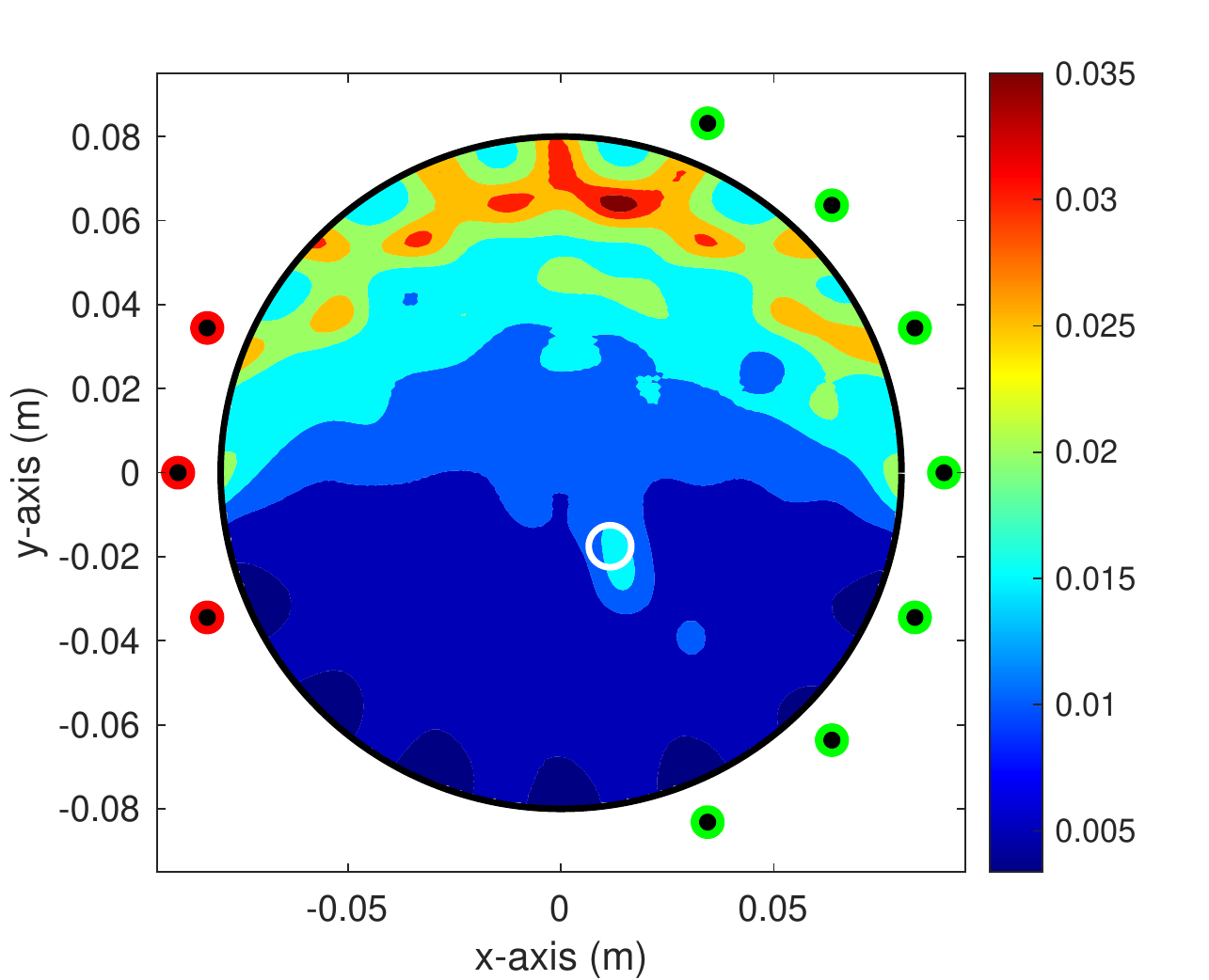}\hfill
  \includegraphics[width=0.25\textwidth]{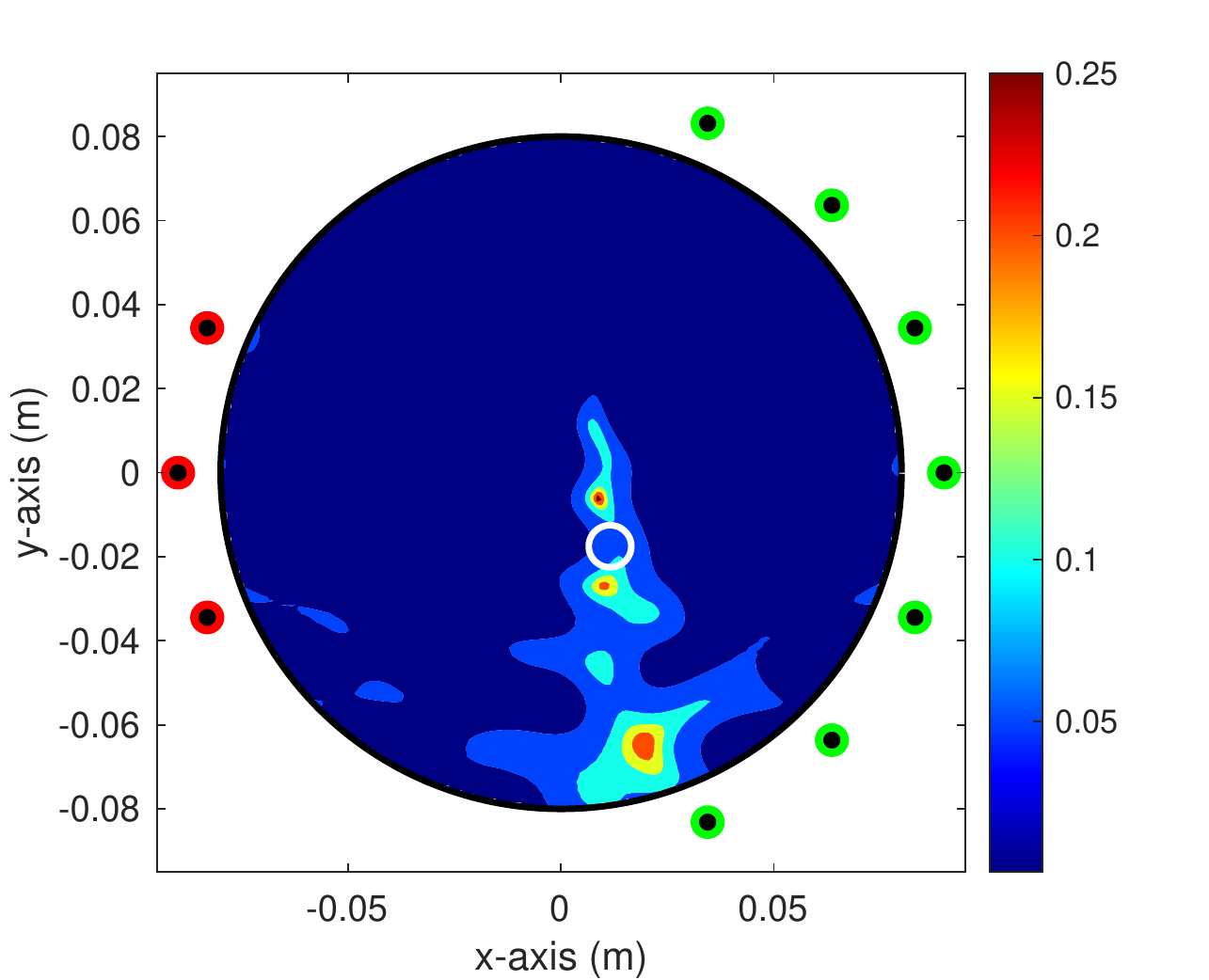}\hfill
  \includegraphics[width=0.25\textwidth]{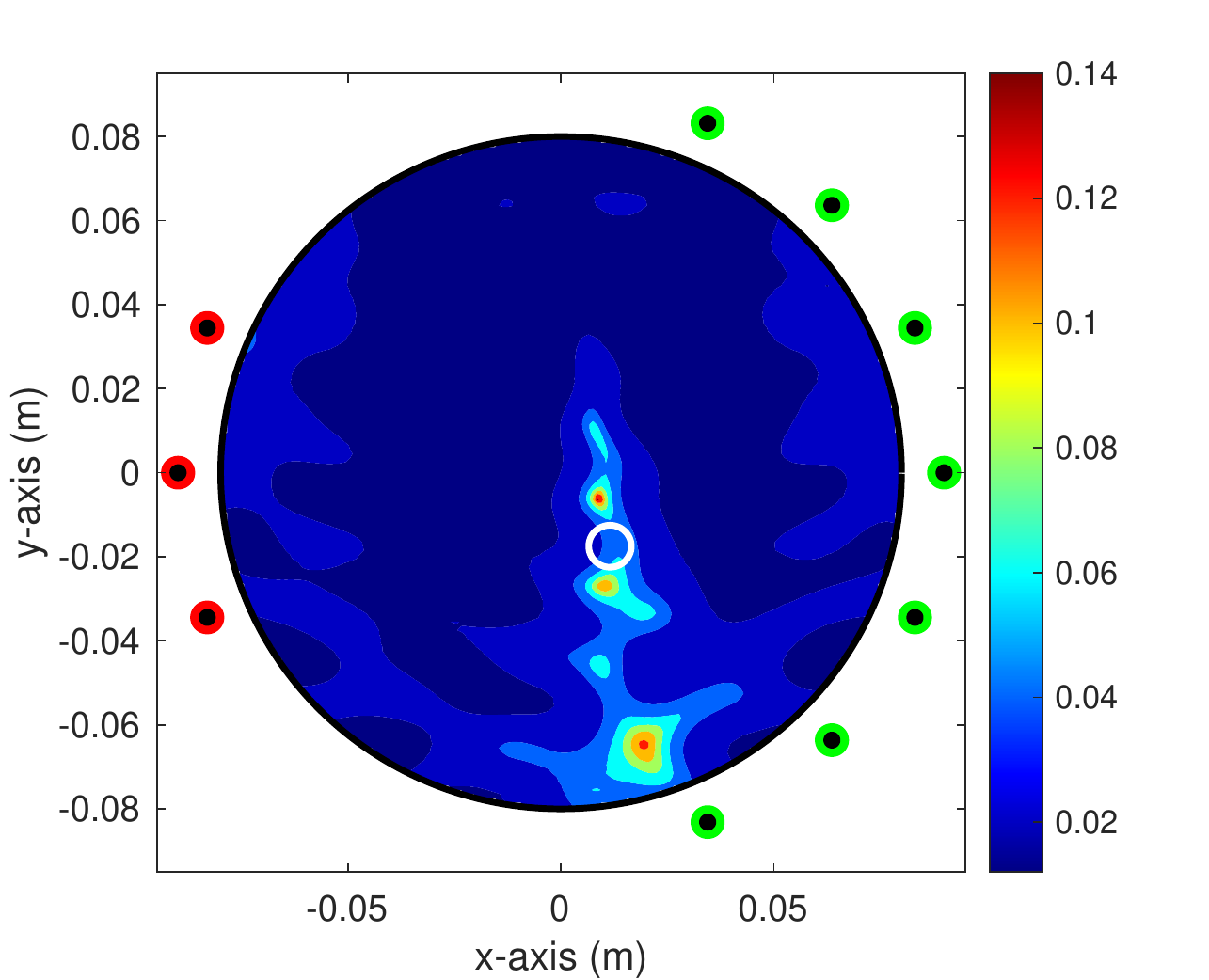}\hfill
  \includegraphics[width=0.25\textwidth]{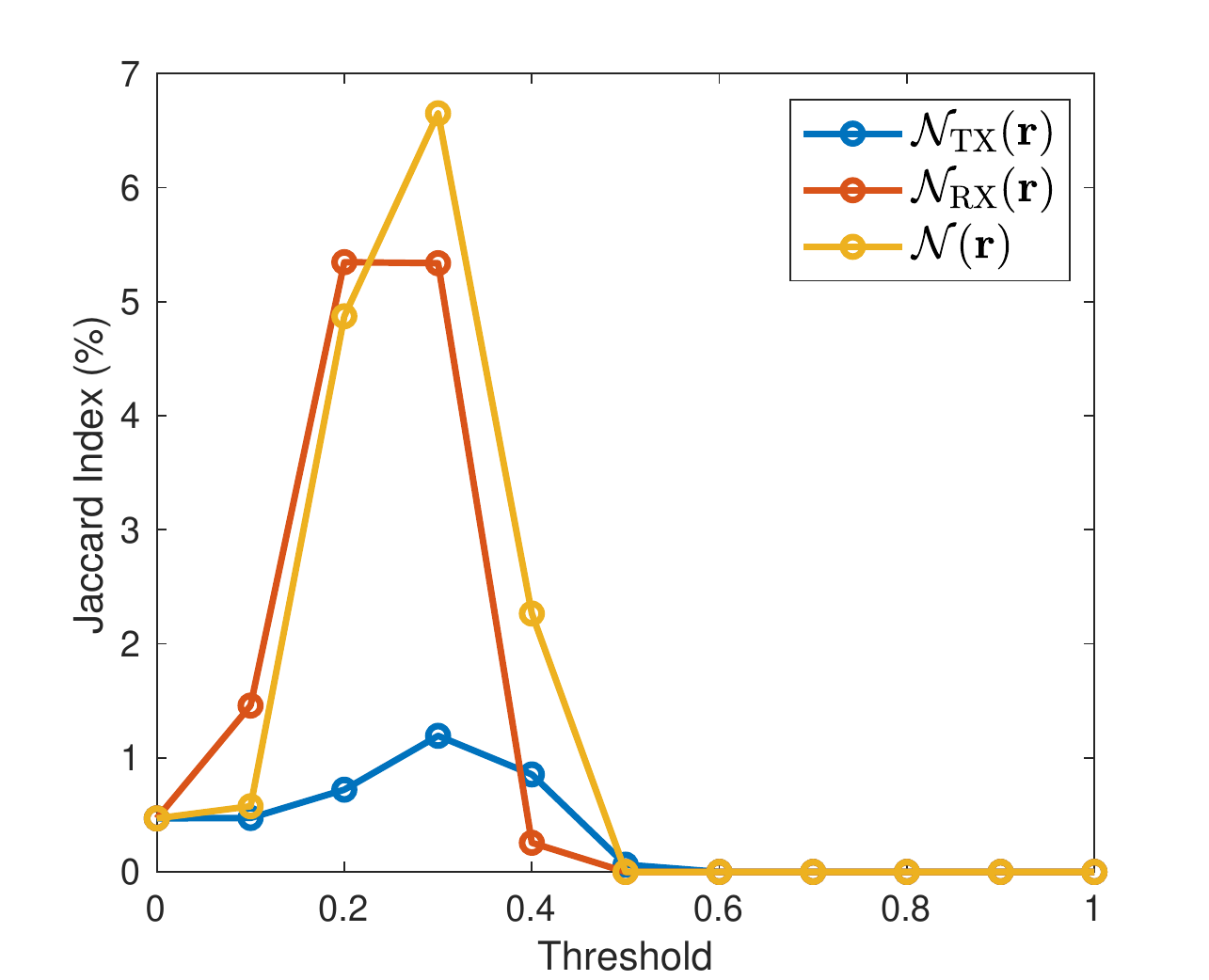}\\
  \includegraphics[width=0.25\textwidth]{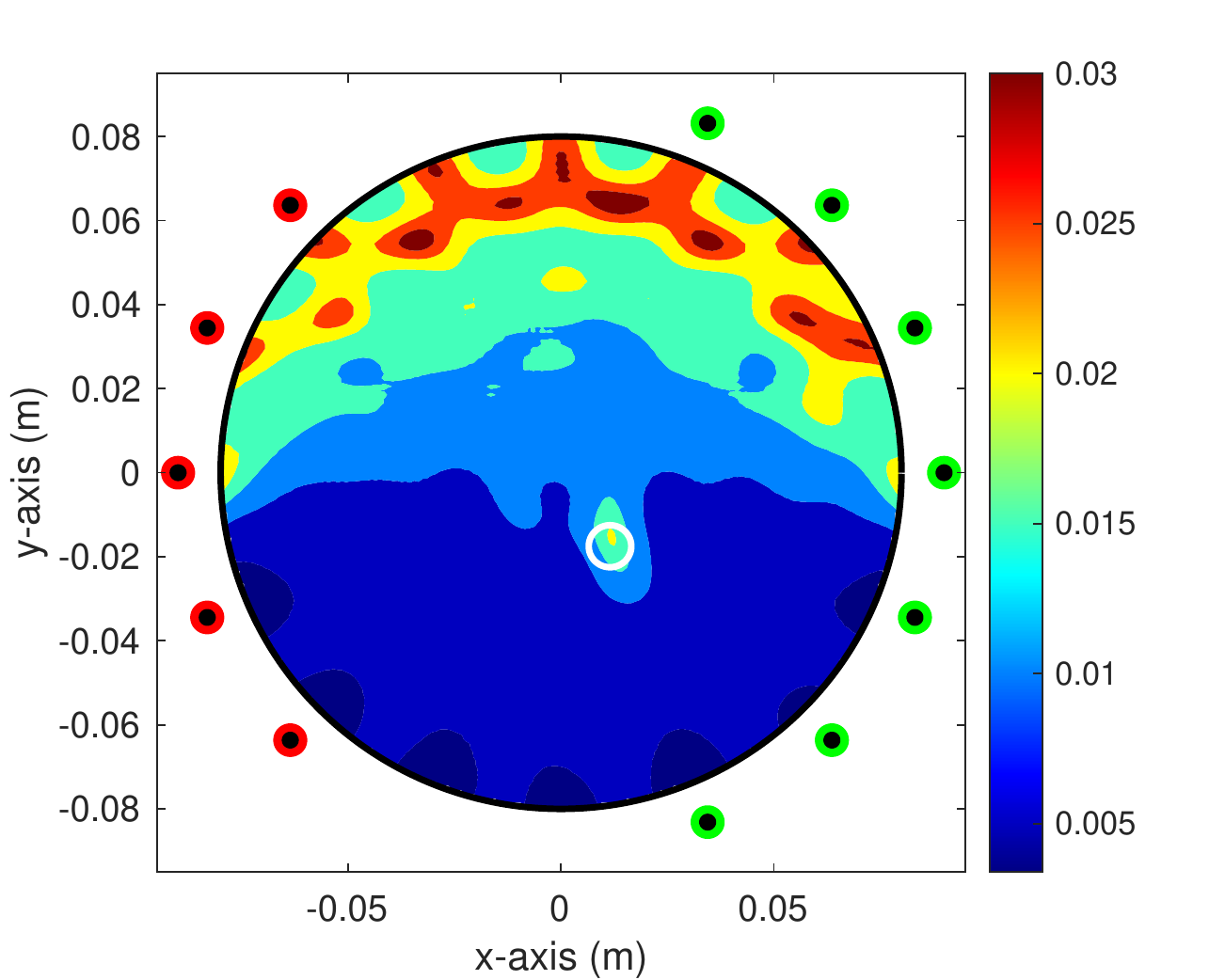}\hfill
  \includegraphics[width=0.25\textwidth]{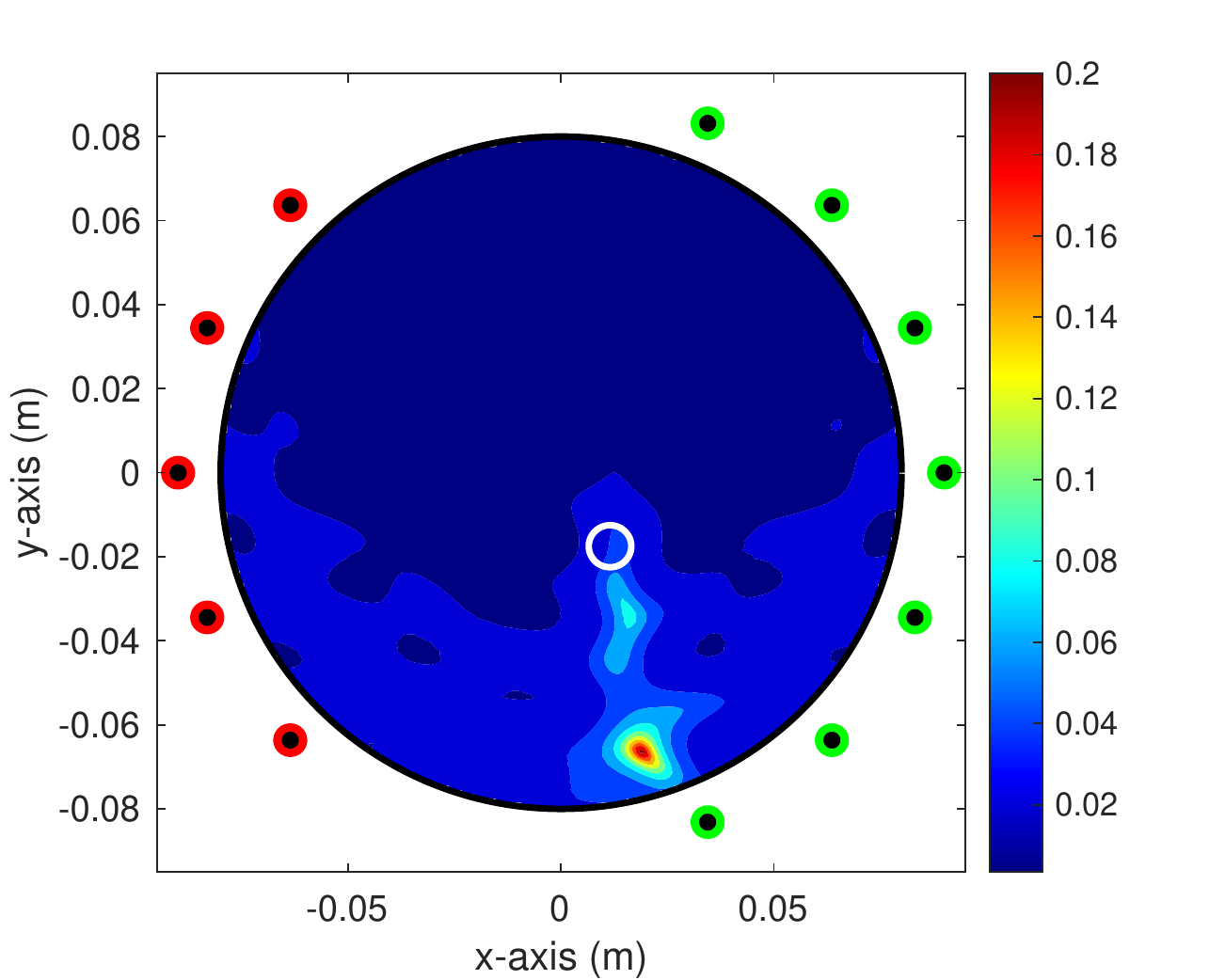}\hfill
  \includegraphics[width=0.25\textwidth]{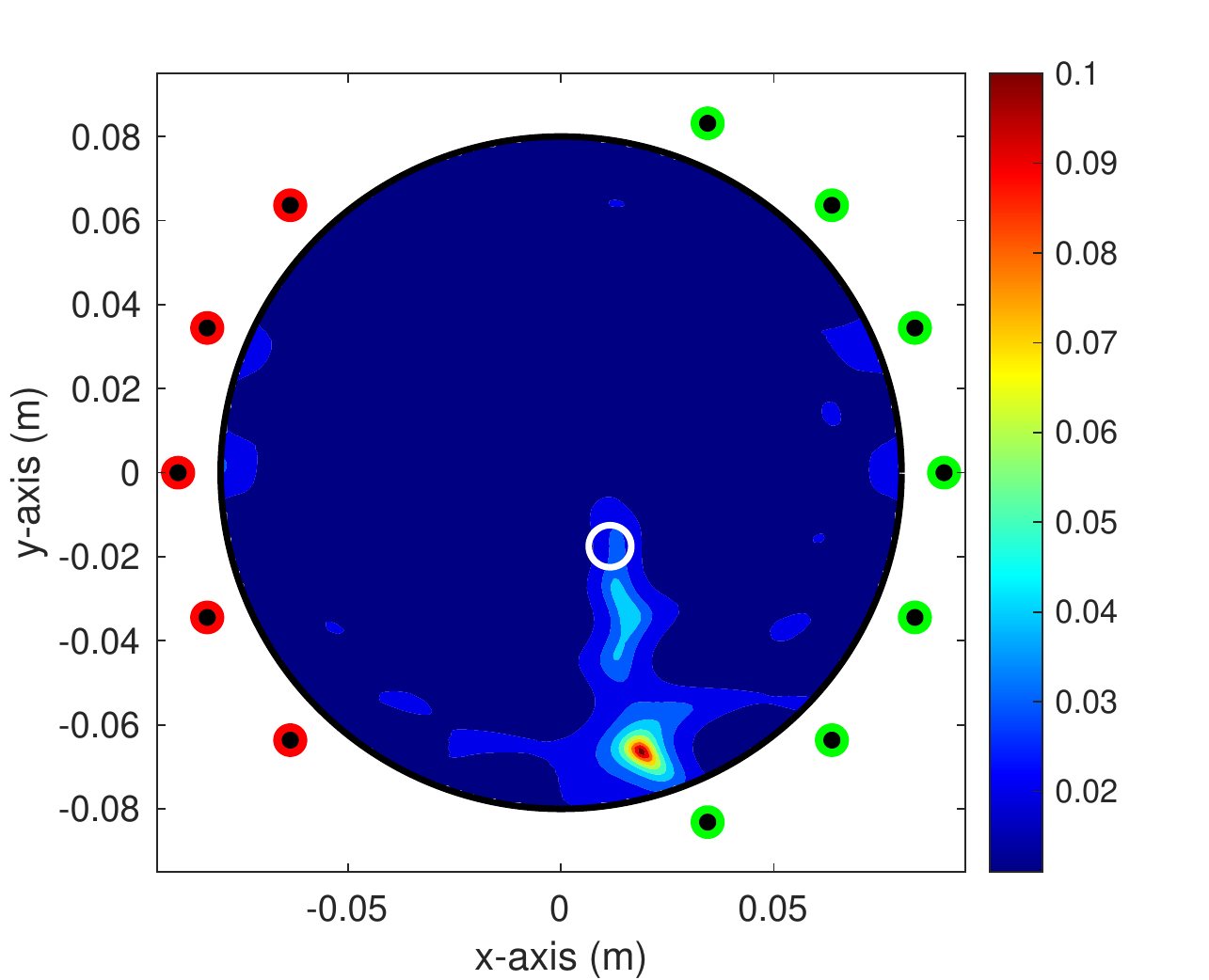}\hfill
  \includegraphics[width=0.25\textwidth]{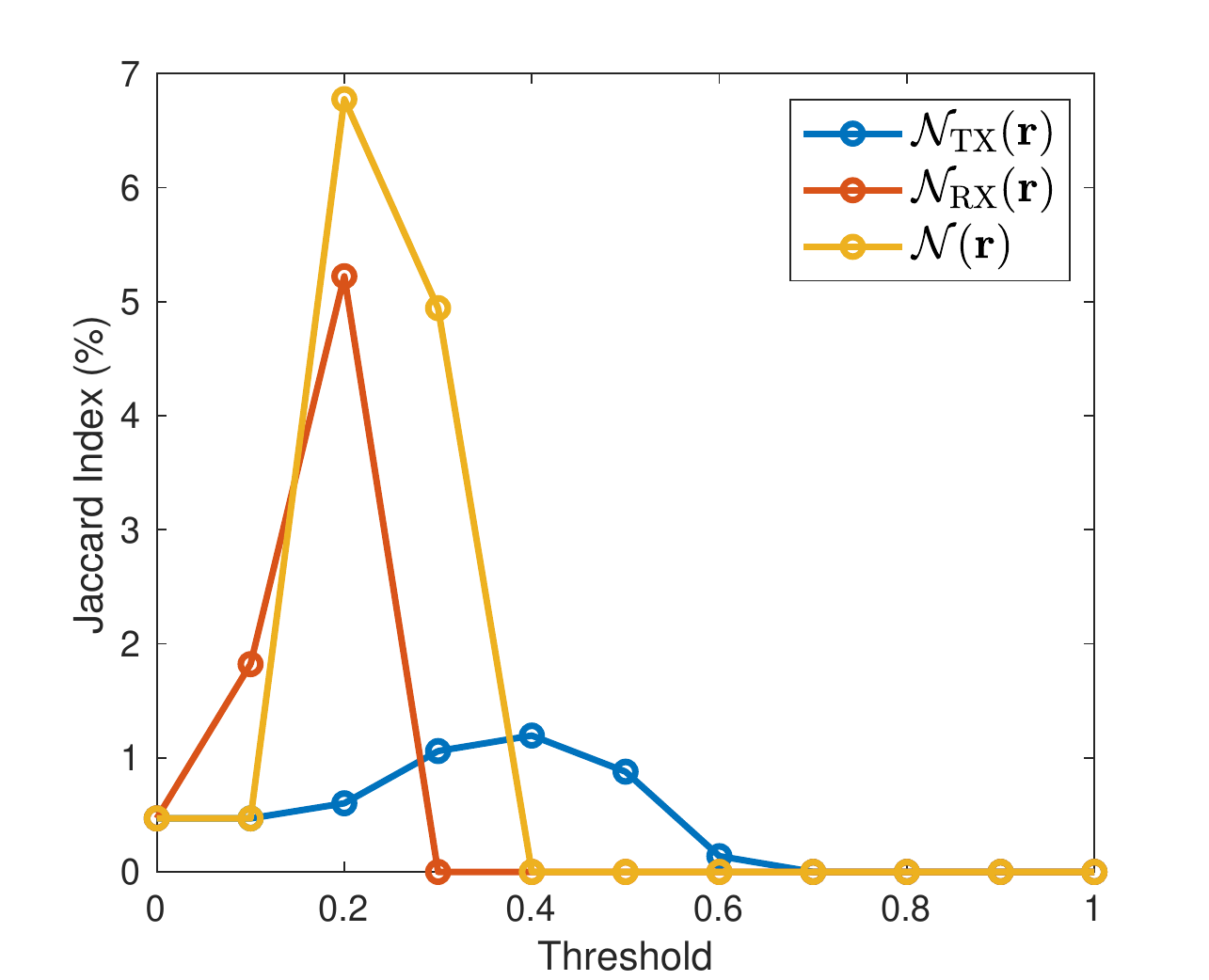}\\
  \includegraphics[width=0.25\textwidth]{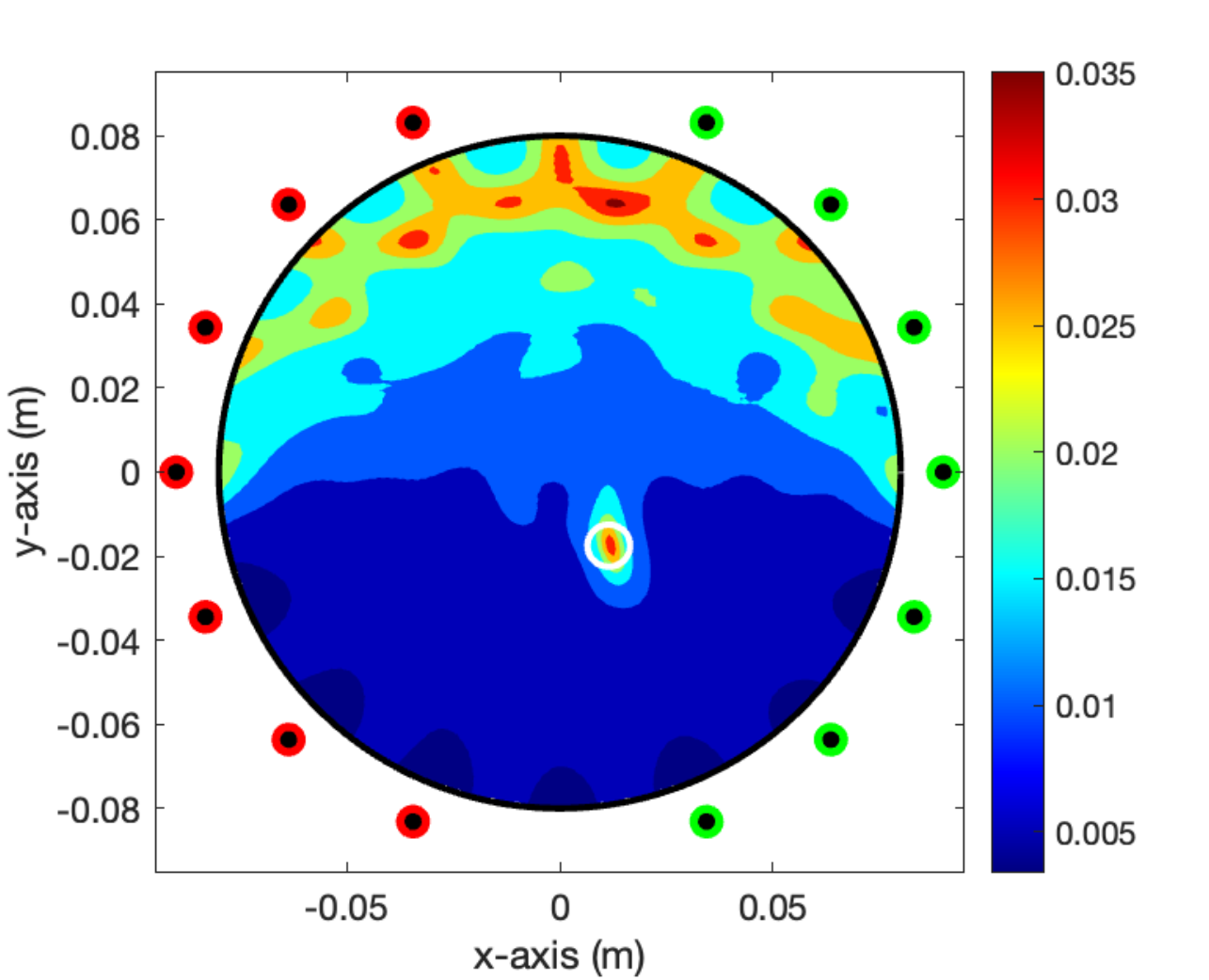}\hfill
  \includegraphics[width=0.25\textwidth]{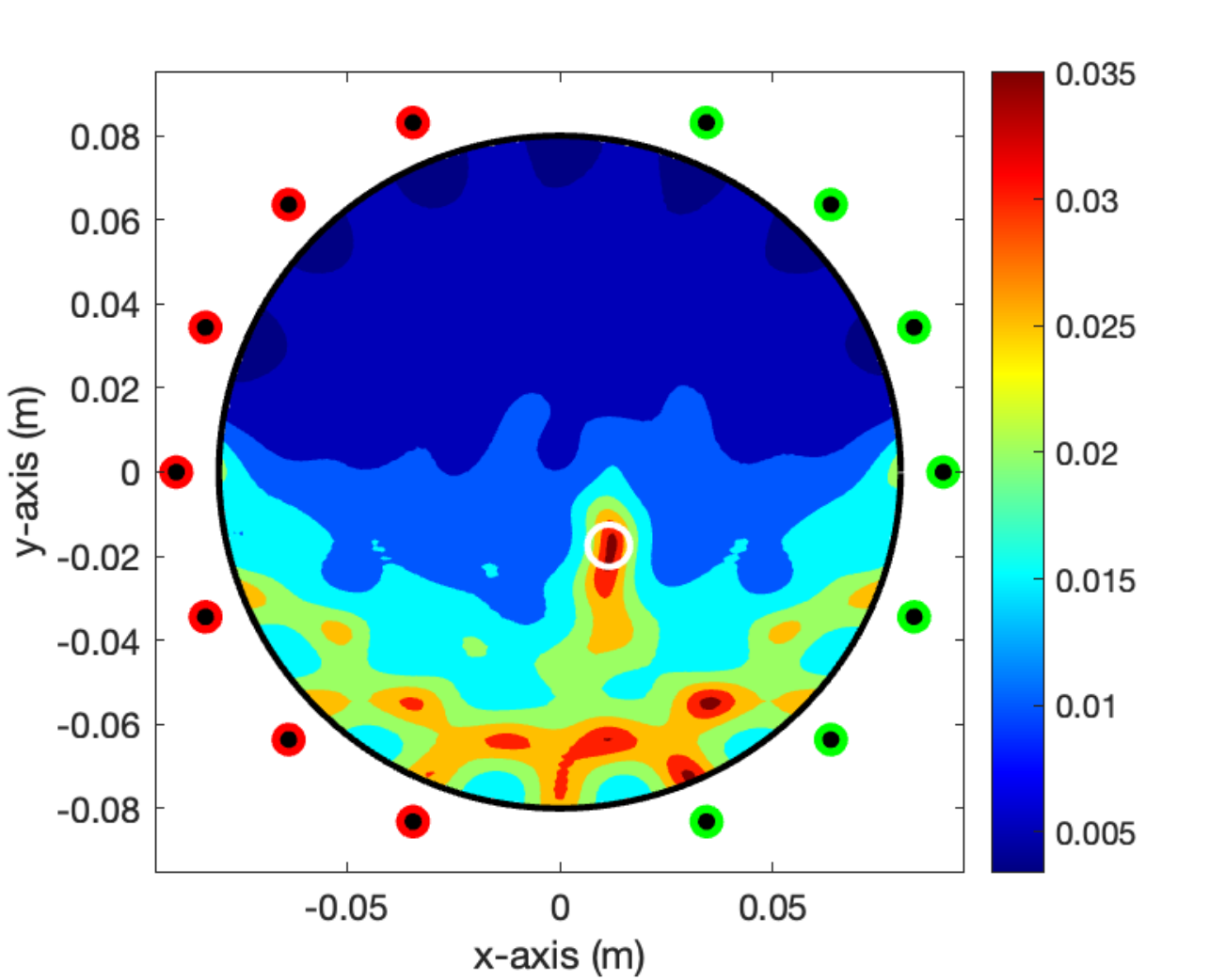}\hfill
  \includegraphics[width=0.25\textwidth]{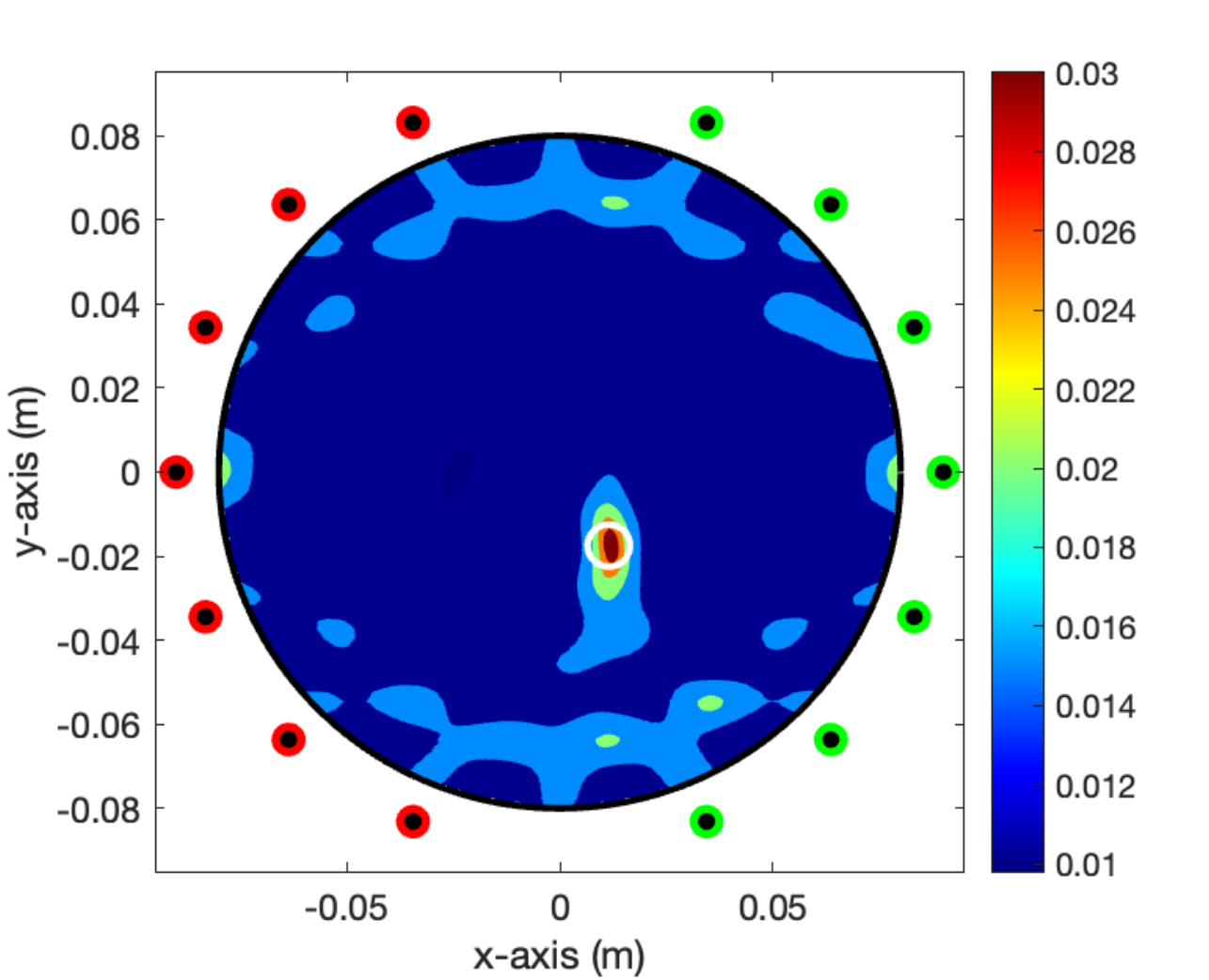}\hfill
  \includegraphics[width=0.25\textwidth]{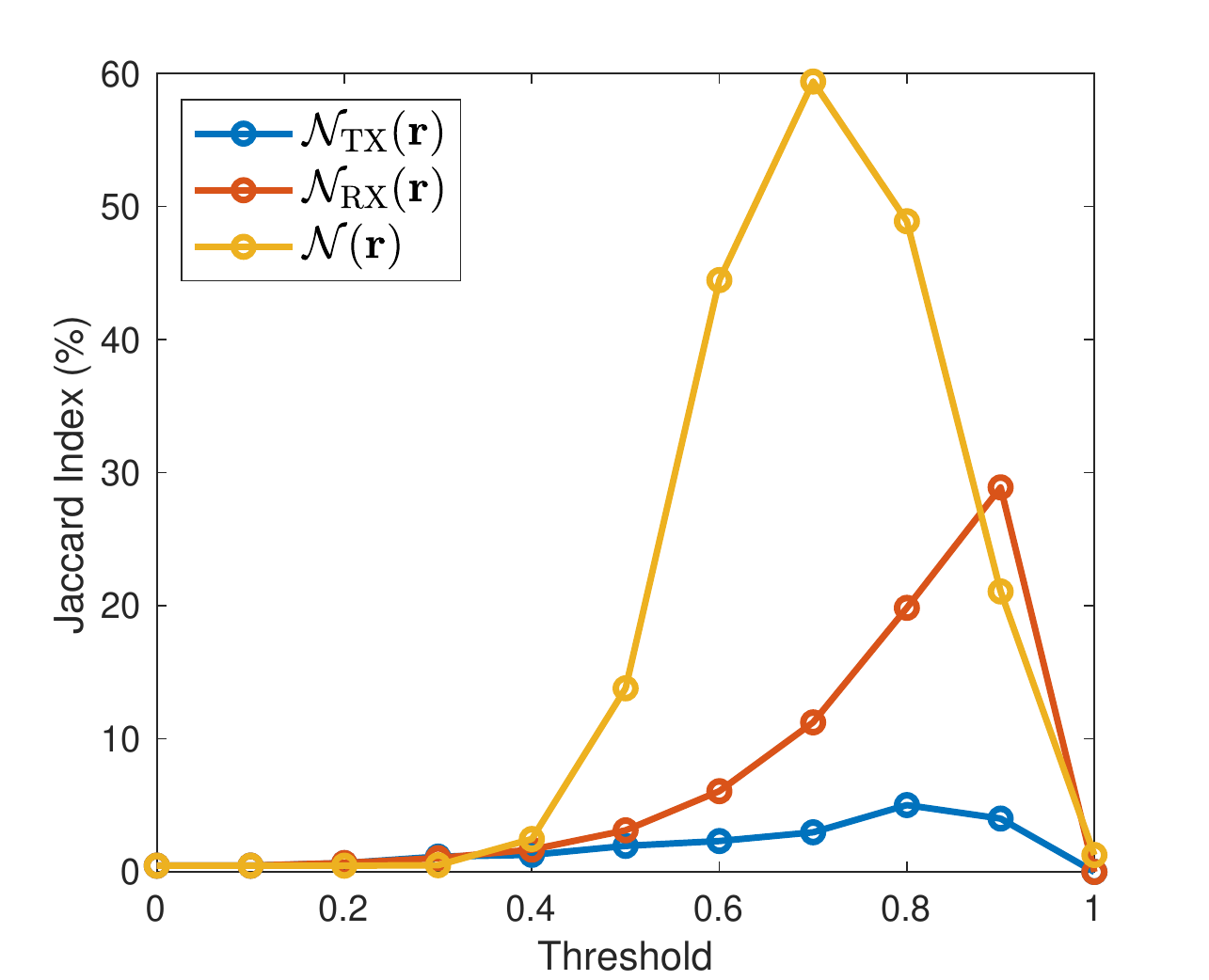}\\
  \includegraphics[width=0.25\textwidth]{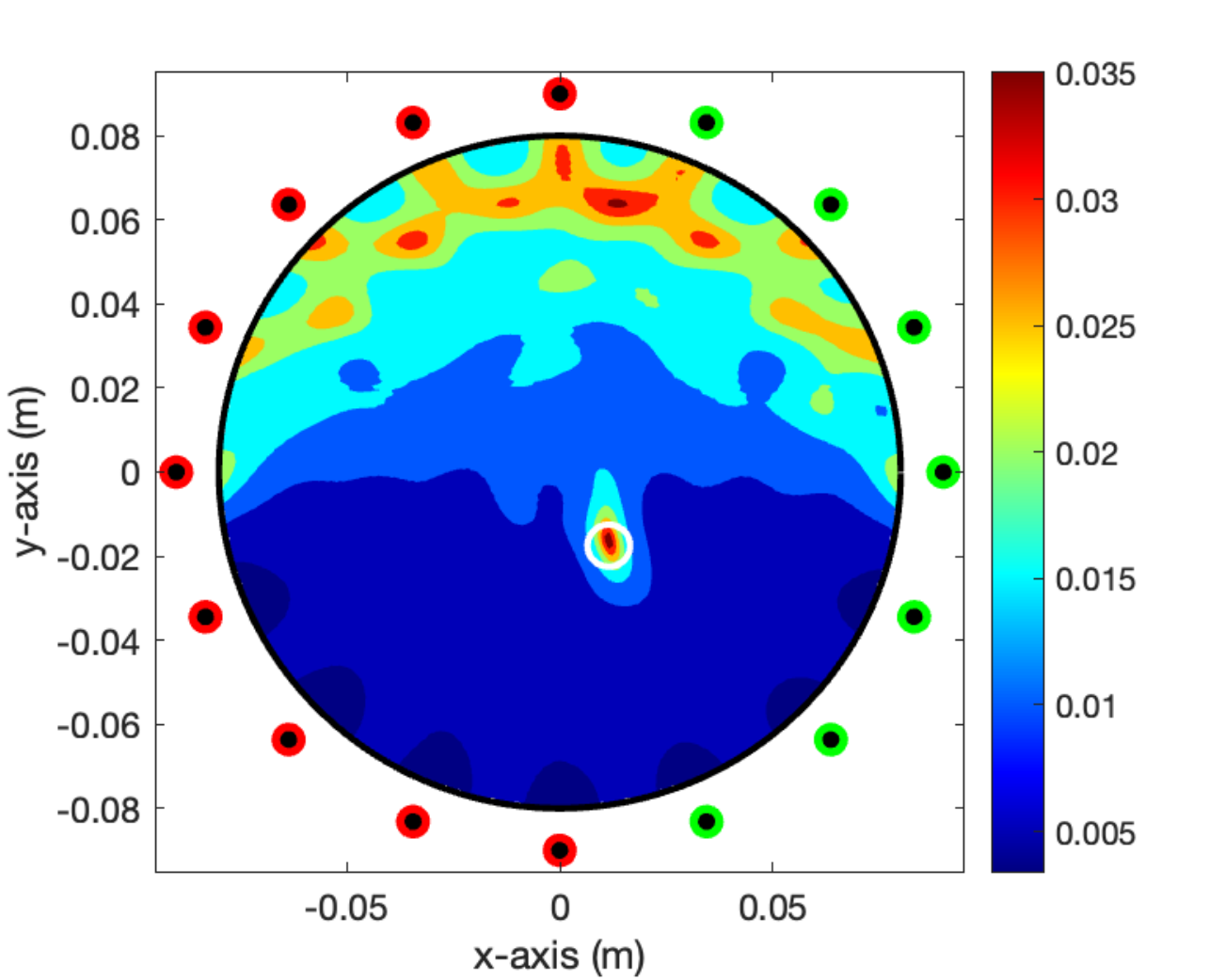}\hfill
  \includegraphics[width=0.25\textwidth]{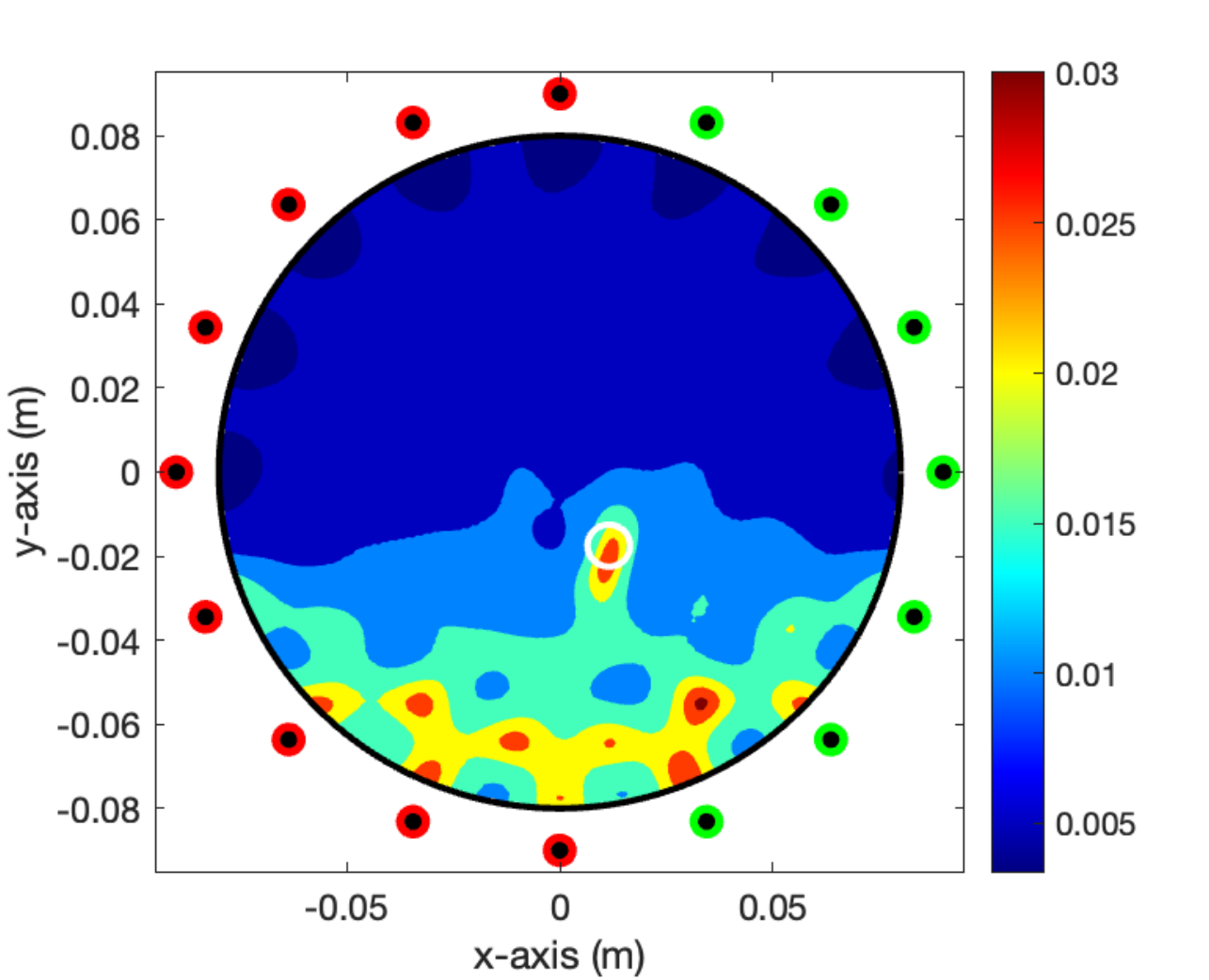}\hfill
  \includegraphics[width=0.25\textwidth]{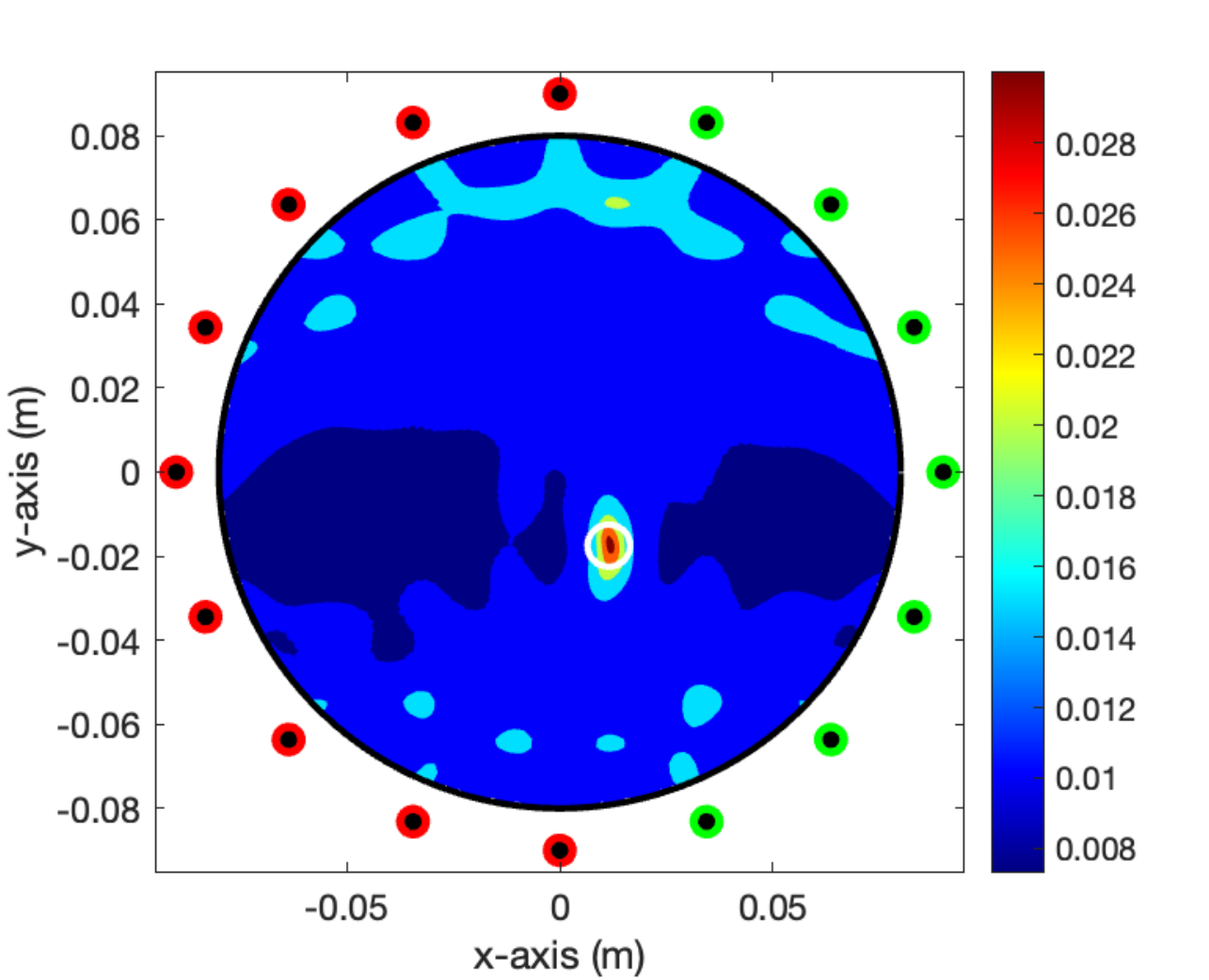}\hfill
  \includegraphics[width=0.25\textwidth]{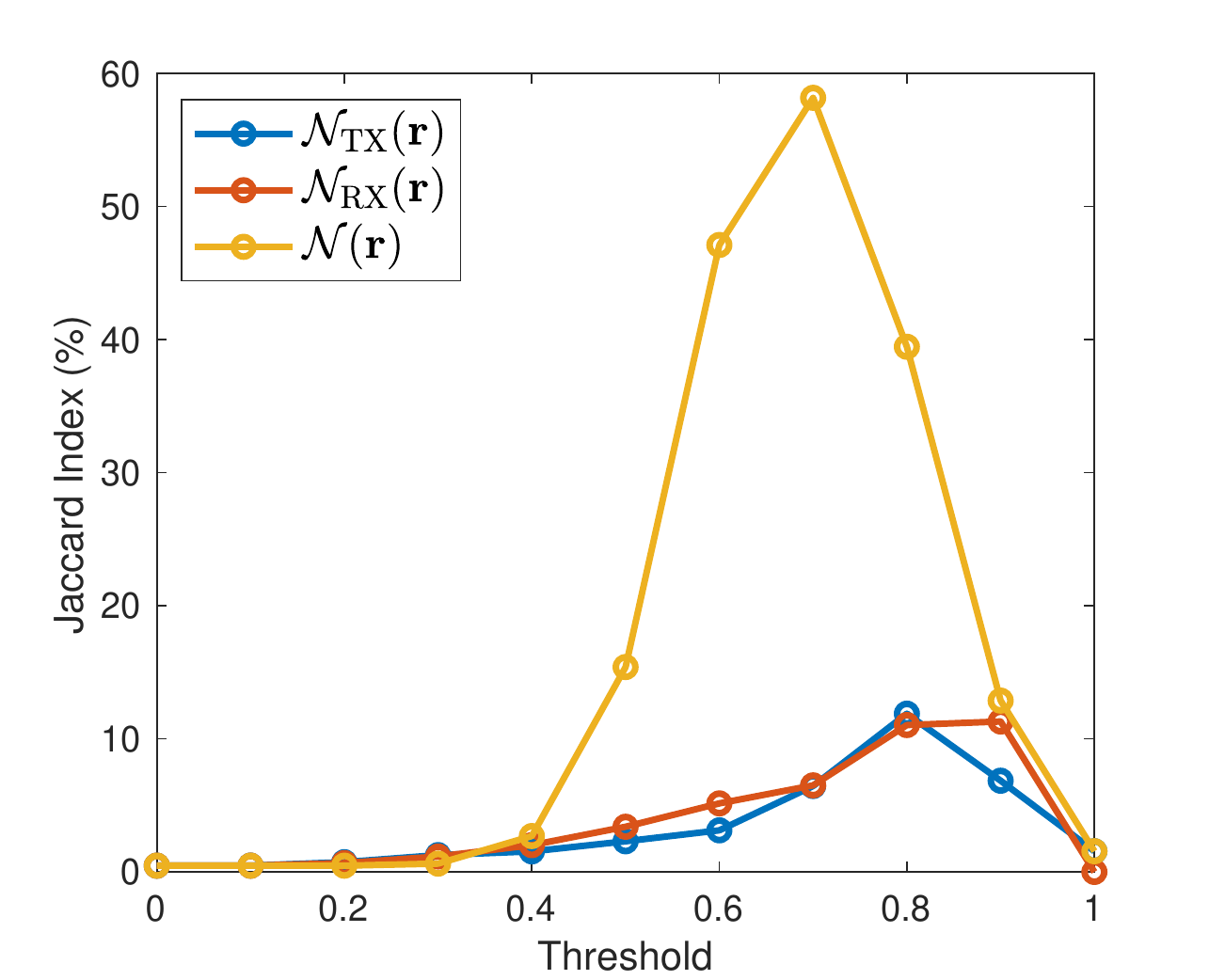}
  \caption{\label{ResultR2}(Example \ref{exR1}) Maps of $\mathfrak{F}_{\tx}(\mr)$ (first column), $\mathfrak{F}_{\rx}(\mr)$ (second column), $\mathfrak{F}(\mr)$ (third column), and Jaccard index (fourth column). Green and red colored circles describe the location of transmitters and receivers, respectively.}
\end{figure}

\begin{figure}[h]
  \centering
  \includegraphics[width=0.25\textwidth]{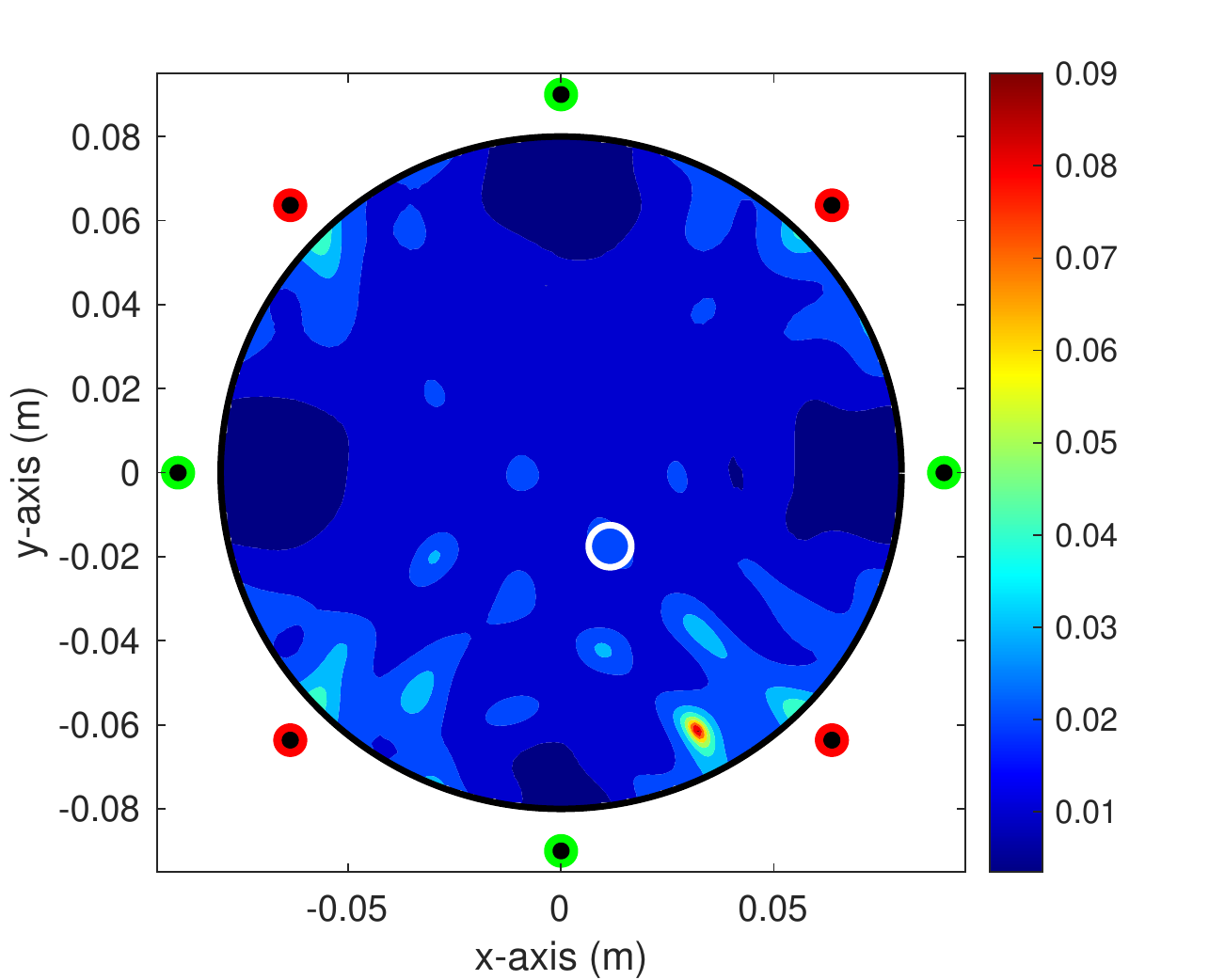}\hfill
  \includegraphics[width=0.25\textwidth]{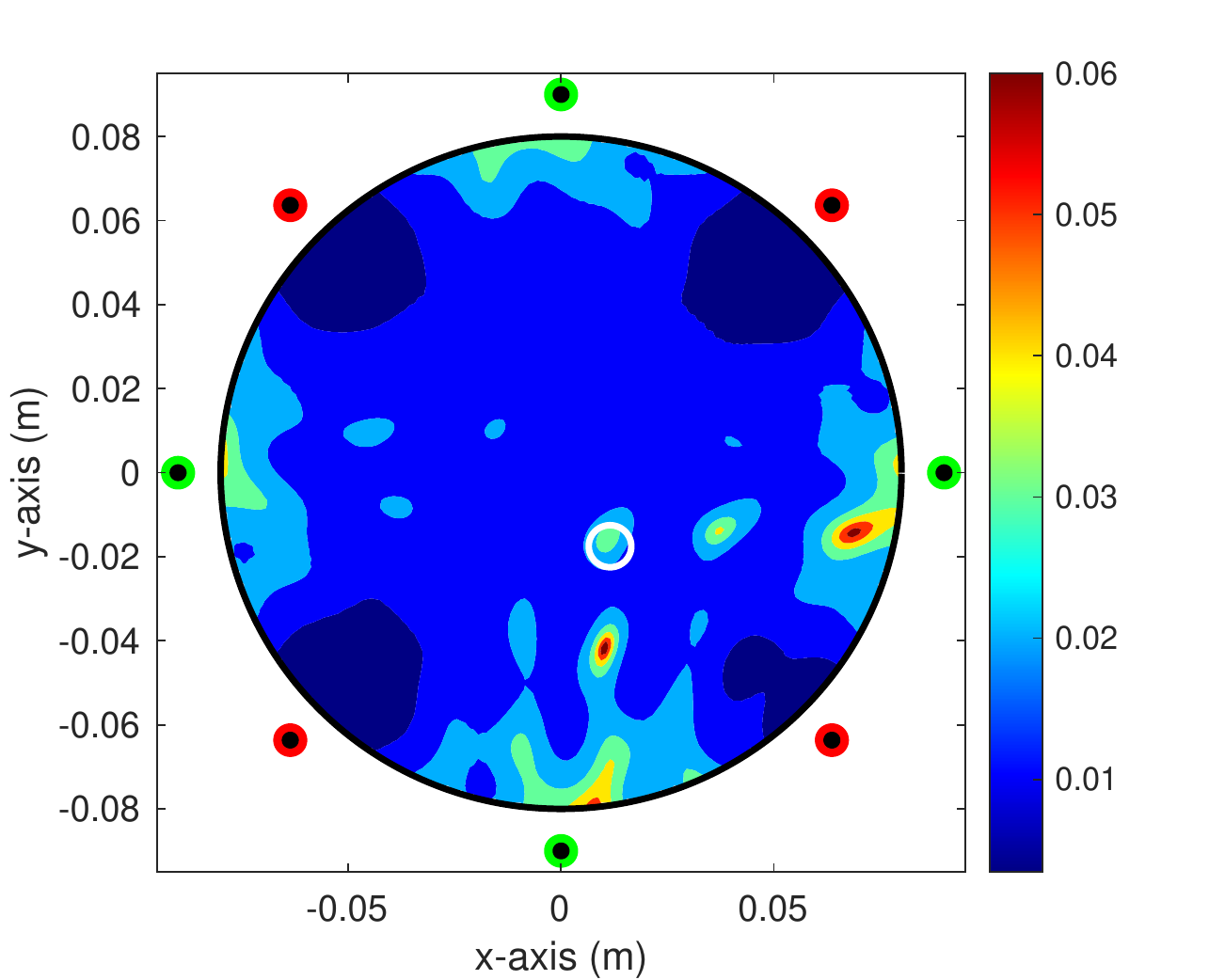}\hfill
  \includegraphics[width=0.25\textwidth]{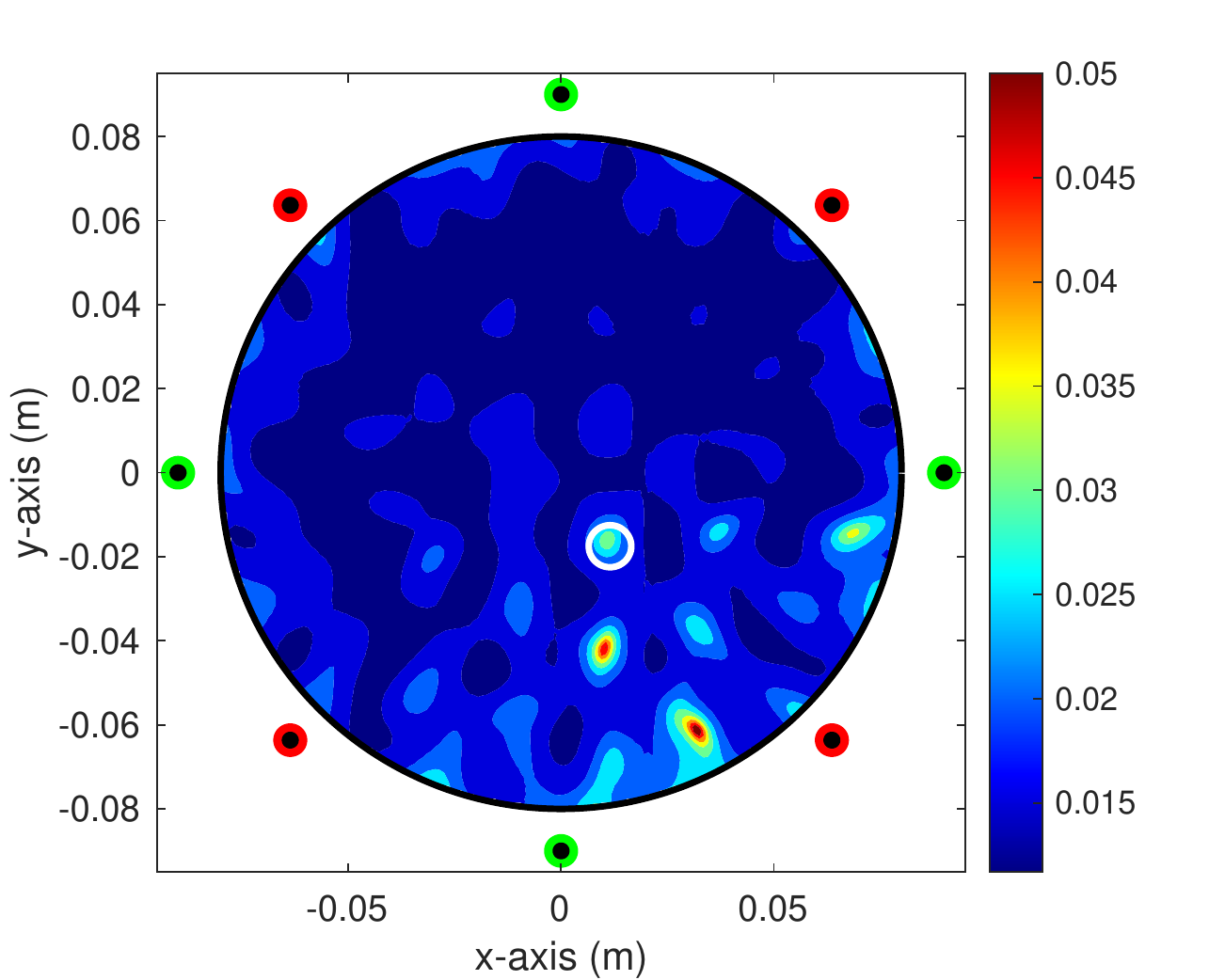}\hfill
  \includegraphics[width=0.25\textwidth]{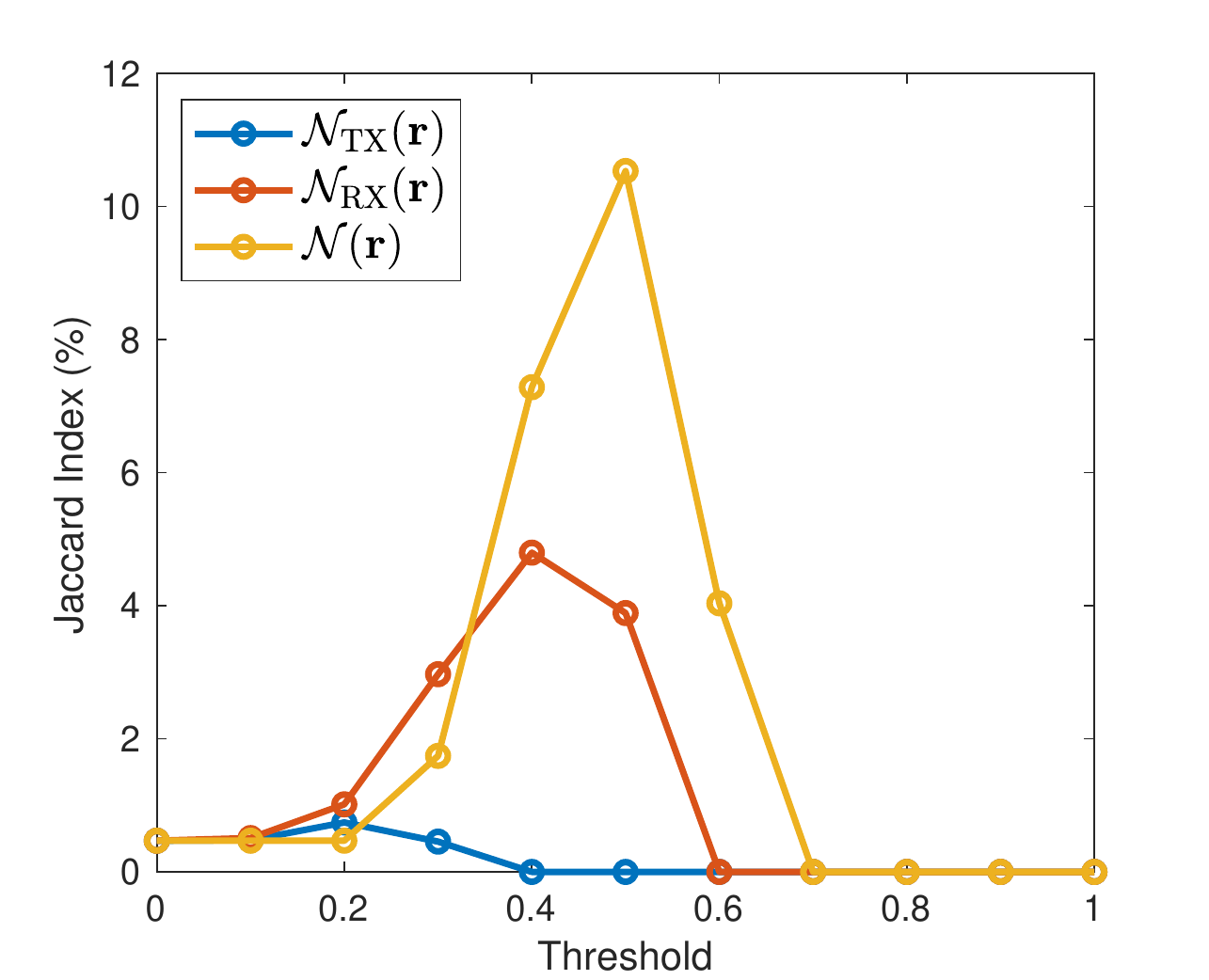}\\
  \includegraphics[width=0.25\textwidth]{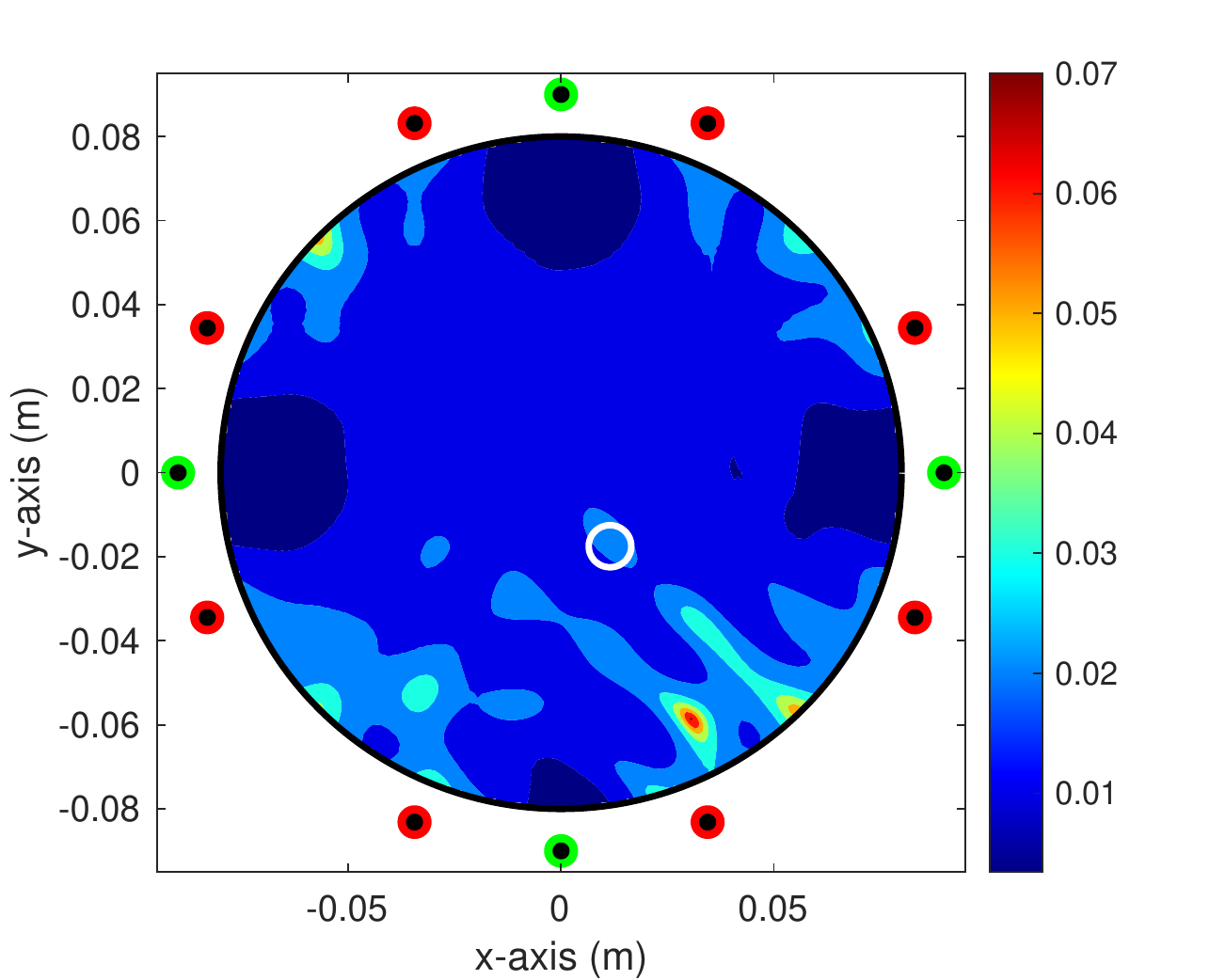}\hfill
  \includegraphics[width=0.25\textwidth]{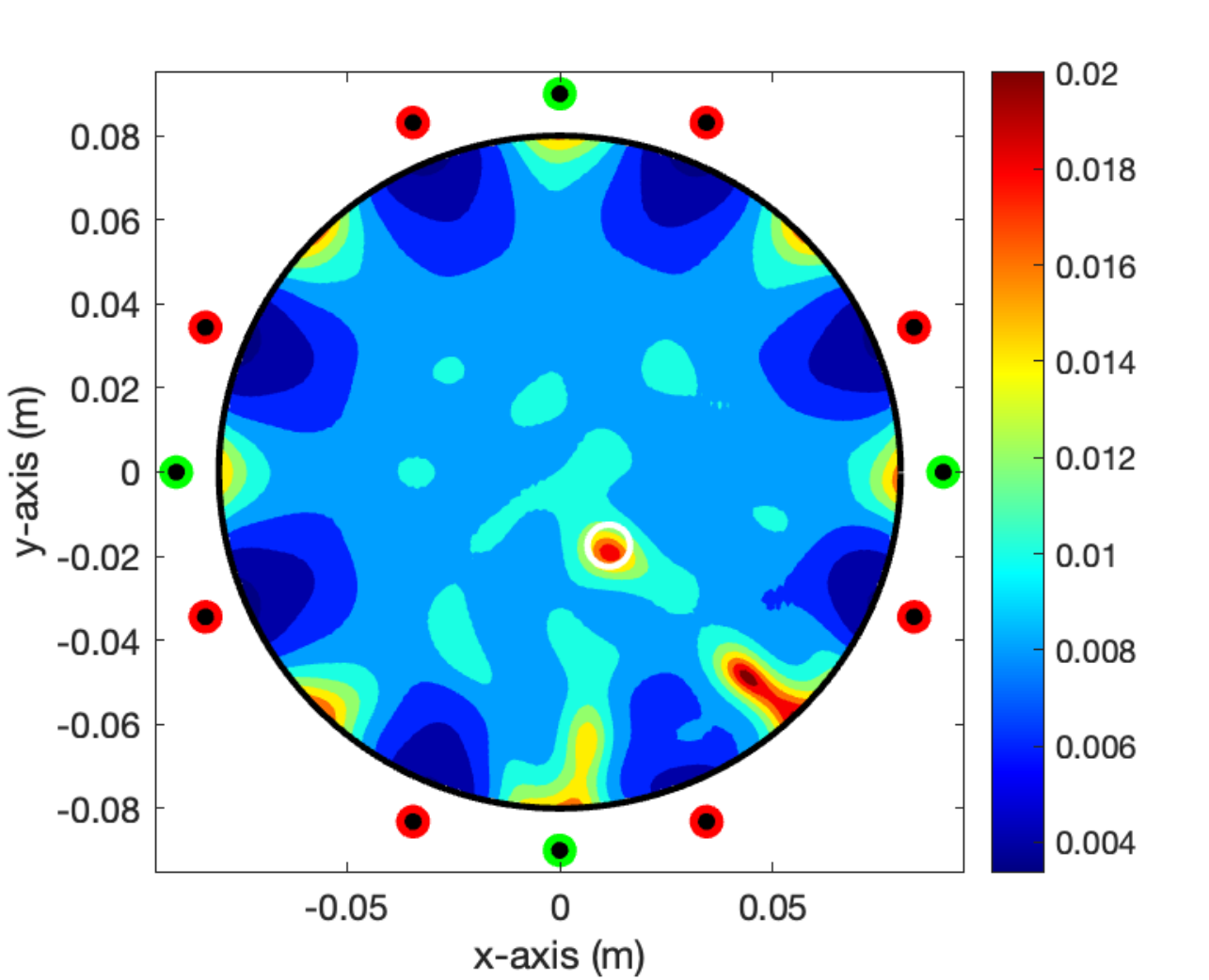}\hfill
  \includegraphics[width=0.25\textwidth]{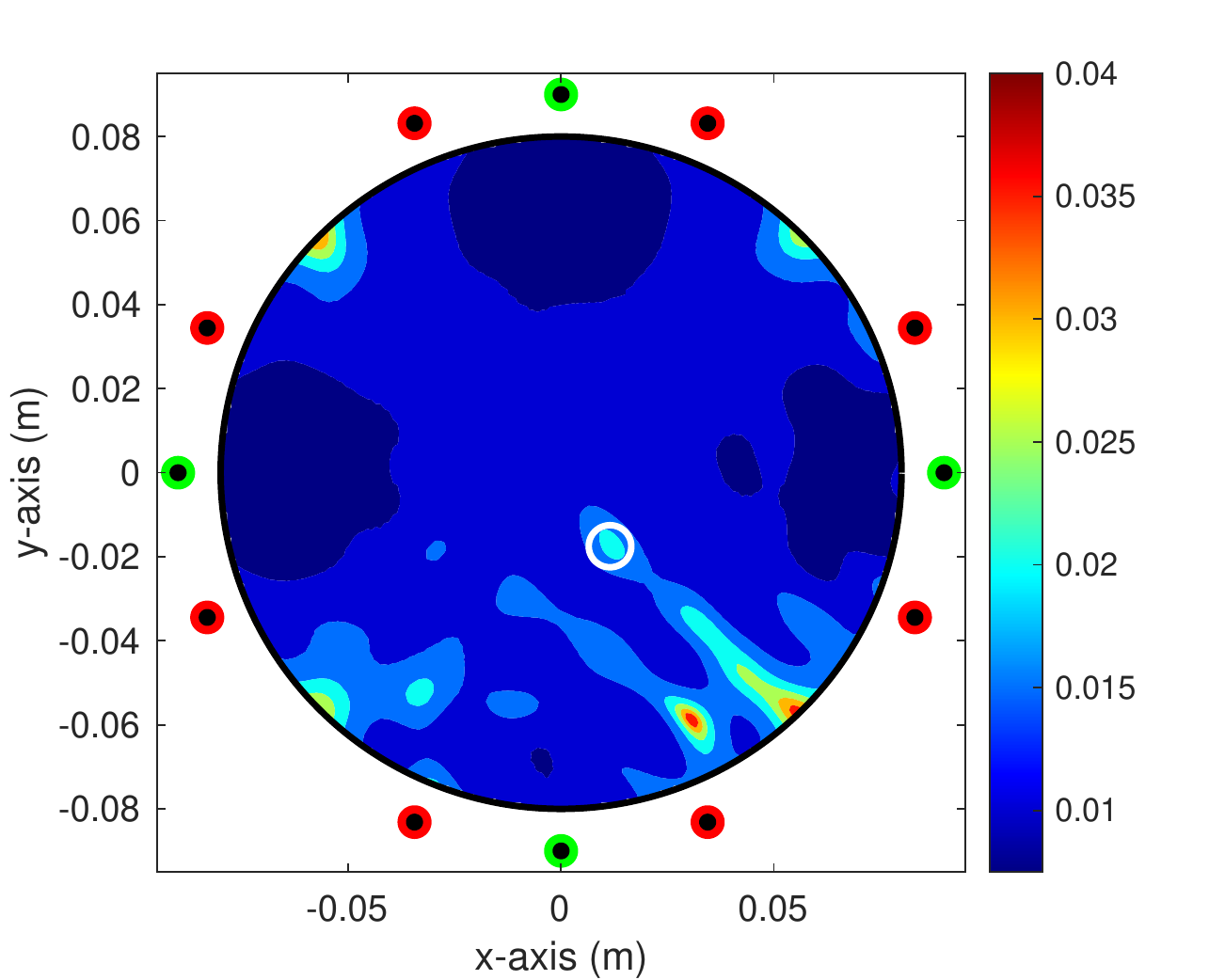}\hfill
  \includegraphics[width=0.25\textwidth]{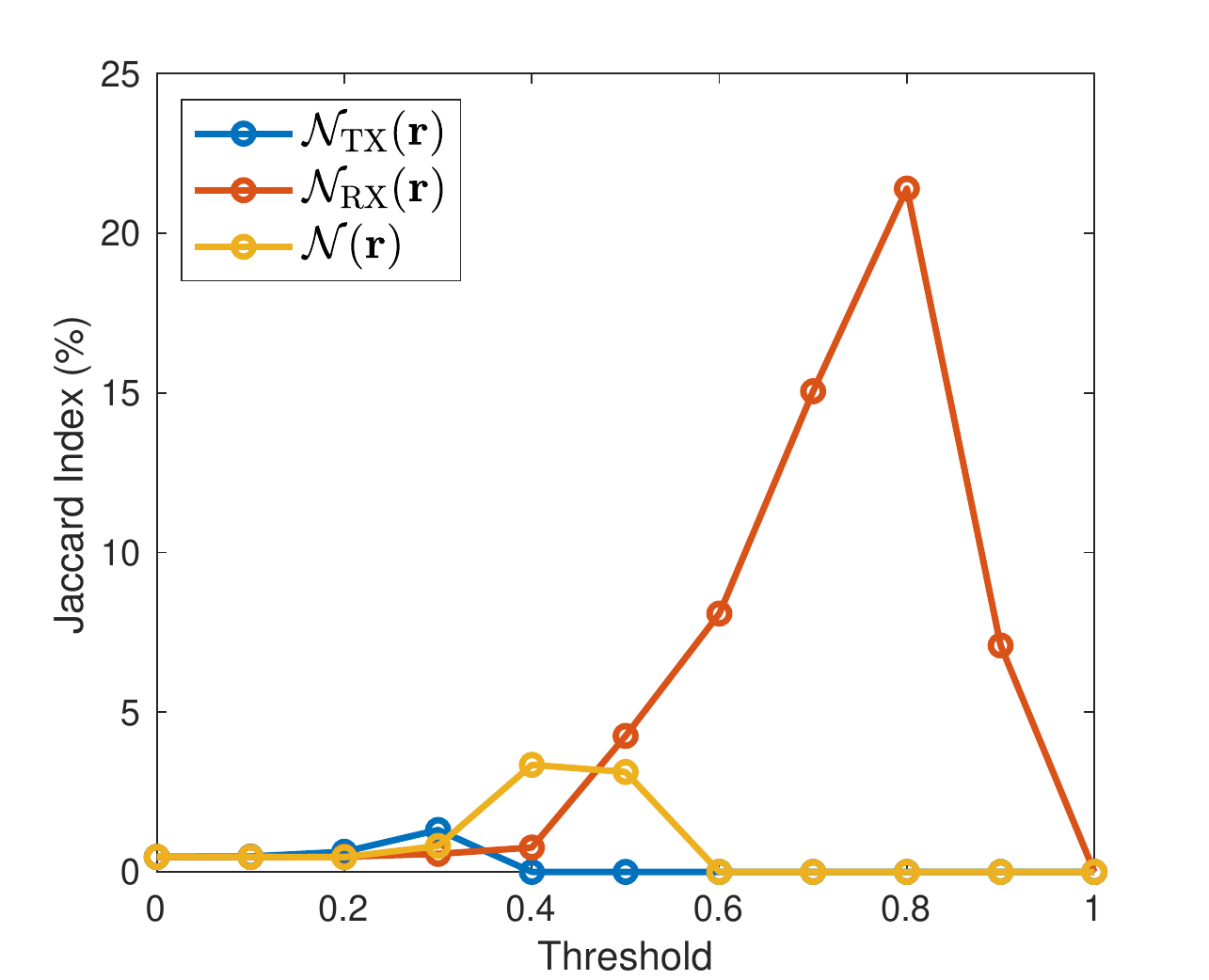}\\
  \includegraphics[width=0.25\textwidth]{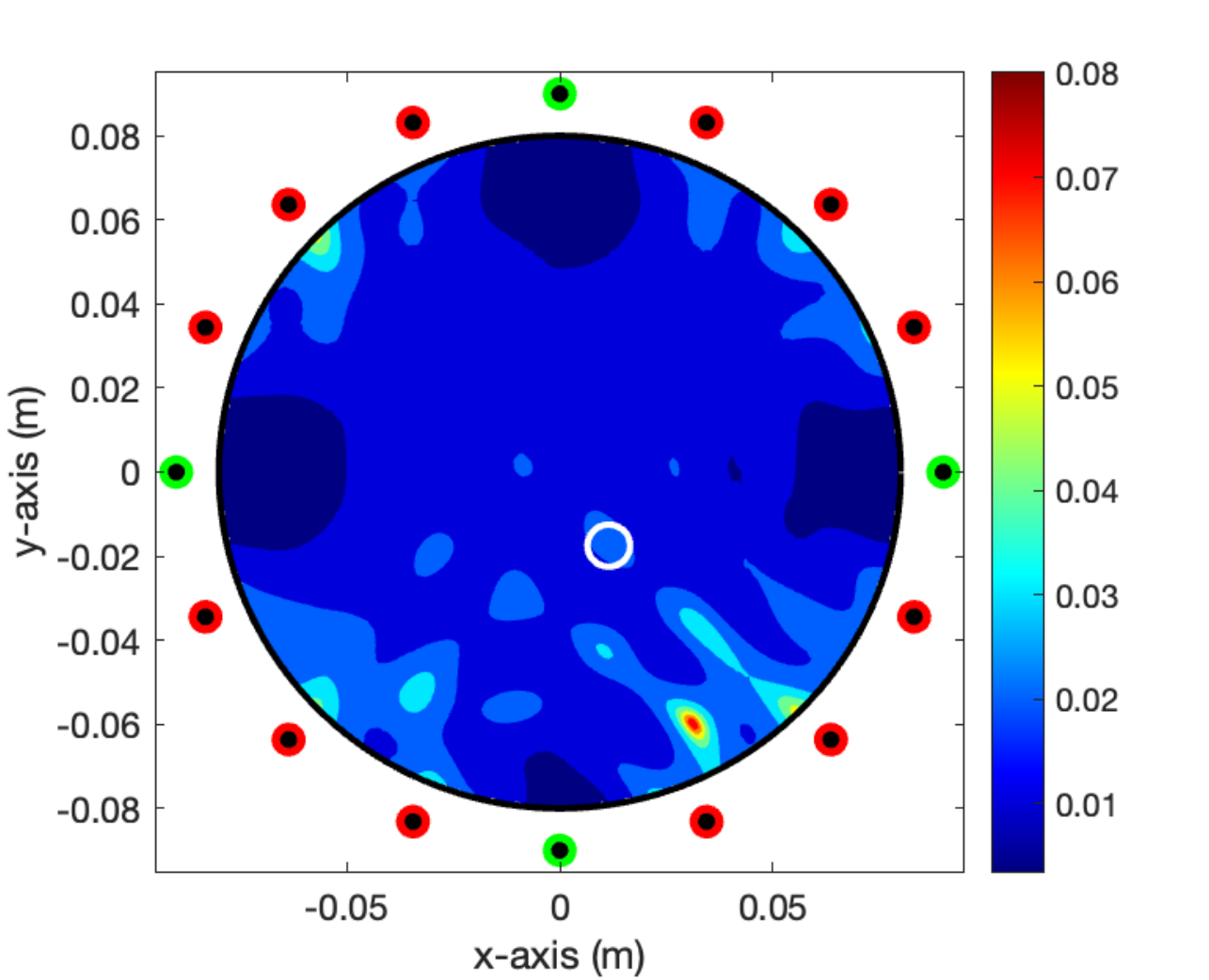}\hfill
  \includegraphics[width=0.25\textwidth]{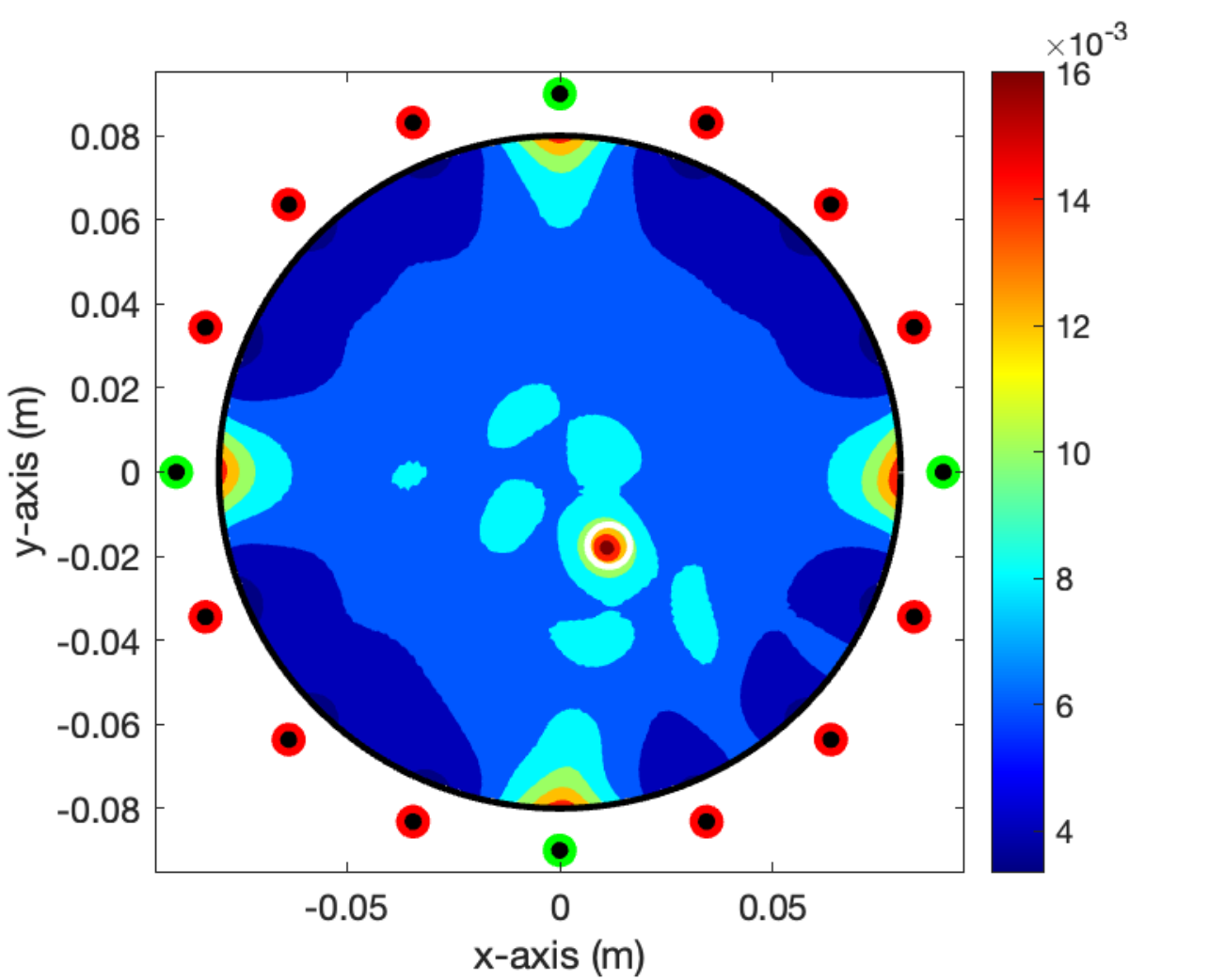}\hfill
  \includegraphics[width=0.25\textwidth]{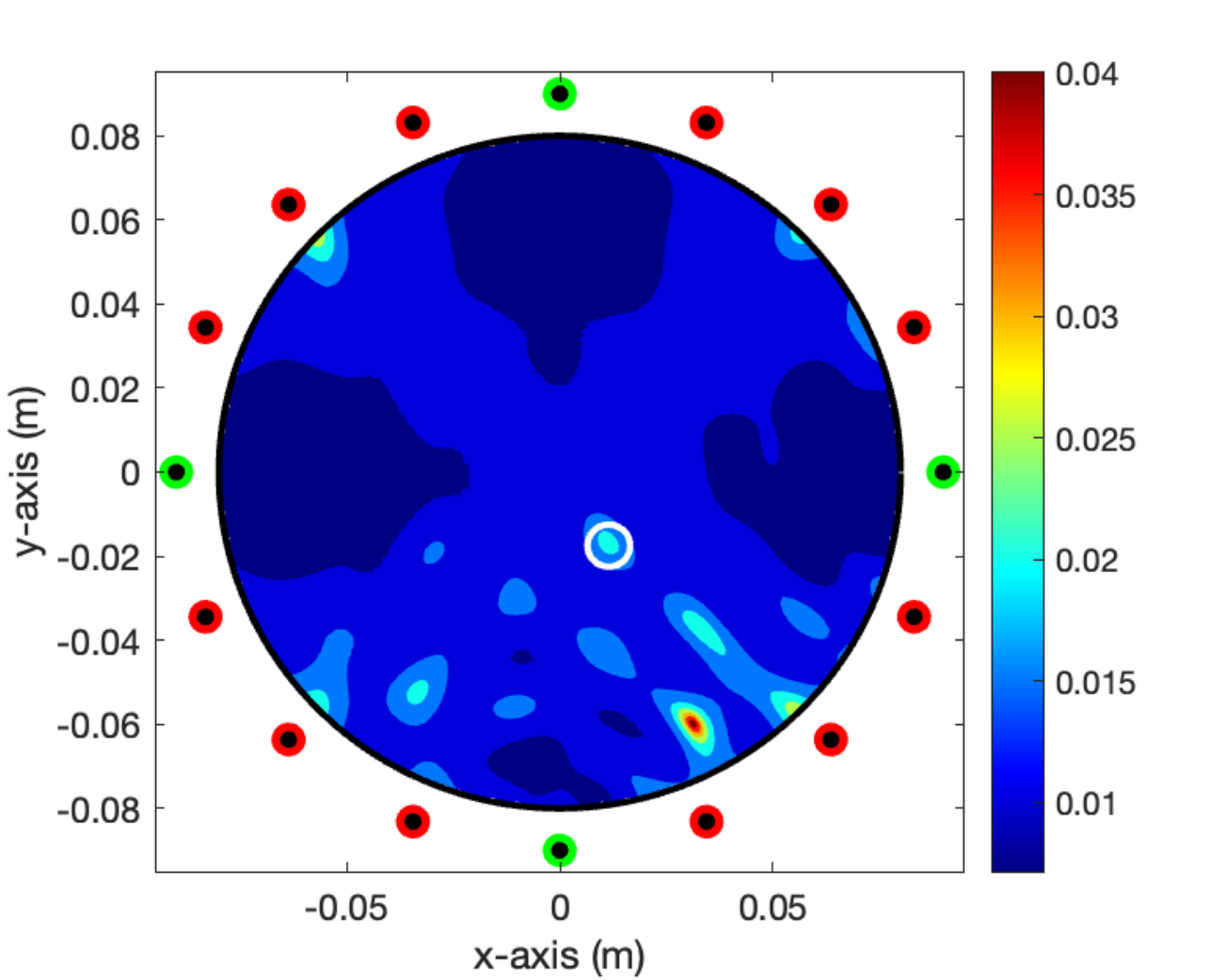}\hfill
  \includegraphics[width=0.25\textwidth]{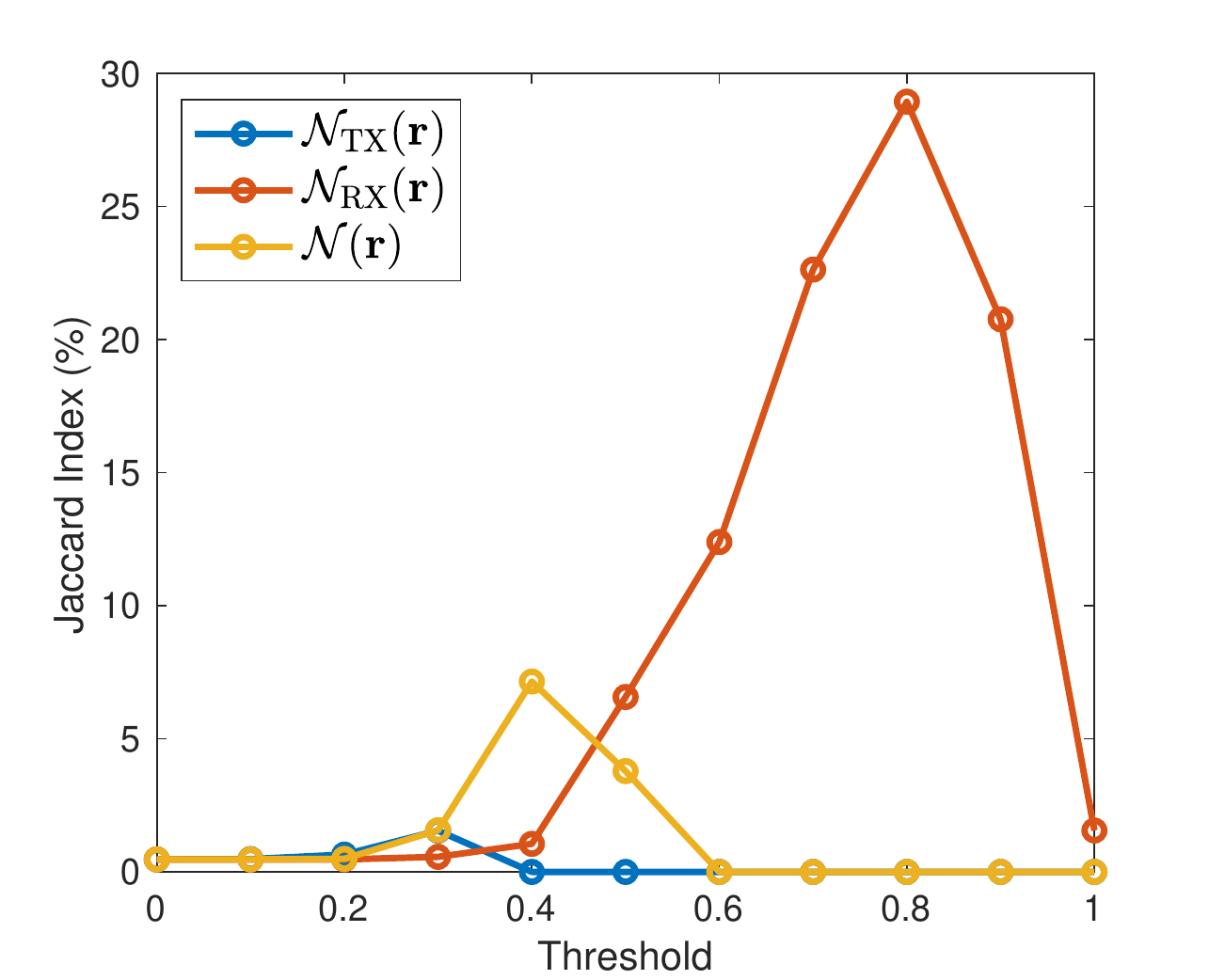}\\
  \includegraphics[width=0.25\textwidth]{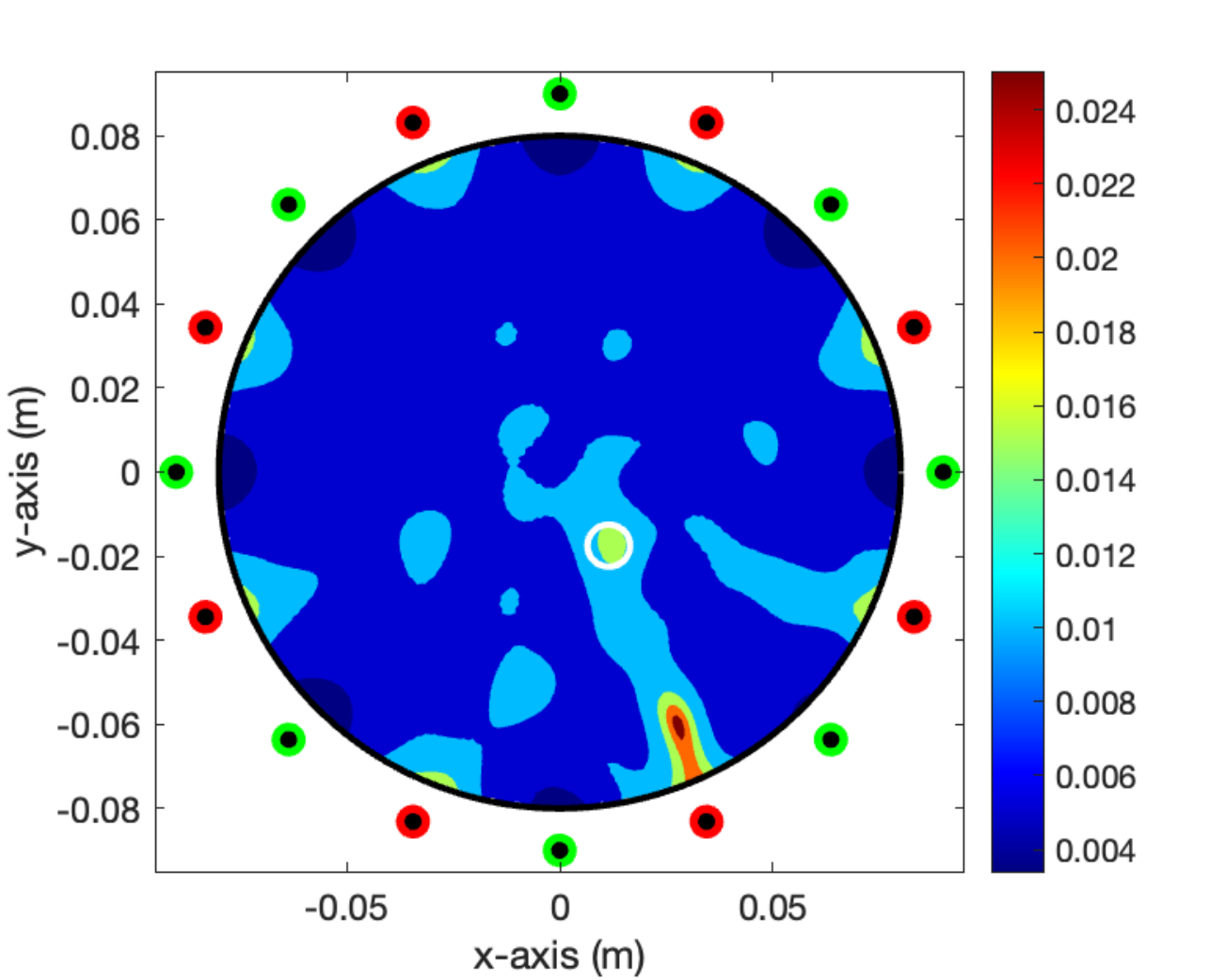}\hfill
  \includegraphics[width=0.25\textwidth]{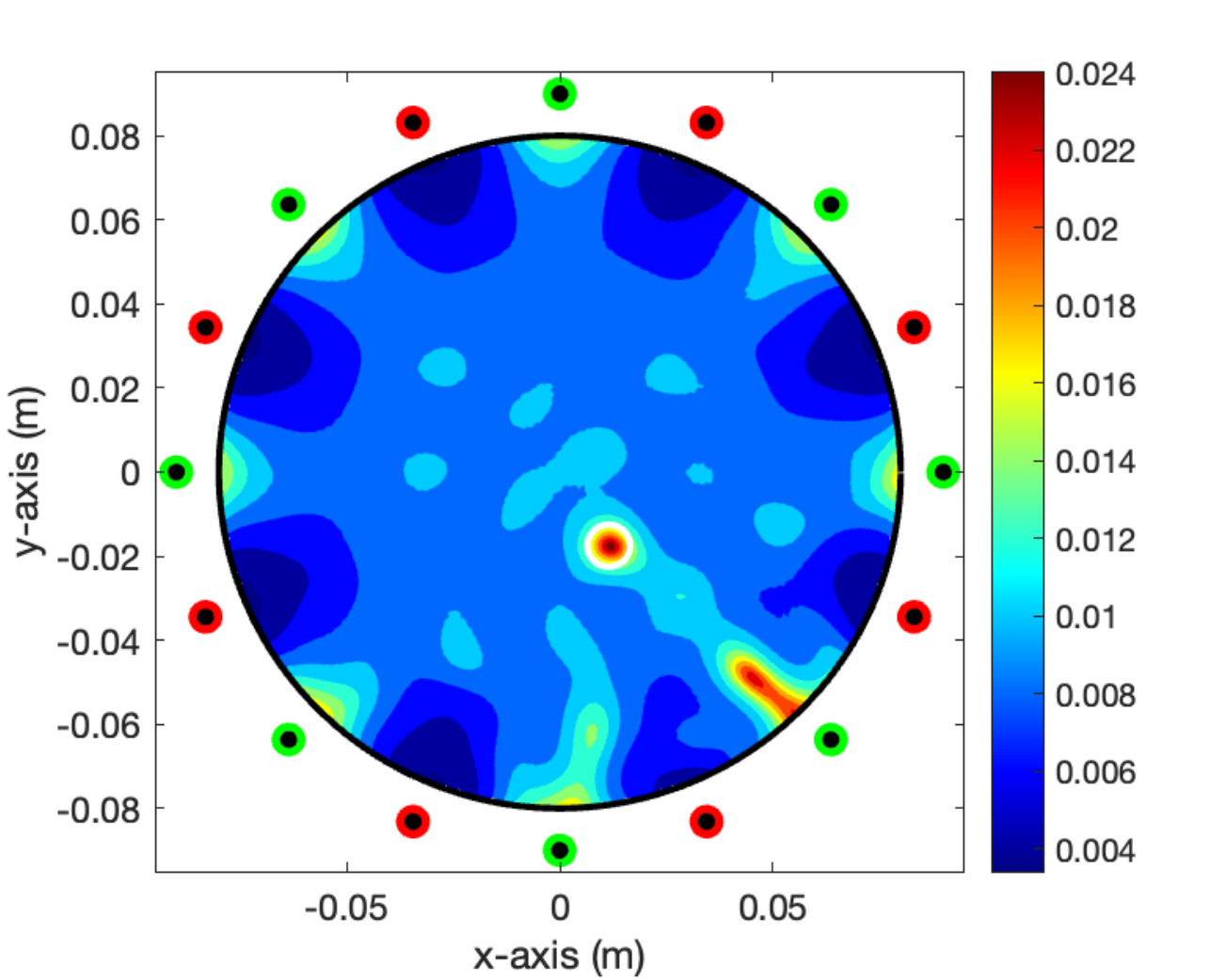}\hfill
  \includegraphics[width=0.25\textwidth]{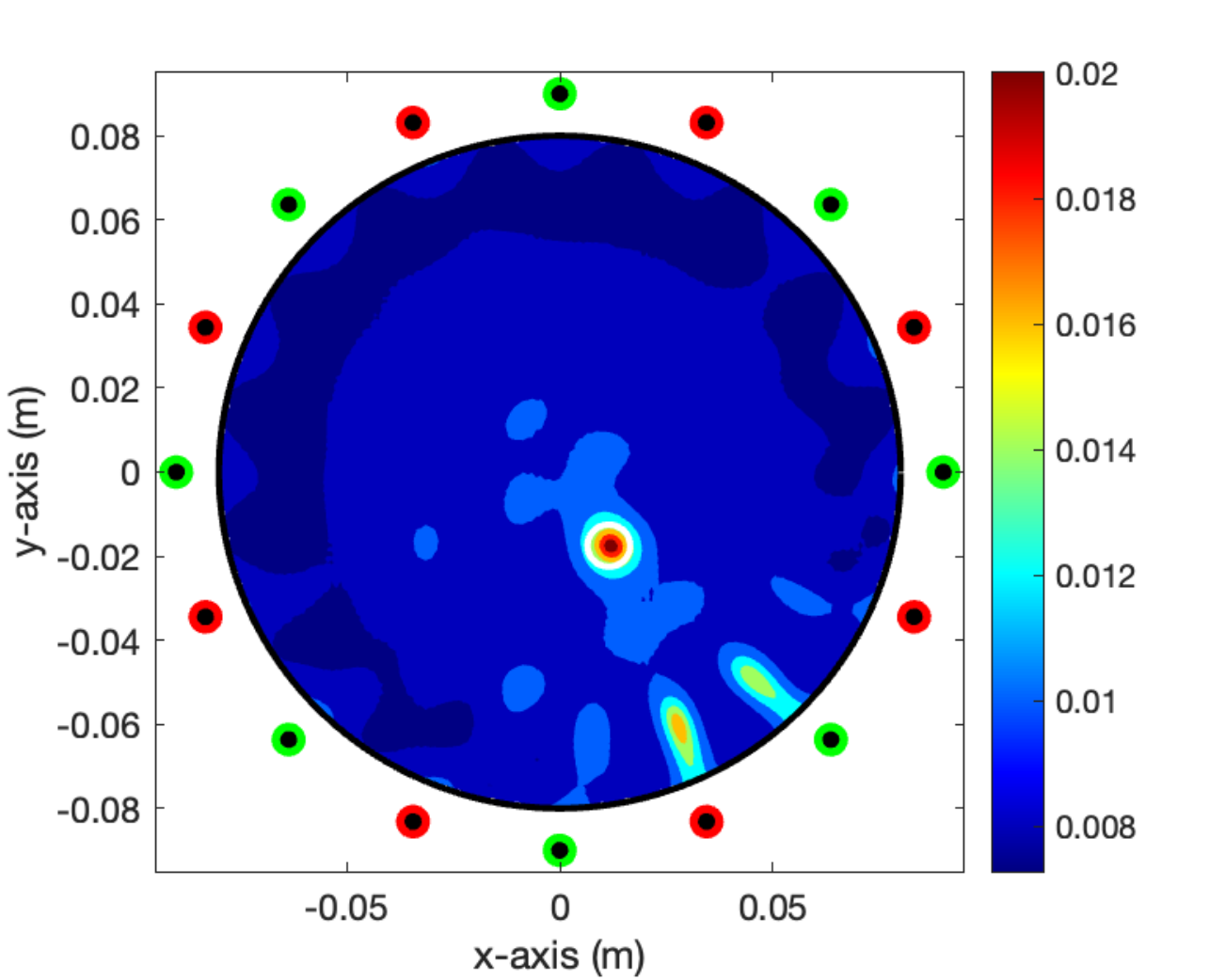}\hfill
  \includegraphics[width=0.25\textwidth]{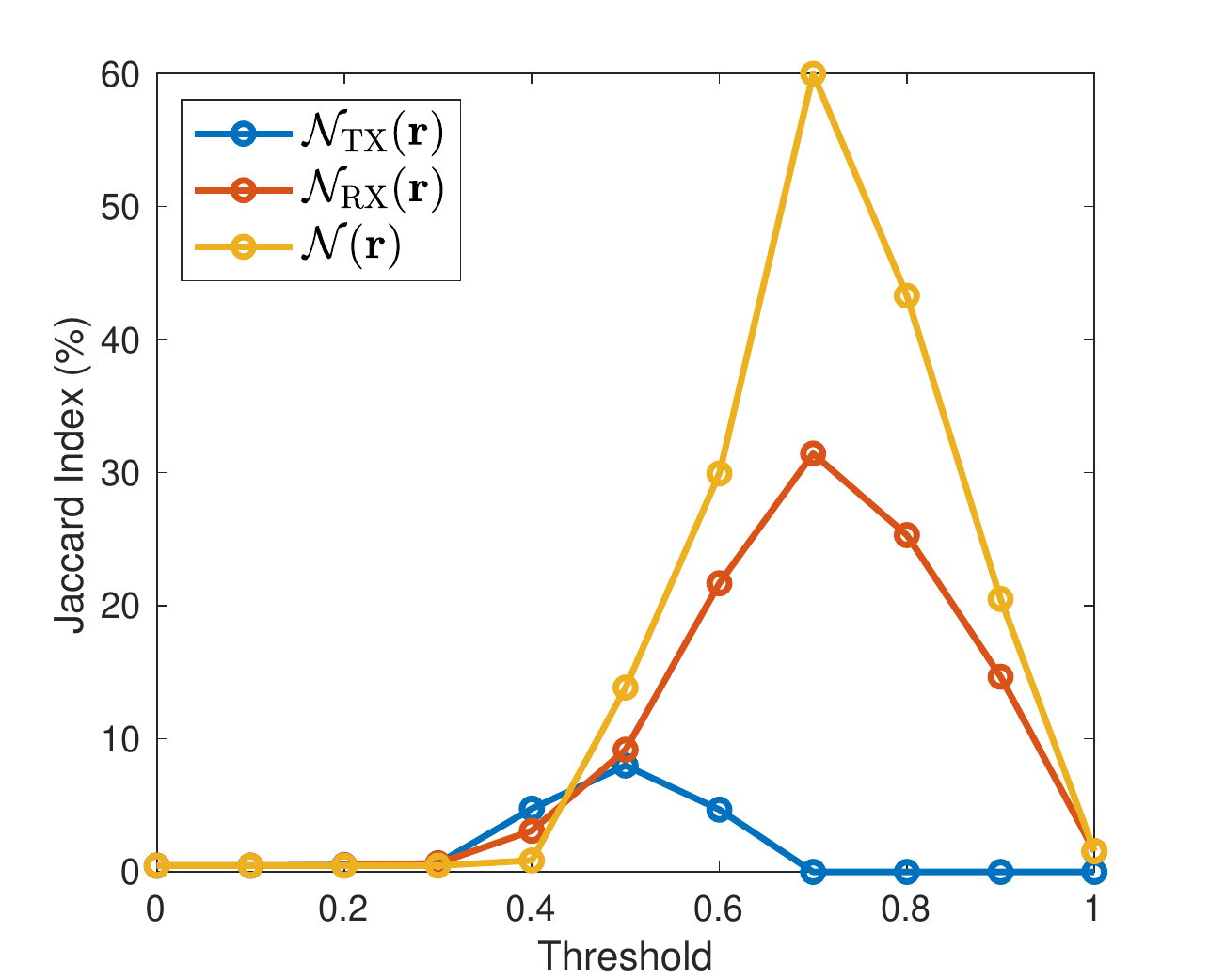}
  \caption{\label{ResultR3}(Example \ref{exR1}) Maps of $\mathfrak{F}_{\tx}(\mr)$ (first column), $\mathfrak{F}_{\rx}(\mr)$ (second column), $\mathfrak{F}(\mr)$ (third column), and Jaccard index (fourth column). Green and red colored circles describe the location of transmitters and receivers, respectively.}
\end{figure}

\begin{example}[Imaging of Three Screw Drivers]\label{exR2}
For the final result, we consider the imaging of the cross-section of three screw drivers. Figures \ref{ResultR4} and \ref{ResultR5} show the maps of $\mathfrak{F}_{\tx}(\mr)$, $\mathfrak{F}_{\rx}(\mr)$, $\mathfrak{F}(\mr)$, and Jaccard index with the same antenna settings in Examples \ref{ex1} and \ref{ex2}, respectively. Based on the result, we can observe that
\begin{enumerate}
\item[\textcircled{1}] it is impossible to recognize the existence of three drivers through the maps of $\mathfrak{F}_{\tx}(\mr)$ with settings $\mathbf{A}_j\cup\mathbf{B}_1$, $j=1,2,3,4$. It is interesting to examine that it is possible to identify the location of single driver, say $\Sigma_1$, but very difficult to recognize remaining two drivers due to the appearance of large artifacts in the neighborhood of drivers with selections $\mathbf{A}_j\cup\mathbf{B}_2$, $j=1,2$. However, by selecting arrangements $\mathbf{A}_j\cup\mathbf{B}_2$, another driver, say $\Sigma_2$, can be identified but remaining drivers cannot be recognized,
\item[\textcircled{2}] it is impossible to recognize the existence of three drivers through the maps of $\mathfrak{F}_{\rx}(\mr)$ with settings $\mathbf{A}_1\cup\mathbf{B}_l$, $l=1,2$ but peak of large magnitude is appear in the neighborhoof of $\Sigma_1$ with selections $\mathbf{A}_j\cup\mathbf{B}_1$, $j=2,3,4$ and $\mathbf{A}_2\cup\mathbf{B}_2$. Unfortunately, some artifacts with large magnitudes are also included in the neighborhood of $\Sigma_1$ so it is still difficult to distinguish $\Sigma_1$ from them. By selecting antenna arrangements $\mathbf{A}_j\cup\mathbf{B}_2$, $j=3,4$, it is possible to recognize the $\Sigma_1$ but it is still impossible to recognize remaining drivers.
\item[\textcircled{3}] based on the \textcircled{1} and \textcircled{2}, it is possible to recognize $\Sigma_1$ and $\Sigma_2$ through the map of $\mathfrak{F}(\mr)$ with selections $\mathbf{A}_4\cup\mathbf{B}_2$. However, with other selections of antenna arrangements, it is impossible to recognize the existence of three drivers or it is possible to identify only one driver.
\end{enumerate}
Figures \ref{ResultR6} shows maps of $\mathfrak{F}_{\tx}(\mr)$, $\mathfrak{F}_{\rx}(\mr)$, $\mathfrak{F}(\mr)$, and Jaccard index with the same antenna settings in Example \ref{ex3}. It is worth to mention that three drivers can be recognized through the maps of $\mathfrak{F}_{\tx}(\mr)$, $\mathfrak{F}_{\rx}(\mr)$, $\mathfrak{F}(\mr)$ with $\mathbf{A}_\star\cup\mathbf{B}_\star$. However, several artifacts are included in the maps of $\mathfrak{F}_{\tx}(\mr)$ and $\mathfrak{F}_{\rx}(\mr)$. Notice that by selecting antenna arrangements $\mathbf{A}_j\cup\mathbf{B}_5$, $j=5,6,7$, it is very difficult to identify three drivers through the maps of $\mathfrak{F}_{\tx}(\mr)$, $\mathfrak{F}_{\rx}(\mr)$, $\mathfrak{F}(\mr)$ but the existence of $\Sigma_1$ and three drivers can be recognized through the map of $\mathfrak{F}_{\rx}(\mr)$ with setting $\mathbf{A}_6\cup\mathbf{B}_5$ and $\mathbf{A}_7\cup\mathbf{B}_5$, respectively.
\end{example}

\begin{figure}[h]
  \centering
  \includegraphics[width=0.25\textwidth]{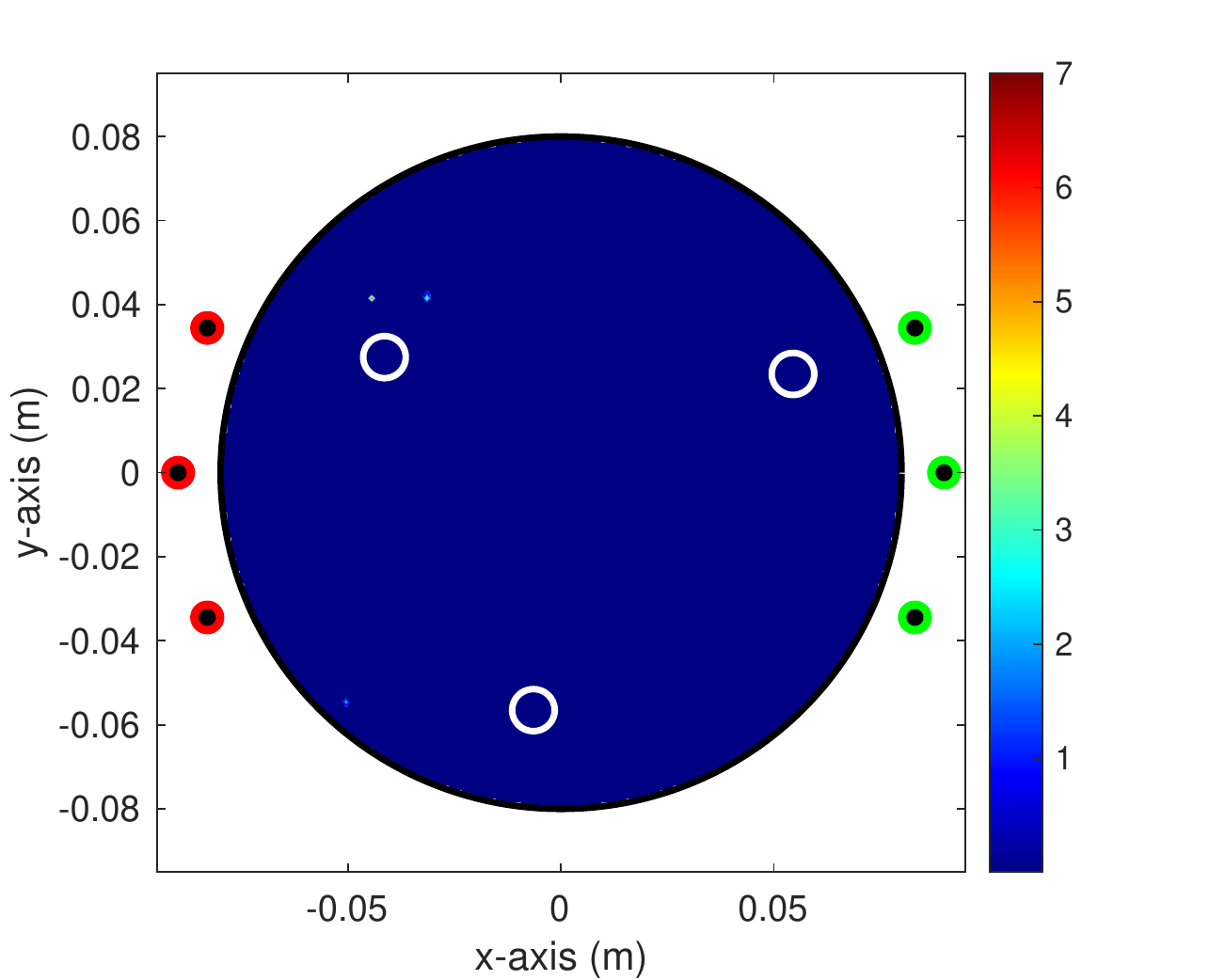}\hfill
  \includegraphics[width=0.25\textwidth]{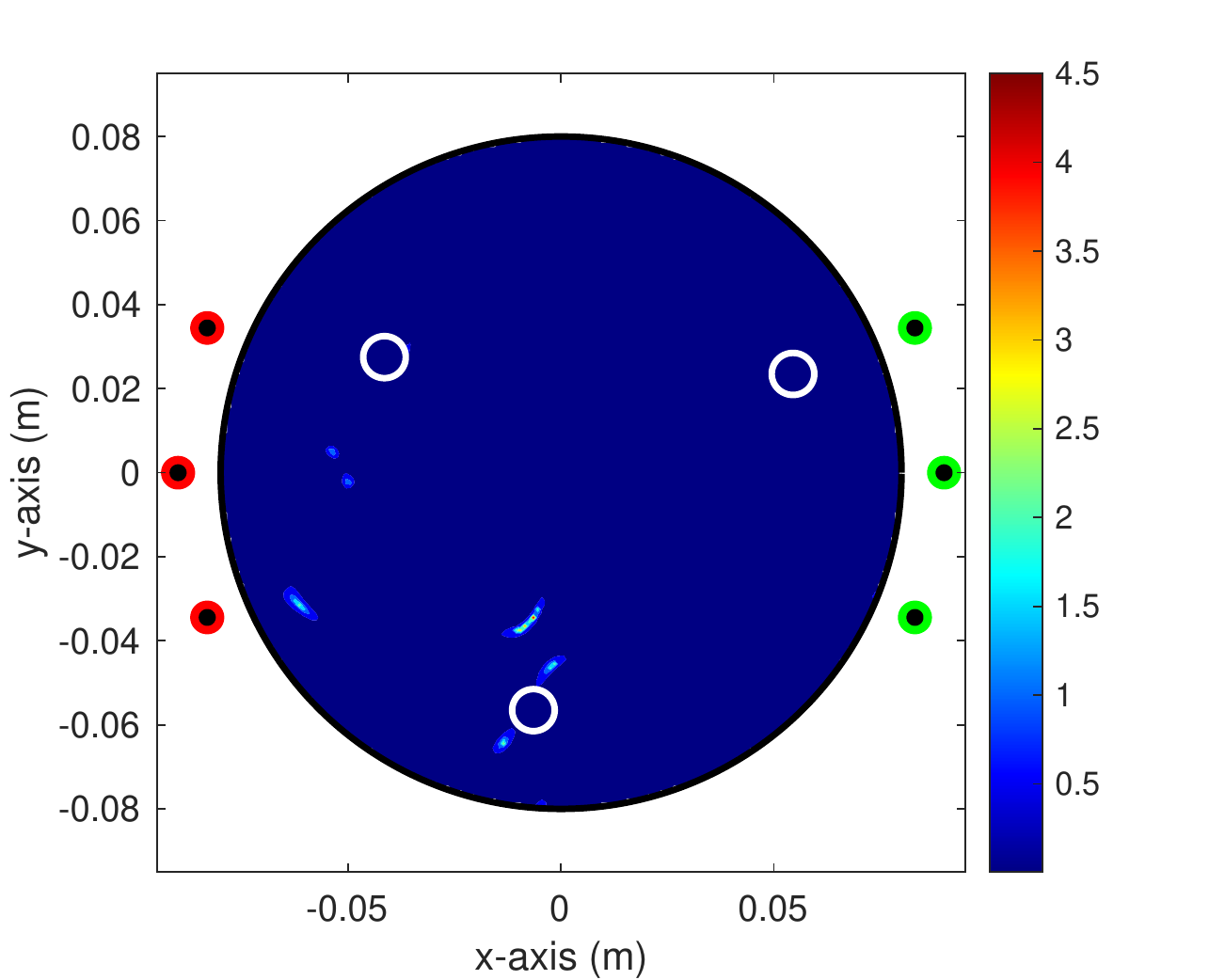}\hfill
  \includegraphics[width=0.25\textwidth]{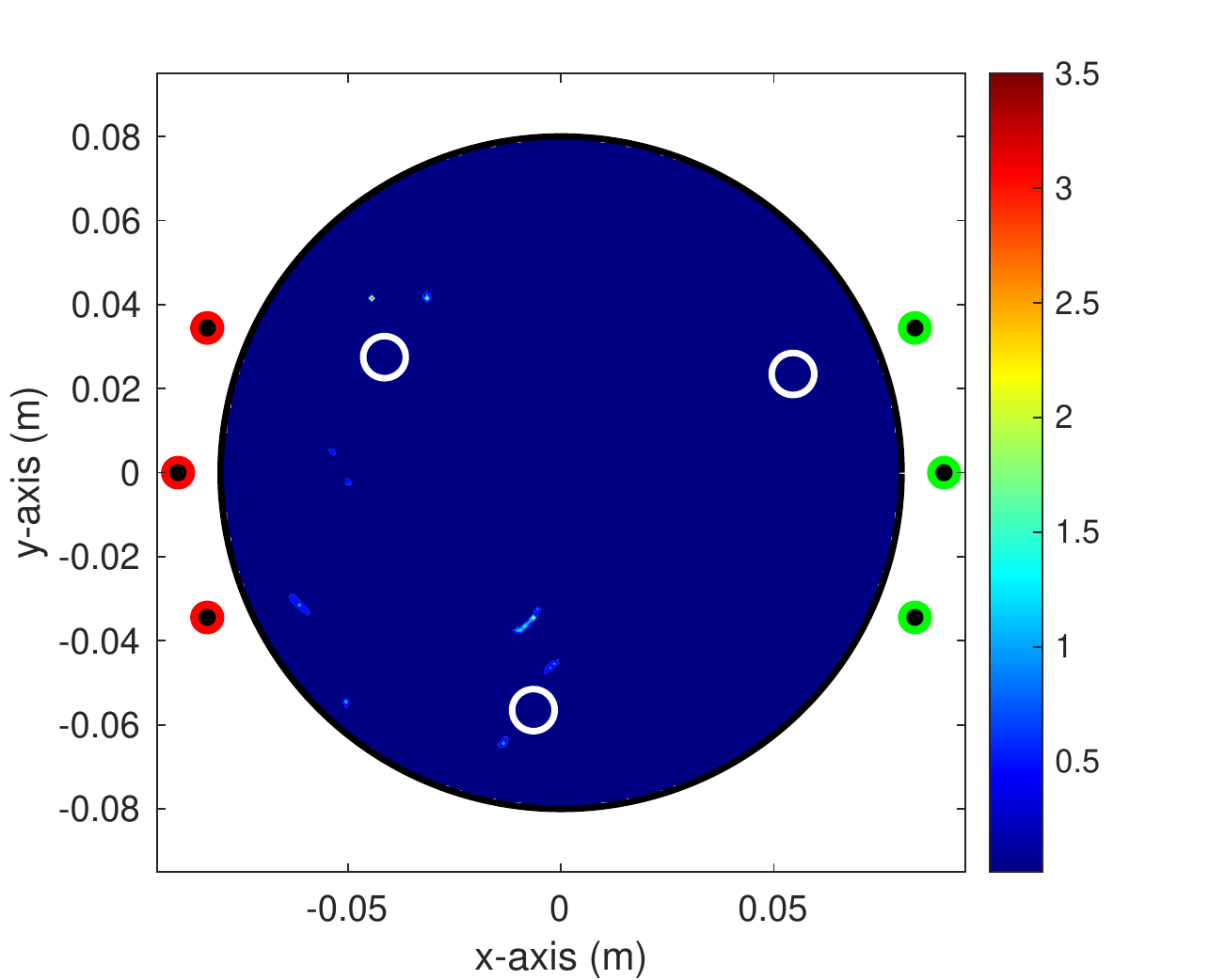}\hfill
  \includegraphics[width=0.25\textwidth]{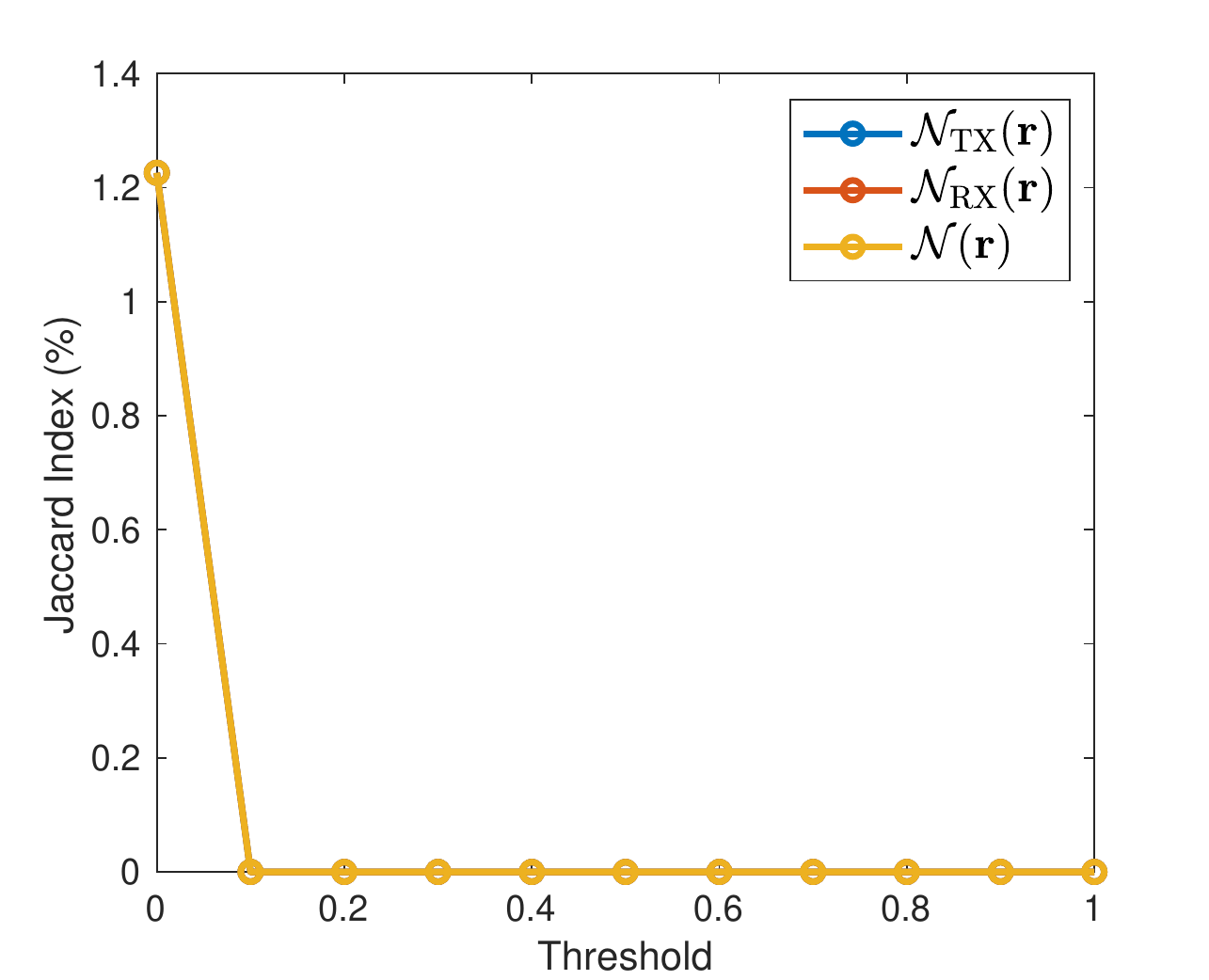}\\
  \includegraphics[width=0.25\textwidth]{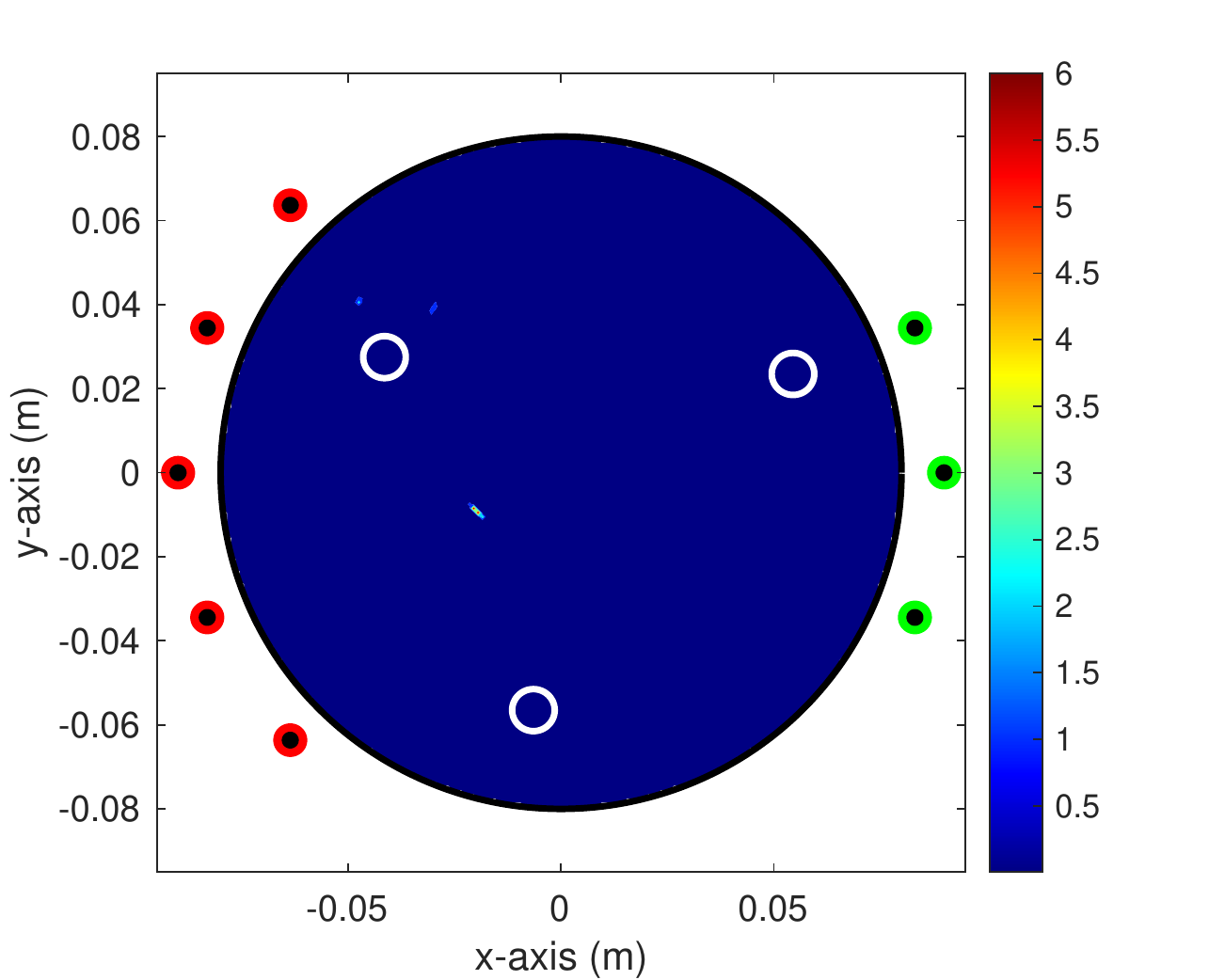}\hfill
  \includegraphics[width=0.25\textwidth]{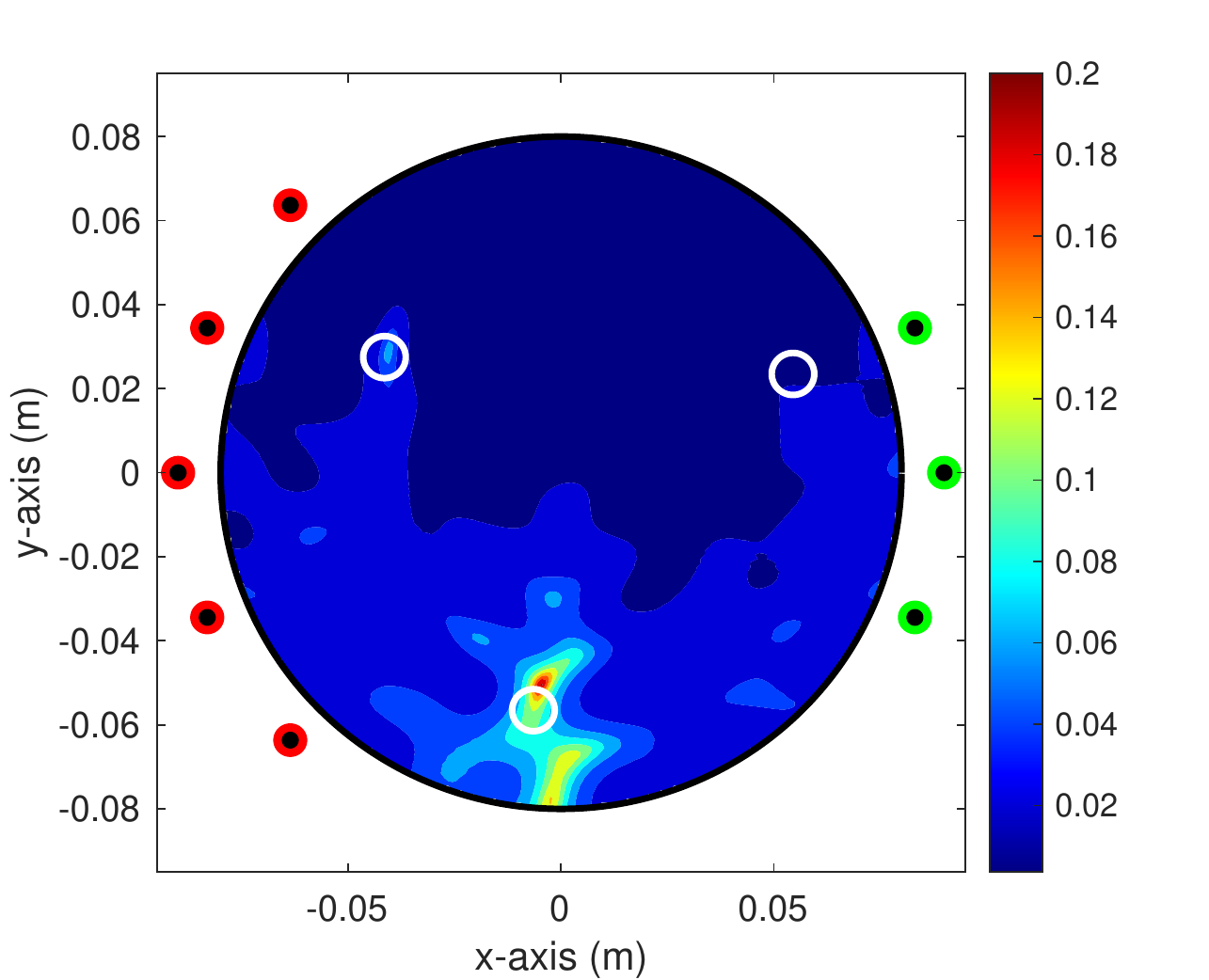}\hfill
  \includegraphics[width=0.25\textwidth]{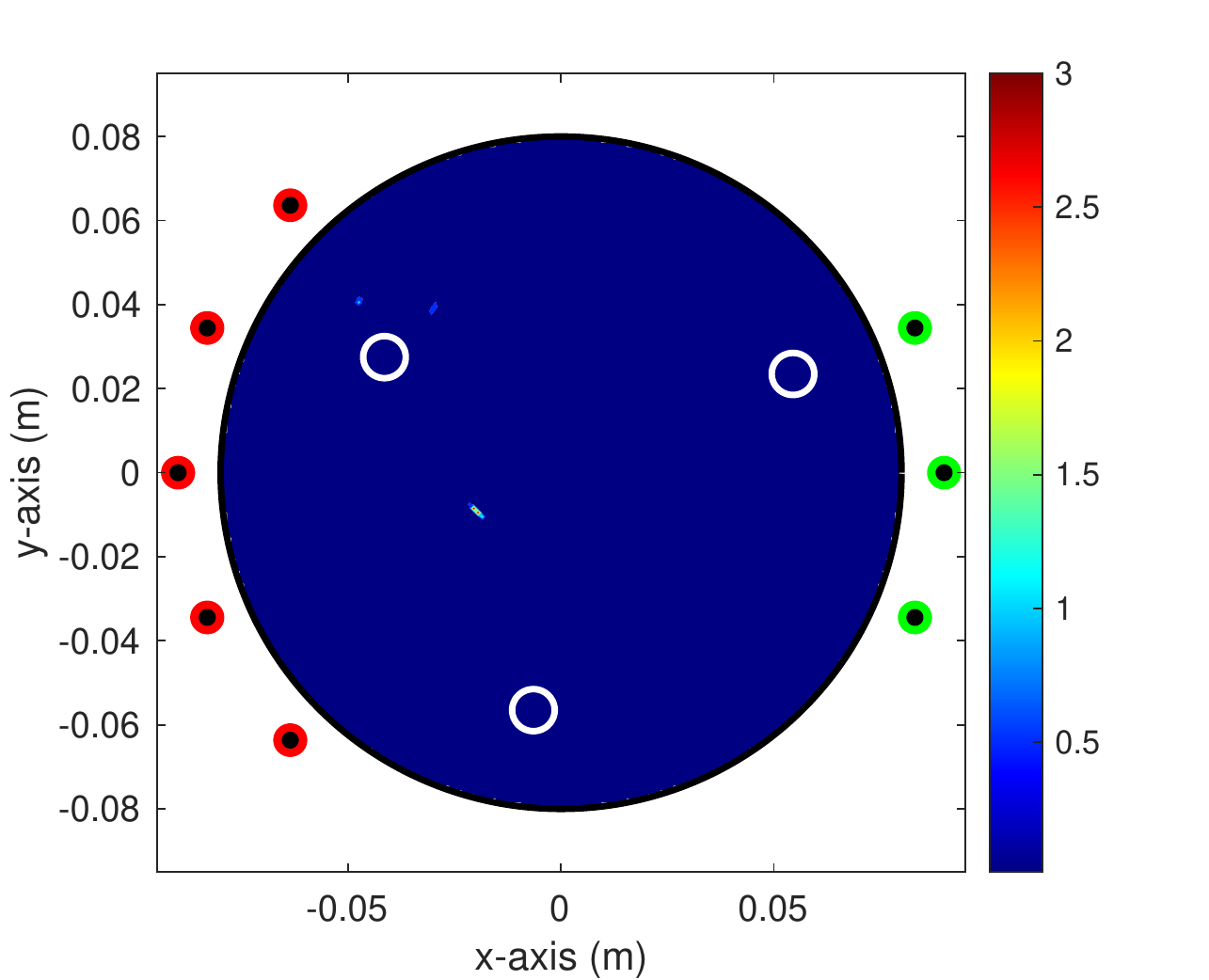}\hfill
  \includegraphics[width=0.25\textwidth]{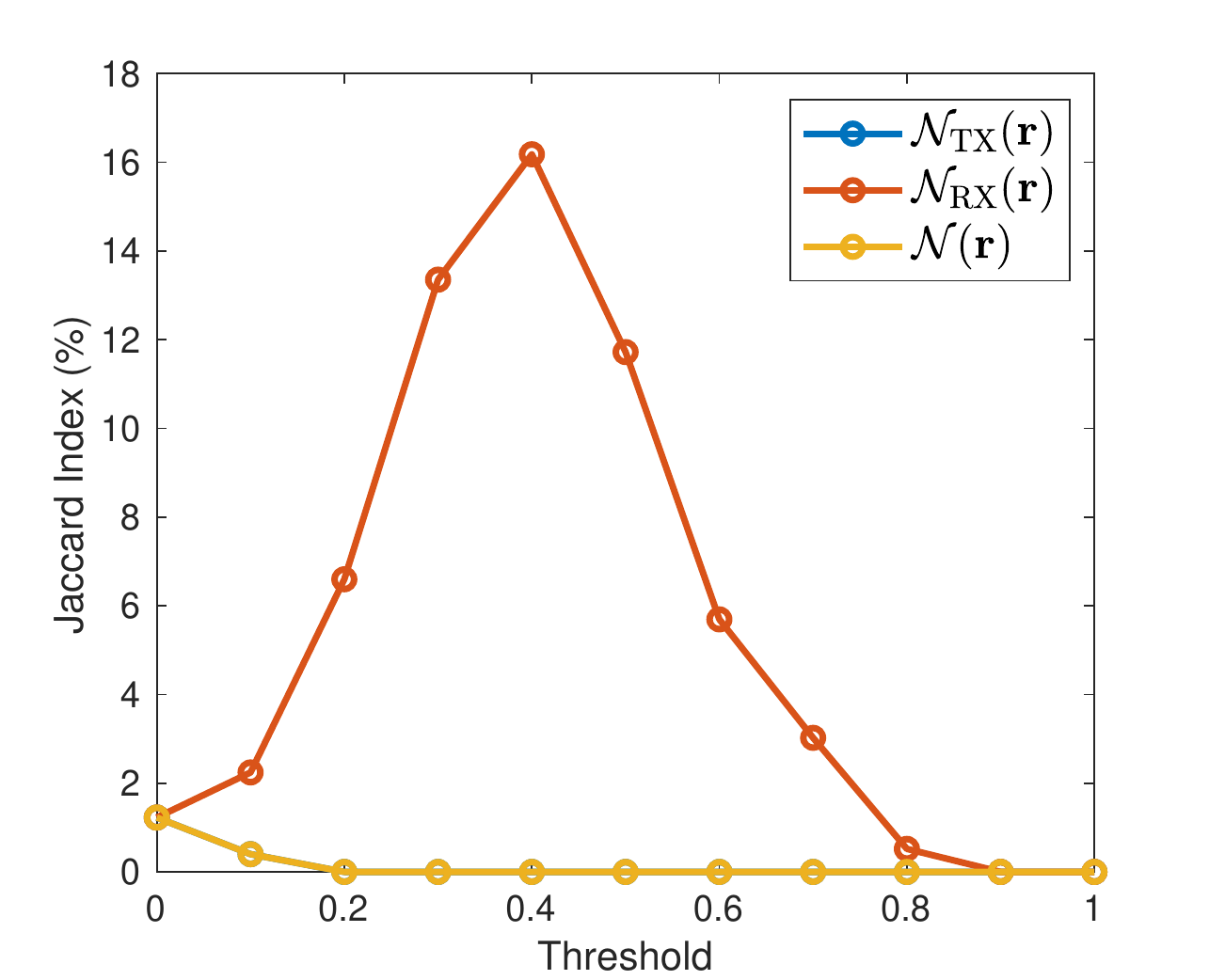}\\
  \includegraphics[width=0.25\textwidth]{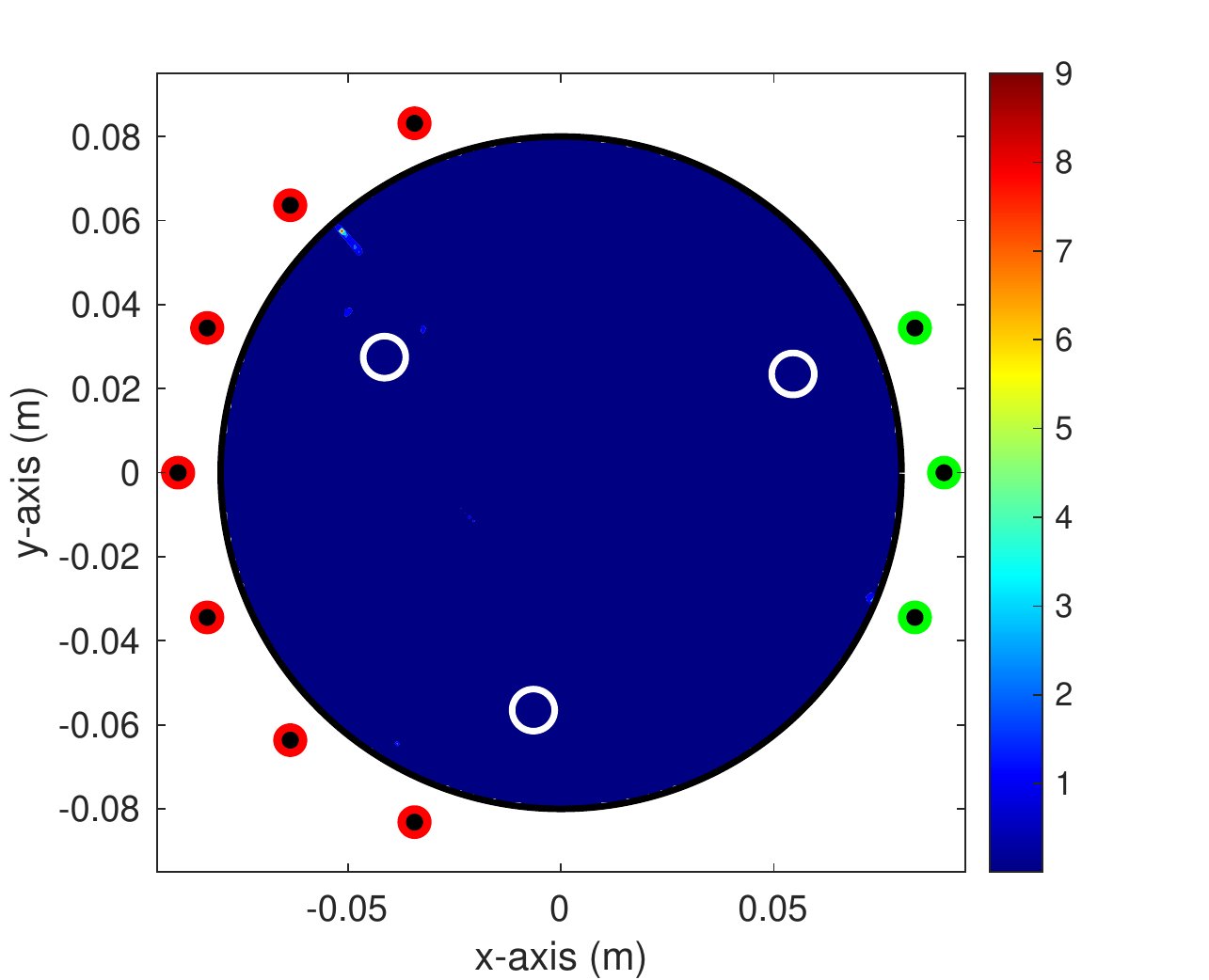}\hfill
  \includegraphics[width=0.25\textwidth]{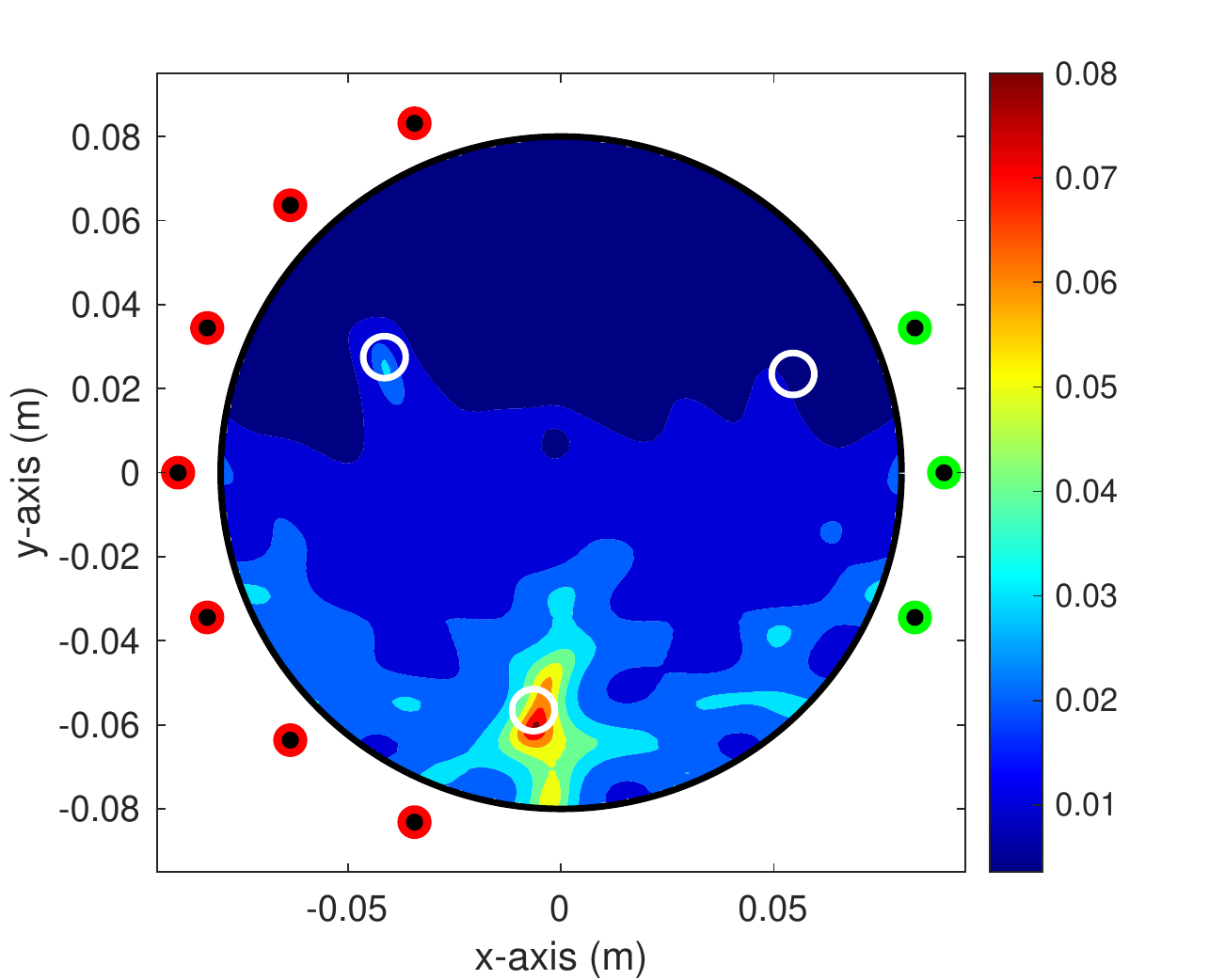}\hfill
  \includegraphics[width=0.25\textwidth]{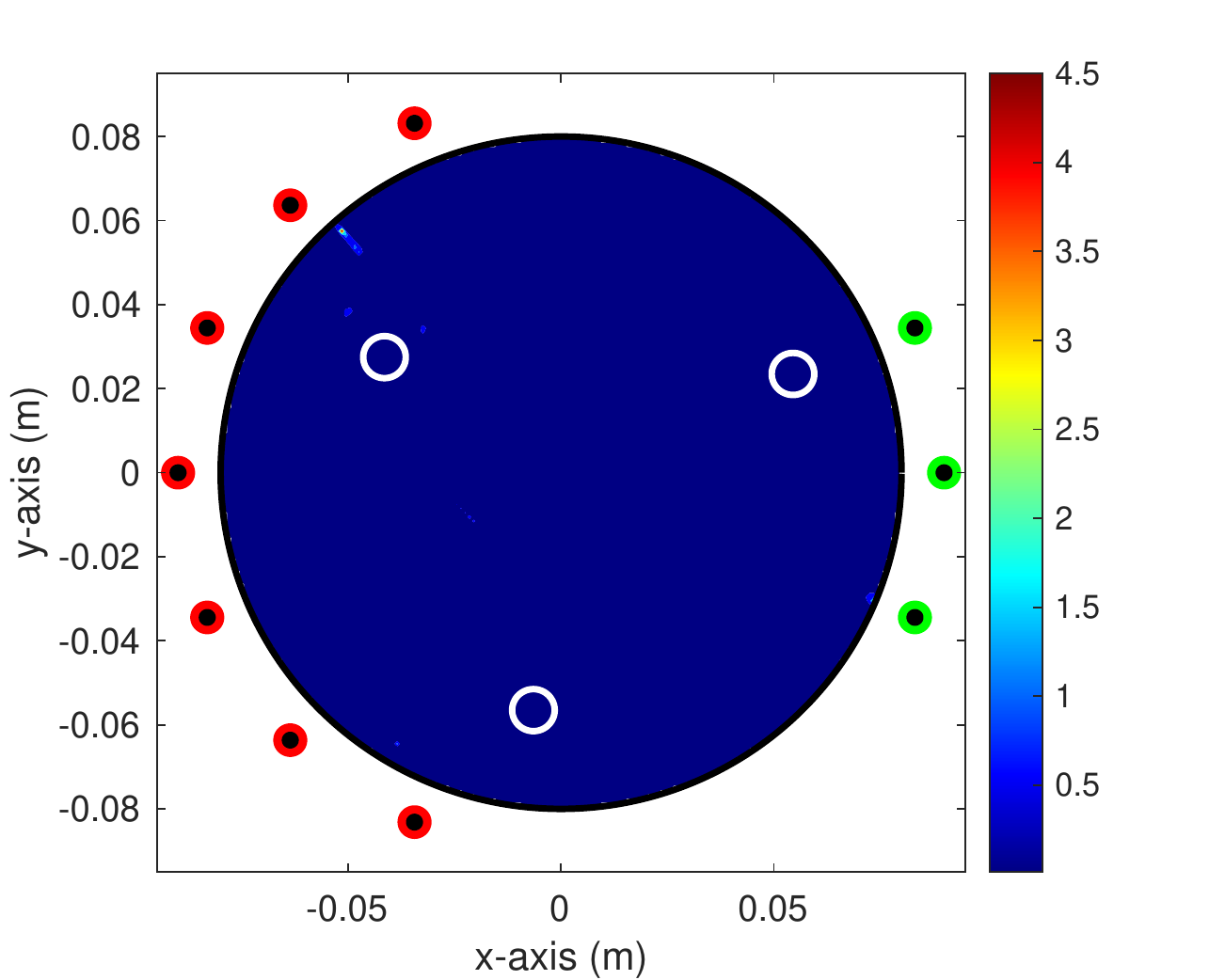}\hfill
  \includegraphics[width=0.25\textwidth]{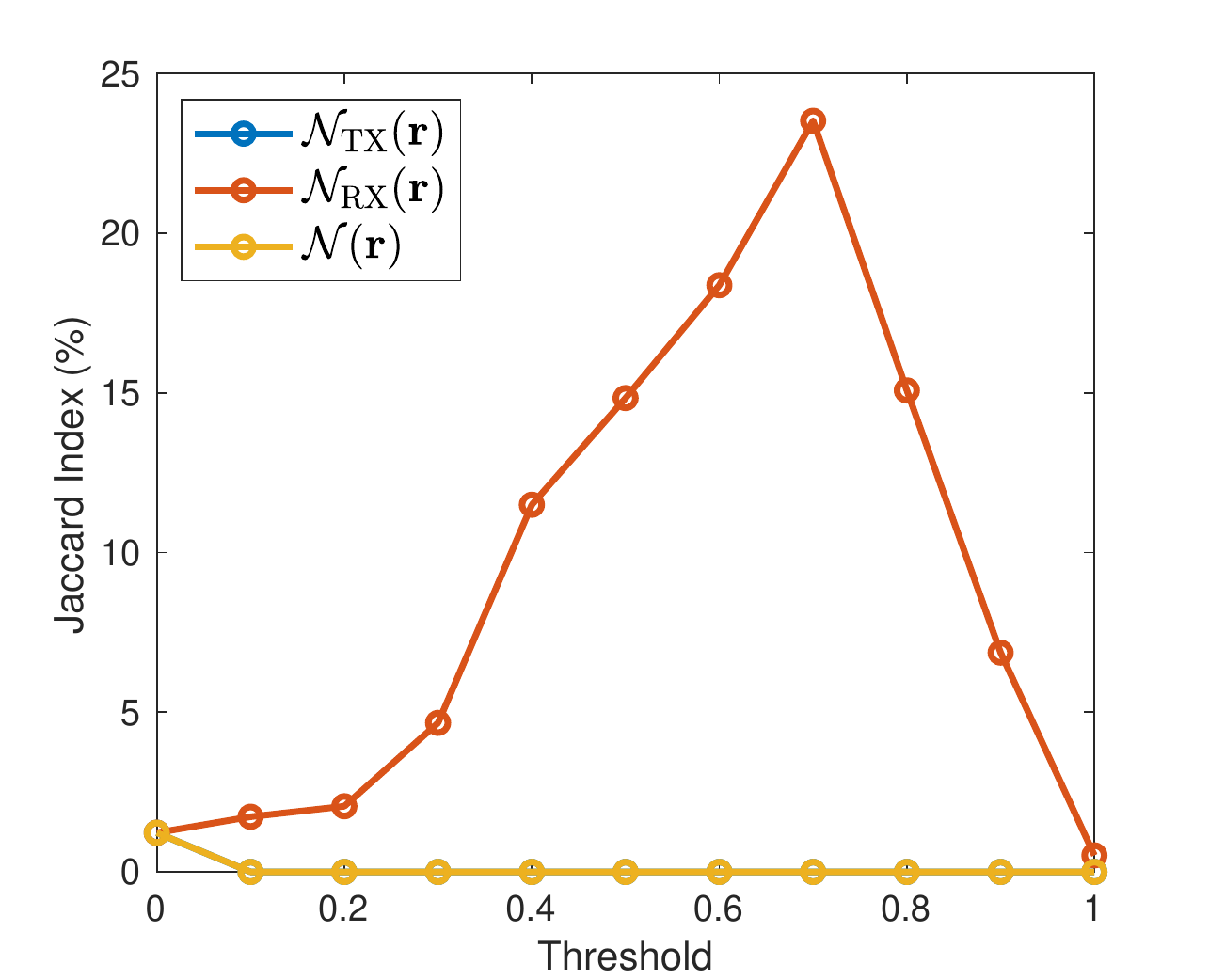}\\
  \includegraphics[width=0.25\textwidth]{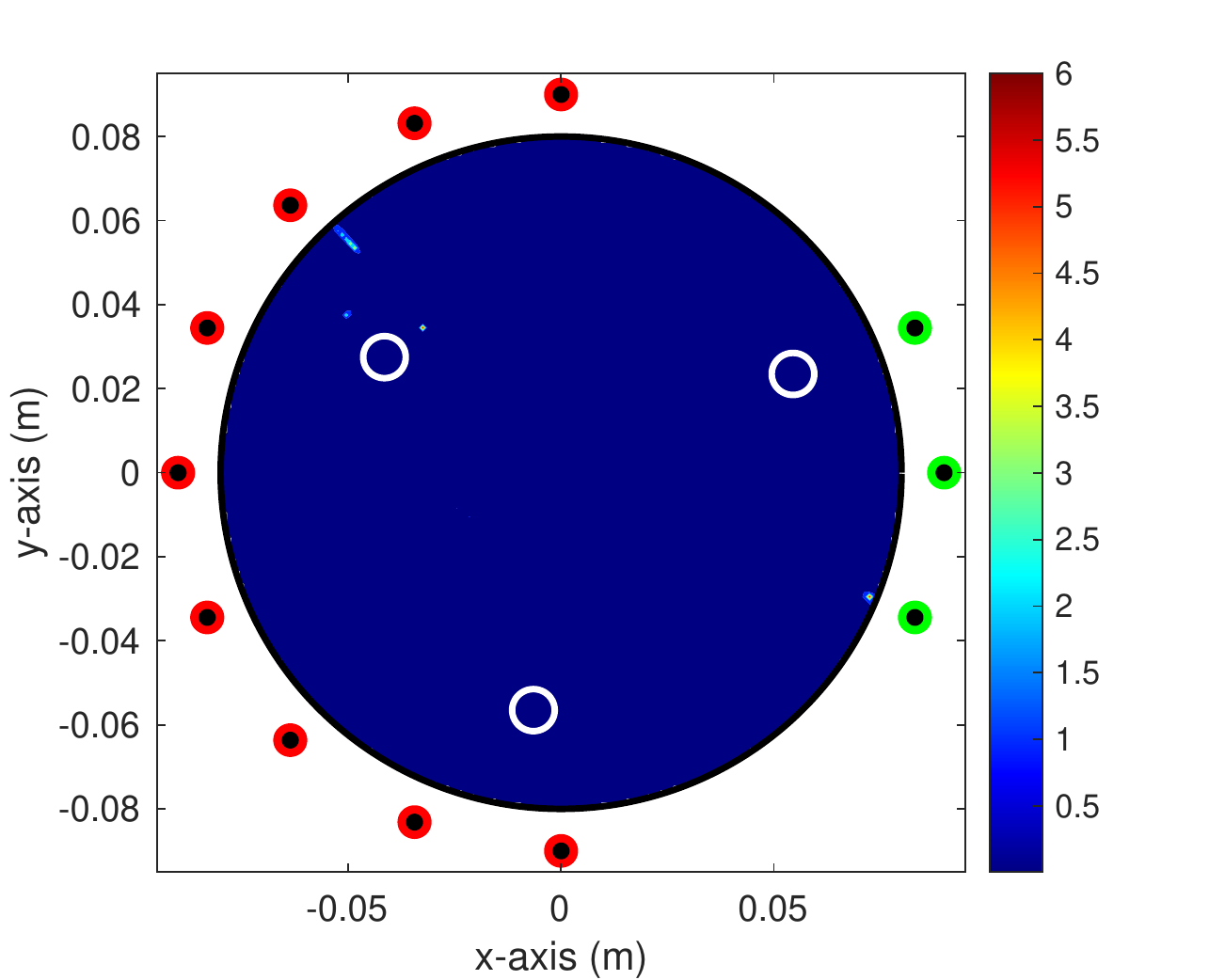}\hfill
  \includegraphics[width=0.25\textwidth]{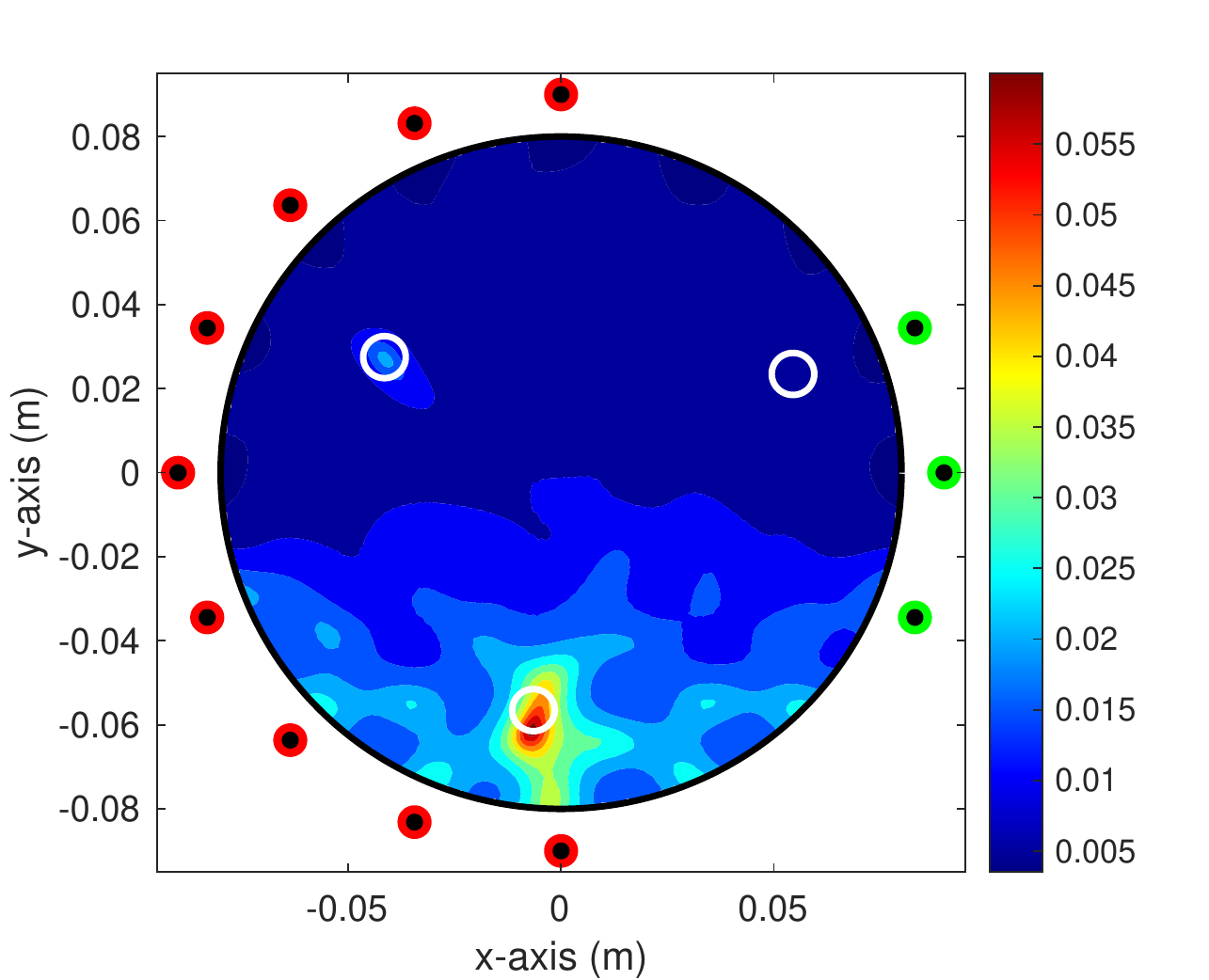}\hfill
  \includegraphics[width=0.25\textwidth]{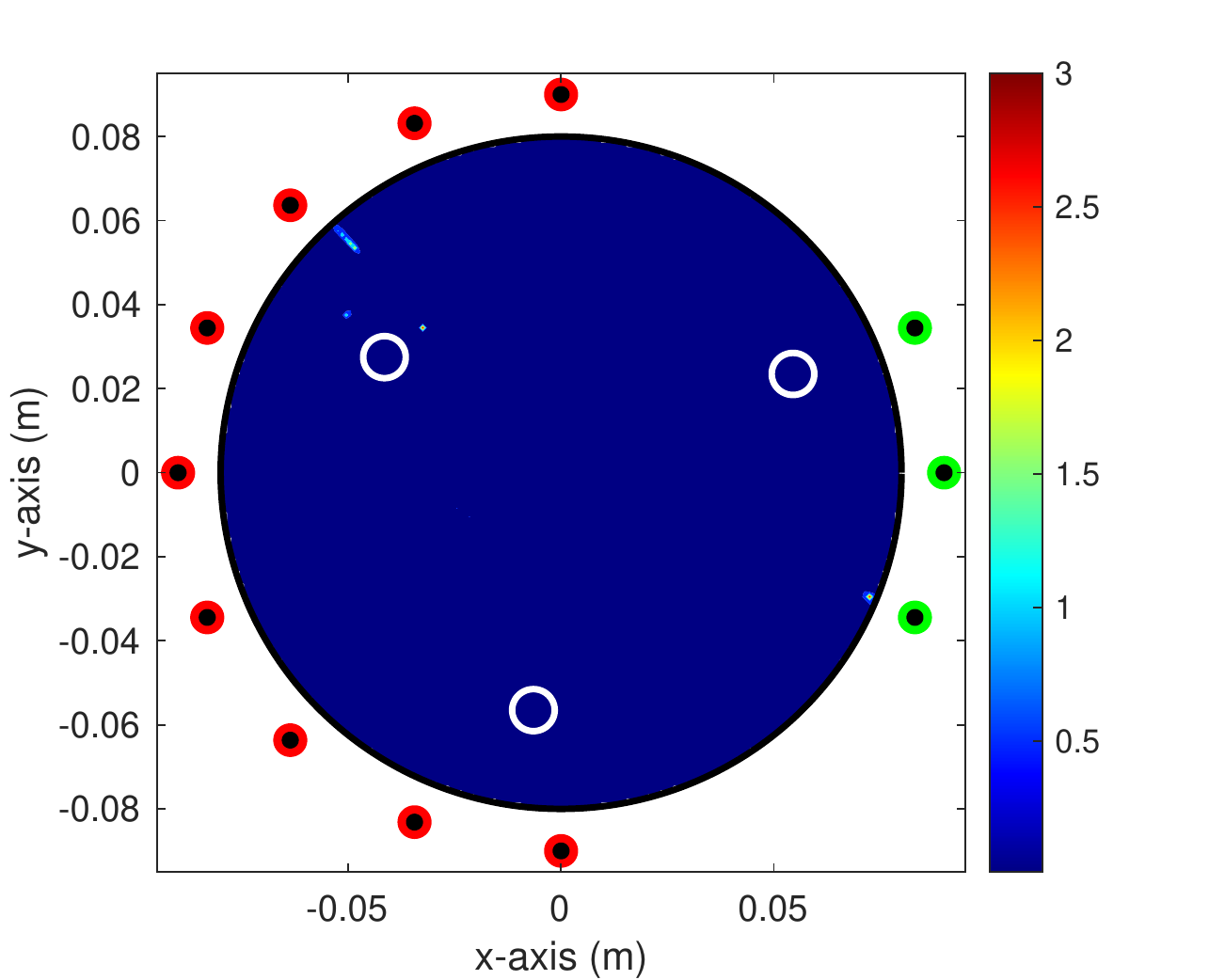}\hfill
  \includegraphics[width=0.25\textwidth]{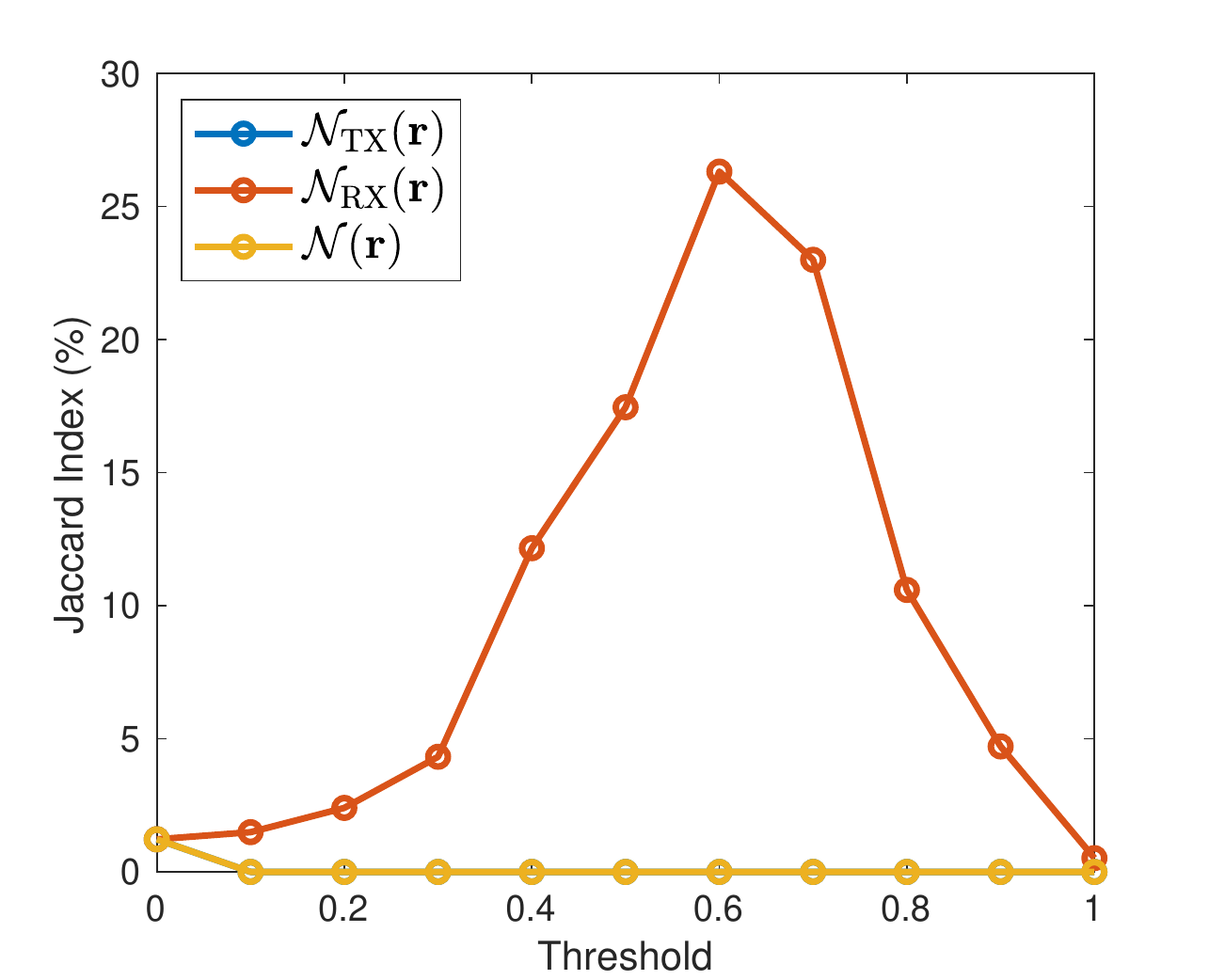}
  \caption{\label{ResultR4}(Example \ref{exR2}) Maps of $\mathfrak{F}_{\tx}(\mr)$ (first column), $\mathfrak{F}_{\rx}(\mr)$ (second column), $\mathfrak{F}(\mr)$ (third column), and Jaccard index (fourth column). Green and red colored circles describe the location of transmitters and receivers, respectively.}
\end{figure}

\begin{figure}[h]
  \centering
  \includegraphics[width=0.25\textwidth]{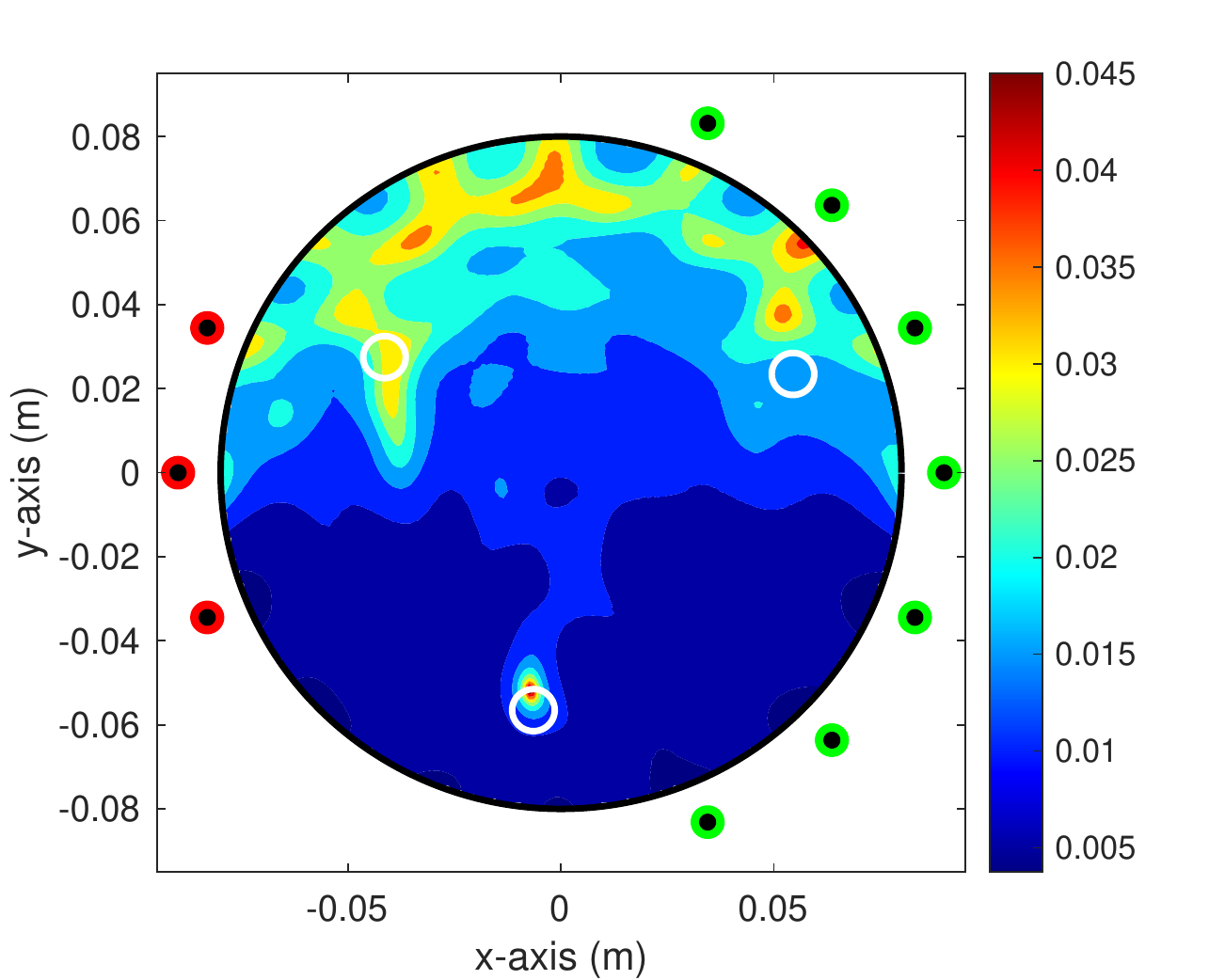}\hfill
  \includegraphics[width=0.25\textwidth]{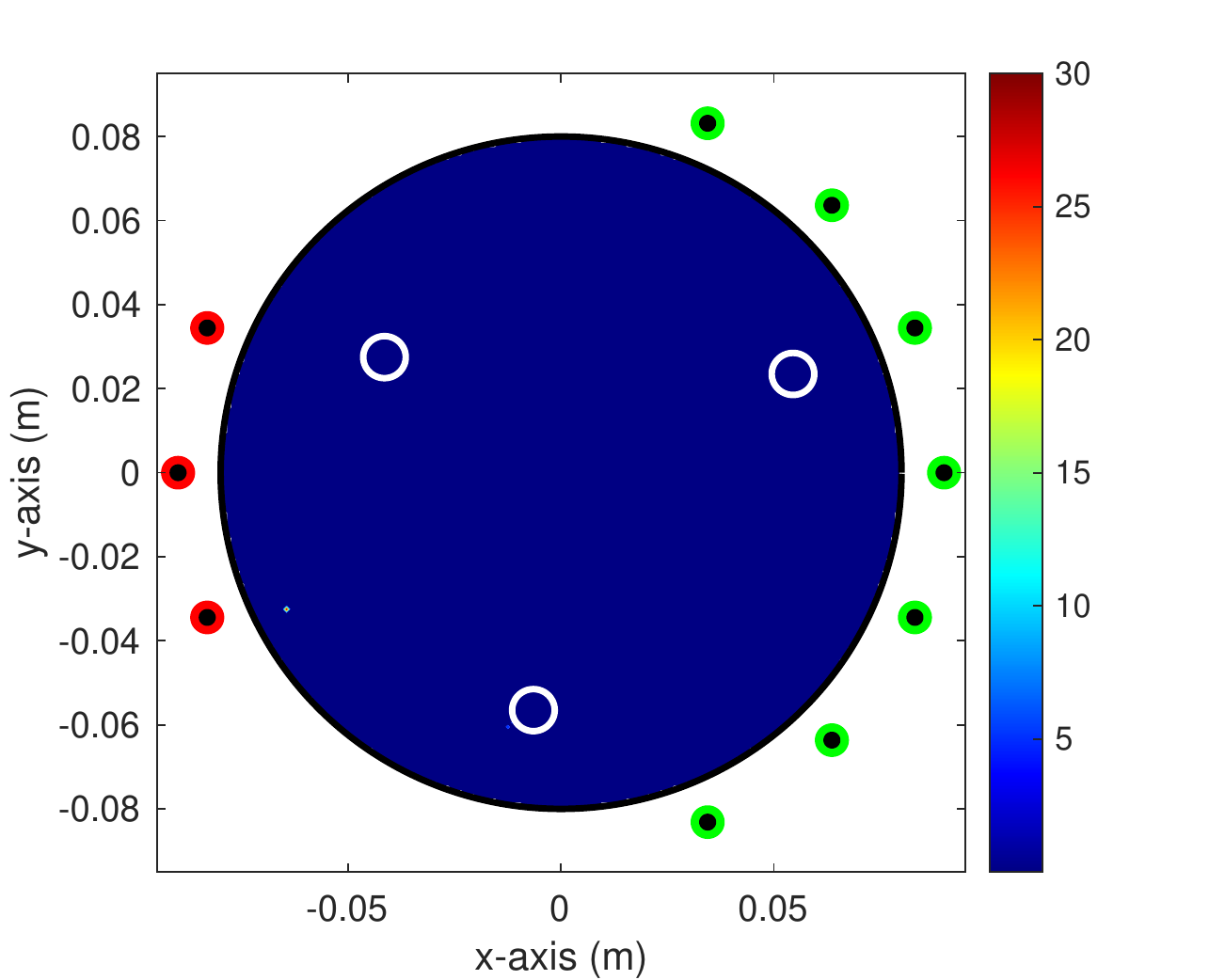}\hfill
  \includegraphics[width=0.25\textwidth]{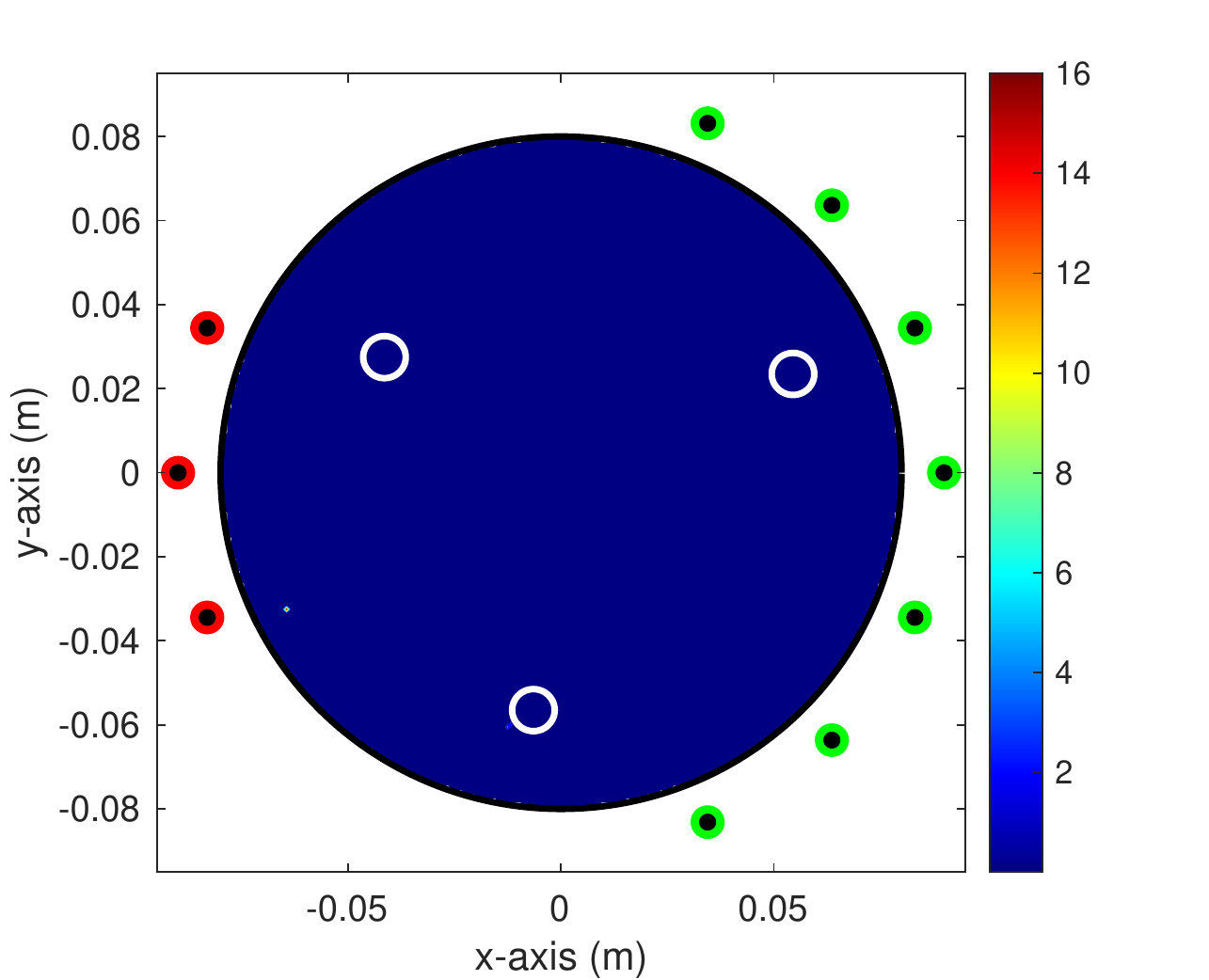}\hfill
  \includegraphics[width=0.25\textwidth]{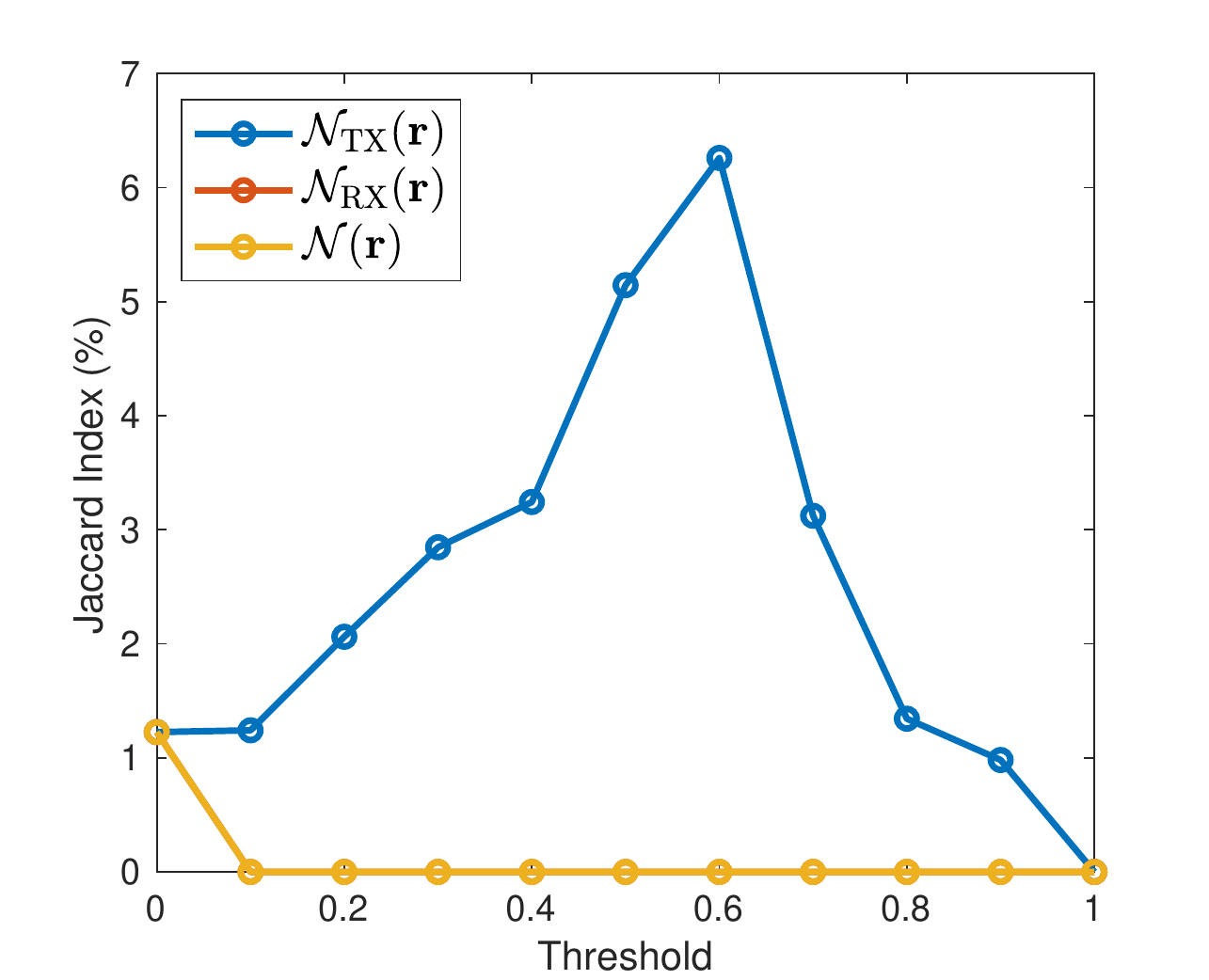}\\
  \includegraphics[width=0.25\textwidth]{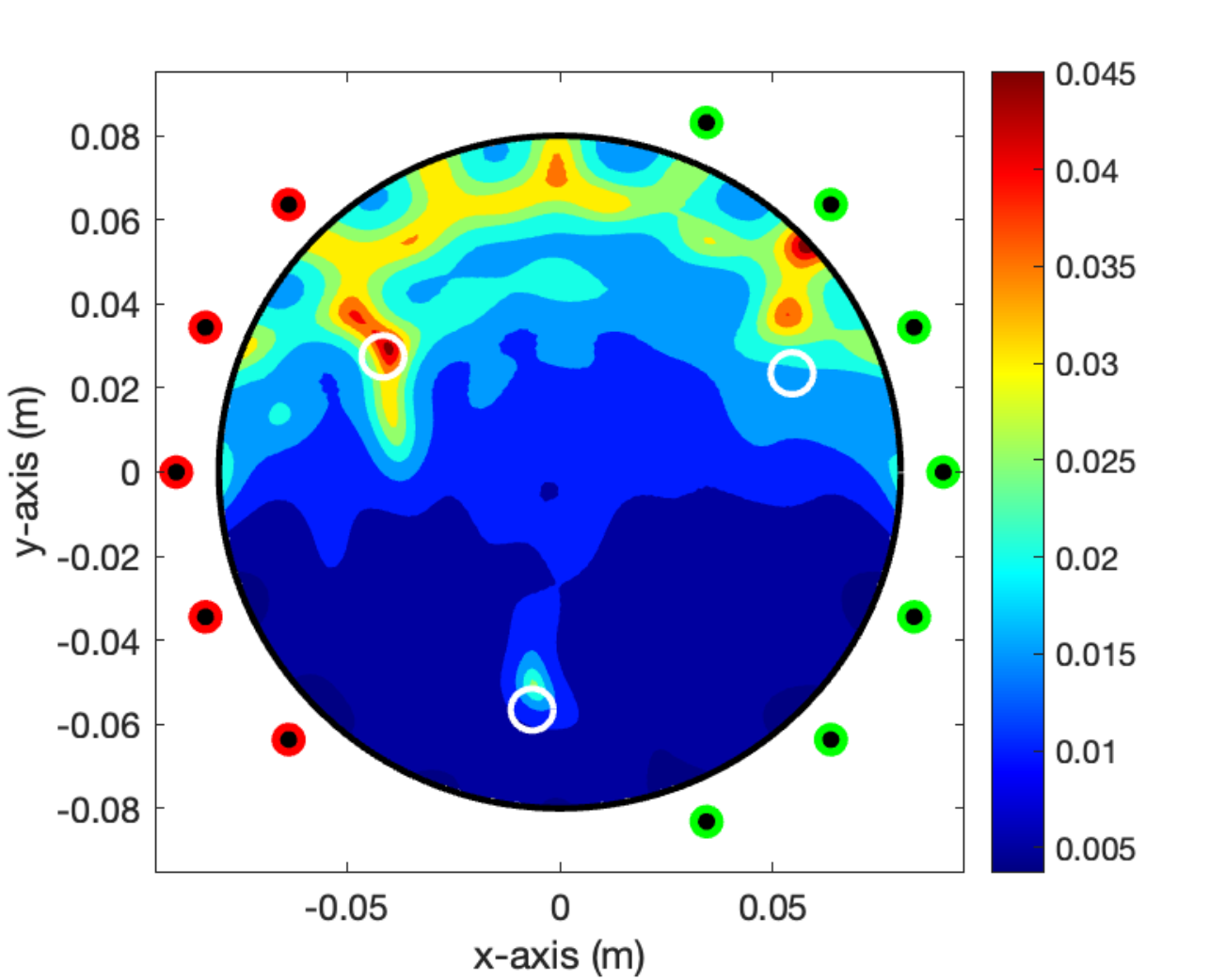}\hfill
  \includegraphics[width=0.25\textwidth]{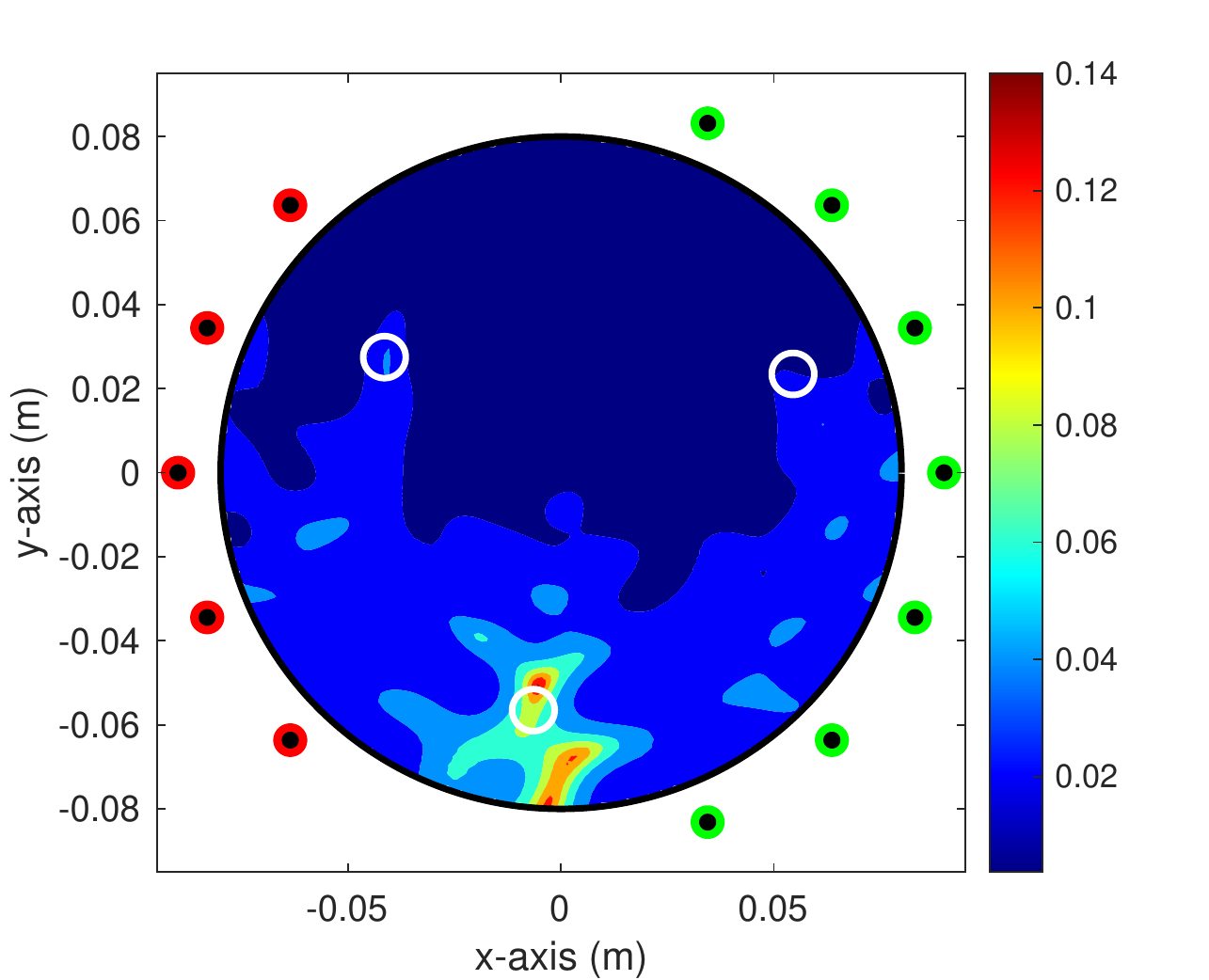}\hfill
  \includegraphics[width=0.25\textwidth]{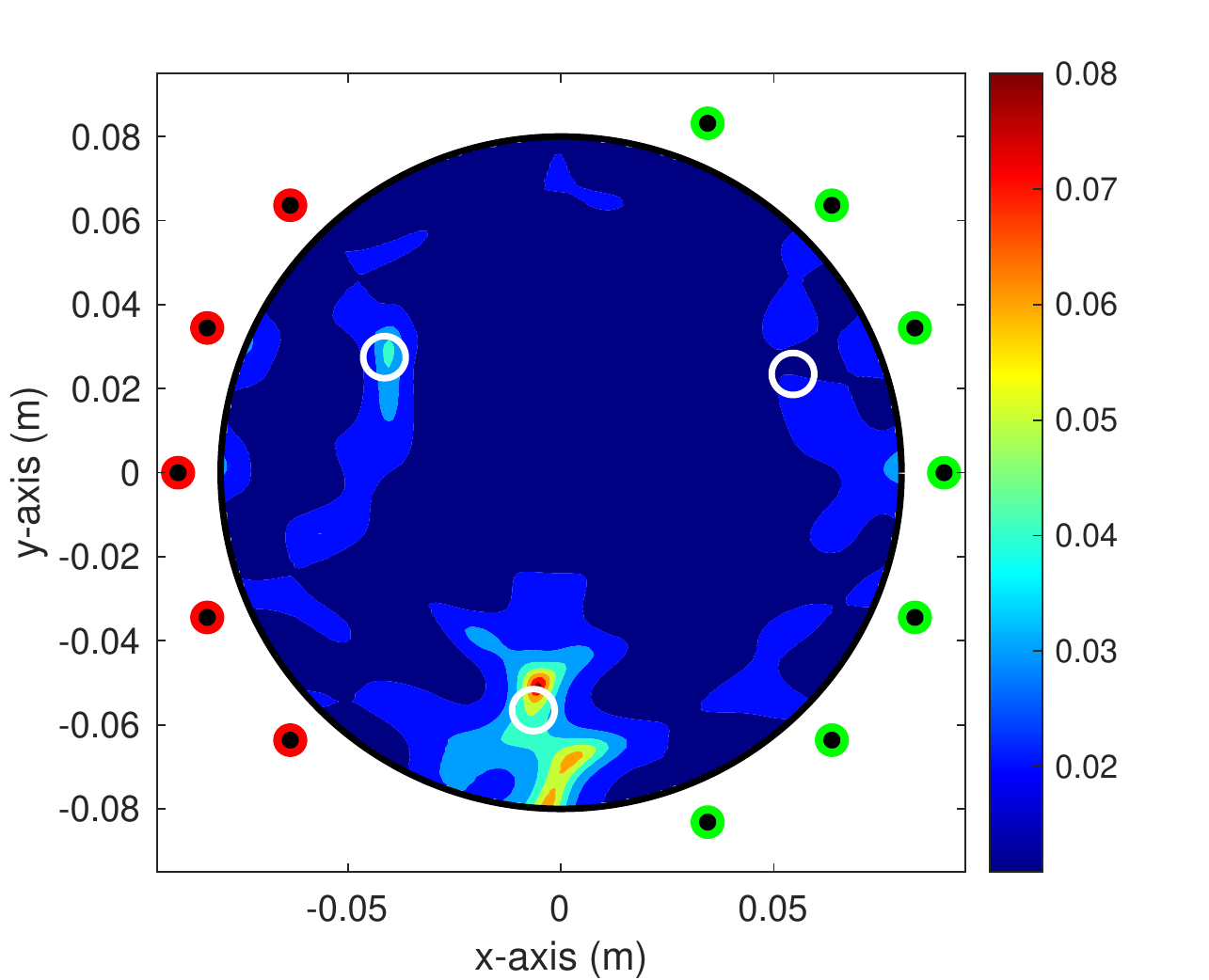}\hfill
  \includegraphics[width=0.25\textwidth]{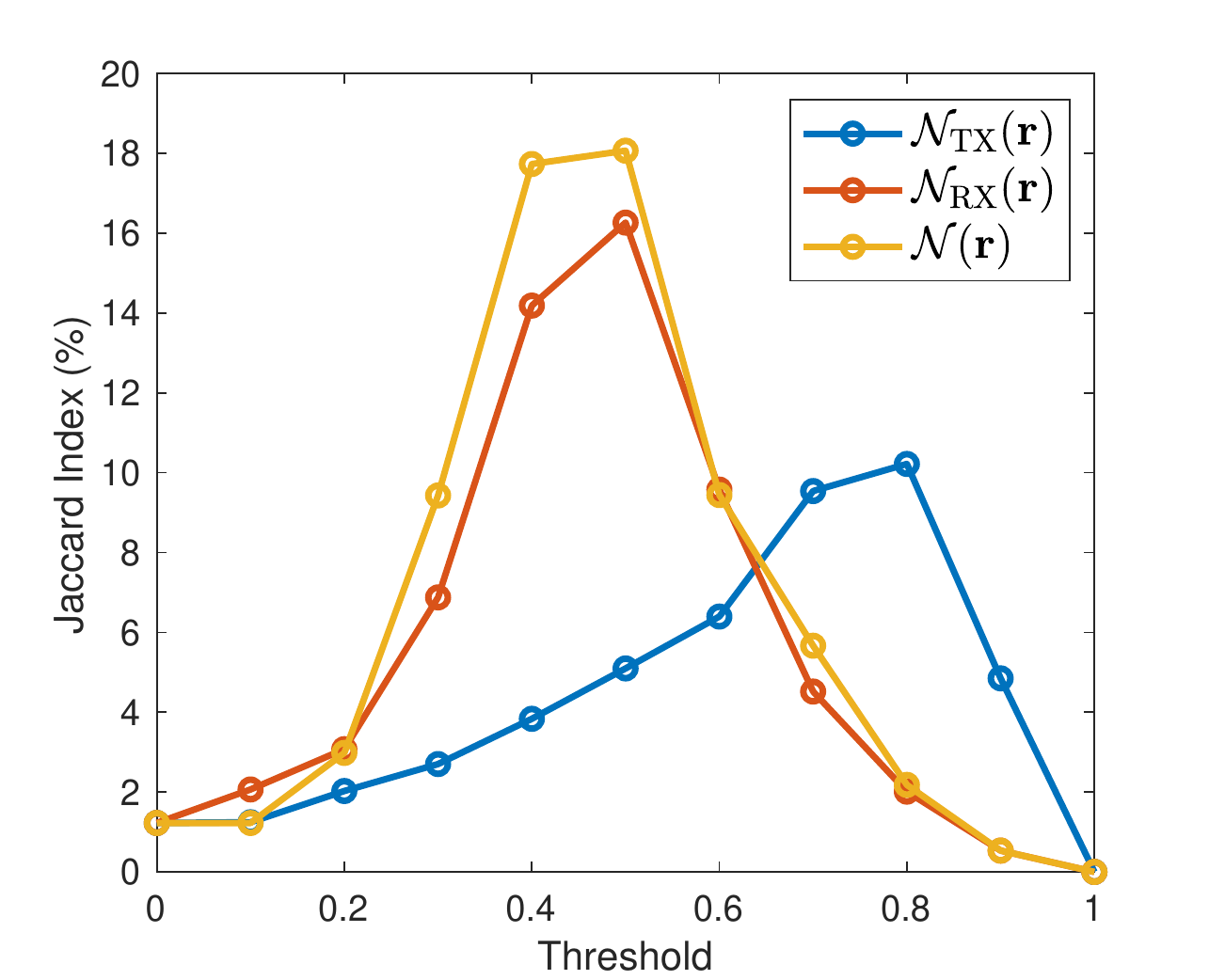}\\
  \includegraphics[width=0.25\textwidth]{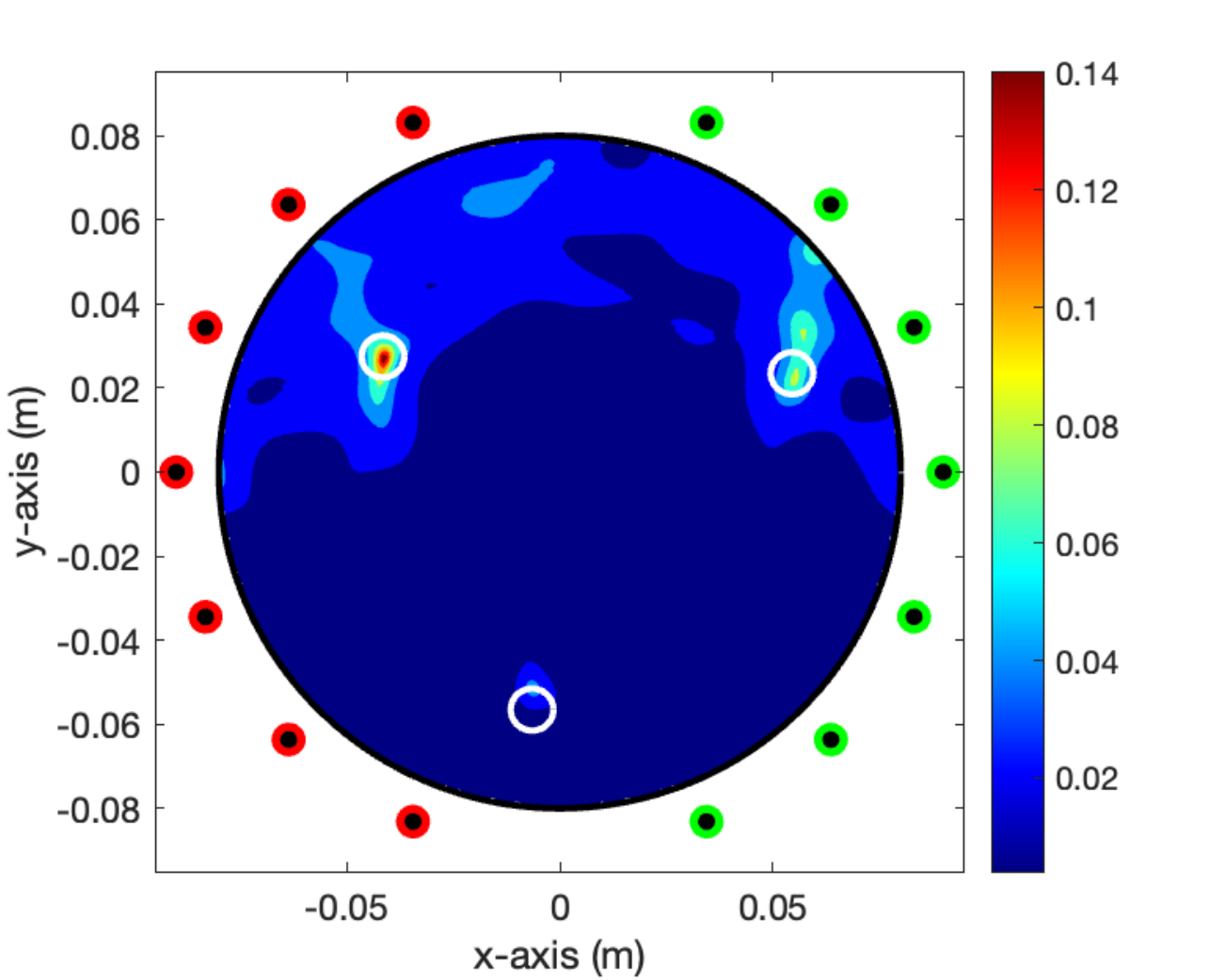}\hfill
  \includegraphics[width=0.25\textwidth]{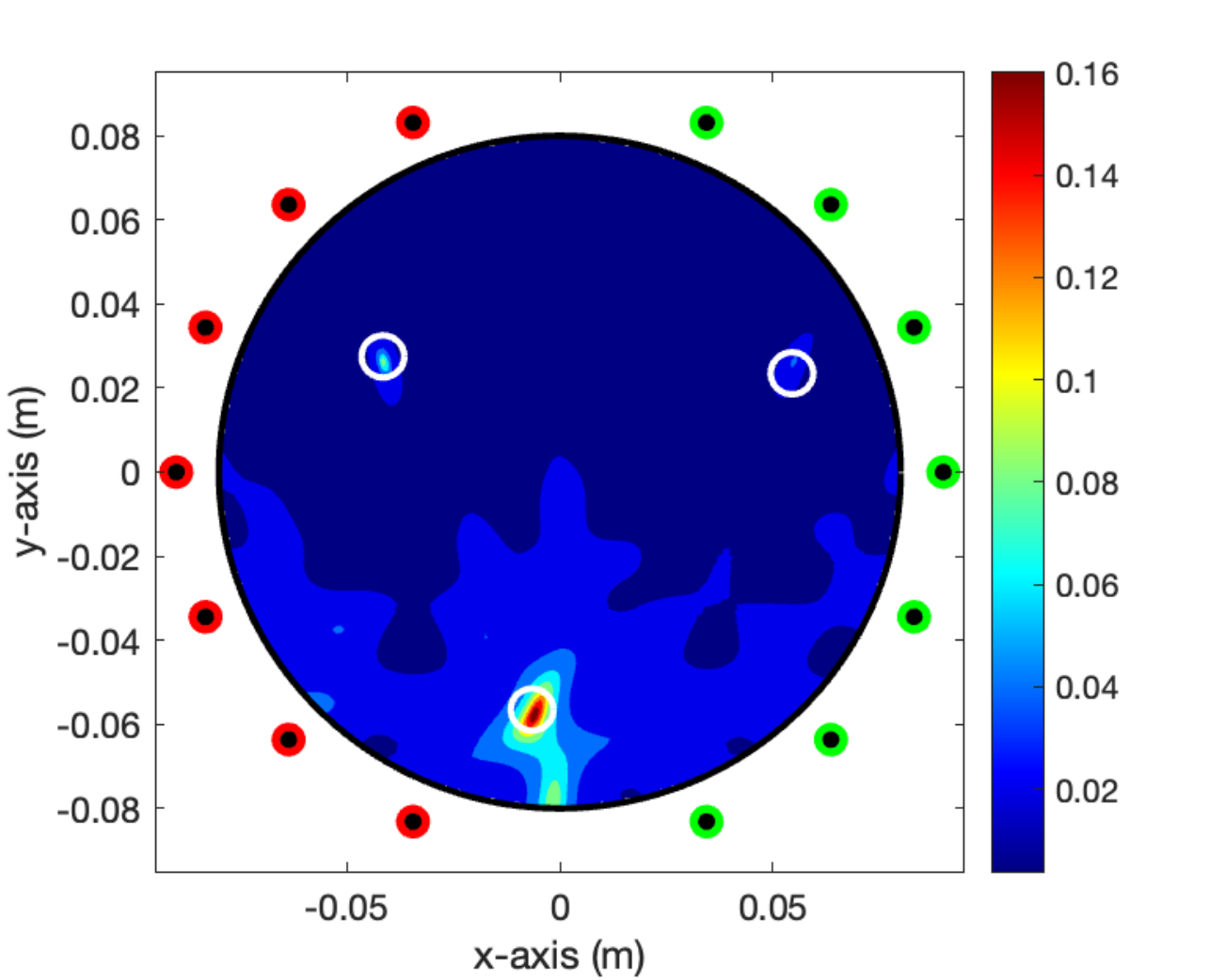}\hfill
  \includegraphics[width=0.25\textwidth]{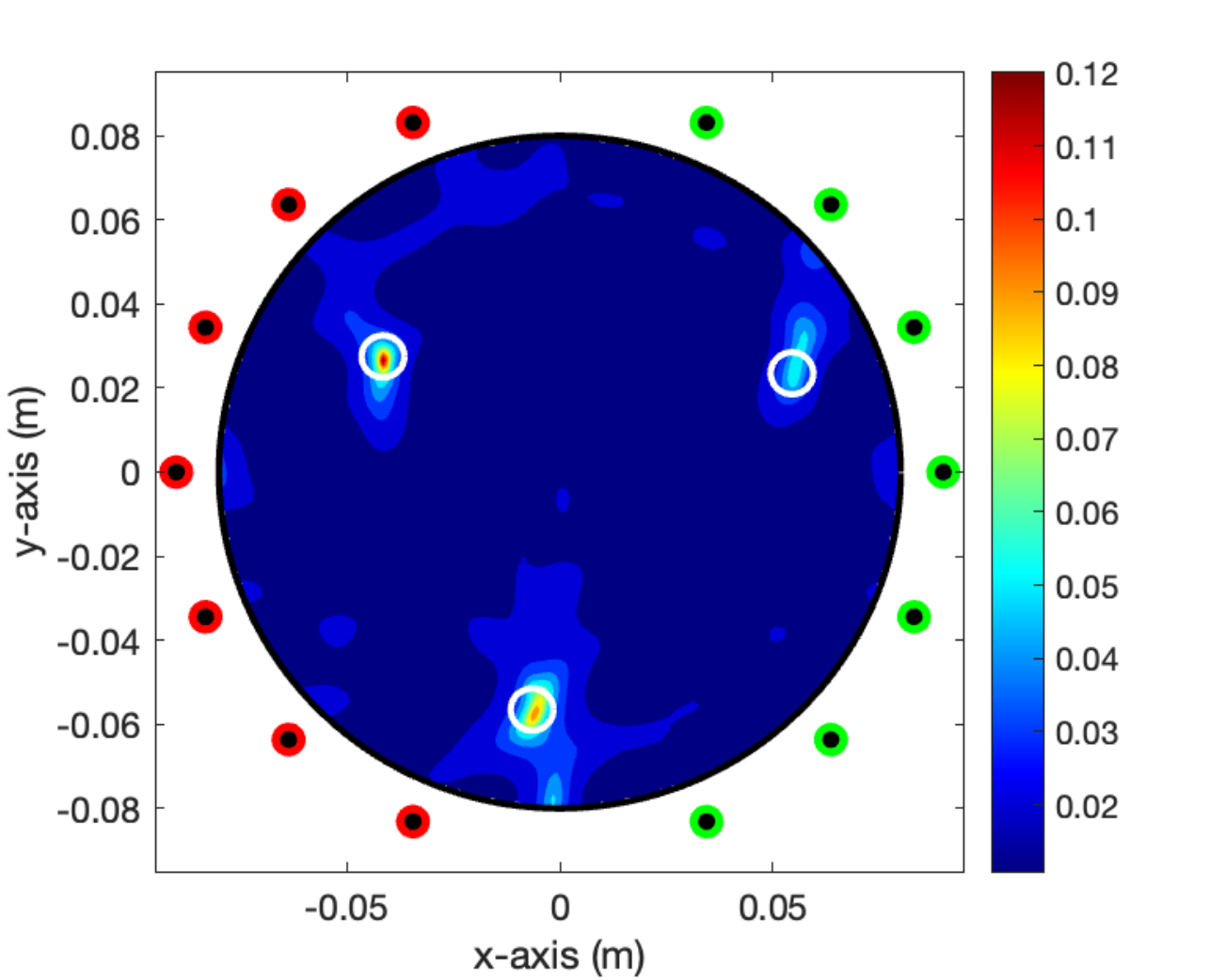}\hfill
  \includegraphics[width=0.25\textwidth]{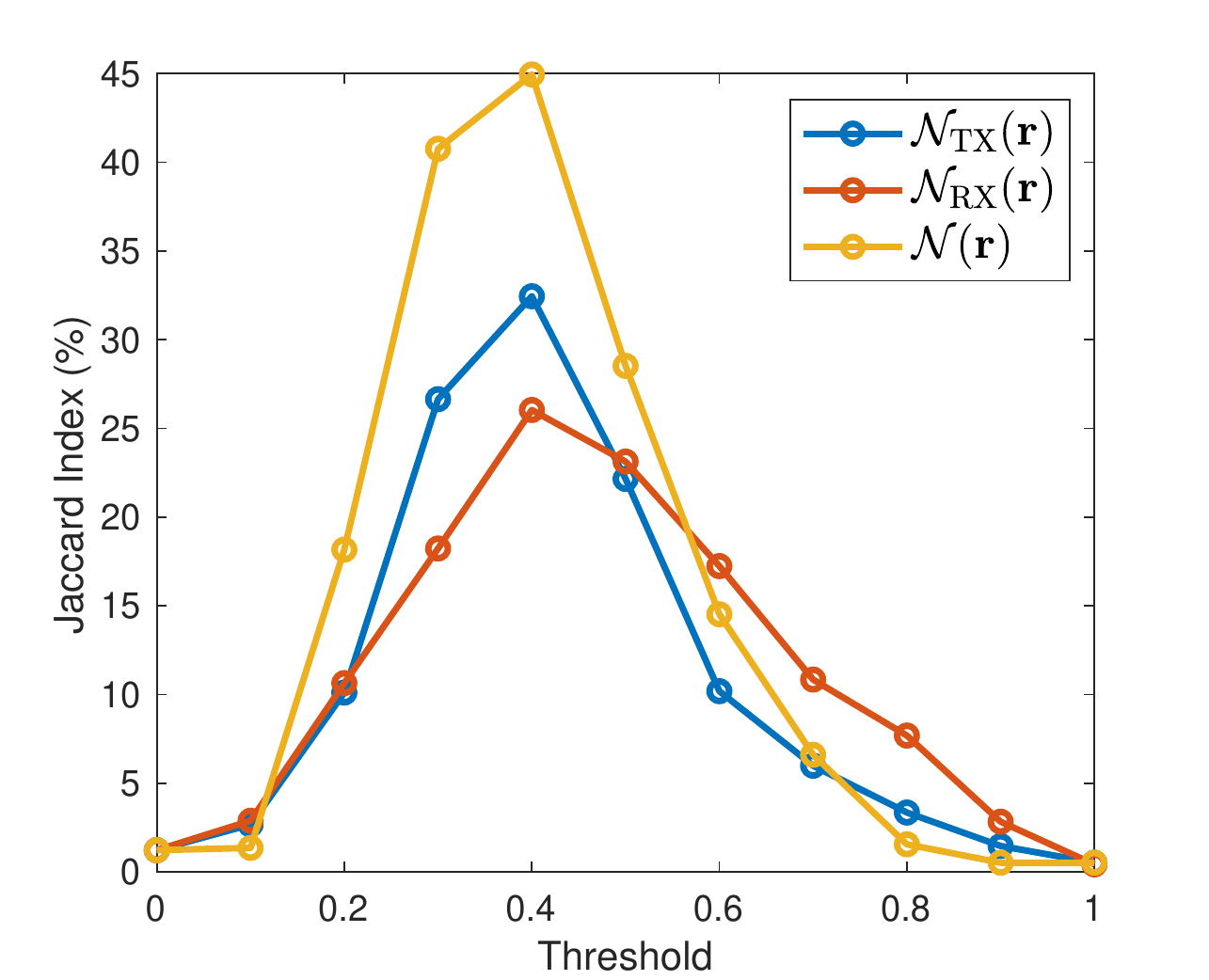}\\
  \includegraphics[width=0.25\textwidth]{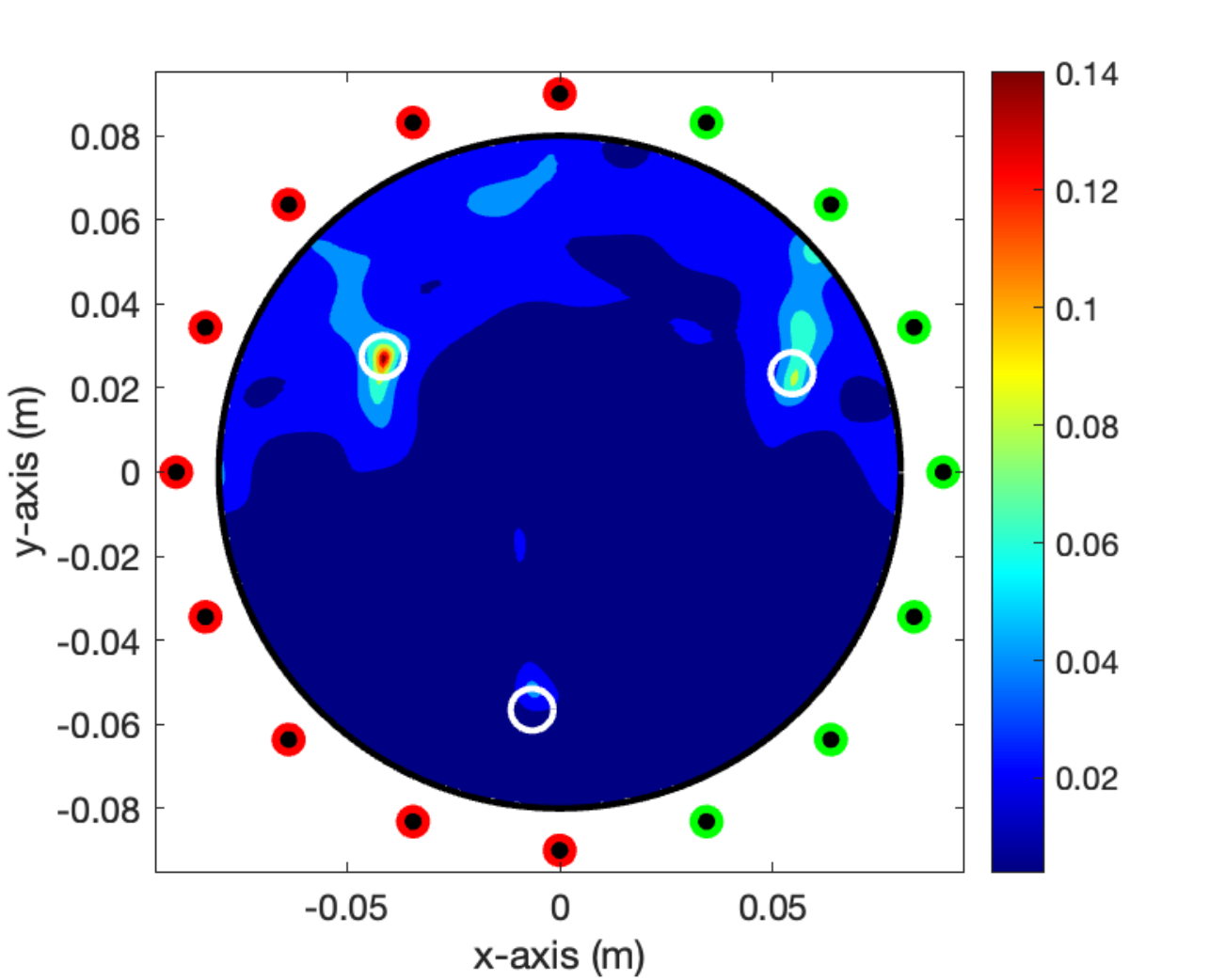}\hfill
  \includegraphics[width=0.25\textwidth]{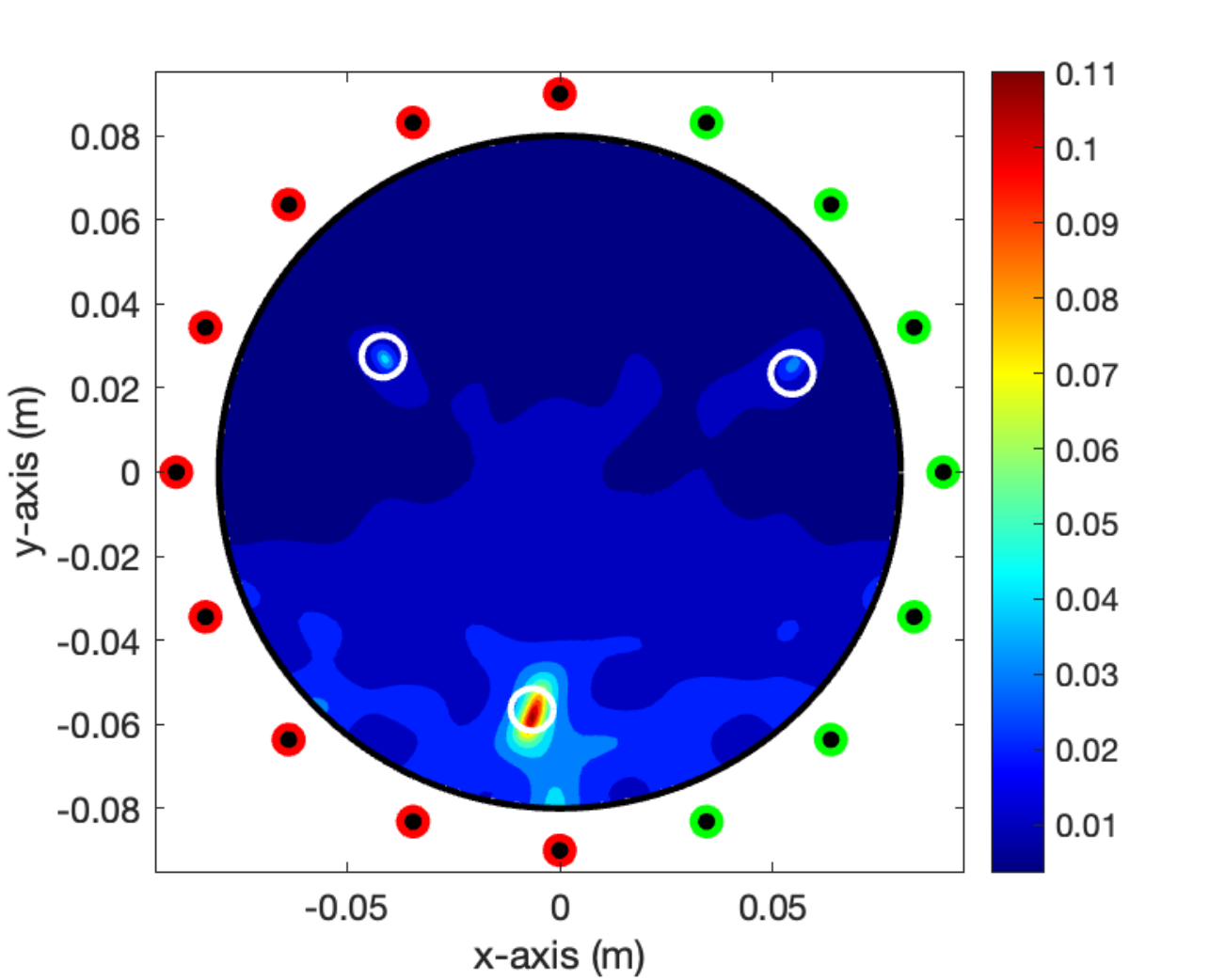}\hfill
  \includegraphics[width=0.25\textwidth]{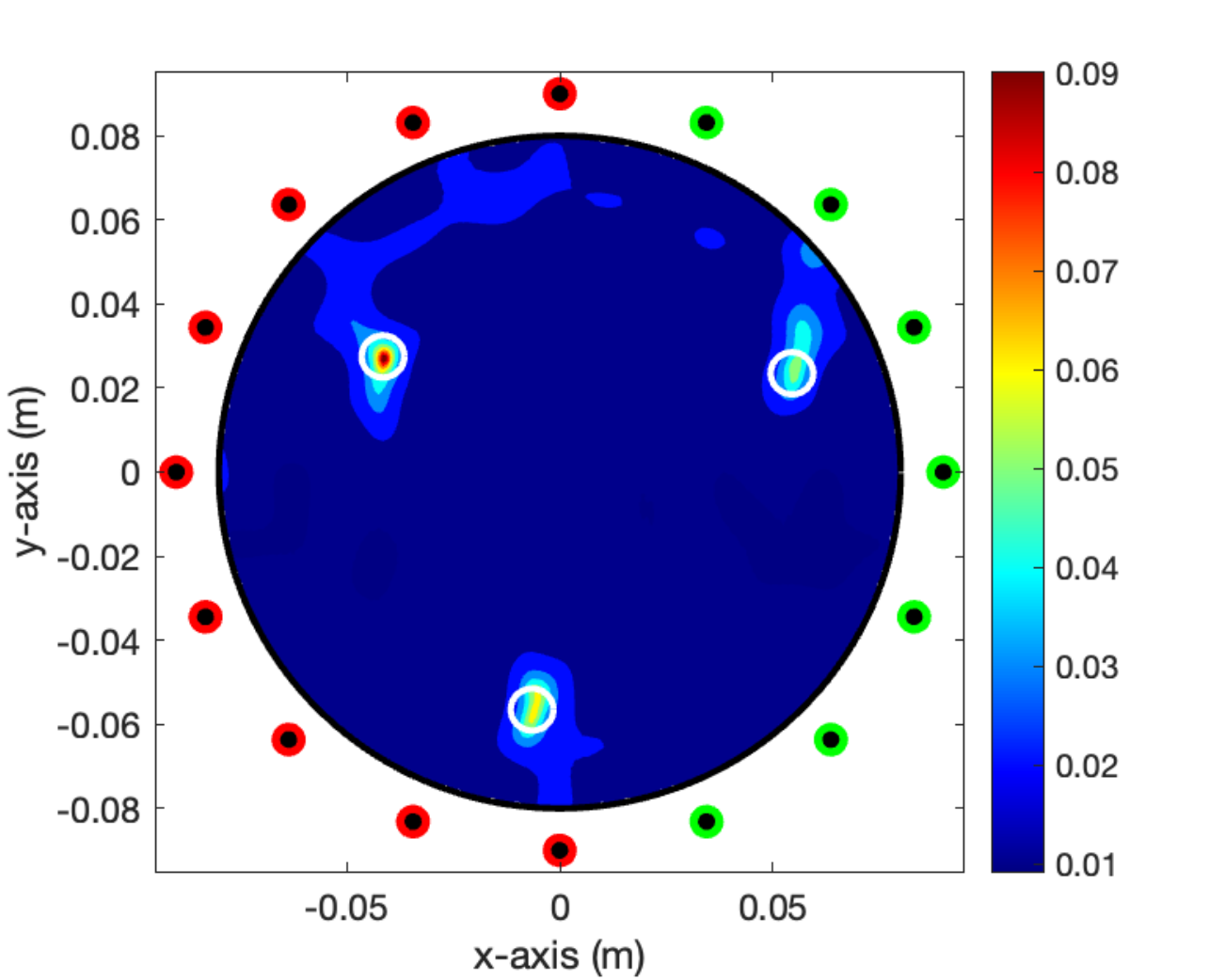}\hfill
  \includegraphics[width=0.25\textwidth]{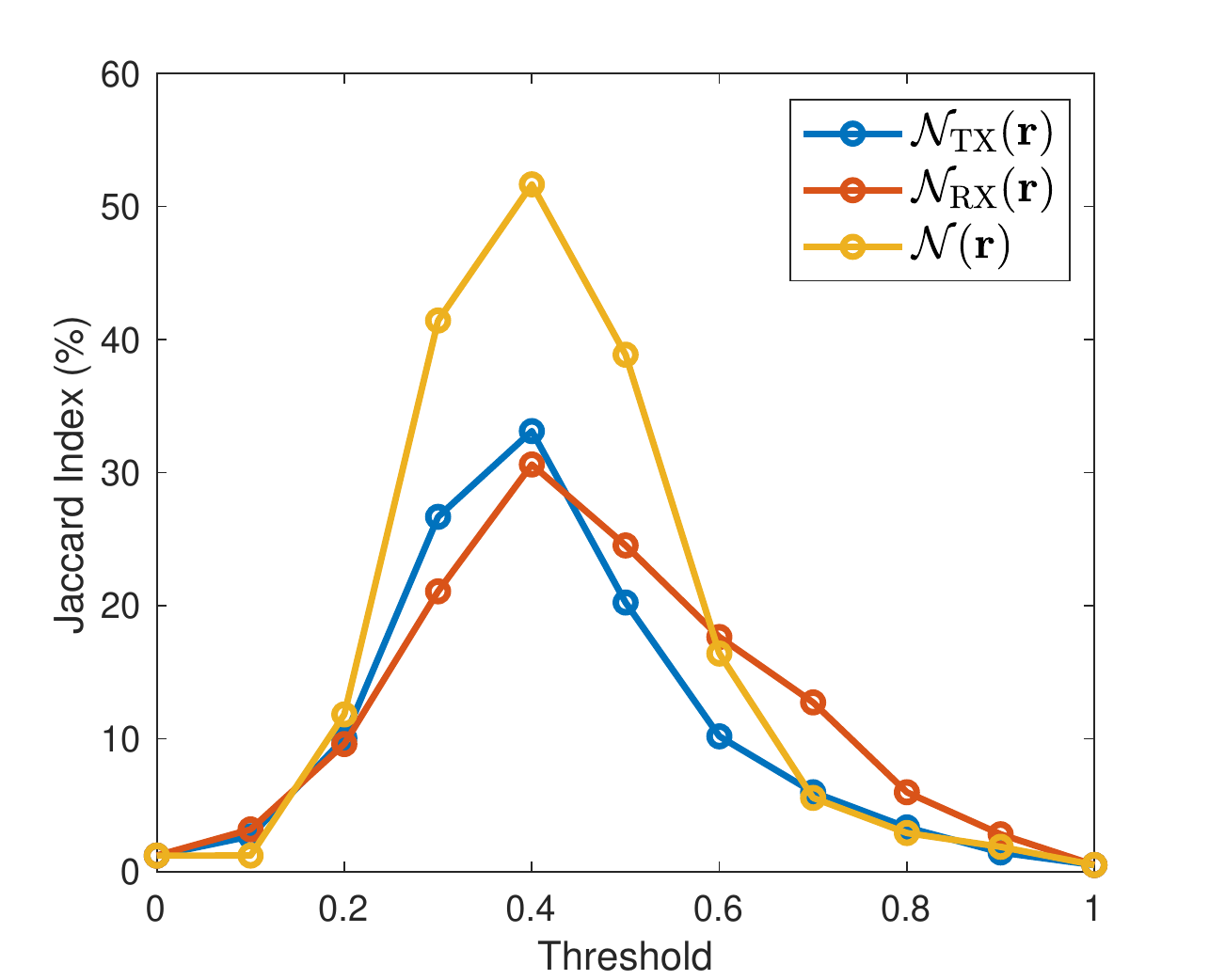}
  \caption{\label{ResultR5}(Example \ref{exR2}) Maps of $\mathfrak{F}_{\tx}(\mr)$ (first column), $\mathfrak{F}_{\rx}(\mr)$ (second column), $\mathfrak{F}(\mr)$ (third column), and Jaccard index (fourth column). Green and red colored circles describe the location of transmitters and receivers, respectively.}
\end{figure}

\begin{figure}[h]
  \centering
  \includegraphics[width=0.25\textwidth]{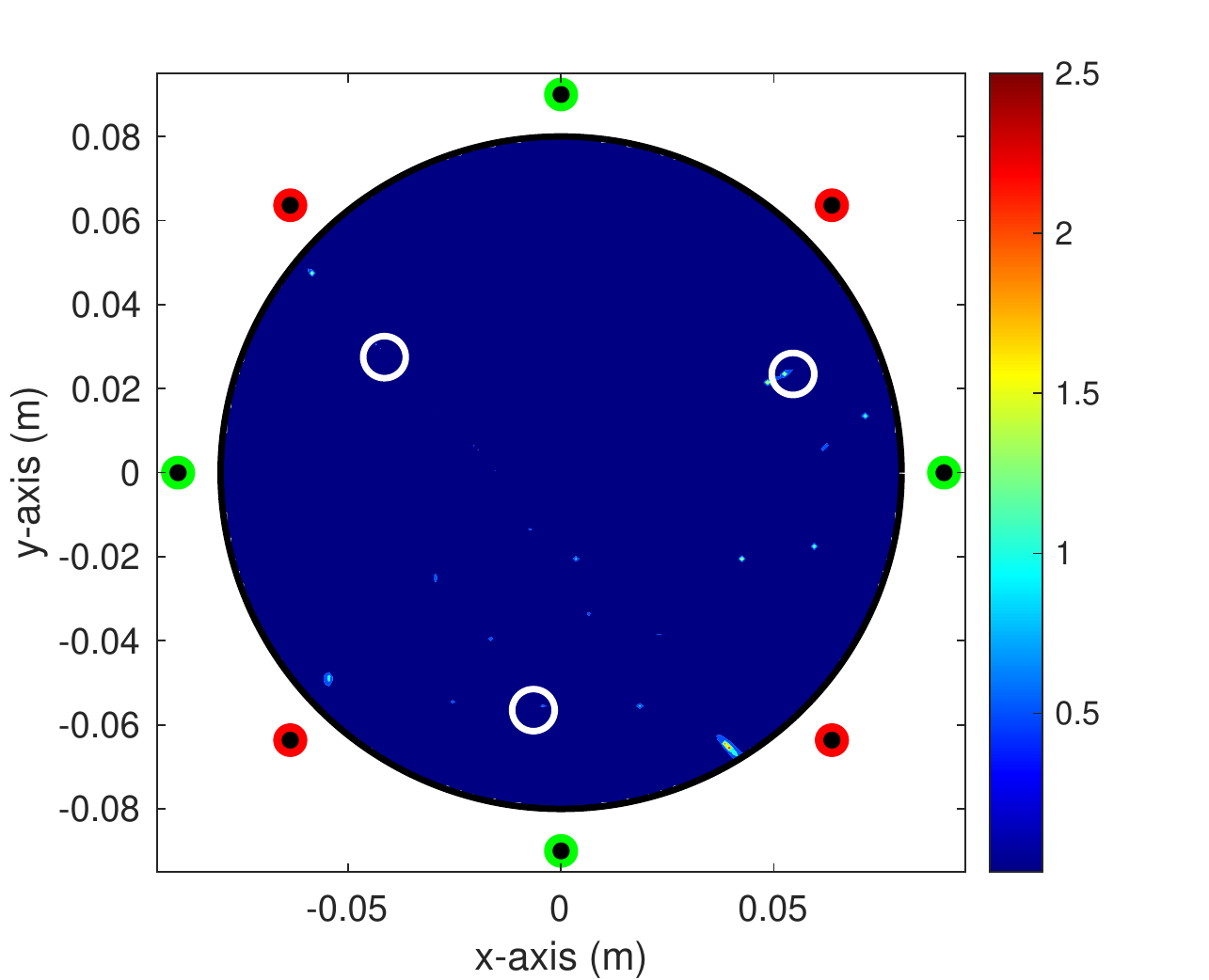}\hfill
  \includegraphics[width=0.25\textwidth]{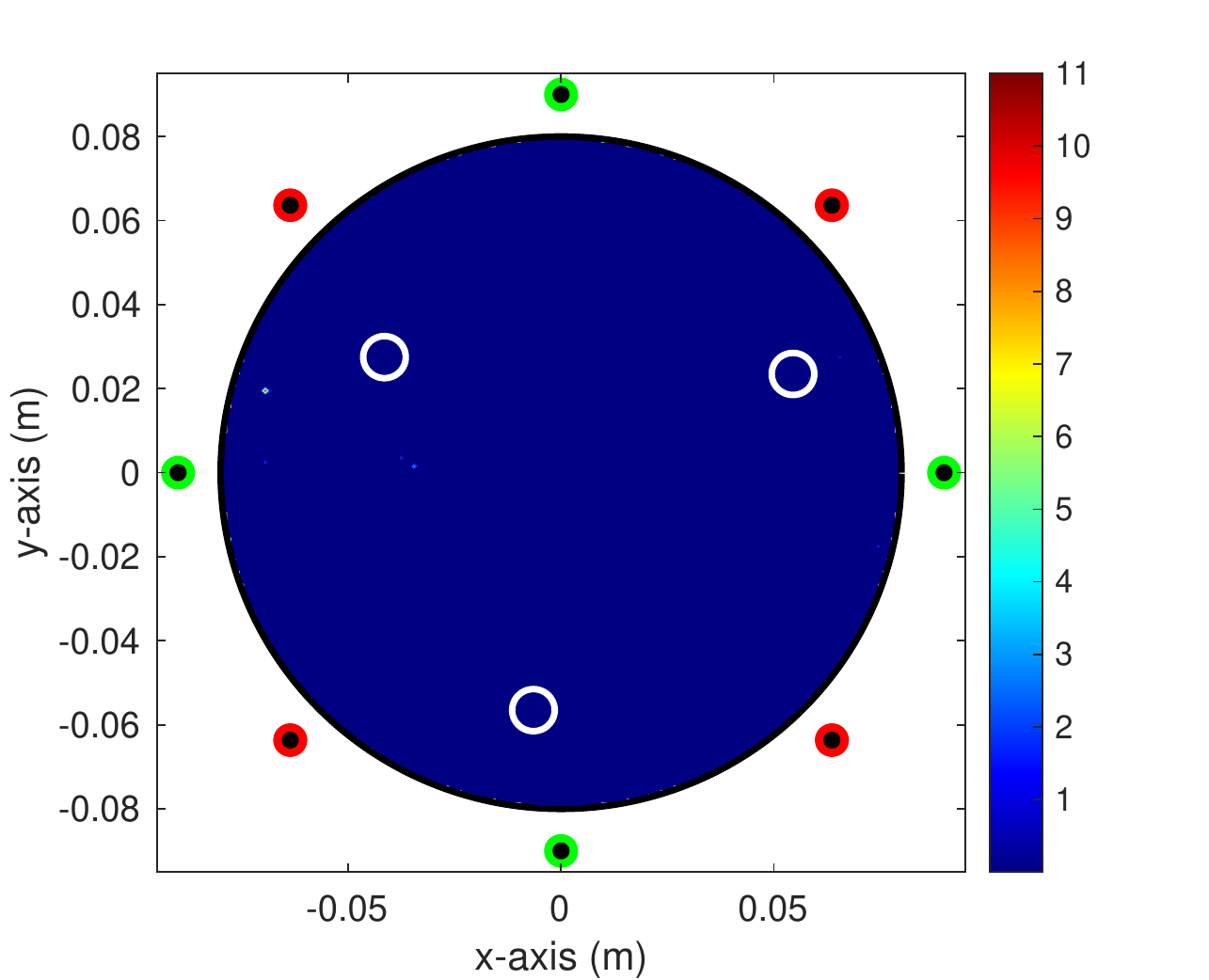}\hfill
  \includegraphics[width=0.25\textwidth]{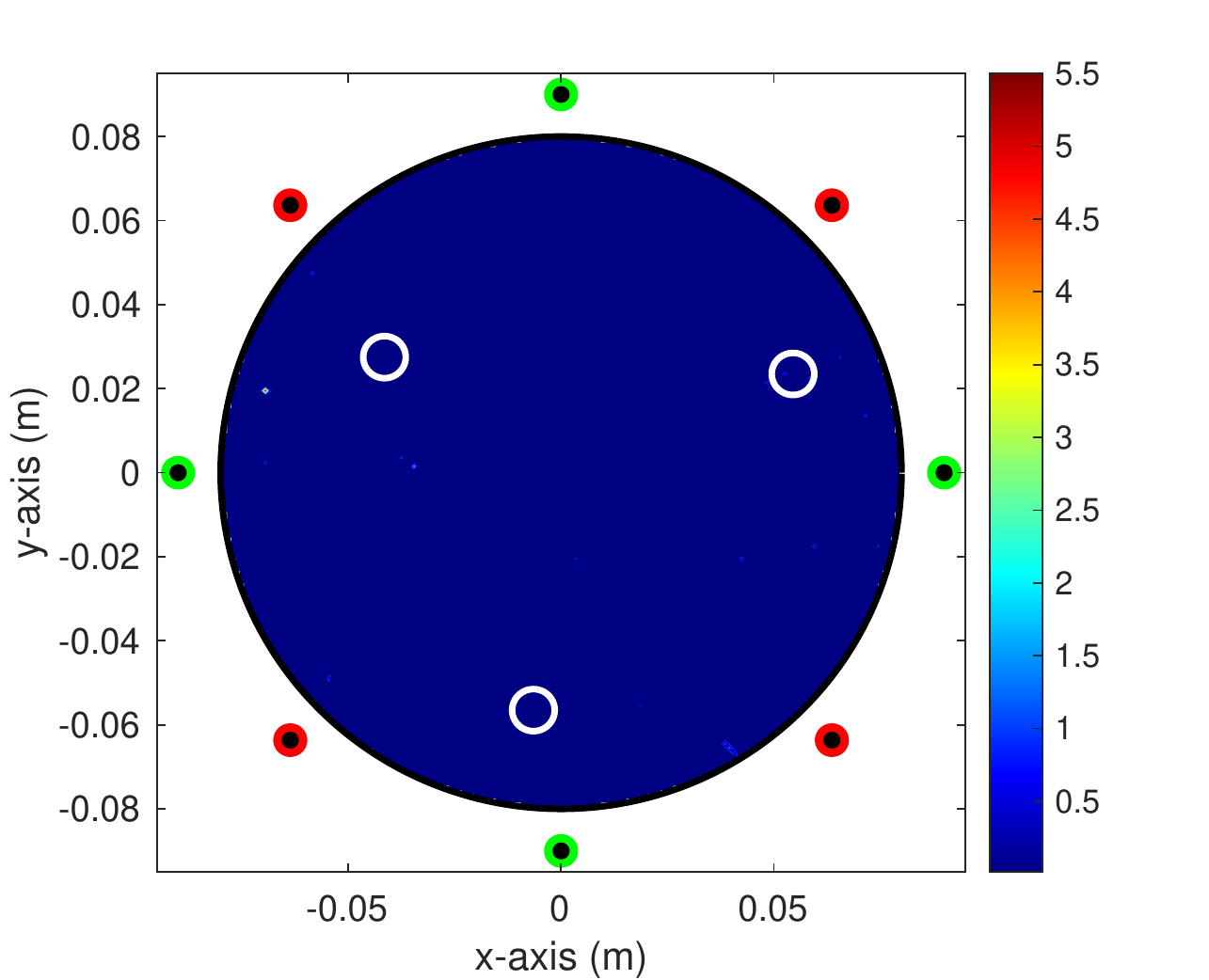}\hfill
  \includegraphics[width=0.25\textwidth]{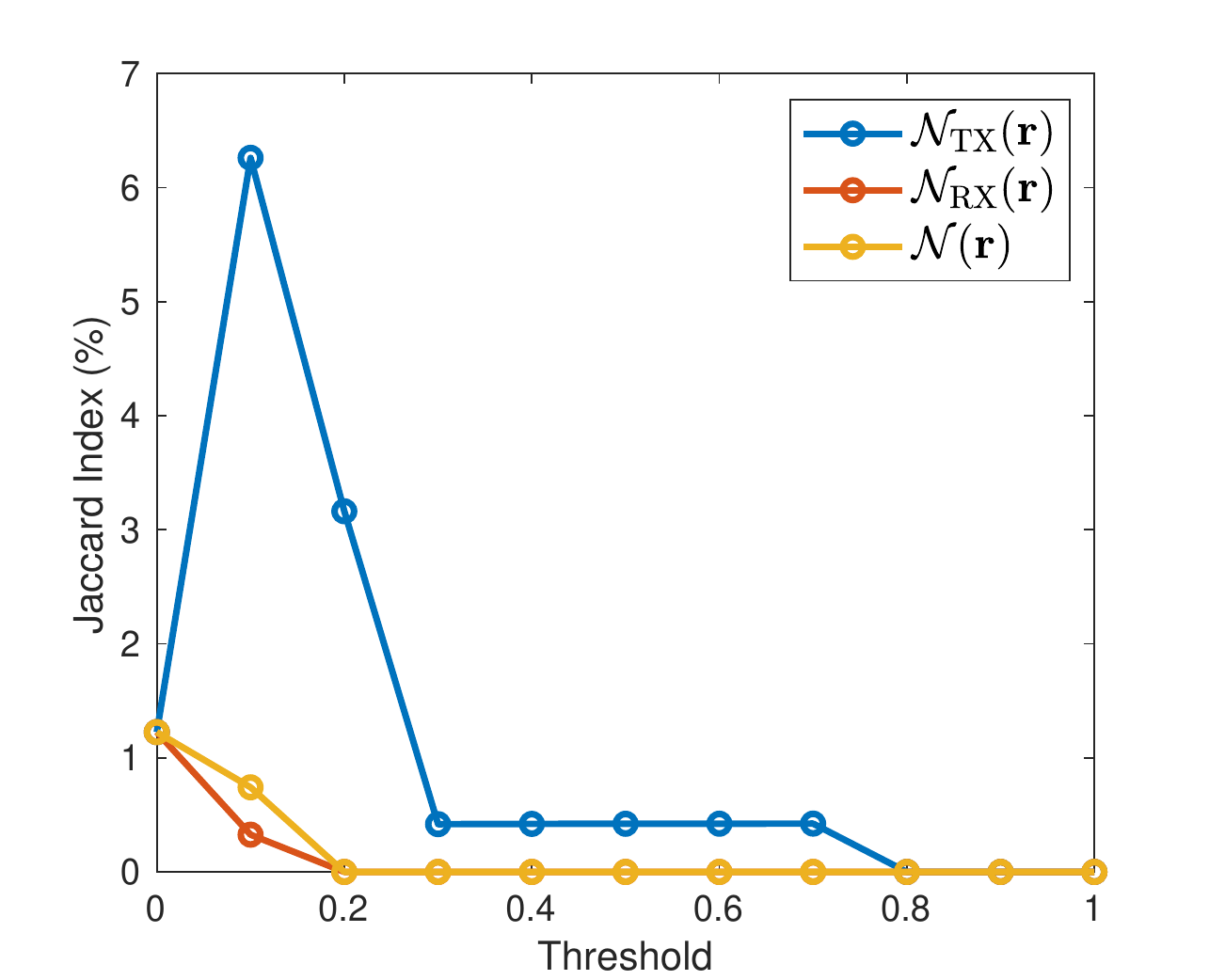}\\
  \includegraphics[width=0.25\textwidth]{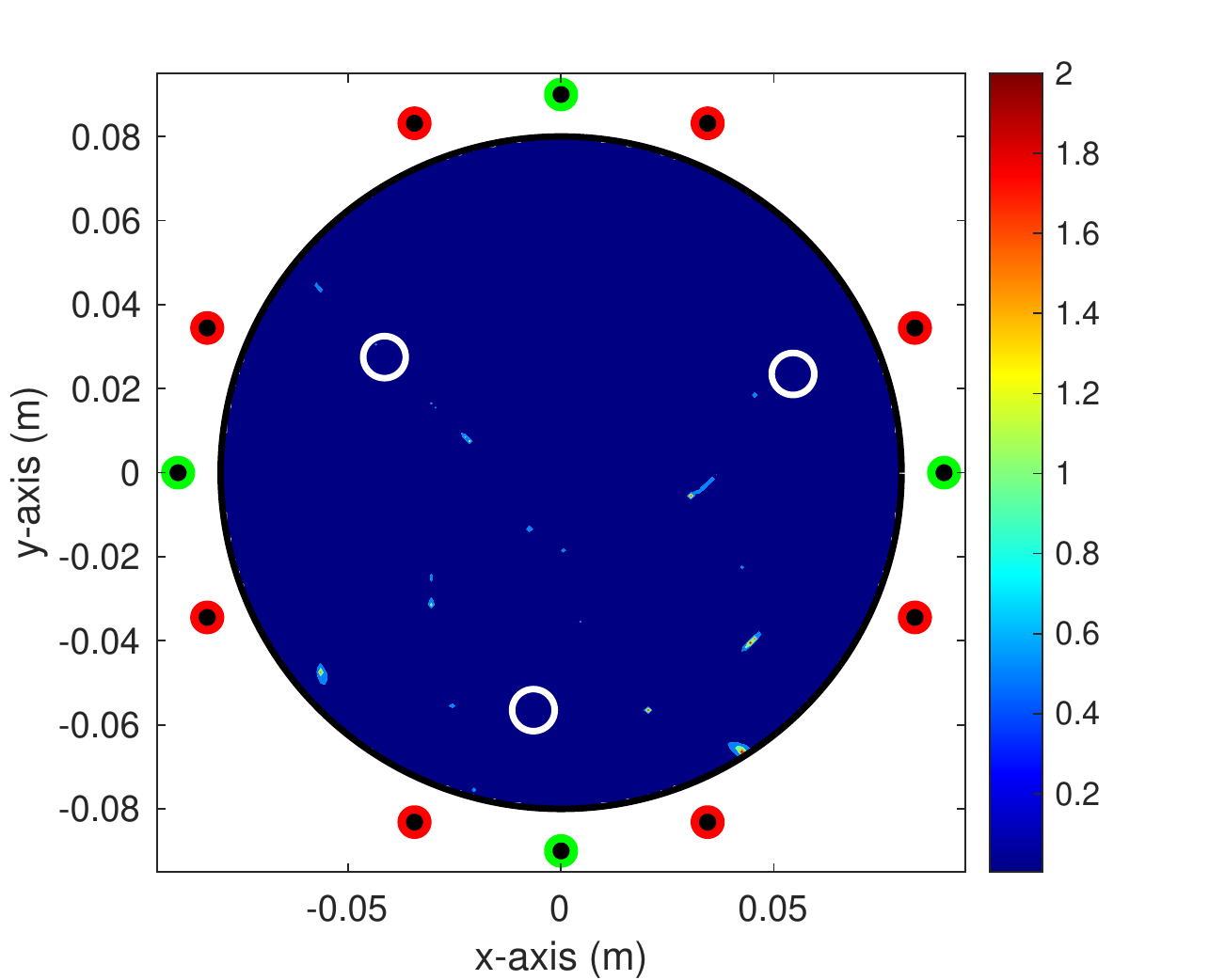}\hfill
  \includegraphics[width=0.25\textwidth]{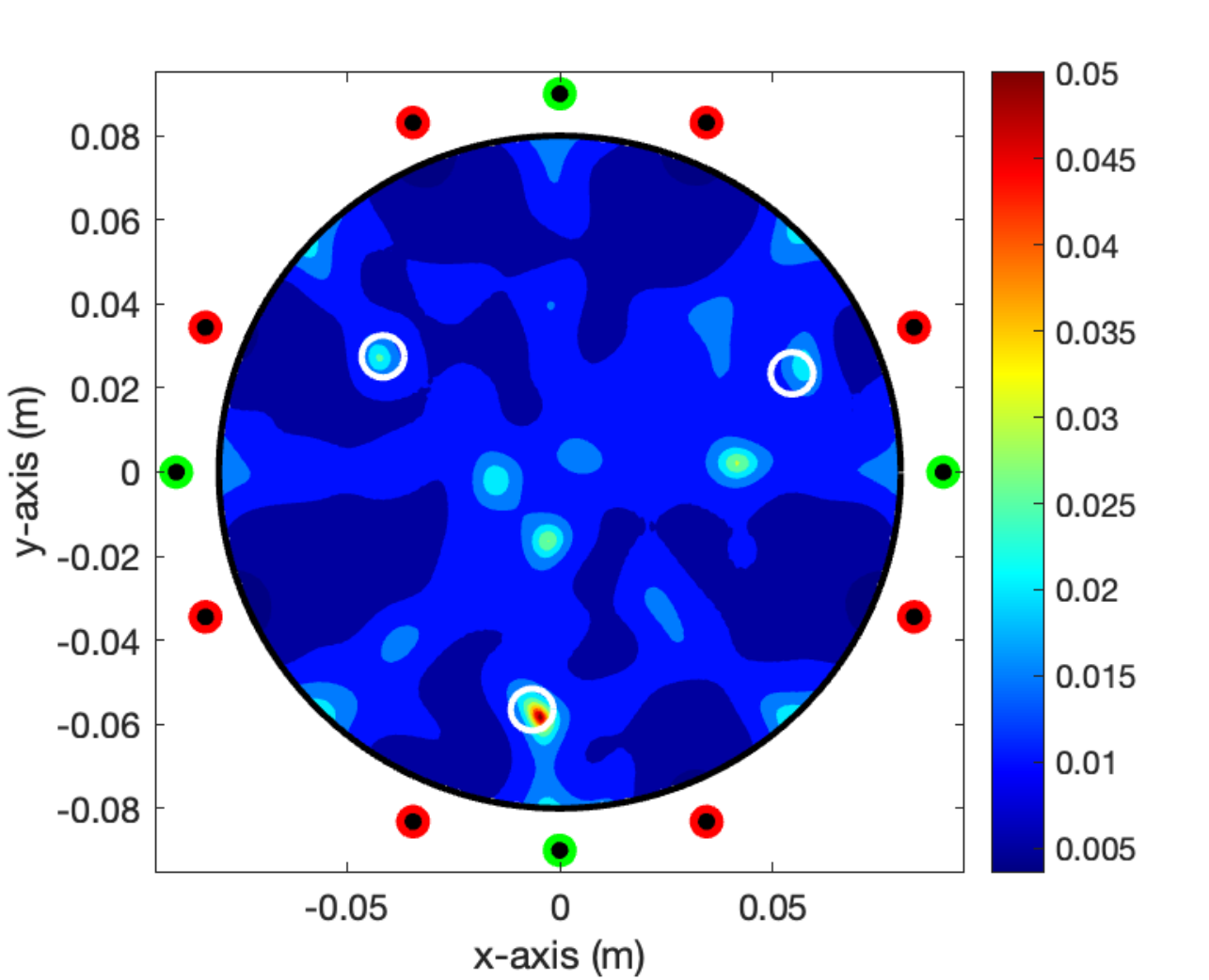}\hfill
  \includegraphics[width=0.25\textwidth]{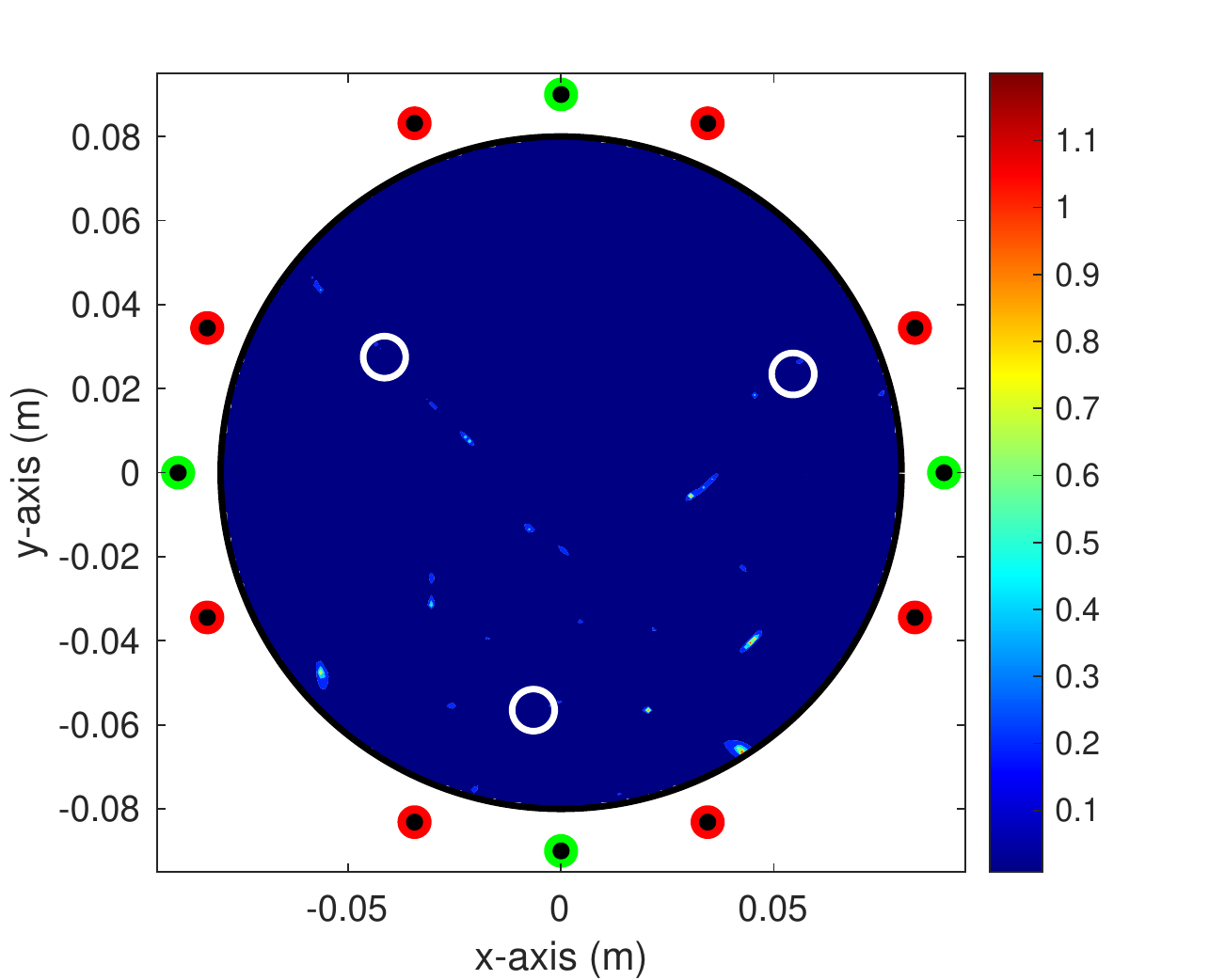}\hfill
  \includegraphics[width=0.25\textwidth]{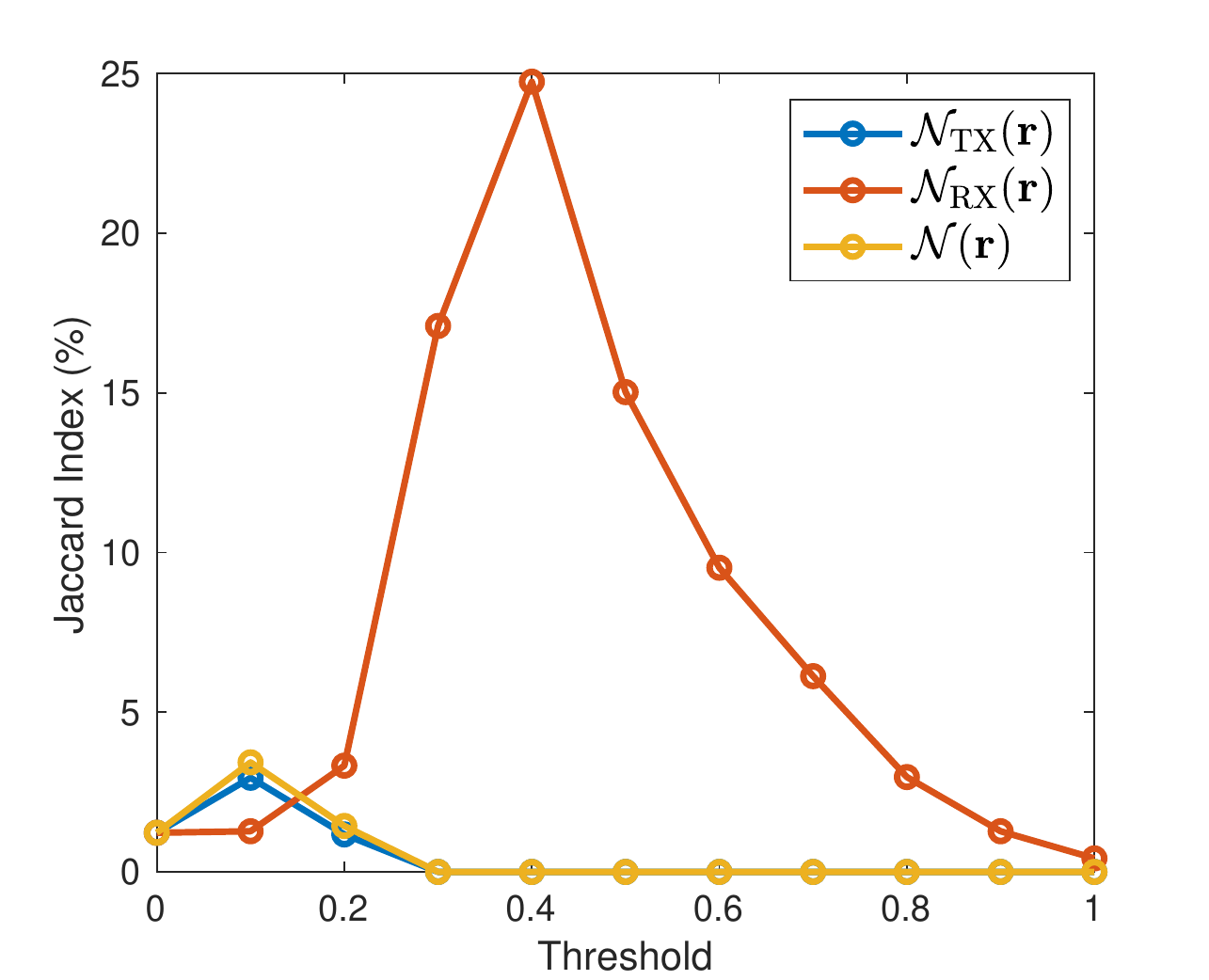}\\
  \includegraphics[width=0.25\textwidth]{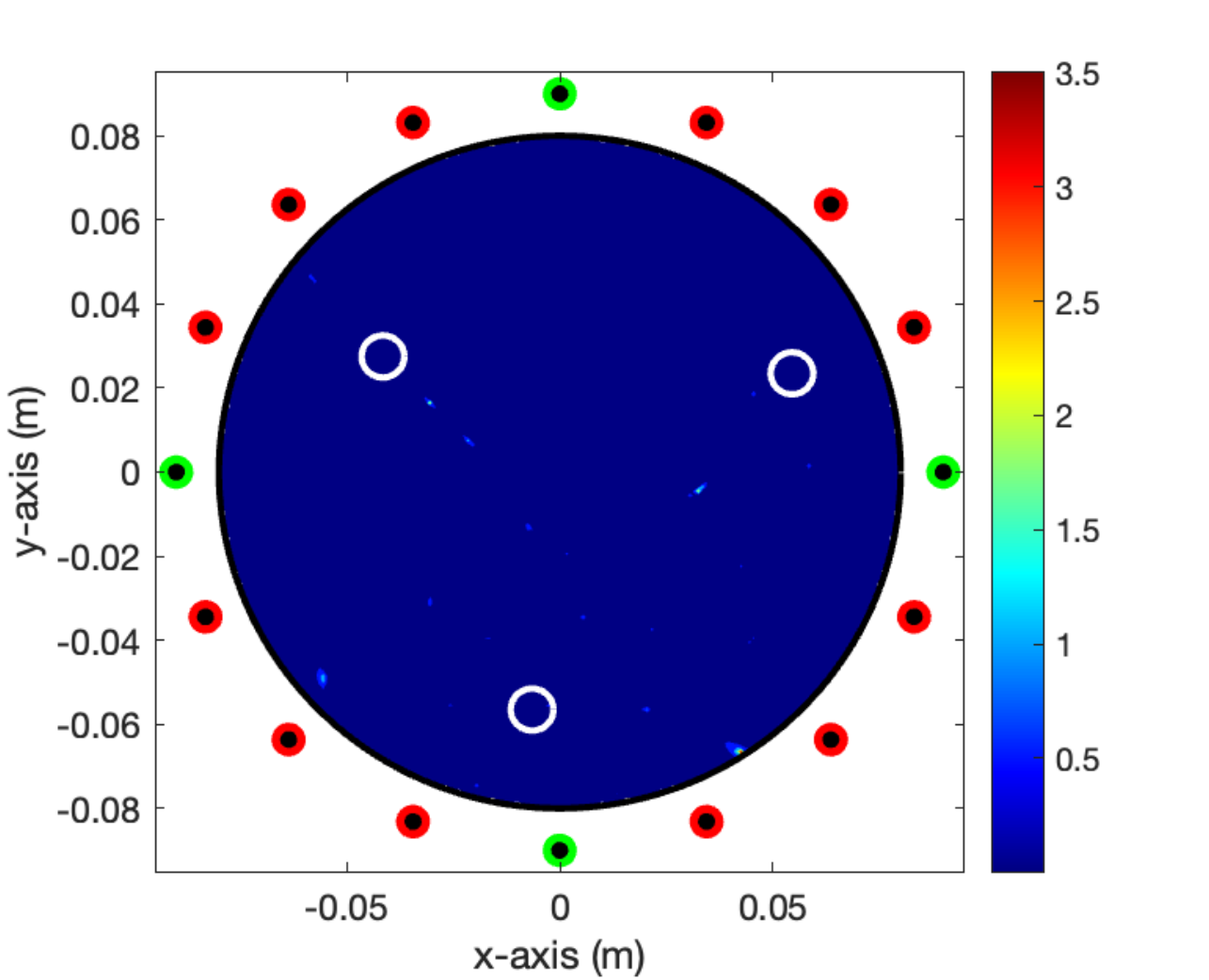}\hfill
  \includegraphics[width=0.25\textwidth]{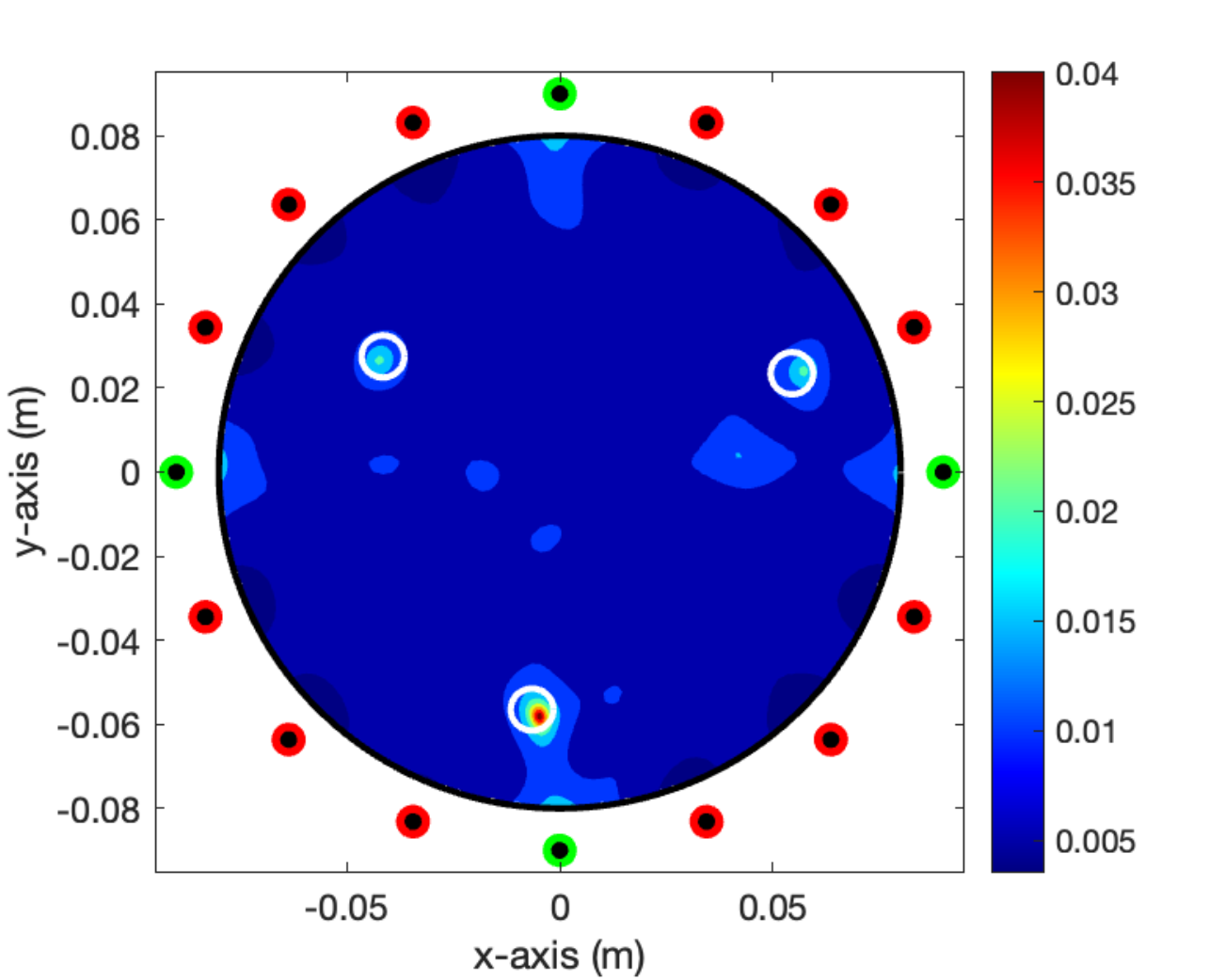}\hfill
  \includegraphics[width=0.25\textwidth]{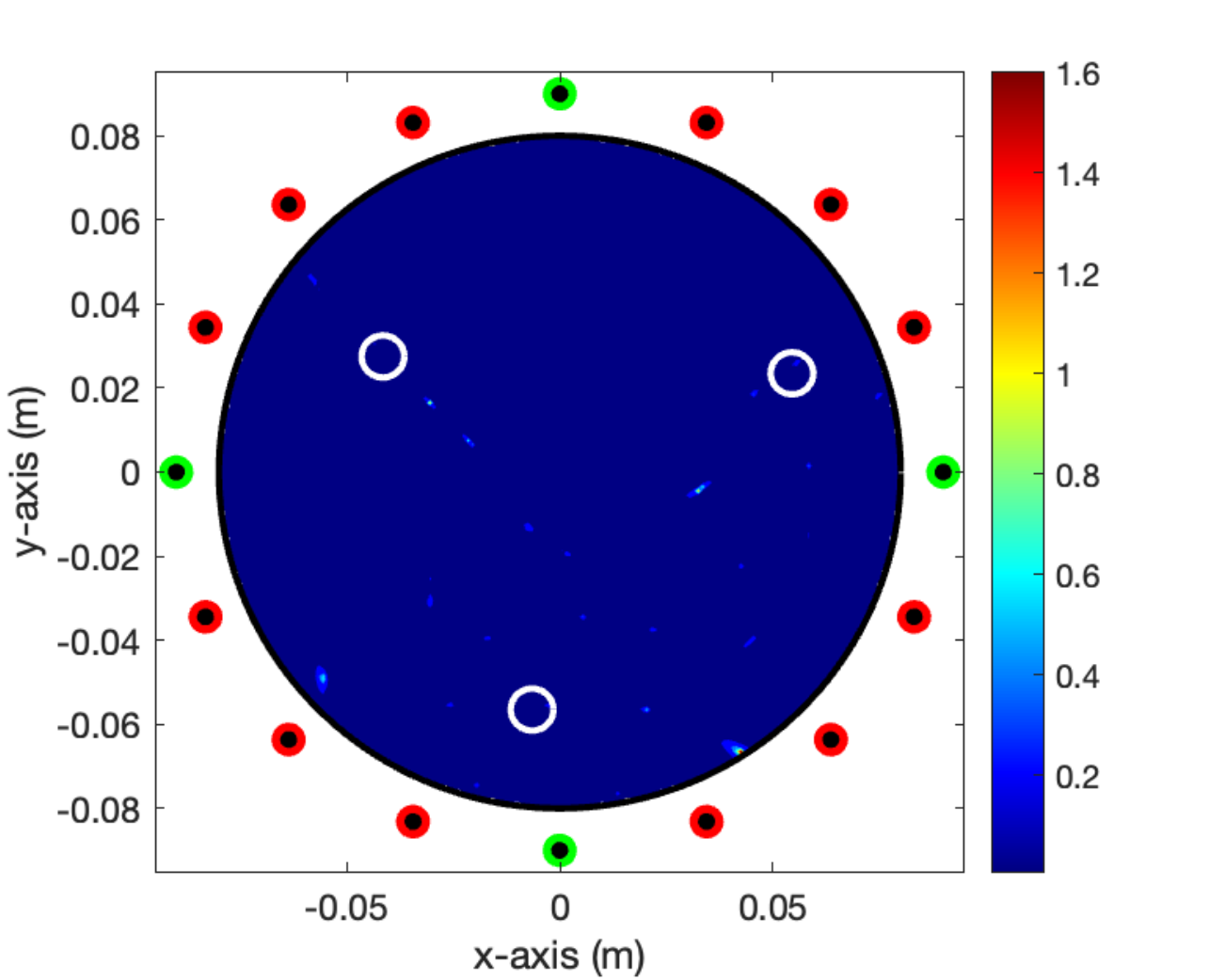}\hfill
  \includegraphics[width=0.25\textwidth]{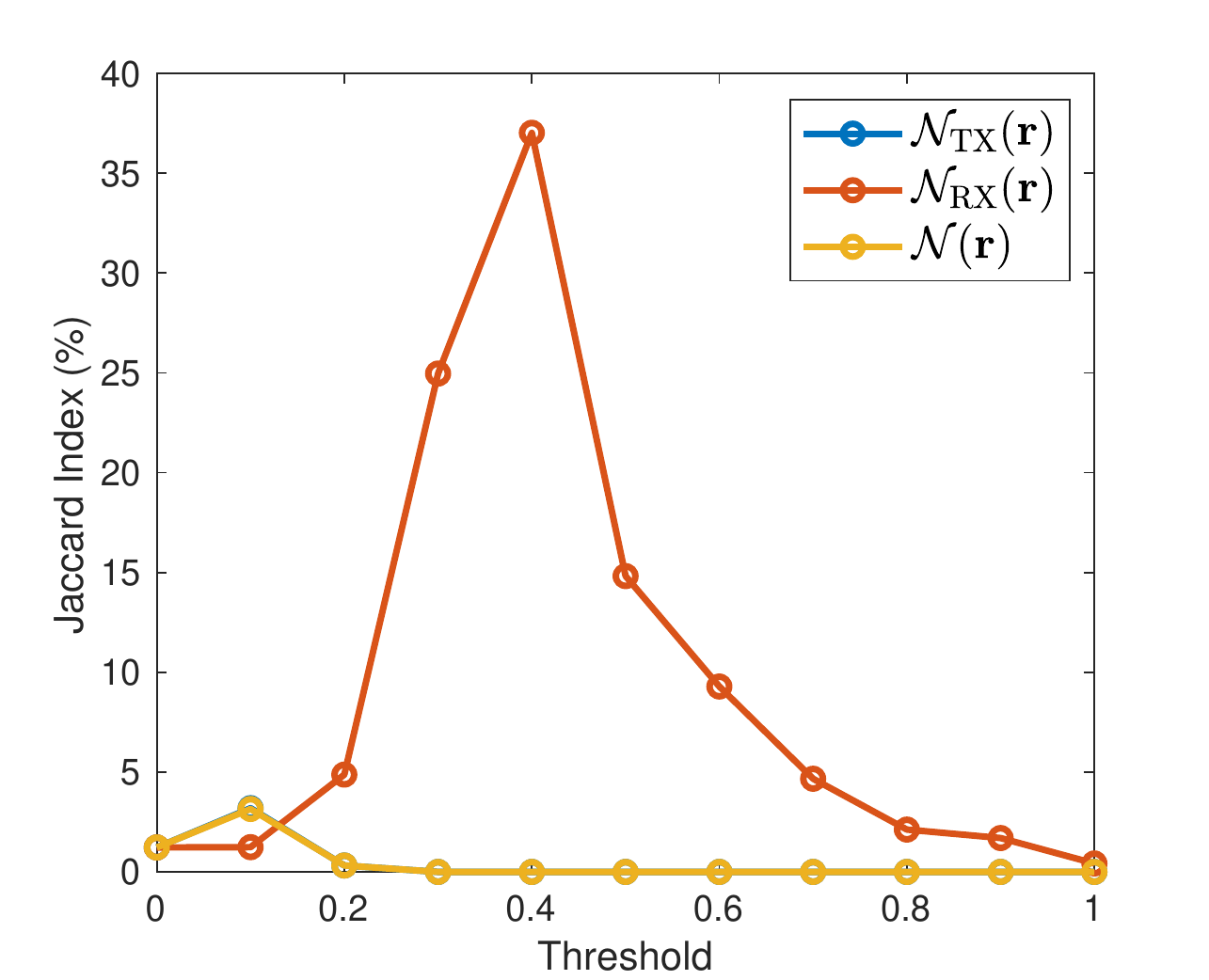}\\
  \includegraphics[width=0.25\textwidth]{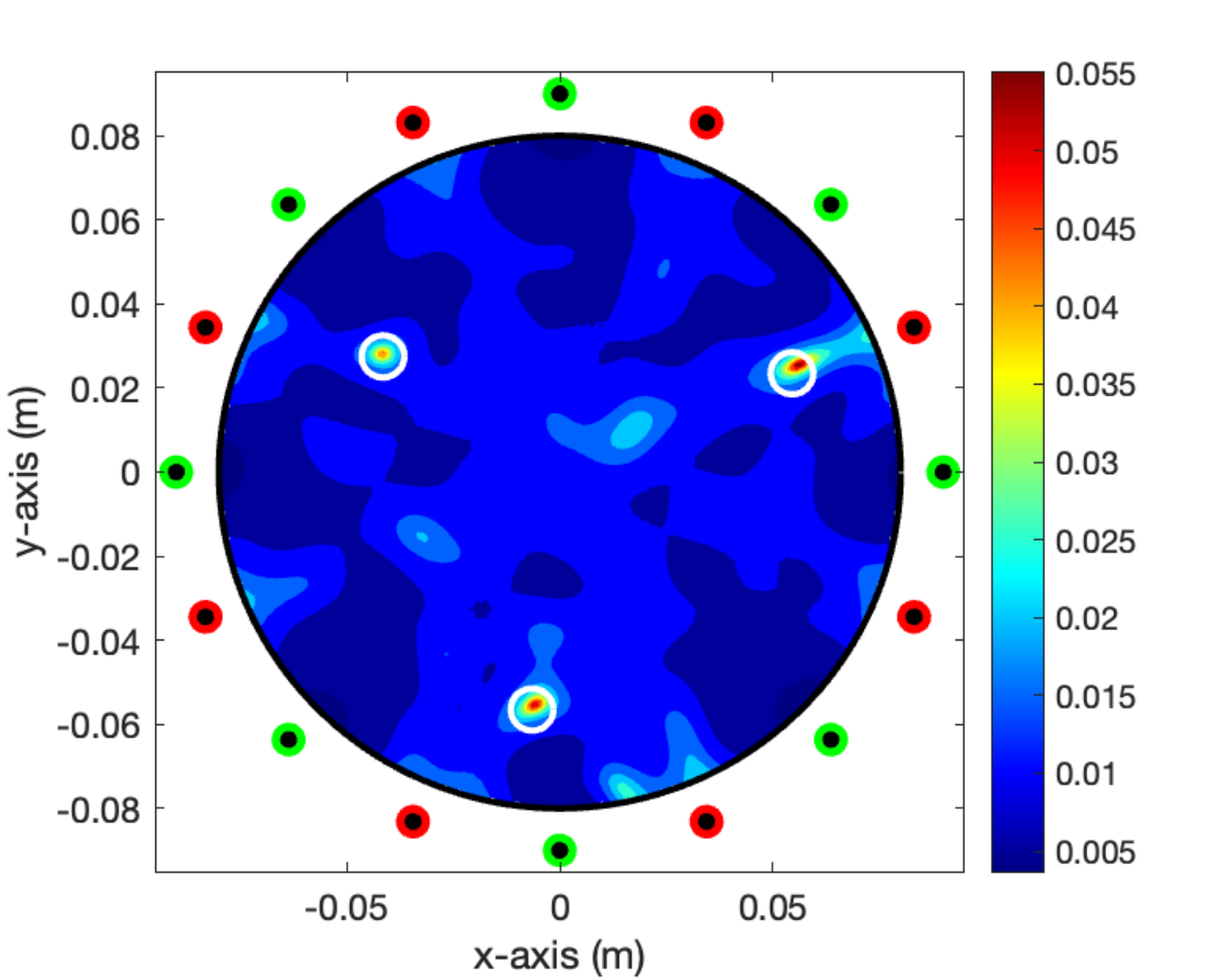}\hfill
  \includegraphics[width=0.25\textwidth]{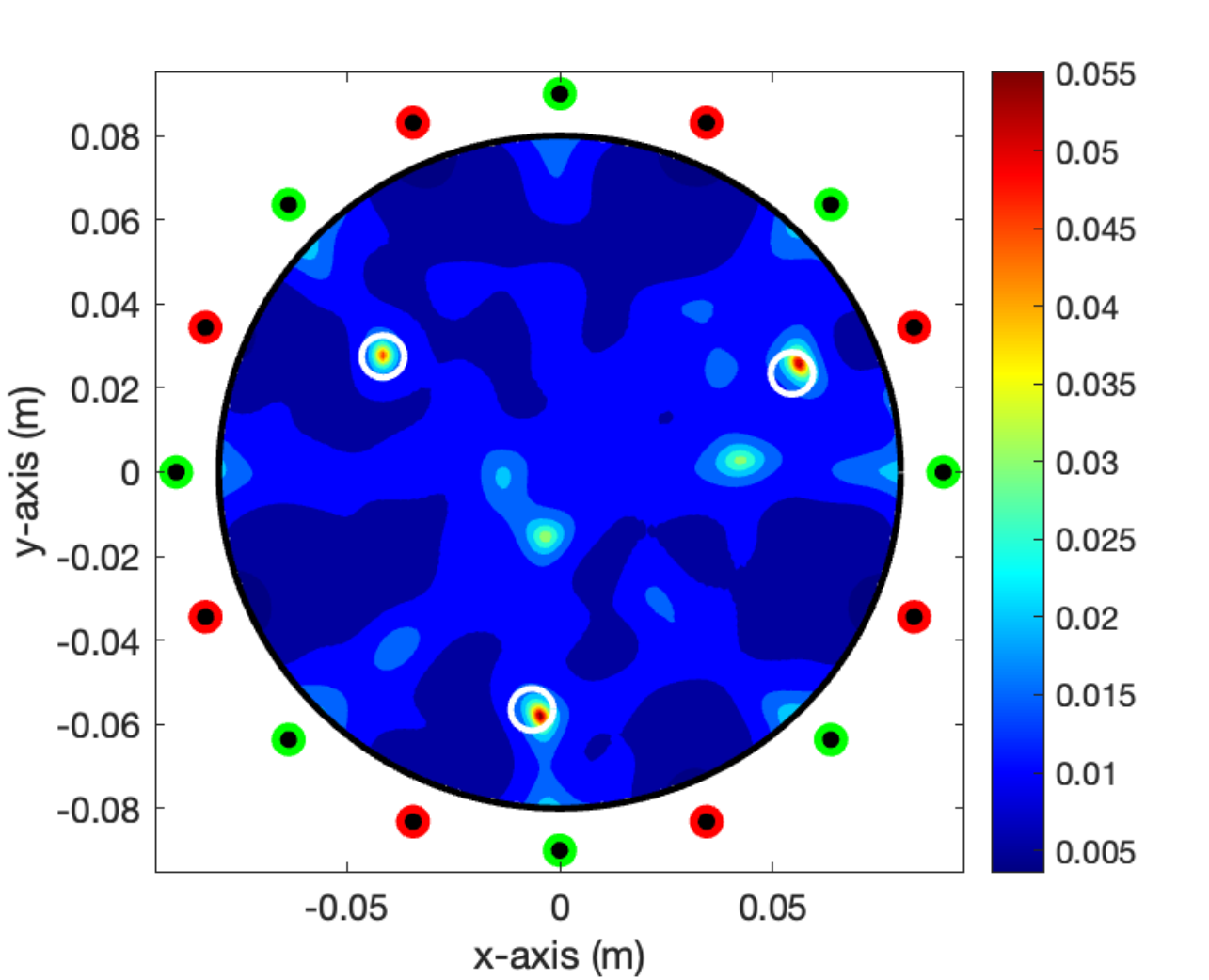}\hfill
  \includegraphics[width=0.25\textwidth]{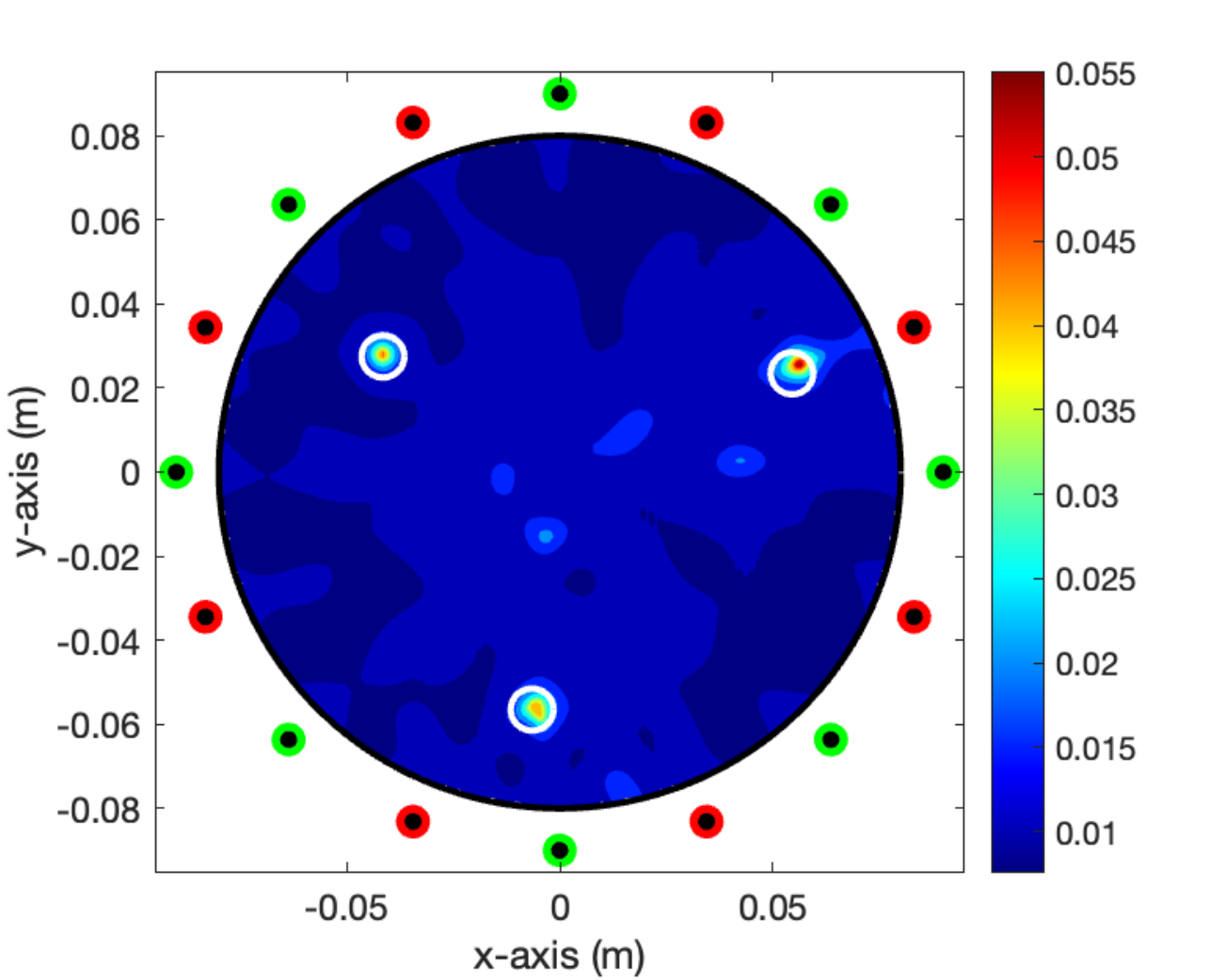}\hfill
  \includegraphics[width=0.25\textwidth]{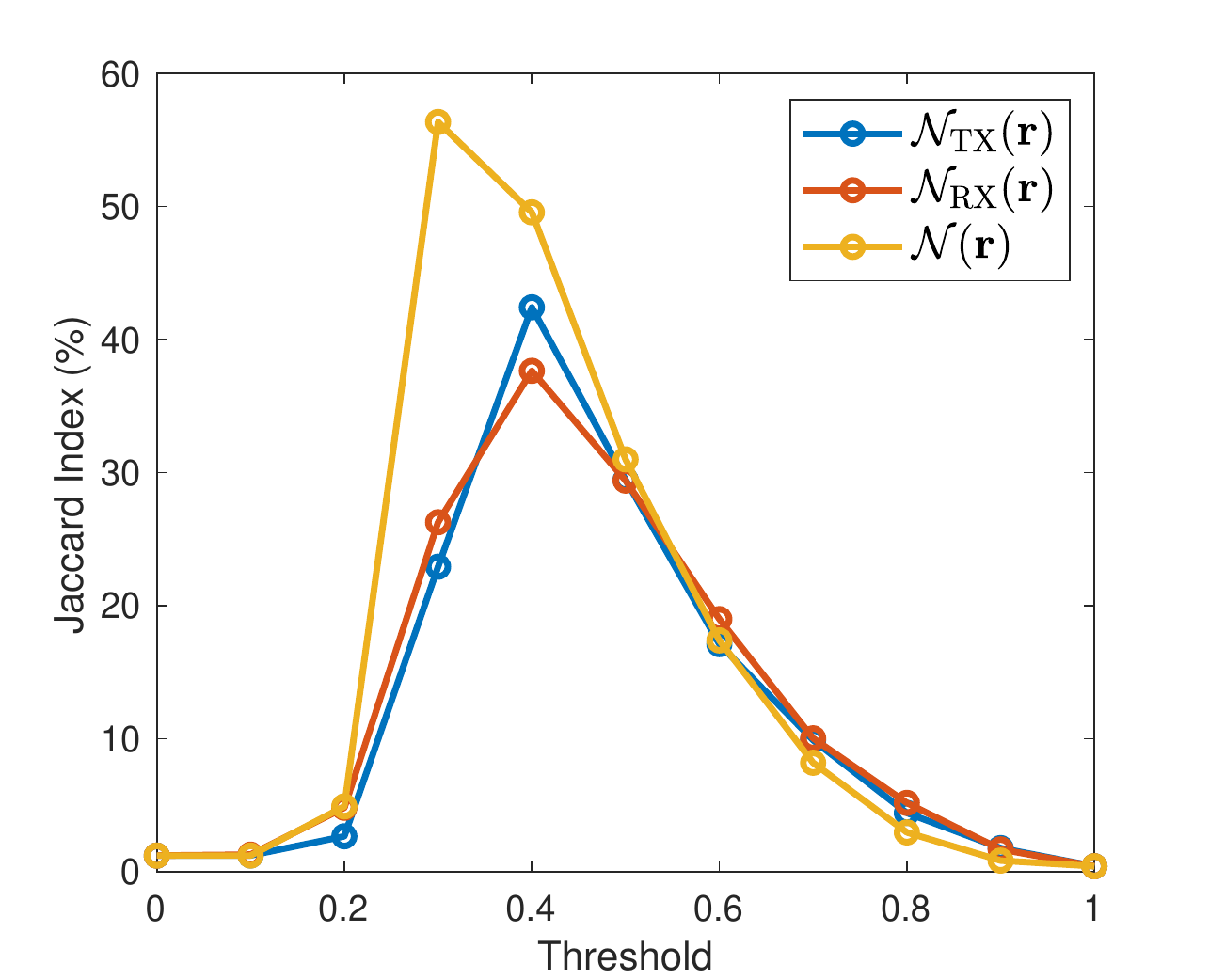}
  \caption{\label{ResultR6}(Example \ref{exR2}) Maps of $\mathfrak{F}_{\tx}(\mr)$ (first column), $\mathfrak{F}_{\rx}(\mr)$ (second column), $\mathfrak{F}(\mr)$ (third column), and Jaccard index (fourth column). Green and red colored circles describe the location of transmitters and receivers, respectively.}
\end{figure}

\section{Concluding Remarks}\label{sec:6}
In this paper, we designed an imaging function of MUSIC for identifying small anomalies from the scattering matrix without a switching device. Based on the analyzed structure of the designed imaging function, we confirmed that the imaging performance of the MUSIC significantly depends on the total number of transmitters and receivers. Additionally, the analyzed structure also allows us to find the optimal antenna arrangement for obtaining better imaging results via MUSIC. To support the derived theoretical result, we presented various numerical simulation results from synthetic and experimental data.

Here, we considered the identification of small targets. Extension to the identification of arbitrary shapes targets such as large anomaly, crack-like defects, etc., will be the forthcoming work. Moreover, to obtain better imaging results, development of proposed MUSIC algorithm will be an interesting topic for future research.

\section*{Acknowledgements}
The author would like to thank Kwang-Jae Lee and Seong-Ho Son for helping in generating scattering parameter data. This work was supported by the National Research Foundation of Korea (NRF) grant funded by the Korea government (MSIT) (NRF-2020R1A2C1A01005221).



\bibliographystyle{elsarticle-num-names}
\bibliography{../../../References}

\end{document}